\documentclass[11pt, oneside, article]{memoir}

\newcommand{\ignore}[1]{}
\newcommand{\formerlysmall}[1]{}

\settrims{0pt}{0pt}
\settypeblocksize{*}{35pc}{*}
\setlrmargins{*}{*}{1}
\setulmarginsandblock{1.1in}{1.4in}{*}
\setheadfoot{\onelineskip}{2\onelineskip}
\setheaderspaces{*}{1.5\onelineskip}{*}
\checkandfixthelayout
\cftsetindents{figure}{0em}{3.5em}
\cftsetindents{table}{0em}{3.5em}

\usepackage[margin=1.5in]{geometry}
 
\usepackage[centertags,sumlimits,intlimits,namelimits,reqno]{amsmath}
\usepackage{amssymb}
\usepackage{amsthm}
\usepackage{cmll}
\usepackage{mathtools}
\usepackage[cal=euler,scr=rsfso]{mathalfa}
\usepackage[boxslash]{stmaryrd}
\usepackage{enumitem}
\usepackage[T1]{fontenc}

\usepackage[pdfencoding=auto,psdextra]{hyperref}
\hypersetup{bookmarksdepth=4,
			bookmarksnumbered=true,
            colorlinks,breaklinks,
            urlcolor=[RGB]{7,7,96},
            linkcolor=[RGB]{7,7,96},
            citecolor=[RGB]{7,7,96}}

\usepackage{import}
\usepackage{pdfpages}
\usepackage{transparent}
\usepackage{xcolor}
\usepackage{setspace,wrapfig,verbatim,changepage,adjustbox, eso-pic}
\usepackage{braket,xspace,url,scalerel,microtype}
\usepackage{xstring}

\usepackage[capitalize,nameinlink]{cleveref}
\usepackage{float,bookmark,caption}
\usepackage[noindentafter,explicit]{titlesec}

\usepackage{tikz, tikz-cd}
\usetikzlibrary{arrows,positioning,calc,backgrounds,decorations.markings,decorations.pathmorphing,shapes, tikzmark}

\allowdisplaybreaks

\setlist{noitemsep, nolistsep}


\setsecheadstyle{\large\bfseries}

\titleformat{\subsection}[runin]
  {\normalfont\normalsize\bfseries}
  {\thesubsection}
  {1em}
  {#1.}

\titleformat{\subsubsection}[runin]
  {\normalfont\normalsize\itshape}
  {\thesubsubsection}
  {1em}
  {#1.}

\titlespacing\section{0pt}{12pt plus 4pt minus 2pt}{6pt plus 2pt minus 2pt}
\titlespacing\subsection{0pt}{6pt plus 4pt minus 2pt}{6pt plus 2pt minus 2pt}
\titlespacing\subsubsection{0pt}{6pt plus 4pt minus 2pt}{6pt plus 2pt minus 2pt}
  
\crefalias{chapter}{section}

 \setcounter{secnumdepth}{4}

\DeclareRobustCommand{\SkipTocEntry}[5]{}

\let\OLDthebibliography\thebibliography
\renewcommand\thebibliography[1]{
  \OLDthebibliography{#1}
  \setlength{\parskip}{0pt}
  \setlength{\itemsep}{0pt plus 0.3ex}
}

\makeatletter
\newcommand{\firstToC}{{%
  \@fileswfalse
  \renewcommand{\contentsname}{Contents}%
  \@starttoc{toc}{\contentsname}%
}}
\newcommand{\anotherToC}{{%
  \@fileswfalse
  \renewcommand{\contentsname}{Contents (TEMPORARY)}%
  \@starttoc{toc}{\contentsname}%
}}
\newcommand{\lastToC}{{%
  \renewcommand{\@tocwrite}[2]{}
  \renewcommand{\contentsname}{Contents}%
  \@starttoc{toc}{\contentsname}%
}}
\makeatother

\pdfstringdefDisableCommands{%
  \def\infty{oo}%
}

%


\newcommand{%
    \def\svgwidth{1\columnwidth}
    \import{./figuresused/}{.pdf_tex}
}[2][1]{%
    \def\svgwidth{#1\columnwidth}
    \import{./figuresused/}{#2.pdf_tex}
}
\counterwithin{figure}{chapter}

\floatplacement{figure}{H}
\setlength{\abovecaptionskip}{-5pt plus 0pt minus 10pt}
\setlength{\belowcaptionskip}{-5pt plus 0pt minus 10pt}


 \tikzset{double line with arrow/.style args={#1,#2}{decorate,decoration={markings,%
mark=at position 0 with {\coordinate (ta-base-1) at (0,1pt);
\coordinate (ta-base-2) at (0,-1pt);},
mark=at position 1 with {\draw[#1] (ta-base-1) -- (0,1pt);
\draw[#2] (ta-base-2) -- (0,-1pt);
}}}}
 \tikzset{Equal/.style={-,double line with arrow={-,-}}}


\tikzset{
    labls/.style={anchor=south, rotate=90, inner sep=.5mm}
}
\tikzset{
    labln/.style={anchor=north, rotate=90, inner sep=1mm}
}
\tikzset{
    lablh/.style={anchor=north, rotate=45, inner sep=1mm}
}

\tikzset{mid vert/.style={/utils/exec=\tikzset{every node/.append style={outer sep=0.8ex}},
postaction=decorate,decoration={markings,
mark=at position 0.5 with {\draw[-] (0,#1) -- (0,-#1);}}},
mid vert/.default=0.75ex}


\definecolor{brightgreen}{rgb}{0.4, 1.0, 0.0}
\definecolor{brightturquoise}{rgb}{0.03, 0.91, 0.87}
\definecolor{brightpink}{rgb}{1.0, 0.0, 0.5}
\definecolor{carrotorange}{rgb}{0.93, 0.57, 0.13}

\definecolor{NBcolor}{rgb}{0.6,0.6,0.0}		

\newcounter{cdcnt}
\newcounter{cldcnt}
\newcounter{tdcnt}

\setlength{\marginparwidth}{2.5cm}
\definecolor{CDcolor}{rgb}{0.0,0.5,0.75}	
\definecolor{XCDcolor}{rgb}{0.8,0.8,1}	
\definecolor{CLDcolor}{RGB}{220,100,10}
\definecolor{CLDEcolor}{rgb}{.6,0,.15}                    
\definecolor{XCLDcolor}{rgb}{1,.8,.8}		
\definecolor{TDcolor}{rgb}{0.4,0.4,0.0}		
\definecolor{RVcolor}{rgb}{0.4,0.0,0.8}		

%



\makeatletter%
\newbox\envmarksym@box
\setbox\envmarksym@box\hbox
  {%
    \begin{tikzpicture}
      \draw (0,0) -- (.5em,0) -- (.5em,.5em);
    \end{tikzpicture}%
  }
\newcommand*\envmarksym[1][.5em]
  {\resizebox{#1}{!}{\raisebox{.0ex}{\usebox\envmarksym@box}}}

\def\thmqedhere{\expandafter\csname\csname @currenvir\endcsname @qed\endcsname}

\def\defthm#1#2#3{
      \newtheorem{#1}{#2}[section]%
      \numberwithin{#1}{section}%
      \expandafter\def\csname #1autorefname\endcsname{#2}%
      \expandafter\let\csname c@#1\endcsname\c@thm}

\theoremstyle{plain} 

\newtheorem{introthm}{Theorem}
\expandafter\def\csname introthmautorefname\endcsname{Theorem}

\expandafter\def\csname introcorautorefname\endcsname{Corollary}
\expandafter\let\csname c@introcor\endcsname\c@introthm

\expandafter\def\csname intropropautorefname\endcsname{Proposition}
\expandafter\let\csname c@introprop\endcsname\c@introthm

\expandafter\def\csname introconjautorefname\endcsname{Conjecture}
\expandafter\let\csname c@introconj\endcsname\c@introthm

\newtheorem{introhyp}{Hypothesis}
\expandafter\def\csname introhypautorefname\endcsname{Hypothesis}
\expandafter\let\csname c@introhyp\endcsname\c@introthm

\newtheorem{introobs}{Observation}
\expandafter\def\csname introobsautorefname\endcsname{Observation}
\expandafter\let\csname c@introobs\endcsname\c@introthm

\newtheorem{thm}{Theorem}[section]
\numberwithin{thm}{section}

\defthm{cor}{Corollary}{Corollaries}
\defthm{prop}{Proposition}{Propositions}
\defthm{lem}{Lemma}{Lemmas}
\defthm{klem}{Key Lemma}{Key Lemmas}
\defthm{conj}{Conjecture}{Conjectures}
\defthm{disconj}{Disproven Conjecture}{Disproven Conjectures}
\defthm{hyp}{Hypothesis}{Hypotheses}
\defthm{indhyp}{Inductive Hypothesis}{Inductive Hypotheses}

\theoremstyle{definition}

\newtheorem{introdef}{Definition}
\expandafter\def\csname introdefautorefname\endcsname{Definition}
\expandafter\let\csname c@introdef\endcsname\c@introthm

\newtheorem{intrormk}{Remark}
\expandafter\def\csname intrormkautorefname\endcsname{Remark}
\expandafter\let\csname c@intrormk\endcsname\c@introthm

\defthm{defnx}{Definition}{Definitions}
\newenvironment{defn}
  {\pushQED{\qed}\defnx}
  {\popQED\enddefnx}

\defthm{altdefnx}{Alternative Definition}{Alternative Definitions}
\crefname{altdefnx}{Definition}{Definitions}
\newenvironment{altdefn}
  {\pushQED{\qed}\altdefnx}
  {\popQED\endaltdefnx}

\defthm{nonegx}{Non-Example}{Non-Examples}
\newenvironment{noneg}
  {\pushQED{\qed}\nonegx}
  {\popQED\endnonegx}

\defthm{egx}{Example}{Examples}
\newenvironment{eg}
  {\pushQED{\qed}\egx}
  {\popQED\endegx}

\defthm{constrx}{Construction}{Constructions}
\newenvironment{constr}
  {\pushQED{\qed}\constrx}
  {\popQED\endconstrx}

\defthm{heurx}{Heuristic}{Heuristics}
\newenvironment{heur}
  {\pushQED{\qed}\heurx}
  {\popQED\endheurx}

\theoremstyle{remark}

\defthm{rmkx}{Remark}{Remarks}
\newenvironment{rmk}
  {\pushQED{\qed}\rmkx}
  {\popQED\endrmkx}
  
\defthm{notex}{Note}{Notes}
\newenvironment{note}
  {\pushQED{\qed}\notex}
  {\popQED\endnotex}

\defthm{punchx}{Punchline}{Punchlines}

\defthm{previewx}{Preview}{Previews}

\defthm{obsx}{Observation}{Observations}
\newenvironment{obs}
  {\pushQED{\qed}\obsx}
  {\popQED\endobsx}

\defthm{probx}{Problem}{Problems}

\defthm{notnx}{Notation}{Notations}
\newenvironment{notn}
  {\pushQED{\qed}\notnx}
  {\popQED\endnotnx}

\defthm{termx}{Terminology}{Terminology}
\newenvironment{term}
  {\pushQED{\qed}\termx}
  {\popQED\endtermx}
\newtheorem*{term*}{Terminology}

\defthm{convx}{Convention}{Conventions}
\newenvironment{conv}
  {\pushQED{\qed}\convx}
  {\popQED\endconvx}
  
\defthm{recoll}{Recollection}{Recollections}
  
\newtheorem*{conv*}{Convention}
\newtheorem*{convrev*}{Convention Reversal}
\newtheorem*{convs*}{Conventions}
\newtheorem*{recoll*}{Recollection}

\let\c@equation\c@thm
\numberwithin{equation}{section}



\def\definecref{\newif\ifcref}
\ifx\creftrue\undefined
  \definecref
  \creffalse
\fi

\ifcref\usepackage{cleveref,aliascnt}\fi

\ifcref\else
  \@ifpackageloaded{mathtools}{\mathtoolsset{showonlyrefs,showmanualtags}}{}
\fi

\makeatother



\newcommand{\fleq}{\ensuremath{\preceq}}

\newcommand{\fles}{\ensuremath{\prec}}



\newcommand{\cone}{\mathsf{c}}
\newcommand{\link}{\mathsf{link}}
\newcommand{\fcone}{\mathsf{ce}}
\newcommand{\tdim}{\mathsf{tdim}}
\newcommand{\tstr}{\mathsf{tstr}}


\makeatletter
\newcommand{\oset}[3][0ex]{%
  \mathrel{\mathop{#3}\limits^{
    \vbox to#1{\kern-2\ex@
    \hbox{$\scriptstyle#2$}\vss}}}}
\makeatother
\newcommand{\TT}{\ensuremath{\mathbb{T}}}
\newcommand{\OTT}{\ensuremath{\mathring{\TT}}}
\newcommand{\CTT}{\ensuremath{\bar{\TT}}}

\newcommand{\truss}[1]{\ensuremath{{\mathsf{T}}\!\mathsf{rs}_{#1}}}

\newcommand{\otruss}[1]{\ensuremath{\mathring{\mathsf{T}}\!\mathsf{rs}_{#1}}}
\newcommand{\ctruss}[1]{\ensuremath{\bar{\mathsf{T}}\!\mathsf{rs}_{#1}}}
\newcommand{\strotruss}[1]{\ensuremath{\mathsf{Str}\!\mathring{\mathsf{T}}\!\mathsf{rs}_{#1}}}
\newcommand{\strctruss}[1]{\ensuremath{\mathsf{Str}\!\bar{\mathsf{T}}\!\mathsf{rs}_{#1}}}

\newcommand{\trussbun}[1]{\ensuremath{\mathsf{TrsBun}_{#1}}}







\newcommand{\ccs}[1]{\ensuremath{\mathsf{char}(#1)}}

\newcommand{\dcs}[1]{\ensuremath{\mathsf{discr}(#1)}}

\newcommand{\cellstr}{\ensuremath{\mathsf{cell}}}


\DeclareMathOperator\Totz{\mathsf{Tot}}
\DeclareMathOperator\Entr{\mathsf{Entr}}
\DeclareMathOperator\Exit{\mathsf{Exit}}
\newcommand{\TEntr}{\ensuremath{\mathcal{E}\mkern-2mu\mathit{ntr}}}
\DeclareMathOperator\stratify{\mathsf{CStr}}
\newcommand{\CStr}[1]{\ensuremath{\stratify {#1}}}

\DeclareMathOperator\fibdim{\ensuremath{\mathrm{fibdim}}}

\newcommand{\numovar}[1]{#1\mathsf{d}}
\newcommand{\numovarone}{}
\newcommand{\pts}{\mathsf{pt}}
\newcommand{\singa}[3]{{#1} \leftleftarrows_{#2} {#3}}
\newcommand{\singb}[5]{{#1} \leftleftarrows_{#2} {#3} \leftleftarrows_{#4} {#5}}


\DeclareMathOperator\src{\ensuremath{\mathsf{src}}}

\newcommand{\proto}[1]{\ensuremath{\xslashedrightarrow{#1}}}







\newcommand{\tmesh}[1]{\ensuremath{{\mathcal{M}\mkern-2mu\mathit{esh}_{#1}}}}
\newcommand{\tmeshbun}[1]{\ensuremath{{\mathcal{M}\mkern-2mu\mathit{eshBun}_{#1}}}}
\newcommand{\otmesh}[1]{\ensuremath{\mathring{\mathcal{M}}\mkern-2mu\mathit{esh}_{#1}}}
\newcommand{\ctmesh}[1]{\ensuremath{\bar{\mathcal{M}}\mkern-2mu\mathit{esh}_{#1}}}
\newcommand{\strotmesh}[1]{\ensuremath{\mathcal{S}\mkern-2mu\mathit{tr}\mathring{\mathcal{M}}\mkern-2mu\mathit{esh}_{#1}}}
\newcommand{\strctmesh}[1]{\mathcal{S}\mkern-2mu\mathit{tr}\ensuremath{\bar{\mathcal{M}}\mkern-2mu\mathit{esh}_{#1}}}

\DeclareMathOperator\FTrs{\mathsf{FTrs}}
\DeclareMathOperator\NFTrs{\mathsf{NFTrs}}

\DeclareMathOperator\CMsh{\mathsf{CMsh}}

\newcommand{\cret}{\ensuremath{\mathsf{cr}}}
\newcommand{\cint}{\ensuremath{\mathsf{ci}}}

\newcommand{\II}{\ensuremath{\mathbb{I}}}
\newcommand{\NF}[1]{\left\llbracket{#1}\right\rrbracket}





\usepackage{calc}

\DeclareFontFamily{U}{MnSymbolB}{}
\DeclareSymbolFont{MnSyB}{U}{MnSymbolB}{m}{n}
\SetSymbolFont{MnSyB}{bold}{U}{MnSymbolB}{b}{n}
\DeclareFontShape{U}{MnSymbolB}{m}{n}{
    <-6>  MnSymbolB5
   <6-7>  MnSymbolB6
   <7-8>  MnSymbolB7
   <8-9>  MnSymbolB8
   <9-10> MnSymbolB9
  <10-12> MnSymbolB10
  <12->   MnSymbolB12}{}
\DeclareFontShape{U}{MnSymbolB}{b}{n}{
    <-6>  MnSymbolB-Bold5
   <6-7>  MnSymbolB-Bold6
   <7-8>  MnSymbolB-Bold7
   <8-9>  MnSymbolB-Bold8
   <9-10> MnSymbolB-Bold9
  <10-12> MnSymbolB-Bold10
  <12->   MnSymbolB-Bold12}{}
  
\DeclareMathSymbol{\nperp}{\mathrel}{MnSyB}{217}


\makeatletter

\newcommand\mathrule[3][0pt]{%
  \ifdim#2>#3\math@hrule[#1]{#2}{#3}\else\math@vrule[#1]{#2}{#3}\fi}

\newcommand\math@hrule[3][0pt]{%
  \gdef\mystery@factor{0.07}%
 \@tempdima=#3%
  \rule[#1]{0pt}{#3}
  \raisebox{.5\@tempdima+#1}{%
    \makebox[#2][l]{\kern-.5\@tempdima\@@mathrule{#2}{#3}}}%
}

\newcommand\math@vrule[3][0pt]{%
  \gdef\mystery@factor{0.0}%
 \@tempdima=#2%
  \rule[#1]{0pt}{#3}
  \raisebox{-.0\@tempdima+#1}{%
    \kern0.5\@tempdima%
    \rotatebox{90}{\kern-0.5\@tempdima\makebox[#3][l]{\@@mathrule{#3}{#2}}}%
    \kern0.5\@tempdima}%
}

\def\@@mathrule#1#2{%
  \@tempdimb=#2%
  \@tempdima=\dimexpr#1-\mystery@factor\@tempdimb
  \pdfliteral{%
    q []0 d %
    1 J 
    \strip@pt\@tempdimb\space w \strip@pt\@tempdimb\space 0 m %
    \strip@pt\@tempdima\space 0 l S Q }}
\makeatother

%
%

\newif\iffootnoterule

\makeatletter
\AtBeginDocument{%
\let\latex@@footnoterule\footnoterule

\renewcommand\footnoterule{%
  \iffootnoterule
  \latex@@footnoterule%
  \else
  \advance\skip\footins 4\p@\@plus2\p@\relax%
  \fi
  }
}
\makeatother






\newcommand{\intrr}{^\circ}

\newcommand{\abs}[1]{\left|{#1}\right|}

\newcommand{\rest}[2]{{\left.\kern-\nulldelimiterspace #1 \right|_{#2}}}
\newcommand{\restup}[3]{\left.\kern-\nulldelimiterspace #1 \right|_{#2}^{#3}}

\newcommand{\eps}{\epsilon}

\newcommand{\xra}{\xrightarrow}
\newcommand{\nid}{\noindent}
\newcommand{\stack}[1]{\mathrel{\#_{#1}}}




\makeatletter
\let\ea\expandafter

\def\mdef#1#2{\ea\ea\ea\gdef\ea\ea\noexpand#1\ea{\ea\ensuremath\ea{#2}\xspace}}
\def\alwaysmath#1{\ea\ea\ea\global\ea\ea\ea\let\ea\ea\csname your@#1\endcsname\csname #1\endcsname
  \ea\def\csname #1\endcsname{\ensuremath{\csname your@#1\endcsname}\xspace}}

\DeclareRobustCommand\widecheck[1]{{\mathpalette\@widecheck{#1}}}
\def\@widecheck#1#2{%
    \setbox\z@\hbox{\m@th$#1#2$}%
    \setbox\tw@\hbox{\m@th$#1%
       \widehat{%
          \vrule\@width\z@\@height\ht\z@
          \vrule\@height\z@\@width\wd\z@}$}%
    \dp\tw@-\ht\z@
    \@tempdima\ht\z@ \advance\@tempdima2\ht\tw@ \divide\@tempdima\thr@@
    \setbox\tw@\hbox{%
       \raise\@tempdima\hbox{\scalebox{1}[-1]{\lower\@tempdima\box
\tw@}}}%
    {\ooalign{\box\tw@ \cr \box\z@}}}


\newcount\foreachcount

\def\foreachletter#1#2#3{\foreachcount=#1
  \ea\loop\ea\ea\ea#3\@alph\foreachcount
  \advance\foreachcount by 1
  \ifnum\foreachcount<#2\repeat}

\def\foreachLetter#1#2#3{\foreachcount=#1
  \ea\loop\ea\ea\ea#3\@Alph\foreachcount
  \advance\foreachcount by 1
  \ifnum\foreachcount<#2\repeat}

\let\oldit\it

\def\definescr#1{\ea\gdef\csname s#1\endcsname{\ensuremath{\mathscr{#1}}\xspace}}
\foreachLetter{1}{27}{\definescr}
\def\definecal#1{\ea\gdef\csname c#1\endcsname{\ensuremath{\mathcal{#1}}\xspace}}
\foreachLetter{1}{27}{\definecal}
\def\definebold#1{\ea\gdef\csname b#1\endcsname{\ensuremath{\mathbf{#1}}\xspace}}
\foreachLetter{1}{27}{\definebold}
\def\definebb#1{\ea\gdef\csname l#1\endcsname{\ensuremath{\mathbb{#1}}\xspace}}
\foreachLetter{1}{27}{\definebb}
\def\definefrak#1{\ea\gdef\csname k#1\endcsname{\ensuremath{\mathfrak{#1}}\xspace}}
\foreachletter{1}{27}{\definefrak}
\foreachLetter{1}{27}{\definefrak}
\def\definesf#1{\ea\gdef\csname i#1\endcsname{\ensuremath{\mathsf{#1}}\xspace}}
\foreachletter{1}{6}{\definesf}
\foreachletter{7}{14}{\definesf}
\foreachletter{15}{27}{\definesf}
\foreachLetter{1}{27}{\definesf}
\def\definebar#1{\ea\gdef\csname #1bar\endcsname{\ensuremath{\overline{#1}}\xspace}}
\foreachLetter{1}{27}{\definebar}
\foreachletter{1}{8}{\definebar} 
\foreachletter{9}{15}{\definebar} 
\foreachletter{16}{27}{\definebar}
\def\definetil#1{\ea\gdef\csname #1til\endcsname{\ensuremath{\widetilde{#1}}\xspace}}
\foreachLetter{1}{27}{\definetil}
\foreachletter{1}{27}{\definetil}
\def\definehat#1{\ea\gdef\csname #1hat\endcsname{\ensuremath{\widehat{#1}}\xspace}}
\foreachLetter{1}{27}{\definehat}
\foreachletter{1}{27}{\definehat}
\def\definechk#1{\ea\gdef\csname #1chk\endcsname{\ensuremath{\widecheck{#1}}\xspace}}
\foreachLetter{1}{27}{\definechk}
\foreachletter{1}{27}{\definechk}
\def\defineul#1{\ea\gdef\csname u#1\endcsname{\ensuremath{\underline{#1}}\xspace}}
\foreachLetter{1}{27}{\defineul}
\foreachletter{1}{27}{\defineul}

\let\it\oldit

\def\autofmt@b#1\autofmt@end{\mathbf{#1}}
\def\autofmt@l#1#2\autofmt@end{\mathbb{#1}\mathsf{#2}}
\def\autofmt@c#1#2\autofmt@end{\mathcal{#1}\mathit{#2}}
\def\autofmt@s#1#2\autofmt@end{\mathscr{#1}\mathit{#2}}
\def\autofmt@f#1\autofmt@end{\mathsf{#1}}
\def\autofmt@k#1\autofmt@end{\mathfrak{#1}}
\def\autofmt@u#1\autofmt@end{\underline{\smash{\mathsf{#1}}}}
\def\autofmt@U#1\autofmt@end{\underline{\underline{\smash{\mathsf{#1}}}}}
\def\autofmt@h#1\autofmt@end{\widehat{#1}}
\def\autofmt@r#1\autofmt@end{\overline{#1}}
\def\autofmt@t#1\autofmt@end{\widetilde{#1}}
\def\autofmt@k#1\autofmt@end{\check{#1}}

\def\auto@drop#1{}
\def\autodef#1{\ea\ea\ea\@autodef\ea\ea\ea#1\ea\auto@drop\string#1\autodef@end}
\def\@autodef#1#2#3\autodef@end{%
  \ea\def\ea#1\ea{\ea\ensuremath\ea{\csname autofmt@#2\endcsname#3\autofmt@end}\xspace}}
\def\autodefs@end{blarg!}
\def\autodefs#1{\@autodefs#1\autodefs@end}
\def\@autodefs#1{\ifx#1\autodefs@end%
  \def\autodefs@next{}%
  \else%
  \def\autodefs@next{\autodef#1\@autodefs}%
  \fi\autodefs@next}






\alwaysmath{alpha}
\alwaysmath{beta}
\alwaysmath{gamma}
\alwaysmath{Gamma}
\alwaysmath{delta}
\alwaysmath{Delta}
\alwaysmath{epsilon}
\mdef\ep{\varepsilon}
\alwaysmath{zeta}
\alwaysmath{eta}
\alwaysmath{theta}
\alwaysmath{Theta}
\alwaysmath{iota}
\alwaysmath{kappa}
\alwaysmath{lambda}
\alwaysmath{Lambda}
\alwaysmath{mu}
\alwaysmath{nu}
\alwaysmath{xi}
\alwaysmath{pi}
\alwaysmath{rho}
\alwaysmath{sigma}
\alwaysmath{Sigma}
\alwaysmath{tau}
\alwaysmath{upsilon}
\alwaysmath{Upsilon}
\alwaysmath{phi}
\alwaysmath{Pi}
\alwaysmath{Phi}
\mdef\ph{\varphi}
\alwaysmath{chi}
\alwaysmath{psi}
\alwaysmath{Psi}
\alwaysmath{omega}
\alwaysmath{Omega}



\renewcommand{\Set}[1]{\{#1\}}

\mdef\delbar{\overline{\partial}}

\newcommand{\inv}{^{-1}}

\mdef\hf{\textstyle\frac12 }
\mdef\thrd{\textstyle\frac13 }
\mdef\qtr{\textstyle\frac14 }

\newcommand{\op}{^{\mathrm{op}}}

\let\iso\cong
\let\eqv\simeq

\mdef\Id{\mathrm{Id}}
\mdef\id{\mathrm{id}}
\mdef\uni{\mathrm{is\_uni}}
\mdef\sym{\mathrm{is\_sym}}
\mdef\rel{\mathrm{is\_rel}}
\mdef\Sym{\mathrm{Sym}}
\mdef\SYM{\textsc{sym}}
\mdef\REL{\textsc{rel}}
\alwaysmath{ell}
\alwaysmath{infty}
\alwaysmath{odot}
\def\frc#1/#2.{\frac{#1}{#2}}   
\mdef\ten{\mathrel{\otimes}}

\mdef\sqten{\mathrel{\boxtimes}}



\DeclareMathOperator\ev{ev}

\mdef\Im{\mathrm{Im}}
\mdef\im{\mathrm{im}}
\let\lim\relax
\DeclareMathOperator\lim{lim}

\DeclareRobustCommand{\ocirc}{%
  \mathbin{\mathpalette\on@ntimes\relax}%
}
\newcommand{\on@ntimes}[2]{%
  \vcenter{\hbox{%
    \sbox0{\m@th$#1\otimes$}%
    \setlength\unitlength{\wd0}%
    \begin{picture}(1,1)
    \linethickness{0.35pt}
    \put(.5,.5){\circle{.8}}
    \end{picture}%
  }}%
}


\newcommand{\ot}{\ensuremath{\leftarrow}}

\let\mshar\rightsquigarrow
\let\perturb\rightharpoonup
\let\toot\rightleftarrows

\let\imp\Rightarrow

\let\into\hookrightarrow

\let\xinto\xhookrightarrow
\mdef\we{\overset{\sim}{\longrightarrow}}
\mdef\leftwe{\overset{\sim}{\longleftarrow}}

\let\epi\twoheadrightarrow
\let\leftepi\twoheadleftarrow


\providecommand{\leftsquigarrow}{%
  \mathrel{\mathpalette\reflect@squig\relax}%
}
\newcommand{\reflect@squig}[2]{%
  \reflectbox{$\m@th#1\rightsquigarrow$}%
}

\newcounter{sarrow}
\newcommand\xrsquigarrow[1]{%
\stepcounter{sarrow}%
\mathrel{\begin{tikzpicture}[baseline= {( $ (current bounding box.south) + (0,-0.5ex) $ )}]
\node[inner sep=0.5ex] (\thesarrow) {$\scriptstyle #1$};
\path[draw,<-,decorate,
  decoration={zigzag,amplitude=0.7pt,segment length=1.2mm,pre=lineto,pre length=4pt}]
    (\thesarrow.south east) -- (\thesarrow.south west);
\end{tikzpicture}}%
}

\let\xto\xrightarrow
\let\xot\xleftarrow
\def\rightarrowtailfill@{\arrowfill@{\Yright\joinrel\relbar}\relbar\rightarrow}
\newcommand\xrightarrowtail[2][]{\ext@arrow 0055{\rightarrowtailfill@}{#1}{#2}}

\def\twoheadrightarrowfill@{\arrowfill@{\relbar\joinrel\relbar}\relbar\twoheadrightarrow}
\newcommand\xtwoheadrightarrow[2][]{\ext@arrow 0055{\twoheadrightarrowfill@}{#1}{#2}}


\def\slashedarrowfill@#1#2#3#4#5{%
  $\m@th\thickmuskip0mu\medmuskip\thickmuskip\thinmuskip\thickmuskip
   \relax#5#1\mkern-7mu%
   \cleaders\hbox{$#5\mkern-2mu#2\mkern-2mu$}\hfill
   \mathclap{#3}\mathclap{#2}%
   \cleaders\hbox{$#5\mkern-2mu#2\mkern-2mu$}\hfill
   \mkern-7mu#4$%
}
\def\rightslashedarrowfill@{%
  \slashedarrowfill@\relbar\relbar\mapstochar\rightarrow}
\newcommand\xslashedrightarrow[2][]{%
  \ext@arrow 0055{\rightslashedarrowfill@}{#1}{#2}}
\def\leftslashedarrowfill@{%
  \slashedarrowfill@\relbar\relbar\mapstochar\leftarrow}
\newcommand\xslashedleftarrow[2][]{%
  \ext@arrow 0055{\leftslashedarrowfill@}{#1}{#2}}
\def\hookrightslashedarrowfill@{%
  \slashedarrowfill@\relbar\relbar\mapstochar\hookrightarrow}
\newcommand\xslashedhookrightarrow[2][]{%
  \ext@arrow 0055{\hookrightslashedarrowfill@}{#1}{#2}}
\mdef\hto{\xslashedrightarrow{}}
\mdef\htoo{\xslashedrightarrow{\quad}}

\newbox\dottedarrow@box
\setbox\dottedarrow@box\hbox
  {%
    \begin{tikzpicture}
      \draw[dotted, semithick ,right hook->] (0,0) -- (1.5em,0);
    \end{tikzpicture}%
  }
\newcommand*\dottedarrow
  {\relax\ifmmode\expandafter\dottedarrow@m\else\expandafter\dottedarrow@t\fi}
\newcommand*\dottedarrow@t[1][1.3em]
  {\resizebox{#1}{!}{\raisebox{-.04ex}{\usebox\dottedarrow@box}}}
\newcommand*\dottedarrow@m[1][]
  {%
    \if\relax\detokenize{#1}\relax
      \mathchoice
        {\dottedarrow@t}
        {\dottedarrow@t}
        {\dottedarrow@t[1.1em]}
        {\dottedarrow@t[0.9em]}%
    \else
      \dottedarrow@t[#1]%
    \fi
  }






\title{Manifold diagrams and tame tangles}

\author{Christoph Dorn \& Christopher L. Douglas}
\date{}


\begin{document}

\maketitle


\begin{abstract}
Diagrammatic notation has become a ubiquitous computational tool; early examples include Penrose's graphical notation for tensor calculus, Feynman's diagrams for perturbative quantum field theory, and Cvitanovi\'c's birdtracks for Lie algebras.  Category theory provides a robust framework in which to understand the nature of such diagrams, and Joyal and Street formalized this framework by introducing string diagrams, governed by the syntax of monoidal 1-categories.  The notion of `manifold diagrams' generalizes string diagrams to higher dimensions, and can be interpreted in higher-categorical terms by a process of geometric dualization.  The closely related notion of `tame tangles' describes a well-behaved class of embedded manifolds that can likewise be interpreted categorically.  In this paper we formally introduce the notions of manifold diagrams and of tame tangles, and show that they admit a combinatorial classification, by using results from the toolbox of framed combinatorial topology.  We then study the stability of tame tangles under perturbation; the local forms of perturbation stable tame tangles provide combinatorial models of differential singularities.  As an illustration we describe various such combinatorial singularities in low dimensions.  We conclude by observing that all smooth 4-manifolds can be presented as tame tangles, and conjecture that the same is true for smooth manifolds of any dimension.
\end{abstract}

\vspace{2\baselineskip}
{\scriptsize
{\nid\scshape Mathematical Institute, University of Oxford, Oxford OX2 6GG, United Kingdom} \\
{\nid\textit{Email addresses:} \texttt{dorn@maths.ox.ac.uk \& cdouglas@maths.ox.ac.uk}} \\

}



\newpage

\renewcommand{\baselinestretch}{1.1}\small
\setcounter{tocdepth}{2}
\tableofcontents*
\renewcommand{\baselinestretch}{1}\normalsize

\newpage

\renewcommand{\thesection}{I.{\arabic{section}}}
\renewcommand{\thesubsection}{\thesection.{\arabic{subsection}}}
\renewcommand\thefigure{I.\arabic{figure}}

\chapter*{Introduction}
\addcontentsline{toc}{chapter}{Introduction}

%
%

Examples of diagrammatic notation include the Penrose graphical notation for tensor calculations \cite{penrose1971applications}, and its adaptions to spin and tensor networks \cite{rovelli1995spin} \cite{vidal2009entanglement},  Feynman diagrams for perturbative quantum field theory \cite{t1974diagrammar},
birdtracks for the classification of Lie algebras \cite{cvitanovi2008}, Petri nets \cite{rozenberg1996elementary} \cite{baez2018quantum}, Bayesian networks \cite{fong2013causal}, and many more. Higher category theory provides a unifying context for these examples, by providing a framework for compositional structures based on the mechanism of `composing morphisms in diagrams'.  Most of the preceding examples can be translated into special instances of diagrams in 2-categories and variations thereof---the required formal diagrammatic notation for 2-categories is known as `string diagram calculus' \cite{joyal1991geometry}.  The concept of manifold diagrams arises when generalizing string diagrams to higher dimensions; while string diagrams represent compositions of morphisms in 2-categories, manifold diagrams describe compositions in $n$-categories.  Intuitively, the notion of manifold diagrams is geometrically dual to the more familiar representation of such compositions by diagrams of cells.

%
%

While on a heuristic level, both the manifold diagram and cell diagram approaches to representing compositions in higher category theory have been known for many decades, essentially all current approaches to the foundations of higher categories are based on diagrams of cells.  In contrast, manifold diagrams have only been investigated in low dimensions.  One reasons for this discrepancy may have been the difficult of formalizing a key geometric feature of manifold diagrams, namely that manifold strata in manifold diagrams can be deformed by isotopies and such isotopies should themselves be higher manifold diagrams.  In cellular approaches, such deformations have received little attention because geometric dualization converts isotopies into certain mysterious, but still essential, `degenerate' diagrams of cells whose role is less evident.  Despite much work on cellular models of higher categories, the problem of formalizing the notion of manifold diagrams has previously remained unaddressed.

%
%

Even in the absence of a formal definition, manifold diagrams are actively being used as a rough conceptual tool in areas such as quantum algebra, knot theory, topological quantum field theory, and related areas \cite{yetter2001functorial} \cite{kock2004frobenius} \cite{baez2011prehistory} \cite{pstrkagowski2014dualizable} \cite{ara2017a} \cite{douglas2020dualizable}.  The advantage that manifold diagrams bring to these areas is that their isotopies make visible certain higher categorical coherences and thereby allow the expression of non-trivial homotopical-algebraic calculations.  In fact, the idea of encoding higher categorical coherences in isotopies is one origin of the tangle hypothesis \cite{baez1995higher}, a central result in the study of topological quantum field theories \cite{lurie2009classification}.  Indeed, tangles can be thought of as manifold diagrams in which manifold strata are amalgamated into a single embedded manifold.  A formal theory of manifold diagrams should therefore also provide a toolset for studying tangles and their local forms.

%
%

In this paper we will formalize both the notions of manifold diagrams and of `tame' tangles in a unifying combinatorial-topological language.  We will explain how this language naturally describes the geometric dualization between cell and manifold diagrams.  The language also provides notions of bundles of manifold diagrams and tame tangles, and thus allows us to study the stability of tame tangles under perturbations.  We will discuss how the local forms of perturbation stable tame tangles provide combinatorial models of differential singularities.  

\section{Overview} \label{sec:overview}

Consider the six `diagrams' in dimensions  $n = 1, 2,3$ shown in \cref{fig:manifold-diagrams-inductive-idea}. As we will see later, all six are in fact examples of manifold $n$-diagrams (in particular, the 2-dimensional diagrams are examples of ordinary string diagrams). Observe that the diagrams are related across dimensions by projecting onto one another (depicted in the same figure by sideways projections). This is indicative of an `inductive structure' in the data of manifold diagrams---and our definitions will precisely exploit this structure.

\begin{figure}[ht]
    \centering
    \def\svgwidth{1\columnwidth}
\begingroup%
  \makeatletter%
  \providecommand\color[2][]{%
    \errmessage{(Inkscape) Color is used for the text in Inkscape, but the package 'color.sty' is not loaded}%
    \renewcommand\color[2][]{}%
  }%
  \providecommand\transparent[1]{%
    \errmessage{(Inkscape) Transparency is used (non-zero) for the text in Inkscape, but the package 'transparent.sty' is not loaded}%
    \renewcommand\transparent[1]{}%
  }%
  \providecommand\rotatebox[2]{#2}%
  \newcommand*\fsize{\dimexpr\f@size pt\relax}%
  \newcommand*\lineheight[1]{\fontsize{\fsize}{#1\fsize}\selectfont}%
  \ifx\svgwidth\undefined%
    \setlength{\unitlength}{2160bp}%
    \ifx\svgscale\undefined%
      \relax%
    \else%
      \setlength{\unitlength}{\unitlength * \real{\svgscale}}%
    \fi%
  \else%
    \setlength{\unitlength}{\svgwidth}%
  \fi%
  \global\let\svgwidth\undefined%
  \global\let\svgscale\undefined%
  \makeatother%
  \begin{picture}(1,0.415625)%
    \lineheight{1}%
    \setlength\tabcolsep{0pt}%
    \put(0,0){\includegraphics[width=\unitlength,page=1]{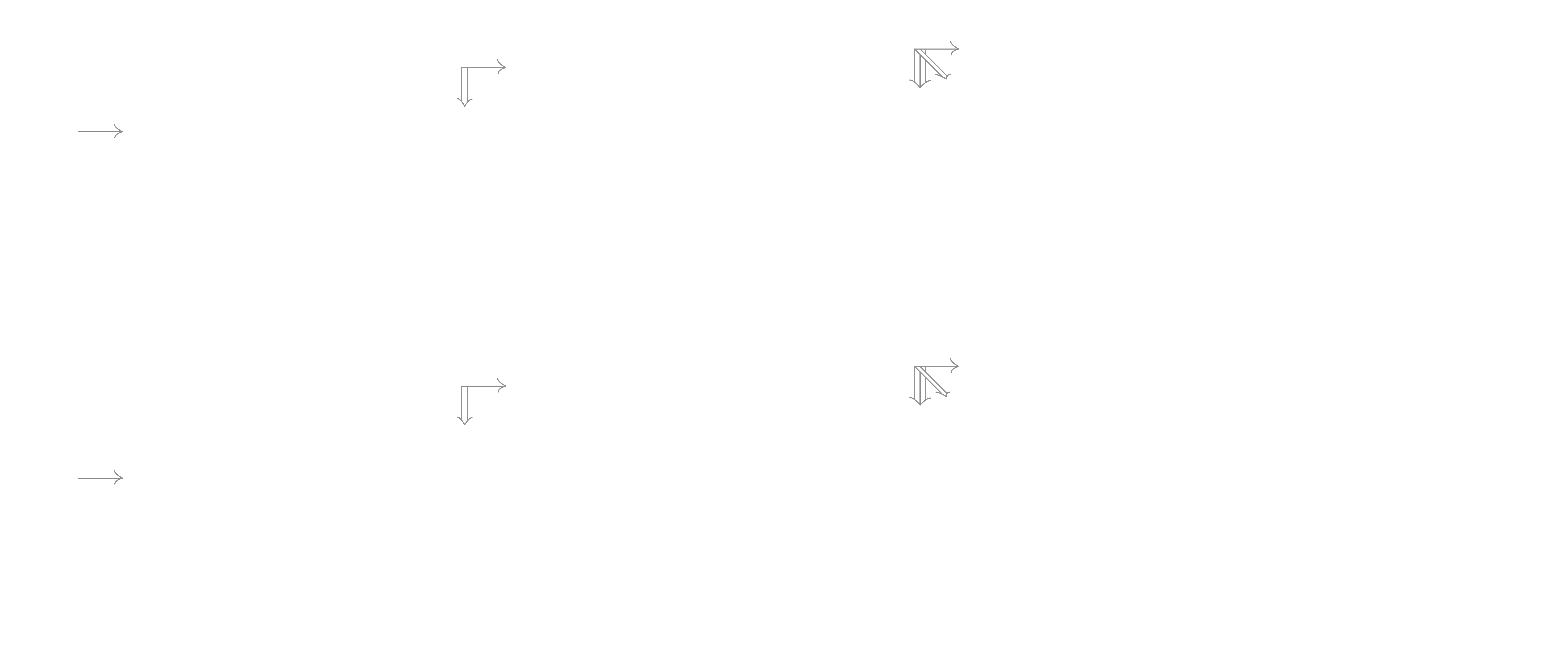}}%
    \put(0.10590278,0.39756944){\color[rgb]{0,0,0}\makebox(0,0)[lt]{\lineheight{1.25}\smash{\begin{tabular}[t]{l}dim 1\end{tabular}}}}%
    \put(0.35104167,0.39756944){\color[rgb]{0,0,0}\makebox(0,0)[lt]{\lineheight{1.25}\smash{\begin{tabular}[t]{l}dim 2\end{tabular}}}}%
    \put(0.66145833,0.39756944){\color[rgb]{0,0,0}\makebox(0,0)[lt]{\lineheight{1.25}\smash{\begin{tabular}[t]{l}dim 3\end{tabular}}}}%
    \put(0.45659722,0.28819444){\color[rgb]{0,0,0}\makebox(0,0)[lt]{\lineheight{1.25}\smash{\begin{tabular}[t]{l}$\to$\end{tabular}}}}%
    \put(0.45659722,0.0875){\color[rgb]{0,0,0}\makebox(0,0)[lt]{\lineheight{1.25}\smash{\begin{tabular}[t]{l}$\to$\end{tabular}}}}%
    \put(0.81423611,0.0875){\color[rgb]{0,0,0}\makebox(0,0)[lt]{\lineheight{1.25}\smash{\begin{tabular}[t]{l}$\to$\end{tabular}}}}%
    \put(0.81423611,0.28819444){\color[rgb]{0,0,0}\makebox(0,0)[lt]{\lineheight{1.25}\smash{\begin{tabular}[t]{l}$\to$\end{tabular}}}}%
    \put(0,0){\includegraphics[width=\unitlength,page=2]{manifold-diagrams-inductive-idea.pdf}}%
  \end{picture}%
\endgroup%

    \caption[Manifold diagrams]{Manifold diagrams and their projections}
    \label{fig:manifold-diagrams-inductive-idea}
\end{figure}

First, we have to formalize the notion of `projections': we do so by introducing a notion of `constructible fiber bundles with framed stratified 1-dimensional fibers'---we refer this notion as \emph{1-mesh bundles}. Inductively stacking 1-mesh bundles on top of one another, defines the notion of \emph{$n$-mesh bundles}. If the base is trivial we simply speak of \emph{$n$-meshes}. Note, however, what we end up with here is \emph{not} yet a manifold diagram: instead, an $n$-mesh will look like a complex of (framed regular) cells as shown in \cref{fig:meshes-cellulate-manifold-diagrams}, on the right. The role of such $n$-meshes is to `cellulate' (think `triangulate', but with framed regular cells) manifold diagrams; this is illustrated in the same figure for two of our earlier examples of manifold diagrams. Let us call a stratification \emph{tame} if it can be cellulated by a mesh.

\begin{figure}[ht]
    \centering
    \def\svgwidth{1\columnwidth}
    \import{./figuresused/}{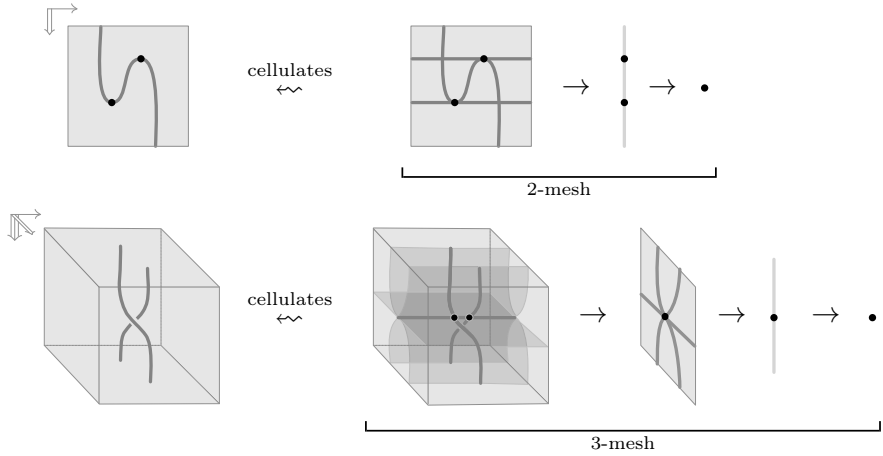}

    \caption[Meshes cellulate manifold diagrams]{Meshes cellulate manifold diagrams}
    \label{fig:meshes-cellulate-manifold-diagrams}
\end{figure}

A manifold diagram can now be defined as a tame stratification of the open $n$-cube $\II^n \equiv (-1,1)^n$ satisfying an additional condition which we call framed conicality: roughly speaking, this means every point has a stratified neighborhood that is the product of a `tangential' $\lR^k$ and a `normal' stratified cone appropriately compatible with the ambient framing. (Framed conicality is therefore a `framed' variation of the usual notion of conicality for stratifications.) The framed conicality condition in this definition is illustrated in \cref{fig:framed-conicality-condition}.
\begin{figure}[ht]
    \centering
    \def\svgwidth{1\columnwidth}
\begingroup%
  \makeatletter%
  \providecommand\color[2][]{%
    \errmessage{(Inkscape) Color is used for the text in Inkscape, but the package 'color.sty' is not loaded}%
    \renewcommand\color[2][]{}%
  }%
  \providecommand\transparent[1]{%
    \errmessage{(Inkscape) Transparency is used (non-zero) for the text in Inkscape, but the package 'transparent.sty' is not loaded}%
    \renewcommand\transparent[1]{}%
  }%
  \providecommand\rotatebox[2]{#2}%
  \newcommand*\fsize{\dimexpr\f@size pt\relax}%
  \newcommand*\lineheight[1]{\fontsize{\fsize}{#1\fsize}\selectfont}%
  \ifx\svgwidth\undefined%
    \setlength{\unitlength}{2160bp}%
    \ifx\svgscale\undefined%
      \relax%
    \else%
      \setlength{\unitlength}{\unitlength * \real{\svgscale}}%
    \fi%
  \else%
    \setlength{\unitlength}{\svgwidth}%
  \fi%
  \global\let\svgwidth\undefined%
  \global\let\svgscale\undefined%
  \makeatother%
  \begin{picture}(1,0.24305556)%
    \lineheight{1}%
    \setlength\tabcolsep{0pt}%
    \put(0,0){\includegraphics[width=\unitlength,page=1]{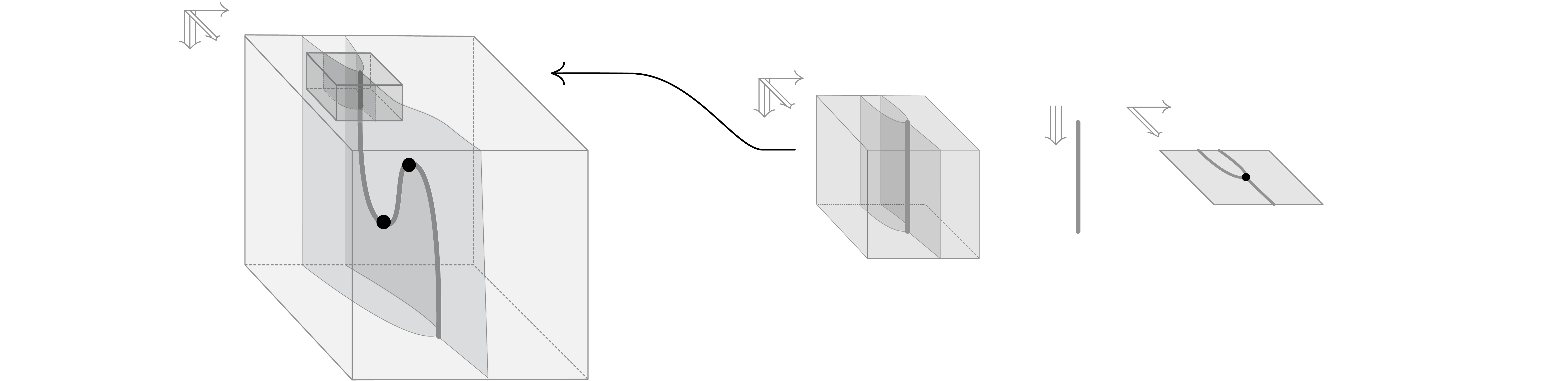}}%
    \put(0.64618056,0.12534722){\color[rgb]{0,0,0}\makebox(0,0)[lt]{\lineheight{1.25}\smash{\begin{tabular}[t]{l}$\iso$\end{tabular}}}}%
    \put(0.70694444,0.12534722){\color[rgb]{0,0,0}\makebox(0,0)[lt]{\lineheight{1.25}\smash{\begin{tabular}[t]{l}$\times$\end{tabular}}}}%
  \end{picture}%
\endgroup%

    \caption[The framed conicality condition]{The framed conicality condition}
    \label{fig:framed-conicality-condition}
\end{figure}

\begin{introdef} A \emph{manifold $n$-diagram} is a framed conical tame stratification of the open $n$-cube.
\end{introdef}

Closely related to our definition of manifold diagrams, the second central notion of this paper is the notion of tame tangles. For us, the role of tame tangles is two-fold: on one hand, we will show that tame tangles can be canonically refined by manifold diagrams, and the manifold diagrams obtained from tame tangles in this way can be thought of as describing composites of (appropriately `structured') dualizable morphisms---this line of thinking, of course, follows the tangle hypothesis \cite{baez1995higher}. On the other hand, however, the notion of tame tangles by itself will provide a new bridge between classical manifold theory and our framework of framed combinatorial topology \cite{fct}, and many of our later observations and conjectures will aim to better understand this bridge, by relating tame tangles to classical piecewise linear (PL) and smooth manifolds.

Formally, tame $m$-tangles in dimension $n$ will be defined as topological manifolds embedded in the open $n$-cube, with the requirement that this embedding be tame (i.e.\ define a tame stratification) and `framed transversal': roughly speaking, the latter condition requires that each point on the tangle manifold has a neighborhood that is the product of a `transversal' part which looks like $\lR^k$, and a `non-transversal' part, which looks like the cone of an $(m-k-1)$-sphere embedded in the boundary of the $(n-k)$-cube. (Note the conceptual similarity to our earlier description of framed conicality.) We arrive at the following definition.

\begin{introdef} A \emph{tame $m$-tangle in dimension $n$} is an $m$-manifold tamely embedded in the open $n$-cube which is framed transversal.
\end{introdef}

So far all of the above is phrased purely in the language of stratified topology. However, as discussed in detail in \cite[\S5]{fct}, `framed tame topology and framed combinatorial topology are locally the same thing'. We will use results from \emph{loc.cit.}\ to write out a fully combinatorial story paralleling the above discussion of manifold diagrams and tame tangles.

We briefly outline this parallel story. The combinatorial counterparts of meshes are called \emph{$n$-trusses}: these will be introduced as the structured (namely, combinatorially framed) entrance path posets of meshes. As we recall, there is a weak equivalence that takes $n$-meshes to their `fundamental' $n$-trusses. This can be generalized to a construction that takes manifold diagrams (resp.\ tame tangles) to their `fundamental combinatorial manifold diagrams' (resp.\ their `fundamental combinatorial tangles'). In this way we obtain a combinatorialization (and thereby a classification) of manifold diagrams and tame tangles as follows.

\begin{introthm}[Combinatorialization of manifold diagrams resp.\ tame tangles] Manifold diagrams up to framed stratified homeomorphism are classified by combinatorial manifold diagrams. Tame tangles up to framed stratified homeomorphism are classified by combinatorial tangles.
\end{introthm}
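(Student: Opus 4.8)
The plan is to realize the claimed classification as a bijection induced by the ``fundamental combinatorial data'' construction advertised in the excerpt, and to deduce its bijectivity from the mesh--truss equivalence of \cite{fct} together with a confluence argument for mesh cellulations. I will treat manifold diagrams in detail; the argument for tame tangles is formally parallel, carrying along the extra datum that records which strata constitute the embedded tangle manifold, and substituting the combinatorial avatar of framed transversality for that of framed conicality throughout. Everything can be organized so that one produces a functor from the groupoid of manifold $n$-diagrams and framed stratified homeomorphisms to the groupoid of combinatorial manifold $n$-diagrams, and the theorem amounts to showing this functor is an equivalence.

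First I would make the map $\cX \mapsto \NF{\cX}$ precise on objects. Given a manifold $n$-diagram $\cX$, tameness supplies a mesh cellulation, i.e.\ an $n$-mesh $M$ refining the underlying stratification of $\II^n$; passing to fundamental $n$-trusses yields $T(M)$, and the refinement $M \to \cX$ dualizes to a surjection of entrance path posets, i.e.\ a coarsening that equips $T(M)$ with the block structure of a combinatorial manifold diagram. One must check this is well defined up to isomorphism: any two mesh cellulations of $\cX$ admit a common refinement that is again a mesh (here the stability of meshes under subdivision from \cite{fct} enters), and the coarsened fundamental truss is invariant along a subdivision, so all choices give the same combinatorial manifold diagram up to canonical isomorphism; one then takes the resulting equivalence class, or its normal form, to be $\NF{\cX}$. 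A framed stratified homeomorphism $\cX \to \cY$ transports mesh cellulations and hence induces $\NF{\cX} \cong \NF{\cY}$, so the construction descends to framed stratified homeomorphism classes.

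Surjectivity is the realization step. Given an abstract combinatorial manifold diagram --- an $n$-truss $T$ equipped with block data satisfying the defining conditions --- I would first use the mesh--truss equivalence of \cite{fct} to realize $T$ as the fundamental truss $T(M)$ of an $n$-mesh $M$, well defined up to the relevant weak equivalence of meshes; then realize the coarsening by amalgamating cells of $|M|$ according to the blocks, obtaining a tame stratification $\cX$ of $\II^n$ with $\NF{\cX}$ isomorphic to the given datum. The real content is that $\cX$ is framed conical: this is precisely what the conditions in the definition of combinatorial manifold diagram are tailored to force, and I would verify it pointwise by reading off, from the local truss data around each point, a framed conical chart, again via the local dictionary between framed tame and framed combinatorial topology. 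For tame tangles the analogous check produces, from a combinatorial tangle, a tamely embedded $m$-manifold that is framed transversal.

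The main obstacle is injectivity --- reconstructing $\cX$ up to framed stratified homeomorphism from $\NF{\cX}$. Suppose $\NF{\cX} \cong \NF{\cY}$ and choose mesh cellulations $M_\cX, M_\cY$, so that $T(M_\cX)$ and $T(M_\cY)$ are $n$-trusses with isomorphic coarsenings. The key combinatorial input, which I expect to be the crux, is that two $n$-trusses with isomorphic coarsenings admit a common refinement truss compatible with the identification --- a confluence statement for trusses, provable by induction on $n$ using the fibrewise one-dimensional structure of truss bundles. Lifting this common refinement through the mesh--truss equivalence (which is faithful enough that meshes with isomorphic fundamental trusses are framed stratified homeomorphic) gives framed stratified homeomorphic subdivided meshes; since the coarsenings are determined by the combinatorial data, this homeomorphism respects them and descends to a framed stratified homeomorphism $\cX \cong \cY$. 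Combining well-definedness, surjectivity, and injectivity yields the classification of manifold diagrams, and the tame tangle statement follows by rerunning the same argument with the tangle-manifold datum and the combinatorial transversality condition carried throughout.
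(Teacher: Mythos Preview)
Your overall architecture is sound, but the paper takes a cleaner route that sidesteps the confluence machinery you propose, and your injectivity step as written is underspecified.

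\medskip

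\noindent\textbf{The paper's simplification.} Rather than arguing via common refinements of mesh cellulations, the paper invokes a single structural result from \cite{fct}: every tame stratification $f$ admits a \emph{unique coarsest refining mesh} $\iM^f$ (recalled here as \cref{thm:coar-ref-mesh}). This makes $\NFTrs f := \FTrs(\iM^f,f)$ canonical on the nose, and the general combinatorialization of tame stratifications (\cref{thm:tame-strat-comb}) then says that $\NFTrs$ already gives a bijection between framed stratified homeomorphism classes of tame stratifications and normalized stratified trusses. Well-definedness and injectivity are thus settled in one stroke, with no need for common refinements or confluence. Your confluence route would, if carried through, amount to re-proving the coarsest-refining-mesh theorem.

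\medskip

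\noindent\textbf{Where the real content lies.} Given the bijection of \cref{thm:tame-strat-comb}, the theorem reduces to checking that framed conicality on the topological side corresponds precisely to combinatorial conicality on the truss side. The paper isolates this as a separate lemma: $\NFTrs$ preserves and reflects products with $\II^k$ and open cones (\cref{lem:NFTrs-preserves-prod-and-cone}, \cref{obs:CMsh-preserves-prod-and-cone}). The proof of \cref{thm:comb-mdiag} then argues pointwise, factoring a tame tubular neighborhood through an (epi,mono) decomposition of stratified truss maps to extract the required form $\OTT^k \times (C_x,c_x)$. You gesture at this (``the conditions \ldots\ are tailored to force'' framed conicality) but do not identify the product/cone preservation as the technical core; this is where the actual work happens.

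\medskip

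\noindent\textbf{Your injectivity step needs sharpening.} You write that ``two $n$-trusses with isomorphic coarsenings admit a common refinement truss,'' but $T(M_\cX)$ and $T(M_\cY)$ are abstract labeled trusses whose \emph{normal forms} coincide; it is not clear what a common refinement of two abstract trusses means, nor how lifting it back through the mesh--truss equivalence would yield a framed homeomorphism $\cX \iso \cY$ rather than merely homeomorphic refining meshes. The paper avoids this entirely: since $\iM^f$ is determined by $f$ up to framed stratified homeomorphism, equality of $\NFTrs f$ and $\NFTrs g$ forces $(\iM^f,f) \iso (\iM^g,g)$ as stratified meshes via \cref{cor:strat-mesh-truss}, and hence $f \iso g$.
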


\nid The theorem is proven (in the respective cases of manifold diagrams and tame tangles) in {\cref{thm:comb-mdiag}} resp.\ {\cref{obs:combinatorializing-tame-tangles}.

    As a first immediate application of these combinatorialization results we will find that manifold diagrams and tame tangles are canonically endowed with PL structures (see \cref{ssec:mdiag-PL} resp.\ \cref{ssec:PL-struct}). 

    As a second application, we will construct geometric duals for manifold diagrams. This relies on the combinatorial theory of trusses being naturally self-dual. Via the previous theorem, this duality carries over to the case of manifold diagrams, yielding a dual notion of `cell diagrams'. The process of dualizing a manifold diagram is illustrated in \cref{fig:a-manifold-diagram-and-its-dual-pasting-diagram}. (In \cref{ssec:cell-diag-interpretation} we discuss an interpretation of cell diagrams in familiar higher categorical terms as `pasting diagrams of higher morphisms'.)

\begin{introobs}[Manifold diagrams dualize to cell diagrams] Up to framed stratified homeomorphism, manifold and cell diagrams are in correspondence by geometric dualization.
\end{introobs}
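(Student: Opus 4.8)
The plan is to realize the stated correspondence as a composite of three bijections, two of which are (the manifold-diagram and the cell-diagram versions of) the combinatorialization theorem stated above, and the middle one of which is the self-duality of the theory of trusses. Concretely: a manifold $n$-diagram $M$ is sent, by the combinatorialization theorem, to its fundamental combinatorial manifold diagram $\mathbb{T}(M)$, which is a certain structured $n$-truss; the self-duality of $n$-trusses then produces a dual structured $n$-truss $\mathbb{T}(M)\dual$; and finally one realizes $\mathbb{T}(M)\dual$ geometrically as a framed regular cell complex, i.e.\ as a cell $n$-diagram. The operation $M \mapsto |\mathbb{T}(M)\dual|$ is what we mean by geometric dualization, and the observation amounts to the claim that it is a bijection on framed-stratified-homeomorphism classes, with inverse the analogous composite run the other way.

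First I would set up the middle step carefully. Truss duality $T \mapsto T\dual$ is an involution on $n$-trusses (level-by-level order reversal together with the dual relabelling of the framing data), so once we know it restricts to a map between the class of combinatorial manifold diagrams and the class of combinatorial cell diagrams, it is automatically a bijection between them, with inverse again given by $(-)\dual$. The content here is exactly that the defining conditions match up under duality: tameness and the inductive $1$-truss-bundle structure are intrinsic to the notion of $n$-truss and are manifestly self-dual, so the real point is that the combinatorial shadow of framed conicality for manifold diagrams is carried by $(-)\dual$ precisely onto the combinatorial condition characterizing cell diagrams (regular cells meeting in the prescribed link and star patterns), and conversely.

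Next I would invoke the cellular analogue of the combinatorialization theorem: combinatorial cell $n$-diagrams classify geometric cell $n$-diagrams up to framed stratified homeomorphism, via a geometric realization functor, established in parallel with the manifold-diagram case (realize an $n$-truss as its associated framed stratified, cellulated space). Composing the three bijections yields the result, and since $(-)\dual$ is symmetric, running the composite in the other direction — combinatorialize a cell diagram, dualize, realize as a mesh and hence a manifold diagram — produces the inverse correspondence; thus manifold and cell $n$-diagrams are genuinely in correspondence by geometric dualization.

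I expect the main obstacle to be the compatibility claim in the middle step: showing that framed conicality of a manifold diagram corresponds, after passing to the fundamental truss and dualizing, to the cellularity-plus-conicality condition on the realized dual complex — equivalently, that realizing the dual of a framed-conical $n$-truss again yields a framed regular cell complex of the expected local form. This requires understanding how the local (tangential-$\lR^k$)-times-(normal stratified cone) picture on a mesh is encoded in its fundamental truss, and how truss duality acts on that local data; the conceptual similarity between framed conicality and its dual noted in the text is the right heuristic, but turning it into an exact combinatorial statement, uniformly across dimension and stratum, is where the work lies. A secondary, bookkeeping-level obstacle is tracking the extra structure (labels, normalization) faithfully through all three steps so that the composite is a bijection of sets rather than merely an equivalence of groupoids.
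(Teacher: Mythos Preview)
Your proposal is correct and follows essentially the same route as the paper: combinatorialize via the manifold-diagram classification theorem, apply truss duality $(-)^\dagger$, and realize back as a tame stratification. The paper makes this even more direct by \emph{defining} a cell $n$-diagram to be a tame stratification $g$ with $\NFTrs g = (\NFTrs f)^\dagger$ for some manifold diagram $f$; together with the fact that normalization commutes with dualization ($\NF{(T,g)^\dagger} = \NF{T,g}^\dagger$), the correspondence then follows immediately from the combinatorialization of tame stratifications.

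Your anticipated ``main obstacle'' --- that framed conicality dualizes to the cellularity condition --- is therefore not needed for the correspondence itself, but it is exactly the content of the paper's secondary characterization: the paper introduces the dual notion of \emph{combinatorial facetality} (truss closures normalize to products $\CTT^k \times (D_x,d_x)$ with a closed facet truss), defines combinatorial cell diagrams as normalized closed stratified trusses satisfying this condition, and then observes that $(T,f)$ is a combinatorial cell diagram iff $(T,f)^\dagger$ is a combinatorial manifold diagram. So what you flagged as the hard step is handled by mechanically dualizing each ingredient of the combinatorial-conicality condition (neighborhoods $\leftrightarrow$ closures, open cone trusses $\leftrightarrow$ closed facet trusses, $\OTT^k \leftrightarrow \CTT^k$), rather than by any geometric argument about links and stars.
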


\begin{figure}[ht]
    \centering
    \def\svgwidth{1\columnwidth}
\begingroup%
  \makeatletter%
  \providecommand\color[2][]{%
    \errmessage{(Inkscape) Color is used for the text in Inkscape, but the package 'color.sty' is not loaded}%
    \renewcommand\color[2][]{}%
  }%
  \providecommand\transparent[1]{%
    \errmessage{(Inkscape) Transparency is used (non-zero) for the text in Inkscape, but the package 'transparent.sty' is not loaded}%
    \renewcommand\transparent[1]{}%
  }%
  \providecommand\rotatebox[2]{#2}%
  \newcommand*\fsize{\dimexpr\f@size pt\relax}%
  \newcommand*\lineheight[1]{\fontsize{\fsize}{#1\fsize}\selectfont}%
  \ifx\svgwidth\undefined%
    \setlength{\unitlength}{2160bp}%
    \ifx\svgscale\undefined%
      \relax%
    \else%
      \setlength{\unitlength}{\unitlength * \real{\svgscale}}%
    \fi%
  \else%
    \setlength{\unitlength}{\svgwidth}%
  \fi%
  \global\let\svgwidth\undefined%
  \global\let\svgscale\undefined%
  \makeatother%
  \begin{picture}(1,0.14583333)%
    \lineheight{1}%
    \setlength\tabcolsep{0pt}%
    \put(0.26388889,0.05763889){\color[rgb]{0,0,0}\makebox(0,0)[lt]{\lineheight{1.25}\smash{\begin{tabular}[t]{l}$\leftsquigarrow$\end{tabular}}}}%
    \put(0.23784722,0.03611111){\color[rgb]{0,0,0}\makebox(0,0)[lt]{\lineheight{1.25}\smash{\begin{tabular}[t]{l}{\tiny cellulates}\end{tabular}}}}%
    \put(0.49166667,0.03611111){\color[rgb]{0,0,0}\makebox(0,0)[lt]{\lineheight{1.25}\smash{\begin{tabular}[t]{l}{\tiny pass to}\end{tabular}}}}%
    \put(0.48506944,0.01458333){\color[rgb]{0,0,0}\makebox(0,0)[lt]{\lineheight{1.25}\smash{\begin{tabular}[t]{l}{\tiny dual cells}\end{tabular}}}}%
    \put(0.70243056,0.03611111){\color[rgb]{0,0,0}\makebox(0,0)[lt]{\lineheight{1.25}\smash{\begin{tabular}[t]{l}{\tiny cellulates}\end{tabular}}}}%
    \put(0.72395833,0.05763889){\color[rgb]{0,0,0}\makebox(0,0)[lt]{\lineheight{1.25}\smash{\begin{tabular}[t]{l}$\rightsquigarrow$\end{tabular}}}}%
    \put(0.503125,0.05763889){\color[rgb]{0,0,0}\makebox(0,0)[lt]{\lineheight{1.25}\smash{\begin{tabular}[t]{l}$\leftrightsquigarrow$\end{tabular}}}}%
    \put(0,0){\includegraphics[width=\unitlength,page=1]{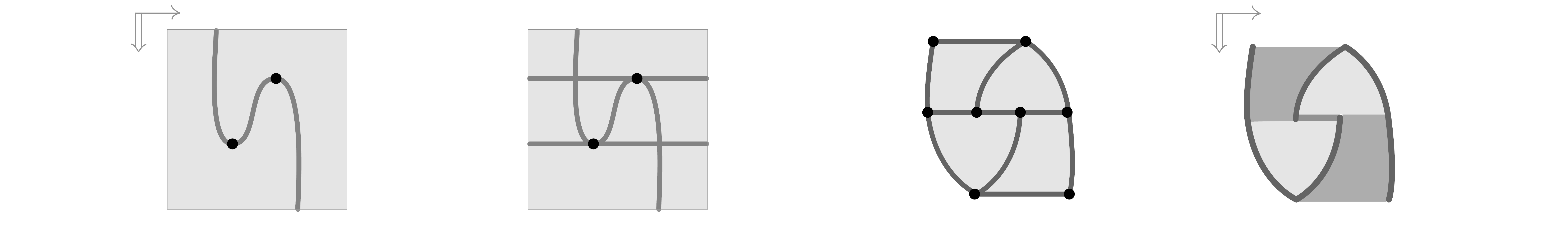}}%
  \end{picture}%
\endgroup%

    \caption[Dualizing a manifold diagrams to cell diagrams]{Dualizing a manifold diagram to its dual cell diagram}
    \label{fig:a-manifold-diagram-and-its-dual-pasting-diagram}
\end{figure}

The combinatorialization of tangles will also play a crucial part in our study of stability of tangles. For this, we will first use the fact that theory of meshes and trusses naturally accommodates notions of bundles, which allows us to generalize the above definition of tame tangles to a notion of tame tangle \emph{bundles} (the generalization similarly applies to manifold diagrams and lets us study `manifold diagram bundles', even though we will not do so here). Our main interest will lie with \emph{perturbations} of tame tangle which are bundles of tame tangles over the base stratification $\{0\} \subset [0,1]$ (fiber over $0$ is usually called `special', whereas the fiber over its complement $(0,1]$ is called `generic'). Perturbations allow us to describe the situation in which one tangle can be perturbed into a `more generic' tangle; importantly, to decide if one tangle is strictly more generic than another tangle, we will introduce a measure of complexity based on the combinatorialization of tangles. If a tangle cannot be perturbed into a strictly less complex tangle, then we will say it is \emph{perturbation stable}. We are particularly interested case of tangle \emph{singularities}, which are the local neighborhoods around critical points in a tangle (think for instance of a saddle point). In low dimensions, we will show that our definitions lead to the following classes of stable singularities.

\begin{introthm}[Stable tangle singularities in dimension {$\leq 4$}] Up to reflection and appropriate equivalence, the stable tangle singularities in dimensions less than 4 can be determined as follows.
    \begin{enumerate}
        \item There is one stable 1-tangle singularity in dim 2 and dim 3.
        \item There are three stable 2-tangle singularities in dim 3: these are shown in \cref{fig:stable-2-tangle-singularities-in-dimension-3}.
        \item There are four stable 2-tangle singularities in dim 4.
    \end{enumerate}
\end{introthm}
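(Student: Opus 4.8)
The plan is to convert the classification into a finite combinatorial search by way of the combinatorialization results (\cref{thm:comb-mdiag} and \cref{obs:combinatorializing-tame-tangles}). Up to framed stratified homeomorphism a tangle singularity is a conical tame tangle of the shape $\lR^k \times \opencone(L)$, where the link $L$ is itself a tame $(m{-}k{-}1)$-tangle embedded in $\partial\II^{n-k}$; since tame tangles are classified by combinatorial tangles, such a singularity is recorded completely by the fundamental combinatorial tangle of $L$, a structured truss in strictly lower dimension. The first step is to set up this dictionary carefully and normalize it: choosing $k$ maximal (i.e.\ absorbing every trivial Euclidean cone factor into the $\lR^k$), a genuine $m$-tangle singularity in dimension $n$ becomes a combinatorial link tangle of tangle dimension $m-1$ living in a truss of bounded height, and ``perturbation stable'' becomes a property of that combinatorial datum.

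Second, I would bound the combinatorial complexity of a perturbation stable singularity a priori and then enumerate the finitely many candidates below the bound. The mechanism is that in the combinatorial setting genericity is itself a move: a link tangle carrying a redundant stratum, or several critical events sharing a fiber, always admits a perturbation --- presented as a combinatorial tame tangle bundle over $\{0\}\subset[0,1]$ --- that deletes a stratum or pulls the events apart, strictly lowering the complexity measure. Hence a stable singularity is assembled from trusses of bounded height with boundedly many strata, and for each relevant pair $(m,n)$ --- namely $m=1$ with $n\in\{2,3\}$, and $m=2$ with $n\in\{3,4\}$ --- one lists the finitely many such trusses.

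Third, for each candidate on the list I would run the combinatorial perturbation analysis: compute, via the theory of tame tangle bundles over $\{0\}\subset[0,1]$, all combinatorial tangles the candidate perturbs to and their complexities; the candidate is perturbation stable exactly when none of these is strictly simpler, and otherwise one exhibits the explicit simplifying perturbation. Quotienting by reflection to identify mirror pairs (a maximum with a minimum, and so on), this leaves exactly one $1$-tangle singularity in each of dimensions $2$ and $3$; exactly three $2$-tangle singularities in dimension $3$, which are then matched against those drawn in \cref{fig:stable-2-tangle-singularities-in-dimension-3}; and exactly four $2$-tangle singularities in dimension $4$.

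The hard part is the combination of completeness of the enumeration with the elimination of unstable candidates in the dimension-$4$, $2$-tangle case. Completeness requires an effective combinatorial general-position statement --- that high-complexity links always simplify --- sharp enough to cut the search down to a manageable finite list, and then each surviving candidate must be certified stable by checking that no chain of combinatorial perturbation moves decreases complexity. This bookkeeping is heaviest where the iterated mesh projections $\II^4\to\II^3\to\II^2\to\II^1$ interact with a surface, since there genuinely new local models appear beyond the dimension-$3$ ones; by contrast the $1$-tangle cases and the dimension-$3$, $2$-tangle case should be short, amounting to the combinatorial shadow of elementary Morse theory.
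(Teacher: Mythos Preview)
Your high-level strategy---combinatorialize, enumerate candidates, verify stability---matches the paper's in spirit, but your Step~2 diverges from what the paper actually does, and in a way that leaves a gap. You propose to first prove an \emph{a priori} complexity bound on stable singularities (``high-complexity links always simplify'') and then enumerate below that bound. The paper does not argue this way. Instead, for each case it proceeds constructively: it enumerates singularities up to a modest size (e.g.\ links containing at most two $\iA_1$ singularities in the 2-tangle-in-$\II^3$ case), checks directly which of those are stable, and then shows that \emph{every} larger singularity $s$ admits an explicit perturbation into a tangle built only from the already-found stable pieces. The completeness argument is thus a concrete rewriting procedure, not an abstract bound.

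That rewriting procedure is where the actual work lies, and your proposal does not supply it. In the 2-tangle-in-$\II^3$ case the paper (i) first ``bends the target into the source'' via $\iA_1^{\numovar 2}$ singularities, reducing to $s : h \to \id$; (ii) iteratively removes wiggles from $h$ using $\iA_2$ singularities; and (iii) inductively eliminates remaining caps and cups segment-by-segment. In the $\II^4$ case there is an additional step: genericize the source so that the only wire crossings are braids, then kill those with the braid-trivializing singularity before running the $\II^3$ procedure. Your sentence ``pulling events apart or deleting redundant strata lowers complexity'' is not enough to produce these moves, and without them you have no mechanism to certify that the unlisted candidates are unstable. In particular, the hard part is not the dimension-4 bookkeeping you flag, but rather the design of this perturbation algorithm in each case; once it is in hand, the dimension-4 argument is essentially the dimension-3 one with a braid-removal preamble.
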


\begin{figure}[ht]
    \centering
    \def\svgwidth{1\columnwidth}
    \import{./figuresused/}{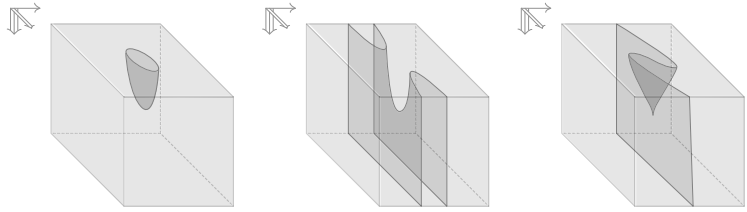}

    \caption[Stable 2-tangle singularities in dimension 3]{The stable 2-tangle singularities in dimension 3 (up to reflection)}
    \label{fig:stable-2-tangle-singularities-in-dimension-3}
\end{figure}

\nid The three cases of the theorem will be addressed in {\cref{thm:stable-1-sing-dim-3}}, {\cref{thm:stable-2-sing}}, and {\cref{thm:stable-2-sing-dim-4}} respectively. Pictures of the cases 1 and 3 of the theorem are given in the main text: see \cref{fig:basic-tangles-singularities}, \cref{fig:stable-1-tangle-singularities-in-dimension-3} for case 1, and \cref{fig:three-stable-2-tangle-singularities-in-dimension-4} and \cref{fig:the-braid-eating-singularity} for case 2. There we also discuss the case of stable 3-tangle singularities in dim 4, see \cref{rmk:3-tang-in-dim-4}, in which case we claim that there are five classes singularities, see \cref{fig:stable-3-tangle-singularities}, but do not give a rigorous proof of this claim. Finally, for the case of tangle singularities in codimension 1, we will sketch how the classification could propagate into higher dimensions.

Our discussion will exhibit a surprising relationship between the combinatorics of tame tangles and the classical differential machinery of singularity theory. This will motivate us to make several conjectures about the relationship between framed combinatorial topology and smooth topology---the gist of these conjectures is summarized in the following hypothesis, and further discussion can be found in \cref{sec:classical-sings}.

\begin{introhyp}[Combinatorialization of smooth structures] Smooth structures on manifolds can be captured by tame tangles up to framed stratified homeomorphisms.
\end{introhyp}

\nid The hypothesis later reappears (in strengthened form) in three separate statements, respectively given in \cref{conj:tame-dense}, \cref{conj:smooth-faithfull} and \cref{conj:smooth-full}. Together with the earlier observation that tame tangles and manifold diagrams have canonical PL structures, the hypothesis lends substance to the idea that different topological worlds may to some extent be `unified' in the combinatorial language of framed combinatorial topology; we briefly return to the idea in \cref{sec:promises}.

\begin{intrormk}[Combinatorialization of smooth 4-structures] The above hypothesis can be made precise and holds for manifolds of dimension $\leq 4$ (see \cref{obs:comb-of-dim4-struct}). A particular consequence is that all smooth structures of 4-manifolds can be combinatorially represented as tame tangles; in the case of the 4-sphere, this allows smooth structures to be visualized as tame 4-tangles in dimension 5, leading to a new `diagrammatic' approach to such structures (see \cref{rmk:SPC4-pictorially}).
\end{intrormk}

\subsection*{Disclaimer} What this paper does not contain is any attempt at using manifold diagrams (or their dual diagrams of cells) to build geometric models of higher categories (see \cref{sec:promises} for further discussion). Indeed, due to the presence of isotopies, this becomes a somewhat more delicate issue than simply imposing conditions on presheafs of cells, and goes beyond the scope of the present work.

\section{Related work} \label{sec:related-work}

\subsection*{Manifold diagrams} Manifold diagrams have previously been described in low dimensions. Joyal and Street \cite{joyal1991geometry} gave the first definition of manifold 2-diagrams (also called `string diagrams' in this dimension). McIntyre and Trimble \cite{trimble1999} worked towards a definition of manifold 3-diagrams (also called `surface diagram' in this dimension) but the project was never completed. Later, manifold 3-diagrams for Gray categories (with duals) were defined in both \cite{hummon2012surface} and \cite{barrett2012gray}. In dimension $4$, there has been substantial work on the `diagrammatic algebra' of immersed and knotted surfaces \cite{carter1996diagrammatics} \cite{carter2011excursion}, however, this was not aimed directly at defining manifold 4-diagrams.
A `combinatorial presentation' of 4-diagrams in terms of generators and relations (more specifically, in terms of elementary 4-coherences and their 5-diagram identities) was discussed by Bar and Vicary in \cite{bar2017data}, and was in part based on earlier work by \cite{crans2000braiding}. The thesis \cite{thesis} described a combinatorial datastructure called `singular cubes' for the purpose of capturing manifold diagrams in all dimensions. This provided a precursor to the notion of `trusses' introduced in \cite{fct}, which then also introduced the stratified topological notion of `meshes', and formalized the theory of both trusses and meshes in a geometrically self-dual way. Sketch definitions of manifold diagrams and tame tangles were given in \emph{loc.cit.}, and we expand upon these sketches here.

\subsection*{Tangles and singularities} The notion of `ordinary' 1-dimensional tangles in dimension 3 has been an important object of study in knot theory for several decades \cite{adams2004knot} (the word `tangle' itself goes back to \cite{conway1970enumeration}). It was soon realized that 1-tangles in this dimension bore close relation to category theoretic ideas, namely, to the notion of braided monoidal categories \cite{yetter1986markov} \cite{freyd1989braided} \cite{turaev1990operator}.
This relation was generalized to $m$-tangles in any dimension in the statement of the so-called `tangle hypothesis' by Baez and Dolan in \cite{baez1995higher}; while the statement was given only at a heuristic level (leaving much ``to be made precise'' as the authors noted), it sparked a flurry of new research in the surrounding areas of topological quantum field theories and quantum algebra \cite{kapustin2010topological} \cite{bartlett2015modular} \cite{teleman2016five} \cite{ayala2017factorization}, and a proof of the hypothesis was sketched in \cite{lurie2009classification}.
Full definitional accounts of higher dimensional tangles in the literature remained nonetheless sparse (and \emph{loc.cit.}\ only gives a sketch definition)---one encounters difficulties comparable to those in the case of manifold diagrams. Several descriptions were given in low dimensions; work on diagrams of knotted surfaces by Carter and others \cite{carter1997combinatorial} \cite{carter1998knotted} falls into this category. Likewise in dimension 2, a thorough treatment of cobordisms (tangles in infinite codimension) was given by Schommer-Pries \cite{schommer2009classification}, who describes a presentation of their category by generators and relation using Morse-Cerf theoretic methods.
The idea of `generators and relations' for categories of tangles (which already featured in the original work by Baez and Dolan) is in turn directly related to our discussion of singularities: any tame tangle can be perturbed into a tangle built only from perturbation stable singularities, and in this sense perturbation stable singularities precisely play the role of `generating tangles' (indeed, our study of stable 2-tangle singularities will recover results from \emph{loc.cit.}\ in the setting of tame tangles). In higher dimensions, the classical theory of smooth singularities roots in the works of Morse \cite{morse1925relations} and Cerf \cite{cerf1970stratification}, with works by Thom, Mather, Arnold, Wall and many other ensuing  \cite{mather1971stability} \cite{arnol1972normal} \cite{thom1976structural} \cite{arnold1985singularities} \cite{du1995geometry}. A good recent summary can be found in \cite{mond2020singularities}.

\section{Outlook} \label{sec:promises}

\subsection*{Towards geometric higher category theory} A core promise of the theory of manifold diagrams and their isotopies is to provide geometric semantics for the `coherences' of homotopy theory and higher category theory. For instance, the Eckmann-Hilton argument, which categorically can be expressed as sequence of coherences induced by a zig-zag of evaluations (as shown in \cref{fig:eckmann-hilton-vs-braid} on the left) can dually be expressed by an isotopy of two points in the plane (illustrated by its graph in $\lR^3$ in the same figure on the right)---this isotopy is also known as the `braid'.

\begin{figure}[ht]
    \centering
    \def\svgwidth{1\columnwidth}
    \import{./figuresused/}{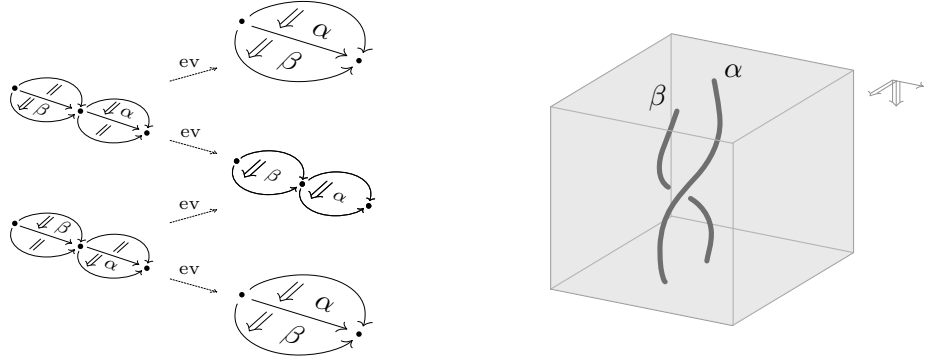}

    \caption[Eckmann-Hilton argument as an isotopy]{Eckmann-Hilton argument for 2-cells $\alpha, \beta : \id \to \id$ witnessed by a zig-zag of evaluations (on the left) and by the braid isotopy (on the right)}
    \label{fig:eckmann-hilton-vs-braid}
\end{figure}

The heuristic idea of relating isotopies and coherences in this way has been long-known and it motivated, for instance, the early project \cite{trimble1999} to develop manifold 3-diagrams as a geometric model for Gray 3-categories \cite{gordon1995coherence}. (Gray 3-categories are a `maximally strict' variation of the notion of weak 3-categories, which is still though equivalent to the weak one; in other words, Gray 3-categories retain only homotopically `essential' coherences). One may, of course, also argue in the converse direction: understanding manifold diagrams and their isotopies could provide an understanding of essential coherences, allowing us to build geometric models of higher categories. We refer to this approach to constructing coherences as the paradigm of \emph{isotopy}. In contrast, most cellular models of higher categories in use are based on the paradigm of \emph{contraction} for the construction of coherences. The following summary specifies the difference between the two paradigms:
\begin{enumerate}
    \item \emph{Contraction}: A coherence between two $n$-diagrams can be constructed as a sequence of `contractions'; a contraction is an $(n+1)$-cell between two $n$-diagrams that exists whenever both diagrams can be obtained as evaluations of another $n$-diagram.
    \item \emph{Isotopy}: A coherence between two $n$-diagrams can be constructed as an $(n+1)$-diagram which (as a manifold $(n+1)$-diagram) is an isotopy between the two diagrams, i.e.\ it does not introduce new manifold strata.
\end{enumerate}
Comparing the two we note the following. First, implementing the contraction paradigm requires us to enforce the existence of structure (namely, of contractions) whereas, in the paradigm of isotopy, coherences are constructed only from already existing structure via the `primitive' mechanism of composing morphisms in diagrams. Arguably, this makes the latter the simpler paradigm to work with from a constructive foundational perspective. Moreover, individual contractions are `homotopically trivial' (the space of contractions of a given diagram is  `contractible'). In contrast, coherences in the isotopy sense may describe non-trivial equivalences, such as the braid isotopy. This makes manifold diagrams a framework suitable for studying the `non-trivial pieces' of general homotopical behavior.

\subsection*{A combinatorial language for smooth structures} Even if we weren't interested in models of higher category theory per se, there is a last, and (to us) central, promise of the theory of manifold diagrams and tame tangles. Namely, we will hypothesize that manifold diagrams and tame tangles enable the description of essential phenomena from the worlds of `tame topology', `PL topology' and `smooth topology' in a single unifying combinatorial-topological language, without the need to separately define concepts in the respective three worlds. This contrasts that in classical terms the topological, PL and smooth categories are built in fundamentally different languages. As an example of our claim, we will see that manifold strata in manifold diagrams, despite being defined in tame topological terms, carry unique PL and smooth structures. We will also see that tangle manifolds of tame tangle can faithfully represent all (compact) PL manifold structures. Finally, a central hypothesis of this paper is that tame tangles can also be used to represent (compact) smooth structures and diffeomorphism. The hypothesis, if true, means that the language developed here can be used to work with smooth structures in purely combinatorial terms.

\section{Reader's Guide}

The combinatorial-topological language for manifold diagrams that we introduce here will build on the theory of framed combinatorial topology developed in our book \cite{fct}. In \cref{ch:recollection}, we will recall central definitions and results from \cite{fct} (as well as basic notions from the theory of stratifications), which ensures that the present paper is as self-contained as possible. Note that even readers already familiar with this material might benefit from skimming the section, as our presentation and terminology differs slightly from that in the book. All mentions of $\infty$-categories can be ignored; ultimately, our results do not rely on them. 

In \cref{ch:mdiag}, as a first contribution of this paper, we give a formal definition of manifold diagrams in all dimensions, elaborating on a sketch given in \cite[\S5]{fct}. We will show that, while our definition is phrased in stratified topological terms, manifold diagrams can in fact be classified by purely combinatorial means. This combinatorial classification will also enable the translation of manifold diagrams into diagrams of (directed) cells by a process of geometric dualization.

In \cref{ch:tangles-and-sings}, we then introduce the related notion of tame tangles. We will show that tame tangle can be canonically refined to manifold diagrams, which makes the relation between the two notions precise. Analogous to manifold diagrams, tame tangles have a combinatorial counterpart which we exploit in order to study the stability of tame tangles under perturbations. We will classify perturbation stable tangle singularities in low dimensions. The fact that this bears resemblance to deep results from classical differential singularity theory will lead us to make several conjectures about the relation between the framed topology of tame tangles and smooth manifolds---these and other ideas discussed in the final section \cref{sec:classical-sings} are speculative, and are yet to be made rigorous.

\section*{Acknowledgements}

This work builds directly on our book of framed combinatorial topology \cite{fct}. We thank Jan Steinebrunner, Filippos Sytilidis, and Christoph Weis for questions and feedback during a preliminary reading group on the material in Oxford in spring 2021. CD would like to thank Lukas Heidemann for many insightful discussions. Both authors were partially supported by the EPSRC grant EP/S018883/1, ``Higher algebra and quantum protocols'', which was conceived and prepared in collaboration with Jamie Vicary and David Reutter. CLD was partially supported by a research professorship in the MSRI programs ``Quantum symmetries'' and ``Higher categories and categorification'' in the spring of 2020 which, after a Covid related break, was continued in June 2022 at the Unidad Cuernavaca del Instituto de Matem\'aticas UNAM.

\renewcommand{\thesection}{\thechapter.{\arabic{section}}}
\renewcommand{\thesubsection}{\thesection.{\arabic{subsection}}}
\renewcommand\thefigure{\thechapter.\arabic{figure}}

\chapter{Meshes and trusses} \label{ch:recollection}

We recall definitions of iterated `1-framed, stratified' bundles, which give rise to the notion of \emph{meshes} (\cite[\S4]{fct}). As outlined in the introduction, meshes provide cellulations of manifold diagrams (that is, they refine the manifold diagrams by cell complexes). The construction has a faithful combinatorial counterpart in the notion of \emph{trusses} (\cite[\S2]{fct}), which is obtained by describing iterated stratified bundles on the level of their fundamental categories.

\section{Recollections from stratified topology}

We briefly recall the basic notions of the theory of stratifications (and refer to \cite[App.\ B]{fct} for further details).

\begin{conv}[Posets as categories] Given a poset (or preorder) $(P,\leq)$ we regard it as a category with the same objects as $P$, and an arrow $x \to y$ between two objects if and only if $x \leq y$. 
\end{conv}

\subsection*{Stratifications and stratified maps} \label{ssec:strat-recoll}

\begin{recoll}[Stratifications and entrance path posets] A \emph{prestratification} $(X,f)$ is a decomposition $f$ of a topological space $X$ into a disjoint union of connected subspaces called \emph{strata}. Its \emph{entrance path preorder} $\Entr(f)$ is the preorder whose objects are strata $s$, and whose arrows are generated by arrows $s \to r$ for all non-empty intersections $\overline s \cap r$. (The opposite poset $\Entr(f)\op$ is often called the \emph{exit path preorder} and denoted by $\Exit(f)$.) If $\Entr(f)$ is a poset then we call $(X,f)$ a \emph{stratification}. The canonical map $X \to \Entr(f)$, taking $x$ to $s$ if $x \in s$, is called the \emph{characteristic map} of the stratification $f$ and often denoted by $f$.
\end{recoll}

\begin{conv}[Finiteness] We assume all stratifications to be finite, i.e.\ to contain only finitely many strata. Equally, we assume all posets to be finite.
\end{conv}

\begin{conv}[Manifold strata] Strata in any stratification are assumed to be manifolds.
\end{conv}

\begin{conv}[Abbreviations] We often abbreviate stratifications $(X,f)$ by their characteristic map $f$, or even by their underlying space $X$.
\end{conv}


\begin{recoll}[Trivial and product stratifications] By default, spaces $X$ are given the \emph{trivial} stratification (with characteristic map being the terminal map $X \to \ast$). Given two stratification $(X,f)$ and $(Y,g)$, their \emph{product} stratifications $(X,f) \times (Y,g)$ is the stratification of $X \times Y$ with characteristic map $f \times g$.
\end{recoll}

\begin{recoll}[Stratified maps] \label{recoll:strat-maps} A \emph{stratified map} $F : (X,f) \to (Y,g)$ is a map $F : X \to Y$ that factors through the characteristic maps $f : X \to \Entr(f)$ and $g : Y \to \Entr(g)$ by a (necessarily unique) poset map $\Entr(F) : \Entr(f) \to \Entr(g)$.
\begin{itemize}
    \item[$-$] If $F : X \to Y$ is a subspace and $\Entr(F)$ is conservative (i.e.\ reflects identities), then we say $F : (X,f) \to (Y,g)$ is a \emph{substratification}.
    \item[$-$] If $F : X \to Y$ is a subspace, $\Entr(F) : \Entr g \to \Entr f$ is a subposet, and $X = f\inv(\Entr g)$, then we say $F$ is a \emph{constructible substratification}.
    \item[$-$] If $F : X \to Y$ is a homeomorphism, we say $F : (X,f) \to (Y,g)$ is a \emph{coarsening} of $(X,f)$ to $(Y,f)$---equivalently, we say $F$ is a \emph{refinement} of $(Y,f)$ by $(X,f)$. We say a coarsening (or refinement) is \emph{strict} if $F = \id_X : X = X$.
\item[$-$] If $F : X \to Y$ is a homeomorphism and $\Entr(F)$ is an isomorphism, we say $F : (X,f) \to (Y,g)$ is a \emph{stratified homeomorphism}.
\end{itemize}
We also write `$\into$', `$\epi$' and `$\iso$' in place of `$\to$' to indicate that a stratified map is a substratification, coarsening or stratified homeomorphism respectively.
\end{recoll}

\begin{recoll}[Stratified bundles] A \emph{stratified bundle} is a stratified map $p : (X,f) \to (Y,g)$ such that for each stratum $s$ of the `base' stratification $(Y,g)$, each point $x$ in $s$ has a neighborhood $V$ (in $s$) over which the map $p$ trivializes to a stratified `projection' map $V \times (Z,h) \to V$.
\end{recoll}

The passage from stratifications to their entrance path posets has a converse.

\begin{recoll}[Classifying stratifications] \label{recoll:classifying-strat} Every poset $P$ has a \emph{classifying stratification} $\CStr{P}$, whose underlying space is the classifying space $\abs{P}$ of $P$ (i.e.\ the realization of the nerve of $P$), and whose characteristic map is the map $\abs{P} \to P$ that maps $\abs{P^{\geq x}} \setminus \abs{P^{>x}}$ to $x$ (here, $P^{\geq x}$ and $P^{>x}$ are the upper resp.\ strict upper closures of an element $x$ in $P$). Moreover, given a poset map $F : P \to Q$, the realization of its nerve yields a stratified map $\CStr F : \CStr P \to \CStr Q$.
\end{recoll}


\begin{recoll}[Cellular posets, regular cell complexes] \label{recoll:cellular-poset} A \emph{cellular poset} $P$ is a poset in which each strict upper closure is a sphere, i.e.\ for all $x \in P$, $\abs{P^{>x}}$ is homeomorphic to a topological $k$-sphere $S^k$ (for $k \in \lN$). A \emph{regular cell complex} is a stratification that is stratified homeomorphic to the classifying stratification of a cellular poset. (Standard results in combinatorial topology show that this coincides with the usual definition of regular cell complexes \cite{lundell1969topology} \cite{bjorner1984posets}.)
\end{recoll}

\begin{rmk}[Dual stratifications] Given a poset $P$, we say $\CStr {P\op}$ is the `dual stratification' of $\CStr {P}$. This definition recovers the following familiar situation: given a PL manifold $M$ with a triangulating simplicial complex $K$, then $K$ is in particular a regular cell complex and thus $K \iso \CStr P$ where $P = \Entr K$. The usual condition for $(M,K)$ to define a PL manifold requires that links of simplices in $K$ are PL standard spheres. Equivalently, this can now be phrased as saying that strict lower closures $P^{<x}$ are standard PL spheres. Thus $P\op$ is a cellular poset: the regular cell complex $\CStr {P\op}$ is the `dual cell complex' of $K$, which is often used in the proof of Poincar\'e duality (cf.\ \cite[\S65]{munkres2018elements} \cite{hatcher2002AT}).
\end{rmk}

\subsection*{Conical, regular, and cellulable stratifications} We recall two classes of `nice' stratifications: \emph{conical} and \emph{regular} stratifications (with the latter further specializing to \emph{cellulable} stratifications). One upshot of these conditions is that they enable simple definitions of `higher' fundamental categories of stratifications, which categorify the notion of entrance path posets. We assume all posets $P$ to be `locally finite', meaning all lower closures $P^{\leq x}$ of $P$ are finite.

\begin{recoll}[Cone stratifications] \label{rmk:cub-cone-strat} Given a stratification $(X,f)$, the \emph{stratified open cone} $(\cone(X), \cone(f))$ stratifies the topological open cone $\cone(X) = X \times [0,1) / X \times \{0\}$ by the product $(X,f) \times (0,1)$ away from the cone point $0$, and by setting $\{0\}$ to be its own stratum. To define the \emph{closed cone} $(\overline\cone(X),\overline\cone(f))$ replace `$1)$' by `$1]$'.
\end{recoll}

\begin{recoll}[Conical stratifications] \label{recoll:conical-strat} A \emph{conical} stratification is a stratification in which each point $x$ has a neighborhood that is a stratified product $U \times (\cone(Z),\cone(l))$ with $x \in U \times \{0\}$.
\end{recoll}

\begin{rmk}[Entrance path $\infty$-categories via homs] \label{rmk:TEntr} In \cite[App.\ A]{lurie2012higher} Lurie constructs \emph{entrance path $\infty$-categories} $\TEntr(f)$ of conical stratifications $(X,f)$: the $k$-simplices of the quasicategory $\TEntr(f)$ are precisely stratified maps $\CStr [k] \to (X,f)$, where $[k]$ is the `$k$-simplex' poset $(0 \to 1 \to ... \to k)$.
\end{rmk}

\begin{conv}[Entrance paths] Unnamed entrance paths $s \to t$ refer to arrows in $\Entr(f)$, while named entrance paths $\alpha :  s \to t$ refer to 1-morphisms in $\TEntr(f)$.
\end{conv}

\begin{rmk}[Posets are conical] Given a poset $P$, its classifying stratification $\CStr P$ is conical \cite[App.\ B]{fct}.
\end{rmk}

\begin{recoll}[$0$-Truncated stratifications] A conical stratification $(X,f)$ is called \emph{$0$-truncated} if $\TEntr(f)$ is $0$-truncated (i.e.\ a poset). Roughly speaking, in a $0$-truncated stratification it doesn't matter which entrance path we take between any two given strata, as we can identify $\TEntr(f) \eqv \Entr(f)$ canonically.
\end{recoll}

\nid In \cite[App.\ B]{fct} we showed that classifying stratifications of posets are $0$-truncated. In particular, regular cell complexes are $0$-truncated. The property of being $0$-truncated is furthermore inherited by all constructible substratifications.

\begin{recoll}[Regular stratifications] An \emph{(open) regular stratification} $(X,f)$ is a stratification that admits a `posetal subrefinement': this is a span $(X,f) \leftepi (Y,g) \into \CStr P$ where $(Y,g) \epi (X,f)$ is refinement of $(X,f)$ and $(Y,g) \into \CStr P$ an (open) constructible substratification of the classifying stratification of a poset $P$. (Here, an `open' substratification is an open subspace on underlying spaces.)
\end{recoll}

\nid Note, every stratification that can be triangulated or cellulated (i.e.\ refined by a simplicial or regular cell complex) is open regular.

\begin{conv}[Base stratifications] Base stratifications $B$ in any stratified bundle $E \to B$ are assumed to be `sufficiently nice' (which can be taken to be mean e.g.\ `conical and open regular'). For our purposes, we moreover assume base stratifications are always $0$-truncated unless otherwise noted.
\end{conv}

\begin{rmk}[Cellulable stratifications] \label{rmk:cellulable-strat} Replacing the `poset $P$' by `cellular poset $P$' in the previous recollection leads to the notion of \emph{(open) cellulable} stratification $(X,f)$ (i.e.\ a stratification which admits a `cellular subrefinement' $(X,f) \leftepi (Y,g) \into \CStr P$ for $P$ a cellular poset). Many proofs in \cite{fct} were given on the case of cellulable stratifications---but most arguments in fact generalize to the case of regular stratifications.
\end{rmk}

\begin{rmk}[Entrance path $\infty$-categories via localization] \label{rmk:posets-wwe} Given a regular stratification $(X,f)$ its entrance path $\infty$-category has a presentation as follows. Pick a posetal refinement $F : g \epi f$ of $f$. Then $\TEntr(g)$ is $0$-truncated and thus $\Entr(g) \eqv \TEntr(g)$.
    We define $\TEntr(f)$ to be the $\infty$-localization of $\Entr(g)$ at the set of arrows that $\Entr(F)$ maps to identities---thus $\TEntr(f)$ is presented by a poset with weak equivalences.\footnote{While this makes it easy to construct a candidate $\infty$-category $\TEntr(f)$, checking well-definedness of the construction requires us to verify independence of the choice $(Y,g)$; a reasonable argument can be given for the case of \emph{conical regular} stratifications, and checks equivalence of the given presentation with the definition of $\TEntr(f)$ in the conical case (see \cref{rmk:TEntr}). We omit the argument as we'll not need it here.}
\end{rmk}

\subsection*{Stratifications as \texorpdfstring{$\infty$}{infinity}-posets} For the categorically-minded reader it may be worth pointing out where stratifications are situated in the categorical landscape---see \cref{table:strat-in-cat-land}. Here, intuitively, an `$\infty$-X' is an $(\infty,\infty)$-category which admits a conservative functor to an X, where X can e.g.\ stand for `set', `poset', or `category'. Yet more generally, X can be an $(n,k)$-category for $n,k < \infty$ (note that, if $n < k$, then for $n < m \leq k$ it is convention to require that there is at most one $m$-arrow between any two $(m-1)$-arrows; in particular, posets and preorders are $(0,1)$-categories by this convention). A `set with weak equivalences' means a poset with weak equivalences in which each arrow is a weak equivalence (see \cref{rmk:posets-wwe}). Note that both `$\infty$-fication' can be `$\infty$-localization' become functors when appropriately defined. This illustrates that the role of posets in the theory of stratifications is analogous to the role of sets in the theory of spaces.

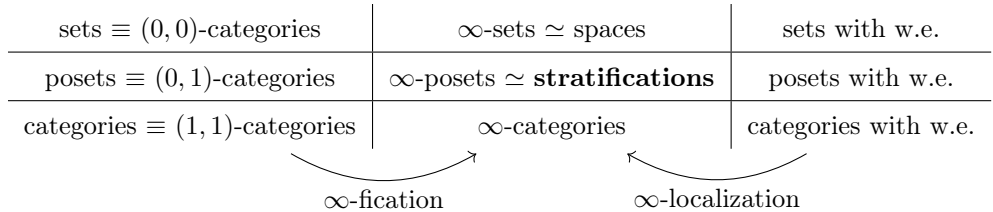
\begin{figure}[h!]
\centering\small
\def\arraystretch{1.5}
\begin{tabular}{ c|c|c }
 sets $\equiv$ $(0,0)$-categories & $\infty$-sets $\eqv$ spaces & sets with w.e. \\ \hline
 posets $\equiv$ $(0,1)$-categories & $\infty$-posets $\eqv$ \textbf{stratifications} & posets with w.e. \\ \hline
 categories $\equiv$ $(1,1)$-cate\tikzmark{a}gories & \tikzmark{b}$\infty$-categories\tikzmark{c} & categ\tikzmark{d}ories with w.e. \\
\end{tabular} \vspace{40pt}
\begin{tikzpicture}[overlay, remember picture, shorten >=.5pt, shorten <=.5pt, transform canvas={yshift=.25\baselineskip}]
    \draw [->] ([yshift=-10pt]{pic cs:a}) to [bend right] node[midway,below] {{$\infty$-fication}} ([yshift=-10pt]{pic cs:b}) ;
    \draw [->] ([yshift=-10pt]{pic cs:d}) to [bend left] node[midway,below] {{$\infty$-localization}} ([yshift=-10pt]{pic cs:c});
  \end{tikzpicture}
\caption[Stratifications in the categorical landscape]{Stratifications in the categorical landscape}
\label{table:strat-in-cat-land}
\end{figure}

\section{Meshes and trusses in dimension 1}

\subsection{1-Meshes and their bundles} A 1-dimensional mesh, or simply a 1-mesh, is a  contractible $k$-manifold, $k \leq 1$, which is stratified `by cells', together with a choice of framing on that manifold.\footnote{We are somewhat agnostic to what we mean by `framing', since in dimension $k \leq 1$ all reasonable notions are equivalent (whether we work in the topological, PL or smooth category). The reader may also take framing to mean `orientation' if $k = 1$, and to mean the trivial structure if $k = 0$.}

\begin{defn}[1-Meshes] A \textbf{1-mesh} $M$ is a stratified framed contractible 0- or 1-manifold, whose strata are open 0- or 1-disks.
\end{defn}

\nid Given a 1-mesh $M$, assigning dimension to strata yields a functor $\Entr(M) \to [1]\op$ from the entrance path poset to the (opposite) interval poset $[1]\op = (0 \ot 1)$.

\begin{term}[Closed and open 1-meshes] If the underlying manifold of a 1-mesh $M$ is compact (resp.\ an open interval) then we say $M$ is `closed' (resp.\ `open'). If neither applies, we say $M$ is `mixed'.
\end{term}

\begin{eg}[1-Meshes] Examples of 1-meshes are shown in \cref{fig:1-meshes-of-open-closed-and-mixed-boundary-type}: we use arrows to indicate the framing of underlying manifolds.
\begin{figure}[ht]
    \centering
    \def\svgwidth{1\columnwidth}
\begingroup%
  \makeatletter%
  \providecommand\color[2][]{%
    \errmessage{(Inkscape) Color is used for the text in Inkscape, but the package 'color.sty' is not loaded}%
    \renewcommand\color[2][]{}%
  }%
  \providecommand\transparent[1]{%
    \errmessage{(Inkscape) Transparency is used (non-zero) for the text in Inkscape, but the package 'transparent.sty' is not loaded}%
    \renewcommand\transparent[1]{}%
  }%
  \providecommand\rotatebox[2]{#2}%
  \newcommand*\fsize{\dimexpr\f@size pt\relax}%
  \newcommand*\lineheight[1]{\fontsize{\fsize}{#1\fsize}\selectfont}%
  \ifx\svgwidth\undefined%
    \setlength{\unitlength}{2160bp}%
    \ifx\svgscale\undefined%
      \relax%
    \else%
      \setlength{\unitlength}{\unitlength * \real{\svgscale}}%
    \fi%
  \else%
    \setlength{\unitlength}{\svgwidth}%
  \fi%
  \global\let\svgwidth\undefined%
  \global\let\svgscale\undefined%
  \makeatother%
  \begin{picture}(1,0.21354167)%
    \lineheight{1}%
    \setlength\tabcolsep{0pt}%
    \put(0.52256944,0.01493056){\color[rgb]{0,0,0}\makebox(0,0)[lt]{\lineheight{1.25}\smash{\begin{tabular}[t]{l}closed\end{tabular}}}}%
    \put(0,0){\includegraphics[width=\unitlength,page=1]{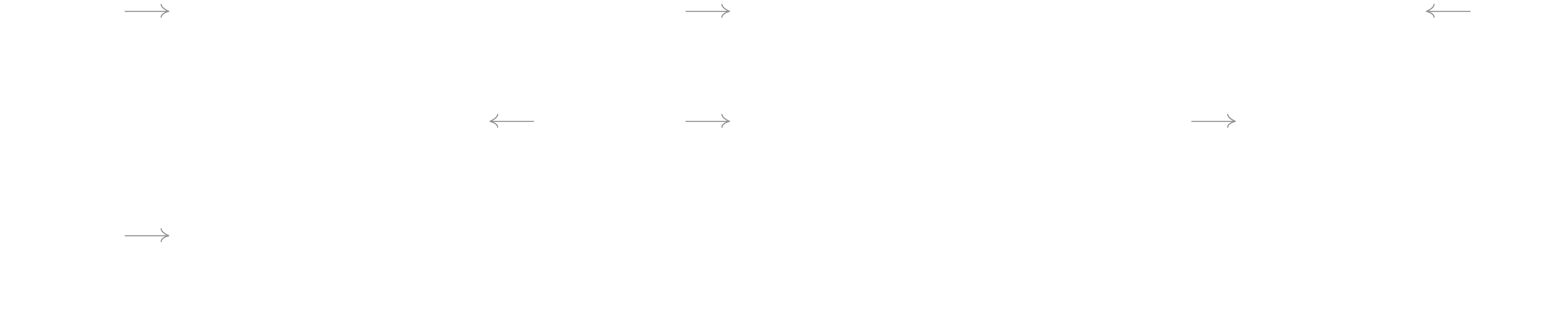}}%
    \put(0.17534722,0.01493056){\color[rgb]{0,0,0}\makebox(0,0)[lt]{\lineheight{1.25}\smash{\begin{tabular}[t]{l}open\end{tabular}}}}%
    \put(0.82604167,0.01493056){\color[rgb]{0,0,0}\makebox(0,0)[lt]{\lineheight{1.25}\smash{\begin{tabular}[t]{l}mixed\end{tabular}}}}%
    \put(0,0){\includegraphics[width=\unitlength,page=2]{1-meshes-of-open-closed-and-mixed-boundary-type.pdf}}%
  \end{picture}%
\endgroup%

    \caption{1-Meshes of open, closed. and mixed type}
    \label{fig:1-meshes-of-open-closed-and-mixed-boundary-type}
\end{figure}
\end{eg}

\begin{term}[Coordinates and bounds] A `coordinatization' $\gamma$ of a 1-mesh $M$ is a framed bounded embedding $\gamma : M \into \lR$. (Here, the embedding being `framed' means that the standard framing of $\lR$ pulls back along $\gamma$ to a framing equivalent to that of $M$.) Since $\gamma$ is bounded, its image closure $\overline{\im(\gamma)}$ is a subspace $[\gamma_-,\gamma_+]$ of $\lR$: we call $\gamma_-$ and $\gamma_+$ the `lower' resp.\ `upper bound' of $\gamma$.
\end{term}

\begin{defn}[1-Mesh maps] A \textbf{map of 1-meshes} $F : M \to N$ is a stratified map that preservers the framing of the 1-meshes.
\end{defn}

\nid The space of maps of 1-meshes inherits a topology from the mapping space of the underlying spaces.

\begin{notn}[The $\infty$-category of 1-meshes] The topologically enriched category of 1-meshes and their maps is denoted by $\tmesh 1$.
\end{notn}


We next recall the definition of bundles of 1-meshes.

\begin{notn}[Fiber dimension] Given a stratified bundle $p : E \to B$, whose domain and codomain both have strata that are manifolds, we write $\fibdim(s)$ for the `fiber dimension' $\dim(s) - \dim(p(s))$ of a stratum $s$ in $E$.
\end{notn}

\begin{defn}[1-Mesh bundles] \label{defn:1-mesh-bun} A \textbf{1-mesh bundle} $p : M \to B$ is a stratified bundle together with a choice of 1-mesh structure $M^b$ for each stratified fiber $p\inv(b)$, $b \in B$, and with the following compatibility condition between fibers.
\begin{enumerate}
    \item[$-$] \textit{Coordinatizability}: there exists a `coordinatizing' bundle embedding $\gamma : M \into B \times \lR$ into the trivial bundle $B \times \lR \to B$, which on each 1-mesh fiber $M^b$ restricts to a coordinatization $\gamma^b : M^b \into \lR$ of $M^b$, such that the `bounding sections' $b \mapsto \gamma_\pm(b) := (b,\gamma^b_\pm)$ are continuous maps $B \to B \times \lR$.
    \item[$-$] \textit{0-Constructibility}: For a stratum $s$ in $M$ with $\fibdim(s)=0$, any entrance path $p(s) \to u$ in $\Entr(B)$ has a unique lift $s \to t$ in $\Entr(M)$, and that lift is such that $\fibdim(t) = 0$. \qedhere
\end{enumerate}
\end{defn}

\nid Observe that the constructibility condition ensures that there is a functor $\Entr(M) \to [1]\op$ which assigns to each stratum $s$ its fiber dimension $\fibdim(s)$.

\begin{term}[Closed and open 1-mesh bundles] If all fibers of a 1-mesh bundle $p$ are closed (resp. open) then we call $p$ itself `closed' (resp. `open').
\end{term}

\begin{eg}[1-Mesh bundles] Two 1-mesh bundles are shown in \cref{fig:1-mesh-bundles}.
\begin{figure}[ht]
    \centering
    \def\svgwidth{1\columnwidth}
\begingroup%
  \makeatletter%
  \providecommand\color[2][]{%
    \errmessage{(Inkscape) Color is used for the text in Inkscape, but the package 'color.sty' is not loaded}%
    \renewcommand\color[2][]{}%
  }%
  \providecommand\transparent[1]{%
    \errmessage{(Inkscape) Transparency is used (non-zero) for the text in Inkscape, but the package 'transparent.sty' is not loaded}%
    \renewcommand\transparent[1]{}%
  }%
  \providecommand\rotatebox[2]{#2}%
  \newcommand*\fsize{\dimexpr\f@size pt\relax}%
  \newcommand*\lineheight[1]{\fontsize{\fsize}{#1\fsize}\selectfont}%
  \ifx\svgwidth\undefined%
    \setlength{\unitlength}{2160bp}%
    \ifx\svgscale\undefined%
      \relax%
    \else%
      \setlength{\unitlength}{\unitlength * \real{\svgscale}}%
    \fi%
  \else%
    \setlength{\unitlength}{\svgwidth}%
  \fi%
  \global\let\svgwidth\undefined%
  \global\let\svgscale\undefined%
  \makeatother%
  \begin{picture}(1,0.26145833)%
    \lineheight{1}%
    \setlength\tabcolsep{0pt}%
    \put(0,0){\includegraphics[width=\unitlength,page=1]{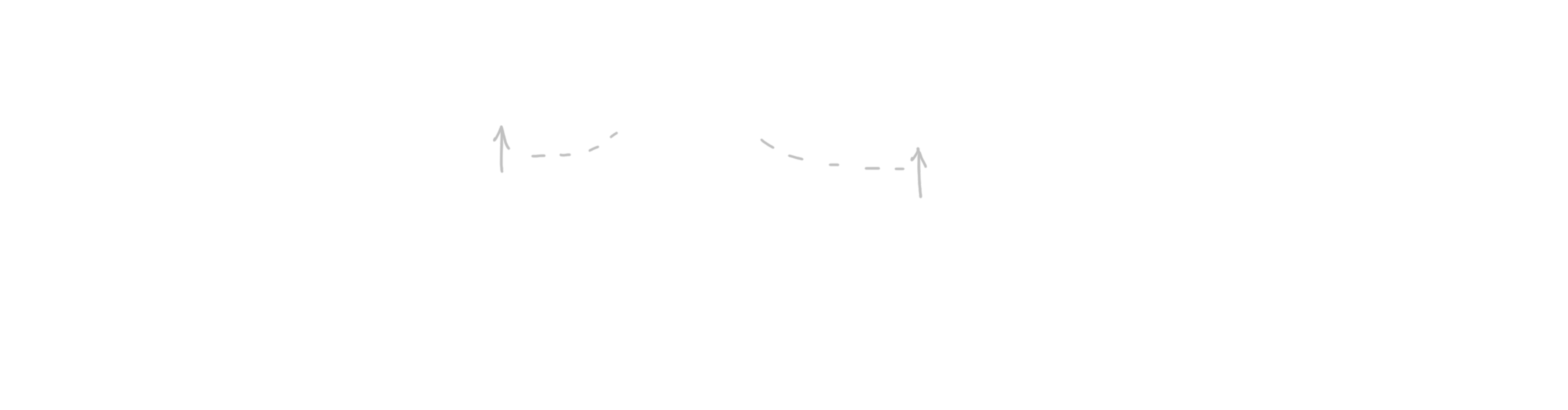}}%
    \put(0.39548611,0.18993056){\color[rgb]{0,0,0}\makebox(0,0)[lt]{\lineheight{1.25}\smash{\begin{tabular}[t]{l}{\tiny framing of}\end{tabular}}}}%
    \put(0.41701389,0.16909722){\color[rgb]{0,0,0}\makebox(0,0)[lt]{\lineheight{1.25}\smash{\begin{tabular}[t]{l}{\tiny fibers}\end{tabular}}}}%
    \put(0,0){\includegraphics[width=\unitlength,page=2]{1-mesh-bundles.pdf}}%
  \end{picture}%
\endgroup%

    \caption{1-Mesh bundles}
    \label{fig:1-mesh-bundles}
\end{figure}
\end{eg}

\nid The constructibility condition required for 1-mesh bundles acts at the `$0$th categorical level': namely, the condition will ensure that 1-mesh bundles $p : M \to B$ can be classified by functors on the entrance path poset $\Entr(B)$. To describe 1-mesh bundles classified by functors on the entrance path $\infty$-category $\TEntr(B)$, we can replace the condition as follows.

\begin{rmk}[1-Constructibility, {\cite[Rmk.\ 4.1.33]{fct}}] \label{rmk:1-constructible} The `0-constructibility' condition in \cref{defn:1-mesh-bun} is categorified by the following condition:
\begin{enumerate}
\item[$-$] \emph{1-Constructibility}: For a stratum $s$ in $M$ with $\fibdim(s)=0$, any entrance path $\alpha : p(s) \to u$ in $B$ has a unique lift $\beta : s \to t$ in $M$ with $p \circ \beta = \alpha$, and that lift satisfies $\fibdim(t) = 0$.
\end{enumerate}
If the base $B$ is a $0$-truncated stratification then 0-constructibility and 1-constructibility are equivalent conditions. We refer to 1-mesh bundles satisfying 1-constructibility (in place of $0$-constructibility) as `$1$-constructible 1-mesh bundles'.
\end{rmk}

\begin{rmk}[Coordinatizations as structure] Our definitions of 1-meshes and their bundles differ slightly from those given in \cite[\S4]{fct}, where we required coordinatizations $\gamma$ as part of the structure of both 1-meshes and 1-mesh bundles. In contrast, here we only ask for the existence of such coordinatizations. Categorically both approaches are equivalent (see also \cref{ssec:coordinates}).
\end{rmk}

\subsection{1-Trusses and their bundles} The entrance path posets of 1-meshes naturally carry additional structure given by strata dimensions and framing. This leads a notion of structured entrance path posets of 1-meshes, which we refer to as the `fundamental 1-trusses' of the 1-meshes.

\begin{notn}[Frame order $\fleq$ in meshes] Given a 1-mesh $M$ and strata $s, t$ in $M$, we write $s \fles t$ if for some (or equivalently, any) coordinatization $\gamma  : M \into \lR$ we have $\gamma(s) < \gamma(t)$ (meaning $\gamma(s)$ lies to the left of $\gamma(t)$ in the standard order of $\lR$).
\end{notn}

\begin{defn}[1-Trusses from 1-meshes] \label{defn:1-truss} A \textbf{1-truss} $T = (T,\leq,\dim,\fleq)$ is a poset $(T,\leq)$ together with a `dimension' map $\dim : (T,\leq) \to [1]\op$ and a `frame order' $(T,\fleq)$, for which we can find a 1-mesh $M$ such that
\begin{enumerate}
\item[$-$] there exists poset isomorphism $\phi : (T,\leq) \iso  \Entr(M)$,
\item[$-$] which is compatible with dimensions, i.e.\ $\dim(s) = \dim(\phi(s))$,
\item[$-$] and compatible with framings, i.e.\ for $s,t \in T$, we have $s \fleq t$ iff $\phi(s) \fleq \phi(t)$.
\end{enumerate}
(Note that for fixed $M$ the choice of $\phi$ is necessarily unique.) We call $M$ a `classifying 1-mesh' of $T$, and say $T$ is a `fundamental 1-truss' of $M$.
\end{defn}

\begin{notn}[Fundamental 1-trusses] Fundamental 1-trusses of 1-meshes $M$ are `essentially unique' (this means, any two fundamental trusses of a given 1-mesh are related by unique structure preserving isomorphisms), and we denote them by $\FTrs M$.
\end{notn}

\begin{notn}[Classifying 1-meshes] Classifying 1-meshes of 1-trusses $T$ are `unique up to contractible choice' (this means, any two classifying 1-meshes of a given 1-truss are related by a contractible space of framed stratified homeomorphisms), and we denote them by $\CMsh T$.
\end{notn}


\nid The structure $(T,\leq,\dim,\fleq)$ of a 1-truss $T$ can alternatively be characterized in purely combinatorial terms, without requiring the existence of classifying 1-meshes. Recall a functor is called `full' if it is surjective on hom sets, and `conservative' if it reflects isomorphisms.

\begin{altdefn}[1-Trusses, combinatorially] \label{rmk:comb-1-trusses} A 1-truss is a poset $(T,\leq)$ together with a full and conservative functor $\dim : (T,\leq) \to [1]\op$, and with a second order $(T,\fleq)$ which is total, and whose generating arrows $t \fles s$ satisfy either $t < s$ or $s < t$.
\end{altdefn}

\begin{eg}[1-Trusses] The fundamental trusses of the 1-meshes from \cref{fig:1-meshes-of-open-closed-and-mixed-boundary-type} are shown in \cref{fig:fundamental-truss-of-our-earlier-1-mesh-examples} below: the order $\leq$ is indicated by arrows between objects, dimension assignments are indicated by numbers, and the total order $\fleq$ is indicated by a single arrow along all (linearly arranged) objects.
\begin{figure}[ht]
    \centering
    \def\svgwidth{1\columnwidth}
    \import{./figuresused/}{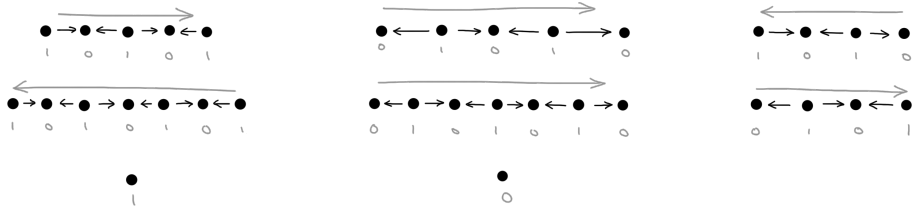}

    \caption[Fundamental trusses]{The fundamental trusses of the earlier 1-mesh examples}
    \label{fig:fundamental-truss-of-our-earlier-1-mesh-examples}
\end{figure}
\end{eg}

\begin{notn}[Dimension-$i$ objects] \label{notn:dim-sets} Given a 1-truss $(T,\leq,\dim,\fleq)$ we denote by $T_{(i)}$ the subset $\dim\inv(i)$ of $T$ (where $i \in [1]$).
\end{notn}

\begin{defn}[Maps of 1-trusses] A \textbf{map of 1-trusses} $T \to S$ is a map of posets $(T,\leq) \to (S,\leq)$ that preserves the frame orders.
\end{defn}

\begin{notn}[Category of 1-trusses] The category of 1-trusses and their maps is denoted by $\truss 1$.
\end{notn}

\begin{rmk}[Fundamental 1-truss functor] Any 1-mesh map $F : M \to N$ determines, by its mapping $\Entr(F)$ on entrance path posets, a unique 1-truss map $\FTrs M \to \FTrs N$, which we call its `fundamental 1-truss map' and denote by $\FTrs(F)$. This yields a functor $\FTrs : \tmesh 1 \to \truss 1$.
\end{rmk}

\begin{rmk}[Classifying 1-mesh functor] Any 1-truss map $G : T \to S$ has a `classifying 1-mesh map' $ : M \to N$ such that $\FTrs (\CMsh G) = G$. $\CMsh G$ is determined up to contractible choice (which yields an `weak inverse' $\CMsh : \truss 1 \to \tmesh 1$ to the functor $\FTrs$; we return to this in \cref{rmk:cmsh-functor-choice}).
\end{rmk}


We next consider bundles of 1-trusses. We first define 1-truss bundles as the combinatorial structures underlying 1-mesh bundles, and then also give an independent, purely combinatorial definition.

\begin{defn}[1-Truss bundles from 1-mesh bundles] \label{defn:1-truss-bun} For a stratification $B$, a \textbf{1-truss bundle} over the poset $\Entr(B)$ is a poset map $q : T \to \Entr(B)$ together with the structure of a 1-truss $T^s$ on each fiber $q\inv(s)$, $s \in \Entr(B)$, for which there exists a 1-mesh bundle $p : M \to B$ with an isomorphism $\phi : T \iso \Entr(M)$ such that $q = \Entr(p) \circ \phi$, and $\phi$ exhibits fibers $T^s$ as the fundamental 1-trusses of fibers $M^b$, $b \in s$ (in the sense of \cref{defn:1-truss}). We call $p$ a `classifying 1-mesh bundle' for $q$, and conversely say $q$ is a `fundamental 1-truss bundle' of $p$.
\end{defn}

\begin{notn}[Fundamental 1-truss bundles] Fundamental 1-truss bundles of 1-mesh bundles $p$ are essentially unique, and we denote them by $\FTrs p$.
\end{notn}

\begin{notn}[Classifying 1-mesh bundles] Classifying 1-mesh bundles of 1-truss bundles $q$ are unique up to contractible choice, and we denote them by $\CMsh q$.
\end{notn}


\begin{eg}[1-Truss bundles]  The fundamental trusses of the 1-meshes from \cref{fig:1-mesh-bundles} are shown in \cref{fig:fundamental-1-truss-bundles-of-earlier-1-mesh-bundle} below.
\begin{figure}[ht]
    \centering
    \def\svgwidth{1\columnwidth}
    \import{./figuresused/}{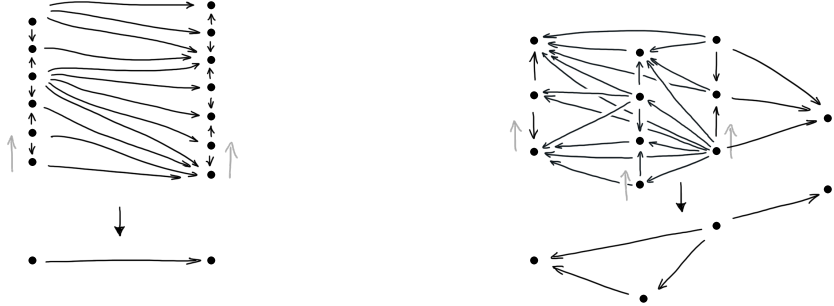}

    \caption[Fundamental 1-truss bundles]{Fundamental 1-truss bundles of earlier 1-mesh bundles}
    \label{fig:fundamental-1-truss-bundles-of-earlier-1-mesh-bundle}
\end{figure}
\end{eg}

\nid Analogous to our combinatorial definition of 1-trusses in \cref{rmk:comb-1-trusses}, we next give an alternative, purely combinatorial definition of 1-truss bundles. This will in particular imply that the definition of 1-truss bundles is independent of the choice of stratification $B$ in \cref{defn:1-truss-bun}.

\begin{rmk}[Boolean profunctors] \label{rmk:bool-prof} A `Boolean profunctor' is an ordinary profunctor $H : \iC \proto{} \iD$ whose values are either the initial set $\emptyset \equiv \bot$ or the terminal set $\ast \equiv \top$. If $\iC$ and $\iD$ are discrete, then such a profunctor $H$ is simply a relation of sets. In this case, we call the profunctor $H$ a `function' if it is a functional relation or a `cofunction' if the dual profunctor $H\op$ is a function.
\end{rmk}

\begin{rmk}[Boolean profunctors from posets maps] \label{rmk:bool-prof-from-pos-map} For any map of posets $F : P \to Q$, the fiber $F\inv(x \to y)$ over an arrow $x \to y$ of $Q$ defines a Boolean profunctor $F\inv(x) \proto{} F\inv(y)$ by mapping $(a,b)$ to $\top$ iff $a \to b$ is an arrow in $P$.
\end{rmk}

\nid Recall our notation for dim-$i$ objects $T_{(i)}$ of a 1-truss $T$ (see \cref{notn:dim-sets}).

\begin{defn}[Category of 1-truss bordisms] \label{defn:1-truss-bordisms} Given 1-trusses $T$ and $S$, a \textbf{1-truss bordism} $R : T \proto{} S$ is a Boolean profunctor $T \proto{} S$ satisfying the following:
\begin{itemize}
    \item[\textsf{(A)}] $R$ restricts to a function $R_{(0)} : T_{(0)} \proto{} S_{(0)}$ and a cofunction $R_{(1)} : T_{(1)} \proto{} S_{(1)}$.
    \item[\textsf{(B)}] Whenever $R(t,s) = \top = R(t',s')$, then either $t \fles t'$ or $s' \fles s$ but not both.
\end{itemize}
Given 1-truss bordisms $R : T \proto{} S$ and $Q : S \proto{} U$, their composite profunctor $R \circ Q$  (composed as ordinary profunctors) is again a 1-truss bordism.\footnote{In contrast, composites of Boolean profunctors (composed as ordinary profunctors) in general need not themselves be Boolean.} This gives rise to the \textbf{category $\kT^1$ of 1-trusses and their bordisms}.
\end{defn}

\begin{altdefn}[1-Truss bundles, combinatorially] \label{rmk:1-truss-bun-comb-defn} A 1-truss bundle over a poset $P$ is a poset map $q : T \to P$ in which each fiber $q\inv(x)$, $x \in P$, is equipped with the structures of a 1-truss $T^x$, and for each arrow $x \to y$ in $P$, the fiber $q\inv(x \to y)$ is a 1-truss bordisms $T^x \proto{} T^y$ (note $q\inv(x \to y)$ is a Boolean profunctor by \cref{rmk:bool-prof-from-pos-map}).
\end{altdefn}

\nid When $P = \Entr(B)$ (for some stratification $B$), then this alternative definition is equivalent to \cref{defn:1-truss-bun} (one verifies that the conditions \textsf{(A)} and \textsf{(B)} in \cref{defn:1-truss-bordisms} precisely guarantee the existence of a classifying mesh bundle over $B$, as required in \cref{defn:1-truss-bun}). Using this alternative characterization, we now recall that the category $\kT^1$ of 1-truss bordisms acts as a classifying category for 1-truss bundles.

\begin{obs}[Essential gauntness of $\kT^1$] \label{rmk:gaunt} The category $\kT^1$ is `essentially gaunt', in the sense that it does not have non-identity automorphisms. Thus, passing to any skeleton, we can replace `natural isomorphism' of functors $P \to \kT^1$ by `equality' without harm.
\end{obs}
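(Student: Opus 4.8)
The plan is to reduce the statement to the claim that every $1$-truss $T$ has trivial automorphism group in $\kT^1$; the part about skeleta is then formal. So I would take an invertible $1$-truss bordism $R:T\proto{}T$, with inverse $R'=R\inv$, and argue that $R$ equals the identity $\id_T$, which (since composition in $\kT^1$ is ordinary profunctor composition) is the hom-profunctor $\Hom_T$, with $\Hom_T(t,s)=\top$ precisely when $t\leq s$.

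First I would read off from condition \textsf{(A)} of \cref{defn:1-truss-bordisms} the two structure maps of $R$: a function $f\coloneqq R_{(0)}:T_{(0)}\to T_{(0)}$ and, from the cofunction $R_{(1)}$, a function $g:T_{(1)}\to T_{(1)}$ with $R_{(1)}(g(s),s)=\top$; similarly $R'$ gives $f'$ and $g'$. Next I would check that $f$ and $g$ are bijections. This follows once one knows that composition of $1$-truss bordisms restricts, on each of the two pure dimension levels, to ordinary composition of these structure maps---equivalently, that an entry $R(t,s)=\top$ with $\dim t\neq\dim s$ cannot feed into $R'\circ R$ on a pair of equidimensional strata. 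Profunctoriality disposes of most such entries: a value $R(t,s)=\top$ with $\dim t=0$ and $\dim s=1$ forces $R(t,s'')=\top$, hence $f(t)=s''$, for every $0$-stratum $s''\geq s$, which is impossible whenever $s$ borders two such strata, and the remaining (boundary) cases are controlled by condition \textsf{(B)}; alternatively this is the content of the normal-form description of $1$-truss bordisms recalled from \cite[\S2]{fct}, which exhibits an invertible bordism $T\proto{}T$ as the graph profunctor of an automorphism of $T$ in $\truss 1$. In any case, because $\Hom_T$ restricts to the identity relations on $T_{(0)}$ and $T_{(1)}$, invertibility of $R$ gives $f'f=ff'=\id$ and $g'g=gg'=\id$, so $f,g,f',g'$ are all bijections.

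Then I would feed the $\top$-entries of $R$ into condition \textsf{(B)}: for distinct $t\fles t'$ in $T_{(0)}$ the pairs $(t,f(t))$ and $(t',f(t'))$ are both $\top$, so \textsf{(B)} rules out $f(t')\fles f(t)$; thus $f$ is monotone for the total frame order $\fleq$, and a monotone permutation of a finite total order is the identity. Hence $f=\id$, and dually $g=\id$, and likewise $f'=g'=\id$. It remains to pin every entry of $R$. Since $f=g=\id$ we have $R(t,t)=\top$ for all $t\in T$, so if $t\leq s$ then covariance of the profunctor in its second slot gives $R(t,s)=\top=\Hom_T(t,s)$. If instead $t\not\leq s$ but $R(t,s)=\top$, then composing the $\top$-entry $R(t,t)$ with the hypothetical $\top$-entry $R'(t,s)$ would make $(R'\circ R)(t,s)=\top$, contradicting $(R'\circ R)(t,s)=\Hom_T(t,s)=\bot$; so $R'(t,s)=\bot$, and the symmetric argument using $R'(t,t)=\top$ and $R\circ R'=\Hom_T$ gives $R(t,s)=\bot$ as well---a contradiction. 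Therefore $R(t,s)=\top\iff t\leq s$, i.e.\ $R=\Hom_T=\id_T$. For the skeletal statement: in a skeleton of $\kT^1$ a natural isomorphism $\eta:F\Rightarrow G$ of functors $P\to\kT^1$ has $F(x)\cong G(x)$, hence $F(x)=G(x)$, for every $x$, so each $\eta_x\in\Aut_{\kT^1}(F(x))=\{\id\}$, whence naturality yields $F=G$.

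The main obstacle is the step isolating bijectivity of the structure maps, i.e.\ that bordism composition restricts to honest function composition on the two pure dimension levels: the coend defining composition of Boolean profunctors a priori mixes the levels through mixed-dimension entries, and excluding this cleanly needs the bookkeeping sketched above (or, more efficiently, the structure theory of $1$-trusses and their bordisms from \cite{fct}, from which the correspondence ``invertible bordism $=$ graph of a $\truss 1$-automorphism'' is immediate). Granting that, the rest is a short and mechanical combination of conditions \textsf{(A)} and \textsf{(B)} with profunctoriality, applied symmetrically to $R$ and $R'=R\inv$.
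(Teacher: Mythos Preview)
The paper states this as an observation without proof, so there is nothing to compare against; your argument is correct and supplies what the paper omits.

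One simplification is available: the worry about cross-dimension mixing in the profunctor composition (which you flag as the ``main obstacle'') is not actually needed for bijectivity of $f$ and $g$. To see $f'f=\id$ you only need that \emph{some} witness exists for $(R'\circ R)(t,f'f(t))=\top$; taking $u=f(t)\in T_{(0)}$ gives this regardless of whether additional witnesses in $T_{(1)}$ exist, and then $(R'\circ R)(t,f'f(t))=\Hom_T(t,f'f(t))$ forces $f'f(t)=t$. The same trick handles $g$. So the digression about excluding mixed-dimension entries can be dropped entirely. Similarly, in the final step the first half (deducing $R'(t,s)=\bot$) is redundant: the second half already gives the contradiction directly, since $R'(t,t)=\top$ together with the assumed $R(t,s)=\top$ witnesses $(R\circ R')(t,s)=\top$, forcing $t\leq s$.
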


\begin{constr}[Classification of 1-truss bundles] \label{constr:1trussbord} Given a functor $\chi : P \to \kT^1$, the usual Grothendieck construction for profunctors yields a functor $q_{\chi} : T \to P$ over $P$ (more specifically, a functor constructed in this way is called an `exponentiable fibration', see \cite{street2001powerful}). Since 1-truss bordisms are Boolean profunctors, $q_{\chi}$ is in fact a map of posets. Moreover, each fiber $q_{\chi}\inv(x)$, $x \in P$, has the structure of a 1-truss (given by the image $\chi(x)$), and this makes $q_\chi$ a 1-truss bundle. The mapping $\chi \mapsto q_\chi$ yields a 1-to-1 correspondence between functors $P \to \kT^1$ and 1-truss bundles over $P$ (up to structure preserving bundle isomorphism). The inverse mapping will be denoted by $q \mapsto \chi_q$, and we refer to $\chi_q$ as the `classifying functor' of the 1-truss bundle $q$.
\end{constr}

\begin{rmk}[Alternative definition of $\kT^1$] \label{rmk:ttr1alt} The fact that the category $\kT^1$ classifies 1-truss bundles means that it can equivalently be defined of as follows: objects of $\kT^1$ are 1-truss bundles over the $[0]$; morphisms are 1-truss bundles over $[1]$ (with domain and codomain given by restricting to fibers over $\Set{0,1} \subset [1]$); two morphisms compose to a third morphism iff there is a 1-truss bundle over $[2]$ that restricts over $(0 \to 1)$, $(1 \to 2)$, and $(0 \to 2)$ to the first, second, resp.\ third morphism.
\end{rmk}

The relation of 1-mesh bundles and 1-truss bundles, together with the classification of 1-truss bundles by the category $\kT^1$, combines to give the following observation.

\begin{lem}[Classification of 1-mesh bundles] \label{lem:class1meshbun} Bundle isomorphism classes of 1-mesh bundles $p : M \to B$ bijectively correspond to functors $\Entr(B) \to \kT^1$. The correspondence takes $p$ to the classifying functor $\chi_{\FTrs(p)}$ of the 1-truss bundle $\FTrs(p)$.
\end{lem}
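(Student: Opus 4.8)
The plan is to assemble the claimed bijection by composing two bijections that have already been set up. First, recall from \cref{defn:1-truss-bun} and the surrounding discussion that the fundamental $1$-truss bundle construction $p \mapsto \FTrs(p)$ sends a $1$-mesh bundle $p : M \to B$ to a $1$-truss bundle over $\Entr(B)$, and that this assignment is essentially inverse to the classifying $1$-mesh bundle construction $q \mapsto \CMsh(q)$: every $1$-truss bundle over $\Entr(B)$ is the fundamental $1$-truss bundle of some $1$-mesh bundle over $B$ (namely $\CMsh(q)$), and the latter is determined up to a contractible space of framed stratified homeomorphisms. Second, recall from \cref{constr:1trussbord} that $1$-truss bundles over a poset $P$ correspond bijectively, up to structure-preserving bundle isomorphism, to functors $P \to \kT^1$, via $q \mapsto \chi_q$ with inverse $\chi \mapsto q_\chi$. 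Composing these two bijections with $P = \Entr(B)$ yields a bijection between bundle isomorphism classes of $1$-mesh bundles over $B$ and functors $\Entr(B) \to \kT^1$, and unwinding the definitions this bijection carries $p$ to $\chi_{\FTrs(p)}$.

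To make this precise I would verify three things. (i) Well-definedness: an isomorphism of $1$-mesh bundles $p \iso p'$ induces, via $\Entr(-)$, an isomorphism of the associated entrance path posets over $\Entr(B)$ compatible with the fibrewise $1$-truss structures (dimension and frame order are preserved because $1$-mesh maps are stratified and preserve framings), hence an isomorphism of $1$-truss bundles $\FTrs(p) \iso \FTrs(p')$, and therefore $\chi_{\FTrs(p)} = \chi_{\FTrs(p')}$ after passing to a skeleton of $\kT^1$ (using essential gauntness, \cref{rmk:gaunt}). (ii) Surjectivity: given $\chi : \Entr(B) \to \kT^1$, form the $1$-truss bundle $q_\chi$ over $\Entr(B)$; by the existence half of the $\FTrs$/$\CMsh$ correspondence there is a $1$-mesh bundle $\CMsh(q_\chi) : M \to B$ with $\FTrs(\CMsh(q_\chi)) \iso q_\chi$, hence $\chi_{\FTrs(\CMsh(q_\chi))} = \chi$. (iii) Injectivity: if $\chi_{\FTrs(p)} = \chi_{\FTrs(p')}$ then $\FTrs(p) \iso \FTrs(p')$ as $1$-truss bundles over $\Entr(B)$; applying $\CMsh$ and using that $\CMsh$ is weakly inverse to $\FTrs$ (so that $\CMsh(\FTrs(p))$ is canonically bundle-isomorphic to $p$) produces a bundle isomorphism $p \iso p'$.

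The main obstacle is (iii): promoting a combinatorial isomorphism of fundamental $1$-truss bundles to a topological isomorphism of the $1$-mesh bundles themselves. Fibrewise this is exactly the ``unique up to contractible choice'' statement for classifying $1$-meshes — a $1$-truss has a contractible space of classifying $1$-meshes, and any $1$-truss isomorphism is realized by a framed stratified homeomorphism of chosen classifying $1$-meshes — so the real work is to see that these fibrewise homeomorphisms can be chosen to assemble continuously over $B$. This is handled by the coordinatizability clause of \cref{defn:1-mesh-bun}: a coordinatizing embedding $M \into B \times \lR$ reduces the gluing problem for $1$-meshes over $B$ to continuously varying bounded embeddings into $\lR$ with continuous bounding sections, a situation controlled by the ($0$-)constructibility condition together with the combinatorial data recorded by $\FTrs(p)$. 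Rather than redo this gluing, I would cite the corresponding statements from \cite{fct} (the construction of $\CMsh$ on bundles and the fact that it is weakly inverse to $\FTrs$), since the present lemma is essentially a repackaging of those results together with \cref{constr:1trussbord}.
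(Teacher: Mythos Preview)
Your proposal is correct and matches the paper's approach: the paper's own proof is a one-line citation to \cite[Prop.\ 4.2.26]{fct} and \cite[Obs.\ 4.2.68]{fct}, and your argument is precisely the unpacking of that citation into the two-step bijection (mesh bundles $\leftrightarrow$ truss bundles via $\FTrs/\CMsh$, then truss bundles $\leftrightarrow$ functors via \cref{constr:1trussbord}), with the gluing in step (iii) deferred to the same source.
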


\begin{proof} Follows from \cite[Prop.\ 4.2.26]{fct} and \cite[Obs.\  4.2.68]{fct}.
\end{proof}

\begin{note}[Classification of 1-constructible mesh bundles] \label{rmk:base-cat} The preceding lemma still applies, with essentially the same proof, in the 1-constructible case (see \cref{rmk:1-constructible}): bundle isomorphism classes of 1-constructible 1-mesh bundles $p : M \to B$ (with $B$ not necessarily $0$-truncated) correspond to functors $\TEntr(B) \to \kT^1$.
\end{note}



\subsection{Geometric duality} The category of trusses admits natural duality operation. Combinatorially, this mirrors the dualization functor $C \mapsto C\op$ of categories. Geometrically, the operation passes to dual cell structures in the spirit of classical Poincar\'e duality.

\begin{defn}[Dualization of trusses] Given a 1-truss $T \equiv (T,\leq,\dim,\fleq)$, its \textbf{dual 1-truss} $T^\dagger$ is the truss $(T,\leq\op,\dim\op,\fleq)$ (where the definition of $\dim\op: (T,\leq\op) \to [1]\op$ uses that $[1] \iso [1]\op$). Given a 1-truss map $F : T \to S$, its \textbf{dual map} $F^\dagger : T^\dagger \to S^\dagger$ is the truss map of dual 1-trusses, that maps objects by $F$.
\end{defn}

\nid As a result, we obtain an involutive dualization functor $\dagger : \truss 1 \iso \truss 1$. This dualization functor maps closed trusses to open trusses. To refine our description of how dualization acts on maps, we now distinguish certain types of maps by their action on dimensions.

\begin{term}[Cellular and cocellular 1-truss maps] \label{term:1-truss-maps} Let $F : T \to S$ be a 1-truss map.
\begin{itemize}
    \item[-] We say $F$ is `cellular' if it restricts to a map $T_{(0)} \to S_{(0)}$, i.e.\ it maps dimension 0 objects to dimension 0 objects. If $T$ and $S$ are closed trusses, then equivalently $F$ preserves upper closures, that is, $F(T^{\geq x}) = S^{\geq F(x)}$.
    \item[-] We say $F$ is `cocellular' if it restricts to a map $T_{(1)} \to S_{(1)}$, i.e.\ it maps dimension 1 objects to dimension 1 objects. If $T$ and $S$ are open trusses, then equivalently $F$ preserves lower closures, that is, $F(T^{\leq x}) = S^{\leq F(x)}$.
\end{itemize}
If $F$ is both cellular and cocellular we say $F$ is `balanced'. A balanced injection is a called `subtruss', and a balanced bijection a `truss isomorphism'. The terminology similarly applies to 1-mesh maps.\footnote{Note our terminology here differs from that in \cite[\S2]{fct}, where `cellular' resp.\ `cocellular' maps of trusses were called `singular' resp.\ `regular' maps, since they map singular objects (i.e.\ the objects of $T_{(0)}$) to singular objects resp.\ regular objects (i.e.\ the objects of $T_{(1)}$) to regular objects. Our terminology choice here instead mirrors the terminology for `cellular maps' of cellular posets, cf.\ \cite[Def.\ 1.3.18]{fct}.}
\end{term}

\nid The dualization functor $\dagger : \truss 1 \iso \truss 1$ maps cellular maps to cocellular maps, and vice versa.

\begin{term}[Duality for 1-meshes and maps] We say meshes $M$ and $N$ are dual iff $\FTrs (M) = \FTrs (N)^\dagger$. Duality of mesh maps works similarly, and is illustrated in the next example.
\end{term}

\begin{eg}[Dualization of mesh maps] In \cref{fig:dualization-of-mesh-maps} we depict pairs of dual maps: the left column shows cellular maps which dualize to the cocellular maps on the right. (These maps may be further terminologically distinguished as `face', `degeneracy', `embedding', and `coarsening' maps, as indicated in the figure; we refer the reader to \cite[Eg.\ 2.3.96]{fct} for a discussion.)
\begin{figure}[ht]
    \centering
    \def\svgwidth{1\columnwidth}
    \import{./figuresused/}{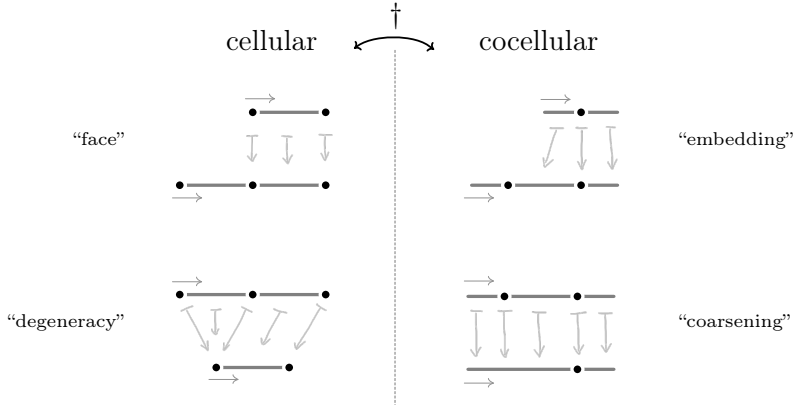}

    \caption{Dualization of 1-mesh maps}
    \label{fig:dualization-of-mesh-maps}
\end{figure}
\end{eg}

\begin{defn}[Duality on truss bundles] \label{defn:dual-bundle} Given a 1-truss bundle $q : T \to B$, its \textbf{dual 1-truss bundle} $q^\dagger : T\op \to B\op$ is the opposite map of posets together with 1-truss fiber structures dual to those of $q$.
\end{defn}

\nid The fact that $q^\dagger$ is indeed a 1-truss bundle is easiest to verify using the classification of 1-truss bundles (see \cref{constr:1trussbord}): recall, 1-truss bundles $q$ are classified by functors $\chi_q : B \to \kT^1$. Now, given a truss bordism $R : T \proto{} S$, the dual Boolean profunctor $R\op$ defines a truss bordism $R^\dagger : S^\dagger \to T^\dagger$ (that is, $R\op$ satisfies conditions \textsf{(A)} and \textsf{(B)}). This yields an involution
\[
	\dagger : \kT^1 \iso (\kT^1)\op .
\]
\nid Post-composing $\chi_p$ with this involution yields a functor $(\dagger \circ \chi_q)\op : B\op \to \kT^1$. This functor classifies a truss bundle, which equals the dual bundle $q^\dagger : T\op \to B\op$ defined above (in particular,  $q^\dagger$ is a 1-truss bundle).

\begin{term}[Duality for 1-mesh bundles] Given 1-mesh bundles $p$ and $p'$ we say they are dual if their fundamental 1-truss bundles are, i.e.\ if $\FTrs(p) = \FTrs(p')^\dagger$.
\end{term}

\begin{eg}[Dualization of mesh bundles] In \cref{fig:dualization-of-mesh-bundles} we depict two dual 1-mesh bundles.
\begin{figure}[ht]
    \centering
    \def\svgwidth{1\columnwidth}
\begingroup%
  \makeatletter%
  \providecommand\color[2][]{%
    \errmessage{(Inkscape) Color is used for the text in Inkscape, but the package 'color.sty' is not loaded}%
    \renewcommand\color[2][]{}%
  }%
  \providecommand\transparent[1]{%
    \errmessage{(Inkscape) Transparency is used (non-zero) for the text in Inkscape, but the package 'transparent.sty' is not loaded}%
    \renewcommand\transparent[1]{}%
  }%
  \providecommand\rotatebox[2]{#2}%
  \newcommand*\fsize{\dimexpr\f@size pt\relax}%
  \newcommand*\lineheight[1]{\fontsize{\fsize}{#1\fsize}\selectfont}%
  \ifx\svgwidth\undefined%
    \setlength{\unitlength}{2160bp}%
    \ifx\svgscale\undefined%
      \relax%
    \else%
      \setlength{\unitlength}{\unitlength * \real{\svgscale}}%
    \fi%
  \else%
    \setlength{\unitlength}{\svgwidth}%
  \fi%
  \global\let\svgwidth\undefined%
  \global\let\svgscale\undefined%
  \makeatother%
  \begin{picture}(1,0.26423611)%
    \lineheight{1}%
    \setlength\tabcolsep{0pt}%
    \put(0,0){\includegraphics[width=\unitlength,page=1]{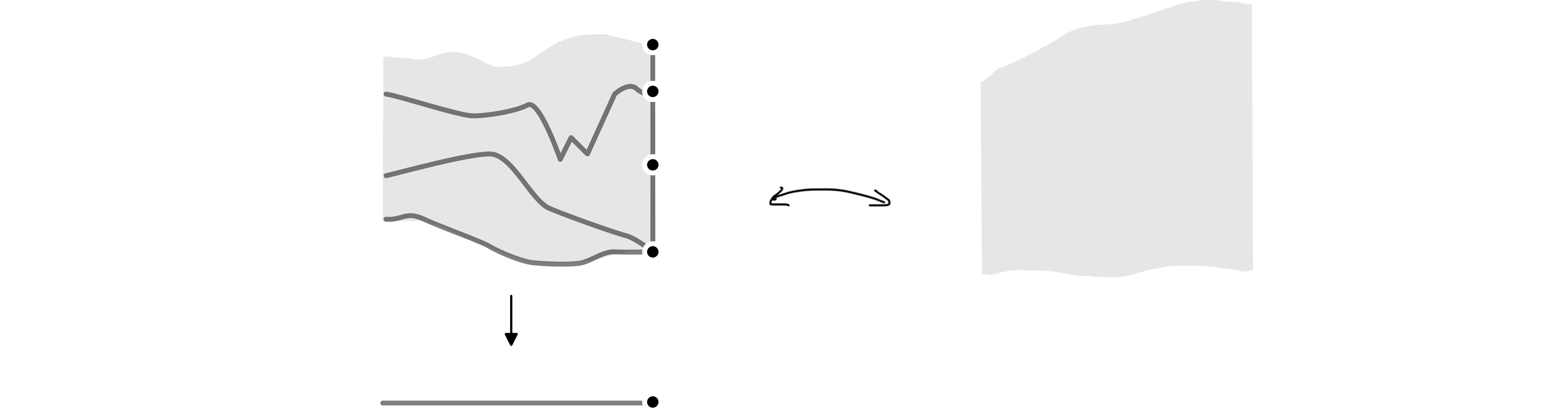}}%
    \put(0.52256944,0.15868056){\color[rgb]{0,0,0}\makebox(0,0)[lt]{\lineheight{1.25}\smash{\begin{tabular}[t]{l}$\dagger$\end{tabular}}}}%
    \put(0,0){\includegraphics[width=\unitlength,page=2]{dualization-of-mesh-bundles.pdf}}%
  \end{picture}%
\endgroup%

    \caption{Dualization of 1-mesh bundles}
    \label{fig:dualization-of-mesh-bundles}
\end{figure}
\end{eg}

\begin{note}[The dual worlds of cells and cocells] \label{note:cell-vs-string}  Given a \emph{cellular} 1-mesh map $F$ we may think of it as a 1-mesh bundle, by projecting its mapping cylinder $M_F$ to the stratified 1-simplex $\CStr [1]$. Passing to the dual of that bundle now represents the \emph{co}cellular map $F^\dagger$ via some (less familiar) notion of `\emph{co}mapping cylinder'. This illustrates the following heuristic observation: as topologists we are used to think about the world using cell structures and cell complexes, but less used to thinking in the dual world of cocells. Using the self-duality of meshes and trusses one can cleanly connect the two worlds.
\end{note}

\subsection{Labelings and stratifications}

We now endow 1-trusses and their bundles with an additional structure, called a `labeling'. This structure will serve two purposes: firstly, it will enable the later inductive classification of $n$-truss bundles, and secondly, it specializes to a structure of `stratifications' on 1-truss bundles which, analogous to stratifications on spaces, will decompose the bundle's total poset into subposet `strata'.

\subsubsection*{Labelings and stratifications of 1-trusses} \label{ssec:strattruss}

A `$\cC$-labeling' of a stratified space $(X,f)$ is a functor $\TEntr(f) \to \cC$ from its entrance path $\infty$-category to another ($\infty$-)category $\cC$. When labeling $0$-truncated stratifications we may work instead with entrance path posets $\Entr(f)$; in particular, for 1-meshes we work with their fundamental 1-trusses. This leads to the following notion of trusses with `labelings'. Throughout this section, let $\iC$ be an ordinary category.

\begin{defn}[Labeled 1-trusses and 1-truss bundles] Let $\iC$ be a category. A \textbf{$\iC$-labeled 1-truss} $(T,f)$ is an `underlying' 1-truss $T$ together with a `labeling' functor $f : T \to \iC$. A  \textbf{$\iC$-labeled 1-truss bundle} $(q,f)$ is an `underlying' 1-truss bundle $q : T \to P$ together with a `labeling' functor $f : T \to \iC$.
\end{defn}

\nid Analogous to 1-truss bundles being classified by functors into the category 1-truss bordisms $\kT^1$, we now construct a classifying category for the case of labeled 1-truss bundles. We define this category by considering labeled bundles over simplices, with objects being bundles over the 0-simplex, morphisms being bundles over the 1-simplex, and their composites being described by bundles over the 2-simplex---this mirrors our earlier construction of $\kT^1$ via unlabeled bundles over simplices in \cref{rmk:ttr1alt}. (However, an alternative construction, which mirrors the construction of $\kT^1$ via 1-truss bordisms in \cref{defn:1-truss-bordisms}, can also be given, see \cite[Obs.\ 2.3.21]{fct}.)

\begin{constr}[Labeled 1-truss bordisms] \label{constr:lab-1-truss-bord} Given a category $\iC$, the \textbf{category $\kT^1(\iC)$ of $\iC$-labeled 1-trusses and their bordisms} is defined as follows: objects of $\kT^1(\iC)$ are $\iC$-labeled 1-truss bundles over $[0]$; morphisms are $\iC$-labeled 1-truss bundles over $[1]$ (with domain and codomain given by restricting to fibers over  $0$ resp.\ $1$); two morphisms compose to a third iff there is a $\iC$-labeled bundle over $[2]$ that restricts over $(0 \to 1)$, $(1 \to 2)$, and $(0 \to 2)$ to the first, second, resp.\ third morphism.
The fact that this defines a category is shown in \cite[\S2.3.1]{fct}.
\end{constr}

\begin{rmk}[Label forgetting functor] Comparing the preceding construction of $\kT^1(\iC)$ to our earlier construction of $\kT^1$ in \cref{rmk:ttr1alt}, note that there is a `label forgetting' functor $\mathfrak{un} :\kT^1(\iC) \to \kT^1$ which discards labelings.
\end{rmk}

\begin{obs}[Classifying labeled 1-truss bundles] \label{constr:lab1bord} Given a functor $\chi : P \to \kT^1(\iC)$, the composite $\mathfrak{un} \circ \chi$ classifies a 1-truss bundle $q_{\chi}$ over $P$ by \cref{constr:1trussbord}. Note, the restriction of $q_{\chi}$ over each arrow $x \to y$ in $P$ equals the bundle classified by $\mathfrak{un} \circ \chi (x \to y)$ (up to a unique bundle isomorphism).
Using this identification, the labelings of the labeled 1-truss bundle $\chi (x \to y)$ assemble into a labeling $f_\chi$ of $q_{\chi}$. The mapping $\chi \mapsto (q_{\chi},f_\chi)$ yields a bijective correspondence between functors $P \to \kT^1(\iC)$ and $\iC$-labeled 1-truss bundles over $P$ (up to bundle isomorphisms that commute with labelings). The inverse will be denoted by $(q,f) \mapsto \chi_{(q,f)}$.
\end{obs}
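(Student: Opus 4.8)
The argument is a direct enrichment of the unlabeled classification \cref{constr:1trussbord}: the plan is to run the same Grothendieck-type construction while carrying the labeling along as an extra layer of structure, and to reduce the bijectivity claim to the unlabeled case plus a patching lemma for labelings.

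First, given a functor $\chi : P \to \kT^1(\iC)$, I would form $\mathfrak{un}\circ\chi : P \to \kT^1$ and apply \cref{constr:1trussbord} to obtain the underlying 1-truss bundle $q_\chi : T \to P$. The key observation, also part of the content of \cref{constr:1trussbord}, is that the Grothendieck construction is compatible with restriction: precomposing $\chi$ with the functor $[1]\to P$ that selects an arrow $x\to y$ and then applying the construction recovers (up to a canonical bundle isomorphism) the restriction $q_\chi|_{x\to y}$, and similarly restriction over $x\in P$ recovers the fiber $q_\chi^{-1}(x)$. Via these identifications the labeling functor carried by the $\iC$-labeled bundle $\chi(x\to y)$ over $[1]$ becomes a functor on the subposet of $T$ lying over $\{x,y\}$, and $\chi(\id_x)=\chi(x)$ supplies a labeling of the fiber $q_\chi^{-1}(x)$.

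Second --- the heart of the matter --- I would check that these per-arrow labelings assemble into a single functor $f_\chi : T \to \iC$. Each object of $T$ lies over a unique $x\in P$ and receives its label from $\chi(x)$; each generating arrow of $T$ lies over some $x\to y$ in $P$ and receives its label from $\chi(x\to y)$. Well-definedness on objects is immediate since $\chi$ preserves identities, so all the $\chi(x\to y)$ restrict over $x$ to the same object $\chi(x)$. Well-definedness and functoriality of the arrow-labels is exactly where functoriality of $\chi$ enters: for composable $x\to y\to z$ in $P$, composition in $\kT^1(\iC)$ is defined (see \cref{constr:lab-1-truss-bord} and \cite[\S2.3.1]{fct}) by a $\iC$-labeled bundle over $[2]$ restricting to $\chi(x\to y)$, $\chi(y\to z)$, and $\chi(x\to z)$; pulling this $[2]$-bundle back into $T$ shows that the three ways of labeling the relevant fibers agree, which is precisely the compatibility needed. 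This produces the $\iC$-labeled 1-truss bundle $(q_\chi,f_\chi)$.

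Third, for the inverse direction I would take a $\iC$-labeled 1-truss bundle $(q,f)$ over $P$ and restrict over the simplices of $P$: restriction over objects gives objects of $\kT^1(\iC)$, restriction over arrows gives morphisms, and restriction over $2$-simplices witnesses the composites, which defines a functor $\chi_{(q,f)} : P\to\kT^1(\iC)$. That $\chi\mapsto(q_\chi,f_\chi)$ and $(q,f)\mapsto\chi_{(q,f)}$ are mutually inverse --- up to natural isomorphism of functors on one side and label-preserving bundle isomorphism on the other (note that $\kT^1(\iC)$ need not be essentially gaunt, unlike $\kT^1$ in \cref{rmk:gaunt}, so both equivalences are genuinely needed) --- then follows by invoking \cref{constr:1trussbord} for the underlying unlabeled data and observing that in both round trips the labeling is rebuilt from exactly the restrictions out of which it was defined. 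The step I expect to be the main obstacle is this patching: verifying that local labelings on $1$- and $2$-simplices determine a global functor on the total poset $T$. It is bookkeeping rather than deep, but it is the only place that genuinely uses functoriality of $\chi$ together with the $[2]$-simplex definition of composition in $\kT^1(\iC)$; everything else transcribes the unlabeled argument.
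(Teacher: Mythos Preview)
Your proposal is correct and follows the same approach as the paper: the paper presents this as an observation with the construction sketched directly in the statement (no separate proof is given), and your write-up is a faithful expansion of that sketch, making explicit the patching of per-arrow labelings into a global functor $f_\chi$ and the role of the $[2]$-simplex composition in $\kT^1(\iC)$. Your remark that $\kT^1(\iC)$ need not be essentially gaunt (so the correspondence on the functor side should really be taken up to natural isomorphism) is a valid caveat the paper glosses over.
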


\begin{obs}[Labeled bordisms functor] Observe that the construction $\iC \mapsto \kT^1(\iC)$ is functorial in $\iC$. Indeed, given a functor $\iF : \iC \to \iD$, then $\kT^1(\iF) : \kT^1(\iC) \to \kT^1(\iD)$ acts on objects and morphisms of $\kT^1(\iC)$ (see \cref{constr:lab-1-truss-bord}) by post-composing their labelings with $\iF$.
\end{obs}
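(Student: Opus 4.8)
The plan is to define the functor $\kT^1(\iF) : \kT^1(\iC) \to \kT^1(\iD)$ directly on the simplex-based presentation of \cref{constr:lab-1-truss-bord}. On an object, i.e.\ a $\iC$-labeled $1$-truss bundle $(q,f)$ over $[0]$ with labeling $f : T \to \iC$, I would set $\kT^1(\iF)(q,f) := (q, \iF \circ f)$: the underlying $1$-truss bundle $q$ is left untouched and $\iF \circ f : T \to \iD$ is again a functor, so this is a $\iD$-labeled $1$-truss bundle over $[0]$. The identical formula $(q,f) \mapsto (q, \iF \circ f)$ on labeled bundles over $[1]$ defines the action on morphisms. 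First I would check compatibility with source and target: these are computed by restricting the labeled bundle over $[1]$ to the fibers over $0$ and $1$, which is a pullback along a poset map and hence computed pointwise on total posets, so it commutes with post-composition by $\iF$.

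Next I would verify the category axioms. Well-definedness on the isomorphism-class data of \cref{constr:lab1bord} is immediate, since an isomorphism of labeled bundles commuting with the $\iC$-labelings still commutes with labelings after post-composing by $\iF$. For identities, I would use that the identity morphism on $(q,f)$ is represented by the pullback of $(q,f)$ along the collapse $[1] \to [0]$ with pulled-back labeling, and that pullback of bundle and labeling commutes with post-composition by $\iF$, so identities are preserved. The one step requiring care is composition: if morphisms $R, Q$ compose to $P$ in $\kT^1(\iC)$, witnessed by a $\iC$-labeled bundle $W$ over $[2]$ restricting to $R, Q, P$ over the three edges, I would post-compose the labeling of $W$ by $\iF$ to obtain a $\iD$-labeled bundle $\iF_* W$ over $[2]$ with the same underlying truss bundle, and then argue — again from the fact that edge-restriction is a pullback and hence commutes with post-composition — that its restrictions over the edges are $\iF_* R$, $\iF_* Q$, $\iF_* P$, so $\iF_* W$ witnesses $\iF_* R \circ \iF_* Q = \iF_* P$ in $\kT^1(\iD)$.

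Finally, functoriality of the assignment $\iF \mapsto \kT^1(\iF)$ follows from strict associativity and unitality of functor composition: $\id_\iC \circ f = f$ yields $\kT^1(\id_\iC) = \id_{\kT^1(\iC)}$, and $(\iG \circ \iF) \circ f = \iG \circ (\iF \circ f)$ yields $\kT^1(\iG \circ \iF) = \kT^1(\iG) \circ \kT^1(\iF)$ for composable $\iF : \iC \to \iD$ and $\iG : \iD \to \iE$; hence $\kT^1(-)$ is a functor $\Cat \to \Cat$. The main obstacle — such as it is — is precisely the compatibility of edge-restriction with post-composition of labelings used in the composition step; once that is spelled out, everything else is routine bookkeeping. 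A tidier alternative is to invoke the classification of \cref{constr:lab1bord}: post-composition by $\iF$ is a natural family of maps between the restriction functors for $\iC$- and $\iD$-labeled bundles over all posets $P$, and therefore induces $\kT^1(\iF)$ on the classifying categories. I would mention this but carry out the hands-on argument for concreteness.
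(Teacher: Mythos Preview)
Your proposal is correct and takes the same approach as the paper: define $\kT^1(\iF)$ by post-composing labelings with $\iF$. The paper treats this as a self-evident observation and offers no further argument, whereas you spell out the routine verifications (compatibility with restriction over edges of $[1]$ and $[2]$, preservation of identities and composition, functoriality in $\iF$); none of these checks is problematic, and your identification of edge-restriction commuting with post-composition as the only point needing a moment's thought is accurate.
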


A `stratification' is a particular type of labeling structure. Let us first recall a few useful facts about such structures. We call a map $X \to P$ from a space to a poset `characteristic' if it is the characteristic map of a stratification of $X$.

\begin{rmk}[Stratified posets] \label{rmk:strat-posets} Any poset $P$ can be considered as a topological space by defining the basic opens to be the lower closures $P^{\leq x}$ of objects $x \in P$ (we speak of the `poset space $P$' when endowing $P$ with this topology). A `stratified poset' $(P,f)$ is a poset $P$ together with a stratification $f$ of the poset space $P$. Usually, we think of $f$ in terms of its characteristic map $f : P \to \Entr(f)$. Importantly, the characteristic map $f$ of a stratified poset $(P,f)$ is always a poset map $P \to \Entr(f)$.
\end{rmk}

\nid In particular, given a stratified space $(X,g)$, a stratification of the poset $\Entr(g)$ given by a characteristic map $f : \Entr(g) \to \Entr(f)$ is a special case of a labeling of $(X,g)$.

\begin{rmk}[Poset labelings of stratifications] \label{obs:lab-strat-top} Let $P$ be a poset.
\begin{enumerate}
\item Given a stratification $(X,g)$, with characteristic map $g : X \to \Entr(g)$, then strict coarsenings of $X$ are in bijective correspondence with stratifications of the poset $\Entr(g)$: the correspondence takes a strict coarsening $F$ to its entrance path poset map $\Entr(F)$. For details, see \cite[Lem.\ B.2.12]{fct}.
\item Given any continuous map $f : X \to P$ from a space $X$ to a poset space $P$, there is an essentially unique decomposition $f = \dcs f \circ \ccs f$ (called the `connected component splitting' of $f$) such that $\ccs f$ is characteristic, and $\dcs f$ is conservative. For details, see \cite[Constr.\ B.1.34]{fct}.
\item The last observation applies to all poset maps $Q \to P$ (since all poset maps are continuous as maps of poset spaces). Thus, any poset map splits essentially uniquely into a characteristic map and a conservative map. \qedhere
\end{enumerate}
\end{rmk}

\begin{defn}[Stratified 1-trusses] A \textbf{stratified 1-truss} $(T,f)$ is a labeled 1-truss such that $f$ stratifies the poset $(T,\leq)$ (in the sense of \cref{rmk:strat-posets}).
\end{defn}

\begin{rmk}[Stratified trusses from poset labeled trusses] \label{rmk:strat-truss-from-pos-lab} Using connected component splittings from \cref{obs:lab-strat-top} above, note that every poset labeled truss $(T, f : T \to P)$ canonically gives rise to a stratified truss $(T,\ccs f)$.
\end{rmk}

Let us also introduce an analogous notion for 1-meshes.

\begin{defn}[Stratified 1-meshes] A \textbf{stratified 1-mesh} is a tuple $(M,f)$ consisting of a 1-mesh $M$ and a strict coarsening $M \to f$ to a stratification $f$ (that is, $f$ has the same underlying space as $M$, and $M$ refines $f$).
\end{defn}

\begin{rmk}[Relating stratified 1-meshes and 1-trusses] Given a stratified 1-mesh $(M,f)$ and stratified 1-truss $(T,g)$, we say $(T,g)$ is a `stratified fundamental 1-truss' of $(M,f)$ (and conversely, that $(M,f)$ a `stratified classifying 1-mesh' of $(T,g)$), if, firstly, $T = \FTrs M$ is the fundamental 1-truss of $M$, and secondly, there is a (necessarily unique) poset isomorphism $\Entr(g) \iso \Entr(f)$ such that the following commutes
\[\begin{tikzcd}[baseline=(W.base)]
	{\Entr(M)} & T \\
{\Entr(f)} & |[alias=W]| {\Entr(g)}
	\arrow["\iso", from=1-1, to=1-2]
	\arrow["g", from=1-2, to=2-2]
	\arrow["{\Entr(M \to f)}"', from=1-1, to=2-1]
	\arrow["\iso"', from=2-1, to=2-2]
\end{tikzcd}. \qedhere
\]
\end{rmk}

\begin{notn}[Stratified fundamental 1-trusses] Stratified fundamental 1-trusses of stratified 1-meshes $(M,f)$ are essentially unique, and will be denoted by $\FTrs(M,f)$.
\end{notn}

\begin{notn}[Stratified classifying 1-meshes] Stratified classifying 1-meshes of stratified 1-trusses $(T,g)$ are unique up to contractible choice (when considered in an appropriate category, see \cref{cor:strat-mesh-truss-bundle}), and will be denoted by $\CMsh(T,g)$.
\end{notn}

\begin{eg}[Stratified 1-meshes and stratified 1-trusses] In \cref{fig:stratified-1-meshes-and-1-trusses}, on the left, we depict a stratified 1-mesh $(M,f)$ consisting of a 1-mesh $M$ and a coarsening $M \to f$; and on the right, the corresponding stratified 1-truss consisting of a 1-truss $T$ and a characteristic poset map $T \to \Entr(f)$.
\begin{figure}[ht]
    \centering
    \def\svgwidth{1\columnwidth}
\begingroup%
  \makeatletter%
  \providecommand\color[2][]{%
    \errmessage{(Inkscape) Color is used for the text in Inkscape, but the package 'color.sty' is not loaded}%
    \renewcommand\color[2][]{}%
  }%
  \providecommand\transparent[1]{%
    \errmessage{(Inkscape) Transparency is used (non-zero) for the text in Inkscape, but the package 'transparent.sty' is not loaded}%
    \renewcommand\transparent[1]{}%
  }%
  \providecommand\rotatebox[2]{#2}%
  \newcommand*\fsize{\dimexpr\f@size pt\relax}%
  \newcommand*\lineheight[1]{\fontsize{\fsize}{#1\fsize}\selectfont}%
  \ifx\svgwidth\undefined%
    \setlength{\unitlength}{2160bp}%
    \ifx\svgscale\undefined%
      \relax%
    \else%
      \setlength{\unitlength}{\unitlength * \real{\svgscale}}%
    \fi%
  \else%
    \setlength{\unitlength}{\svgwidth}%
  \fi%
  \global\let\svgwidth\undefined%
  \global\let\svgscale\undefined%
  \makeatother%
  \begin{picture}(1,0.13333333)%
    \lineheight{1}%
    \setlength\tabcolsep{0pt}%
    \put(0,0){\includegraphics[width=\unitlength,page=1]{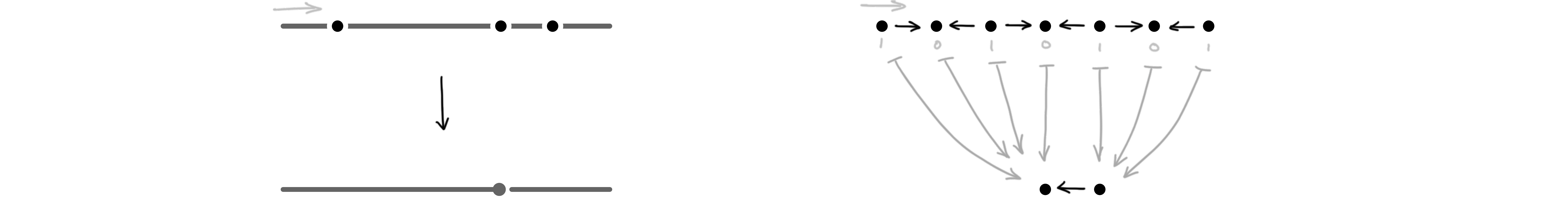}}%
    \put(0.125,0.11041667){\color[rgb]{0,0,0}\makebox(0,0)[lt]{\lineheight{1.25}\smash{\begin{tabular}[t]{l}$M$\end{tabular}}}}%
    \put(0.30659722,0.06041667){\color[rgb]{0,0,0}\makebox(0,0)[lt]{\lineheight{1.25}\smash{\begin{tabular}[t]{l}{\tiny coarsen}\end{tabular}}}}%
    \put(0.12881944,0.00625){\color[rgb]{0,0,0}\makebox(0,0)[lt]{\lineheight{1.25}\smash{\begin{tabular}[t]{l}$f$\end{tabular}}}}%
    \put(0.78194444,0.00625){\color[rgb]{0,0,0}\makebox(0,0)[lt]{\lineheight{1.25}\smash{\begin{tabular}[t]{l}$\Entr(f)$\end{tabular}}}}%
    \put(0.80034722,0.11041667){\color[rgb]{0,0,0}\makebox(0,0)[lt]{\lineheight{1.25}\smash{\begin{tabular}[t]{l}$T$\end{tabular}}}}%
  \end{picture}%
\endgroup%

    \caption{Stratified 1-mesh and its stratified fundamental 1-truss}
    \label{fig:stratified-1-meshes-and-1-trusses}
\end{figure}
\end{eg}

\nid Just as we defined labeled 1-truss bundles earlier, we could now also define stratifications for 1-truss (and 1-mesh) bundles. We won't do so here, but will return to the notion in the more general case of $n$-trusses (and $n$-meshes) in the next section.

\section{Meshes and trusses in dimension \texorpdfstring{$n$}{n}}

Recall the bigger picture from \cref{sec:overview}: when studying manifold diagrams one observes an apparent inductive pattern; manifold $n$-diagrams project to manifold $(n-1)$-diagrams, which in turn project to manifold $(n-2)$-diagrams, and so on. With 1-mesh bundles playing a role analogous to that of such projections, we are lead to the following definition.

\begin{defn}[$n$-Meshes] An \textbf{$n$-mesh} $M$ is a tower of 1-mesh bundles
\[
	M_n \xto {p_n} M_{n-1} \xto {p_{n-1}}  ... \xto{p_2} M_1 \xto {p_1} M_0 = \ast. \qedhere
\]
\end{defn}

\nid At the level of structured entrance path posets, this has the following combinatorial counterpart.

\begin{defn}[$n$-Trusses] An \textbf{$n$-truss} $T$ is a tower of 1-truss bundles
\[
	T_n \xto {q_n} T_{n-1} \xto {q_{n-1}}  ... \xto{q_2} T_1 \xto {q_1} T_0 = \ast. \qedhere
\]
\end{defn}

\begin{notn}[Fundamental $n$-trusses] Given an $n$-mesh $M$ consisting of $1$-mesh bundles $p_i : M_i \to M_{i-1}$, $1 \leq i \leq n$, its `fundamental $n$-truss' $\FTrs M$ is the $n$-truss given by the 1-truss bundles $\FTrs p_i$.
\end{notn}

\begin{eg}[$2$-Meshes and 2-trusses] In \cref{fig:open-and-closed-2-meshes-and-their-respective-2-trusses} we depict an open and a closed 2-mesh, together with their corresponding fundamental 2-trusses.
\begin{figure}[ht]
    \centering
    \def\svgwidth{1\columnwidth}
\begingroup%
  \makeatletter%
  \providecommand\color[2][]{%
    \errmessage{(Inkscape) Color is used for the text in Inkscape, but the package 'color.sty' is not loaded}%
    \renewcommand\color[2][]{}%
  }%
  \providecommand\transparent[1]{%
    \errmessage{(Inkscape) Transparency is used (non-zero) for the text in Inkscape, but the package 'transparent.sty' is not loaded}%
    \renewcommand\transparent[1]{}%
  }%
  \providecommand\rotatebox[2]{#2}%
  \newcommand*\fsize{\dimexpr\f@size pt\relax}%
  \newcommand*\lineheight[1]{\fontsize{\fsize}{#1\fsize}\selectfont}%
  \ifx\svgwidth\undefined%
    \setlength{\unitlength}{2160bp}%
    \ifx\svgscale\undefined%
      \relax%
    \else%
      \setlength{\unitlength}{\unitlength * \real{\svgscale}}%
    \fi%
  \else%
    \setlength{\unitlength}{\svgwidth}%
  \fi%
  \global\let\svgwidth\undefined%
  \global\let\svgscale\undefined%
  \makeatother%
  \begin{picture}(1,0.36006944)%
    \lineheight{1}%
    \setlength\tabcolsep{0pt}%
    \put(0,0){\includegraphics[width=\unitlength,page=1]{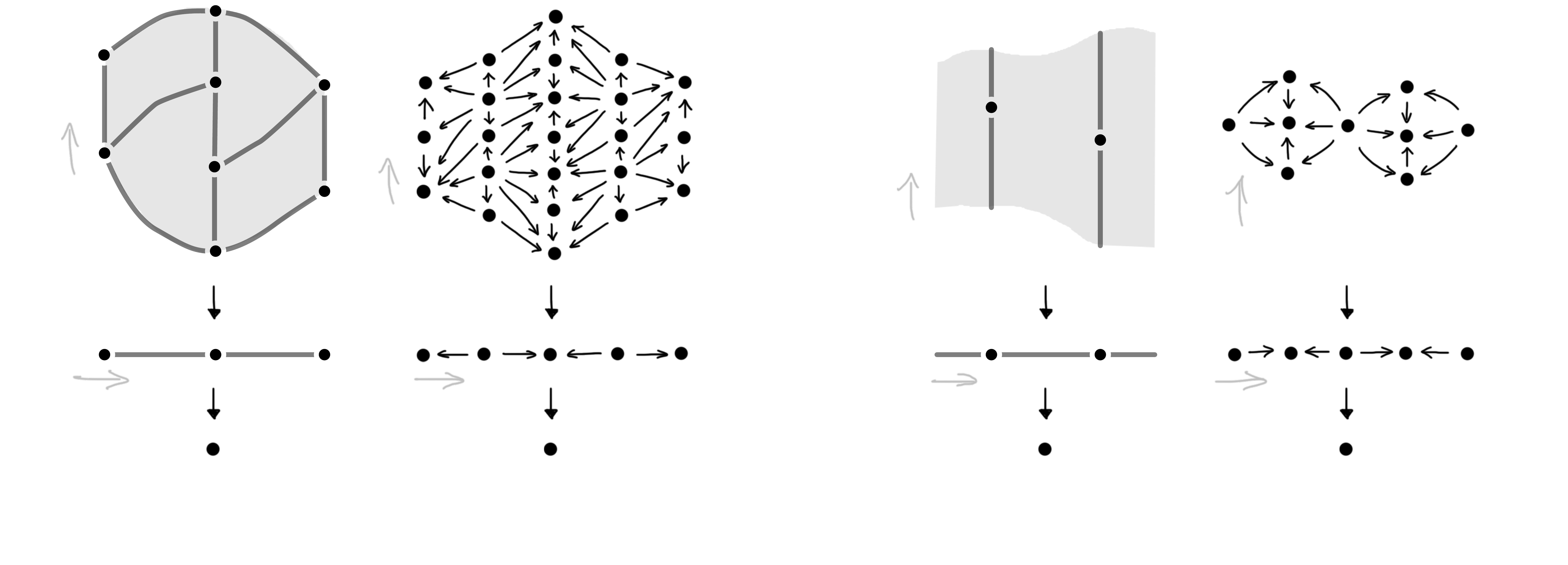}}%
    \put(0.11909722,0.02534722){\color[rgb]{0,0,0}\makebox(0,0)[lt]{\lineheight{1.25}\smash{\begin{tabular}[t]{l}$M$\end{tabular}}}}%
    \put(0.31180556,0.02534722){\color[rgb]{0,0,0}\makebox(0,0)[lt]{\lineheight{1.25}\smash{\begin{tabular}[t]{l}$\FTrs(M)$\end{tabular}}}}%
    \put(0.65173611,0.02534722){\color[rgb]{0,0,0}\makebox(0,0)[lt]{\lineheight{1.25}\smash{\begin{tabular}[t]{l}$N$\end{tabular}}}}%
    \put(0.82118056,0.02534722){\color[rgb]{0,0,0}\makebox(0,0)[lt]{\lineheight{1.25}\smash{\begin{tabular}[t]{l}$\FTrs(N)$\end{tabular}}}}%
  \end{picture}%
\endgroup%

    \caption[Open and closed meshes and trusses]{Open and closed 2-meshes and their respective fundamental 2-trusses}
    \label{fig:open-and-closed-2-meshes-and-their-respective-2-trusses}
\end{figure}
\end{eg}

\nid The preceding example illustrates that all strata (in the total stratification $M_n$) of an $n$-mesh $M$ are $k$-cells, $k \leq n$---indeed, as outlined in the introduction, the role of $n$-meshes is precisely to `cellulate' other stratifications (such as manifold diagrams, to be defined in \cref{ch:mdiag}). For trusses, cell dimensions can be expressed in combinatorial terms as follows.

\begin{rmk}[Cell dimensions in $n$-trusses] \label{rmk:n-truss-dim} For an $n$-truss $T$ given by 1-truss bundles $q_i : T_i \to T_{i-1}$, and for $x \in T_n$, we write $x_i = q_{> i}(x)$ where $q_{> i} := q_{i+1} \circ ... \circ q_{n-1} \circ q_n$ maps $T_n \to T_i$. Write $\dim_i(x) = j$ if $\dim(x_i) = j$ in the 1-truss fiber over $x_{i-1}$. We define $\dim(x) := \sum_i \dim_i(x)$ and speak of that sum as the `cell dimension' of $x$. The cell dimension is a poset map $\dim : T_n \to [n]\op$. Note if $T = \FTrs M$, then $\dim(x)$ is the dimension of the cell in $M_n$ corresponding to $x$.
\end{rmk}

\subsection{Bundles, maps, and classification} \label{ssec:truss-mesh-bundles}

The definitions of $n$-meshes and $n$-trusses immediately generalize to bundles thereof.

\begin{defn}[$n$-Mesh bundles] An \textbf{$n$-mesh bundle $p$} over a `base' stratification $B$ is a tower of 1-mesh bundles $p_i : M_i \to M_{i-1}$, $1 \leq i \leq n$, ending in $M_0 = B$.
\end{defn}

\begin{defn}[$n$-Truss bundles] An \textbf{$n$-truss bundle $q$} over a `base' poset $P$ is a tower of 1-truss bundles $q_i : T_i \to T_{i-1}$, $1 \leq i \leq n$, ending in $T_0 = P$.
\end{defn}

\nid Note that both definitions also apply to the case $n = 0$: a $0$-mesh bundle is simply a stratification $B$, while a $0$-truss bundle is simply a poset $P$. As before, the two definitions are related as follows.

\begin{notn}[Fundamental $n$-truss bundles] \label{rmk:fund-truss-bundle} Given an $n$-mesh bundle $p = \{p_i : M_i \to M_{i-1}\}_{1 \leq i \leq n}$ over $B$, its `fundamental $n$-truss bundle' $\FTrs(p)$ is the essentially unique $n$-truss bundle over $\Entr(B)$ given by the $1$-truss bundles $\{\FTrs p_i : \Entr M_i \to \Entr M_{i-1}\}_{1 \leq i \leq n}$.
\end{notn}

\begin{notn}[Classifying $n$-mesh bundles] \label{term:classifying-mesh-bundles} An $n$-mesh bundle over $B$ is said to be the `classifying $n$-mesh bundle' of an $n$-truss bundle $q$ over $\Entr(B)$ if $q$ is the fundamental $n$-truss bundle of $p$. Classifying $n$-mesh bundles of $q$ are unique up to contractible choice (which will follow from \cref{thm:mesh-truss-bundle}, at least in the case of closed and open bundles), and we denote any such $n$-mesh bundle by $\CMsh q$.
\end{notn}

\begin{term}[Closed and open bundles] An $n$-mesh bundle $p$ is called `closed' (resp. `open') if all of its 1-mesh bundles $p_i$ are closed (resp.\ open). The terminology similarly applies to $n$-truss bundles.
\end{term}

\begin{eg}[$n$-Mesh bundles] We depict two examples of 2-mesh bundles over the stratified 1-simplex $\CStr [1]$ in \cref{fig:2-mesh-bundles-over-the-stratified-1-simplex}. (In each case, the reader may construct a 2-truss bundle by passing to fundamental truss bundles).
\begin{figure}[ht]
    \centering
    \def\svgwidth{1\columnwidth}
    \import{./figuresused/}{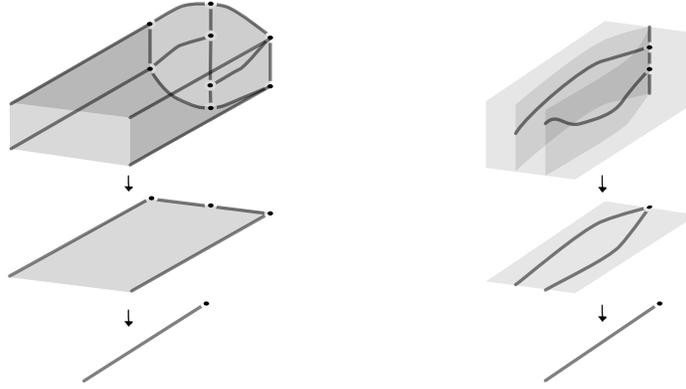}

    \caption[2-Mesh bundles]{A closed resp.\ open 2-mesh bundle over the stratified 1-simplex}
    \label{fig:2-mesh-bundles-over-the-stratified-1-simplex}
\end{figure}
\end{eg}

We now discuss the theory mesh and truss bundles in more detail. Note that many of our definitions specialize to the case of $n$-meshes and $n$-trusses as well, but we will not explicitly repeat them.

\begin{notn}[Total stratifications and posets] \label{notn:total-strat-pos} Given an $n$-mesh bundle $p = \{p_i : M_i \to M_{i-1}\}_{1 \leq i \leq n}$, we call $M_n$ the `total stratification' of $p$ and denote it by $\Totz p$. Similarly, in the case of an $n$-truss bundles $q = \{q_i : T_i \to T_{i-1}\}_{1 \leq i \leq n}$ we call $T_n$ the `total poset' and denote it by $\Totz q$.
\end{notn}


\begin{defn}[Mesh bundle maps] Given $n$-mesh bundles $p = \{p_i : M_i \to M_{i-1}\}_{1 \leq i \leq n}$ and $p' =  \{p'_i : M'_i \to M'_{i-1}\}_{1 \leq i \leq n}$, an \textbf{$n$-mesh bundle map} $F : p \to p'$  is a tower of stratified maps $F_i : M_i \to M'_i$ that commute with all $p_i$, $p'_i$, and such that the $F_i$ restrict to 1-mesh maps on fibers.
\end{defn}

\begin{defn}[Truss bundle maps] Given $n$-truss bundles $q = \{q_i : T_i \to T_{i-1}\}_{1 \leq i \leq n}$ and $q' = \{q'_i : T'_i \to T'_{i-1}\}_{1 \leq i \leq n}$, an \textbf{$n$-truss bundle map}  $F : q \to q'$ is a tower of poset maps $F_i : T_i \to T'_i$ that commute with all $q_i$, $q'_i$, and such that the $F_i$ restrict to 1-truss maps on fibers.
\end{defn}

\begin{conv}[Fixing the base] \label{conv:fixing-the-base} When working with maps $F$ of bundles over the same base $B$, we assume that $F_0 = \id_B$ (for both mesh and truss bundle maps).
\end{conv}

\begin{notn}[Mesh and truss categories] $\tmeshbun n$ denotes the topologically enriched category of $n$-mesh bundles and their maps, $\tmesh n(B)$ its subcategory of bundles over $B$, and $\tmesh n$ its subcategory of $n$-meshes. Similarly, $\trussbun n$ denotes 1-category of $n$-trusses bundles and their maps, $\truss n(P)$ its subcategory of bundles over $P$, and $\truss n$ its subcategory of $n$-trusses.
\end{notn}

\begin{rmk}[Fundamental truss bundle functor] \label{notn:ftrs-bundle-map} Note that any $n$-mesh bundle map $F : p \to p'$ determines an $n$-truss bundle map $\FTrs p \to \FTrs p'$ by passing to entrance path posets componentwise; we denote this map by $\FTrs F$. This yields a topologically enriched functor $\FTrs : \tmesh n(B) \to \truss n (\Entr B)$ (see \cite[\S4.2.1]{fct}).
\end{rmk}

\nid For the next observation, recall that we call a category `essentially gaunt' if any two isomorphic objects are isomorphic by a unique isomorphism.

\begin{obs}[Essential gauntness of truss bundles] Truss bundle isomorphisms $F : q \to q'$ between $n$-truss bundles $q$, $q'$ over the same base (assumed to be fixed by $F$), are unique if they exist. By passing to any skeleton, we will replace all isomorphisms $q \iso q'$ of $n$-truss bundles by equalities $q = q'$.
\end{obs}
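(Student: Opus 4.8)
The plan is to prove uniqueness by induction along the tower, reducing everything to the rigidity of $1$-trusses. Recall from the definition of truss bundle maps (together with \cref{conv:fixing-the-base}) that an isomorphism $F : q \to q'$ of $n$-truss bundles over $B$ is a tower of poset isomorphisms $F_i : T_i \to T_i'$, $0 \leq i \leq n$, with $F_0 = \id_B$, commuting with the structure maps $q_i$, $q_i'$, and restricting on each fiber to a $1$-truss isomorphism (the fiber restrictions of the inverse $F\inv$ supplying the inverse $1$-truss maps). So it suffices to show that, given two such towers $F$ and $G$, one has $F_i = G_i$ for every $i$.

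The base case of the induction is really the only piece of content: I claim that between any two $1$-trusses $T$ and $S$ there is at most one $1$-truss isomorphism. Indeed, by \cref{rmk:comb-1-trusses} a $1$-truss carries a total order $\fleq$, and a $1$-truss isomorphism must preserve it; but an order isomorphism between two finite total orders is uniquely determined --- it sends the $k$-th $\fleq$-least element of $T$ to the $k$-th $\fleq$-least element of $S$. Hence the underlying map of sets, and therefore the $1$-truss isomorphism itself, is unique. Equivalently, this is the statement that the objects of $\kT^1$ have no nonidentity automorphisms, i.e.\ the essential gauntness of $\kT^1$ recorded in \cref{rmk:gaunt}.

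For the inductive step, suppose $F_{i-1} = G_{i-1}$ (the case $i = 0$ being handled by $F_0 = \id_B = G_0$). Pick $x \in T_i$ and put $b := q_i(x) \in T_{i-1}$. Since $F_i$ and $G_i$ commute with $q_i$ and with $F_{i-1} = G_{i-1}$, both restrict to $1$-truss isomorphisms out of the fiber $T_i^b$ and into the \emph{same} fiber $(T_i')^{\,F_{i-1}(b)}$ of $q_i'$; by the base case these two restrictions agree, so $F_i(x) = G_i(x)$. As $T_i$ is the disjoint union of the fibers $q_i\inv(b)$, $b \in T_{i-1}$, and $x$ was arbitrary, $F_i = G_i$, completing the induction. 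The one point to watch is precisely that $F_i(x)$ and $G_i(x)$ land in the same fiber of $q_i'$ --- this is where the inductive hypothesis $F_{i-1} = G_{i-1}$ enters, and it is what makes the base-case comparison applicable; beyond that, the argument is pure bookkeeping over the tower and I do not anticipate any genuine obstacle.

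Finally, uniqueness of isomorphisms says exactly that $\trussbun n$ --- and likewise each of $\truss n$ and $\truss n(P)$ --- is essentially gaunt, so in particular every $n$-truss bundle has trivial automorphism group. Choosing a skeleton then upgrades the relation `$q \iso q'$' to literal equality `$q = q'$', which is the convention we adopt henceforth; no further argument is needed for this last remark.
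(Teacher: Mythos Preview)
Your argument is correct and is exactly the natural unwinding of this observation; the paper states it without proof, implicitly relying on the essential gauntness of $\kT^1$ (\cref{rmk:gaunt}) together with the evident induction up the tower, which is precisely what you have written out.
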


\begin{term}[Cellular and co-cellular $n$-truss bundle maps] \label{term:n-truss-bun-maps} An $n$-truss bundle map $F$ is called `cellular',  `cocellular', or `balanced' if each 1-truss bundle map $F_{i}$, $i > 0$, is fiberwise so in the sense of \cref{term:1-truss-maps}. It is a `subtruss bundle' (resp.\ a `truss bundle isomorphism') if firstly, each $F_{i}$, $i > 0$, is fiberwise so, and secondly, $F_0$ is a subposet (resp.\ a poset isomorphism). We similarly use the same terms `cellular', `cocellular', `balanced', `submesh bundle', and `mesh bundle isomorphism' for mesh bundle maps $F$, whenever it applies to the fundamental $n$-truss bundle map $\FTrs F$.
\end{term}

Next, we recall the classification of $n$-truss and $n$-mesh bundles.

\begin{defn}[$n$-Truss bordisms] The \textbf{category of $n$-trusses and their bordisms} $\kT^n$ is the category obtained by $n$ times applying the functor $\kT^1(-)$ to the terminal category $\ast$.
\end{defn}

\nid As in the case $n = 1$ (see \cref{rmk:gaunt}), the category of $n$-truss bordisms $\kT^n$ is essentially gaunt. Consequently, passing to a skeleton, we may again consider functors into $\kT^n$ up to equality.

\begin{obs}[Classification of $n$-truss bundles] \label{obs:ntrussbord} $n$-Truss bundles over a poset are classified by functors from the poset into $\kT^n$. To see this, consider the following diagram, whose top row is an $n$-truss bundle $q$ over $P$.
\[\begin{tikzcd}
	{T_n} & {T_{n-1}} & {T_{n-2}} & \cdots & {T_1} & {T_0 = P} \\
	\ast & {\kT^1(\ast)} & {\kT^2(\ast)} && {\kT^{n-1}(\ast)} & {\kT^n(\ast)}
	\arrow["{q_n}", from=1-1, to=1-2]
	\arrow["{q_{n-1}}", from=1-2, to=1-3]
	\arrow["{q_{n-2}}", from=1-3, to=1-4]
	\arrow["{q_2}", from=1-4, to=1-5]
	\arrow["{q_1}", from=1-5, to=1-6]
	\arrow[from=1-1, to=2-1]
	\arrow["{\chi_q^1}",from=1-2, to=2-2]
	\arrow["{\chi_q^2}",from=1-3, to=2-3]
	\arrow["{\chi_q^{n-1}}",from=1-5, to=2-5]
	\arrow["{\chi_q^n}",from=1-6, to=2-6]
\end{tikzcd}\]
Just considering the top 1-truss bundle $q_n$ together with the `trivial labeling' $T_n \to \ast$, we can apply \cref{constr:lab1bord} to find a classifying functor $\chi_q^1 : T_{n-1} \to \kT^1(\ast)$. But this classifying functor now provides a labeling for the 1-truss bundle $q_{n-1} : T_{n-1} \to T_{n-2}$; thus, applying \cref{constr:lab1bord} again, this $\kT^1(\ast)$-labeled bundle is now classified by a functor $\chi_q^2 : T_{n-2} \to \kT(\kT^1(\ast)) = \kT^2(\ast)$. Continuing inductively and defining $\chi_q := \chi_q^n$, we see that $n$-truss bundles $q$ over $P$ (up to structure preserving bundle isomorphism) are classified precisely by functors $\chi_q : P \to \kT^n$.
\end{obs}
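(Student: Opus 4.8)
The plan is to induct on $n$, with the work at each step outsourced to the two classification results already in hand: \cref{constr:1trussbord} for unlabeled $1$-truss bundles and, more to the point, \cref{constr:lab1bord} for $\iC$-labeled $1$-truss bundles over a poset, with $\iC$ an arbitrary category. The base case $n = 0$ is vacuous: a $0$-truss bundle over $P$ is the poset $P$ itself, $\kT^0 = \ast$, and there is exactly one functor $P \to \ast$.

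The inductive step exploits the fact that the two sides of the asserted correspondence have identical recursive descriptions. Tautologically, an $n$-truss bundle $q = \{q_i : T_i \to T_{i-1}\}_{1 \le i \le n}$ over $P$ is the same data as a single $1$-truss bundle $q_1 : T_1 \to P$ together with the $(n-1)$-truss bundle $\{q_i\}_{2 \le i \le n}$ over the poset $T_1$. On the combinatorial side, $\kT^n = \kT^1(\kT^{n-1})$ by definition, so \cref{constr:lab1bord} (taken with $\iC = \kT^{n-1}$) identifies a functor $\chi : P \to \kT^n$ with a $\kT^{n-1}$-labeled $1$-truss bundle over $P$, that is, with a $1$-truss bundle $q_1 : T_1 \to P$ equipped with a labeling functor $T_1 \to \kT^{n-1}$; by the inductive hypothesis, such a labeling functor is precisely the classifying data of an $(n-1)$-truss bundle over $T_1$. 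Since the two descriptions agree term for term, the inductive hypothesis together with \cref{constr:lab1bord} supply the mutually inverse assignments $q \mapsto \chi_q$ and $\chi \mapsto q_\chi$. Unwinding the recursion from the top reproduces the explicit recipe: the trivial labeling of $T_n$ makes $q_n$ a labeled bundle classified by $\chi^1_q : T_{n-1} \to \kT^1$; this in turn labels $q_{n-1}$, giving $\chi^2_q : T_{n-2} \to \kT^2$; after $n$ iterations one reaches $\chi_q := \chi^n_q : T_0 = P \to \kT^n$.

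What is left is to check that $q \mapsto \chi_q$ and $\chi \mapsto q_\chi$ are mutually inverse up to structure-preserving bundle isomorphism, and I expect this to be the only delicate point, though it is bookkeeping rather than a new idea. The correspondences of \cref{constr:1trussbord} and \cref{constr:lab1bord} are each bijective only modulo bundle isomorphisms, so chaining them through $n$ levels requires that the isomorphism chosen at one level can be transported up to an isomorphism of the labeled bundle at the next level, compatibly with the inductive hypothesis. The essential gauntness of $\kT^n$ --- and, dually, of $n$-truss bundles over a fixed base, as noted just above --- removes any ambiguity here, since it forces these transported isomorphisms to be unique and hence to compose coherently. Once this is in place, the induction closes and functors $P \to \kT^n$ classify $n$-truss bundles over $P$.
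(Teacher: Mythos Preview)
Your proposal is correct and matches the paper's own argument: both iterate \cref{constr:lab1bord}, peeling off one $1$-truss bundle at a time and using the classifying functor obtained at each step as the labeling for the next, until one arrives at $\chi_q : P \to \kT^n$. Your framing as a formal induction on $n$ (decomposing at $q_1$ with $\iC = \kT^{n-1}$) is a mild repackaging of the paper's top-down iteration (starting at $q_n$ with $\iC = \ast$), and you even note that the two unwind to the same explicit recipe; the extra paragraph on essential gauntness is sound bookkeeping that the paper leaves implicit.
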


\begin{lem}[Classification of $n$-mesh bundles] \label{lem:classnmeshbun} Bundle isomorphism classes of closed (or open) $n$-mesh bundles with base stratification $B$ bijectively correspond to functors $\Entr(B) \to \kT^n$. The correspondence maps $p \mapsto \chi_{\FTrs(p)}$.
\end{lem}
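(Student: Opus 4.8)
The plan is to reduce the statement to \cref{obs:ntrussbord}, which already classifies $n$-truss bundles over a poset by functors into $\kT^n$, combined with \cref{lem:class1meshbun} applied one layer at a time. Since $\FTrs(p)$ corresponds under \cref{obs:ntrussbord} to the functor $\chi_{\FTrs(p)}$, and isomorphic mesh bundles over $B$ have the same classifying functor (by essential gauntness of truss bundles, exactly as on the truss side), it suffices to show that $p \mapsto \FTrs(p)$ induces a bijection between bundle isomorphism classes of closed (resp.\ open) $n$-mesh bundles over $B$ and of closed (resp.\ open) $n$-truss bundles over $\Entr(B)$ --- equivalently, that classifying $n$-mesh bundles exist and are unique up to bundle isomorphism over $B$. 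I would prove this by induction on $n$, with $n=1$ being \cref{lem:class1meshbun} (and $n=0$ trivial).

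For the inductive step, I would decompose an $n$-mesh bundle $p = \{p_i\}_{1 \le i \le n}$ over $B$ into its lower $(n-1)$-mesh bundle $p^{<n} = \{p_i\}_{1 \le i \le n-1}$ over $B$, with total stratification $M_{n-1} = \Totz(p^{<n})$, and its top $1$-mesh bundle $p_n : M_n \to M_{n-1}$; closedness resp.\ openness is inherited by both pieces, and conversely such a pair reassembles to an $n$-mesh bundle over $B$. By definition of $\FTrs$ one has $\Entr(M_{n-1}) = \Totz(\FTrs(p^{<n}))$, so that fundamental $1$-truss bundles of $1$-mesh bundles over $M_{n-1}$ live over exactly the total poset of $\FTrs(p^{<n})$. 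The one genuine input I would import from \cite[\S4.2]{fct} is that $M_{n-1}$, being the total stratification of a closed or open $(n-1)$-mesh bundle over an admissible base, is itself an admissible base stratification (conical, open regular, $0$-truncated --- indeed a regular cell complex), so that \cref{lem:class1meshbun} applies with $M_{n-1}$ in place of $B$.

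Granting this, surjectivity follows: given a closed (resp.\ open) $n$-truss bundle $q$ over $\Entr(B)$, write $q = (q^{<n}, q_n)$ with $q_n$ a $1$-truss bundle over $\Totz(q^{<n})$; by induction $q^{<n} = \FTrs(p^{<n})$ for some closed (resp.\ open) $(n-1)$-mesh bundle $p^{<n}$ over $B$, whence $\Totz(q^{<n}) = \Entr(M_{n-1})$ and \cref{lem:class1meshbun} over $M_{n-1}$ produces a closed (resp.\ open) $1$-mesh bundle $p_n$ with $\FTrs(p_n) = q_n$; then $(p^{<n}, p_n)$ is the sought classifying $n$-mesh bundle. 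For injectivity, an isomorphism $\FTrs(p) \cong \FTrs(p')$ of $n$-truss bundles over $\Entr(B)$ restricts to one of the $(n-1)$-truncations, so by induction $p^{<n} \cong (p')^{<n}$ over $B$; transporting along this identifies $M_{n-1}$ with $M'_{n-1}$, under which $p_n$ and $p'_n$ have isomorphic fundamental $1$-truss bundles and hence, by \cref{lem:class1meshbun}, are isomorphic; assembling these two isomorphisms (and checking the routine compatibility with the $p_i$) yields $p \cong p'$.

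I expect the only nontrivial point to be precisely that `admissibility of base stratifications is preserved under passing to total stratifications of closed or open mesh bundles' --- this is the hinge that lets the induction run over $M_{n-1}$ --- whereas everything else is bookkeeping on top of \cref{obs:ntrussbord} and \cref{lem:class1meshbun}. Alternatively, one can run the entire argument combinatorially on the truss side, where \cref{obs:ntrussbord} does all the work, and transfer along $\FTrs$ only at the very end by citing \cite[\S4.2]{fct} for the existence and contractibility of classifying $n$-mesh bundles; the inductive skeleton is unchanged.
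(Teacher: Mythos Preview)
Your proposal is correct. The paper's own proof is a bare citation (``Follows from \cite[Prop.\ 4.2.22]{fct} and \cite[Obs.\ 4.2.68]{fct}''), so there is no substantive argument in the paper to compare against; what you have written is essentially an unpacking of how those cited results are proved in \cite{fct}. Your inductive decomposition into the $(n-1)$-mesh bundle $p^{<n}$ and the top $1$-mesh bundle $p_n$, together with the reduction to \cref{lem:class1meshbun} applied over $M_{n-1}$, is exactly the skeleton of the argument. You have also correctly isolated the one genuine input --- that $M_{n-1}$ is again an admissible base (conical, open regular, $0$-truncated) so that \cref{lem:class1meshbun} applies there --- and correctly noted that this is precisely where the closed/open hypothesis is used (cf.\ the remark following the lemma that this assumption ``simplifies the types of cellular subrefinements one encounters'').
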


\nid We remark that the assumption that bundles are closed resp.\ open is used as it simplifies the types of cellular subrefinements one encounters.

\begin{proof}[Proof of \cref{lem:classnmeshbun}] Follows from \cite[Prop.\ 4.2.22]{fct} and \cite[Obs.\  4.2.68]{fct}.
\end{proof}


\begin{rmk}[The 1-constructible case] \label{rmk:base-cat-n} Analogous to \cref{rmk:base-cat}, the above lemma generalizes to the 1-constructible case: bundle isomorphisms classes of 1-constructible $n$-mesh bundles over a (not necessarily $0$-truncated) base stratifications $B$ correspond to functors from the entrance path $\infty$-category $\TEntr(B)$ into the 1-category $\kT^n$.
\end{rmk}

\subsection{Labelings and stratifications} \label{ssec:label-strat-nmesh}

\subsubsection*{Labelings and stratifications of $n$-trusses}

We now endow $n$-truss bundles with labelings and stratifications. Again, our definitions equally apply to the case of `non-bundled' trusses and meshes and we will not separately spell out either of these special cases.

\begin{defn}[Labeled truss bundles] \label{defn:labntrussbun} Let $\iC$ be a category. A \textbf{$\iC$-labeled $n$-truss bundle $(q,f)$} over $P$ is an `underlying' $n$-truss bundle $q$ over $P$, together with a `labeling' functor $f : \Totz {q} \to \iC$.
\end{defn}

\begin{obs}[Classification of labeled $n$-truss bundles] \label{obs:labeled-n-truss-bord} $\iC$-labeled $n$-truss bundles are classified by the category $\kT^n(\iC)$ of `$\iC$-labeled $n$-truss bordisms' obtained by $n$ times applying the functor $\kT^1(-)$ to the category $\iC$. This uses the same argument as \cref{obs:ntrussbord}, but with the category $\iC$ in place of $\ast$.
\end{obs}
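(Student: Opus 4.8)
The plan is to prove this by induction on $n$, reusing verbatim the inductive mechanism of \cref{obs:ntrussbord} but carrying the label category along at each stage. The base case $n = 0$ is a tautology: a $\iC$-labeled $0$-truss bundle over a poset $P$ consists of the trivial $0$-truss bundle over $P$ (which is just $P$ itself) together with a labeling functor $f : \Totz q = P \to \iC$, i.e.\ nothing but a functor $P \to \iC = \kT^0(\iC)$.

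For the inductive step, let $q = \{q_i : T_i \to T_{i-1}\}_{1 \le i \le n}$ be an $n$-truss bundle over $P = T_0$ equipped with a labeling $f : T_n \to \iC$ (recall $\Totz q = T_n$, as in \cref{defn:labntrussbun}). First I would peel off the topmost $1$-truss bundle: the pair $(q_n : T_n \to T_{n-1},\ f : T_n \to \iC)$ is a $\iC$-labeled $1$-truss bundle over $T_{n-1}$, so by \cref{constr:lab1bord} it is classified, up to bundle isomorphism commuting with labelings, by a functor $\chi^1 : T_{n-1} \to \kT^1(\iC)$. Now observe that the truncated tower $\{q_i : T_i \to T_{i-1}\}_{1 \le i \le n-1}$ is an $(n-1)$-truss bundle over $P$ whose total poset is exactly $T_{n-1}$, the domain of $\chi^1$; hence $\chi^1$ is precisely a $\kT^1(\iC)$-labeling of this $(n-1)$-truss bundle. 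Applying the inductive hypothesis with the category $\kT^1(\iC)$ in place of $\iC$, this $\kT^1(\iC)$-labeled $(n-1)$-truss bundle is classified by a functor $P \to \kT^{n-1}\!\left(\kT^1(\iC)\right) = \kT^n(\iC)$, which we take to be $\chi_q$. The inverse correspondence unwinds the same steps: given $\chi : P \to \kT^{n-1}(\kT^1(\iC))$, the inductive hypothesis produces a $\kT^1(\iC)$-labeled $(n-1)$-truss bundle over $P$ whose underlying $(n-1)$-truss bundle has total poset $T_{n-1}$ carrying a functor $\chi^1 : T_{n-1} \to \kT^1(\iC)$; feeding $\chi^1$ back into \cref{constr:lab1bord} yields a $\iC$-labeled $1$-truss bundle $(q_n : T_n \to T_{n-1},\ f : T_n \to \iC)$, and prepending $q_n$ to the tower while retaining $f$ reconstructs the $\iC$-labeled $n$-truss bundle. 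That these two assignments are mutually inverse — up to bundle isomorphisms commuting with labelings on one side, and up to natural isomorphism of classifying functors on the other — follows by combining, at each stage of the induction, the bijection statement of \cref{constr:lab1bord} with the inductive hypothesis.

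The argument is essentially pure bookkeeping, and the only genuinely non-formal ingredient is \cref{constr:lab1bord}, the classification of labeled $1$-truss bundles, which is taken as given. The one point requiring care at each inductive step is the identification of the classifying functor $\chi^1$ of the peeled-off top $1$-truss bundle with a labeling of the next-lower $(n-1)$-truss bundle; this is exactly the alignment displayed in the diagram of \cref{obs:ntrussbord}, and it works because the total poset of the truncated tower coincides with the base of the $1$-truss bundle $q_n$. A secondary point worth flagging is that, unlike $\kT^1$ itself, the iterated category $\kT^1(\iC)$ need not be essentially gaunt when $\iC$ has nontrivial automorphisms, so throughout the induction one tracks natural isomorphisms of classifying functors rather than strict equalities; this is already the form in which \cref{constr:lab1bord} is stated, so no additional work is incurred.
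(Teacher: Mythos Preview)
Your proof is correct and follows essentially the same approach as the paper: the paper's own justification is literally the sentence ``This uses the same argument as \cref{obs:ntrussbord}, but with the category $\iC$ in place of $\ast$,'' and you have spelled out precisely that inductive peeling-off argument, starting from the labeling $f : T_n \to \iC$ rather than the trivial labeling $T_n \to \ast$. Your additional remark about the failure of essential gauntness for $\kT^1(\iC)$ when $\iC$ has nontrivial automorphisms is a nice point of care that the paper leaves implicit.
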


\begin{defn}[Stratified $n$-truss bundles] A \textbf{stratified $n$-truss bundle} $(q,f)$ over a poset $P$ is a labeled $n$-truss bundle over $P$ such that
    \begin{enumerate}
        \item[(1)] $f$ stratifies the poset $\Totz q$ (in the sense of \cref{rmk:strat-posets}),
        \item[(2)] the map $f : \Totz q \to \Entr(f)$ factors through $q_{>0} : \Totz q \to P$ by a map $\Entr(f) \to P$ (where $q_{>0}$ is the composite $q_n \circ q_{n-1} \circ ... \circ q_1$). \qedhere
    \end{enumerate}
\end{defn}

\nid Note, condition (2) in the preceding definition guarantees that strata of $(\Totz q, f)$ live in the fibers of $q_{>0} : \Totz q \to P$.

\begin{rmk}[Trivial stratifications] Every $n$-truss bundle $q$ is trivially stratified $(q,f)$, by defining $f : \Totz q \to \ast$ to be the unique map to the terminal poset $\ast$.
\end{rmk}

\begin{defn}[Stratified maps] Given two stratified $n$-truss bundles $(q,f)$ and $(q',f')$, a \textbf{stratified map} $F : (q,f) \to (q',f')$ is a truss bundle map $F : q \to q'$ whose top component $F_n : \Totz q \to \Totz q'$ factors through $f$ and $f'$ by a (necessarily unique) map $\Entr(F) : \Entr(f) \to \Entr(f')$, that is, $f' \circ F_n = \Entr(F) \circ f$.
\end{defn}

\begin{term}[Stratified isomorphism] A stratified map $F : (q,f) \to (q',f)$ of two stratified $n$-truss bundles $(q,f), (q',f')$ over the same base $P$ (where $F_0 = \id_P$, see \cref{conv:fixing-the-base}) is called a `stratified isomorphism' if $F : q \to q'$ is a truss bundle isomorphism (see \cref{term:n-truss-bun-maps}) and $\Entr(F)$ is a poset isomorphism.
\end{term}

\begin{obs}[Essential gauntness of stratified truss bundles] \label{rmk:strat-truss-iso} Stratified truss bundle isomorphisms over a fixed base are unique if they exist. Thus, without harm we can pass to a skeleton, and work with equality $(q,f) = (q',f')$ in place of stratified isomorphisms $(q,f) \iso (q',f')$.
\end{obs}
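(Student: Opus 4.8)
The plan is to reduce the statement to the essential gauntness of unlabeled truss bundles already recorded above, exploiting the fact that a stratified map carries no data beyond its underlying truss bundle map.

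First I would unwind the definition of a stratified map $F : (q,f) \to (q',f')$: it is a truss bundle map $F : q \to q'$ with the sole additional \emph{property} that its top component $F_n$ factors through the characteristic maps $f$ and $f'$; the factoring poset map $\Entr(F) : \Entr(f) \to \Entr(f')$ is uniquely determined by $F_n$ (since $f$ is surjective as a characteristic map) and so adds nothing to the data. Hence two stratified maps $(q,f) \to (q',f')$ coincide precisely when their underlying truss bundle maps do. Now if $F, G : (q,f) \to (q',f')$ are two stratified truss bundle isomorphisms over a fixed base $P$ (so $F_0 = G_0 = \id_P$ by \cref{conv:fixing-the-base}), then forgetting labelings turns each into a truss bundle isomorphism $q \to q'$ over $P$; by the essential gauntness of truss bundles established above, such an isomorphism is unique if it exists, so the underlying maps of $F$ and $G$ agree, whence $F = G$. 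This proves the uniqueness assertion.

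For the remaining ``thus'' clause I would argue as usual: uniqueness of isomorphisms between any two objects is equivalent to the category of stratified $n$-truss bundles over $P$ (with morphisms the stratified maps fixing $P$) having no non-identity automorphisms, i.e.\ being essentially gaunt; passing to a skeleton then yields an equivalent category in which each isomorphism class is a single object, so that any stratified isomorphism $(q,f) \iso (q',f')$ can be replaced by an equality $(q,f) = (q',f')$ without harm. I do not anticipate a genuine obstacle here; the only point requiring a moment's care is the bookkeeping in the first step — confirming that a stratified (iso)morphism is genuinely determined by its underlying truss bundle map, and that the forgetful passage to unlabeled truss bundles preserves the notion of ``isomorphism over the same base''.
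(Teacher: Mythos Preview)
Your argument is correct and is precisely the intended reduction: the paper states this as an observation without separate proof, relying implicitly on the earlier essential gauntness of unlabeled truss bundles together with the fact (built into the definition of stratified maps) that $\Entr(F)$ is uniquely determined by the underlying truss bundle map $F$. Your unpacking of why a stratified isomorphism carries no extra data beyond its underlying truss bundle isomorphism is exactly the point.
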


\begin{term}[Stratified subtrusses] \label{term:stratified-subtruss} A stratified map $F : (q,f) \to (q',f')$ of stratified $n$-truss bundles is called a `stratified subtruss bundle' if $F : q \to q'$ is a subtruss bundle (see \cref{term:n-truss-bun-maps}) and $\Entr(F)$ is conservative.\footnote{This last condition mirrors the fact that substratifications are conservative maps on entrance path posets, see \cref{recoll:strat-maps}.}
\end{term}

\begin{rmk}[Stratified truss bundle pullback] \label{rmk:strat-truss-pullback} Given a stratified $n$-truss bundle $(q,f)$ over $P$, and a map $F : Q \to P$, then $(q,f)$ pulls back to a stratified $n$-truss bundle $F^*(q,f) \equiv (F^*q,F^*f)$ (formally, we first define the labeled truss bundle $(S,g)$ by the classifying map $\chi_{(q,f)} \circ F$, and then set $F^*q = S$ and $F^*f = \ccs g$, see \cref{rmk:strat-truss-from-pos-lab}). If $F$ is a subposet inclusion $Q \into P$, then we also write $\rest {(q,f)} Q \equiv (\rest q Q, \rest f Q)$ for this pullback. (Note $\rest {(q,f)} Q \into (q,f)$ is a stratified subtruss.)
\end{rmk}

Let us now discuss the corresponding definitions for meshes. Recall a strict coarsening is a coarsening of stratified spaces that is an identity on underlying topological spaces.

\begin{defn}[Stratified $n$-mesh bundles] \label{defn:strat-mesh-bun} A \textbf{stratified $n$-mesh bundle} $(p,f)$ an $n$-mesh bundle $p$ together with a strict coarsening $\Totz p \to f$ to a stratification $f$.
\end{defn}

\nid Note we may equivalently think of stratified mesh bundles as tuples $(p,f)$ where $p$ is a mesh bundle and $(\Entr (\Totz p),f)$ is a stratified poset (see \cref{obs:lab-strat-top}). This yields the following relation of stratified trusses and meshes.

\begin{rmk}[Relating stratified $n$-truss and $n$-mesh bundles] Consider a stratified $n$-mesh bundle $(p,f)$ over $B$, and a stratified $n$-truss bundle $(q,g)$ over $\Entr(B)$. If $q = \FTrs p$ and if there is (a necessarily unique) isomorphism $\Entr(g) \iso \Entr(f)$ that commutes with $g$, $f$ and $\Totz q \iso \Entr (\Totz p)$, then we call $(q,g)$ a `stratified fundamental truss bundle' of $(p,f)$, and conversely, say $(p,f)$ is a `stratified classifying mesh bundle' of $(q,g)$.
\end{rmk}

\begin{notn}[Stratified fundamental $n$-truss bundles] Stratified fundamental $n$-truss bundles of stratified $n$-mesh bundles $(p,f)$ are essentially unique, and we denote them by $\FTrs(p,f)$.
\end{notn}

\begin{notn}[Stratified classifying $n$-mesh bundles] Stratified classifying $n$-meshes bundles of stratified $n$-truss bundles $(q,g)$ are unique up to contractible choice (this will follow from \cref{cor:strat-mesh-truss-bundle}, at least for open/closed bundles), and we denote them by $\CMsh (q,g)$.
\end{notn}

\begin{defn}[Stratified mesh bundle maps] Given stratified mesh bundles $(p,f)$, $(p',f')$ a \textbf{stratified map} $F : (p,f) \to (p',f')$ is a mesh bundle map $F : p \to p'$ whose top component $F_n : \Totz p \to \Totz p'$ descends along the strict coarsenings $\Totz p \to f$ and $\Totz p' \to f'$ to a stratified map $f \to f'$.
\end{defn}

\begin{rmk}[Fundamental stratified truss bundle maps] Note, the construction of fundamental truss bundle maps (see \cref{notn:ftrs-bundle-map}) carries over to the setting stratified maps: any stratified map $F : (p,f) \to (p',f')$ determines a stratified map of truss bundles $\FTrs F : \FTrs (p,f) \to \FTrs (p',f')$.
\end{rmk}

We will be particularly interested in the following class of stratified maps.

\begin{defn}[Mesh bundle coarsenings] \label{defn:mesh-coarsening} Given stratified mesh bundles $(p,f)$, $(p',f')$ over the same base stratification $B$, a \textbf{mesh bundle coarsening} $F : (p,f) \to (p',f')$ is a stratified map whose top component is a coarsening $F_n : \Totz p \to \Totz p'$ that descends to a stratified homeomorphism $f \iso f'$. We call $F$ a \textbf{strict} mesh bundle coarsening if $F_n$ is a strict coarsening and $f = f'$.
\end{defn}

\begin{eg}[Mesh coarsenings] If the base stratification is trivial, the notion of mesh bundle coarsening specializes to that of `mesh coarsening'. An example of a strict mesh coarsening of stratified 2-meshes is given in \cref{fig:mesh-coarsening} (note that we omit the depiction of the tower of 1-mesh bundles).
\begin{figure}[ht]
    \centering
    \def\svgwidth{1\columnwidth}
\begingroup%
  \makeatletter%
  \providecommand\color[2][]{%
    \errmessage{(Inkscape) Color is used for the text in Inkscape, but the package 'color.sty' is not loaded}%
    \renewcommand\color[2][]{}%
  }%
  \providecommand\transparent[1]{%
    \errmessage{(Inkscape) Transparency is used (non-zero) for the text in Inkscape, but the package 'transparent.sty' is not loaded}%
    \renewcommand\transparent[1]{}%
  }%
  \providecommand\rotatebox[2]{#2}%
  \newcommand*\fsize{\dimexpr\f@size pt\relax}%
  \newcommand*\lineheight[1]{\fontsize{\fsize}{#1\fsize}\selectfont}%
  \ifx\svgwidth\undefined%
    \setlength{\unitlength}{2160bp}%
    \ifx\svgscale\undefined%
      \relax%
    \else%
      \setlength{\unitlength}{\unitlength * \real{\svgscale}}%
    \fi%
  \else%
    \setlength{\unitlength}{\svgwidth}%
  \fi%
  \global\let\svgwidth\undefined%
  \global\let\svgscale\undefined%
  \makeatother%
  \begin{picture}(1,0.28090278)%
    \lineheight{1}%
    \setlength\tabcolsep{0pt}%
    \put(0,0){\includegraphics[width=\unitlength,page=1]{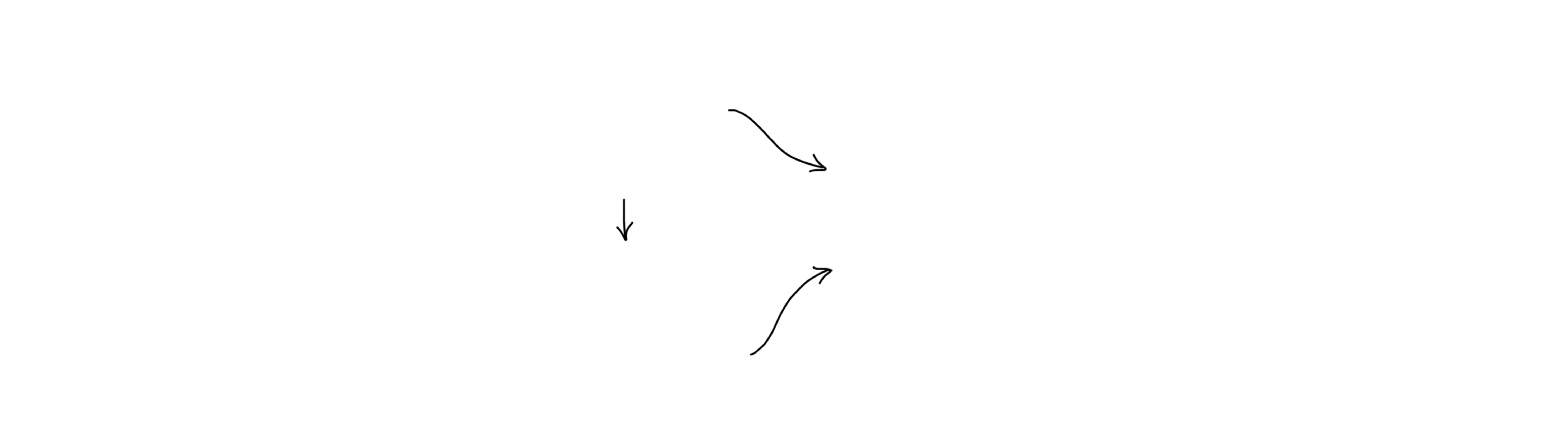}}%
    \put(0.29826389,0.17951389){\color[rgb]{0,0,0}\makebox(0,0)[lt]{\lineheight{1.25}\smash{\begin{tabular}[t]{l}$M$\end{tabular}}}}%
    \put(0.67951389,0.13784722){\color[rgb]{0,0,0}\makebox(0,0)[lt]{\lineheight{1.25}\smash{\begin{tabular}[t]{l}$f$\end{tabular}}}}%
    \put(0.29826389,0.00798611){\color[rgb]{0,0,0}\makebox(0,0)[lt]{\lineheight{1.25}\smash{\begin{tabular}[t]{l}$M'$\end{tabular}}}}%
    \put(0.4125,0.13784722){\color[rgb]{0,0,0}\makebox(0,0)[lt]{\lineheight{1.25}\smash{\begin{tabular}[t]{l}{\tiny coarsen}\end{tabular}}}}%
    \put(0,0){\includegraphics[width=\unitlength,page=2]{mesh-coarsening.pdf}}%
  \end{picture}%
\endgroup%

    \caption{A mesh coarsening between stratified 2-meshes}
    \label{fig:mesh-coarsening}
\end{figure}
\end{eg}

\nid The notion of mesh coarsenings, in turn, has the following combinatorial counterpart.

\begin{defn}[Truss bundle coarsenings] \label{defn:truss-coarsening} A stratified map $F : (q,f) \to (q',f')$ of stratified truss bundles is called a \textbf{truss bundle coarsening} if there is a `classifying' mesh bundle coarsening $G$ such that $F = \FTrs G$.
\end{defn}

\nid The definition also has a purely combinatorial phrasing, as follows.

\begin{altdefn}[Truss bundle coarsenings, combinatorially] \label{term:truss-coarsening} A stratified map $F : (q,f) \to (q',f')$ is a truss bundle coarsening if $\Entr(F)$ is an isomorphism and the underlying truss bundle map $F : q \to q'$ is a surjective cocellular map such that $F_i$, $i > 0$, preserves dimensions of endpoints of all 1-truss fibers. (Note, if $q$ and $q'$ are open truss bundles this last condition becomes redundant as it is implied by cocellularity.)
\end{altdefn}

\nid Note also that the definition equally applies in the (trivially stratified) case of truss bundles, allowing us to speak of truss coarsenings $F : q \to q'$.

\begin{rmk}[Relation of mesh and truss coarsenings] \label{obs:truss-and-mesh-coarsening} Given a stratified mesh bundle $(p,f)$ and with stratified fundamental truss bundle $(q,g) = \FTrs (p,f)$ then strict mesh coarsenings $(p,f) \to (p',f')$ are in 1-to-1 correspondence with truss bundle coarsenings $(q,g) \to (q',g')$: the correspondence takes a strict mesh coarsening $F$ to the fundamental truss bundle map $\FTrs F$.
\end{rmk}

\subsection{Framing structures of \texorpdfstring{$n$}{n}-meshes} \label{ssec:coordinates} Recall that, by definition, the underlying manifolds of 1-meshes carry a framing. We defined $n$-meshes as towers of bundles of 1-meshes. As a consequence, $n$-meshes too carry a type of framing structure. In this section, we give a precise description of this structure, which we will refer to as a `flat framing structure'. The structure is a special (namely, `flat') case of a more general notion of framings as follows.

\begin{term}[$n$-Framed space] \label{term:flat-framed-space} Let $X$ be a topological space.
\begin{enumerate}
    \item[$-$] An `$n$-framed chart' $(U,\gamma)$ in $X$ is an embedding $\gamma : U \into \lR^n$ of a subspace $U \subset X$. If $U' \subset U$, then $\gamma$ restricts to the `restricted chart' $(U',\gamma)$.
    \item[$-$] Given two $n$-framed charts $(U,\gamma)$, $(V,\rho)$ in spaces $X$ resp.\ $Y$, a map $F : X \to Y$ is said to be `framed' on the given charts, if it restricts to a map $F : U \to V$ such that, for all $i$, denoting by $\pi_{> i}$ the projection $\lR^i \times \lR^{n-i} \to \lR^i$, there is a continuous partial mapping $F_i : \lR^i \to \lR^i$ such that $\pi_{> i} \circ \rho \circ F = F_i \circ \pi_{> i} \circ \gamma$ commutes.
\item[$-$] Two $n$-framed charts $(U,\gamma)$, $(V,\gamma')$ in $X$ are `framed compatible' if $\id : X = X$ is a framed map on the restricted charts $(U \cap V, \gamma)$ resp.\ $(U \cap V,\gamma')$.
\item[$-$] An `$n$-framing structure' on $X$ is an `atlas' $\cA$ of framed compatible charts $\{(U_i,\gamma_i)\}$ such that the subspaces $U_i$ cover $X$.
\item[$-$] A `global $n$-framed chart' of $X$ is a chart $(X,\gamma)$ of all of $X$. A `flat $n$-framing structure' on $X$ is an atlas $\cA$ containing a global $n$-framed chart.
\item[$-$] Given spaces with $n$-framing structure $(X,\cA) \to (Y,\cB)$ then a map $F : X \to Y$ is said to be `framed' for each point $x \in X$, there are charts $(U_i,\gamma_i) \in \cA$, $(V_i,\rho_i) \in \cB$ and a neighborhood $U'_i \subset U_i$ of $x$, such that $F$ is framed on the charts $(U'_i,\gamma_i)$ and $(V_i,\rho_i)$. \qedhere
\end{enumerate}
\end{term}

\nid Note that, given a flat $n$-framed space $(X,\cA)$, any global $n$-framed chart $(X,\gamma) \in \cA$ determines $(X,\cA)$ up to framed homeomorphism; we therefore often denote flat framed spaces by giving a single global $n$-framed chart $(X,\gamma)$. Moreover, since we will, in fact, only ever endow spaces with flat framing structures, we usually refer to `global $n$-framed charts of $X$' simply as `$n$-framed charts of $X$'. The next remark points out an important convention regarding the indexing of coordinates when working with flat framed spaces.


\begin{rmk}[Categorical directions of $\lR^n$] \label{notn:coord-axis-reverse} We refer to the $(n-k)$th coordinate of $\lR^n$ as its `$k$th categorical direction'. We usually label the coordinate axes of $\lR^n$ by their categorical directions: that is, we label the coordinate axes of $\lR^n = (\lR \times \lR \times ... \times \lR)$ by $(n,n-1,\dots,1)$. We also often use `$k$-arrows' (arrows with $k$ parallel lines) with the convention that $k$-arrows point in the $k$th categorical direction. For $n = 2$ this is illustrated in \cref{fig:cube-axis-labeling} on the right. The reason for this convention lies in the `categorical' interpretation of framed space that we will discuss later in \cref{sec:cell-diagrams}.
\end{rmk}

\begin{eg}[Flat framed space] A space $X$ with a $2$-framed chart $\gamma$ into $\lR^2$ is shown in \cref{fig:cube-axis-labeling}. The axes of $\lR^2$ are labelled by categorical directions (that is, the first coordinate axis is labeled with `2', and the second with `1'). Grey lines indicate the preimages of the projection $\pi_{2} : \lR^2 \to \lR^1$ mapping $(x_1,x_2)$ to $x_1$. We indicate the standard framing $(e_1,e_2)$ of $\lR^2$ by a grid of coordinate frames ($e_1$ and $e_2$ being the constant vector fields $(1,0)$ resp.\ $(0,1)$; to depict vectors we use $k$-arrows as explained in \cref{notn:coord-axis-reverse}). We also show how these frames pull back to $X$.
    \begin{figure}[ht]
    \centering
    \def\svgwidth{1\columnwidth}
\begingroup%
  \makeatletter%
  \providecommand\color[2][]{%
    \errmessage{(Inkscape) Color is used for the text in Inkscape, but the package 'color.sty' is not loaded}%
    \renewcommand\color[2][]{}%
  }%
  \providecommand\transparent[1]{%
    \errmessage{(Inkscape) Transparency is used (non-zero) for the text in Inkscape, but the package 'transparent.sty' is not loaded}%
    \renewcommand\transparent[1]{}%
  }%
  \providecommand\rotatebox[2]{#2}%
  \newcommand*\fsize{\dimexpr\f@size pt\relax}%
  \newcommand*\lineheight[1]{\fontsize{\fsize}{#1\fsize}\selectfont}%
  \ifx\svgwidth\undefined%
    \setlength{\unitlength}{2160bp}%
    \ifx\svgscale\undefined%
      \relax%
    \else%
      \setlength{\unitlength}{\unitlength * \real{\svgscale}}%
    \fi%
  \else%
    \setlength{\unitlength}{\svgwidth}%
  \fi%
  \global\let\svgwidth\undefined%
  \global\let\svgscale\undefined%
  \makeatother%
  \begin{picture}(1,0.34722222)%
    \lineheight{1}%
    \setlength\tabcolsep{0pt}%
    \put(0,0){\includegraphics[width=\unitlength,page=1]{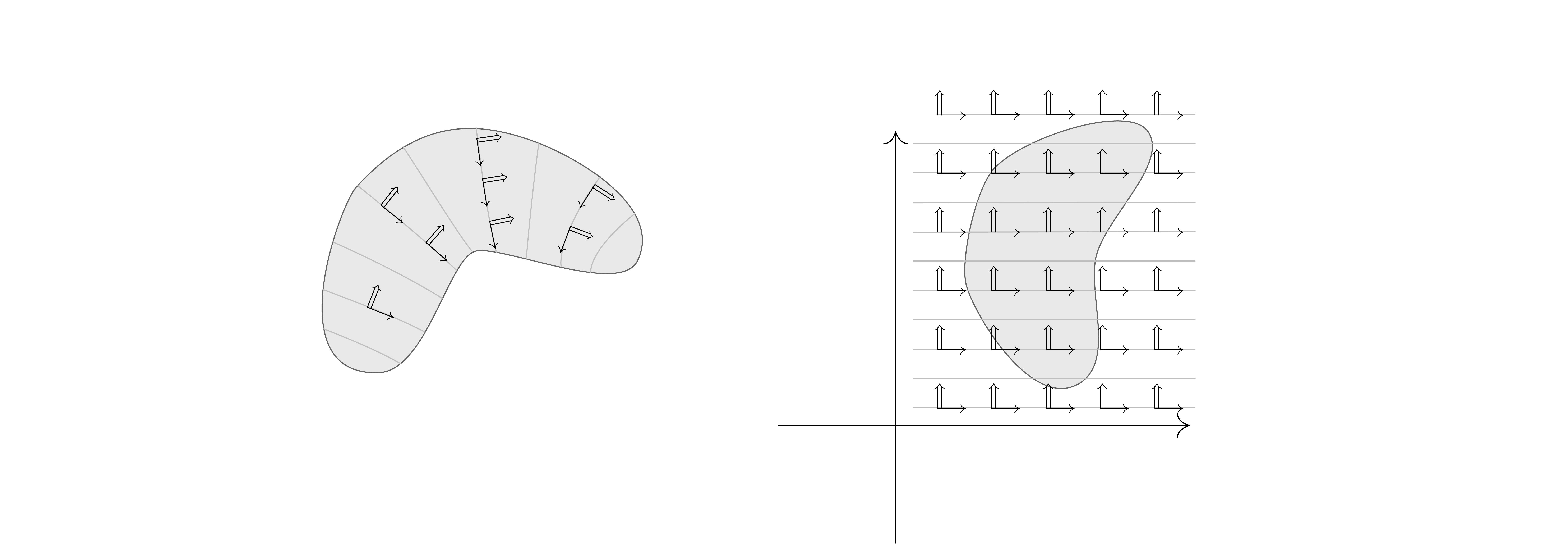}}%
    \put(0.55833333,0.328125){\color[rgb]{0,0,0}\makebox(0,0)[lt]{\lineheight{1.25}\smash{\begin{tabular}[t]{l}$\lR^2 = \lR \times \lR$\end{tabular}}}}%
    \put(0.56527778,0.27291667){\color[rgb]{0,0,0}\makebox(0,0)[lt]{\lineheight{1.25}\smash{\begin{tabular}[t]{l}$2$\end{tabular}}}}%
    \put(0.77465278,0.06875){\color[rgb]{0,0,0}\makebox(0,0)[lt]{\lineheight{1.25}\smash{\begin{tabular}[t]{l}$1$\end{tabular}}}}%
    \put(0,0){\includegraphics[width=\unitlength,page=2]{cube-axis-labeling.pdf}}%
    \put(0.45659722,0.19618056){\color[rgb]{0,0,0}\makebox(0,0)[lt]{\lineheight{1.25}\smash{\begin{tabular}[t]{l}$\xinto{\quad \gamma \quad}$\end{tabular}}}}%
    \put(0.27118056,0.290625){\color[rgb]{0,0,0}\makebox(0,0)[lt]{\lineheight{1.25}\smash{\begin{tabular}[t]{l}$X$\end{tabular}}}}%
  \end{picture}%
\endgroup%

    \caption[Charts and categorical directions]{A flat framed space with a chart into standard framed euclidean space}
    \label{fig:cube-axis-labeling}
\end{figure}
\end{eg}

\begin{term}[Flat framed bundles] \label{term:flat-framed-bundles} Given a `base' space $B$, all notions from \cref{term:flat-framed-space} have immediate analogs for the case of `bundles over $B$': for this, replace $\lR^n$ with $B \times \lR^n$, and $\pi_{> i} : \lR^i \times \lR^{n-i} \to \lR^i$ with $B \times \pi_{> i} : B \times \lR^i \times \lR^{n-i} \to B \times \lR^i$, and require that $F_0 = \id_B$ for framed maps $F$. For instance, we speak of an `$n$-framed chart bundle $(U,\gamma)$ over $B$' to mean an embedding $\gamma : U \into B \times \lR^n$.
\end{term}

\begin{term}[Framed stratified notions] \label{term:framed-everything} We add the adjective `(flat) framed' to standard notions of stratification theory to indicate that the underlying topological spaces are flat framed spaces, and that the underlying maps are framed maps (this similarly applies in the case of bundles).
\end{term}

The relevance of flat framed spaces (and their framed maps) to us here stems from the next observations. These show that any $n$-mesh (and similarly, any $n$-mesh bundle) has a canonical flat $n$-framing structure. We will refer to charts in this canonical framing structure as the `coordinatizations' of the $n$-mesh.

\begin{obs}[Canonical framing structure of meshes and mesh bundles] \label{obs:mesh-bundle-framing-struct} Given an $n$-mesh $M = \{p_i : M_i \to M_{i-1}\}_{1 \leq i \leq n}$, a `coordinatization' is an embedding $\gamma_n : M_n \into \lR^n$, such that for all $i$, the composite projection $p_{> i} = p_{i+1} \circ ... \circ p_{n} : M_n \to M_i$ factors through $\pi_{> i} \circ \gamma_n : M_n \to \lR^i$ by an embedding $\gamma_i : M_i \into \lR^i$ and this embedding defines a coordinatizing embedding of the 1-mesh bundle $p_i$ in the sense of \cref{defn:1-mesh-bun} (up to identifying $M_{i-1} \iso \im(\gamma_{i-1})$ via $\gamma_{i-1}$). The fact that coordinatizations of $n$-meshes exist can be seen as follows. Inductively, construct a coordinatization $\gamma_{n-1} : M_{n-1} \into \lR^{n-1}$ of the $(n-1)$-mesh $M = \{p_i : M_i \to M_{i-1}\}_{1 \leq i \leq n-1}$. Pick a coordinatizing embedding $M_n \into M_{n-1} \times \lR$ for the 1-mesh bundle $p_n : M_n \to M_{n-1}$ and denote the mapping of this embedding by $x \mapsto (p_n(x),\gamma(x))$. Set $\gamma_n : M_n \to \lR^n$ to map $x \mapsto (\gamma_{n-1}\circ p_n(x), \gamma(x))$. This constructs a coordinatization $\gamma_n$ of $M$ as claimed. One can similarly define and construct coordinatizations $\gamma_n$ in $B \times \lR^n$ for $n$-mesh bundles over $B$. By construction, all coordinatizations of a given $n$-mesh (resp.\ $n$-mesh bundle) are framed compatible charts of $M_n$ and thus define a flat framing structure.
\end{obs}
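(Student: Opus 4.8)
The plan is to establish the Observation in three steps: (i) construct a coordinatization of an arbitrary $n$-mesh $M = \{p_i : M_i \to M_{i-1}\}_{1 \leq i \leq n}$, and of an $n$-mesh bundle over a base $B$, so that the asserted existence claim holds; (ii) show that any two coordinatizations of $M$ are framed compatible charts of the total space $M_n$ in the sense of \cref{term:flat-framed-space}; and (iii) conclude that the collection of all coordinatizations is an atlas of pairwise framed compatible global charts covering $M_n$, hence by definition a flat $n$-framing structure (and that it is canonical, since any two such atlases agree up to framed homeomorphism).

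For \emph{existence} I would induct on $n$. The case $n = 0$ is the unique chart $\ast \into \lR^0$. For the inductive step, apply the hypothesis to the truncated $(n-1)$-mesh $M_{n-1} \xto{p_{n-1}} \cdots \to M_0 = \ast$ to obtain $\gamma_{n-1} : M_{n-1} \into \lR^{n-1}$; then invoke the coordinatizability clause of \cref{defn:1-mesh-bun}, which supplies a coordinatizing bundle embedding of $p_n$ into $M_{n-1} \times \lR$ with mapping $x \mapsto (p_n(x), \gamma(x))$, and set $\gamma_n(x) = (\gamma_{n-1}(p_n(x)), \gamma(x)) \in \lR^{n-1} \times \lR = \lR^n$. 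One then checks from the recursion that for every $i$ the composite projection $\pi_{> i} \circ \gamma_n$ factors through a map $\gamma_i : M_i \into \lR^i$ (the evident iterate of the construction), and that $\gamma_i$ restricts on each fiber of $p_i$ to a framed bounded embedding with continuous bounding sections, i.e.\ that $\gamma_i$ is a coordinatizing embedding of $p_i$; this is bookkeeping with the two defining properties of $1$-mesh-bundle coordinatizations. The bundle case is verbatim the same with $\lR^n$ replaced throughout by $B \times \lR^n$ and $\pi_{>i}$ by $B \times \pi_{>i}$, the condition $F_0 = \id_B$ holding automatically since the base is never moved.

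For \emph{framed compatibility}, given two coordinatizations $\gamma_n, \gamma'_n : M_n \into \lR^n$ I must produce, for each $i$, a continuous partial map $\lR^i \to \lR^i$ through which $\pi_{>i}\circ\gamma'_n$ factors over $\pi_{>i}\circ\gamma_n$. I would again induct on $i$: after composing with $\pi_{>i}$ both $\gamma_n$ and $\gamma'_n$ descend to coordinatizations $\gamma_i, \gamma'_i$ of the truncated $i$-mesh, so by induction $\id$ is framed on $(\gamma_i,\gamma'_i)$ for all $i < n$; the only new content at the top level is that two coordinatizing embeddings of the \emph{same} $1$-mesh bundle differ, fiberwise, by a continuous family of framing-preserving (hence monotone) self-embeddings of $\lR$, which supplies exactly the missing partial map at level $n$ once compatibility at level $n-1$ is in hand. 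The key point is that a coordinatization of a $1$-mesh is unique up to a framing-preserving reparametrization of $\lR$, and these reparametrizations vary continuously over the base because the bounding sections do.

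The \textbf{main obstacle} is precisely this last compatibility check: one must be careful that the partial maps are genuinely well defined and continuous on the relevant (proper) subsets of $\lR^i$ — they are only defined on the images of the projected coordinatizations, and strata must be matched up correctly — and that the inductive passage from level $i-1$ to level $i$ is compatible with the recursion defining $\gamma_i$ from $\gamma_{i-1}$ and the $1$-mesh-bundle coordinatization. Once framed compatibility is established the Observation follows immediately: the atlas of all coordinatizations is pairwise framed compatible, covers $M_n$, and contains a global chart (any one coordinatization), so it is a flat $n$-framing structure; canonicity is then automatic because any two coordinatizations already lie in this single framed compatible atlas, and the same argument applies to $n$-mesh bundles over $B$.
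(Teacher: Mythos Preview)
Your proposal is correct and follows essentially the same approach as the paper: the paper's argument is entirely contained in the Observation statement itself, giving the same inductive construction of $\gamma_n$ from $\gamma_{n-1}$ and a 1-mesh-bundle coordinatization of $p_n$, and then simply asserting that ``by construction, all coordinatizations of a given $n$-mesh (resp.\ $n$-mesh bundle) are framed compatible charts of $M_n$''. Your step (ii) spells out in more detail what the paper leaves implicit in that phrase --- the inductive verification that the transition between two coordinatizations descends through the tower via fiberwise monotone reparametrizations --- but this is elaboration rather than a different route.
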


\begin{obs}[Mesh maps are framed] Equipping meshes (and similarly mesh bundles) with their canonical framing structures, observe that all mesh maps are framed maps in the sense of \cref{term:flat-framed-space}.
\end{obs}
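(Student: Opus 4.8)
The plan is to unwind the definitions and reduce the claim to a short diagram chase among the projection maps of the two towers. Fix $n$-meshes $M = \{p_i : M_i \to M_{i-1}\}_{1\leq i\leq n}$ and $M' = \{p'_i : M'_i \to M'_{i-1}\}_{1\leq i\leq n}$ together with an $n$-mesh map $F = \{F_i : M_i \to M'_i\}$. By \cref{obs:mesh-bundle-framing-struct} each of $M$ and $M'$ carries a \emph{flat} framing structure generated by a coordinatization $\gamma_n : M_n \into \lR^n$ resp.\ $\rho_n : M'_n \into \lR^n$, and these come equipped with the descended embeddings $\gamma_i : M_i \into \lR^i$, $\rho_i : M'_i \into \lR^i$ of \cref{obs:mesh-bundle-framing-struct} satisfying $\pi_{>i}\circ\gamma_n = \gamma_i\circ p_{>i}$ and $\pi_{>i}\circ\rho_n = \rho_i\circ p'_{>i}$, where $p_{>i} = p_{i+1}\circ\cdots\circ p_n$ (and likewise for $p'$). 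Since the ambient framing structures are flat, i.e.\ carried by single global charts, the notion of framed map from \cref{term:flat-framed-space} collapses to the statement that $F$ is framed on these two global charts: so it suffices to produce, for each $0\leq i\leq n$, a continuous partial map $\widehat F_i : \lR^i \to \lR^i$ with $\pi_{>i}\circ\rho_n\circ F_n = \widehat F_i\circ\pi_{>i}\circ\gamma_n$.

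First I would record the only extra input needed: by the definition of a mesh map, the components $F_i$ commute with all bundle projections, $p'_i\circ F_i = F_{i-1}\circ p_i$, so by an immediate induction $p'_{>i}\circ F_n = F_i\circ p_{>i}$ for every $i$. Next I would simply take $\widehat F_i$ to be the tower component $F_i$ transported along the charts $\gamma_i$ and $\rho_i$, namely $\widehat F_i := \rho_i\circ F_i\circ\gamma_i^{-1}$, a partial map with domain $\im(\gamma_i)\supseteq\im(\pi_{>i}\circ\gamma_n)$; it is continuous because $\gamma_i$ is an \emph{embedding}, so $\gamma_i^{-1}$ is continuous on $\im(\gamma_i)$, while $F_i$ and $\rho_i$ are continuous. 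The required identity is then the chase
\[
\widehat F_i\circ\pi_{>i}\circ\gamma_n \;=\; \rho_i\circ F_i\circ\gamma_i^{-1}\circ\gamma_i\circ p_{>i} \;=\; \rho_i\circ F_i\circ p_{>i} \;=\; \rho_i\circ p'_{>i}\circ F_n \;=\; \pi_{>i}\circ\rho_n\circ F_n ,
\]
using the coordinatization identity for $M$ on the left, then $\gamma_i^{-1}\circ\gamma_i = \id_{M_i}$, then the induction identity above, then the coordinatization identity for $M'$ on the right. The boundary cases $i = n$ (where $\pi_{>n} = \id$) and $i = 0$ (where $\lR^0 = \ast$ and $\widehat F_0$ is the unique map) are instances of the same computation; in the bundle case the case $i = 0$ also recovers $F_0 = \id_B$ as required by \cref{conv:fixing-the-base}.

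For mesh \emph{bundles} over a base stratification $B$ the argument is verbatim the same, following \cref{term:flat-framed-bundles}: one replaces $\lR^n$ throughout by $B\times\lR^n$ and $\pi_{>i}$ by $B\times\pi_{>i}$, uses the bundle form of coordinatizations from \cref{obs:mesh-bundle-framing-struct}, and invokes $F_0 = \id_B$; the same chase closes the argument.

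I do not anticipate a real obstacle: the proof is essentially bookkeeping, and the one point worth care is that $\widehat F_i$ is genuinely a \emph{continuous} partial map, which is exactly where it matters that a coordinatization provides $\gamma_i$ as an embedding (so that $\gamma_i^{-1}$ is continuous on its image) rather than merely an injection; there is no atlas-compatibility to verify, since flat framing structures are pinned down by a single global chart and the framed-map condition of \cref{term:flat-framed-space} reduces to the chart-level condition in that case. A clarifying remark to include is that the partial map witnessing framedness in the $(n-i)$th categorical direction is, read in the canonical coordinates, nothing but the $i$th tower component $F_i$ of the mesh map itself.
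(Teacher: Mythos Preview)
Your proof is correct; the paper states this as a bare observation without proof, leaving the verification to the reader, and your argument spells out exactly the diagram chase one performs to see it. The key ingredients you identify---the coordinatization identities $\pi_{>i}\circ\gamma_n = \gamma_i\circ p_{>i}$ from \cref{obs:mesh-bundle-framing-struct} and the tower-commutation $p'_{>i}\circ F_n = F_i\circ p_{>i}$ built into the definition of a mesh map---are precisely what makes the observation immediate, and your transport $\widehat F_i = \rho_i\circ F_i\circ\gamma_i^{-1}$ is the canonical witness.
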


\nid We often prefer to work in coordinates instead of abstract topological spaces. For instance, in \cref{ch:mdiag} and \cref{ch:tangles-and-sings} our main interest will lie with the open $n$-cube $(-1,1)^n$ with flat framing inherited from the standard framing of $\lR^n$.

\begin{obs}[Coordinatizing open $n$-meshes] Any open $n$-mesh $M$ has a coordinatization $\gamma : M \into \lR^n$ whose image is the open cube $\II^n = (-1,1)^n \subset \lR^n$.
\end{obs}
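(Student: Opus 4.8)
The plan is to induct on $n$, building a coordinatization $\gamma_n : M_n \into \lR^n$ whose image is exactly the open cube $\II^n$. The base case $n = 0$ is immediate, since $M_0 = \ast = \II^0$. For the inductive step I would write the open $n$-mesh as a tower $M_n \xto{p_n} M_{n-1} \xto{p_{n-1}} \cdots \xto{p_1} M_0 = \ast$ of open $1$-mesh bundles, observe that $M_{n-1}$ together with $p_1,\dots,p_{n-1}$ is an open $(n-1)$-mesh, and apply the inductive hypothesis to coordinatize it by some $\gamma_{n-1} : M_{n-1} \into \lR^{n-1}$ with image $\II^{n-1}$. Write $\gamma_i : M_i \into \lR^i$ for its component embeddings, so that $\pi_{>i} \circ \gamma_{n-1} = \gamma_i \circ p_{>i}$ and each $\gamma_i$ coordinatizes the $1$-mesh bundle $p_i$ for $i < n$.

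The real content is straightening out the top direction. By the coordinatizability clause of \cref{defn:1-mesh-bun}, the open $1$-mesh bundle $p_n : M_n \to M_{n-1}$ carries a coordinatizing bundle embedding $\iota : M_n \into M_{n-1} \times \lR$, $x \mapsto (p_n(x), t(x))$, whose restriction to the fiber over $b$ is a coordinatization of $M_n^b$ with image the open interval $(\gamma^b_-, \gamma^b_+)$, and whose bounding sections $b \mapsto \gamma^b_\pm$ are continuous. Because $p_n$ is open, every fiber is a nonempty open interval, so the fiber length $\ell(b) := \gamma^b_+ - \gamma^b_-$ is a strictly positive continuous function on $M_{n-1}$. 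I would then renormalize fiberwise, post-composing $\iota$ with the self-map $\Phi$ of $M_{n-1} \times \lR$ sending $(b,s) \mapsto \bigl(b,\,(2s - \gamma^b_+ - \gamma^b_-)/\ell(b)\bigr)$. This $\Phi$ is a homeomorphism, with continuous inverse $(b,u) \mapsto \bigl(b,\,\tfrac12\ell(b)u + \tfrac12(\gamma^b_+ + \gamma^b_-)\bigr)$; it carries each fiber onto $\{b\} \times (-1,1)$, and on fibers it differs from a coordinatization only by an orientation-preserving affine rescaling, hence remains framed. So $\iota' := \Phi \circ \iota$ is again a coordinatizing bundle embedding of $p_n$, now with image $M_{n-1} \times (-1,1)$ and constant bounds $\pm 1$. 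Writing $\iota'(x) = (p_n(x), t'(x))$, I set $\gamma_n(x) := \bigl(\gamma_{n-1}(p_n(x)),\, t'(x)\bigr)$; this is the composite of the embedding $\iota'$ with the embedding $\gamma_{n-1} \times \id_{\lR}$, hence an embedding, and its image is $\II^{n-1} \times (-1,1) = \II^n$.

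It then remains to verify that $\gamma_n$ is a coordinatization in the sense of \cref{obs:mesh-bundle-framing-struct}. The top-level requirement is precisely that $\iota'$ coordinatizes $p_n$, which was checked above; and for $i < n$, since $t'$ occupies the last coordinate of $\lR^n$, one has $\pi_{>i} \circ \gamma_n = \pi_{>i} \circ \gamma_{n-1} \circ p_n = \gamma_i \circ p_{>i}$, with $\gamma_i$ coordinatizing $p_i$ by the inductive hypothesis. This closes the induction.

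I expect the only genuinely nontrivial point to be that the fiberwise normalization $\Phi$ is an honest homeomorphism of $M_{n-1} \times \lR$, rather than merely a bijection on each fiber; this is exactly where the openness hypothesis enters, since it forces the fiber lengths $\ell(b)$ to be strictly positive (the interval never degenerates or collapses onto an endpoint) and, together with the continuity of the bounding sections built into \cref{defn:1-mesh-bun}, to vary continuously in $b$. Everything else is routine bookkeeping about towers of bundles, and the same argument goes through verbatim for $n$-mesh bundles over a base $B$, replacing $\lR^n$ by $B \times \lR^n$ throughout.
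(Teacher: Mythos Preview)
Your proof is correct and follows essentially the same approach as the paper. The paper states this as an observation without proof, immediately after the inductive construction of coordinatizations in \cref{obs:mesh-bundle-framing-struct}; your argument is exactly the natural elaboration of that construction, making explicit the fiberwise affine renormalization step (which the paper leaves implicit) that uses openness to force strictly positive, continuously varying fiber lengths.
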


\begin{note}[On the notion of framing] The notion of `framing' described in \cref{term:flat-framed-space} is not immediately recognizable as being related to the standard meaning of the term. A relation is explained in combinatorial terms in \cite[\S1]{fct}, and in more classical terms in \cite[App.\ A]{fct} where it is observed that the notion of framings described here is related to a `metric-free' generalization of orthonormal frames.
\end{note}

\section{Combinatorializibility of meshes and tame stratifications} \label{sec:meshtrusseqv}

In this section we summarize results about the `combinatorializability of flat framed stratified space' from \cite[\S5]{fct}. We first discuss the equivalence of meshes and trusses. We then add stratifications into the mix, showing that `tame' stratifications of flat framed space can be combinatorialized by so-called `normalized stratified trusses'.

\subsection{Combinatorialization of meshes} \label{ssec:comb-result-meshes}

We begin by recalling that the fundamental truss functor is a weak equivalence between the $\infty$-category of meshes (resp.\ mesh bundles) and the 1-category of trusses (resp.\ truss bundles). In \cite{fct}, this is proven in the following `closed-cellular' and `open-cocellular' cases.\footnote{One reason for only considering these cases is, as explained in \cite[\S4.2]{fct}, that they are the easiest to handle for $(\infty,1)$-categorical proof methods (in the other cases one encounters $(\infty,2)$-categories). Another reason is that these two cases will be the most meaningful to us.}

\begin{notn}[Cellular and cocellular subcategories] \label{notn:closed-cell-and-open-cocell-cat} Let $\ctmesh n$ (resp. $\otmesh n$) denote the subcategory of $\tmesh n$ of closed (resp. open) $n$-meshes with cellular (resp. cocellular) maps. Similarly, let $\ctruss n$ (resp. $\otruss n$) denote the subcategory of $\truss n$ of closed (resp. open) $n$-trusses with cellular (resp. cocellular) maps.
\end{notn}

\nid Recall from \cref{notn:ftrs-bundle-map} that $\FTrs$ is a functor of topologically enriched categories.

\begin{thm}[Equivalence of meshes and trusses, {\cite[Thm. 4.2.1]{fct}}] \label{thm:mesh-truss} The functors $\FTrs : \ctmesh n \to \ctruss n$ and $\FTrs : \otmesh n \to \otruss n$ are weak equivalences of $\infty$-categories.
\end{thm}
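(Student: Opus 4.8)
The plan is to prove both statements simultaneously by induction on $n$, using the structure theory of $1$-meshes and $1$-trusses recalled above as the base case, and the classification of mesh and truss bundles (\cref{lem:class1meshbun}, \cref{lem:classnmeshbun}, \cref{obs:ntrussbord}) to organise the induction. Since the source of $\FTrs$ is topologically enriched and the target is an ordinary $1$-category, it is enough to verify that $\FTrs$ is (i) essentially surjective, i.e.\ every closed (resp.\ open) $n$-truss is isomorphic to $\FTrs M$ for some $n$-mesh $M$, and (ii) homotopically fully faithful, i.e.\ for all $M,N$ the map of spaces $\Map(M,N) \to \Hom(\FTrs M,\FTrs N)$ onto the discrete hom-set has contractible homotopy fibres. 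Throughout we fix skeleta of $\kT^n$, $\ctruss n$ and $\otruss n$, so that by essential gauntness (\cref{rmk:gaunt}) all classification statements become bijections with sets of functors considered up to equality, and isomorphisms of (bundles of) trusses may be treated as equalities.

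\emph{Base case $n = 1$.} Essential surjectivity is the existence of classifying $1$-meshes $\CMsh T$. For fully faithfulness, fix closed $1$-meshes $M,N$ and a cellular $1$-truss map $G : \FTrs M \to \FTrs N$; we must show the subspace of cellular $1$-mesh maps $F : M \to N$ with $\FTrs F = G$ is contractible. Picking coordinatizations $M \into \lR$ and $N \into \lR$, such an $F$ is precisely a weakly monotone continuous map $M \to \lR$ which on each stratum $s$ of $M$ lands in the stratum $G(s)$ of $N$ (a convex subset of $\lR$, being an open $0$- or $1$-disk). The set of such maps is a nonempty convex subset of the space of maps $M \to \lR$ — straight-line homotopies preserve weak monotonicity and the finitely many convex stratum constraints — hence contractible. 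This is the precise content of the slogan that classifying $1$-mesh maps are determined up to contractible choice. The open--cocellular case is obtained by the same argument with lower closures in place of upper closures, or deduced from the closed case via the duality functor $\dagger$.

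\emph{Inductive step.} View an $n$-mesh $M = \{p_i\}_{1 \le i \le n}$ as a $1$-constructible $1$-mesh bundle $p_n$ over the $(n-1)$-mesh $M_{n-1}$; forgetting $p_n$ gives a functor $U : \ctmesh n \to \ctmesh{n-1}$ whose fibre over $M_{n-1}$ is the enriched category of closed--cellular $1$-mesh bundles over $M_{n-1}$, and similarly a functor $V : \ctruss n \to \ctruss{n-1}$ with fibre over a poset $P$ the category of $1$-truss bundles over $P$; moreover $\FTrs \circ U = V \circ \FTrs$. These functors are Grothendieck-type fibrations: base change of $1$-mesh bundles along a mesh map exists and is well behaved (visible through \cref{lem:class1meshbun} and \cref{rmk:base-cat}, the classification of $1$-mesh bundles over $B$ by functors $\TEntr(B) \to \kT^1$), and since the $(n-1)$-mesh $M_{n-1}$ is a regular cell complex we have $\TEntr(M_{n-1}) \eqv \Entr(M_{n-1}) = \Totz \FTrs(M_{n-1})$, so the base categories are matched up under $\FTrs$. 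By the inductive hypothesis $\FTrs : \ctmesh{n-1} \to \ctruss{n-1}$ is a weak equivalence; and the base-case argument, run bundle-wise over $M_{n-1}$ (now applied to the fibrewise coordinatizations and the bounding sections of \cref{defn:1-mesh-bun}), shows $\FTrs$ restricts to a weak equivalence on every fibre $U^{-1}(M_{n-1}) \to V^{-1}(\Entr M_{n-1})$. The standard fact that a map of (co)Cartesian fibrations which is a weak equivalence on the base and on all fibres is a weak equivalence on total categories then completes the induction.

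\emph{The main obstacle.} The delicate point is the fibrewise input to the inductive step: proving, uniformly in the base $B$, that the space of geometric realisations of a fixed combinatorial datum — a $1$-truss bundle, or a map of such, over $\TEntr(B)$ — is contractible. Concretely this requires a deformation retraction of the relevant spaces of framed stratified bundle embeddings (coordinatizations) and framed stratified bundle homeomorphisms onto a point, compatible with restriction along base maps: the convexity of spaces of fibrewise weakly monotone maps supplies the retraction, but one must check it is compatible with continuity of the bounding sections and with the $0$- (resp.\ $1$-)constructibility condition, and that $U$ and $V$ are genuinely (co)Cartesian fibrations so that the fibrewise recognition criterion applies. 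Everything else — the tower bookkeeping, and the parallel treatment of the closed--cellular and open--cocellular variants — is routine given the classification results and truss duality already established.
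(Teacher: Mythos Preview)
The paper does not prove this theorem at all: it is stated as a recollection, with the proof deferred entirely to \cite[Thm.~4.2.1]{fct}. There is therefore no ``paper's own proof'' to compare against; the present paper merely records the statement and uses it as a black box (for instance in \cref{defn:dual-n-meshes} and \cref{cor:strat-mesh-truss}).

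That said, your outline is a reasonable sketch of how such a result is typically established, and is broadly in the spirit of the arguments in \cite{fct}: induction on $n$, with the $n=1$ case handled by a convexity/straight-line contraction of the space of fibrewise monotone maps, and the inductive step organised via the tower structure. A few cautions. First, the claim that $U$ and $V$ are (co)Cartesian fibrations is doing real work and is not obvious: for the closed--cellular case you need cellular $1$-truss bundle maps to admit cartesian lifts along cellular $(n-1)$-truss maps, and the analogous statement on the mesh side requires constructing pullback $1$-mesh bundles along cellular mesh maps with the correct universal property in the topologically enriched sense. Second, your fibrewise contractibility argument for bundles needs the bounding sections $\gamma_\pm$ and the $0$-constructibility condition to be preserved under the straight-line homotopy; this is true but is exactly the kind of verification that consumes most of the actual proof. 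You correctly flag this as the main obstacle. Third, invoking ``the standard fact'' about maps of fibrations requires you to have checked that $\FTrs$ preserves (co)cartesian arrows, which you do not mention. None of these are fatal, but together they mean your write-up is an outline rather than a proof; filling them in is precisely the content of \cite[\S4.2]{fct}.
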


\begin{rmk}[The classifying mesh functor] \label{rmk:cmsh-functor-choice} Choosing a weak inverse to the functor $\FTrs$ gives rise to a `classifying mesh' functor $\CMsh : \ctruss n \to \ctmesh n$ (and similarly  $\CMsh : \otruss n \to \otmesh n$). We discuss a concrete construction later in \cref{rmk:cmsh-via-compactifiation}.
\end{rmk}

\nid \cref{thm:mesh-truss} immediately generalizes to the case of labeled (or stratified) meshes and trusses, since these structures are, by definition, in correspondence for meshes and their fundamental trusses. We record this generalization in the case of stratification structures (we add the prefix `Str' to indicate the straight-forward `stratified' analogues of the categories from \cref{notn:closed-cell-and-open-cocell-cat}).

\begin{cor}[Equivalence of stratified meshes and trusses] \label{cor:strat-mesh-truss} The functor $\FTrs$ gives weak equivalences between $\strctmesh n$ and $\strctruss n$, resp.\ $\strotmesh n$ and $\strotruss n$. \qed
\end{cor}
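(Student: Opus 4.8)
The plan is to bootstrap from the unlabeled equivalence \cref{thm:mesh-truss} together with the observation that stratification structures are, by construction, \emph{tautologically the same data} on a mesh bundle and its fundamental truss bundle. First I would recall that a stratified closed (resp.\ open) $n$-mesh bundle $(p,f)$ consists of a closed (resp.\ open) $n$-mesh bundle $p$ together with a strict coarsening $\Totz p \to f$, and that this coarsening is equivalent data to a stratification of the poset $\Entr(\Totz p)$ by \cref{obs:lab-strat-top}(1). Since $\FTrs$ identifies $\Entr(\Totz p)$ with $\Totz{\FTrs p}$, a stratification structure on $p$ is literally the same thing as a stratification structure on $\FTrs p$, and this identification is compatible with stratified maps (a stratified mesh bundle map is, by definition, one whose top component descends along the coarsenings, which under $\FTrs$ is exactly the condition defining a stratified truss bundle map). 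Thus $\FTrs$ restricts to a functor $\strctmesh n \to \strctruss n$ (resp.\ $\strotmesh n \to \strotruss n$), and this functor sits in a strict pullback square over the forgetful functors to $\ctmesh n \to \ctruss n$ (resp.\ the open-cocellular versions).

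Next I would upgrade this to a weak equivalence. The cleanest route is to use that both $\strctmesh n$ and $\strctruss n$ fiber over the (same) category of stratified posets: more precisely, sending $(p,f)$ to the stratified poset $(\Entr(\Totz p), f)$ and similarly $(q,g) \mapsto (\Totz q, g)$ gives functors to a common target $\Stratz$-valued base, and by the tautological identification above $\FTrs$ is a map over this base. Over a fixed stratified poset $(T, f\colon T \to \Entr(f))$, the fiber of $\strctmesh n$ consists of closed-cellular $n$-meshes $p$ with $\Entr(\Totz p) \iso T$ compatibly with the structure maps, and likewise for $\strctruss n$; on these fibers $\FTrs$ is exactly the functor of \cref{thm:mesh-truss} restricted to the subcategory of meshes (resp.\ trusses) with prescribed entrance-path poset, which is a weak equivalence because \cref{thm:mesh-truss} is a weak equivalence and being a weak equivalence is detected on such fibers once one knows the base functors agree. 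Concretely, one invokes that $\FTrs\colon \ctmesh n \to \ctruss n$ is a weak equivalence, that the square relating the stratified categories to the unstratified ones is a (homotopy) pullback — which holds because the stratification data is a discrete (set-level, in fact poset-level) choice pulled back identically on both sides — and concludes by the two-out-of-three / pullback stability of weak equivalences of $\infty$-categories. A weak inverse $\CMsh$ on the stratified side is then obtained by choosing a weak inverse to $\FTrs$ upstairs (as in \cref{rmk:cmsh-functor-choice}) and equipping its output with the transported stratification, which is well-defined up to contractible choice.

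The main obstacle I expect is making precise the claim that the square
\[
\begin{tikzcd}
\strctmesh n \arrow[r, "\FTrs"] \arrow[d] & \strctruss n \arrow[d] \\
\ctmesh n \arrow[r, "\FTrs"] & \ctruss n
\end{tikzcd}
\]
is a homotopy pullback of $\infty$-categories — i.e.\ that adjoining stratification structures does not create any new homotopy, and in particular that the space of stratification structures on a mesh bundle maps by a homotopy equivalence (indeed an isomorphism of sets, after passing to skeleta via \cref{rmk:strat-truss-iso} and \cref{obs:lab-strat-top}) to the space of stratification structures on its fundamental truss bundle, naturally in the bundle. Once this naturality is in hand — and it follows directly from \cref{obs:lab-strat-top}, since strict coarsenings of $\Totz p$ correspond bijectively and functorially to stratifications of the poset $\Entr(\Totz p)$, with no topology left over — the rest is formal. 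The labeled (rather than stratified) version, if needed, is identical with `stratification of the poset' replaced by `functor out of the poset', and $\FTrs$ again inducing an isomorphism on these label sets. \qed
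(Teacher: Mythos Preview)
Your proposal is correct and follows the same idea as the paper, which simply records the corollary with a \qed{} after noting that stratification structures ``are, by definition, in correspondence for meshes and their fundamental trusses'' so that \cref{thm:mesh-truss} immediately generalizes. Your homotopy-pullback formulation is a careful unpacking of exactly this one-line justification; the paper does not spell out the fiberwise or pullback-stability argument but relies on the tautological identification you articulate in your first paragraph.
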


As an application of \cref{thm:mesh-truss}, we discuss the geometric dualization of meshes.

\begin{defn}[Truss dualization functor] \label{defn:dual-n-trusses} The \textbf{truss dualization} $\dagger : \truss n \to \truss n$ takes $n$-trusses $T = \{p_i : T_i \to T_{i-1}\}_{1 \leq i \leq n}$ to their \textbf{dual $n$-trusses} $T^\dagger = \{p^\dagger_i : T\op_i \to T\op_{i-1}\}_{1 \leq i \leq n}$ (where $p^\dagger_i$ is the dual bundle of $p_i$, see \cref{defn:dual-bundle}), and truss maps $F : T \to S$ to their \textbf{dual truss maps} $F^\dagger : T^\dagger \to S^\dagger$ whose mapping on objects equals that of $F$.
\end{defn}

\nid Note that $\dagger$ restricts to an isomorphisms of categories $\dagger : \ctruss n \iso \otruss n$. Together with \cref{thm:mesh-truss} this implies the following.

\begin{cor}[Dualization of $n$-meshes, {\cite[Cor. 4.2.4]{fct}}] \label{defn:dual-n-meshes}  There is an $\infty$-functor $\dagger : \ctmesh n \eqv \otmesh n$ determined by requiring $\FTrs{} \circ \dagger = \dagger \circ \FTrs$. The functor is a weak equivalence.
\end{cor}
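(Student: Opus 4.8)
The plan is to transport the $n$-truss dualization across the equivalence of \cref{thm:mesh-truss}. Recall from the remark preceding the corollary that the truss dualization $\dagger : \truss n \to \truss n$ of \cref{defn:dual-n-trusses} restricts to an isomorphism of $1$-categories $\dagger : \ctruss n \iso \otruss n$ (and symmetrically $\dagger : \otruss n \iso \ctruss n$); in particular it is a weak equivalence of $\infty$-categories. Fixing, as in \cref{rmk:cmsh-functor-choice}, a weak inverse $\CMsh : \otruss n \to \otmesh n$ of the weak equivalence $\FTrs : \otmesh n \to \otruss n$, I would define the mesh dualization to be the composite
\[
  \dagger \;:=\; \bigl(\,\ctmesh n \xrightarrow{\ \FTrs\ } \ctruss n \xrightarrow{\ \iso\ } \otruss n \xrightarrow{\ \CMsh\ } \otmesh n \,\bigr).
\]
Since $\FTrs$ is a weak equivalence by \cref{thm:mesh-truss}, the middle map is an isomorphism of categories, and $\CMsh$ is a weak inverse of a weak equivalence, the composite $\dagger : \ctmesh n \to \otmesh n$ is a weak equivalence of $\infty$-categories.

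Next I would verify the defining compatibility. The weak-inverse data supplies a natural equivalence $\FTrs \circ \CMsh \eqv \id_{\otruss n}$, so
\[
  \FTrs \circ \dagger \;=\; \FTrs \circ \CMsh \circ \dagger \circ \FTrs \;\eqv\; \dagger \circ \FTrs ,
\]
which is the identity $\FTrs \circ \dagger = \dagger \circ \FTrs$ of the corollary, understood up to coherent natural equivalence. That this property determines $\dagger$: if $G : \ctmesh n \to \otmesh n$ is any $\infty$-functor with $\FTrs \circ G \eqv \dagger \circ \FTrs$, then using the other half $\CMsh \circ \FTrs \eqv \id_{\ctmesh n}$ of the equivalence data one gets $G \eqv \CMsh \circ \FTrs \circ G \eqv \CMsh \circ \dagger \circ \FTrs = \dagger$, so $G \eqv \dagger$. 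Running the same construction with the roles of closed and open swapped yields a dualization $\otmesh n \to \ctmesh n$; since $\dagger^2 = \id$ holds strictly on trusses, the triangle identities for $\FTrs \dashv \CMsh$ show the two mesh dualizations are mutually weak-inverse, so $\dagger$ is a (weak) involutive equivalence $\ctmesh n \eqv \otmesh n$.

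A couple of routine points remain, which I would mention only briefly. First, $\CMsh \circ \dagger \circ \FTrs$ does land in $\otmesh n$ with \emph{cocellular} maps, because $\dagger : \ctruss n \iso \otruss n$ sends cellular truss (bundle) maps to cocellular ones, and the type of a mesh (bundle) map is by definition the type of its fundamental truss map (\cref{term:n-truss-bun-maps}). Second, the uniqueness argument above shows that the construction is, up to equivalence, independent of the chosen weak inverse $\CMsh$, so nothing is lost by fixing one.

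I expect no serious obstacle here: all the geometric substance — that a mesh is reconstructible, up to contractible choice, from its fundamental truss — is already packaged in \cref{thm:mesh-truss}. The only mild subtlety is bookkeeping: ``$\FTrs \circ \dagger = \dagger \circ \FTrs$'' must be read as a coherent equivalence of $\infty$-functors rather than a strict equality, and the fact that it pins down $\dagger$ uses that $\FTrs$ is an $\infty$-categorical equivalence (so that it induces an equivalence on functor $\infty$-categories) and not just essentially surjective and fully faithful on homotopy categories.
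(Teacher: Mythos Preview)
Your proposal is correct and follows exactly the approach the paper intends: the paper states the corollary as an immediate consequence of the fact that $\dagger : \ctruss n \iso \otruss n$ is an isomorphism together with \cref{thm:mesh-truss}, without spelling out the transport-along-equivalence argument you have written out. Your explicit construction $\dagger := \CMsh \circ \dagger \circ \FTrs$ and the verification of the determining compatibility and uniqueness simply unpack what the paper leaves implicit.
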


\begin{eg}[Dualization of $n$-meshes] We illustrate the action of dualization of 2-meshes in an example in \cref{fig:dualization-of-2-meshes}.
\begin{figure}[ht]
    \centering
    \def\svgwidth{1\columnwidth}
\begingroup%
  \makeatletter%
  \providecommand\color[2][]{%
    \errmessage{(Inkscape) Color is used for the text in Inkscape, but the package 'color.sty' is not loaded}%
    \renewcommand\color[2][]{}%
  }%
  \providecommand\transparent[1]{%
    \errmessage{(Inkscape) Transparency is used (non-zero) for the text in Inkscape, but the package 'transparent.sty' is not loaded}%
    \renewcommand\transparent[1]{}%
  }%
  \providecommand\rotatebox[2]{#2}%
  \newcommand*\fsize{\dimexpr\f@size pt\relax}%
  \newcommand*\lineheight[1]{\fontsize{\fsize}{#1\fsize}\selectfont}%
  \ifx\svgwidth\undefined%
    \setlength{\unitlength}{2160bp}%
    \ifx\svgscale\undefined%
      \relax%
    \else%
      \setlength{\unitlength}{\unitlength * \real{\svgscale}}%
    \fi%
  \else%
    \setlength{\unitlength}{\svgwidth}%
  \fi%
  \global\let\svgwidth\undefined%
  \global\let\svgscale\undefined%
  \makeatother%
  \begin{picture}(1,0.26388889)%
    \lineheight{1}%
    \setlength\tabcolsep{0pt}%
    \put(0,0){\includegraphics[width=\unitlength,page=1]{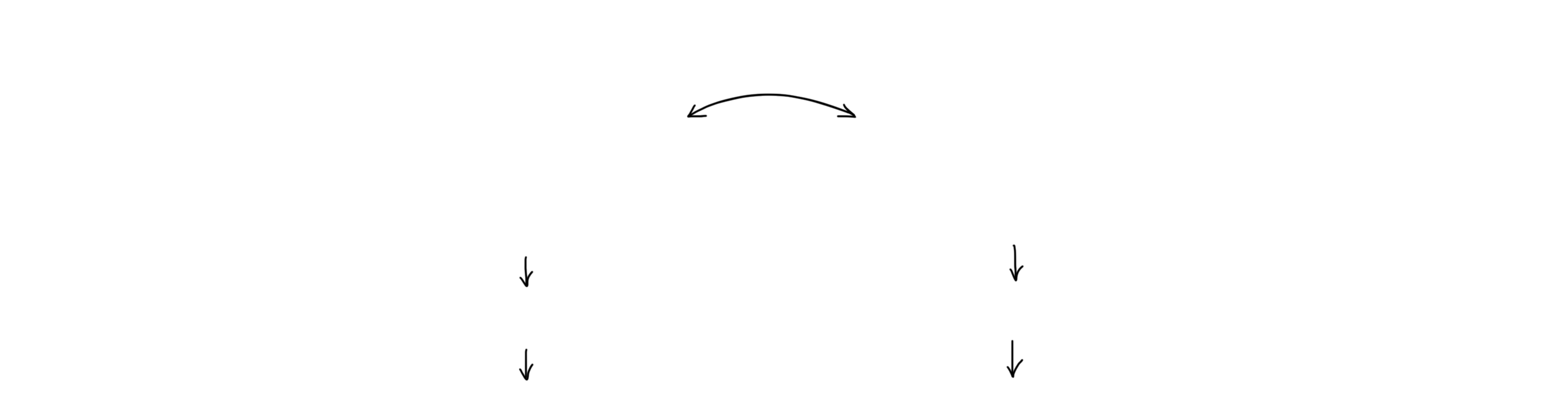}}%
    \put(0.484375,0.22083333){\color[rgb]{0,0,0}\makebox(0,0)[lt]{\lineheight{1.25}\smash{\begin{tabular}[t]{l}$\dagger$\end{tabular}}}}%
    \put(0,0){\includegraphics[width=\unitlength,page=2]{dualization-of-2-meshes.pdf}}%
  \end{picture}%
\endgroup%

    \caption{Dualization of 2-meshes}
    \label{fig:dualization-of-2-meshes}
\end{figure}
\end{eg}

\cref{thm:mesh-truss} also generalizes to the case of mesh and truss bundles. (Our previous \cref{notn:closed-cell-and-open-cocell-cat} for closed-cellular and open-cocellular meshes and trusses generalizes to the case of bundles over a fixed base; notationally we indicate this by adding `$(B)$' for the base stratifications $B$ of mesh bundles resp.\ `$(P)$' for base posets $P$ of truss bundles.)

\begin{thm}[Equivalence of mesh bundles and truss bundles, {\cite[Thm. 4.2.2]{fct}}] \label{thm:mesh-truss-bundle} The functors $\FTrs : \ctmesh n(B) \to \ctruss n(\Entr B)$ and $\FTrs : \otmesh n (B) \to \otruss n (\Entr B)$ are weak equivalences of $\infty$-categories.
\end{thm}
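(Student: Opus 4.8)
The statement to prove is \cref{thm:mesh-truss-bundle}: the fundamental truss bundle functor $\FTrs$ gives weak equivalences $\ctmesh n(B) \to \ctruss n(\Entr B)$ and $\otmesh n(B) \to \otruss n(\Entr B)$. Since the paper explicitly attributes this to \cite[Thm.\ 4.2.2]{fct}, the ``proof'' will be a citation together with a short indication of how it reduces to the already-recalled non-bundled case (\cref{thm:mesh-truss}) and the classification results (\cref{lem:classnmeshbun}, \cref{obs:ntrussbord}). The plan is therefore to explain the reduction rather than reprove everything from scratch.

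First I would note that both $n$-mesh bundles and $n$-truss bundles over a fixed base are, by \cref{lem:classnmeshbun} and \cref{obs:ntrussbord}, classified up to bundle isomorphism by functors $\Entr(B) \to \kT^n$, and that the correspondence is compatible with $\FTrs$ (this is exactly what \cref{lem:classnmeshbun} asserts: $p \mapsto \chi_{\FTrs(p)}$). This already gives that $\FTrs$ is essentially surjective and bijective on isomorphism classes of objects. The remaining content is the statement at the level of mapping spaces: one must show that for mesh bundles $p, p'$ over $B$, the map of topological mapping spaces $\Map_{\ctmesh n(B)}(p,p') \to \Map_{\ctruss n(\Entr B)}(\FTrs p, \FTrs p')$ is a weak homotopy equivalence (and similarly in the open-cocellular case). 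Here the target is a discrete set (truss bundles form a $1$-category, and moreover \cref{obs:ntrussbord}'s essential gauntness means morphism sets are small), so the claim is that each mapping space of mesh bundles is homotopy discrete with $\pi_0$ matching the truss bundle maps.

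The key step is to bootstrap from the non-bundled equivalence. Working fiberwise: a mesh bundle map $F: p \to p'$ over $B$ restricts over each stratum $s$ of $B$, and more precisely over each simplex of a refinement realizing $\Entr(B)$, to a family of mesh bundle maps; by pulling back along $\CStr[k] \to \CStr(\Entr B)$ one reduces to bundles over $\CStr[k]$, i.e.\ to the data classified by a functor $[k] \to \kT^n$. Over the point this is \cref{thm:mesh-truss}; over $[k]$ one runs the same $(\infty,1)$-categorical argument inductively in the tower-length $n$, using that $\kT^n = \kT^1(\kT^{n-1})(\ast)$ and that the one-dimensional bundle case (\cref{lem:class1meshbun}, \cref{rmk:base-cat}) already handles the base step. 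The contractibility of the relevant choice spaces — that classifying mesh bundles are unique up to contractible choice — is the engine making the mapping spaces homotopy discrete.

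The main obstacle, and the reason the book restricts to the closed-cellular and open-cocellular cases, is controlling the cellular subrefinements used to compare a mesh bundle with the classifying stratification of a truss bundle: in the general (mixed) case one is forced into $(\infty,2)$-categorical bookkeeping, whereas in the closed-cellular/open-cocellular cases the subrefinements that occur are of a restricted, tractable type (as remarked after \cref{lem:classnmeshbun}). So the honest proof strategy is: (i) invoke \cref{thm:mesh-truss} for the fiberwise statement; (ii) invoke the classification correspondence (\cref{lem:classnmeshbun}, \cref{obs:ntrussbord}) for objects; (iii) upgrade to bundles by the fiberwise-over-simplices argument of \cite[\S4.2]{fct}, citing \cite[Prop.\ 4.2.22]{fct} and \cite[Obs.\ 4.2.68]{fct} for the technical heart. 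Accordingly:

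\begin{proof}[Proof of \cref{thm:mesh-truss-bundle}] This is \cite[Thm.\ 4.2.2]{fct}; it follows from \cite[Prop.\ 4.2.22]{fct} and \cite[Obs.\ 4.2.68]{fct}, exactly as in the non-bundled case (\cref{thm:mesh-truss}), now carried out fiberwise over the base using \cref{lem:classnmeshbun} and \cref{obs:ntrussbord} to identify both sides with functors into $\kT^n$. \end{proof}
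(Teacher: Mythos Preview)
The paper gives no proof of \cref{thm:mesh-truss-bundle} at all: the theorem is simply stated with the citation \cite[Thm.\ 4.2.2]{fct} in its heading, and the text immediately moves on. Your proposal therefore goes well beyond what the paper does, supplying a reduction sketch where the paper is content to defer entirely to the book.

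One caution about your final proof environment: you invoke \cite[Prop.\ 4.2.22]{fct} and \cite[Obs.\ 4.2.68]{fct} as the technical heart. In the paper these two references are cited for the proofs of \cref{lem:class1meshbun} and \cref{lem:classnmeshbun}, which are $\pi_0$-level classification statements (bundle isomorphism classes $\leftrightarrow$ functors into $\kT^n$). You yourself correctly identify that the full weak equivalence requires more, namely the mapping-space statement, and your sketch gestures at a fiberwise-over-simplices argument for this. But the specific citations you give at the end are, as far as the paper records, for the weaker classification results; it is not clear from the paper that they also carry the $\infty$-categorical mapping-space content of \cite[Thm.\ 4.2.2]{fct}. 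Since the paper's own ``proof'' is a bare citation, the cleanest fix is simply to cite \cite[Thm.\ 4.2.2]{fct} and stop, as the paper does; if you want to keep the reduction sketch, it would be more accurate to say the mapping-space argument is carried out in \cite[\S4.2]{fct} rather than pinning it to those two particular sub-references.
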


\nid As before, the theorem also carries over to the stratified case, directly generalizing our previous \cref{cor:strat-mesh-truss} (with the evident notational changes for bundles).

\begin{cor}[Equivalence of stratified mesh and truss bundles] \label{cor:strat-mesh-truss-bundle} The functor $\FTrs$ gives weak equivalences between $\strctmesh n (B)$ and $\strctruss n (\Entr B)$, resp.\ between $\strotmesh n(B)$ and $\strotruss n(\Entr B)$. \qed
\end{cor}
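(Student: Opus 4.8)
The plan is to bootstrap from the unstratified equivalence \cref{thm:mesh-truss-bundle} using the observation — already invoked for the non-bundled case in \cref{cor:strat-mesh-truss} — that a stratification structure on a mesh bundle \emph{is literally the same data} as a stratification structure on its fundamental truss bundle. Concretely, a stratified $n$-mesh bundle $(p,f)$ over $B$ is, by \cref{defn:strat-mesh-bun} and the remark following it, a mesh bundle $p$ equipped with a strict coarsening $\Totz p \to f$; by \cref{obs:lab-strat-top} such a strict coarsening corresponds bijectively to a stratification of the entrance poset $\Entr(\Totz p)$, and under the canonical identification $\Entr(\Totz p) \cong \Totz(\FTrs p)$ this is exactly the datum of a stratification of the total poset of $\FTrs p$, i.e.\ of a stratified $n$-truss bundle structure on $\FTrs p$. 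Thus $\FTrs$ lifts to a functor $\strctmesh n (B) \to \strctruss n (\Entr B)$ (and, identically, to a functor $\strotmesh n(B) \to \strotruss n(\Entr B)$) — that this lift sends stratified maps to stratified maps is precisely the content of the remark on fundamental stratified truss bundle maps.

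First I would fix a weak inverse $\CMsh$ to $\FTrs \colon \ctmesh n(B) \to \ctruss n(\Entr B)$ supplied by \cref{thm:mesh-truss-bundle}, together with the natural weak equivalences $\eta \colon \id \Rightarrow \CMsh\,\FTrs$ and $\eps \colon \FTrs\,\CMsh \Rightarrow \id$. Given a stratified truss bundle $(q,g)$, I would transport the stratification $g$ of the poset $\Totz q$ along the isomorphism $\Entr(\Totz \CMsh q) \cong \Totz q$ induced by $\eps_q$, obtaining (via \cref{obs:lab-strat-top}) a strict coarsening of $\Totz \CMsh q$; this is exactly the stratified classifying mesh bundle $\CMsh(q,g)$, and it makes $\CMsh$ into a functor $\strctruss n(\Entr B) \to \strctmesh n(B)$. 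Next I would check that the components of $\eta$ and $\eps$ are stratified maps for these transported stratifications: a mesh- (resp.\ truss-) bundle map underlying a prospective stratified map is stratified precisely when its top component descends along the relevant strict coarsenings, and since the induced poset map on entrance posets is automatically unique once it exists, this descent is forced by the fact that $\eta$ and $\eps$ were already compatible with the identifications of total posets used to transport the stratifications. Hence $\eta$ and $\eps$ witness $\FTrs$ and $\CMsh$ as mutually weakly inverse on the stratified categories, which gives the claim; the open/cocellular variant is word-for-word the same with $\ctmesh n$, $\ctruss n$ replaced by $\otmesh n$, $\otruss n$.

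I expect the only genuine work to lie in the bookkeeping of the first paragraph: verifying that the correspondence ``strict coarsenings of $\Totz p$ $\leftrightarrow$ stratifications of $\Totz(\FTrs p)$'' is compatible with the factoring/fiber conditions built into the two sides (in particular the requirement, on the truss side, that strata live in the fibers of $q_{>0}$), and that the correspondence is natural in bundle maps. Both are immediate once one notes that $\FTrs$ identifies $\Entr(\Totz p)$ with $\Totz(\FTrs p)$ compatibly with the projections onto $\Entr(M_i)$ for every $i$, in particular onto $\Entr(B)$. No new geometry or combinatorics enters: \cref{cor:strat-mesh-truss-bundle} is a formal consequence of \cref{thm:mesh-truss-bundle} together with the definitional fact that a (flat framed stratified) mesh bundle and its fundamental truss bundle carry the very same stratification data.
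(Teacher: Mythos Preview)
Your proposal is correct and takes essentially the same approach as the paper: the paper gives no proof beyond the $\qed$ and the surrounding remark that ``the theorem also carries over to the stratified case, directly generalizing our previous \cref{cor:strat-mesh-truss},'' i.e.\ that stratification data on a mesh bundle and on its fundamental truss bundle are by definition the same, so the unstratified equivalence of \cref{thm:mesh-truss-bundle} lifts for free. You have spelled out in detail exactly the bookkeeping the paper leaves implicit (transporting the stratification along the identification $\Entr(\Totz p)\cong\Totz(\FTrs p)$, checking that the unit and counit are stratified maps); this is faithful to, and more explicit than, the paper's one-line justification.
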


\begin{note}[The 1-constructible case, cf.\ {\cite[Rmk.\ 4.2.4]{fct}}] Following the earlier \cref{rmk:base-cat} and \cref{rmk:base-cat-n}, \cref{thm:mesh-truss-bundle} has a yet further variation (with essentially the same proof): the $\infty$-category of closed-cellular (resp.\ open-cocellular) 1-constructible $n$-mesh bundles over a (not necessarily $0$-truncated) base stratification $B$ is weakly equivalent to corresponding category of $n$-truss bundles over the entrance path $\infty$-category $\TEntr(B)$ (here, an `$n$-truss bundle over an $\infty$-category $\cC$' describes the data of a functor $\cC \to \kT^n$, but we will omit a detailed discussion of the notion).
\end{note}

\subsection{Tame stratifications and coarsest refining meshes} \label{ssec:comb-tame-strat}

We now shift focus to stratifications of flat framed spaces. More precisely, we are interested in those stratifications that admit a framed refinement by a mesh in the following sense.

\begin{term}[Mesh refinements] \label{term:mesh-refinements} Given a stratification $(X,f)$ of a flat $n$-framed space $(X,\gamma)$, a `mesh refinement' of $f$ by an $n$-mesh $M$ is a framed strict coarsening $M_n \to f$, where $M_n$ is the total stratification of $M$. We will denote mesh refinements by writing $M \mshar f$.
\end{term}

\nid Note that any stratified mesh $(M,f)$ yields a stratification $f$ together with a mesh refinement $M \mshar f$. (And conversely, any mesh refinement $M \mshar f$ of a stratification $f$ defines a stratified mesh $(M,f)$.)

\begin{defn}[Tame stratifications] \label{defn:flat-framed-stratification} A \textbf{tame stratification} $(X,f,\gamma)$ is a stratification $(X,f)$ of a flat framed space $(X,\gamma)$ that has a mesh refinement.
\end{defn}

\begin{rmk}[`Tame' vs.\ `flat framed' stratifications] Tame stratifications of flat framed spaces were simply called `flat framed stratifications' in \cite[\S5]{fct}. For our purposes here it will be useful to distinguish both `tame' and `not necessarily tame' stratifications of flat framed spaces.
\end{rmk}

\nid To simplify notation, we will usually work with subspaces of $\lR^n$.

\begin{notn}[Working with subspaces] \label{obs:stratified-meshes-vs-ref-of-tame-strat} We henceforth drop framed charts $\gamma$ from our notation, and work with subspaces $X \subset \lR^n$ instead. In particular, we abbreviate tame stratifications $(X,f,\gamma)$ by $(X,f)$ or $f$.
\end{notn}

\nid Recall that a `framed stratified map' is a stratified map whose underlying map is a framed map of framed spaces (see \cref{term:framed-everything}).

\begin{defn}[Tame framed stratified maps] \label{term:tame-maps} A \textbf{tame framed stratified map} $F : (X,f) \to (Y,g)$ of tame stratifications is a framed stratified map such that there exist mesh refinements $M \mshar f$ and $M' \mshar f'$ (see \cref{term:mesh-refinements}) through which $F$ factors by an $n$-mesh map $G : M \to M'$, i.e.\ the following commutes
\[\begin{tikzcd}[baseline=(W.base)]
	{(X,f)} & {(Y,f')} \\
	{M_n} & |[alias=W]| {M'_n}
	\arrow["F", from=1-1, to=1-2]
	\arrow[from=2-2, to=1-2]
	\arrow["G_n", from=2-1, to=2-2]
	\arrow[from=2-1, to=1-1]
\end{tikzcd} . \qedhere
\]
\end{defn}

A priori, a tame stratification can have many refining meshes. The next result observes that there is in fact a canonical choice. The result plays a central role for the combinatorializibility of tame stratifications.

\begin{thm}[Coarsest refining meshes, {\cite[Thm. 5.1.17]{fct}}] \label{thm:coar-ref-mesh} Every tame stratification $f$ has a unique `coarsest refining mesh' $M \mshar f$, satisfying that any other mesh refinement $M' \mshar f$ factors through $M \mshar f$ by a unique strict $n$-mesh coarsening $M' \to M$.
\end{thm}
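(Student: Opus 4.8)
The plan is to construct the coarsest refining mesh directly, as the common coarsening of all mesh refinements of $f$, and then verify the universal factorization property. First I would fix the tame stratification $f$ of the flat $n$-framed space $X \subset \lR^n$ and observe that, by \cref{defn:flat-framed-stratification}, the set of mesh refinements $M' \mshar f$ is nonempty. The key structural input is \cref{thm:mesh-truss} (and its bundle version \cref{thm:mesh-truss-bundle}): via the fundamental truss functor, each mesh refinement $M' \mshar f$ corresponds to a stratified $n$-truss $\FTrs(M',f)$, and strict mesh coarsenings correspond to truss coarsenings (\cref{obs:truss-and-mesh-coarsening}). So the problem translates into a purely combinatorial statement: among all stratified $n$-trusses refining the stratified poset $(\Entr f, \cdots)$ — i.e.\ all ways of presenting $f$ combinatorially as a truss coarsening — there is a unique coarsest one, through which all others factor by truss coarsenings.

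Next I would carry out the induction on $n$, since both meshes and trusses are defined as towers of $1$-mesh (resp.\ $1$-truss) bundles. For $n=0$ there is nothing to prove. For the inductive step, given refinements $M'\mshar f$ and $M''\mshar f$, I want to produce a common coarsening. The idea is to work one bundle-layer at a time: the top $1$-mesh bundles $p'_n : M'_n \to M'_{n-1}$ and $p''_n : M''_n \to M''_{n-1}$ both refine the same stratification, and by \cref{lem:class1meshbun} (classification of $1$-mesh bundles by functors into $\kT^1$) two such bundles over a common base admit a coarsest common coarsening fiberwise — this is essentially the statement that in each $1$-mesh fiber, finitely many cell subdivisions of a fixed stratified interval have a coarsest common refinement that still refines $f$, namely the one whose $0$-strata are exactly the "forced" singular points coming from $f$ together with those forced by lower layers. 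Iterating down the tower, and invoking the inductive hypothesis on $M'_{n-1}, M''_{n-1}$, assembles a mesh $M$ with $M \mshar f$ that coarsens both $M'$ and $M''$. Since the poset of stratifications of $X$ refining $f$ is finite (all stratifications here are assumed finite), taking the common coarsening of \emph{all} mesh refinements produces a single coarsest mesh refinement $M \mshar f$; any mesh refinement $M' \mshar f$ then admits a strict coarsening $M' \to M$, and uniqueness of this coarsening follows from the fact that strict coarsenings are identities on underlying spaces and are determined by their effect on entrance path posets (essential gauntness, \cref{rmk:strat-truss-iso}), once one checks the coarsening map lands in $M$ rather than some intermediate refinement.

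The main obstacle I anticipate is showing that the fiberwise/layerwise common coarsening is \emph{well-defined as a mesh}, i.e.\ that the 0-constructibility and coordinatizability conditions of \cref{defn:1-mesh-bun} are preserved when one takes common coarsenings, and that the construction is compatible across the tower (the coarsening chosen at layer $i$ must be compatible with the base coarsening inherited from layer $i-1$). Concretely: when one coarsens the base of a $1$-mesh bundle, the total space must be coarsened accordingly, and one must check that the "coarsest choice downstairs" is compatible with a well-defined coarsest choice upstairs — there is a potential tension between coarsening as much as possible in the base versus in the fiber, and resolving it requires the observation that the singular strata of a mesh refining $f$ are partially \emph{forced} by $f$ and partially forced by the framing/bundle structure, and that these forced strata are closed under the operations in question. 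This is exactly the content that \cite[\S5.1]{fct} develops, so in the write-up I would reduce to \cite[Thm.\ 5.1.17]{fct} for the single-layer analysis and supply the inductive assembly, citing \cref{thm:mesh-truss-bundle} to pass between the topological and combinatorial pictures at each stage.
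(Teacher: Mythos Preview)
The paper does not prove this theorem: it is imported verbatim from \cite[Thm.~5.1.17]{fct} and used as a black box (indeed, \cref{cor:normalization} and \cref{thm:tame-strat-comb} are both \emph{deduced} from it). So there is no ``paper's own proof'' to compare your proposal against.

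Evaluating your proposal on its own merits, there are two concrete problems. First, your finiteness argument is wrong: each individual stratification is finite, but the collection of mesh refinements $M' \mshar f$ of a fixed $f$ is certainly infinite (one can always subdivide further), so ``take the common coarsening of \emph{all} mesh refinements'' is not a well-defined operation without first knowing that pairwise common coarsenings exist and that the process stabilizes --- which is precisely what you are trying to prove. Second, and more seriously, your final paragraph proposes to ``reduce to \cite[Thm.~5.1.17]{fct} for the single-layer analysis'': but \cite[Thm.~5.1.17]{fct} \emph{is} the theorem you are proving, so this is circular.

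The inductive, layer-by-layer strategy you sketch is in the right spirit, and the genuine content is exactly the obstacle you identify: that coarsening the base of a 1-mesh bundle interacts well with coarsening its fibers, so that the ``forced'' singular strata assemble coherently down the tower. A cleaner route, already visible in the paper, is to use the purely combinatorial existence of normal forms for labeled trusses (\cref{rmk:normal-forms-for-gen-labelled-trusses}, proved in \cite{thesis} independently of meshes), together with the mesh--truss equivalence \cref{thm:mesh-truss}, to \emph{define} the coarsest refining mesh as $\CMsh\NF{\FTrs(M',f)}$ for any chosen refinement $M'$; the substantive remaining step is then to show this is independent of the choice of $M'$, i.e.\ that any two mesh refinements of $f$ admit a common mesh refinement.
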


\nid The notion of `coarsest refining meshes' has the following combinatorial counterpart. 

\begin{defn}[Normal forms] \label{defn:normal-forms} A stratified truss $(T,f)$ is said to be in \textbf{normal form} (or `normalized') if any truss coarsening $F : (T,f) \to (S,g)$ is an identity.
\end{defn}

\begin{obs}[Relation of normal forms to coarsest refining meshes] \label{obs:normal-form-vs-crs-ref-mesh} Consider a stratified $n$-mesh $(M,f)$ with stratified fundamental $n$-truss $(T,g) = \FTrs (M,f)$. Using \cref{obs:truss-and-mesh-coarsening}, note $(T,g)$ is in normal form iff $M \mshar f$ is the coarsest refining mesh.
\end{obs}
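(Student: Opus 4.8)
The plan is to convert the normal-form condition on $(T,g)$ into a statement about mesh coarsenings out of $(M,f)$ by means of \cref{obs:truss-and-mesh-coarsening}, and then compare it with the universal property of the coarsest refining mesh supplied by \cref{thm:coar-ref-mesh}.

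First I would record the translation. Since $(T,g) = \FTrs(M,f)$, \cref{obs:truss-and-mesh-coarsening} gives a bijection between strict $n$-mesh coarsenings $(M,f) \to (M',f')$ and truss coarsenings $(T,g) \to (T',g')$, sending a strict mesh coarsening $G$ to $\FTrs G$ and preserving identities. Hence, by \cref{defn:normal-forms}, $(T,g)$ is in normal form if and only if every strict $n$-mesh coarsening out of $(M,f)$ is an identity. It therefore remains to show that this latter condition is equivalent to $M \mshar f$ being the coarsest refining mesh.

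For the forward implication, suppose every strict mesh coarsening out of $(M,f)$ is an identity, and let $N \mshar f$ be the coarsest refining mesh supplied by \cref{thm:coar-ref-mesh}. Because $M \mshar f$ is a mesh refinement of $f$, it factors through $N \mshar f$ by a (unique) strict mesh coarsening $M \to N$; by hypothesis this map is an identity, so $M = N$ and $M \mshar f$ is the coarsest refining mesh. Conversely, suppose $M \mshar f = N \mshar f$ is the coarsest refining mesh, and let $G : (M,f) \to (M',f')$ be a strict mesh coarsening. Then $f' = f$, so $M' \mshar f$ is again a mesh refinement; \cref{thm:coar-ref-mesh} produces a unique strict mesh coarsening $r : M' \to M$, and since both $\id_M$ and $r \circ G$ are strict mesh coarsenings $M \to M = N$ compatible with the refinement $M \mshar f$, the uniqueness clause of \cref{thm:coar-ref-mesh} forces $r \circ G = \id_M$. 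Passing to the (finite) entrance path posets, $\Entr(M_n) \to \Entr(M'_n) \to \Entr(M_n)$ is then the identity with both factors surjective, hence bijective; a strict coarsening with bijective entrance path map has the same strata on source and target, so $M' = M$ and $G$ is an identity. This shows $(T,g)$ is in normal form.

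The only step that is not pure bookkeeping is the last part of the converse: deducing from ``$M$ is the coarsest refining mesh'' that no strict mesh coarsening leaves $M$ nontrivially. This is where finiteness of the stratifications is used — a strict coarsening out of $M$ that also admits a coarsening back to $M$ is split, and a surjection of finite posets with a one-sided inverse is a bijection, so the coarsening is in fact a stratified homeomorphism and hence (being strict) an identity. Everything else follows formally from the correspondences set up in the excerpt.
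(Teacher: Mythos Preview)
Your proof is correct and follows essentially the same approach as the paper. The paper treats this observation as immediate from \cref{obs:truss-and-mesh-coarsening} (the bijection between strict mesh coarsenings and truss coarsenings) and does not give a separate proof; you have simply unpacked that ``immediate'' step, correctly noting that the converse direction needs the existence theorem \cref{thm:coar-ref-mesh} together with a finiteness/gauntness argument to rule out nontrivial split coarsenings.
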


\begin{cor}[Normalization] \label{cor:normalization} Every stratified truss $(T,f)$ has a unique truss coarsening to a stratified truss in normal form, denoted $(T,f) \to \NF{T,f}$.
\end{cor}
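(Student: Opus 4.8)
The plan is to deduce \cref{cor:normalization} directly from \cref{thm:coar-ref-mesh} by transporting along the equivalence of stratified meshes and trusses recorded in \cref{cor:strat-mesh-truss-bundle} (in the form without a base, i.e.\ \cref{cor:strat-mesh-truss}), together with the dictionary between truss coarsenings and strict mesh coarsenings in \cref{obs:truss-and-mesh-coarsening} and \cref{obs:normal-form-vs-crs-ref-mesh}. First I would take a stratified $n$-truss $(T,f)$ and pick a stratified classifying $n$-mesh $(M,h)$ with $\FTrs(M,h) = (T,f)$ (this exists by the equivalence of \cref{cor:strat-mesh-truss}, since $\FTrs$ is essentially surjective; one has to be slightly careful about the closed/open splitting, but this is handled exactly as elsewhere in the paper). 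Let $h$ denote the resulting stratification; its underlying tame stratification then has, by \cref{thm:coar-ref-mesh}, a unique coarsest refining mesh $M' \mshar h$, and the inclusion-ordered universal property says every mesh refinement of $h$ factors through $M' \mshar h$ by a unique strict $n$-mesh coarsening.

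Next I would translate this to the combinatorial side. The stratified mesh $(M,h)$ corresponds to the mesh refinement $M \mshar h$, and the coarsest refining mesh $M' \mshar h$ defines a stratified mesh $(M',h)$ whose stratified fundamental truss I will call $\NF{T,f} := \FTrs(M',h)$. By \cref{obs:normal-form-vs-crs-ref-mesh}, since $M' \mshar h$ is the coarsest refining mesh, the stratified truss $\NF{T,f}$ is in normal form. The unique strict $n$-mesh coarsening $M \to M'$ over $h$ (from \cref{thm:coar-ref-mesh}) is a strict mesh coarsening of stratified meshes $(M,h) \to (M',h)$, and by \cref{obs:truss-and-mesh-coarsening} it corresponds under $\FTrs$ to a truss coarsening $(T,f) \to \NF{T,f}$. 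This produces the required truss coarsening to a normalized stratified truss.

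For uniqueness, suppose $(T,f) \to (S,g)$ is any truss coarsening with $(S,g)$ in normal form. Pass to stratified classifying meshes: by \cref{obs:truss-and-mesh-coarsening} (or directly \cref{cor:strat-mesh-truss}), this truss coarsening is $\FTrs$ of some strict mesh coarsening $(M,h) \to (M_S, h)$ over the common underlying stratification $h$, so $M_S \mshar h$ is a mesh refinement of $h$; and by \cref{obs:normal-form-vs-crs-ref-mesh}, $(S,g)$ being normalized forces $M_S \mshar h$ to be the coarsest refining mesh, hence $M_S = M'$ by the uniqueness clause of \cref{thm:coar-ref-mesh}. Applying $\FTrs$ and using essential gauntness of stratified truss bundles (\cref{rmk:strat-truss-iso}), which lets us replace stratified isomorphisms by equalities, we get $(S,g) = \NF{T,f}$, and the coarsening itself is the unique one by the uniqueness of the factorizing strict mesh coarsening in \cref{thm:coar-ref-mesh}.

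The main obstacle I anticipate is not conceptual but bookkeeping: \cref{thm:coar-ref-mesh} and \cref{obs:truss-and-mesh-coarsening} are stated for \emph{strict} mesh coarsenings and for meshes refining a \emph{fixed} flat framed stratification, whereas \cref{cor:strat-mesh-truss} (hence the existence of classifying meshes) is only asserted in the closed-cellular and open-cocellular cases. So the real work is to check that a stratified truss and its normalization live in one of these two worlds simultaneously (a normalization does not change the open/closed type of the underlying truss, only coarsens the stratification labeling), so that the equivalence and the uniqueness statement can both be applied; once that is in place, the argument is a formal transport along $\FTrs$ and its weak inverse $\CMsh$.
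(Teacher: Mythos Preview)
Your proposal is correct and follows essentially the same approach as the paper: the paper's proof is the single sentence ``Pick any stratified classifying mesh $(M,g) \iso \CMsh (T,f)$ and use \cref{thm:coar-ref-mesh} for the tame stratification $g$,'' and you have simply unpacked this into its constituent steps (existence via the coarsest refining mesh, normal form via \cref{obs:normal-form-vs-crs-ref-mesh}, uniqueness via the universal property). Your worry about the open/closed bookkeeping is legitimate but, as you note yourself, harmless since coarsenings preserve the type.
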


\begin{proof} Pick any stratified classifying mesh $(M,g) \iso \CMsh (T,f)$ and use \cref{thm:coar-ref-mesh} for the tame stratification $g$.
\end{proof}

Let us briefly discuss a purely combinatorial approach to the existence of normal forms. First, note that the notion of truss coarsenings can be straight-forwardly generalized to deal with labelings in some category $\iC$.

\begin{term}[Truss coarsenings for labeled trusses] \label{term:truss-coarsenings-gen} Given $\iC$-labeled trusses $(T,f)$ and $(T',f')$, a `truss coarsening' $F : (T,f) \to (T',f')$ is a truss coarsening $F : T \to T'$ such that $f' \circ F_n = f$ commutes.
\end{term}

\nid \cref{cor:normalization} now generalizes as follows.

\begin{term}[Normalized labeled trusses] A $\iC$-labeled truss $(T,f)$ is said to be `normalized' (or in `normal form') if no non-identity coarsening applies to it.
\end{term}

\begin{obs}[Existence of normal forms for labeled trusses] \label{rmk:normal-forms-for-gen-labelled-trusses} Any $\iC$-labeled truss $(T,f)$ has a unique coarsening $(T,f) \to \NF{T,f}$ to a normalized labeled truss $\NF{T,f}$ to which no non-identity coarsening applies. For open trusses $T$, this was shown in \cite[Thm. 5.2.2.11]{thesis}, and the given proof applies to trusses as well. That proof is purely combinatorial and does not rely on any results about meshes.
\end{obs}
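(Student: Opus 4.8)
The plan is to establish existence and uniqueness of the normalized coarsening by a standard rewriting argument --- termination together with local confluence, combined via Newman's lemma. I would begin by observing that the stratification data is inert: every truss coarsening induces an \emph{isomorphism} on entrance path posets (\cref{term:truss-coarsening}, \cref{term:truss-coarsenings-gen}), so the statement reduces to a purely combinatorial claim about $\iC$-labeled truss bundles and their coarsenings, exactly as asserted; in particular it upgrades \cref{cor:normalization} to the labeled setting.

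For \emph{existence} I would argue termination. First, truss coarsenings are closed under composition: a composite of surjective cocellular maps is surjective and cocellular, the endpoint-dimension clause of \cref{term:truss-coarsening} is preserved, and the labeling commutation $f'\circ F_n = f$ composes. Second, a non-identity truss coarsening is non-injective on objects --- if it were bijective, the conditions of \cref{term:n-truss-bun-maps} would make it a truss bundle isomorphism, hence an identity after passing to a skeleton --- so it strictly decreases the finite number of objects of $\Totz{T}$. Hence no infinite chain of non-identity coarsenings exists, and repeatedly applying non-identity coarsenings starting from $(T,f)$ terminates; by closure under composition, the terminal result is a coarsening $(T,f)\to\NF{T,f}$ out of which no non-identity coarsening exists, i.e.\ $\NF{T,f}$ is normalized in the sense of \cref{defn:normal-forms}.

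For \emph{uniqueness} it then suffices to prove local confluence with respect to \emph{elementary} coarsenings and to note that every coarsening is a composite of elementary ones (so that being normalized for elementary moves coincides with the required notion). Here an elementary move deletes a single $\dim$-$0$ object of one $1$-truss fibre, fusing its two adjacent regular cells into one; it is legitimate precisely when the labeling is locally trivial on that configuration (equal values, identity arrows in $\iC$). Using that an $n$-truss coarsening is a tower of $1$-truss bundle coarsenings and that labeled $1$-truss bundles over a poset are classified by functors into $\kT^1(\iC)$ (\cref{constr:lab1bord}, \cref{obs:ntrussbord}), one works one level at a time and, within a level, fibrewise over the base poset. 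Moves with disjoint support commute, yielding trivial diamonds; the critical pairs --- two moves deleting adjacent singular objects, or a move interacting with the lift of a move forced along an arrow of the base poset by $0$-constructibility --- are finite in number and each resolves by deleting the union of the affected objects, still compatibly with the labeling. Newman's lemma then gives the unique normal form, and the universal coarsening $(T,f)\to\NF{T,f}$ follows, functorially.

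The hard part will be the critical-pair analysis in the \emph{bundle} direction: because of the $0$-constructibility condition, an elementary move in one fibre is not genuinely local but must be propagated along the arrows of the base poset, and one must check that the propagated moves of two overlapping elementary coarsenings still admit a common coarsening, all while tracking the induced labelings --- this bookkeeping is essentially the content of \cite[Thm.\ 5.2.2.11]{thesis}. Finally I would remark that the endpoint-dimension clause in \cref{term:truss-coarsening} is exactly what makes the general (non-open) case behave like the open-truss case treated there, so no genuinely new phenomena arise beyond that setting.
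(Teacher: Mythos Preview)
The paper gives no proof of its own here; the observation simply cites \cite[Thm.~5.2.2.11]{thesis} and asserts that the combinatorial argument there carries over from open trusses to general trusses. Your rewriting-theoretic strategy --- termination (non-identity coarsenings strictly shrink the finite object set of the total poset) plus local confluence of elementary deletions, then Newman's lemma --- is sound and is a standard route to normal-form results of this type; it is plausibly close to what the cited proof does. You also correctly locate the genuine work in the critical-pair analysis in the bundle direction, where $0$-constructibility forces an elementary move in one fibre to propagate along arrows of the base poset.

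One small muddle: your opening sentence about coarsenings ``inducing an isomorphism on entrance path posets'' only makes sense for \emph{stratified} trusses, not general $\iC$-labeled ones; the point you presumably want is just that the labeled notion of coarsening (\cref{term:truss-coarsenings-gen}) specializes to the stratified one, so the labeled statement subsumes \cref{cor:normalization}. It is also worth noting that the paper's subsequent remark on computability sketches, in its footnote, a different organization: normalize level-by-level from the top down, at each stage replacing the labeling by the classifying functor of the already-normalized bundle above (via \cite[Lem.~5.2.2.8]{thesis}). That inductive scheme reduces everything to the $1$-truss-bundle case and sidesteps a global critical-pair analysis; it may be closer to how the thesis actually structures the argument, whereas your approach is conceptually direct but carries the heavier bookkeeping you acknowledge.
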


\nid Further generalizations of \cref{cor:normalization} can be obtained, for instance, by weakening the condition $f' \circ F_n = f$ in \cref{term:truss-coarsenings-gen} to hold only up to natural isomorphism; or yet more generally, by considering labelings in higher categories.

\begin{rmk}[Normal forms are computable] Observe that normal forms are algorithmically computable: indeed, we may simply search through all truss coarsenings of a labeled $n$-truss $(T,f)$ to find its normal form $\NF{T,f}$.\footnote{There are more efficient algorithms to compute normal forms $\NF{T,f}$: for instance, inductively in descending level $i$, one can compute the normalized labeled 1-truss bundles $\NF{(q_i,f_i)}$, where $p_i : T_i \to T_{i-1}$ is the $i$th 1-truss bundle in $T$, the labeling $f_i$ is classifying functor $\chi_{\NF{q_{i+1},f_{i+1}}}$, and $f_n = f$ is the labeling of $T$. This computes the functor $f_0$, which classifies $\NF{T,f}$. The fact that normal forms can be computed in this way follows from \cite[Lem.\ 5.2.2.8]{thesis}, but an explicit description of the algorithm was omitted in \emph{loc.cit.}. A related algorithm was recently described in detail in \cite{heidemann2022zigzag}.}
\end{rmk}

\subsection{Combinatorialization of tame stratifications} The central `combinatorialization' theorem for tame stratifications can now be stated as follows.

\begin{thm}[Combinatorialization of tame stratifications, {\cite[Thm.\ \S5.0.4]{fct}}] \label{thm:tame-strat-comb} Tame stratifications up to framed stratified homeomorphism are in 1-to-1 correspondence with normalized stratified trusses.
\end{thm}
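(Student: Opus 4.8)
The plan is to route the claimed bijection through the theory of stratified meshes, using coarsest refining meshes to single out a canonical mesh refinement of each tame stratification, and the equivalence $\FTrs$ of \cref{cor:strat-mesh-truss} to translate between stratified meshes and stratified trusses.

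First I would set up the forward map. Given a tame stratification $(X,f)$, \cref{thm:coar-ref-mesh} supplies its unique coarsest refining mesh $M \mshar f$, hence a stratified mesh $(M,f)$, and I would send $(X,f)$ to the stratified fundamental truss $\FTrs(M,f)$. By \cref{obs:normal-form-vs-crs-ref-mesh} this truss is in normal form precisely because $M \mshar f$ is the coarsest refining mesh, so the assignment does land among normalized stratified trusses. To check that it descends to framed stratified homeomorphism classes, I would take a framed stratified homeomorphism $\psi : (X,f) \iso (Y,f')$, push the refinement $M \mshar f$ forward along $\psi$ to obtain a mesh refinement $\psi_* M \mshar f'$, and observe that since $\psi$ is a homeomorphism it carries mesh refinements of $f$ bijectively to those of $f'$ and preserves the factorization property characterising the coarsest one; hence $\psi_* M$ is itself coarsest and, by the uniqueness clause of \cref{thm:coar-ref-mesh}, equals the coarsest refining mesh of $f'$. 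Thus $\psi$ underlies a framed stratified homeomorphism of stratified meshes $(M,f) \iso (M',f')$, and applying $\FTrs$ gives a stratified truss isomorphism, which is an identity after passing to a skeleton by essential gauntness (\cref{rmk:strat-truss-iso}).

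Next I would define the inverse. Given a normalized stratified truss $(T,g)$, pick a stratified classifying mesh $(M,f) = \CMsh(T,g)$, which exists and is unique up to contractible choice by \cref{cor:strat-mesh-truss}; then $M \mshar f$ witnesses that $f$ is tame, so $(T,g) \mapsto (X,f)$ is well defined up to framed stratified homeomorphism. To see the two maps are mutually inverse: for $(T,g)$ normalized, \cref{obs:normal-form-vs-crs-ref-mesh} forces the chosen $M \mshar f$ to be the coarsest refining mesh of $f$, so running the forward map on $f$ returns $\FTrs(M,f) = (T,g)$; conversely, for tame $(X,f)$ with coarsest refining mesh $M$, the classifying mesh of $\FTrs(M,f)$ is again $M$ up to framed stratified homeomorphism, because $\FTrs$ and $\CMsh$ are mutually weak-inverse equivalences by \cref{cor:strat-mesh-truss}, so the recovered tame stratification is $f$ up to framed stratified homeomorphism.

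The hard part will be the bookkeeping that matches the purely geometric equivalence relation of framed stratified homeomorphism on the tame side with the combinatorial equivalence relation of stratified truss isomorphism on the other side; this is exactly the step where the uniqueness clause of \cref{thm:coar-ref-mesh} is essential, as it is what lets a homeomorphism of tame stratifications be transported functorially to the coarsest refining meshes and thence through $\FTrs$. A secondary subtlety is that, although \cref{cor:strat-mesh-truss} is phrased for closed-cellular and open-cocellular meshes, the coarsest refining mesh of a general tame stratification need not be of either type; the argument really relies on the general form of the mesh--truss equivalence (together with the contractibility of the space of classifying meshes of a stratified truss), which is the content of \cite[\S5]{fct}, so in a careful write-up I would cite that general version rather than \cref{thm:mesh-truss} alone.
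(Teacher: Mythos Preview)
Your proposal is correct and follows essentially the same approach as the paper: construct the forward map via the coarsest refining mesh and $\FTrs$, and the inverse via $\CMsh$. The paper's own proof is in fact considerably terser than yours---it only spells out the forward construction and leaves the inverse and well-definedness checks implicit---so your more careful treatment of both directions and of descent along framed stratified homeomorphisms is a welcome expansion rather than a deviation.
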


\begin{proof}[Proof of \cref{thm:tame-strat-comb}] Given a tame stratification $f$, first construct its coarsest refining mesh $M \mshar f$. The normalized stratified truss corresponding to $f$ is then given by $\FTrs (M,f)$ (see \cref{obs:normal-form-vs-crs-ref-mesh}). Equivalently, this stratified truss is obtained as the normal form $\NF{T,g}$ of the fundamental truss $(T,g) = \FTrs(M,f)$ of \emph{any} mesh refinement $M \mshar f$.
\end{proof}

\begin{notn}[Combinatorialization of tame stratifications] \label{notn:tame-strat-comb} Given a tame stratification $(X,f)$, we denote its coarsest refining mesh by $\iM^f$. We abbreviate $\FTrs (\iM^f,f)$ simply by $\NFTrs f$ (or by $\NFTrs (X,f)$ if we want to make the underlying flat framed space $X$ explicit), and refer to it as the `normalized stratified fundamental truss' of $f$. Conversely, we also call $f$ a `classifying tame stratification' of $\NFTrs f$ (or any stratified truss isomorphic to it).
\end{notn}

\begin{eg}[Combinatorializing tame stratifications] In \cref{fig:combinatorializing-tame-stratifications} on the left we depict a tame stratification $f$ of the open 2-cube $\II^2 = (-1,1)^2$; to its right, we depict its coarsest refining mesh $\iM^f$. On the further right, we depict the normalized stratified truss $(T,g) = \FTrs(\iM^f,f)$ consisting of a 2-truss $T$ and the characteristic poset map $g = \Entr(\iM^f \to f)$ (we depict $g$ by coloring images $x$ and preimages $g\inv(x)$ in the same color).
\begin{figure}[ht]
    \centering
    \def\svgwidth{1\columnwidth}
    \import{./figuresused/}{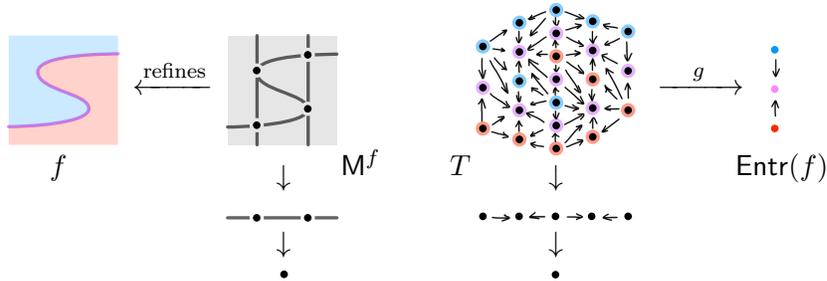}

    \caption{Combinatorializing tame stratifications}
    \label{fig:combinatorializing-tame-stratifications}
\end{figure}
\end{eg}

Let us also briefly address how our discussion of tame stratification, coarsest refining meshes, and normalized stratified trusses generalizes to the case of bundles.

\begin{term}[Tame stratified bundles] \label{rmk:tame-bundles} Note that replacing meshes by mesh bundles over a stratification $B$, and flat framed spaces by flat framed bundles over $B$ (see \cref{term:flat-framed-bundles}), one readily defines `tame stratified bundles' $f$ over $B$ to be stratified bundles $(X,f) \to B$ with underlying bundle map $X \subset B \times \lR^n \to B$, such that $f$ can be strictly refined by some mesh bundle $p$ over $B$; as before, we write this refinement as $p \mshar f$.
\end{term}

\nid Note, a mesh refinement $p \mshar f$ defines and is defined by a stratified mesh bundle $(p,f)$.

\begin{term}[Normalized stratified truss bundles] On the combinatorial side, the notion of normal forms carries over verbatim: a stratified truss bundles $(q,f)$ is `normalized' if no non-trivial truss bundle coarsening applies to it.
\end{term}

\begin{thm}[Coarsest refining mesh bundles, {\cite[Thm.\ 5.2.24]{fct}}] Every tame stratified bundle $f$ has a `coarsest refining $n$-mesh bundle' $p \mshar f$, such that any other refining $n$-mesh bundle $p' \mshar f$ factors through $p \mshar f$ by a strict $n$-mesh bundle coarsening $F : p' \to p$.
\end{thm}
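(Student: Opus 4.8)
The plan is to mirror the proof of the non-bundled \cref{thm:coar-ref-mesh}, transporting everything across the $\FTrs$-equivalence to a combinatorial statement about stratified truss bundles, settling existence and terminality of normal forms there, and then transporting back. First I fix the tame stratified bundle $f$ over $B$ and a mesh-bundle refinement $p \mshar f$; by (the stratified form of \cref{thm:mesh-truss-bundle}, i.e.) \cref{cor:strat-mesh-truss-bundle}, this stratified mesh bundle $(p,f)$ corresponds under $\FTrs$ to the stratified truss bundle $(q,g) := \FTrs(p,f)$ over $\Entr(B)$, and by the bundle form of \cref{obs:truss-and-mesh-coarsening}, strict mesh-bundle coarsenings out of $(p,f)$ are in bijection with truss-bundle coarsenings out of $(q,g)$, compatibly with composition and with the underlying stratification. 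So the theorem is equivalent to the combinatorial claim that every stratified truss bundle $(q,g)$ admits a truss-bundle coarsening $(q,g) \to \NF{q,g}$ to a normalized one through which every truss-bundle coarsening of $(q,g)$ factors uniquely — the bundled analogue of \cref{cor:normalization}, whose resolution in passing also upgrades \cref{thm:tame-strat-comb} to the bundled setting.

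Next I would prove this combinatorial claim by a level-by-level induction down the tower $q_i : T_i \to T_{i-1}$ ($1 \le i \le n$, $T_0 = \Entr(B)$), following the descending-level procedure for computing normal forms described after \cref{cor:normalization}. The top $1$-truss bundle $q_n$ carrying the labeling $g$ is a labeled $1$-truss bundle over $T_{n-1}$, so by the existence of normal forms for labeled $1$-truss bundles (the bundle form of \cref{cor:normalization}/\cref{rmk:normal-forms-for-gen-labelled-trusses}) it has a unique normalization; its classifying functor $g_{n-1} : T_{n-1} \to \kT^1(\Entr(g))$ in the sense of \cref{constr:lab1bord} is then a new labeling of $q_{n-1}$. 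Normalizing again and iterating in descending level produces labelings $g_{i-1} : T_{i-1} \to \kT^{n-i+1}(\Entr(g))$, and the final functor $g_0 : \Entr(B) \to \kT^n(\Entr(g))$ classifies, via \cref{obs:labeled-n-truss-bord}, a stratified truss bundle $\NF{q,g}$ together with the coarsening $(q,g)\to\NF{q,g}$ assembled from the level-wise ones. Since each level-wise coarsening is terminal among coarsenings at its level, the composite is terminal among coarsenings of $(q,g)$; terminality also forces distinct refinements of $f$ to normalize to the same stratified truss bundle, so $\NF{q,g}$ is their common coarsest one. Taking $p' := \CMsh \NF{q,g}$ (which exists up to contractible choice) yields the coarsest refining mesh bundle, and the required factorization of any further refinement $p'' \mshar f$ through $p' \mshar f$ is the image under $\FTrs$ of the factorization of truss-bundle coarsenings.

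The main obstacle is the inductive base case invoked at every level: that a labeled $1$-truss bundle admits a \emph{global} normal form, i.e.\ that the fiberwise normalizations of \cref{cor:normalization} (which merge adjacent strata carrying equal labels) are coherent along every $1$-truss bordism of the base, so that they glue to a genuine $1$-truss-bundle coarsening — in particular to an object that is again a $1$-truss bundle — rather than merely a compatible family of fiberwise coarsenings. This is exactly where the defining condition of a stratified truss bundle, that the labeling factors through the projection to the base, is essential: it pins the merging pattern down to be locally constant over each bordism. Verifying this coherence, and that the glued coarsening remains terminal, is the technical heart of the argument and is the bundled counterpart of the purely combinatorial proof of \cite[Thm.\ 5.2.2.11]{thesis}; everything else is bookkeeping across the levels of the tower and across the $\FTrs$-equivalence.
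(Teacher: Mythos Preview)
The paper does not prove this theorem; it is cited from \cite[Thm.\ 5.2.24]{fct} and stated without argument, and the subsequent corollary on normal forms for stratified truss bundles is derived from it. Your route runs in the opposite logical direction: establish the combinatorial normal-form statement independently (via the level-by-level algorithm the paper sketches in a footnote and in \cref{rmk:normal-forms-for-gen-labelled-trusses}) and then transport back through $\CMsh$. That is a legitimate alternative, and the paper explicitly acknowledges that the combinatorial side admits a direct, mesh-free proof.

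There is, however, a gap distinct from the one you flag. Having fixed \emph{one} refinement $p \mshar f$ and constructed $\NF{q,g}$ for $(q,g) = \FTrs(p,f)$, you have only shown that $\CMsh\NF{q,g}$ is terminal among strict coarsenings \emph{of that particular $p$}. Your assertion that ``terminality also forces distinct refinements of $f$ to normalize to the same stratified truss bundle'' is not justified: terminality of $\NF{q,g}$ among coarsenings of $(q,g)$ says nothing about $(q',g') = \FTrs(p',f)$ for another refinement $p' \mshar f$, since $(q',g')$ is not in general a coarsening of $(q,g)$. What is missing is a common-refinement step: any two mesh-bundle refinements $p,p'$ of $f$ admit a common mesh-bundle refinement $p''$ (built level-wise by superposing the $1$-mesh fibre structures and verifying $0$-constructibility), whence the coarsenings $\FTrs(p'',f) \to (q,g)$ and $\FTrs(p'',f) \to (q',g')$, together with uniqueness of normal forms and the fact that normalized objects admit no nontrivial coarsenings, force $\NF{q,g} = \NF{\FTrs(p'',f)} = \NF{q',g'}$. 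This is logically separate from the fibrewise-coherence obstacle you isolate in your final paragraph, and without it the argument does not yet deliver the universal property the theorem claims.
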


\begin{cor}[Normalization for bundles] Every stratified truss bundle $(q,f)$ has a unique truss coarsening $(q,f) \to \NF{q,f}$ to a normalized stratified truss bundle. \qed
\end{cor}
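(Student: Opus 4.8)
The plan is to mirror the proof of \cref{cor:normalization}, replacing meshes and \cref{thm:coar-ref-mesh} by mesh bundles and the ``Coarsest refining mesh bundles'' theorem just stated, and using the bundled analogues of \cref{obs:truss-and-mesh-coarsening} and \cref{obs:normal-form-vs-crs-ref-mesh}. Throughout I write $P$ for the base poset of $(q,f)$ and $B$ for a stratification with $\Entr(B) = P$ (for instance $B = \CStr P$).

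For existence I would first choose a stratified classifying mesh bundle $(p,g)$ with $\FTrs(p,g) = (q,f)$; such a choice exists, uniquely up to contractible choice, by \cref{cor:strat-mesh-truss-bundle} (in the open/closed case, to which one may restrict as elsewhere in this section, or else one argues combinatorially as in \cref{rmk:normal-forms-for-gen-labelled-trusses} applied inductively down the bundle levels). Since $(p,g)$ is a stratified mesh bundle it determines a mesh refinement $p \mshar g$, so the ``Coarsest refining mesh bundles'' theorem applied to the tame stratified bundle $g$ yields the coarsest refining $n$-mesh bundle, say $\bar p \mshar g$, together with a strict $n$-mesh bundle coarsening $(p,g) \to (\bar p, g)$. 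Passing to fundamental truss bundles and invoking the bundled form of \cref{obs:truss-and-mesh-coarsening} (strict mesh bundle coarsenings out of $(p,g)$ biject with truss bundle coarsenings out of $\FTrs(p,g) = (q,f)$), this produces a truss bundle coarsening $(q,f) \to \NF{q,f} := \FTrs(\bar p, g)$. That $\NF{q,f}$ is normalized is the bundled form of \cref{obs:normal-form-vs-crs-ref-mesh}: since $\bar p \mshar g$ is coarsest, $\FTrs(\bar p, g)$ admits no non-identity truss bundle coarsening.

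For uniqueness, let $F : (q,f) \to (q'',f'')$ be any truss bundle coarsening with $(q'',f'')$ normalized. Unwinding \cref{defn:truss-coarsening} and using essential gauntness of stratified truss bundles (\cref{rmk:strat-truss-iso}) together with the up-to-contractible-choice uniqueness of classifying mesh bundles, one realizes $F$ as $\FTrs$ of a mesh bundle coarsening out of the same $(p,g)$, whose target exhibits a mesh refinement $p'' \mshar g$. By the ``Coarsest refining mesh bundles'' theorem, $p''$ factors through $\bar p$ by a strict $n$-mesh bundle coarsening, whose image under $\FTrs$ is a truss bundle coarsening $(q'',f'') \to \NF{q,f}$; normality of $(q'',f'')$ forces this to be an identity, so $(q'',f'') = \NF{q,f}$, and uniqueness of the strict mesh bundle coarsening (essential gauntness again) then forces $F$ itself to coincide with the coarsening $(q,f) \to \NF{q,f}$ constructed above.

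The only real difficulty I anticipate is bookkeeping rather than anything conceptual: one must check that \cref{obs:truss-and-mesh-coarsening} and \cref{obs:normal-form-vs-crs-ref-mesh} transport verbatim to the bundled setting (they do, since strict coarsenings and normal forms are detected fibrewise and along the tower of $1$-mesh/$1$-truss bundles), and one must deal with the existence of classifying mesh bundles outside the open/closed case --- most cleanly by restricting to those cases throughout, or by replacing the mesh-theoretic input altogether with the purely combinatorial normal-form existence of \cref{rmk:normal-forms-for-gen-labelled-trusses}, extended to bundles by inducting down the levels $q_n, \dots, q_1$ exactly as in the computability remark.
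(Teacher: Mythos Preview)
Your proposal is correct and follows essentially the same approach as the paper: the paper's proof is simply \qed, treating the statement as immediate from the ``Coarsest refining mesh bundles'' theorem by direct analogy with \cref{cor:normalization}, and you have spelled out exactly those details (pick a classifying mesh bundle, apply the coarsest-refining result, translate back via $\FTrs$). Your additional care about uniqueness and the side remarks on the open/closed restriction and the combinatorial alternative via \cref{rmk:normal-forms-for-gen-labelled-trusses} are appropriate elaborations of what the paper leaves implicit.
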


\begin{thm}[{\cite[Thm.\ 5.2.25]{fct}}] \label{thm:tame-bundle-class} Tame bundles over $B$ up to framed stratified homeomorphism are in 1-to-1 correspondence with normalized stratified trusses over $\Entr(B)$.
\end{thm}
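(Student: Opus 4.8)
The plan is to follow the proof of \cref{thm:tame-strat-comb} line for line, substituting each ingredient by its bundled analogue as recorded above. In the forward direction, given a tame stratified bundle $(X,f) \to B$, I would first produce its coarsest refining mesh bundle $p \mshar f$ (\cite[Thm.\ 5.2.24]{fct}), which is terminal among mesh bundle refinements of $f$ over $B$, and then declare the stratified fundamental truss bundle $\FTrs(p,f)$ over $\Entr(B)$ to be the combinatorialization of $f$. That $\FTrs(p,f)$ is normalized is the bundled form of \cref{obs:normal-form-vs-crs-ref-mesh}: by \cref{obs:truss-and-mesh-coarsening}, a non-identity truss bundle coarsening out of $\FTrs(p,f)$ would come from a non-identity strict mesh bundle coarsening out of $p$, contradicting terminality. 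Equivalently --- and this is the more robust phrasing --- one starts from \emph{any} mesh bundle refinement $p' \mshar f$ and passes to the normal form $\NF{T,g}$ of the fundamental stratified truss bundle $(T,g) = \FTrs(p',f)$, using the bundled normalization corollary above; the two recipes yield the same normalized stratified truss bundle.

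In the reverse direction, given a normalized stratified truss bundle $(q,g)$ over $\Entr(B)$, I would invoke \cref{cor:strat-mesh-truss-bundle}: since $\FTrs$ is a weak equivalence between the (closed-cellular, resp.\ open-cocellular) categories of stratified mesh bundles over $B$ and stratified truss bundles over $\Entr(B)$, there is a stratified classifying mesh bundle $(p,g')$ with $\FTrs(p,g') \iso (q,g)$, pinned down up to a contractible space of choices; its total stratification is a tame stratified bundle over $B$, well-defined up to framed stratified homeomorphism. The two composites are then identities: from $(q,g)$ one returns the normal form of $\FTrs(p,g')$, which equals $(q,g)$ because $(q,g)$ was already normalized --- here one also uses essential gauntness of stratified truss bundles (\cref{rmk:strat-truss-iso}) to read `$\iso$' as `$=$' after passing to a skeleton --- while from $f$ one obtains a tame bundle that refines to the same coarsest mesh bundle as $f$, hence is framed stratified homeomorphic to $f$.

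It then remains to match the bijection with the two equivalence relations, i.e.\ to show that $f$ and $f'$ over $B$ are framed stratified homeomorphic exactly when their normalized fundamental stratified truss bundles are stratified-isomorphic. A framed stratified homeomorphism $f \iso f'$ over $B$ carries the coarsest refining mesh bundle of $f'$ to a refining mesh bundle of $f$; by the universal property this induces a strict mesh bundle isomorphism of the two coarsest refining mesh bundles over $B$, hence a stratified isomorphism of their fundamental truss bundles. Conversely, a stratified isomorphism of the normalized truss bundles is realized by a mesh bundle isomorphism of the coarsest refining mesh bundles (again via \cref{cor:strat-mesh-truss-bundle}), which descends along the refinements to a framed stratified homeomorphism $f \iso f'$.

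I expect the genuine difficulty to sit in this last paragraph: one must know that the coarsest refining mesh bundle depends \emph{canonically} on the tame bundle and is \emph{strictly natural} with respect to base-preserving framed stratified homeomorphisms, so that its terminal universal property transports cleanly between homeomorphic tame bundles. The remaining steps are bookkeeping with the weak equivalence \cref{cor:strat-mesh-truss-bundle}, the bundled normalization corollary, and essential gauntness; the real content is packaged in the uniqueness clause of \cite[Thm.\ 5.2.24]{fct} together with \cref{obs:truss-and-mesh-coarsening}.
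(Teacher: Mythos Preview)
Your proposal is correct and follows essentially the same approach as the paper. The paper's own proof is in fact much terser than yours: it only states the forward construction (take the coarsest refining mesh bundle $p \mshar f$ and form $\FTrs(p,f)$, or equivalently normalize $\FTrs(p',f)$ for any refinement $p'$) and defers the remaining verifications to \cite[Thm.\ 5.2.25]{fct}; your elaboration of the reverse direction via \cref{cor:strat-mesh-truss-bundle}, the well-definedness on equivalence classes, and the naturality of the coarsest refining mesh bundle is exactly the kind of detail the paper leaves implicit in that citation.
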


\begin{proof} Given a tame stratified bundle $f$ over $B$, the corresponding normalized stratified truss is constructed as $\FTrs (p,f)$ for the coarsest refining mesh bundle $p \mshar f$. Equivalently, this can be obtained as the normal form $\NF{q,g}$ of the fundamental truss $(q,g) = \FTrs(p,f)$ of \emph{any} mesh bundle refinement $p \mshar f$.
\end{proof}

\begin{term}[Combinatorialization of tame stratified bundles] Given a tame stratified bundle $f$, we denoted its coarsest refining mesh bundle by $\ip^f$. As before, we abbreviate $\FTrs(\ip^f,f)$ by $\NFTrs f$, and speak of the `normalized fundamental stratified truss bundle' of $f$. Conversely, we also call $f$ a `classifying tame stratified bundle' of $\NFTrs f$ (and of any stratified truss bundle isomorphic to it).
\end{term}





\chapter{Manifold diagrams} \label{ch:mdiag}

We give two definitions of manifold diagrams; one in more familiar geometric terms, and one in purely combinatorial terms. Using our results about the combinatorializability of tame stratifications in the previous section, we will deduce that these definitions are equivalent. We use our construction of dualization functors in the previous section to relate manifold diagrams to so-called `cell diagrams', which can be thought of as pasting diagrams of higher morphisms in the familiar higher categorical sense.

\section{Topological definition}

\subsection{Definition on the open cube} Giving a definition of manifold diagrams in topological terms will require two ingredients: \emph{framed conicality} and \emph{tameness}. It will be convenient to fix the following `framed background' space for manifold diagrams.

\begin{notn}[Cubes] \label{notn:cubes} We denote by $\II^n$ the open $n$-cube $(-1,1)^n$ in $\lR^n$, and by $\bI^n$ the closed $n$-cube $[-1,1]^n$ in $\lR^n$ (we consider both as flat $n$-framed spaces leaving the inclusion $\gamma$ into $\lR^n$ implicit, see \cref{term:flat-framed-space}). We write $\partial \bI^n = \bI^n \setminus \II^n$ for the $n$-cube boundary.
\end{notn}

\begin{term}[Cubical links and cones] \label{notn:cube-links} A `cubical link' (or simply, a `link') is a stratification $(\partial\bI^n,l)$ of the $n$-cube boundary. We identify the open cone $\cone(\partial \bI^n) = \partial \bI^n \times [0,1) \slash \partial \bI^n \times \Set{0}$ with the open cube $\II^n$ by mapping $(x,\lambda) \in \partial \bI^n \times [0,1)$ to $\lambda x \in \II^n$. Similarly we identify the closed cone $\overline\cone(\partial \bI^n)$ with $\bI^n$. In particular, given a link $(\partial \bI^k, l)$ we write $(\II^k,\cone(g))$ for the open cone of $l$ and $(\bI^k,\overline\cone(l))$ for the closed cone (see \cref{rmk:cub-cone-strat}). We say $l$ is a `tame' link, if $(\bI^n,\overline\cone(l))$ is a tame stratification (which implies $(\II^n,\cone(l))$ too is tame).
\end{term}

\nid Recall `conical' stratification require tubular neighborhoods of the form $U \times (\cone(L),\cone(l))$ (see \cref{recoll:conical-strat}). Recall the notions of framed maps (see \cref{term:flat-framed-space}), and of tame framed stratified maps (see \cref{term:tame-maps}).

\begin{defn}[Framed conical stratifications] \label{defn:framed-conical} Given a stratification $(\II^n,f)$ and a point $x \in \II^n$ we say $f$ is \textbf{(tame) framed conical at $x$} if there is a (tame) link $(\partial \bI^{n-k}, l_x)$ and a (tame) framed stratified neighborhood $\phi : \II^{k} \times (\II^{n-k}, \cone(l_x)) \into (\II^n,f)$ such that $x \in \II^k \times \{0\}$, where $0$ is the cone point of $\cone(l_x)$. We say $(\II^n,f)$ is \textbf{(tame) framed conical} if it is (tame) framed conical at all $x \in \II^n$.
\end{defn}

\nid We usually refer to the neighborhood $\phi$ in the preceding definition as a `framed tubular neighborhood around $x$'.

\begin{eg}[Framed conicality condition] In the middle of \cref{fig:framed-conicality-condition-and-failure} we depict a tame stratification in $\II^2$ (we use \cref{notn:coord-axis-reverse} to indicate the framing of $\II^2$). Note that the `bifurcating' line is a single stratum. The stratification is framed conical at the blue point as shown, but it fails to be framed conical at the red `bifurcation' point. Another illustration of the framed conicality condition was given earlier in \cref{fig:framed-conicality-condition}.
\begin{figure}[ht]
    \centering
    \def\svgwidth{1\columnwidth}
\begingroup%
  \makeatletter%
  \providecommand\color[2][]{%
    \errmessage{(Inkscape) Color is used for the text in Inkscape, but the package 'color.sty' is not loaded}%
    \renewcommand\color[2][]{}%
  }%
  \providecommand\transparent[1]{%
    \errmessage{(Inkscape) Transparency is used (non-zero) for the text in Inkscape, but the package 'transparent.sty' is not loaded}%
    \renewcommand\transparent[1]{}%
  }%
  \providecommand\rotatebox[2]{#2}%
  \newcommand*\fsize{\dimexpr\f@size pt\relax}%
  \newcommand*\lineheight[1]{\fontsize{\fsize}{#1\fsize}\selectfont}%
  \ifx\svgwidth\undefined%
    \setlength{\unitlength}{2160bp}%
    \ifx\svgscale\undefined%
      \relax%
    \else%
      \setlength{\unitlength}{\unitlength * \real{\svgscale}}%
    \fi%
  \else%
    \setlength{\unitlength}{\svgwidth}%
  \fi%
  \global\let\svgwidth\undefined%
  \global\let\svgscale\undefined%
  \makeatother%
  \begin{picture}(1,0.22291667)%
    \lineheight{1}%
    \setlength\tabcolsep{0pt}%
    \put(0,0){\includegraphics[width=\unitlength,page=1]{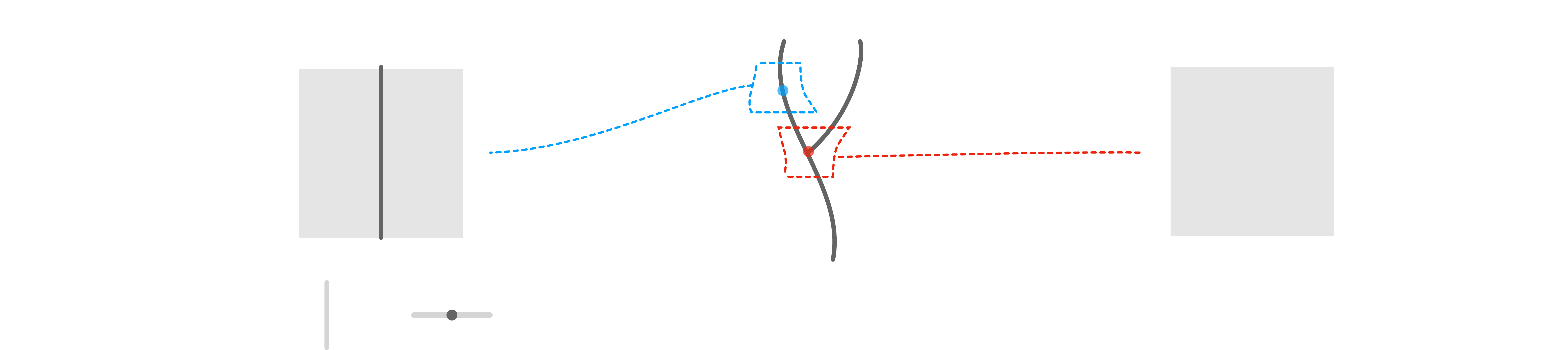}}%
    \put(0.16840278,0.01701389){\color[rgb]{0,0,0}\makebox(0,0)[lt]{\lineheight{1.25}\smash{\begin{tabular}[t]{l}$\iso$\end{tabular}}}}%
    \put(0,0){\includegraphics[width=\unitlength,page=2]{framed-conicality-condition-and-failure.pdf}}%
    \put(0.22569444,0.01701389){\color[rgb]{0,0,0}\makebox(0,0)[lt]{\lineheight{1.25}\smash{\begin{tabular}[t]{l}$\times$\end{tabular}}}}%
    \put(0,0){\includegraphics[width=\unitlength,page=3]{framed-conicality-condition-and-failure.pdf}}%
  \end{picture}%
\endgroup%

    \caption{Tame stratification with framed conical and non-conical points}
    \label{fig:framed-conicality-condition-and-failure}
\end{figure}
\end{eg}

We can now define manifold diagrams as follows.

\begin{defn}[Manifold $n$-diagrams] \label{defn:mdiag} A \textbf{manifold $n$-diagram} $(\II^n,f)$ is a tame stratification of the open cube that is tame framed conical.
\end{defn}

\begin{rmk}[Strata are manifolds endowed with local euclideanization] \label{rmk:strata-in-mdiag} The framed conicality condition guarantees the following. Strata in manifold $n$-diagrams are $k$-manifolds ($0 \leq k \leq n$). Each $k$-manifold stratum $M$ in a manifold $n$-diagram has a local homeomorphism to $\II^k$, obtained by restricting the projection $\II^n \to \II^k$ to $M$.
\end{rmk}

\begin{eg}[Manifold diagrams] A range of examples in dimensions $n = 1$, $2$, and $3$ were given in \cref{fig:manifold-diagrams-inductive-idea}. In \cref{fig:manifold-diagrams-in-dim-4} we give two examples in dimension 4, illustrated by sampling at slices $\{t_i\} \times \II^3$ of the 4-cube $\II^4$ for `times' $t_i \in \II$: small dots are restriction of 1-manifold strata to these slices, while big dots represent 0-manifold strata; the evolution of 1-manifold strata across time slices is indicated in color.
    \begin{figure}[ht]
    \centering
    \def\svgwidth{1\columnwidth}
\begingroup%
  \makeatletter%
  \providecommand\color[2][]{%
    \errmessage{(Inkscape) Color is used for the text in Inkscape, but the package 'color.sty' is not loaded}%
    \renewcommand\color[2][]{}%
  }%
  \providecommand\transparent[1]{%
    \errmessage{(Inkscape) Transparency is used (non-zero) for the text in Inkscape, but the package 'transparent.sty' is not loaded}%
    \renewcommand\transparent[1]{}%
  }%
  \providecommand\rotatebox[2]{#2}%
  \newcommand*\fsize{\dimexpr\f@size pt\relax}%
  \newcommand*\lineheight[1]{\fontsize{\fsize}{#1\fsize}\selectfont}%
  \ifx\svgwidth\undefined%
    \setlength{\unitlength}{2161.5bp}%
    \ifx\svgscale\undefined%
      \relax%
    \else%
      \setlength{\unitlength}{\unitlength * \real{\svgscale}}%
    \fi%
  \else%
    \setlength{\unitlength}{\svgwidth}%
  \fi%
  \global\let\svgwidth\undefined%
  \global\let\svgscale\undefined%
  \makeatother%
  \begin{picture}(1,0.40804997)%
    \lineheight{1}%
    \setlength\tabcolsep{0pt}%
    \put(0,0){\includegraphics[width=\unitlength,page=1]{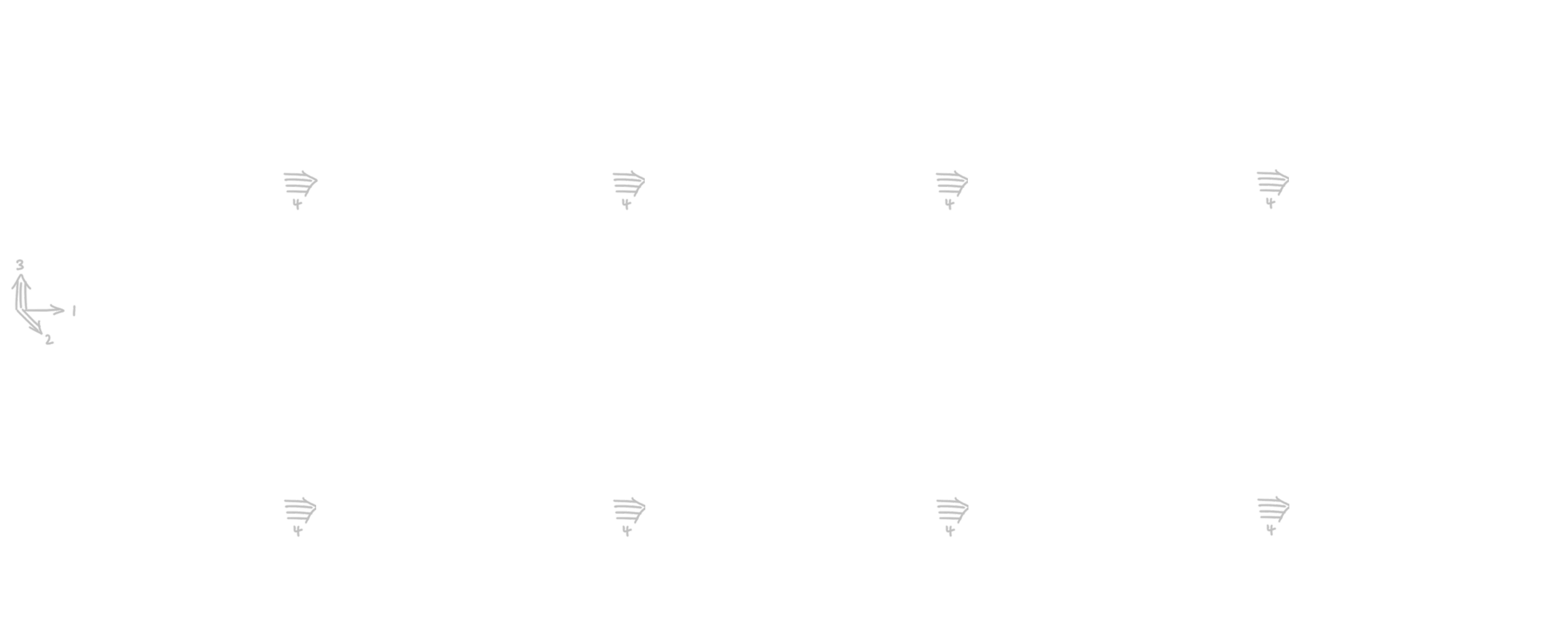}}%
    \put(0.04337266,0.39347675){\color[rgb]{0,0,0}\makebox(0,0)[lt]{\lineheight{1.25}\smash{\begin{tabular}[t]{l}$t_0$\end{tabular}}}}%
    \put(0.25156142,0.39347675){\color[rgb]{0,0,0}\makebox(0,0)[lt]{\lineheight{1.25}\smash{\begin{tabular}[t]{l}$t_1$\end{tabular}}}}%
    \put(0.45975017,0.39347675){\color[rgb]{0,0,0}\makebox(0,0)[lt]{\lineheight{1.25}\smash{\begin{tabular}[t]{l}$t_2$\end{tabular}}}}%
    \put(0.66759195,0.39347675){\color[rgb]{0,0,0}\makebox(0,0)[lt]{\lineheight{1.25}\smash{\begin{tabular}[t]{l}$t_3$\end{tabular}}}}%
    \put(0.87612769,0.39347675){\color[rgb]{0,0,0}\makebox(0,0)[lt]{\lineheight{1.25}\smash{\begin{tabular}[t]{l}$t_4$\end{tabular}}}}%
    \put(0,0){\includegraphics[width=\unitlength,page=2]{manifold-diagrams-in-dim-4.pdf}}%
  \end{picture}%
\endgroup%

    \caption{Manifold 4-diagrams}
    \label{fig:manifold-diagrams-in-dim-4}
\end{figure}
\end{eg}

\begin{noneg}[Non-tame diagram] In \cref{fig:a-non-tame-diagram} we depict a stratification in dimension 3 that is not a manifold 3-diagram. Note that the given stratification $(\II^3,g)$ is framed conical, but is itself not tame---indeed, as shown its projection to $\II^2$ has infinitely many intersections which cannot admit a mesh refinement (since meshes are finite stratifications).
\begin{figure}[ht]
    \centering
    \def\svgwidth{1\columnwidth}
    \import{./figuresused/}{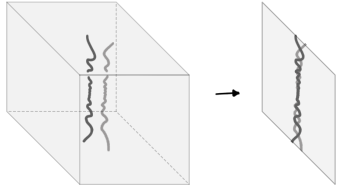}

    \caption{A non-tame diagram}
    \label{fig:a-non-tame-diagram}
\end{figure}
\end{noneg}

\nid We remark that is possible to weaken `tame framed conicality' to `framed conicality' in the definition of manifold diagrams, without changing the class of tame stratifications that are manifold diagrams.

\begin{rmk}[Weakening the tame framed conicality condition] \label{obs:redundancy-tame-con} Given a tame stratification $(\II^n,f)$ which is framed conical, then it is also tame framed conical and thus a manifold $n$-diagram. In other words, weakening \cref{defn:mdiag} by replacing tame framed conicality by framed conicality yields and equivalent definition. \qedhere

\end{rmk}

\subsection{Definition on the closed cube} Instead of defining manifold diagrams on the open cube $\II^n$ we may also consider manifold diagrams on the closed cube $\bI^n$. As we will see, the two notions are (almost) in correspondence by a notion of compactification.

\begin{term}[Corner neighborhoods] \label{term:corner-neighborhoods} Let $\kP = \{\emptyset,-1,+1\}$, and recall that $\II = (-1,1)$ and $\bI = [-1,1]$. For $\sigma \in \kP$, denote by $\II^\sigma$ the (open or half-open) interval $\II \cup \sigma$ (which is a subinterval of $\bI$). Now let $\sigma = (\sigma_1,\sigma_2, ..., \sigma_k) \in \kP^k$ be a $\kP$-valued $k$-tuple. Denote by $\II^\sigma$ the `$\sigma$-corner' obtained as the $k$-fold product $\II^{\sigma_1} \times \II^{\sigma_2} \times ... \times \II^{\sigma_k}$.
\end{term}

\begin{defn}[Compact framed conical stratifications] A tame stratification $(\bI^n,f)$ is \textbf{(tame) compact framed conical at $x \in \bI^n$} if $x$ has a (tame) framed stratified neighborhood $\II^\sigma \times (\II^k,\cone(l_x)) \into (\bI^n,f)$ where $\sigma \in \kP^{n-k}$ and $x \in \II^{\sigma} \times \{0\}$. We say $f$ is \textbf{(tame) compact framed conical} if it is so at all points $x \in \bI^n$.
\end{defn}

\begin{defn}[Compact manifold diagrams] \label{obs:compact-diagrams} A \textbf{compact manifold $n$-diagram} $(\bI^n, f)$ is a tame stratification of the closed cube that is tame compact framed conical.
\end{defn}

\nid The relation of the compact and `open' definitions of manifold diagrams will be addressed in \cref{constr:compactifying-diagrams} and \cref{constr:compactifying-diagrams}.

It is reasonable to conjecture that compact manifold $n$-diagrams can approximate general (not necessarily tame) stratifications as long as these stratifications are compact framed conical.\footnote{Here, a stratification $(X,f)$ of a metric space $X$ is said to be an `$\eps$-close approximation' of another stratification $(X,g)$, if the two stratifications are stratified homeomorphic by a self-homeomorphism of $X$ that moves points by at most the distance $\eps$.}

\begin{conj}[Tame diagrams approximate all diagrams] \label{conj:mfld-diag-approx} Any compact framed conical stratification $(\bI^n,f)$ has an arbitrarily close approximation by a compact manifold $n$-diagram.
\end{conj}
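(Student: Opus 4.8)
The plan is to prove \cref{conj:mfld-diag-approx} by a local-to-global argument: first establish a \emph{local} approximation statement (every point of $(\bI^n,f)$ has a neighborhood on which $f$ is $\eps$-close to a compact manifold diagram structure), then patch the local models together using a partition-of-unity-style argument for stratified homeomorphisms, being careful that the patching preserves framed conicality and produces a \emph{tame} (hence mesh-refinable, hence finite) stratification. I would proceed by induction on $n$; the case $n = 0$ is trivial, and the inductive step will let us assume the result is known for the links $(\partial\bI^{n-k},l_x)$, which live on cube boundaries of lower dimension.

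First I would set up the local picture. Fix $x \in \bI^n$. By compact framed conicality there is a framed stratified neighborhood $\II^\sigma \times (\II^k,\cone(l_x)) \into (\bI^n,f)$ with $\sigma \in \kP^{n-k}$ and $x \in \II^\sigma \times \{0\}$. The cone factor $(\II^k,\cone(l_x))$ is the open cone on the link $(\partial\bI^k, l_x)$, a stratification of a $(k-1)$-sphere's worth of cube boundary; crucially $l_x$ is itself compact framed conical (conicality of $f$ at points near $x$, away from the cone point, restricts to conicality of $l_x$). By the inductive hypothesis applied to $l_x$ — more precisely to the closed cone $(\bI^k,\overline\cone(l_x))$, whose boundary carries $l_x$ and whose interior away from the cone point is $l_x \times (0,1)$ — we obtain a compact manifold $(k-1)$-diagram $\tilde l_x$ that $\delta$-approximates $l_x$ for a parameter $\delta$ we will choose later. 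The open cone $\cone(\tilde l_x)$ is then tame (its closed cone is tame by \cref{term:cube-links}, since the closed cone of a tame link is a tame stratification by definition of tame link, and $\tilde l_x$ is tame being a manifold diagram), and it $\delta'$-approximates $\cone(l_x)$ near $x$ (the approximating homeomorphism of cubes on $\partial\bI^k$ extends radially to $\bI^k$, distorting distances by a controlled factor). Taking the product with the trivial stratification on $\II^\sigma$ gives a tame, tame framed conical local model $\II^\sigma \times (\II^k,\cone(\tilde l_x))$ that $\eps$-approximates $f$ near $x$.

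Next I would globalize. Since $\bI^n$ is compact, cover it by finitely many such local charts $\{U_\alpha\}$ with chosen local models $(U_\alpha, \tilde f_\alpha)$. The key technical step is to glue: on overlaps $U_\alpha \cap U_\beta$ the two local models are both $\eps$-close to the same $f$, hence $2\eps$-close to each other, and one builds a single global stratification by an inductive patching over the (finite) nerve of the cover, at each stage using an ambient stratified isotopy supported in a slightly shrunken chart to interpolate between the old global model and $\tilde f_\alpha$. Because each $\tilde f_\alpha$ is built from a mesh (the link diagrams are mesh-refinable by \cref{thm:mesh-truss} and \cref{obs:normal-form-vs-crs-ref-mesh}, and products and cones of meshes over the framing directions are again mesh-refinable by the tower-of-1-mesh-bundles structure), the resulting global stratification admits a common mesh refinement — this is where finiteness is used essentially, and it is what rules out the pathology of \cref{fig:a-non-tame-diagram}. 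Framed conicality at each point of the glued object holds because, after the patching isotopy, a small enough neighborhood of any point lies inside a single chart where the model is a genuine framed cone. Choosing the $\delta$'s (hence the link-approximation parameters) small compared to $\eps$ and to the Lebesgue number of the cover makes the total displacement at most $\eps$.

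The main obstacle I expect is the patching step: interpolating between two nearby framed-conical stratifications by an ambient framed stratified isotopy, with support control, while \emph{not} destroying tameness. Ordinary stratified-isotopy-extension (à la Thom--Mather) does not obviously respect the flat framing, and naively glued stratifications can fail to be mesh-refinable even if the pieces are. I would address this by working combinatorially: translate each local model to its normalized stratified fundamental truss via \cref{thm:tame-strat-comb}, perform the gluing at the level of truss bundles over the nerve poset of the cover (which is automatically finite and combinatorial, so tameness is free), and only then pass back to a geometric realization via $\CMsh$ — invoking that $\CMsh$ is well-defined up to contractible choice (\cref{rmk:cmsh-functor-choice}) to get the ambient homeomorphism. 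The remaining subtlety — that the realized global mesh can be chosen $\eps$-close to $f$ — would follow from a quantitative refinement of the equivalence of \cref{thm:mesh-truss}, namely that the classifying mesh of a truss can be realized in any prescribed open neighborhood of a given compatible geometric model; I expect this quantitative statement to require the bulk of the work, and it is the part I would flag as needing a genuinely new lemma rather than a citation to \cite{fct}.
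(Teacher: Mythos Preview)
The statement you are attempting to prove is labeled a \emph{conjecture} in the paper and is left open there; the paper gives no proof and only an illustrative example (the infinitely-braided strands of \cref{fig:a-non-tame-diagram}, which one approximates by resolving to finitely many crossings). So there is no proof in the paper to compare your proposal against.

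As to the proposal itself: the overall local-to-global strategy is reasonable, but there is a genuine gap in the inductive setup. You want to invoke the inductive hypothesis on the link $l_x$, but $l_x$ is a stratification of the cube \emph{boundary} $\partial\bI^{n-k}$, not of a closed cube $\bI^{n-1}$, so the statement you are inducting on does not literally apply. You gesture at applying it instead to the closed cone $(\bI^{n-k},\overline\cone(l_x))$, but this already assumes $l_x$ is tame (that is what ``tame link'' means in \cref{notn:cube-links}), which is exactly what is at issue --- if $l_x$ were tame you would not need to approximate it. What you actually need is an inductive statement about compact framed conical stratifications of cube boundaries, and formulating that correctly (together with the compatibility of the face restrictions) is nontrivial.

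You correctly identify the patching step as the main obstacle, and your idea of passing to the combinatorial side via $\NFTrs$ and gluing truss bundles over the nerve is in the right spirit. But note that $\NFTrs$ is only defined for \emph{tame} stratifications, so you cannot apply it to the local pieces of $f$ itself --- only to your already-constructed local approximations $\tilde f_\alpha$. The difficulty then is that on overlaps the trusses $\NFTrs(\tilde f_\alpha)$ and $\NFTrs(\tilde f_\beta)$ need not be comparable at all (being $2\eps$-close as stratifications does not imply any combinatorial relationship between their fundamental trusses), so ``gluing at the level of truss bundles'' is not well-posed without substantial further input. The quantitative realization lemma you flag at the end would not by itself resolve this; you would still need a mechanism that produces \emph{compatible} combinatorial models on overlaps, and that is where the real content of the conjecture lies.
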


\nid As an example, consider the stratification in \cref{fig:a-non-tame-diagram} (suitably compactified to a stratification of $\bI^3$): while it is compact framed conical, it is not tame itself; however, it may be approximated with arbitrary precision by a compact manifold diagram that resolves the infinite number of braid crossings by a finite number of such crossings. Note that it is natural to state this conjecture in the compact case: in the open case, non-tame stratifications may have `non-tame behavior at infinity' which cannot be tamely approximated, but this is automatically excluded in the compact case.

\subsection{Framed piecewise-linear structure} \label{ssec:mdiag-PL} We end this section with observations contrasting the framed topological setting with the classical topological one. Our first observation concerns the choices of framed link stratifications in the definition of manifold diagrams.

\begin{rmk}[Canonical choices of links] \label{rmk:links-well-def-in-mdiag} In the case of (classical) conical stratifications, one generally cannot speak of `the' link of a stratum; that is, strata may have several non-homeomorphic stratified links (see e.g.\ \cite[Eg.\ 2.3.6]{friedman2020singular}). In contrast, framed links around points in a manifold diagram can be canonically chosen: this will follow from the combinatorialization of manifold diagrams and is further discussed in \cref{rmk:links-are-well-defined} and \cref{rmk:links-are-strat-homeo}.
\end{rmk}

\nid We next compare framed TOP and framed PL structures on manifold diagrams.

\begin{term}[PL stratifications in $\lR^n$] A `PL stratification' $(U,f)$ is a stratification $f$ of a bounded subspace $U$ of $\lR^n$ that has a `simplicial subrefinement': this means there exists a span $f \leftepi g \into K$ where $K$ is a simplicial complex, $g \into K$ is a constructible substratification, and $g \epi f$ is a refinement of $f$ by $g$ (which is linear on each simplex w.r.t.\ the standard linear structure of $\lR^n$).
\end{term}

\nid Note that working with subrefinements $f \ot g \into K$ in place of refinements $f \ot K$ allows our notion of PL stratifications to accommodate non-compact stratifications.

\begin{term}[Framed PL structures] \label{term:framed-pl-struct} Given a stratification $(U,f)$ of a subspace $U \subset \lR^n$, a `framed triangulation' of $(U,f)$ is a framed stratified homeomorphism $\alpha : (U,f) \iso (V,g)$ to a PL stratification $(V,g)$. Two framed triangulations $\alpha : (U,f) \iso (V,g)$, $\beta : (U,f) \iso (W,h)$ are `framed equivalent' if there is a framed stratified PL homeomorphism $\rho : (V,g) \iso (W,h)$.\footnote{One may further require this to satisfy $\rho \circ \alpha \eqv \beta$ up to a homotopy through framed stratified maps (or even framed stratified homeomorphisms)---but the resulting notions of framed PL structures are in fact all equivalent.} A `framed PL structure' on $(U,f)$ is a framed equivalence class of framed triangulations.
\end{term}

\nid Note that any \emph{tame} stratification $f$ has a canonical framed PL structure since $f \iso \CMsh (\NFTrs f)$ and, analogous to classifying stratifications of posets having canonical PL structures, the classifying stratified mesh $\CMsh T$ of any truss $T$ has canonical framed PL structure (see \cref{rmk:cmsh-via-compactifiation} for further explanation, and see \cite[\S4.2.4]{fct} for full details of the construction). In fact, this is the unique framed PL structure of $f$ as the next theorem records.

\begin{thm}[Uniqueness of framed PL structures, {\cite[Cor.\ 5.7]{fct}}] \label{thm:framed-PL-struct-unique} All tame stratifications $f$ have unique framed PL structures. This structure is represented by $\CMsh \NFTrs f$.
\end{thm}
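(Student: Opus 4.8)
The plan is to split the statement into an existence assertion (the framed PL structure represented by $\CMsh \NFTrs f$ exists) and a uniqueness assertion (every framed triangulation of $f$ is framed equivalent to it); the first is essentially immediate from the combinatorialization results already recorded, and the second is where the real work sits.

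For existence, I would argue as follows. By \cref{thm:tame-strat-comb} there is a framed stratified homeomorphism $f \iso \CMsh \NFTrs f$, and the classifying stratified mesh $\CMsh T$ of any stratified truss $T$ is built directly out of the combinatorial data of $T$ as a PL stratification in $\lR^n$ (see \cite[\S4.2.4]{fct}), hence carries a canonical framed triangulation. Transporting the latter along the homeomorphism equips $f$ with a framed PL structure, which by construction is the one represented by $\CMsh \NFTrs f$.

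For uniqueness, let $\alpha : (U,f) \iso (V,g)$ be an arbitrary framed triangulation, so $(V,g)$ is a PL stratification of the flat framed space $V \subset \lR^n$. The crucial observation I would use is that a framed stratified homeomorphism transports mesh structures to mesh structures — a framed map commutes with all the coordinate projections up to partial maps and therefore preserves the defining tower of $1$-mesh bundles of a mesh. Hence $\alpha$ carries mesh refinements of $f$ bijectively onto mesh refinements of $g$ (in particular $g$ is tame), compatibly with factorization by strict coarsenings; by the universal property of \cref{thm:coar-ref-mesh} it then sends the coarsest refining mesh $\iM^f$ to the coarsest refining mesh $\iM^g$ of $g$, so that $\alpha$ restricts to a framed stratified homeomorphism $(\iM^f,f) \iso (\iM^g,g)$ of stratified meshes, giving $\NFTrs f = \FTrs(\iM^f,f) \iso \FTrs(\iM^g,g) = \NFTrs g$. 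Consequently $\CMsh \NFTrs g \iso \CMsh \NFTrs f$ by a framed stratified PL homeomorphism (functoriality of the classifying mesh construction), and the whole theorem reduces to the single claim that $(V,g)$ is itself framed stratified PL homeomorphic to $\CMsh \NFTrs g$ (the optional homotopy condition in the definition of framed equivalence being harmless, cf.\ the footnote to \cref{term:framed-pl-struct}).

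This remaining claim is the main obstacle, and it amounts to upgrading the mesh--truss dictionary to the PL category: I would show that the coarsest refining mesh $\iM^g \mshar g$ of a PL stratification is again a PL refinement, so that $(\iM^g, g)$ is a PL stratified mesh with fundamental stratified truss $\NFTrs g$, and then apply the PL analogue of \cref{thm:mesh-truss} — in its stratified form, \cref{cor:strat-mesh-truss} — to obtain a framed stratified PL homeomorphism $(\iM^g, g) \iso \CMsh \NFTrs g$, which descends to the desired $(V,g) \iso \CMsh \NFTrs g$. The genuinely hard input is precisely that the coarsest-refining-mesh construction of \cref{thm:coar-ref-mesh} and the equivalence of \cref{thm:mesh-truss} are compatible with PL structures; this is the content carried out in \cite[\S5]{fct}, and granting it the theorem follows formally, together with the existence step, from \cref{thm:tame-strat-comb}.
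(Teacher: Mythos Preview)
The paper does not supply its own proof of this theorem: it is stated as a citation of \cite[Cor.\ 5.7]{fct} and no argument is given in the present text. So there is nothing to compare your proposal against directly; what you have written is an outline of how one might reconstruct the cited result from the surrounding machinery.

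As such an outline, your proposal is reasonable and lands on the right target: the reduction to showing that any PL tame stratification $g$ is framed stratified PL homeomorphic to $\CMsh \NFTrs g$ is exactly what the paper later refers to as the ``flat framed Hauptvermutung'' (see \cref{obs:pl-tang-inherit-pl-struct}, citing \cite[Cor.\ 5.0.7]{fct}), and you correctly isolate the two nontrivial PL-compatibility ingredients (PLness of the coarsest refining mesh of a PL stratification, and a PL upgrade of the mesh--truss equivalence) as the content deferred to \cite[\S5]{fct}. One step you pass over a bit quickly is the claim that a framed stratified homeomorphism transports mesh refinements to mesh refinements: this is true, but it is not quite as automatic as ``framed maps commute with projections''---one must also check that the pushed-forward tower has genuine 1-mesh bundle structure on each level (coordinatizability and constructibility of fibers), which uses that the map is a homeomorphism and not merely framed. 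Granting that, your reduction via $\NFTrs f = \NFTrs g$ is correct and matches the spirit of the combinatorialization results the paper records.
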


\nid In contrast, classical topological stratifications and PL stratifications aren't compatible in this way, that is, a given topological stratification may have several inequivalent PL structures or none at all.

In the case of manifold diagrams, the canonical PL structure of \cref{thm:framed-PL-struct-unique} endows strata with PL manifold structures as the next observation shows. The following terminological distinction may be useful to highlight.

\begin{rmk}[PL manifold structures] \label{term:PL-struct-vs-PL-manifold} A `PL manifold structure' on a space $X$ is a PL structure on $X$ (i.e.\ a PL equivalence class of triangulations of $X$) such that, for any triangulation in the structure, links of that triangulations are  PL homeomorphic to the PL standard sphere. A space $X$ equipped with a PL manifold structure is also called a PL manifold (in this case $X$ is in particular a topological manifold.) Equivalently, a PL manifold structure can be given by an atlas with PL transition maps \cite[\S3]{hudson1969piecewise}.
\end{rmk}

\begin{obs}[Manifold diagrams have unique PL structures] \label{obs:PL-struct-of-mdiag} By \cref{thm:framed-PL-struct-unique}, any manifold diagram $(\II^n,f)$ has a  unique framed PL structure $(\II^n,f) \iso (\II^n,f_{\text{PL}})$. This restricts on each stratum in $f_{\text{PL}}$ to the PL structure of a (non-compact) PL manifold: indeed, a PL atlas can be constructed using \cref{rmk:strata-in-mdiag}.
\end{obs}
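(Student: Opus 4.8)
The plan is to derive both assertions — existence/uniqueness of the framed PL structure, and the fact that this structure restricts to genuine PL manifold structures on the strata — from results already in hand, principally \cref{thm:framed-PL-struct-unique} and \cref{rmk:strata-in-mdiag}. The first assertion is essentially immediate: a manifold $n$-diagram $(\II^n,f)$ is by \cref{defn:mdiag} a tame stratification, so \cref{thm:framed-PL-struct-unique} applies verbatim, giving the unique framed PL structure represented by $\CMsh \NFTrs f$, and the framed stratified homeomorphism $(\II^n,f) \iso (\II^n,f_{\mathrm{PL}})$ where $f_{\mathrm{PL}}$ denotes the image of $f$ under this representative (one has to note that the classifying stratified mesh can be coordinatized so its underlying space is again $\II^n$, using \cref{obs:mesh-bundle-framing-struct} and the fact that the diagram, being a stratification of the open cube, is refined by an open mesh). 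So the substance of the observation is the second sentence, concerning the restriction to strata.

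For the restriction claim, fix a stratum $s$ of $f_{\mathrm{PL}}$; I want to exhibit it as a PL manifold in the sense of \cref{term:PL-struct-vs-PL-manifold}, i.e. with links PL-homeomorphic to the standard PL sphere. First I would use \cref{rmk:strata-in-mdiag}: since framed conicality holds, each $k$-dimensional stratum $s$ carries a local homeomorphism to $\II^k$ obtained by restricting the ambient projection $\II^n \to \II^k$ to $s$; under the framed PL structure this projection is PL (the coordinatizations of the coarsest refining mesh $\iM^f$ are framed, hence compatible with the tower of projections $p_{>i}$, so each composite projection $M_n \to M_i$ and its restriction to $s$ is simplicial with respect to a suitable simplicial subrefinement). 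This produces a PL atlas on $s$ with PL transition maps, which by the last sentence of \cref{term:PL-struct-vs-PL-manifold} is exactly a PL manifold structure; the transition maps being PL follows because any two such local charts differ by a framed PL stratified self-homeomorphism of a neighborhood, coming from the framed-equivalence part of the framed PL structure. Finally, to see this PL manifold structure on $s$ is the one induced by restricting $f_{\mathrm{PL}}$ — rather than some a priori different structure — I would observe that the coarsest refining mesh $\iM^f \mshar f$ restricts on the closure of $s$ to a regular cell complex whose open cells lying in $s$ cellulate $s$, and that a cellulation of a PL manifold by PL-embedded cells presents precisely its PL manifold structure; uniqueness then follows from \cref{thm:framed-PL-struct-unique} applied to $f$ together with the fact that the restriction of a framed triangulation of $(\II^n,f)$ to a stratum is a framed triangulation of that stratum.

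The main obstacle I anticipate is the verification that the PL atlas on $s$ built from the projection charts of \cref{rmk:strata-in-mdiag} really does have PL (not merely continuous) transition functions, and that it genuinely refines to the PL structure inherited from $\CMsh\NFTrs f$ rather than accidentally defining a distinct one — in other words, reconciling the two sources of PL structure on $s$ (the "external" one from the ambient framed PL structure via \cref{thm:framed-PL-struct-unique}, and the "internal" one from local euclideanization). I expect this to be handled cleanly by working throughout with the coarsest refining mesh $\iM^f$: its cell structure simultaneously triangulates $(\II^n,f)$ compatibly with the framing (hence represents $f_{\mathrm{PL}}$) and, on each stratum-closure, gives a PL cellulation compatible with the projection $\II^n \to \II^k$, so the two PL structures on $s$ are visibly the same. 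Everything else is routine bookkeeping with the definitions of framed PL structures (\cref{term:framed-pl-struct}) and of tame framed stratified maps (\cref{term:tame-maps}).
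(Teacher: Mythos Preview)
Your proposal is correct and follows the same approach as the paper: invoke \cref{thm:framed-PL-struct-unique} for the unique framed PL structure (since manifold diagrams are tame by definition), then use the local euclideanization of \cref{rmk:strata-in-mdiag} to produce the PL atlas on each stratum. The paper treats this as a brief observation with no separate proof---the entire justification is the two inline citations---so your elaboration goes well beyond what the paper actually provides; in particular, your concern about reconciling an ``external'' and ``internal'' PL structure on $s$ is not something the paper addresses, and is arguably unnecessary since the only PL structure in play is the one restricted from $f_{\mathrm{PL}}$, with the projection charts of \cref{rmk:strata-in-mdiag} simply exhibiting it as a PL manifold atlas.
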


\nid Importantly, PLness in fact implies tameness.

\begin{thm}[PL implies tame] \label{thm:plness-implies-tameness} Any PL stratification of $\II^n$ (or $\bI^n$) is tame.
\end{thm}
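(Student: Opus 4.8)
The plan is to show that any PL stratification $(U,f)$ of $\II^n$ (or $\bI^n$) admits a mesh refinement, which is exactly what tameness requires by \cref{defn:flat-framed-stratification}. Since a PL stratification comes by definition with a simplicial subrefinement $f \leftepi g \into K$ where $K$ is a simplicial complex linearly embedded in $\lR^n$, it suffices to produce a mesh $M$ together with a strict framed coarsening $M_n \to f$; and by transitivity of coarsenings (first coarsen $M_n$ to $g$, then $g$ to $f$), it is enough to handle the case where $f$ itself is (the constructible substratification picked out by) a linearly embedded simplicial complex $K \subset \II^n$. So the real content is: \emph{every finite simplicial complex linearly (and boundedly) embedded in $\lR^n$ admits a mesh refinement}.

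The key construction is the one familiar from the theory of hyperplane arrangements: a linear simplicial complex $K \subset \lR^n$ has finitely many top-dimensional simplices, each cut out by finitely many affine hyperplanes (the affine spans of its facets). Collect all these hyperplanes together with, for safety, all hyperplanes of the form $\{x_i = c\}$ needed to cut off a bounding cube; this is still a finite set $\mathcal{H}$ of affine hyperplanes in $\lR^n$. I would then build the mesh inductively on the tower $M_n \to M_{n-1} \to \cdots \to M_0 = \ast$ by successively ``slicing along coordinate directions.'' Concretely, first project all of $\mathcal{H}$ and all simplices of $K$ down the last coordinate axis $\pi_{>n-1}\colon \lR^n \to \lR^{n-1}$; the images of the various intersection loci, together with the recursively constructed $(n-1)$-mesh downstairs, give a stratification of a bounded region of $\lR^{n-1}$ refining those images. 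Refine that recursively to an $(n-1)$-mesh $M_{n-1}$. Now over each stratum (fiber) of $M_{n-1}$, the fibers of the projection meet the hyperplanes of $\mathcal{H}$ in a locally constant, finite, linearly ordered configuration of points (and possibly one interval component), because over the interior of a stratum of $M_{n-1}$ nothing in the configuration can jump — that is precisely why we arranged for all the relevant hyperplanes' ``shadows'' to be incorporated into $M_{n-1}$. This fiberwise data is exactly a $1$-mesh structure on each fiber satisfying coordinatizability and $0$-constructibility (\cref{defn:1-mesh-bun}): coordinatizability is immediate from the ambient linear coordinate, and $0$-constructibility holds because as a stratum of $M_{n-1}$ is approached in its closure, the point-strata of the fiber can only collide or move off to the bounding walls, never split — a consequence of linearity of the cutting hyperplanes. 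This yields the $1$-mesh bundle $p_n\colon M_n \to M_{n-1}$, hence the mesh $M$; by construction $M_n$ refines $K$ (and, being built from all of $\mathcal{H}$, refines the coarser $g$ and thence $f$), and the refinement map is framed because it is compatible with all the projections $\pi_{>i}$ by construction. The closed-cube case $\bI^n$ is identical, using half-open fibers along the bounding walls (mixed $1$-meshes), or alternatively one reduces to the open case by the compactification results of \cref{ssec:mdiag-PL}.

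The main obstacle I anticipate is verifying the two \cref{defn:1-mesh-bun} conditions — especially $0$-constructibility — uniformly at the inductive step, i.e.\ checking that over the closure of a stratum of $M_{n-1}$ the fiberwise configuration of points degenerates in the controlled way a $1$-mesh bundle allows (points merging or escaping to the boundary, with fiber dimensions behaving monotonically). This is where one genuinely uses that $K$ is \emph{linearly} embedded: the loci where fiber points collide are themselves unions of (shadows of) linear subspaces, so after incorporating all of them into the base mesh $M_{n-1}$ the degeneration is constant on the interior of each base stratum and the limiting behaviour along boundary faces is forced. A secondary, more bookkeeping-type point is ensuring the finite set $\mathcal{H}$ one starts with is genuinely closed under the relevant operations (all facet-spans of all simplices, plus the coordinate walls) so that the induction on $n$ has enough data to feed the next level down; this is routine but should be stated carefully. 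Everything else — that the result is a stratification refining $f$, that it is finite, that the coarsening is framed — follows directly from the construction and from \cref{term:mesh-refinements,thm:coar-ref-mesh}.
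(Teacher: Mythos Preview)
The paper's own proof is a one-line citation (``a slight variation of \cite[Prop.~5.8]{fct}''), so there is no in-paper argument to compare against. Your approach---refine a linearly embedded simplicial complex by projecting the induced hyperplane arrangement to $\lR^{n-1}$, apply the inductive hypothesis there, and then verify that the preimage is a $1$-mesh bundle---is the natural one and is essentially how the cited result in \cite{fct} is proved.

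Two small points worth tightening. First, the data you push down to $\lR^{n-1}$ is not ``the hyperplanes of $\mathcal H$'' themselves: a non-vertical affine hyperplane projects \emph{onto} all of $\lR^{n-1}$ and carries no information. What you must record downstairs are (i) the projections of the vertical hyperplanes in $\mathcal H$, (ii) the projections of all pairwise intersections $H_i\cap H_j$ (these are the loci over which two fiber points collide), and (iii) the projections of the faces of the simplices of $K$. That collection is again a finite PL stratification of a bounded region of $\lR^{n-1}$, so the induction applies. Your ``bookkeeping-type'' caveat in the last paragraph is exactly this. Second, your reduction of the $\bI^n$ case points to \S\ref{ssec:mdiag-PL}, but the compactification machinery actually lives in \S\ref{ssec:compactification}; more directly, for $\bI^n$ one simply runs the same construction with closed $1$-mesh fibers. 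With these fixes the argument goes through.
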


\begin{proof} This is a slight variation of \cite[Prop.\ 5.8]{fct}.
\end{proof}

\begin{obs}[Framed conical PL stratifications are manifold diagrams] \label{obs:PL-diagrams-are-mdiag} Any tame framed conical PL stratification of $\II^n$ is a manifold diagram (and similar, tame compact framed conical PL stratifications of $\bI^n$ are compact manifold diagrams): this follows since PL stratifications are tame by \cref{thm:plness-implies-tameness}. Taking \cref{obs:redundancy-tame-con} into account (which also applies in the compact case), one may further generalize this statement by weakening tame framed conicality to framed conicality.
\end{obs}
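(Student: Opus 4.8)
The plan is simply to unwind the definition of manifold diagram (\cref{defn:mdiag}) and observe that the only ingredient not already present in the hypothesis is tameness, which is exactly what \cref{thm:plness-implies-tameness} provides.

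First I would handle the open case. Let $(\II^n,f)$ be a tame framed conical PL stratification. By \cref{defn:mdiag}, to conclude that it is a manifold $n$-diagram we must verify two things: (i) that $(\II^n,f)$ is a tame stratification, and (ii) that it is tame framed conical. Condition (ii) is part of the hypothesis, so nothing is needed there. For (i), \cref{thm:plness-implies-tameness} asserts precisely that every PL stratification of $\II^n$ is tame. Hence $(\II^n,f)$ is a manifold $n$-diagram. The compact case is identical word for word: \cref{thm:plness-implies-tameness} also covers PL stratifications of $\bI^n$, so a tame compact framed conical PL stratification of $\bI^n$ is a tame stratification that is tame compact framed conical, i.e.\ a compact manifold $n$-diagram in the sense of \cref{obs:compact-diagrams}.

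For the final sentence of the statement, the generalization weakening \emph{tame framed conicality} to \emph{framed conicality}, I would argue as follows: given a framed conical PL stratification $(\II^n,f)$, \cref{thm:plness-implies-tameness} again makes it tame, and then \cref{obs:redundancy-tame-con} upgrades framed conicality to tame framed conicality for tame stratifications, whence $(\II^n,f)$ is a manifold $n$-diagram by the previous paragraph. The compact version runs the same way, invoking the compact analogue of \cref{obs:redundancy-tame-con}.

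I do not expect any genuine obstacle here; the statement is essentially a repackaging of \cref{thm:plness-implies-tameness} together with the definitions. The only care required is bookkeeping: matching "tame framed conical" in the hypothesis with condition (ii) of \cref{defn:mdiag}, and noting that \cref{thm:plness-implies-tameness} is formulated for both $\II^n$ and $\bI^n$ so that the compact case needs no separate argument. Any real work is hidden inside \cref{thm:plness-implies-tameness} (deferred to \cite{fct}) and \cref{obs:redundancy-tame-con}, both of which may be used freely.
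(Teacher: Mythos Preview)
Your proposal is correct and follows exactly the same approach as the paper: the observation's inline justification is precisely that PL implies tame by \cref{thm:plness-implies-tameness}, so the only missing ingredient in \cref{defn:mdiag} is supplied, and the weakening to mere framed conicality then goes through \cref{obs:redundancy-tame-con}. There is nothing to add.
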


\begin{rmk}[PL implies tame for maps] \label{rmk:pl-map-implies-tame-map} Analogous to the proof of \cref{thm:plness-implies-tameness} one shows that any PL map of PL stratifications $(\bI^n,f) \to (\bI^n,g)$ is tame (this also holds in the open case for stratifications on $\II^n$ as long as maps can be made simplicial by finite triangulations).
\end{rmk}

\nid Last but not least, we briefly sketch how strata in manifold diagrams can be endowed with canonical smooth structures as well.

\begin{rmk}[Smooth structures on manifold diagram strata] Using \cref{rmk:strata-in-mdiag} and \cref{obs:PL-struct-of-mdiag}, each $k$-manifold stratum $M$ in a manifold $n$-diagram has a local PL homeomorphism to $\II^k$. The standard framing of $\II^k$ can now be pulled back to a framing of the PL manifold $M$. Using classical smoothing theory \cite{hirsch1974smoothings} (the folklore result stating that `framed PL equals framed DIFF'), one can show that strata are endowed with canonical smooth structures.
\end{rmk}

\nid The more interesting, and harder, question concerns how smooth structures of different strata interact (the task of defining `smooth conical stratified space' with good properties is difficult in general, see \cite{ayala2017local} for discussion); a version of this question crops up naturally in the context of tangles and singularities, which we will return to in \cref{ch:tangles-and-sings}.

\section{Combinatorial definition}

We give a purely combinatorial definition of manifold diagrams. As in the topological case, the notion has both an `open' and a `compact' variation. We will relate the two by a combinatorial process of compactification.

\subsection{Neighborhoods, cones, and products for stratified trusses}

A first key observation is that topological operations (taking neighborhoods, cones, products, etc.) have natural combinatorial counterparts. Recall the notion of subtrusses from \cref{term:n-truss-bun-maps}.

\begin{defn}[Truss neighborhoods] Let $T$ be an $n$-truss and $x \in T_n$ an object of its top poset. There is a unique subtruss $T^{\leq x} \into T$, called the \textbf{truss neighborhood} of $x$, whose top component $T^{\leq x}_n \into T_n$ is the downward closure of $x$ in $T_n$.
\end{defn}

\nid Recall the notion of stratified subtrusses from \cref{term:stratified-subtruss}.

\begin{defn}[Stratified truss neighborhoods] \label{defn:strat-truss-nbhd} Let $(T,f)$ be a stratified $n$-truss. For any $x \in T_n$, there is a unique stratified subtruss $(T^{\leq x},f^{\leq x}) \into (T,f)$ called the \textbf{stratified truss neighborhood} of $x$.
\end{defn}

\begin{eg}[Stratified truss neighborhoods] In \cref{fig:stratified-truss-neighborhoods} we depict two 2-trusses $T$ and $S$ (note, we omit the bundles $T_2 \to T_1 \to T_0$ resp.\ $S_2 \to S_1 \to S_0$---in each case these are determined by first projecting vertically). We select objects $x \in T_2$ and $y \in S_2$ as shown, and depict the resulting stratified subtrusses $T^{\leq x} \into T$ resp.\ $S^{\leq y} \into S$. Observe that the map $\Entr(g^{\leq y}) \to \Entr(g)$ is not a subposet inclusion.
\begin{figure}[ht]
    \centering
    \def\svgwidth{1\columnwidth}
    \import{./figuresused/}{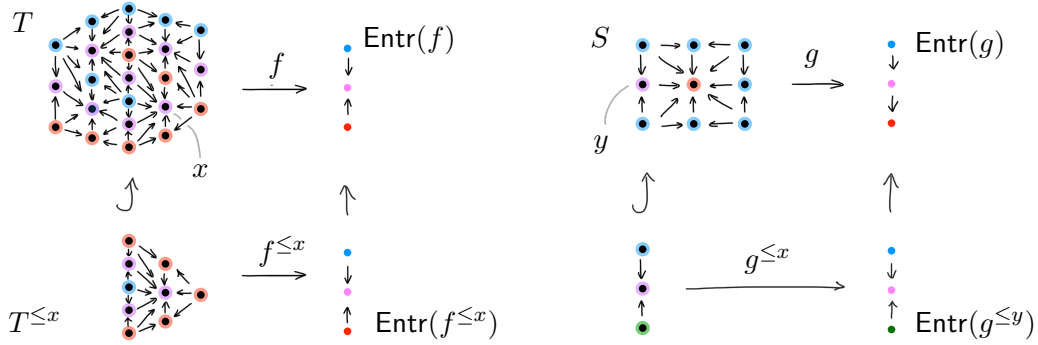}

    \caption{Stratified truss neighborhoods in 2-trusses}
    \label{fig:stratified-truss-neighborhoods}
\end{figure}
\end{eg}

\nid Recall from \cref{rmk:n-truss-dim} that $n$-trusses $T$ come with a map $\dim : T_n \to [n]\op$.

\begin{defn}[Cone trusses] \label{defn:truss-cones} A \textbf{open cone $n$-truss} $T$ is an open $n$-truss whose top poset $T_n$ has a maximal element $\top \in T_n$ with $\dim(\top) = 0$ (which we call the `cone point' of $T$).
\end{defn}

\begin{defn}[Stratified cone trusses] \label{defn:strat-truss-cone} A \textbf{stratified open cone $n$-truss} $(T,f)$ is a stratified $n$-truss whose underlying truss is an cone $n$-truss, and such that the cone point is its own stratum (i.e.\ $f\inv \circ f (\top) = \{\top \}$).
\end{defn}

\begin{eg}[Cone trusses] In \cref{fig:truss-cones} we depict two stratified 2-trusses: both have a maximal element $\top$ with $\dim(\top) = 0$, however, only the second example stratifies $\{\top\}$ as its own stratum. Consequently, only the second stratified truss is a stratified cone truss.
\begin{figure}[ht]
    \centering
    \def\svgwidth{1\columnwidth}
    \import{./figuresused/}{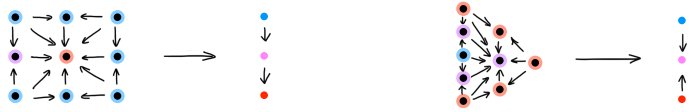}

    \caption{An example and a non-example of a cone truss}
    \label{fig:truss-cones}
\end{figure}
\end{eg}

\nid We state the next definition in the generality of labeled trusses (it certainly can be specialized to the stratified case).

\begin{defn}[Truss products] Let $P$ be a poset and $(T,l)$ a $\iC$-labeled $k$-truss. Define the `poset-truss product' $P \times (T,l)$ (equivalently written $(P \times T,P \times l)$) to be the trivial $\iC$-labeled $k$-truss bundle over $P$ whose fiber is $(T,l)$. Now let $S$ be an $m$-truss bundle. The \textbf{truss product} $S \times (T,l)$ (also written $(S \times T, S \times l)$) is the $\iC$-labeled $(m+k)$-truss bundle over $P$ obtained by concatenating $S_m \times T$ and $S$.
\end{defn}

\nid Note that for $m = 0$ truss products and poset-truss products coincide (recall from \cref{ssec:truss-mesh-bundles} that $0$-truss bundles are simply posets).


\subsection{Defining open and compact combinatorial diagrams} We now give definitions of combinatorial manifold diagrams. Recall the construction of normal forms $\NF{T,f}$ of stratified trusses $(T,f)$ from \cref{defn:normal-forms} and \cref{cor:normalization} (it is also worth recalling that normal forms can be constructed in purely combinatorial terms, see \cref{ssec:comb-tame-strat}).

\begin{term}[The open cube truss] \label{term:open-cube-truss} Define the `open $1$-cube truss' $\OTT^1$ to be the open 1-truss with a single object. The `open $k$-cube truss' $\OTT^k$ is the $k$-truss obtained as the $k$-fold product truss $\OTT^1 \times \OTT^1 \times ... \times \OTT^1$.
\end{term}

\begin{defn}[Combinatorially conical stratified trusses] A stratified $n$-truss $(T,f)$ is \textbf{combinatorially conical at $x \in T_n$} if the stratified truss neighborhood around $x$ normalizes to the product of an open truss cube and a stratified open cone truss; that is, there is $k \leq n$ and a stratified open cone $(n-k)$-truss $(C_x,c_x)$ (called the `cone at $x$') such that
\[
	\NF{T^{\leq x},f^{\leq x}} = \OTT^k \times (C_x,c_x).
\]
If $(T,f)$ is combinatorially conical at all $x \in T_n$ then it is said to be \textbf{combinatorially conical}.
\end{defn}

\begin{defn}[Combinatorial manifold diagrams] \label{defn:comb-diag} A \textbf{combinatorial manifold $n$-diagram} (also called a `manifold diagram $n$-truss') is a normalized open stratified $n$-truss $(T,f)$ that is combinatorially conical.
\end{defn}

\begin{eg}[Combinatorial manifold 2-diagram] In \cref{fig:combinatorial-manifold-2-diagram} we depict a stratified truss $(T,f)$ that is a combinatorial manifold 2-diagram. To illustrate that it satisfies the combinatorial conicality condition we picked an element $x \in T_2$. To the right of $(T,f)$ we illustrate the neighborhood $(T^{\leq x}, f^{\leq x})$. On the right of that neighborhood we illustrate the normalization $\NF{T^{\leq x}, f^{\leq x}}$. This is of the required form $\OTT^k \times (C_x,c_x)$ for $k = 1$.
\begin{figure}[ht]
    \centering
    \def\svgwidth{1\columnwidth}
\begingroup%
  \makeatletter%
  \providecommand\color[2][]{%
    \errmessage{(Inkscape) Color is used for the text in Inkscape, but the package 'color.sty' is not loaded}%
    \renewcommand\color[2][]{}%
  }%
  \providecommand\transparent[1]{%
    \errmessage{(Inkscape) Transparency is used (non-zero) for the text in Inkscape, but the package 'transparent.sty' is not loaded}%
    \renewcommand\transparent[1]{}%
  }%
  \providecommand\rotatebox[2]{#2}%
  \newcommand*\fsize{\dimexpr\f@size pt\relax}%
  \newcommand*\lineheight[1]{\fontsize{\fsize}{#1\fsize}\selectfont}%
  \ifx\svgwidth\undefined%
    \setlength{\unitlength}{2160bp}%
    \ifx\svgscale\undefined%
      \relax%
    \else%
      \setlength{\unitlength}{\unitlength * \real{\svgscale}}%
    \fi%
  \else%
    \setlength{\unitlength}{\svgwidth}%
  \fi%
  \global\let\svgwidth\undefined%
  \global\let\svgscale\undefined%
  \makeatother%
  \begin{picture}(1,0.28055556)%
    \lineheight{1}%
    \setlength\tabcolsep{0pt}%
    \put(0,0){\includegraphics[width=\unitlength,page=1]{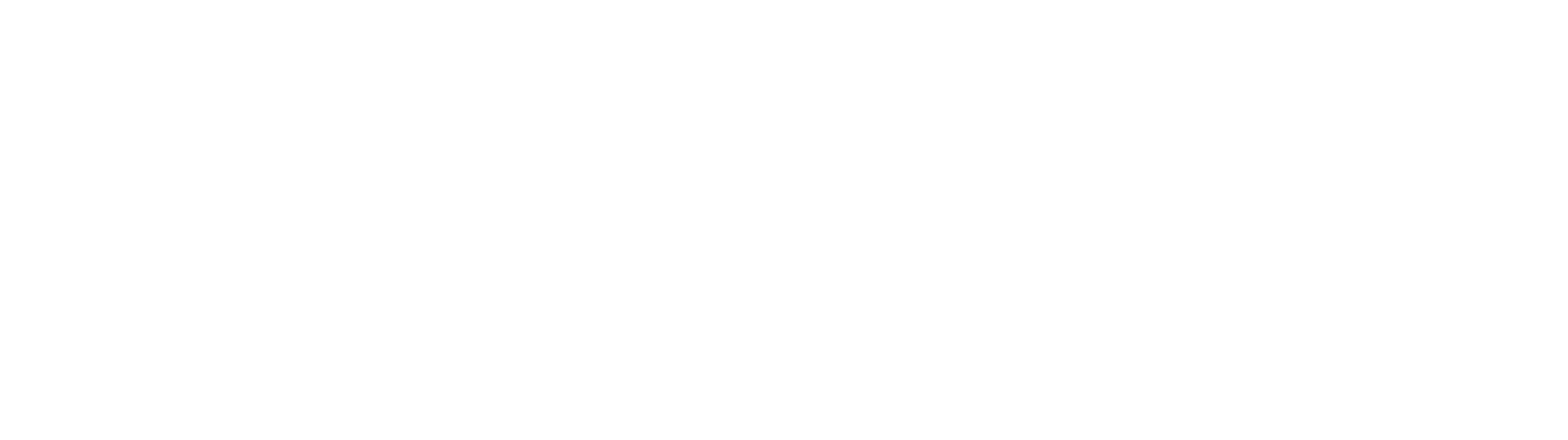}}%
    \put(0.12881944,0.16180556){\color[rgb]{0,0,0}\makebox(0,0)[lt]{\lineheight{1.25}\smash{\begin{tabular}[t]{l}$f$\end{tabular}}}}%
    \put(0.12465278,0.25069444){\color[rgb]{0,0,0}\makebox(0,0)[lt]{\lineheight{1.25}\smash{\begin{tabular}[t]{l}$T$\end{tabular}}}}%
    \put(0.34340278,0.26597222){\color[rgb]{0,0,0}\makebox(0,0)[lt]{\lineheight{1.25}\smash{\begin{tabular}[t]{l}$x$\end{tabular}}}}%
    \put(0.46423611,0.25069444){\color[rgb]{0,0,0}\makebox(0,0)[lt]{\lineheight{1.25}\smash{\begin{tabular}[t]{l}$(T^{\leq x}, f^{\leq x})$\end{tabular}}}}%
    \put(0.75208333,0.25069444){\color[rgb]{0,0,0}\makebox(0,0)[lt]{\lineheight{1.25}\smash{\begin{tabular}[t]{l}$\NF{T^{\leq x}, f^{\leq x}}$\end{tabular}}}}%
    \put(0,0){\includegraphics[width=\unitlength,page=2]{combinatorial-manifold-2-diagram.pdf}}%
  \end{picture}%
\endgroup%

    \caption{A combinatorial manifold 2-diagram, a neighborhood in it, and the neighborhood's normal form}
    \label{fig:combinatorial-manifold-2-diagram}
\end{figure}
\end{eg}

\begin{eg}[Combinatorial manifold 3-diagram] In \cref{fig:combinatorial-manifold-3-diagram} we depict a stratified truss $(T,f)$ that is a combinatorial manifold 3-diagram. Note that for simplicity we only depict `essential' poset arrows: all other arrows can be generated from essential arrows by composition.\footnote{One can give a precise description of essential arrows in trusses, see \cite[Constr.\ 2.3.69]{fct}.} Let us verify the combinatorial conicality condition at the shown element $x \in T_2$. In \cref{fig:normalization-of-a-neighborhood} we illustrate the neighborhood $(T^{\leq x}, f^{\leq x})$ around $x$. Underneath, we illustrate the normalization $\NF{T^{\leq x}, f^{\leq x}}$. This normalization is of the required form $\OTT^k \times (C_x,c_x)$.
\begin{figure}[ht]
    \centering
    \def\svgwidth{1\columnwidth}
\begingroup%
  \makeatletter%
  \providecommand\color[2][]{%
    \errmessage{(Inkscape) Color is used for the text in Inkscape, but the package 'color.sty' is not loaded}%
    \renewcommand\color[2][]{}%
  }%
  \providecommand\transparent[1]{%
    \errmessage{(Inkscape) Transparency is used (non-zero) for the text in Inkscape, but the package 'transparent.sty' is not loaded}%
    \renewcommand\transparent[1]{}%
  }%
  \providecommand\rotatebox[2]{#2}%
  \newcommand*\fsize{\dimexpr\f@size pt\relax}%
  \newcommand*\lineheight[1]{\fontsize{\fsize}{#1\fsize}\selectfont}%
  \ifx\svgwidth\undefined%
    \setlength{\unitlength}{2160bp}%
    \ifx\svgscale\undefined%
      \relax%
    \else%
      \setlength{\unitlength}{\unitlength * \real{\svgscale}}%
    \fi%
  \else%
    \setlength{\unitlength}{\svgwidth}%
  \fi%
  \global\let\svgwidth\undefined%
  \global\let\svgscale\undefined%
  \makeatother%
  \begin{picture}(1,0.30868056)%
    \lineheight{1}%
    \setlength\tabcolsep{0pt}%
    \put(0,0){\includegraphics[width=\unitlength,page=1]{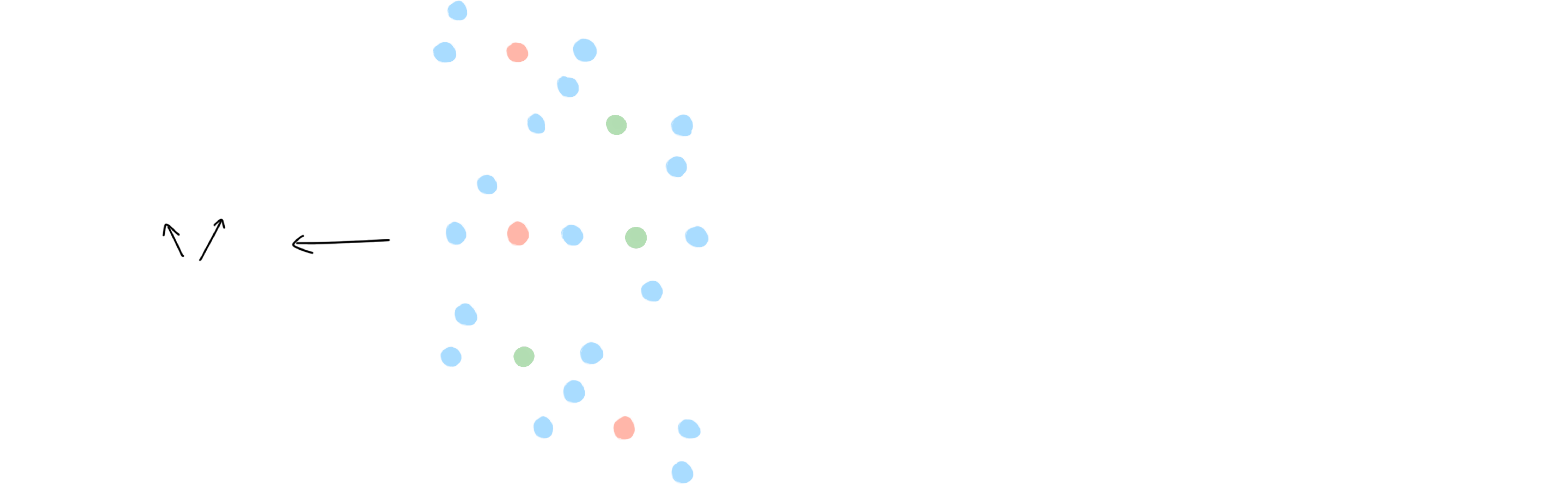}}%
    \put(0.07951389,0.10173611){\color[rgb]{0,0,0}\makebox(0,0)[lt]{\lineheight{1.25}\smash{\begin{tabular}[t]{l}$\Entr(f)$\end{tabular}}}}%
    \put(0.20694444,0.16770833){\color[rgb]{0,0,0}\makebox(0,0)[lt]{\lineheight{1.25}\smash{\begin{tabular}[t]{l}$f$\end{tabular}}}}%
    \put(0.228125,0.08576389){\color[rgb]{0,0,0}\makebox(0,0)[lt]{\lineheight{1.25}\smash{\begin{tabular}[t]{l}$x$\end{tabular}}}}%
    \put(0.20451389,0.27881944){\color[rgb]{0,0,0}\makebox(0,0)[lt]{\lineheight{1.25}\smash{\begin{tabular}[t]{l}$T$\end{tabular}}}}%
    \put(0,0){\includegraphics[width=\unitlength,page=2]{combinatorial-manifold-3-diagram.pdf}}%
  \end{picture}%
\endgroup%

    \caption{A combinatorial manifold 3-diagram}
    \label{fig:combinatorial-manifold-3-diagram}
\end{figure}
\begin{figure}[ht]
    \centering
    \def\svgwidth{1\columnwidth}
\begingroup%
  \makeatletter%
  \providecommand\color[2][]{%
    \errmessage{(Inkscape) Color is used for the text in Inkscape, but the package 'color.sty' is not loaded}%
    \renewcommand\color[2][]{}%
  }%
  \providecommand\transparent[1]{%
    \errmessage{(Inkscape) Transparency is used (non-zero) for the text in Inkscape, but the package 'transparent.sty' is not loaded}%
    \renewcommand\transparent[1]{}%
  }%
  \providecommand\rotatebox[2]{#2}%
  \newcommand*\fsize{\dimexpr\f@size pt\relax}%
  \newcommand*\lineheight[1]{\fontsize{\fsize}{#1\fsize}\selectfont}%
  \ifx\svgwidth\undefined%
    \setlength{\unitlength}{2160bp}%
    \ifx\svgscale\undefined%
      \relax%
    \else%
      \setlength{\unitlength}{\unitlength * \real{\svgscale}}%
    \fi%
  \else%
    \setlength{\unitlength}{\svgwidth}%
  \fi%
  \global\let\svgwidth\undefined%
  \global\let\svgscale\undefined%
  \makeatother%
  \begin{picture}(1,0.50451389)%
    \lineheight{1}%
    \setlength\tabcolsep{0pt}%
    \put(0,0){\includegraphics[width=\unitlength,page=1]{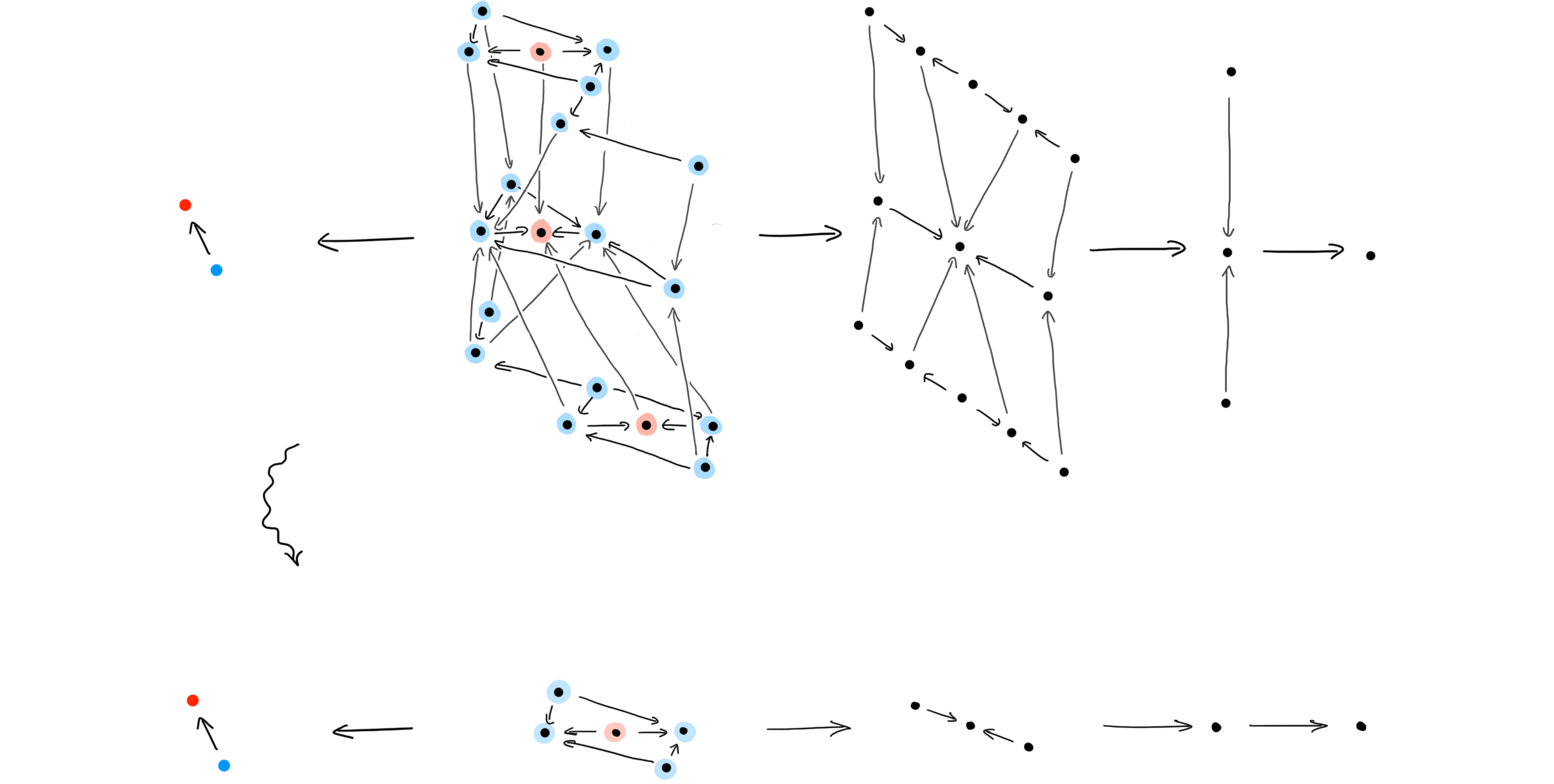}}%
    \put(0.14236111,0.09270833){\color[rgb]{0,0,0}\makebox(0,0)[lt]{\lineheight{1.25}\smash{\begin{tabular}[t]{l}$\NF{T^{\leq x},f^{\leq x}}$\end{tabular}}}}%
    \put(0.20173611,0.17743056){\color[rgb]{0,0,0}\makebox(0,0)[lt]{\lineheight{1.25}\smash{\begin{tabular}[t]{l}{\tiny normalize}\end{tabular}}}}%
    \put(0.14236111,0.415625){\color[rgb]{0,0,0}\makebox(0,0)[lt]{\lineheight{1.25}\smash{\begin{tabular}[t]{l}$(T^{\leq x},f^{\leq x})$\end{tabular}}}}%
  \end{picture}%
\endgroup%

    \caption[Normalizing truss neighborhoods]{Normalization of a neighborhood in a combinatorial manifold 3-diagram}
    \label{fig:normalization-of-a-neighborhood}
\end{figure}
\end{eg}

\begin{noneg}[Failure of combinatorial conicality condition] The stratified 2-truss $(T,f)$ shown on the left in the earlier \cref{fig:stratified-truss-neighborhoods} is not a combinatorial manifold diagram: it fails the conicality condition at the indicated point $x$.
\end{noneg}

Combinatorial manifold diagrams stratify open trusses, and we consequently sometimes refer them as `open' combinatorial manifold diagrams. We now define a notion of `compact' combinatorial manifold diagram, which are combinatorial analogues of our earlier topological definition of compact manifold diagrams.

\begin{term}[Corner trusses] \label{term:corner-trusses} Define $\CTT_1 = \{- \ot 0 \to +\}$ to be the unique closed truss with $3$ elements. Let $\kP = \{\emptyset,-,+\}$. For $\sigma \in \kP$, denote by $\TT^\sigma$ the unique subtruss of $\CTT_1$ containing both $0$ and $\sigma$. Now let $\sigma = (\sigma_1,\sigma_2, ..., \sigma_k) \in \kP^k$ be a $\kP$-valued $k$-vector. Denote by $\TT^\sigma$ the `$\sigma$-corner truss' obtained as the $k$-fold product $\TT^{\sigma_1} \times \TT^{\sigma_2} \times ... \times \TT^{\sigma_k}$. Note that for $\sigma = (\emptyset,\emptyset,...,\emptyset)$ we have $\TT^\sigma = \OTT^k$.
\end{term}

\begin{defn}[Compact combinatorially conical stratified trusses] A stratified $n$-truss $(T,f)$ is \textbf{compact combinatorially conical at $x \in T_n$} if the stratified truss neighborhood around $x$ normalizes to the product of a corner truss and a stratified open cone truss; that is, there is $k \leq n$, $\sigma \in \kP^{k}$, and a stratified open cone $(n-k)$-truss $(C_x,c_x)$ such that
\[
	\NF{T^{\leq x},f^{\leq x}} = \TT^{\sigma} \times (C_x,c_x)
\]
If $(T,f)$ is combinatorially conical at all $x \in T_n$ then it is said to be \textbf{compact combinatorially conical}.
\end{defn}

\begin{defn}[Compact combinatorial manifold diagrams] \label{defn:compact-comb-diag} A \textbf{compact combinatorial manifold $n$-diagram} $(T,f)$ is a normalized closed stratified $n$-truss that is compact combinatorially conical.
\end{defn}

\begin{eg}[Compact combinatorial manifold diagrams] In \cref{fig:a-compact-combinatorial-manifold-diagrams} we depict a compact combinatorial manifold 2-diagram $(T,f)$ and, for chosen $x,y \in T_n$, illustrate how stratified neighborhoods normalize to products of corner trusses and stratified open cone trusses (note, for our choice of $x$, the neighborhood is already normalized).
\begin{figure}[ht]
    \centering
    \def\svgwidth{1\columnwidth}
    \import{./figuresused/}{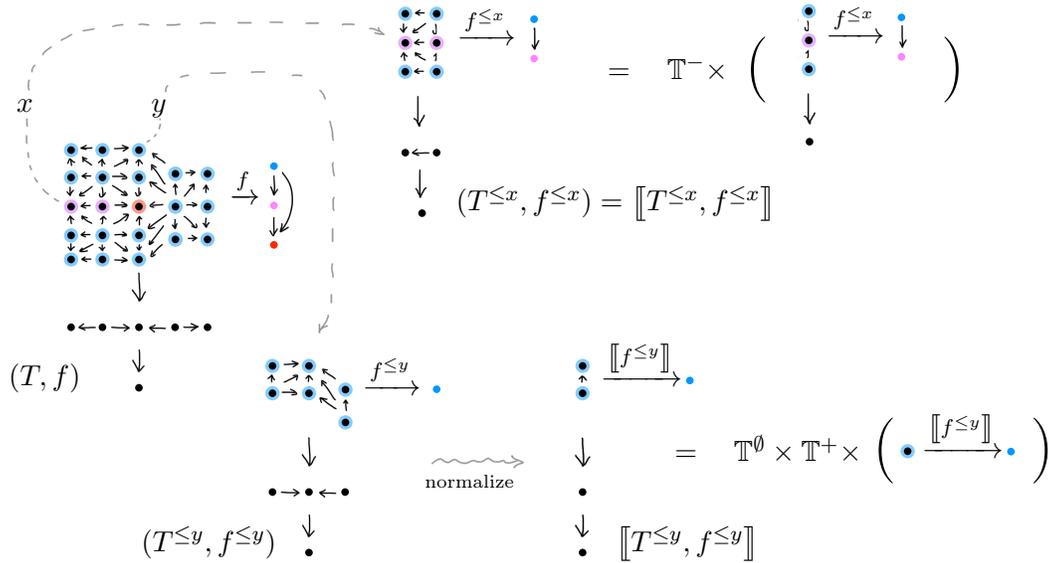}

    \caption{A compact combinatorial manifold 2-diagram, and two neighborhoods in it, together with their normal forms}
    \label{fig:a-compact-combinatorial-manifold-diagrams}
\end{figure}
\end{eg}

\subsection{Compactification} \label{ssec:compactification} Let us now describe the formal relation of open and compact combinatorial manifold diagrams. This is given by a process of cubical compactification. The construction is based on the following topological intuition. Given a stratification $(\II^n,f)$ of the open $n$-cube we may define a compactification simply as a extension $(\II^n,f) \into (\bI^n,\tilde f)$ of that stratification to the closed $n$-cube. However, such an extension $\tilde f$ may arbitrarily add new strata in addition to those in $f$; to further control $\tilde f$, we usually require $\tilde f$ to `retract' back into $f$: a `retractable' compactification $i : f \into \tilde f$ is a compactification such that there exists a framed substratification $r : \tilde f \into f$ and both $\Entr(r \circ i)$ and $\Entr(i \circ r)$ are identities. Our notion of compactification describes the universal such retractable compactification, which we will refer to as `cubical compactification'.

\begin{term}[Retractable compactifications] Given a stratified $n$-truss $(T,f)$, a `retractable compactification' is a pair of cocellular stratified $n$-truss maps $i : (T,f) \toot (\tilde T,\tilde f) : r$ where $\tilde T$ is a closed stratified truss, $i$ is a dense stratified subtruss (`dense' meaning that the closure of $i_n(T_n)$ is $\tilde T_n$), and $r \circ i = \id$.
\end{term}

\begin{defn}[Cubical compactifications of trusses] \label{constr:comp-comb-diagrams}
    The \textbf{cubical compactification $(\overline T,\overline f)$} of an stratified $n$-truss $(T,f)$ is the retractable compactification $\cint : (T,f) \toot (\overline T,\overline f) : \cret$ with the following universal property: for any other retractable compactification $i : (T,f) \toot (\tilde T,\tilde f) : r$ there exists a unique stratified truss bordism $R : (\overline T,\overline f) \proto{} (\tilde T,\tilde f)$ (i.e.\ a morphism in $\kT^n(\Entr(f))$, see \cref{obs:labeled-n-truss-bord}, or equivalently an $\Entr(f)$-labeled bundle over $[1]$ such that $\rest R 0 = (\overline T,\overline f)$ and $\rest R 1 = (\tilde T,\tilde f)$) that on the images of $\cint$ resp.\ $i$ restricts to an identity bordism $\id : (T,f) \proto{} (T,f)$.
\end{defn}

\nid The construction of cubical compactifications $\overline T$ on underlying trusses $T$ is given in \cite[Constr.\ 4.2.57]{fct}. The stratified case described here adds to this the labeling $\overline f$ determined as the composite $f \circ \cret$. Often we simply speak of `compactification' instead of `cubical compactification'.

\begin{eg}[Retractable and cubical compactifications] Consider the stratified truss $(T,f)$ shown in the middle of \cref{fig:compactification-of-a-stratified-truss}: to its left, we depict its cubical compactification $(\overline T, \overline f)$, and to its right, we depict a retractable but non-cubical compactification. One verifies that there is a unique stratified bordism $\overline T \proto{} \tilde T$ that restricts to an identity bordism on $T$.
\begin{figure}[ht]
    \centering
    \def\svgwidth{1\columnwidth}
    \import{./figuresused/}{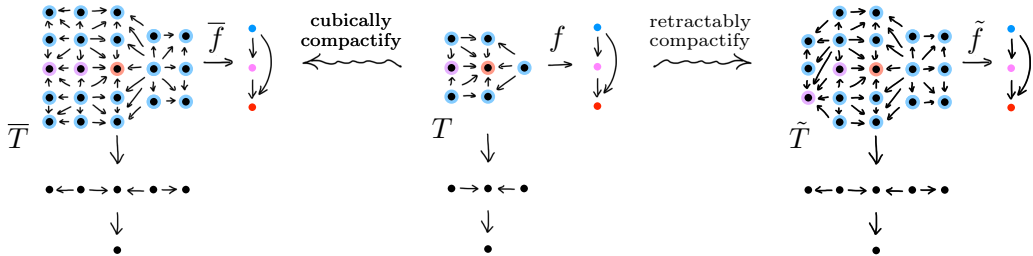}

    \caption{Cubical and non-cubical compactification of a stratified truss}
    \label{fig:compactification-of-a-stratified-truss}
\end{figure}
\end{eg}

\nid Note that in the preceding example $(T,f)$ is a combinatorial manifold diagram, and $(\overline T, \overline f)$ is a compact combinatorial manifold diagram. This observation holds in general, as we shortly explain. We first remark that compactification has a `left inverse', and that it can be used in the construction of classifying meshes.

\begin{rmk}[Interiors of closed stratified trusses] \label{term:interior-compactifying} Given a closed stratified truss $(T,f)$ its `interior' $(T\intrr,f\intrr)$ is the maximal stratified open subtruss of $(T,f)$. Note that, given a stratified open truss $(S,g)$, then $(\overline S\intrr, \overline g\intrr) = (S,g)$. The converse need not hold. If for a stratified closed truss $(T,g)$ we have $(\overline {T\intrr}, \overline {f\intrr}) = (T,f)$ then we call $(T,f)$ `interior-compactifying'.
\end{rmk}

\begin{rmk}[Compactification in the construction of classifying meshes] \label{rmk:cmsh-via-compactifiation} Given a stratified $n$-truss $(T,f)$, a classifying mesh $(M,g) = \CMsh (T,f)$ can be constructed by first constructing the classifying stratification $\CStr {\overline T}_i$ (see \cref{recoll:classifying-strat}), and then defining $M_i$ to be the constructible substratification of $\CStr {\overline T}_i$ determined on entrance path posets by the inclusion $T_i \into {\overline T}_i$ (see \cref{recoll:strat-maps}); the mesh bundles $p_i : M_i \to M_{i-1}$ are obtained by restricting the maps $\CStr {(\overline T_i \to \overline T_{i-1})}$, and framings of fibers are induced canonically by the frame orders of fibers in $T$ (the full construction of $M = \CMsh T$ is spelled out in \cite[\S4.2.4]{fct}). Note that $M$ inherits framed PL structure from $\CStr {\overline T}$. The coarsening $M_n \to g$ is determined on entrance paths by $f : T_n = \Entr M_n \to \Entr(f)$.
\end{rmk}

\begin{rmk}[Compactification for truss bundles] \label{rmk:compact-truss-bun} Note that our discussion immediately carries over to the case of stratified truss bundles over a base poset $P$ (for unstratified truss bundles, see \cite[Defn.\ 4.2.53ff]{fct}). This yields notions of `cubical compactifications' $(\overline q, \overline f)$ of stratified open truss bundles $(q,f)$, and of `interiors' $(q\intrr, f\intrr)$ of stratified closed truss bundles $(q,f)$. (We omit spelling out these definitions, since they almost verbatim repeat our earlier discussion.)
\end{rmk}

\nid We now specialize our discussion to the case of manifold diagrams.

\begin{obs}[Relating of open and compact combinatorial diagrams via compactification] \label{obs:open-compact-comb-corr} We state the following observations (without proof).
\begin{enumerate}
\item Given a compact combinatorial manifold diagram $(T,f)$, its interior $(T\intrr,f\intrr)$ is an open combinatorial manifold diagram.
\item Conversely, given an open combinatorial manifold diagram $(T,f)$, then its cubical compactification $(\overline T, \overline f)$ is a compact combinatorial manifold diagram.
\end{enumerate}
Together, these observations establish a 1-to-1 correspondence between open combinatorial manifold diagrams and interior-compactifying compact combinatorial manifold diagrams.
\end{obs}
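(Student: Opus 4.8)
The final correspondence is formal once (1) and (2) are in hand: by \cref{term:interior-compactifying} one has $(\overline S)\intrr = S$ for every open stratified truss $S$, while $\overline{T\intrr} = T$ exactly when the closed stratified truss $T$ is interior-compactifying, so $(T,f)\mapsto(\overline T,\overline f)$ and $(T,f)\mapsto(T\intrr,f\intrr)$ are mutually inverse; (1) and (2) send the two classes into one another, and an image $(\overline T,\overline f)$ of an open combinatorial manifold diagram is interior-compactifying because $\overline{(\overline T)\intrr}=\overline T$. Thus the content is (1) and (2). A preliminary remark: by \cref{term:corner-trusses} we have $\TT^{(\emptyset,\dots,\emptyset)}=\OTT^k$, so combinatorial conicality at a point is precisely compact combinatorial conicality with trivial corner type; and since a product $\TT^\sigma\times(C_x,c_x)$ with $(C_x,c_x)$ an open cone truss is an open truss iff $\sigma$ is trivial, while truss coarsenings preserve the dimensions of the endpoints of all $1$-truss fibres (\cref{term:truss-coarsening}) and hence the normal form of an open stratified truss is again open, it follows that whenever $\NF{T^{\leq x},f^{\leq x}}$ is the normal form of an open stratified truss and equals some $\TT^\sigma\times(C_x,c_x)$, the corner type $\sigma$ is trivial.

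For (1), let $(T,f)$ be a compact combinatorial manifold diagram. Its interior $(T\intrr,f\intrr)$ is open by definition, and it is normalized: cubical compactification takes coarsest refining meshes to coarsest refining meshes (\cite{fct}), so restriction to the open cube takes the coarsest refining mesh of $f$ to that of its restriction --- here compact combinatorial conicality is used, as it forces the coarsest refining mesh to be cylindrical near $\partial\bI^n$, so that a refining mesh of the interior extends back over the boundary --- and then \cref{obs:normal-form-vs-crs-ref-mesh} applies. For combinatorial conicality fix an interior object $x\in T\intrr_n$. Every boundary stratum of (the classifying mesh of) $(T,f)$ has closure inside $\partial\bI^n$, so no boundary object lies below $x$; hence the downward closure of $x$ in $T_n$ lies in $T\intrr_n$, giving $(T\intrr)^{\leq x}=T^{\leq x}$ and $(f\intrr)^{\leq x}=f^{\leq x}$ as stratified trusses. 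Compact conicality of $(T,f)$ yields $\NF{T^{\leq x},f^{\leq x}}=\TT^\sigma\times(C_x,c_x)$, but $T^{\leq x}$ is an open truss --- it is the combinatorial open star of the interior cell $x$, an open neighbourhood --- so by the preliminary remark $\sigma$ is trivial and $\NF{(T\intrr)^{\leq x},(f\intrr)^{\leq x}}=\OTT^k\times(C_x,c_x)$. Thus $(T\intrr,f\intrr)$ is a normalized open combinatorially conical stratified truss, i.e.\ an open combinatorial manifold diagram.

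For (2), let $(T,f)$ be an open combinatorial manifold diagram with cubical compactification $\cint:(T,f)\toot(\overline T,\overline f):\cret$. Then $(\overline T,\overline f)$ is a closed stratified truss, normalized again because compactification takes coarsest refining meshes to coarsest refining meshes (\cite{fct}) --- equivalently because a truss coarsening of $(\overline T,\overline f)$ restricts along $\cret,\cint$ to a (hence trivial) truss coarsening of $(T,f)$ and is forced to be trivial by density of $\cint(T)$. For compact combinatorial conicality fix $y\in\overline T_n$. If $y=\cint(x)$ with $x\in T_n$ interior, then as in (1) the downward closure of $y$ avoids the boundary objects added by compactification, so $\overline T^{\leq y}\iso T^{\leq x}$ and $\NF{\overline T^{\leq y},\overline f^{\leq y}}=\OTT^k\times(C_x,c_x)=\TT^{(\emptyset,\dots,\emptyset)}\times(C_x,c_x)$. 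If $y$ is a boundary object, then by the explicit construction of cubical compactification (\cite[Constr.\ 4.2.57]{fct}) it is associated to a unique interior object $x$ and a corner type $\sigma$ recording the faces of $\bI^n$ containing $y$, and $\overline T^{\leq y}$ is the partial cubical compactification of $T^{\leq x}$ in the $\sigma$-directions. Since partial cubical compactification distributes over truss products and commutes with normalization (\cite{fct}), $\NF{\overline T^{\leq y},\overline f^{\leq y}}$ is the partial compactification in the $\sigma$-directions of $\NF{T^{\leq x},f^{\leq x}}=\OTT^k\times(C_x,c_x)$; and the $\sigma$-directions lie in the flat factor $\OTT^k$, because they are trivial product directions of the neighbourhood while the cone factor $(C_x,c_x)$, having its cone point as its own stratum and hence no dimension-$0$ object splitting off, admits no trivial product direction. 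So this partial compactification is $\TT^\sigma\times(C_x,c_x)$, and $(\overline T,\overline f)$ is a normalized closed compact combinatorially conical stratified truss, i.e.\ a compact combinatorial manifold diagram.

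The main obstacle is the boundary case of (2): identifying the stratified truss neighbourhood $\overline T^{\leq y}$ of an added boundary object with the partial cubical compactification of an interior neighbourhood $T^{\leq x}$, and checking that this partial compactification touches only the flat $\OTT^k$-factor of the normalized neighbourhood and leaves the cone truss $(C_x,c_x)$ untouched --- a ``locality plus cylindricity'' property of cubical compactification. I expect the cleanest route is induction on $n$: both the tower structure of $n$-trusses and the construction of \cite[Constr.\ 4.2.57]{fct} proceed one $1$-truss bundle at a time, which reduces the statement to a one-dimensional analysis of the truss neighbourhoods of the new endpoint objects of a compactified $1$-truss bundle, together with the compatibility of normal forms with partial compactification recorded in \cite[\S5.2]{fct}. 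The ingredients used in (1) --- that coarsest refining meshes inherit the near-boundary cylindricity forced by compact conicality, and that (partial) compactification preserves normal form --- are the easier closed/open duals of the same facts.
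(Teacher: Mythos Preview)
The paper explicitly states these observations \emph{without proof}, so there is no argument in the paper to compare yours against. Your sketch is therefore not a reproduction of anything in the text but an independent attempt, and as such it is structurally sound: the formal correspondence at the end is correct as you say, the reduction of combinatorial conicality to the compact condition with trivial corner type via $\TT^{(\emptyset,\dots,\emptyset)}=\OTT^k$ is the right observation, and your handling of~(1) and of the interior case of~(2) via the identification $(T\intrr)^{\leq x}=T^{\leq x}$ for interior $x$ (the open star of an interior cell lies in the interior) is correct.

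Your own diagnosis of the main obstacle is accurate: the boundary case of~(2), where you need that the stratified truss neighbourhood of a newly added boundary object $y$ is a partial compactification of the neighbourhood of $x=\cret(y)$, and that this partial compactification only touches the flat $\OTT^k$-factor. Two cautions here. First, the claim that partial cubical compactification ``commutes with normalization'' and ``distributes over truss products'' is doing real work and is not stated in this paper; you are right that it should be extractable from \cite[\S4--5]{fct}, but it is not a one-line citation. Second, the assertion that the $\sigma$-directions land in the $\OTT^k$-factor because the cone factor ``admits no trivial product direction'' needs a bit more: the cone truss $(C_x,c_x)$ is normalized and has cone point its own stratum, but you should argue that none of its $1$-truss bundle levels can acquire a new endpoint under compactification without violating the cone condition --- your inductive suggestion (one $1$-truss bundle at a time, via \cite[Constr.~4.2.57]{fct}) is the right way to make this precise.
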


\nid Examples of interior-compactifying and non-interior-compactifying compact manifold diagrams are illustrated later in terms of (topological) manifold diagrams in \cref{fig:interior-compactifying-diagram}. While not all compact combinatorial diagrams are interior-compactifying, it is possible to show that they are so `up to a (canonical) perturbation'---in this sense, notions of open and compact combinatorial manifold diagrams are closely related. Our focus will mostly lie on the open case, but compactification proves to be a useful tool in many instances.



\section{Equivalence of the definitions}

We now compare our topological and combinatorial definitions of manifold diagrams. The central result used in the comparison will be the correspondence of tame stratifications up to framed stratified homeomorphism with normalized stratified trusses (see \cref{sec:meshtrusseqv}).

\subsection{Translating products and cones}

Before stating and proving the comparison, we will show that both products and cones can be faithfully translated from their topological definitions into combinatorial ones.

\begin{lem}[Combinatorialization preserves products and cones] \label{lem:NFTrs-preserves-prod-and-cone} The construction of $\NFTrs$ preserves products and cones as follows.
    \begin{enumerate}
        \item Given a tame stratification $(\II^n,f)$, then $\NFTrs (\II^k \times f) = \OTT^k \times \NFTrs f$.
        \item Given a tame link $(\partial\bI^n,l)$ then $\NFTrs \cone(l)$ is an stratified open cone truss.
    \end{enumerate}
\end{lem}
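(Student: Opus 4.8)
The plan is to exploit the uniqueness of coarsest refining meshes (\cref{thm:coar-ref-mesh}) together with the fact that the product and cone operations on stratifications correspond to the product and cone operations on meshes, and then read off the combinatorial statement via $\FTrs$. For part (1), first I would observe that if $M \mshar f$ is the coarsest refining mesh of $(\II^n, f)$, then $\II^k \times M \mshar \II^k \times f$ is a mesh refinement of $(\II^k \times \II^n, \II^k \times f)$, where $\II^k$ carries the trivial $k$-mesh structure (a tower of trivial 1-mesh bundles each with a single $1$-dimensional stratum). The key claim is that $\II^k \times M$ is in fact the \emph{coarsest} refining mesh of $\II^k \times f$. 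To see this, suppose $M' \mshar \II^k \times f$ is any mesh refinement; I would argue that $M'$ must split as a product $\II^k \times M''$ with $M'' \mshar f$ — intuitively because the first $k$ framing directions of $\II^k \times f$ carry no stratification data, so the coarsest refining mesh cannot subdivide in those directions any more finely than the trivial mesh, and the first $k$ levels of any refining mesh bundle tower are forced (up to the coordinatizability of 1-mesh bundles) to be trivial. Since $M'' \mshar f$ factors through $M \mshar f$ by a strict coarsening, $M' = \II^k \times M''$ factors through $\II^k \times M$, establishing coarsestness. Then $\NFTrs(\II^k \times f) = \FTrs(\II^k \times M, \II^k \times f) = \OTT^k \times \FTrs(M, f) = \OTT^k \times \NFTrs f$, using that $\FTrs$ of the trivial $k$-mesh $\II^k$ is $\OTT^k$ and that $\FTrs$ commutes with the (poset-)truss product, which is how the truss product was defined (by concatenating towers of bundles, matching the concatenation defining $n$-meshes).

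For part (2), I would argue that $\NFTrs \cone(l)$ is a stratified open cone truss by verifying the two defining conditions of \cref{defn:truss-cones} and \cref{defn:strat-truss-cone}: that the top poset has a maximal element of cell dimension $0$, which is its own stratum. Let $M \mshar \overline\cone(l)$ be the coarsest refining mesh of the closed cone (this exists since $l$ is tame), and restrict/relate it to a refining mesh of the open cone $\cone(l) = (\II^n, \cone(l))$. The cone point $0 \in \II^n$ is its own stratum in $\cone(l)$ by definition of the stratified open cone (\cref{rmk:cub-cone-strat}), so in any mesh refinement $M \mshar \cone(l)$ the cone point lies in a single $0$-dimensional stratum — the framed conicality/cone structure forces the coarsest refining mesh to have a unique cell over the cone point, necessarily a $0$-cell, and this $0$-cell is maximal in $\Entr(M_n)$ because $0$ is in the closure of every stratum of $\cone(l)$ and hence (after refinement) the corresponding truss object lies above all others in the $\leq$-order. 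Passing to $\FTrs$ and normalizing, the image $\top$ of this $0$-cell is maximal in $T_n$ with $\dim(\top) = 0$, and $f^{-1}f(\top) = \{\top\}$; moreover the underlying truss is open since $\cone(l)$ is a stratification of the open cube and its coarsest refining mesh is open. Hence $\NFTrs\cone(l)$ is a stratified open cone truss.

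\textbf{Main obstacle.} The crux of the argument — and the step I expect to be most delicate — is the coarsestness claim in part (1): that the coarsest refining mesh of $\II^k \times f$ genuinely splits as $\II^k \times (\text{coarsest refining mesh of } f)$. One must rule out the possibility that some cleverly chosen refining mesh of the product subdivides in a way that does not respect the product decomposition yet still coarsens to $\II^k \times f$; the resolution should come from the $0$-constructibility condition in \cref{defn:1-mesh-bun} applied level-by-level in the bundle tower, showing that the bottom $k$ 1-mesh bundles of any refining mesh of $\II^k \times f$ must be trivial (single open-interval fibers with no $0$-dimensional strata) because $\II^k \times f$ has no stratum of fiber dimension $0$ in those directions, and then invoking the uniqueness in \cref{thm:coar-ref-mesh} to conclude the product mesh is terminal among refinements. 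Similarly in part (2) the careful point is identifying the cell over the cone point; here one leans on the cone stratification having the cone point as an isolated stratum together with the fact that coarsest refining meshes exist and are unique, so no genuinely new phenomenon arises beyond bookkeeping.
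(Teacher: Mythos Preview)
Your approach to part (1) is aligned with the paper's in spirit, but the proposed resolution of your ``main obstacle'' is incorrect: an arbitrary mesh refinement $M' \mshar \II^k \times f$ need \emph{not} split as a product $\II^k \times M''$, nor must its bottom $k$ 1-mesh bundles be trivial. A refining mesh may freely subdivide in the $\II^k$ direction (take $f$ trivial and refine $\II^1 \times f = \II^2$ by a $2$-mesh with a $0$-cell in the first coordinate). The paper sidesteps this by arguing differently: any mesh refining $\II^k \times f$ \emph{restricts}, over each point $x \in \II^k$, to a mesh refining $f$; hence $\II^k \times \iM^f$ is coarsest, since any further coarsening would have a fiber failing to refine $f$. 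You do not need $M'$ itself to be a product---only that $\II^k \times \iM^f$ admits no nontrivial stratified mesh coarsening.

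For part (2) your argument has a genuine gap. The inference ``$0$ is in the closure of every stratum of $\cone(l)$, hence the corresponding truss object lies above all others'' does not follow: after refining by a mesh, a cell $c$ contained in a stratum $s$ need not satisfy $0 \in \overline{c}$ even though $0 \in \overline{s}$ (the same worry you flag for part (1) applies here). What must be shown is that the \emph{coarsest} refining mesh has this property, and this is not mere bookkeeping---it requires exploiting the self-similarity of the cone. The paper's proof does exactly this: the scaling map $\lambda : \cone(l) \to \cone(l)$, $x \mapsto \lambda x$, is framed stratified, and for small $\lambda$ it tamely embeds $\cone(l)$ into the mesh neighborhood classifying the truss neighborhood $(T^{\leq x}, g^{\leq x})$ of the cone point. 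An (epi,mono)-factorization then forces $\NFTrs\cone(l) = \NF{T^{\leq x}, g^{\leq x}}$, and the latter is a stratified open cone truss since truss neighborhoods are cone trusses and normalization preserves that form. Your direct verification does not supply this scaling ingredient, and without it the maximality of the cone-point object is unjustified.
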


\begin{proof} The first statement follows since each mesh refining $\II^k \times f$ restricts on the preimage $x \in \II^k$ under $\II^k \times \II^n \to \II^k$ to a mesh refining $f$. Thus $\II^k \times \iM^f$ (where $\iM^f$ is the coarsest refining mesh of $f$) must be the coarsest refining mesh of $\II^k \times f$.

    For the second statement, first observe that `scaling' $\lambda : \cone(l) \to \cone(l)$, which on underlying spaces maps $x \mapsto \lambda x$ with $0 < \lambda \leq 1$, is a framed stratified map. Let $(T,g) = \NFTrs \cone(l)$ and let $x \in T_n$ correspond to the cone point $0$ of $\cone(l)$. Then $(T^{\leq x},g^{\leq x}) \into (T,g)$ has a classifying mesh map $\phi : (N,e) \into (\iM^{\cone(l)},\cone(l))$, whose image is an open neighborhood around $0$. For small $\lambda$, we get a tame substratifications $\lambda : \cone(l) \into e$ (to see tameness, one can pass to appropriate PL replacements and use that $(\bI^n, \overline\cone(l))$ is assumed to be tame, cf.\ \cref{rmk:pl-map-implies-tame-map}).
    Since $\lambda : \cone(l) \into e$ is tame, pick mesh refinements $L \mshar \cone(l)$ and $E \mshar e$ such that $\lambda : (L,\cone(l)) \into (E,e)$ is a stratified mesh map. Pass to trusses to obtain a stratified truss map $i = \FTrs \lambda : \FTrs (L,\cone(l)) \into \FTrs (E,e)$. Note, there is a truss coarsening $c : \FTrs (E,e) \to \NF{T^{\leq x},g^{\leq x}}$. The composite $c \circ i$ uniquely factors as
\[\begin{tikzcd}
	{\FTrs(E,e)} & {\NF{T^{\leq x},g^{\leq x}}} \\
	{\FTrs (L,\cone(l))} & {(S,h)}
	\arrow["i",from=2-1, to=1-1]
	\arrow[hook, from=2-2, to=1-2]
	\arrow["c",two heads, from=1-1, to=1-2]
	\arrow[two heads, from=2-1, to=2-2]
\end{tikzcd}\]
where `$\epi$' arrows are stratified coarsenings and `$\into$' arrows are open stratified subtrusses (this can be shown by `(epi,mono)-factorizing' the map $c \circ i$, see e.g.\ \cite[Lem.\ 2.3.101]{fct}). Since open cone trusses normalize to open cone trusses, observe $\NF{T^{\leq x},g^{\leq x}}$ is an open cone truss. But we must have $(S,h) = \NF{T^{\leq x},g^{\leq x}}$ since $S \into T$ contains the cone point in its image. Thus $\NFTrs \cone(l) = (S,h)$ is an open cone truss as claimed.
\end{proof}

\nid The converse of \cref{lem:NFTrs-preserves-prod-and-cone} is easier and we simply state it as an observation.

\begin{obs}[Combinatorialization reflects products and cones] \label{obs:CMsh-preserves-prod-and-cone} Given a stratified $n$-truss $(T,g)$ and a tame stratification $f$ with $\NFTrs f = \OTT^k \times (T,g)$, then $f \iso \II^n \times h$ where $\NFTrs h = (T,g)$. Given a stratified cone $n$-truss $(T,g)$ and a tame stratification $f$ with $\NFTrs f = (T,g)$, then $f \iso \cone(l)$ for a tame link $l$. Both statement can be seen from the explicit construction of classifying meshes of trusses (see e.g.\ \cref{rmk:cmsh-via-compactifiation}).
\end{obs}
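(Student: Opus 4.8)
The plan is to derive both claims from the combinatorialization theorem \cref{thm:tame-strat-comb} together with an unwinding of the explicit construction of classifying meshes in \cref{rmk:cmsh-via-compactifiation}. By \cref{thm:tame-strat-comb}, a tame stratification is recovered up to framed stratified homeomorphism from its normalized stratified truss, so $f \iso \CMsh(\NFTrs f)$; in the first case this reads $f \iso \CMsh(\OTT^k \times (T,g))$, and in the second $f \iso \CMsh(T,g)$. It therefore suffices to identify the classifying meshes of product trusses and of stratified cone trusses.

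\emph{Products.} A truss product $\OTT^k \times (T,g)$ is, by definition, the concatenation of the trivial tower of the $k$-cube truss with the tower of $(T,g)$. Unwinding \cref{rmk:cmsh-via-compactifiation}, the classifying mesh is assembled levelwise from the classifying stratifications of the cubical compactification; since $\CStr{-}$ turns products of posets into products of stratifications, and $\CMsh(\OTT^k)$ is the standard flat-framed open cube $\II^k$ (as $\CMsh(\OTT^1)$ is the open interval stratified by a single $1$-disk), one checks that $\CMsh(\OTT^k \times (T,g))$ is, on underlying framed stratified spaces, the product $\II^k \times \CMsh(T,g)$, with the stratification trivial along the $\II^k$ factor. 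Writing $h$ for the stratification underlying $\CMsh(T,g)$, this gives $f \iso \II^k \times h$. Finally $(T,g)$ is itself normalized: any nontrivial truss coarsening of $(T,g)$ would, upon taking the product with $\id_{\OTT^k}$, yield a nontrivial truss coarsening of $\OTT^k \times (T,g) = \NFTrs f$, contradicting normality of $\NFTrs f$. Hence, by \cref{obs:normal-form-vs-crs-ref-mesh}, $\CMsh(T,g)$ is the coarsest refining mesh of $h$, so $\NFTrs h = \FTrs(\CMsh(T,g),g) = (T,g)$.

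\emph{Cones.} Let $(T,g)$ be a stratified open cone $n$-truss with cone point $\top \in T_n$, so $\dim(\top)=0$ and $\top$ is the maximum of $T_n$, and let $f$ be tame with $\NFTrs f = (T,g)$, so $f \iso \CMsh(T,g)$. In the construction of \cref{rmk:cmsh-via-compactifiation}, the cell of $\CStr{\overline T}_n$ corresponding to $\top$ is a single point — its cell dimension being $\dim(\top)=0$ — lying in the closure of every other top cell; hence the underlying framed space of $\CMsh(T,g)$ is the open cone on its cube boundary, which under the identification fixed in \cref{notn:cube-links} is $\cone(\partial\bI^n) \iso \II^n$ with cone point the $0$-cell of $\top$. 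Since $(T,g)$ is a \emph{stratified} cone truss, $g$ isolates this cone point as its own stratum, so $h$ descends to $\cone(l)$ where $l$ is the induced stratification of $\partial\bI^n$. Lastly $l$ is a tame link: a mesh refining the closed cone $\overline\cone(l)$ is obtained from the classifying mesh of the cubical compactification $\overline T$ (or, equivalently, one passes to PL models and invokes \cref{rmk:pl-map-implies-tame-map}). Thus $f \iso \cone(l)$ for the tame link $l$.

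The main obstacle is the bookkeeping in the product step: checking that the levelwise construction of $\CMsh$ genuinely commutes with concatenation of truss towers, so that the classifying mesh of $\OTT^k \times (T,g)$ is, framings and all, the product mesh $\II^k \times \CMsh(T,g)$; and, in the cone step, verifying carefully that the topological cone point of $\CMsh(T,g)$ is exactly the $0$-cell of $\top$ and that it is the cube-boundary link $\partial\bI^n$ — rather than some abstract link of a $0$-cell — that carries the tame stratification $l$ singled out by the identification of \cref{notn:cube-links}. Both are routine once the definitions in \cref{rmk:cmsh-via-compactifiation} are unfolded, but that is where the content lies.
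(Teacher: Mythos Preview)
Your proposal is correct and takes essentially the same approach as the paper: the paper's entire justification is the single sentence ``Both statements can be seen from the explicit construction of classifying meshes of trusses (see e.g.\ \cref{rmk:cmsh-via-compactifiation}),'' and what you have done is precisely to unfold that construction and record the bookkeeping (compatibility of $\CMsh$ with truss products, identification of the $0$-cell of $\top$ with the cone point, and the check that $(T,g)$ remains normalized). Your treatment is in fact more detailed than the paper's, and you correctly write $\II^k$ where the paper's statement has the apparent typo $\II^n$.
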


\subsection{Statement and proof of the equivalence}

We start with the open case, and later adapt our discussion to the compact case.

\begin{thm}[Combinatorialization of manifold diagrams] \label{thm:comb-mdiag} Framed stratified homeomorphism classes of manifold $n$-diagrams are in 1-to-1 correspondence with combinatorial manifold $n$-diagrams: the correspondence takes $(\II^n,f)$ to $\NFTrs(\II^n,f)$.
\end{thm}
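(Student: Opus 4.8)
The plan is to leverage \cref{thm:tame-strat-comb}, which already establishes a 1-to-1 correspondence between tame stratifications up to framed stratified homeomorphism and normalized stratified trusses, via $f \mapsto \NFTrs f$. Since manifold $n$-diagrams are by definition tame stratifications of $\II^n$ (\cref{defn:mdiag}) and combinatorial manifold $n$-diagrams are by definition normalized open stratified $n$-trusses (\cref{defn:comb-diag}), the map $(\II^n,f) \mapsto \NFTrs(\II^n,f)$ is already a well-defined injection on framed stratified homeomorphism classes. What remains is to check that this bijection matches up the two additional conditions: on the topological side, that $f$ is a tame stratification \emph{of the open cube $\II^n$} which is \emph{tame framed conical}; on the combinatorial side, that the truss is \emph{open} and \emph{combinatorially conical}. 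So the theorem reduces to two claims: (i) $f$ is a stratification of $\II^n$ (equivalently, its coarsest refining mesh is an open $n$-mesh) if and only if $\NFTrs f$ is an open $n$-truss; and (ii) $f$ is framed conical if and only if $\NFTrs f$ is combinatorially conical.

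For claim (i), I would argue that the underlying flat framed space being the open cube corresponds exactly to the fundamental truss being open: by \cref{obs:redundancy-tame-con} we may use framed conicality (not tame framed conicality), and the coarsest refining mesh $\iM^f$ of a stratification of $\II^n$ is an open $n$-mesh (its underlying space is $\II^n$, so all its 1-mesh bundles have open fibers), whence $\NFTrs f = \FTrs(\iM^f, f)$ is an open stratified $n$-truss; conversely, by \cref{rmk:cmsh-via-compactifiation} the classifying mesh $\CMsh$ of an open stratified truss has underlying space the open cube (an open $n$-mesh has a coordinatization with image $\II^n$), so $f \iso \CMsh\NFTrs f$ is a stratification of $\II^n$. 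For claim (ii), the key is to localize: framed conicality at $x \in \II^n$ says $f$ near $x$ is a tame framed stratified product $\II^k \times (\II^{n-k},\cone(l_x))$; applying $\NFTrs$ to this neighborhood and using \cref{lem:NFTrs-preserves-prod-and-cone} gives $\OTT^k \times (C_x, c_x)$ with $(C_x,c_x)$ a stratified open cone truss, which is precisely combinatorial conicality of $\NFTrs f$ at the object of the top poset corresponding to $x$. The converse direction uses \cref{obs:CMsh-preserves-prod-and-cone}: if $\NF{T^{\leq x}, f^{\leq x}} = \OTT^k \times (C_x,c_x)$, then the classifying tame stratification of the truss neighborhood is framed stratified homeomorphic to $\II^k \times \cone(l_x)$, giving a framed tubular neighborhood around $x$. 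The one wrinkle to address carefully is that the truss neighborhood $(T^{\leq x}, f^{\leq x})$ (\cref{defn:strat-truss-nbhd}) genuinely classifies a small open stratified neighborhood of the corresponding point in the classifying mesh — this is essentially the content of $\CMsh$ being built from $\CStr\overline{T}$ restricted to constructible subposets as in \cref{rmk:cmsh-via-compactifiation}, and one should cite that every point of $\II^n$ arises as such a point via $f \iso \CMsh\NFTrs f$.

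I expect the main obstacle to be the careful bookkeeping in claim (ii), specifically verifying that the combinatorial operation ``truss neighborhood'' $(T^{\leq x}, f^{\leq x})$ exactly models the topological operation ``small stratified open neighborhood of a point'' under the $\CMsh/\FTrs$ correspondence, and that this is compatible with normalization — i.e.\ that $\NFTrs$ of the restriction of $f$ to a small neighborhood of $x$ equals $\NF{(\FTrs\iM^f)^{\leq x}, \dots}$. This requires knowing that coarsest refining meshes restrict well to open neighborhoods (which follows from the local nature of the mesh-refinement condition, much as in the proof of \cref{lem:NFTrs-preserves-prod-and-cone}), and that the resulting restricted truss is the downward closure $T^{\leq x}$. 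Once this localization lemma is in hand, the proof is a clean assembly: apply \cref{thm:tame-strat-comb} for the global bijection, then match the conicality conditions point-by-point using \cref{lem:NFTrs-preserves-prod-and-cone} and \cref{obs:CMsh-preserves-prod-and-cone}, and match the open/compact condition using \cref{rmk:cmsh-via-compactifiation} and \cref{obs:redundancy-tame-con}. I would also note in passing that the compact case (\cref{defn:compact-comb-diag}) is handled identically, replacing $\II^n$ by $\bI^n$, $\OTT^k$ by corner trusses $\TT^\sigma$, and \cref{thm:tame-strat-comb} by its evident closed-cube analogue — this gives the parallel statement needed later.
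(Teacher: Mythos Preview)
Your proposal is correct and takes essentially the same approach as the paper: reduce to \cref{thm:tame-strat-comb} for the global bijection, then match framed conicality with combinatorial conicality pointwise using \cref{lem:NFTrs-preserves-prod-and-cone} and \cref{obs:CMsh-preserves-prod-and-cone}. The ``wrinkle'' you correctly flag --- that the truss neighborhood $(T^{\leq x}, f^{\leq x})$ really models a small open neighborhood and that this survives normalization --- is exactly the technical heart of the paper's argument, which it handles via an (epi,mono)-factorization of the composite $c \circ i$ of a subtruss inclusion and a coarsening, together with the observation that any open subtruss of a cone truss containing the cone point must be the whole thing.
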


\begin{rmk}[A higher categorical classification] There are also higher versions of this statement: for instance, the $\infty$-groupoid of manifold $n$-diagrams (modeled by the topological category of manifold diagrams and framed stratified maps that have weak inverses) is equivalent to the \emph{set} of combinatorial manifold $n$-diagrams.
\end{rmk}

\begin{proof}[Proof of \cref{thm:comb-mdiag}] By \cref{thm:tame-strat-comb}, mapping $(\II^n,f)$ to $\NFTrs(\II^n,f)$ injects framed stratified homeomorphism classes of manifold diagrams (see \cref{defn:mdiag}) into the set of normalized stratified trusses. To prove the theorem we need to show (1) that for each manifold diagram $f$, its fundamental stratified truss $\NFTrs f$ is combinatorially conical, and (2) that given a tame stratification $f$ such that $\NFTrs f$ is a combinatorial manifold diagram, then $f$ is framed transversal.

    We first prove (1). Let $(\II^n,f)$ be a manifold diagram, denote by $\iM^f$ the coarsest refining mesh of $f$ and set $(T,g) := \NFTrs f$. We want to show that $(T,g)$ is a combinatorial manifold diagram. Take $x \in T_n$. Take any point $z$ in the corresponding stratum $x$ of $\iM^f$. Choose a tame framed tubular neighborhood $\phi : \II^{k} \times \cone(l_z) \into f$ of $z$ (which exists because $f$ is a manifold diagram). Since the neighborhood is tame, there are mesh refinements $N \mshar \II^{k} \times \cone(l_z)$ and $M \mshar f$ such that $\phi$ descends to a stratified mesh map $\psi : (N,e) \to (M,f)$ (where we abbreviate $e \equiv \II^{k} \times \cone(l_z)$).
    Passing to trusses, this implies $i = \FTrs \psi : \FTrs (N,e) \into \FTrs (M,f)$ is a stratified subtruss. We have a stratified coarsening $c : (M,f) \epi \NF{\FTrs (M,f)} = (T,g)$ since $(T,g) = \FTrs (\iM^f,f)$ and $\iM^f$ is the coarsest refining mesh of $f$. The composite $c \circ i$ uniquely factors as
\[\begin{tikzcd}
	{\FTrs(M,f)} & {(T,g)} \\
	{\FTrs (N,e)} & {(S,h)}
	\arrow["i",from=2-1, to=1-1]
	\arrow[hook, from=2-2, to=1-2]
	\arrow["c",two heads, from=1-1, to=1-2]
	\arrow[two heads, from=2-1, to=2-2]
\end{tikzcd}\]
By uniqueness of normal forms observe $\NFTrs e = \NF{\FTrs (N,e)} = \NF{S,h}$. Since $e$ is a stratification of the form $\II^{k} \times \cone(l_z)$, and using \cref{lem:NFTrs-preserves-prod-and-cone}, observe that $\NFTrs e$ is of the form $\OTT^{k} \times (C,c)$ where $(C,c)$ is a stratified open cone truss. From this we obtain the commutative diagram:
\[\begin{tikzcd}
	{(S,h)} & {\OTT^{k} \times (C,c)} \\
    {(T^{\leq x},g^{\leq x})} & {(\tilde T^{\leq x},\tilde g^{\leq x})}
	\arrow[hook, from=2-1, to=1-1]
	\arrow[hook, from=2-2, to=1-2]
	\arrow[two heads, from=1-1, to=1-2]
	\arrow[two heads, from=2-1, to=2-2]
\end{tikzcd}\]
Observe that the right-hand subtruss must be an equality since it contains the cone point in its image. This verifies that $(T,g)$ is a combinatorial manifold diagram as claimed (see \cref{defn:comb-diag}).

It remains to show (2). Let $(T,g)$ be a combinatorial manifold $n$-diagram, and $(\II^n,f)$ a tame stratification with $(T,g) = \NFTrs(\II^n,f)$. We claim that $(\II^n,f)$ is tame framed conical (and thus a manifold $n$-diagram). Denote by $\iM^f$ the coarsest refining mesh of $f$. Let $z \in \II^n$. Then $z$ lies in a stratum $x$ of $\iM^f_n$. Since $T = \FTrs \iM^f$, consider $x$ as an element of $T_n$. Consider the neighborhood $\phi : T^{\leq x} \into T$ around $x$. By our choice of $x$, it is possible to choose the classifying submesh $\psi : N \into \iM^f$ of $\phi$ (i.e.\ $\FTrs \psi = \phi$) such that $\im(\psi)$ contains $z$. Coarsening this submesh by the stratified coarsening $\iM^f \to f$, $\psi$ now becomes a neighborhood $\psi^f : (\II^n,e) \into (\II^n,f)$. Since $\NF{T^{\leq x},g^{\leq x}}$ is of the form $\OTT^n \times (C_x,c_x)$, and since $\NFTrs e = \NF{T^{\leq x},g^{\leq x}}$, we can use \cref{obs:CMsh-preserves-prod-and-cone} to see that $\psi^f$ is in fact a tame framed tubular neighborhood as required in \cref{defn:mdiag}.
\end{proof}

\begin{term}[Fundamental and classifying diagrams] Given a manifold $n$-diagram $(\II^n,f)$, we call $\NFTrs (\II^n,f)$ the `fundamental combinatorial manifold diagram' of $f$. Conversely, we say $(\II^n,f)$ is a `classifying manifold diagram' of $(T,g) = \NFTrs (\II^n,f)$.
\end{term}

\begin{eg}[Combinatorializing manifold 2-diagrams] In \cref{fig:combinatorializations-of-manifold-2-diagrams} on the left, we depict (two) manifold 2-diagrams $f$; to their right, we depict their coarsest stratifying 2-mesh $M$. On the right, we depict the corresponding combinatorial manifold 2-diagram $(T,g) = \FTrs(M,f)$.
\begin{figure}[ht]
    \centering
    \def\svgwidth{1\columnwidth}
    \import{./figuresused/}{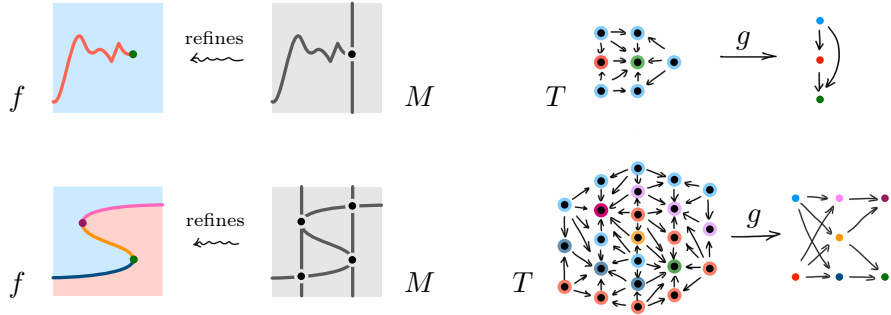}

    \caption{Combinatorializations of manifold 2-diagrams}
    \label{fig:combinatorializations-of-manifold-2-diagrams}
\end{figure}
\end{eg}

\begin{eg}[Combinatorializing a manifold 3-diagram] Consider the manifold 3-diagram $(\II^3,f)$ shown on the left in \cref{fig:combinatorialization-of-a-3-diagram}. A refining 3-mesh of this diagram (up to framed stratified homeomorphism) was given in \cref{fig:meshes-cellulate-manifold-diagrams}. On the right of \cref{fig:combinatorialization-of-a-3-diagram} we depict the corresponding combinatorial manifold 3-diagram $(T,g) = \NFTrs f$ (the full 3-truss $T_3 \to T_2 \to T_1$ was shown earlier in \cref{fig:combinatorial-manifold-3-diagram}).
\begin{figure}[ht]
    \centering
    \def\svgwidth{1\columnwidth}
\begingroup%
  \makeatletter%
  \providecommand\color[2][]{%
    \errmessage{(Inkscape) Color is used for the text in Inkscape, but the package 'color.sty' is not loaded}%
    \renewcommand\color[2][]{}%
  }%
  \providecommand\transparent[1]{%
    \errmessage{(Inkscape) Transparency is used (non-zero) for the text in Inkscape, but the package 'transparent.sty' is not loaded}%
    \renewcommand\transparent[1]{}%
  }%
  \providecommand\rotatebox[2]{#2}%
  \newcommand*\fsize{\dimexpr\f@size pt\relax}%
  \newcommand*\lineheight[1]{\fontsize{\fsize}{#1\fsize}\selectfont}%
  \ifx\svgwidth\undefined%
    \setlength{\unitlength}{2160bp}%
    \ifx\svgscale\undefined%
      \relax%
    \else%
      \setlength{\unitlength}{\unitlength * \real{\svgscale}}%
    \fi%
  \else%
    \setlength{\unitlength}{\svgwidth}%
  \fi%
  \global\let\svgwidth\undefined%
  \global\let\svgscale\undefined%
  \makeatother%
  \begin{picture}(1,0.30868056)%
    \lineheight{1}%
    \setlength\tabcolsep{0pt}%
    \put(0,0){\includegraphics[width=\unitlength,page=1]{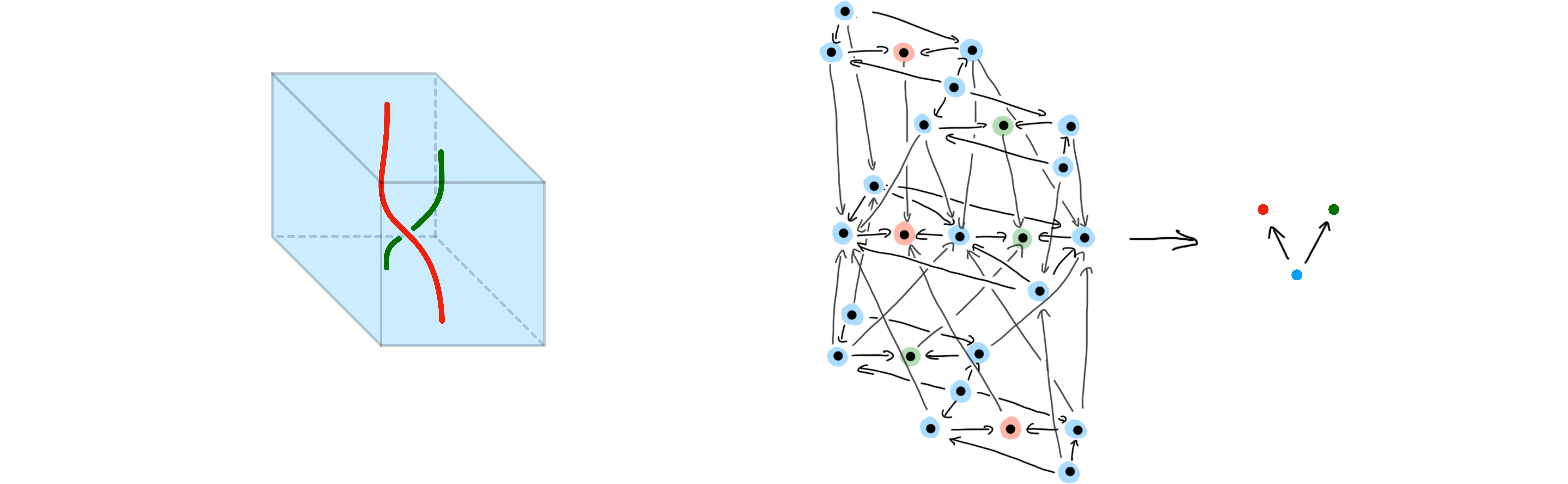}}%
    \put(0.17534722,0.09131944){\color[rgb]{0,0,0}\makebox(0,0)[lt]{\lineheight{1.25}\smash{\begin{tabular}[t]{l}$f$\end{tabular}}}}%
    \put(0.52361111,0.03020833){\color[rgb]{0,0,0}\makebox(0,0)[lt]{\lineheight{1.25}\smash{\begin{tabular}[t]{l}$T$\end{tabular}}}}%
    \put(0.73055556,0.17256944){\color[rgb]{0,0,0}\makebox(0,0)[lt]{\lineheight{1.25}\smash{\begin{tabular}[t]{l}$g$\end{tabular}}}}%
  \end{picture}%
\endgroup%

    \caption{Combinatorialization of a manifold 3-diagram}
    \label{fig:combinatorialization-of-a-3-diagram}
\end{figure}
\end{eg}

\subsection{The compact case}

Analogous to the open case, compact manifold diagrams and compact combinatorial manifold diagrams are related as follows.

\begin{thm}[Combinatorialization of compact manifold diagrams]\label{lem:class-mdiag-compact} Framed stratified homeomorphism classes of compact manifold $n$-diagrams are in 1-to-1 correspondence with compact combinatorial manifold $n$-diagrams: the correspondence takes $(\bI^n,f)$ to $\NFTrs (\bI^n,f)$.
\end{thm}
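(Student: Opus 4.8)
The plan is to mirror the proof of \cref{thm:comb-mdiag} almost verbatim, replacing the open-cube ingredients with their closed-cube analogues throughout. The backbone is the same: by \cref{thm:tame-strat-comb}, the assignment $(\bI^n,f) \mapsto \NFTrs(\bI^n,f)$ already gives an injection from framed stratified homeomorphism classes of tame stratifications of $\bI^n$ into normalized stratified trusses, so it suffices to check that this injection restricts to a bijection between the subclass of compact manifold $n$-diagrams (\cref{obs:compact-diagrams}) and the subclass of compact combinatorial manifold $n$-diagrams (\cref{defn:compact-comb-diag}). As before this splits into two implications: (1) if $(\bI^n,f)$ is a compact manifold diagram then $\NFTrs(\bI^n,f)$ is compact combinatorially conical, and (2) if $(T,g)$ is a compact combinatorial manifold diagram and $(\bI^n,f)$ is a tame stratification with $\NFTrs(\bI^n,f) = (T,g)$, then $f$ is tame compact framed conical.

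\textbf{Key steps.} First I would record the missing translation lemma: the closed-cube analogue of \cref{lem:NFTrs-preserves-prod-and-cone}, namely that for a corner $\II^\sigma = \II^{\sigma_1}\times\cdots\times\II^{\sigma_k}$ (\cref{term:corner-neighborhoods}) and a tame stratification $h$, one has $\NFTrs(\II^\sigma \times h) = \TT^\sigma \times \NFTrs h$ (\cref{term:corner-trusses}), together with the statement that $\NFTrs$ of a cone of a tame link is a stratified open cone truss, and the corresponding reflection statement (the analogue of \cref{obs:CMsh-preserves-prod-and-cone}). The product half of this follows exactly as in \cref{lem:NFTrs-preserves-prod-and-cone}: any mesh refining $\II^\sigma \times h$ restricts over each point of the $\II^\sigma$-factor to a mesh refining $h$, so $\II^\sigma \times \iM^h$ is the coarsest refining mesh of $\II^\sigma \times h$ (using that $\TT^\sigma$ with its evident framing is the fundamental truss of the coarsest mesh of $\II^\sigma$); the cone half and the reflection statement carry over from the open case unchanged since they are local at the cone point. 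With this in hand, implication (1) runs as in the proof of \cref{thm:comb-mdiag}: pick $x \in T_n$, a point $z$ in the corresponding stratum of $\iM^f$, a tame compact framed tubular neighborhood $\II^\sigma \times \cone(l_z) \into f$ of $z$, descend it to a stratified mesh map after choosing mesh refinements, pass to trusses, factor the composite $c \circ i$ of the coarsening $c$ and the subtruss $i$ through an (epi,mono)-factorization, and observe that the resulting subtruss into $\TT^\sigma \times (C,c)$ must be an equality because it contains the cone point in its image; hence $\NF{T^{\leq x},g^{\leq x}} = \TT^\sigma \times (C_x,c_x)$, which is exactly compact combinatorial conicality. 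Implication (2) likewise copies its open counterpart: given $z \in \bI^n$, find the stratum $x$ of $\iM^f_n$ containing $z$, take the truss neighborhood $T^{\leq x}\into T$, choose the classifying submesh whose image contains $z$, coarsen along $\iM^f \to f$, and use the reflection statement to identify the resulting neighborhood with $\II^\sigma \times \cone(l_z)$, confirming that $f$ is tame compact framed conical at $z$.

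\textbf{Expected obstacle.} The only genuinely new bookkeeping is at boundary and corner points of $\bI^n$: one must make sure that the corner trusses $\TT^\sigma$ and the corner neighborhoods $\II^\sigma$ are matched correctly (so that $\sigma$ on the topological side is the same $\sigma$ on the combinatorial side), and that the submesh $\psi : N \into \iM^f$ can always be chosen with $z$ in its image even when $z$ lies on $\partial\bI^n$; this requires knowing that coarsest refining meshes of tame stratifications of $\bI^n$ restrict well to the half-open intervals $\II^\sigma$, which is the content of the corner case of the product half of the translation lemma above. I expect this corner-compatibility check to be the main (though still routine) point; everything else is a direct transcription of the proof of \cref{thm:comb-mdiag}. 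Optionally, one could also deduce \cref{lem:class-mdiag-compact} from \cref{thm:comb-mdiag} via the compactification correspondence of \cref{obs:open-compact-comb-corr} together with the topological compactification construction, but since not every compact (combinatorial) manifold diagram is interior-compactifying, the direct argument is cleaner and is the one I would write up.
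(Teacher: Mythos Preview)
Your proposal is correct and follows exactly the approach the paper takes: the paper's proof is a one-liner stating that the proof of \cref{thm:comb-mdiag} carries over almost verbatim with evident adjustments for corner neighborhoods (referring to \cref{term:corner-neighborhoods} and \cref{term:corner-trusses}). You have simply spelled out those adjustments in detail, including the corner version of \cref{lem:NFTrs-preserves-prod-and-cone}, which is precisely what ``evident adjustments'' means here.
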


\begin{proof} The proof of \cref{thm:comb-mdiag} carries over to the compact case almost verbatim (with evident adjustments accounting for `corner neighborhoods', see \cref{term:corner-neighborhoods} and \cref{term:corner-trusses}).
\end{proof}

\begin{eg}[Combinatorialization of a compact manifold diagrams] In \cref{fig:combinatorialization-of-a-compact-manifold-diagrams} on the left we depict a compact manifold 2-diagram, together with its coarsest refining mesh $M$. On the right we depict the corresponding stratified closed truss $(T,g) = \FTrs(M,f)$ which is a compact combinatorial manifold 2-diagram.
\begin{figure}[ht]
    \centering
    \def\svgwidth{1\columnwidth}
\begingroup%
  \makeatletter%
  \providecommand\color[2][]{%
    \errmessage{(Inkscape) Color is used for the text in Inkscape, but the package 'color.sty' is not loaded}%
    \renewcommand\color[2][]{}%
  }%
  \providecommand\transparent[1]{%
    \errmessage{(Inkscape) Transparency is used (non-zero) for the text in Inkscape, but the package 'transparent.sty' is not loaded}%
    \renewcommand\transparent[1]{}%
  }%
  \providecommand\rotatebox[2]{#2}%
  \newcommand*\fsize{\dimexpr\f@size pt\relax}%
  \newcommand*\lineheight[1]{\fontsize{\fsize}{#1\fsize}\selectfont}%
  \ifx\svgwidth\undefined%
    \setlength{\unitlength}{2160bp}%
    \ifx\svgscale\undefined%
      \relax%
    \else%
      \setlength{\unitlength}{\unitlength * \real{\svgscale}}%
    \fi%
  \else%
    \setlength{\unitlength}{\svgwidth}%
  \fi%
  \global\let\svgwidth\undefined%
  \global\let\svgscale\undefined%
  \makeatother%
  \begin{picture}(1,0.12638889)%
    \lineheight{1}%
    \setlength\tabcolsep{0pt}%
    \put(0,0){\includegraphics[width=\unitlength,page=1]{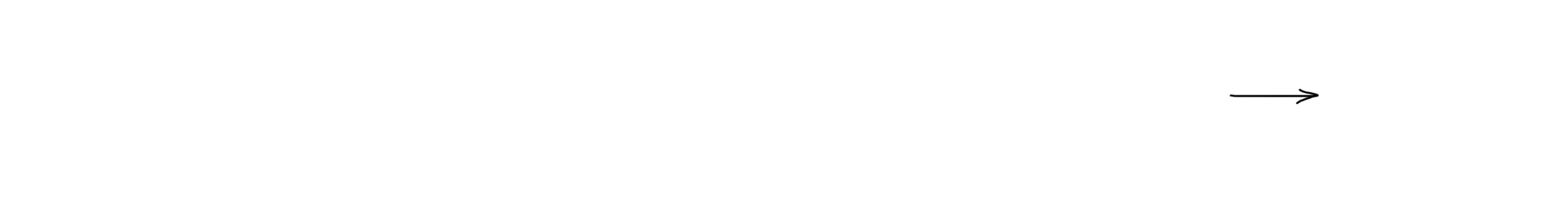}}%
    \put(0.09895833,0.01145833){\color[rgb]{0,0,0}\makebox(0,0)[lt]{\lineheight{1.25}\smash{\begin{tabular}[t]{l}$f$\end{tabular}}}}%
    \put(0.29479167,0.01145833){\color[rgb]{0,0,0}\makebox(0,0)[lt]{\lineheight{1.25}\smash{\begin{tabular}[t]{l}$M$\end{tabular}}}}%
    \put(0.57465278,0.01145833){\color[rgb]{0,0,0}\makebox(0,0)[lt]{\lineheight{1.25}\smash{\begin{tabular}[t]{l}$T$\end{tabular}}}}%
    \put(0.80034722,0.07708333){\color[rgb]{0,0,0}\makebox(0,0)[lt]{\lineheight{1.25}\smash{\begin{tabular}[t]{l}$g$\end{tabular}}}}%
    \put(0,0){\includegraphics[width=\unitlength,page=2]{combinatorialization-of-a-compact-manifold-diagrams.pdf}}%
  \end{picture}%
\endgroup%

    \caption{Combinatorialization of a compact manifold diagram}
    \label{fig:combinatorialization-of-a-compact-manifold-diagrams}
\end{figure}
\end{eg}

Having `combinatorialized' topological manifold diagrams, we may now describe the relation of compact and open topological manifold diagrams using our combinatorial construction of compactifications.

\begin{obs}[Relation of open and compact manifold diagrams] \label{constr:compactifying-diagrams} Given a compact manifold diagram $(\bI^n,f)$, its `interior' $(\II^n,f\intrr)$ is the substratification of $f$ on $\II^n \subset \bI^n$. One checks that $(\II^n,f\intrr)$ is in fact an open manifold diagram. The converse is slightly more subtle in the topological case. Given an open manifold diagram $(\II^n,f)$ it might not make sense to `compactify it on the nose' since manifold strata of $f$ in $\II^n$ need not continuously extend to $\bI^n$---however, they always do so up to framed stratified homeomorphism. Indeed, we may define the  `cubical compactification' of $f$ to be a compact manifold diagram $(\bI^n,\overline f)$ (up to stratified homeomorphism) such that $\NFTrs \overline f = \overline {\NFTrs f}$ (the latter being the cubical compactification of $\NFTrs f$, see \cref{obs:open-compact-comb-corr}). If a compact manifold diagram cubically compactifies its interior in this way we say it is `interior-compactifying'.
\end{obs}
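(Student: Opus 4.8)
The plan is to deduce the observation from the classification results already in hand --- \cref{thm:comb-mdiag} for open manifold diagrams, \cref{lem:class-mdiag-compact} for compact ones, and the combinatorial open/compact dictionary of \cref{obs:open-compact-comb-corr} --- together with one bridging lemma: that $\NFTrs$ intertwines the topological operation of passing to the interior of a compact manifold diagram with the combinatorial operation of passing to the interior of a closed stratified truss (\cref{term:interior-compactifying}), i.e.\ $\NFTrs(\II^n,f\intrr) = \bigl(\NFTrs(\bI^n,f)\bigr)\intrr$ for every compact manifold diagram $(\bI^n,f)$. Once that lemma is available, both halves of the observation are essentially formal.

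First I would dispose of the easy directions topologically. Given a compact manifold diagram $(\bI^n,f)$, the cube boundary $\partial\bI^n$ is a union of strata of the coarsest refining mesh $\iM^f \mshar f$ (a closed $n$-mesh on $\bI^n$), so the interior of $\iM^f$ is again an $n$-mesh and refines $f\intrr$; hence $f\intrr$ is tame. Moreover, for any interior point $x \in \II^n$, tame compact framed conicality of $f$ supplies a tame framed tubular neighborhood $\II^\sigma \times \cone(l_x)$ with $x \in \II^\sigma \times \{0\}$; since all coordinates of $x$ lie strictly inside $(-1,1)$, each half-open factor of $\II^\sigma$ may be shrunk to an open subinterval around the corresponding coordinate of $x$ and the cone factor rescaled, yielding a tame framed tubular neighborhood $\II^k \times \cone(l_x) \into (\II^n,f\intrr)$ of the form required in \cref{defn:framed-conical}. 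Thus $(\II^n,f\intrr)$ is a manifold diagram.

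For the compactification direction, I would transport along the combinatorics. Given an open manifold diagram $(\II^n,f)$, \cref{thm:comb-mdiag} makes $\NFTrs(\II^n,f)$ an open combinatorial manifold diagram, and part (2) of \cref{obs:open-compact-comb-corr} makes its cubical compactification $\overline{\NFTrs f}$ a compact combinatorial one. By \cref{lem:class-mdiag-compact} there is then a compact manifold diagram $(\bI^n,\overline f)$, unique up to framed stratified homeomorphism, with $\NFTrs(\bI^n,\overline f) = \overline{\NFTrs f}$; this is the cubical compactification of $f$. Applying the previous paragraph to $(\bI^n,\overline f)$ and then the bridging lemma gives $\NFTrs(\II^n,\overline f\intrr) = (\overline{\NFTrs f})\intrr = \NFTrs f$, the last equality by \cref{term:interior-compactifying}; so by injectivity of $\NFTrs$ on framed stratified homeomorphism classes (\cref{thm:comb-mdiag}) the interior of $\overline f$ recovers $(\II^n,f)$. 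In particular $(\bI^n,\overline f)$ is interior-compactifying, and the 1-to-1 correspondence of \cref{obs:open-compact-comb-corr} lifts to topological manifold diagrams.

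The main obstacle is the bridging lemma $\NFTrs(f\intrr) = (\NFTrs f)\intrr$, which I would prove in the style of \cref{lem:NFTrs-preserves-prod-and-cone}. The content is that the interior of the coarsest refining mesh $\iM^f \mshar f$ --- which makes sense since $\partial\bI^n$ is a union of strata of $\iM^f$ --- is the coarsest refining mesh of $f\intrr$; the only non-formal point is coarsest-ness, which follows from the universal property in \cref{thm:coar-ref-mesh} once one checks that every mesh refinement of $f\intrr$ extends, via cubical compactification, to a mesh refinement of $f$. Passing to fundamental trusses and comparing with the explicit construction of classifying meshes through truss compactification in \cref{rmk:cmsh-via-compactifiation} then identifies $\FTrs$ of the restricted mesh with the combinatorial interior $(\NFTrs f)\intrr$. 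The corner-neighborhood and corner-truss bookkeeping (\cref{term:corner-neighborhoods}, \cref{term:corner-trusses}) is exactly that already used for \cref{lem:class-mdiag-compact}.
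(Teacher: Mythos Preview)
Your proposal is correct and aligns with the paper's intent. Note, however, that the paper does not supply a separate proof for this observation: it is stated as a definitional remark, with the interior claim left as ``One checks that\ldots'' and the compactification simply \emph{defined} through the combinatorial side via \cref{obs:open-compact-comb-corr} and \cref{lem:class-mdiag-compact}. Your write-up is thus a reasonable elaboration of what the paper leaves to the reader, and the route you take---direct topological verification of tameness and framed conicality for the interior, transport along $\NFTrs$ for the compactification---is exactly the one the surrounding material suggests.

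Your bridging lemma $\NFTrs(f\intrr) = (\NFTrs f)\intrr$ is a sensible addition, and the paper implicitly relies on its combinatorial shadow (that the interior of a compact combinatorial manifold diagram is normalized) in \cref{obs:open-compact-comb-corr}(1), which is likewise asserted without proof. One small caution in your sketch: the step ``every mesh refinement of $f\intrr$ extends, via cubical compactification, to a mesh refinement of $f$'' is not quite right as stated, since an arbitrary open mesh refining $f\intrr$ need not compactify to something refining the boundary strata of $f$. It is cleaner to argue the other way: $(\iM^f)\intrr$ is an open mesh refining $f\intrr$, and any coarsening of $(\iM^f)\intrr$ that still refines $f\intrr$ would, after compactification, yield a coarsening of $\iM^f$ still refining $f$---contradicting coarsest-ness of $\iM^f$. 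This reversed direction avoids the extension problem entirely.
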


\begin{eg}[Interior-compactifying diagrams] In \cref{fig:interior-compactifying-diagram} we depict two compact topological manifold diagrams: the left does not cubically (and in fact, neither retractably) compactify its interior while the right one does.
\begin{figure}[ht]
    \centering
    \def\svgwidth{1\columnwidth}
    \import{./figuresused/}{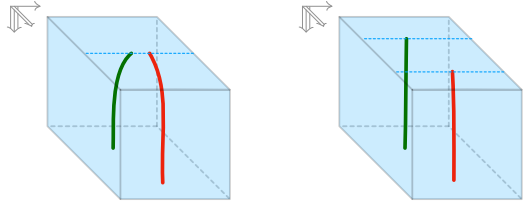}

    \caption{A non-interior-compactifying and an interior-compactifying diagram}
    \label{fig:interior-compactifying-diagram}
\end{figure}
\end{eg}


\subsection{Canonicity of links}

As an application of the preceding two theorems, we observe that links in manifold diagrams can be canonically chosen and are well-defined up to stratified homeomorphism (recall from \cref{rmk:links-well-def-in-mdiag} that generally this need not be the case in topological conical stratifications). We argue in the open case; the compact case is analogous.

\begin{constr}[Canonical links] \label{rmk:links-are-well-defined} Consider a manifold diagram $(\II^n,f)$ and a stratum $s$ of $f$. The \textbf{canonical link} $\link_s$ of $s$ can be constructed as follows. Set $(T,g) := \NFTrs f$.
    Take any $x \in T_n$ with $g(x) = s$ (using $\Entr(g) = \Entr(f)$). Since $(T,g)$ is a combinatorial manifold diagram, the normal form $\NF{T^{\leq x},g^{\leq x}}$ is of the form $\OTT^k \times (C_x,c_x)$. Compactify $(C_x,c_x)$ and obtain the closed stratified mesh $(N,e) = \CMsh (\overline C_x, \overline c_x)$. Restrict the stratification $(\bI^n,e)$ to $\partial \bI^n$ to construct the canonical link $(\partial \bI^n, \link_s)$.
    Up to framed stratified homeomorphism, the construction is independent of any choices made. One shows that each point $z \in s$ has a framed tubular neighborhood of the form $\II^k \times \cone(\link_s)$.
\end{constr}


\begin{rmk}[All links are stratified homeomorphic] \label{rmk:links-are-strat-homeo} Given a manifold diagram $(\II^n,f)$, a stratum $s$ of $f$, and $z \in s$, pick any tame tubular neighborhood $\II^k \times \cone(l_z)$ with tame link $l_z$. One can show that $\overline\cone(l_z)$ is a retractable compactification of $\cone(l_z)$ (in the sense that $\FTrs \overline\cone(l_z)$ is a retractable compactification of $\FTrs \cone(l_z)$): a retraction can be produced by down-scaling the closed cone (cf.\ the proof of \cref{lem:NFTrs-preserves-prod-and-cone}(2)). Using that $\NFTrs \cone(l_z) \iso \NFTrs \cone(\link_s)$, one can produce a stratified homeomorphism between $l_z$ and $\link_s$---thus all choices of links $l_z$ are stratified homeomorphic. However, if $\overline\cone(l_z)$ non-cubically compactifies its interior, there need not be a \emph{framed} stratified homeomorphism between the two. An example of this situation can be given by topological analog of the retractable compactification in \cref{fig:compactification-of-a-stratified-truss}.
\end{rmk}

We end this section with the remark that more generally manifold diagrams can be studied in bundles.

\begin{rmk}[Manifold diagram bundles] Just as tame stratifications can be considered in bundles (see \cref{rmk:tame-bundles}), so can manifold diagrams. We will not spell this out here, but details are straight-forward to work out, and inspiration can be taken from the (closely related) later discussion of tangle bundles in \cref{ch:tangles-and-sings}.
\end{rmk}

\section{Cell diagrams} \label{sec:cell-diagrams}

In this section we discuss the geometric duals of manifold diagrams: this will lead us to a very general class of diagrams of directed cells. We discuss how they can be thought of as `pasting diagram' (where we use the term with its familiar, but informal, meaning from higher category theory).

\subsection{Topological and combinatorial definitions}

Recall the duality of truss and meshes (see \cref{defn:dual-n-trusses} and \cref{defn:dual-n-meshes}). This extends to the case of stratified trusses and tame stratifications as follows.

\begin{defn}[Duality for stratified trusses] Given a stratified $n$-truss $(T,f)$, its \textbf{dual stratified truss} $(T,g)^\dagger$ is the stratified $n$-truss $(T^\dagger, f^\dagger)$ where $f^\dagger := f\op : T_n\op \to \Entr(f)\op$ is the dual of $f$.
\end{defn}

\begin{defn}[Duality for tame stratifications] Given tame stratification $(X,f)$ and $(Y,g)$ we say $g$ is a \textbf{dual} of $f$ if $\NFTrs g = (\NFTrs f)^\dagger$.
\end{defn}

\nid Applying this to the notion of manifold diagrams yields the following.

\begin{defn}[Cell diagrams] A \textbf{cell $n$-diagram} $(X,f)$ is a tame stratification that is dual to a manifold $n$-diagram.
\end{defn}

\begin{eg}[Cell diagrams] We illustrate four cell diagrams in the upper row of \cref{fig:pasting-diagrams-and-their-dual-manifold-diagrams}. The third example illustrates that the underlying $n$-framed space $X$ of a cell $n$-diagram $(X,f)$ need not be homeomorphic to the closed cube. Note, the fourth example is a cell 3-diagram consisting of two 3-cells, glued together as indicated to the right (note, the cells are 3-dimensional balls: we highlight this by indicating a cross-sections of the respective cells). Underneath each cell diagram we illustrate a dual manifold diagrams: in particular, the fourth example recovers our earlier example of the braid homotopy.
\begin{figure}[ht]
    \centering
    \def\svgwidth{1\columnwidth}
    \import{./figuresused/}{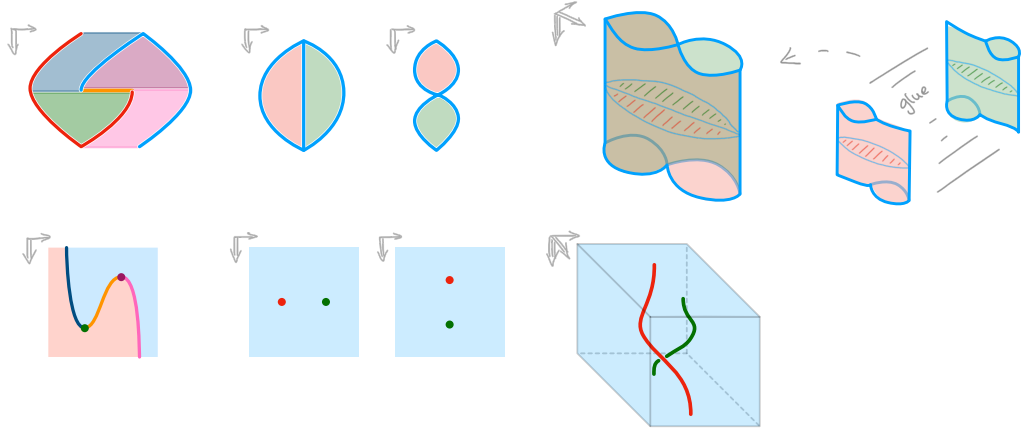}

    \caption{Cell diagrams and their dual manifold diagrams}
    \label{fig:pasting-diagrams-and-their-dual-manifold-diagrams}
\end{figure}
\end{eg}

\begin{obs}[Duality between manifold and cell diagrams] \label{cor:duality-m-p-diag} Framed stratified homeomorphisms classes of manifold $n$-diagrams and cell $n$-diagrams correspond bijectively by dualization.
\end{obs}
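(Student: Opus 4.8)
The plan is to leverage the machinery already assembled, since at this point the statement is essentially a formal consequence of the combinatorialization theorems. Recall from \cref{thm:comb-mdiag} that framed stratified homeomorphism classes of manifold $n$-diagrams correspond bijectively to combinatorial manifold $n$-diagrams via $(\II^n,f) \mapsto \NFTrs(\II^n,f)$, and by definition a cell $n$-diagram is a tame stratification dual to a manifold $n$-diagram, so that framed stratified homeomorphism classes of cell $n$-diagrams correspond (by \cref{thm:tame-strat-comb}) to normalized stratified trusses of the form $(\NFTrs f)^\dagger$ for $f$ a manifold diagram. So the claim reduces to showing that the truss dualization functor $\dagger : \truss n \to \truss n$ from \cref{defn:dual-n-trusses}, restricted to combinatorial manifold diagrams, is a well-defined involutive bijection onto its image; the image is then by definition the class of normalized stratified trusses classifying cell diagrams.

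First I would check that dualization sends normalized stratified trusses to normalized stratified trusses. This is where a small argument is needed: one must verify that if no non-identity truss coarsening applies to $(T,f)$, then no non-identity truss coarsening applies to $(T^\dagger,f^\dagger)$. Using the combinatorial characterization of truss coarsenings (see \cref{term:truss-coarsening}) as surjective cocellular maps (with the endpoint-dimension condition), and the fact noted after \cref{term:1-truss-maps} that $\dagger$ interchanges cellular and cocellular maps, a coarsening of $(T^\dagger,f^\dagger)$ would dualize to a surjective \emph{cellular} map out of $(T,f)$; one then observes that a normalized stratified truss also admits no non-identity such map, because normal forms are characterized geometrically via coarsest refining meshes (see \cref{obs:normal-form-vs-crs-ref-mesh}) and the dual mesh construction (\cref{defn:dual-n-meshes}) is itself a weak equivalence compatible with $\FTrs$. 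Concretely: $(T,f)$ normalized means $\CMsh(T,f) \mshar f$ is coarsest refining; dualizing via \cref{defn:dual-n-meshes} and using that $\FTrs \circ \dagger = \dagger \circ \FTrs$ shows $\CMsh((T,f)^\dagger)$ is the coarsest refining mesh of its stratification, hence $(T,f)^\dagger$ is normalized.

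Next I would assemble the bijection. Since $\dagger$ is an involution on $\truss n$ (it is built from the involution $\dagger : \kT^1 \iso (\kT^1)\op$ on truss bordisms, iterated, together with $T \mapsto T\op$), it restricts to an involution on the subset of normalized stratified trusses. The subset of normalized stratified trusses classifying manifold diagrams is carved out by the combinatorial conicality condition of \cref{defn:comb-diag}; its image under $\dagger$ is \emph{by definition} the subset classifying cell diagrams. Hence $\dagger$ restricts to a bijection from {manifold-diagram trusses} to {cell-diagram trusses}, and composing with the two instances of \cref{thm:comb-mdiag} resp.\ \cref{thm:tame-strat-comb} (the latter applied to the subclass of tame stratifications that happen to be cell diagrams) produces the desired bijection on framed stratified homeomorphism classes. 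One should also remark that this bijection is realized geometrically by passing to dual meshes: given a manifold diagram $f$ with coarsest refining mesh $\iM^f$, the stratification underlying $\CMsh\big((\NFTrs f)^\dagger\big)$, equivalently the classifying stratification of the dual mesh $(\iM^f)^\dagger$ (via \cref{defn:dual-n-meshes}), is a cell diagram dual to $f$, and conversely.

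The main obstacle I anticipate is the verification that dualization preserves the \emph{normalized} property at the level of stratified trusses — that is, the interchange of the coarsening conditions under $\dagger$ together with the subtle endpoint-dimension clause in \cref{term:truss-coarsening}. Everything else (involutivity of $\dagger$, the definitional nature of the cell-diagram class, invoking \cref{thm:comb-mdiag}) is bookkeeping. A secondary, more cosmetic point is that one should confirm the \emph{topological} side matches: that $\NFTrs$ of any tame stratification dual to a manifold diagram genuinely lands in normalized trusses of the stated form — but this is immediate from the definition of ``dual of a tame stratification'' combined with \cref{thm:tame-strat-comb}, so no real work is hidden there.
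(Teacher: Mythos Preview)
The paper records this as an Observation without proof, immediately after \emph{defining} a cell diagram to be a tame stratification dual to a manifold diagram. With that definition and the involutivity of $\dagger$ on stratified trusses, the correspondence is nearly tautological: $\dagger$ is injective on combinatorial manifold diagrams, and its image is by definition the class of stratified trusses underlying cell diagrams. Your proposal is more careful than the paper and correctly isolates the only substantive point: for the map even to be defined on a given manifold diagram $f$, one needs $(\NFTrs f)^\dagger$ to be normalized, so that some tame $g$ with $\NFTrs g = (\NFTrs f)^\dagger$ exists.

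However, both of your arguments for this step are circular. Dualization does exchange cellular and cocellular maps, so a coarsening of $(T^\dagger,f^\dagger)$ indeed dualizes to a surjective \emph{cellular} map out of $(T,f)$ --- but normalization (\cref{defn:normal-forms}, \cref{term:truss-coarsening}) only forbids non-identity \emph{cocellular} coarsenings, so nothing follows from $(T,f)$ being normalized. Your mesh-duality argument then simply asserts that the dual of a coarsest refining mesh is again coarsest refining, which is the claim in question. Your caution is in fact warranted: the paper's parenthetical $\NF{(T,g)^\dagger} = \NF{T,g}^\dagger$, stated just after this Observation, fails for arbitrary stratified trusses. The open $1$-truss with dimension pattern $(1,0,1,0,1)$ and labels $(a,b,b,b,c)$ is normalized (neither $(1,0,1)$-triple is monochrome), yet its dual closed $1$-truss has the monochrome $(1,0,1)$-triple $(e_2,e_3,e_4)$ and hence admits a nontrivial coarsening. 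This example is not combinatorially conical --- the cone-point condition of \cref{defn:strat-truss-cone} fails at $e_2$ --- and one expects the conicality hypothesis to exclude exactly these configurations; but neither you nor the paper actually verifies that.
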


One can similarly define combinatorial cell diagrams as those stratified trusses that are dual to combinatorial manifold diagrams. We take a different path: based on the observation that normalization is compatible with dualization (that is, $\NF{(T,g)^\dagger} = \NF{T,g}^\dagger$), we dualize the condition of combinatorial conicality and use it to define combinatorial cell diagrams.

\begin{term}[Stratified truss closures] Given a stratified $n$-truss $(T,f)$, the `stratified truss closure' $(T^{\geq x},f^{\geq x})$ is the stratified subtruss of $(T,f)$ whose support is the upper closure $T^{\geq x}$ of $x$. This is dual to the notion of `stratified truss neighborhoods' from \cref{defn:strat-truss-nbhd}.
\end{term}

\begin{term}[Stratified facet truss] A `closed facet $n$-truss' $(T,f)$ is a closed stratified $n$-truss with minimal element $\bot$ in $T_n$ such that $\dim(\bot) = n$ and $\{\bot\}$ is its own stratum. This is dual to the notion of `open cone trusses' from \cref{defn:truss-cones}.
\end{term}

\begin{term}[The truss point] The `closed 1-truss point' $\CTT^1$ is the closed 1-truss with one element. The `closed $k$-truss point' $\CTT^k$ is the closed $k$-truss obtained as the $k$-fold product truss $\CTT^1 \times \CTT^1 \times ... \times \CTT^1$. This is dual to the notion of `open $k$-truss cube' from \cref{term:open-cube-truss} (note that $(\CTT^k)^\dagger = \OTT^k$).
\end{term}

\nid Combinatorial conicality now dualizes to the following condition.

\begin{defn}[Combinatorially facetal stratified trusses] A closed $n$-truss $(T,f)$ is \textbf{combinatorially facetal at $x \in T_n$} if the stratified truss closure of $x$ normalizes to a product of a truss point and a truss facet, that is, there is $0 \leq k \leq n$ and a closed facet $(n-k)$-truss $(D_x,d_x)$ such that:
\[
	\NF{ T^{\geq x}, f^{\geq x} } = \CTT^{k} \times (D_x,d_x).
\]
We say $(T,f)$ is \textbf{combinatorially facetal} if it is so at all $x \in T_n$.
\end{defn}

\begin{defn}[Combinatorial cell diagrams] \label{defn:comb-past-diag} A \textbf{combinatorial cell $n$-diagram} $(T,f)$ is a normalized closed stratified $n$-truss that is combinatorially facetal.
\end{defn}

\begin{rmk}[A self-contained definition of cell diagrams] With enough care it is equally possible to define (topological) cell diagrams in self-contained topological terms, using an appropriate topological notions of `framed facetal' tame stratification.
\end{rmk}

\begin{obs}[Duality of combinatorial manifold and cell diagrams] \label{obs:duality-diagrams-comb} A stratified $n$-truss $(T,f)$ is a combinatorial cell diagram if and only if its dual $(T,f)^\dagger$ is a combinatorial manifold diagram.
\end{obs}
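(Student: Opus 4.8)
The statement to prove is \cref{obs:duality-diagrams-comb}: a stratified $n$-truss $(T,f)$ is a combinatorial cell diagram if and only if its dual $(T,f)^\dagger$ is a combinatorial manifold diagram.

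The plan is to unwind both conditions and match them term by term using the self-duality of the combinatorial machinery. The definition of combinatorial cell diagram (\cref{defn:comb-past-diag}) asks that $(T,f)$ be (i) normalized, (ii) closed, and (iii) combinatorially facetal; the definition of combinatorial manifold diagram (\cref{defn:comb-diag}) asks that $(T,f)^\dagger$ be (i$'$) normalized, (ii$'$) open, and (iii$'$) combinatorially conical. I would establish the three correspondences (i)$\leftrightarrow$(i$'$), (ii)$\leftrightarrow$(ii$'$), (iii)$\leftrightarrow$(iii$'$) separately, each as a short lemma-style paragraph. For (ii): by \cref{defn:dual-n-trusses} the dualization functor $\dagger$ sends closed $n$-trusses to open $n$-trusses and vice versa (this is noted right after \cref{defn:dual-n-trusses} in the $n$-truss case, and after \cref{defn:dual-bundle}/\cref{term:1-truss-maps} in the $1$-truss case), and $\dagger$ is involutive, so $(T,f)$ is closed iff $(T,f)^\dagger$ is open. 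For (i): the remark cited in the paragraph before \cref{term:stratified-truss-closures} (that normalization commutes with dualization, $\NF{(T,g)^\dagger} = \NF{T,g}^\dagger$) shows $(T,f)$ is normalized (equals its own normal form) iff $(T,f)^\dagger$ is normalized; alternatively one observes that truss bundle coarsenings dualize to truss bundle coarsenings (the dual of a surjective cocellular dimension-endpoint-preserving map is again one, via \cref{term:truss-coarsening}), so the poset of coarsenings of $(T,f)$ is anti-isomorphic to that of $(T,f)^\dagger$ and in particular one has no non-identity coarsenings iff the other does.

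The heart of the argument is (iii)$\leftrightarrow$(iii$'$), the duality between combinatorial facetality and combinatorial conicality. Fix $x \in T_n$; note $T_n = (T^\dagger)_n$ as sets, so $x$ is also an object of the top poset of the dual. First, the stratified truss closure $(T^{\geq x}, f^{\geq x})$ was by design (see the paragraph introducing it) defined to be dual to the stratified truss neighborhood, so $(T^{\geq x}, f^{\geq x})^\dagger = ((T^\dagger)^{\leq x}, (f^\dagger)^{\leq x})$ (using $(\leq\op) = (\geq)$ on the underlying poset, matching \cref{defn:strat-truss-nbhd}). Second, by the normalization-dualization compatibility, $\NF{T^{\geq x}, f^{\geq x}}^\dagger = \NF{(T^\dagger)^{\leq x}, (f^\dagger)^{\leq x}}$. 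Third, the closed facet truss and truss point notions are defined to be dual to open cone trusses and truss cubes respectively: by the paragraph introducing \cref{term:stratified-facet-truss} a closed facet $m$-truss dualizes to a stratified open cone $m$-truss and conversely (this is exactly the statement that $\dim(\bot) = m$ with $\{\bot\}$ its own stratum dualizes to $\dim(\top)=0$ with $\{\top\}$ its own stratum, since $\dim\op$ swaps $0$ and $m$ in the relevant fiber, and since the stratum condition is self-dual), and $(\CTT^k)^\dagger = \OTT^k$ (noted in \cref{term:the-truss-point}). Therefore the equation $\NF{T^{\geq x}, f^{\geq x}} = \CTT^k \times (D_x, d_x)$ holds for some closed facet $(n-k)$-truss $(D_x,d_x)$ if and only if, dualizing and using that $\dagger$ distributes over the truss product (which I should check: dualizing a product truss $\CTT^k \times (D_x,d_x)$, built by concatenating towers of $1$-truss bundles, yields the concatenation of the dual towers, i.e.\ $\OTT^k \times (D_x,d_x)^\dagger$ — this follows from \cref{defn:dual-n-trusses} applied levelwise), we get $\NF{(T^\dagger)^{\leq x}, (f^\dagger)^{\leq x}} = \OTT^k \times (C_x, c_x)$ with $(C_x,c_x) := (D_x,d_x)^\dagger$ a stratified open cone $(n-k)$-truss. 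This is precisely combinatorial conicality of $(T^\dagger, f^\dagger)$ at $x$. Quantifying over all $x \in T_n = (T^\dagger)_n$ gives (iii)$\leftrightarrow$(iii$'$).

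The main obstacle I anticipate is purely bookkeeping: verifying carefully that $\dagger$ commutes with the truss product $\CTT^k \times (-)$ and with passing to stratified truss closures, and that the stratum-condition in the definitions of closed facet truss and stratified open cone truss really is preserved under dualization (the dimension conditions clearly swap correctly under $\dim \mapsto \dim\op$ on $[m] \iso [m]\op$, but one must confirm the stratification $f^\dagger = f\op$ has $\{\bot\}$ resp.\ $\{\top\}$ as its own stratum exactly when $f$ does — this is immediate since $f^\dagger$ has the same fibers as $f$ up to reversing the order on $\Entr(f)$, so $(f^\dagger)\inv\circ f^\dagger(\bot) = f\inv \circ f(\bot)$ as subsets of $T_n$). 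None of this is deep, but it is the place where an error could hide, so I would state each of these compatibilities explicitly before assembling the final equivalence. Once these are in hand, the proof is the short conjunction: $(T,f)$ normalized, closed, combinatorially facetal $\iff$ $(T,f)^\dagger$ normalized, open, combinatorially conical, which is exactly the claim.
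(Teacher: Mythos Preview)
Your proposal is correct and takes essentially the same approach as the paper. The paper states this observation without proof, having deliberately set up the definition of combinatorial cell diagrams by dualizing each ingredient (truss closure, facet truss, truss point) of the combinatorial manifold diagram definition; your term-by-term matching of (i)$\leftrightarrow$(i$'$), (ii)$\leftrightarrow$(ii$'$), (iii)$\leftrightarrow$(iii$'$) via the compatibility $\NF{(T,g)^\dagger} = \NF{T,g}^\dagger$ and the explicit dualities $(\CTT^k)^\dagger = \OTT^k$, closed facet $\leftrightarrow$ open cone, is exactly the verification the paper leaves implicit.
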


\begin{obs}[Correspondence of cell diagram and combinatorial cell diagrams] A tame stratification $(X,f)$ is a cell $n$-diagram if and only if $\NFTrs f$ is a combinatorial cell $n$-diagram.
\end{obs}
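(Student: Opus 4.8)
The plan is to unwind the two definitions involved (``cell $n$-diagram'' $=$ tame stratification dual to a manifold $n$-diagram, and ``$g$ dual to $f$'' $=$ $\NFTrs g = (\NFTrs f)^\dagger$) and chain them with three already-established facts: the combinatorialization of manifold diagrams (\cref{thm:comb-mdiag}), the duality between combinatorial manifold and combinatorial cell diagrams (\cref{obs:duality-diagrams-comb}), and the compatibility of normalization with dualization. First I would record the purely formal preliminaries: since $(X,f)$ is assumed tame, $\NFTrs f$ is a well-defined normalized stratified $n$-truss by \cref{thm:tame-strat-comb}; and $\dagger$ is an involution on stratified $n$-trusses that preserves normal forms, because for normalized $(T,g)$ one has $\NF{(T,g)^\dagger} = \NF{T,g}^\dagger = (T,g)^\dagger$. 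This is what licenses moving freely between $\NFTrs$ of dual tame stratifications and $\dagger$ of their normalized fundamental trusses.

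For the forward implication, suppose $(X,f)$ is a cell $n$-diagram. By definition there is a manifold $n$-diagram $(Y,g)$ with $f$ dual to $g$, i.e.\ $\NFTrs f = (\NFTrs g)^\dagger$. By \cref{thm:comb-mdiag}, $\NFTrs g$ is a combinatorial manifold $n$-diagram; then by \cref{obs:duality-diagrams-comb} (applied to the stratified truss $(\NFTrs g)^\dagger$, whose own dual is $\NFTrs g$) its dual $(\NFTrs g)^\dagger = \NFTrs f$ is a combinatorial cell $n$-diagram. Conversely, suppose $\NFTrs f$ is a combinatorial cell $n$-diagram. Then $(\NFTrs f)^\dagger$ is a combinatorial manifold $n$-diagram by \cref{obs:duality-diagrams-comb}, so by \cref{thm:comb-mdiag} there is a manifold $n$-diagram $(Y,g)$ with $\NFTrs g = (\NFTrs f)^\dagger$; applying $\dagger$ and using involutivity gives $\NFTrs f = (\NFTrs g)^\dagger$, which says exactly that $f$ is dual to the manifold diagram $g$, hence $(X,f)$ is a cell $n$-diagram.

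I expect no serious obstacle here: once the definitions are spelled out, the statement is a two-line diagram chase through \cref{thm:comb-mdiag} and \cref{obs:duality-diagrams-comb}. The only point that genuinely requires (and has) justification upstream is that normalization commutes with dualization, so that the dual of a normalized stratified truss is again normalized and $\NF{(\cdot)^\dagger} = \NF{(\cdot)}^\dagger$ — without this one could not identify $(\NFTrs f)^\dagger$ with $\NFTrs$ of a dual stratification, and the equivalence would be stated only up to a further normalization step.
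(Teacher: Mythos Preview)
Your proposal is correct and takes essentially the same approach as the paper. The paper states this as an Observation without explicit proof, since it follows immediately by chaining the definition of cell diagram, the definition of duality for tame stratifications, \cref{thm:comb-mdiag}, \cref{obs:duality-diagrams-comb}, and the compatibility $\NF{(T,g)^\dagger} = \NF{T,g}^\dagger$ noted just before \cref{defn:comb-past-diag}; your writeup simply spells out that chain.
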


\subsection{On the categorical meaning of cell diagrams} \label{ssec:cell-diag-interpretation}

We briefly outline how to think about cell diagrams in more familiar higher categorical terms, as `pasting diagrams of higher morphisms'.

\begin{term}[Non-degenerate and degenerate cells, and cell shapes] Consider a combinatorial cell diagram $(T,f)$, take an object $x \in T_n$ with $\dim(x) = k$. We say $x$ is a `non-degenerate $k$-cell' in $(T,f)$ if $\{x\}$ is its own stratum in $(T^{\leq x},f^{\leq x})$. Otherwise we say $x$ is a `degenerate $k$-cell'. The terminology similarly applies to cells in the coarsest refining mesh of a topological cell diagram.
\end{term}

\begin{rmk}[Relating cell diagrams to classical pasting diagrams] \label{rmk:cell-diagram-interpretation} Cell diagrams can be depicted as classical `point and arrow diagrams' by replacing $k$-cells by $k$-arrows if they are non-degenerate, and by `$k$-identities' (i.e.\ headless $k$-arrows) if they are degenerate. For our earlier examples in \cref{fig:pasting-diagrams-and-their-dual-manifold-diagrams} this translation is given in \cref{fig:interpreting-cell-diagrams-as-pasting-diagrams}. (Here, a `$k$-arrow' is drawn with $k$ parallel lines, the exception being an identity $1$-arrow, which is drawn with $2$ lines as `$=$'.)
\begin{figure}[ht]
    \centering
    \def\svgwidth{1\columnwidth}
    \import{./figuresused/}{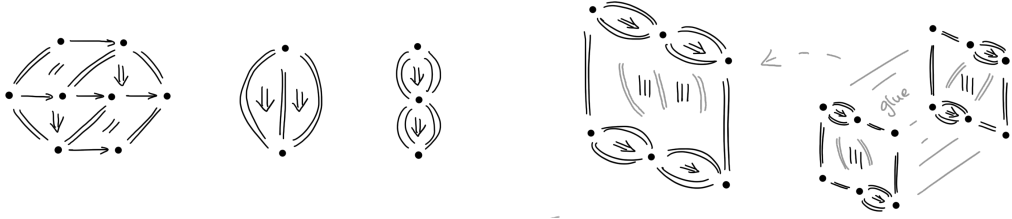}

    \caption{Interpreting cell diagrams as pasting diagrams}
    \label{fig:interpreting-cell-diagrams-as-pasting-diagrams}
\end{figure}
\end{rmk}

The previous remark explains how to think of cell diagrams as diagrams of points and arrows, and we may interpret these as traditional categorical pasting diagrams in which arrows represent higher morphisms. Let us next comment on the type of `morphism shapes' that can appear in such diagrams.

\begin{term}[Framed cell shapes] A cell $n$-diagram $(S,g)$ with minimal object $x \in S_n$ is called a `$n$-framed cell shape'.\footnote{A `framed cell shape' is different from a `framed regular cell' as defined in \cite[\S1]{fct}, due to the additional stratification structure. Moreover, after removing degeneracies (i.e.\ after quotienting it by the degenerate cells in the shape's boundary) a framed cell shape need not be a regular cell.}
\end{term}

\nid Note, every cell $(T^{\leq x},f^{\leq x})$ in a cell diagram $(T,f)$ normalizes to a framed cell shape. Framed cell shapes generalize many other `categorical cell shapes' in use, such as globes, (directed) simplices, cubes, opetopes, etc.---however, in order to allow for a `side-by-side' comparison, framed cell shapes need to first be `quotiented' by the degenerate cells in their boundary: we illustrate this in \cref{fig:blow-up-of-classical-cells-to-computadic-cells}, where the classical shapes shown in the top row are obtained by quotienting along the identities in the framed cell shapes shown in the bottom row. (We also schematically indicate how this quotient acts on the framing of the underlying framed space.)
\begin{figure}[ht]
    \centering
    \def\svgwidth{1\columnwidth}
    \import{./figuresused/}{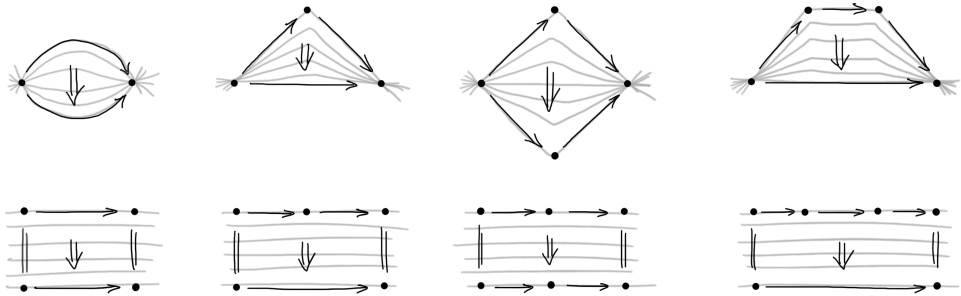}

    \caption{Comparison of common cell shapes and framed cell shapes}
    \label{fig:blow-up-of-classical-cells-to-computadic-cells}
\end{figure}

The central feature of cell diagrams that distinguishes them from existing notions of pasting diagrams is that cell diagrams can record certain categorical coherences (namely, by diagrams that are geometrically dual to isotopies of manifold diagrams, such as the braid). This can be used to resolve classical problems of `strict computads' not having a well-defined category of shapes: roughly speaking, when working with `strict' shapes one encounters the problem that composing higher cells with degenerate boundaries in pasting diagrams may lead to an ill-defined notion of cells in those diagrams (see e.g.\ \cite[\S7.6]{leinster2004higher} \cite{henry2017nonunital} \cite{hadzihasanovic2020diagrammatic}). In contrast, cell diagrams deal well with degenerate boundaries in all dimensions (see e.g.\ the third example in \cref{fig:interpreting-cell-diagrams-as-pasting-diagrams}). However, an in-depth discussion of this topic would lead beyond the scope of the present paper.

\addtocontents{toc}{\protect\newpage}
\chapter{Tame tangles} \label{ch:tangles-and-sings}

We introduce the notion of \emph{tame tangles}, which are manifolds with corners that are `transversally' embedded in the framed $n$-cube. We show that tame tangles may also be understood in purely combinatorial terms, which parallels the case of manifold diagrams. (In fact, tame tangle and manifold diagrams are closely related: tame tangles can be canonically refined to manifold diagrams by stratifying the tangle manifold by the `loci of its critical points' as we will explain.) Tame tangles can naturally be studied in bundles, and this will lead us to definitions of perturbations and perturbation stability of tame tangles. Locally, by studying the perturbation stable `germs' of tames tangles around a point, this gives rise to a theory of singularities. We will end with a discussion of conjectures and directions for future work.

\section{Definitions}

\subsection{Topological definition} We define tame tangles as manifolds embedded in $\II^n$ that satisfy a `framed transversality' condition: the condition will ensure that `loci of critical points' are in general position with respect to the ambient framing.

\begin{rmk}[Embeddings as stratifications] \label{rmk:emb-as-strat} Given an embedding $f : W \into X$ of a closed subspace $W$ in a topological space $X$, the embedding $f$ defines a stratification of the space $X$ whose strata are the connected components of $W$ and the connected components of the complement $X \setminus W$. Abusing notation we write $f$ for the stratification that this defines, and $f : X \to \Entr(f)$ for the stratification's characteristic map. (Whenever we consider an embedding $f : W \into X$ as a stratification in this way, we always tacitly assume the subspace $W$ is closed in $X$.)
\end{rmk}

\nid Recall that a tame stratification is a stratification of a flat framed space that can be refined by a mesh (see \cref{defn:flat-framed-stratification}).

\begin{defn}[Tame embeddings] Given a flat framed space $X$, an embedding $f : W \into X$ of topological spaces is called a \textbf{tame embedding} if it defines a tame stratification of $X$.
\end{defn}

Recall the notions of (tame) links and cones from \cref{notn:cube-links}.

\begin{term}[Cones of embeddings] Given an embedded space $f = (W \into \partial \bI^n)$ its `cone embedding' $\fcone(f)$ is the embedding of cones $\cone(W) \into \cone(\partial \bI^n) \iso \II^n$. (Note $W$ can be empty, in which case $\cone(W)$ is a point.) One similarly defines the `closed' cone embedding $\overline\fcone(f)$.
\end{term}

\nid Note well that cone embeddings $\fcone(f)$ differ from stratified cones $\cone(f)$ in that $\fcone(f)$ doesn't have a separate cone point stratum; we illustrate the difference in \cref{fig:coned-embeddings-vs-stratified-cones}.

\begin{figure}[ht]
    \centering
    \def\svgwidth{1\columnwidth}
\begingroup%
  \makeatletter%
  \providecommand\color[2][]{%
    \errmessage{(Inkscape) Color is used for the text in Inkscape, but the package 'color.sty' is not loaded}%
    \renewcommand\color[2][]{}%
  }%
  \providecommand\transparent[1]{%
    \errmessage{(Inkscape) Transparency is used (non-zero) for the text in Inkscape, but the package 'transparent.sty' is not loaded}%
    \renewcommand\transparent[1]{}%
  }%
  \providecommand\rotatebox[2]{#2}%
  \newcommand*\fsize{\dimexpr\f@size pt\relax}%
  \newcommand*\lineheight[1]{\fontsize{\fsize}{#1\fsize}\selectfont}%
  \ifx\svgwidth\undefined%
    \setlength{\unitlength}{2160bp}%
    \ifx\svgscale\undefined%
      \relax%
    \else%
      \setlength{\unitlength}{\unitlength * \real{\svgscale}}%
    \fi%
  \else%
    \setlength{\unitlength}{\svgwidth}%
  \fi%
  \global\let\svgwidth\undefined%
  \global\let\svgscale\undefined%
  \makeatother%
  \begin{picture}(1,0.19756944)%
    \lineheight{1}%
    \setlength\tabcolsep{0pt}%
    \put(0,0){\includegraphics[width=\unitlength,page=1]{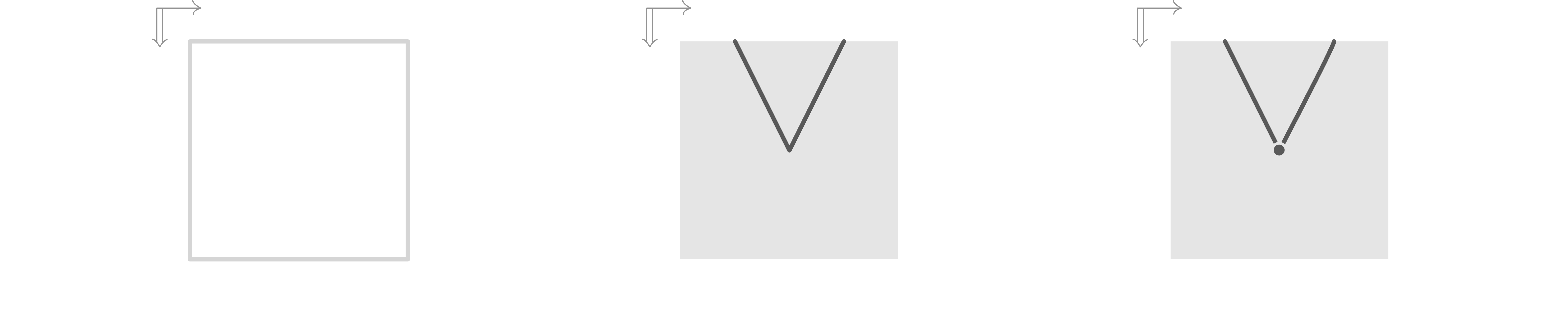}}%
    \put(0.09965278,0.003125){\color[rgb]{0,0,0}\makebox(0,0)[lt]{\lineheight{1.25}\smash{\begin{tabular}[t]{l}$f = (M \into \partial\bI^2)$\end{tabular}}}}%
    \put(0.47048611,0.003125){\color[rgb]{0,0,0}\makebox(0,0)[lt]{\lineheight{1.25}\smash{\begin{tabular}[t]{l}$\fcone(f)$\end{tabular}}}}%
    \put(0.79166667,0.003125){\color[rgb]{0,0,0}\makebox(0,0)[lt]{\lineheight{1.25}\smash{\begin{tabular}[t]{l}$\cone(f)$\end{tabular}}}}%
    \put(0,0){\includegraphics[width=\unitlength,page=2]{coned-embeddings-vs-stratified-cones.pdf}}%
  \end{picture}%
\endgroup%

    \caption{Cone embeddings vs stratified cones}
    \label{fig:coned-embeddings-vs-stratified-cones}
\end{figure}

\begin{defn}[Framed transversality] \label{defn:framed-transv} Given an embedded $m$-manifold $f = (W \into \II^n)$, inductively define a notion of \textbf{$k$-transversal points $x$ of $W$} as follows.
    \begin{enumerate}
        \item[$-$] If $x \in W$ has a tame framed stratified neighborhood of the form $\II^{m} \times (\II^{n-m},\fcone(l))$ with $l = (\emptyset \into \partial\bI^{n-k})$ the empty link, and $x \in \II^m \times 0$, then we say $x$ is an \textbf{$m$-transversal} point of $W$.
        \item[$-$] Now take $k < m$. Assume $x$ is not $j$-transversal for any $j > k$. If $x$ has a framed stratified neighborhood of the form $\II^{k} \times (\II^{n-k},\fcone(l))$ for $l$ a tame link of the form $l = (S^{m-k-1} \into \partial\bI^{n-k})$, with $x \in \II^k \times 0$, and all $y \in \II^k \times \cone(S^{m-k-1}) \setminus \II^k \times 0$ being $j_y$-transversal for $j_y > k$, then $x$ is a \textbf{$k$-transversal} point of $W$.
    \end{enumerate}
We say $f$ is a \textbf{tame framed transversal stratification} if all points $x \in W$ are $k_x$-transversal points of $W$ for some $0 \leq k_x \leq m$.
\end{defn}


\begin{notn}[Transversal dimension] \label{notn:transv-dim} If $f$ is $k$-transversal at $x$, we also call $k$ the `transversal dimension' of $x$ and write $k = \tdim (x)$.
\end{notn}

\begin{term}[Regular and singular points] \label{term:reg-and-sing-points} Given an embedded manifold $f = (W \into \II^n)$, and a point $x \in W$ with transversal dimension $k$ we say $x$ is `regular' if $k = \dim(W)$, and `singular' if $k = 0$.
\end{term}

\begin{eg}[Transversality and non-transversality] In \cref{fig:framed-transversality-condition-and-failure}, for the 1-manifold embedded in the 2-cube shown in the center, we indicate the transversal dimension of three points on the manifold. The blue point is regular, the green point is singular, while the red point fails to be transversal (while it does have a neighborhood of the form $\II^0 \times \cone(l)$ for $l = (S^0 \into \partial \bI^2)$, the condition that points away from the cone stratum are $k$-transversal with $k > 0$ is not satisfied).
\end{eg}

\begin{figure}[ht]
    \centering
    \def\svgwidth{1\columnwidth}
    \import{./figuresused/}{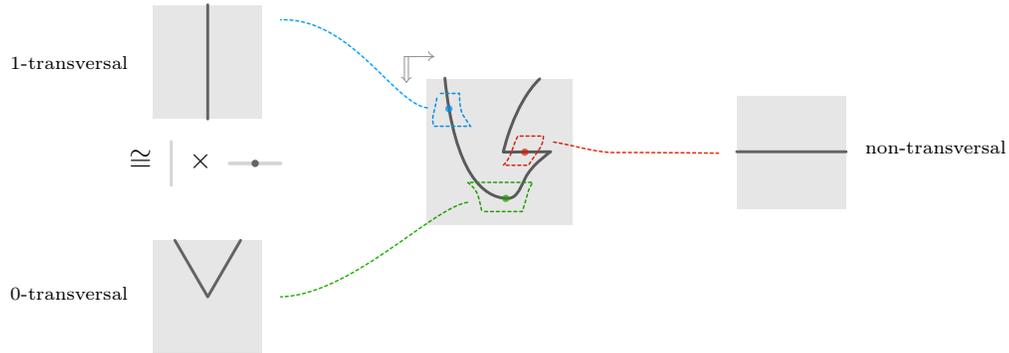}

    \caption{Points of transversality and of non-transversality}
    \label{fig:framed-transversality-condition-and-failure}
\end{figure}

\nid The previous example illustrates that the inductive phrasing of framed transversality is crucial. We will see later in \cref{ssec:tangle-vs-mdiag} that the definition of framed transversality ensures properties that are implicit in our earlier definition of framed conicality.

\begin{defn}[Tame tangles] \label{defn:tame-tangle} A \textbf{tame $m$-tangle $f = (W \into \II^{n})$} is a tame embedded $m$-manifold such that $f$ is a tame framed transversal stratification.\footnote{\label{foot:non-tame-nbh-for-tangles} As in the case of manifold diagrams (see \cref{obs:redundancy-tame-con}) an equivalent definition of tame tangles can be obtained by replacing the condition of `tame framed transversality' by `framed transversality' (defined by omitting the adjective `tame' throughout \autoref{defn:framed-transv})---it is sufficient to require the stratification $f$ itself to be tame.}
\end{defn}



\begin{eg}[Tame tangles] Several tame tangles $W \into \II^n$ are illustrated in \cref{fig:tame-tangle-in-dim-2-and-3}.
\begin{figure}[ht]
    \centering
    \def\svgwidth{1\columnwidth}
    \import{./figuresused/}{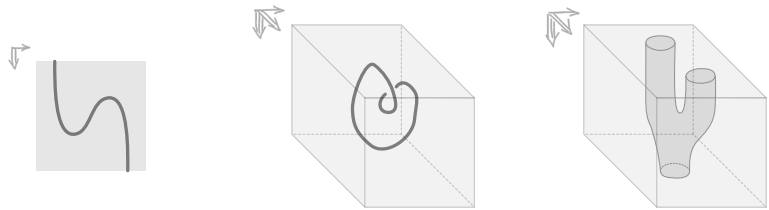}

    \caption{Tame tangles in dimension 2 and 3}
    \label{fig:tame-tangle-in-dim-2-and-3}
\end{figure}
\end{eg}

\begin{noneg}[Tame tangles] The embedded manifold given in \cref{fig:framed-transversality-condition-and-failure} does not define a tame tangle, since it fails the tame framed transversality condition. The embedded manifold in \cref{fig:a-non-tame-diagram} (thought of as an embedding of a manifold consisting of two disconnected intervals) does not define a tame tangle, since it fails to be a tame stratification.
\end{noneg}

\begin{rmk}[Tangles with defects] By considering $k$-filtrations $f$ of closed subspaces $W_0 \into W_1 \into ... \into W_k \into X$ in place of simple embeddings $W \into X$ the preceding discussion can be generalized to define a notion of `tame tangles with defects'.
\end{rmk}

Parallel to our earlier discussion of compact manifold diagrams, one may adapt the definitions of tame tangles to the case of the \emph{closed} $n$-cube $\bI^n$.

\begin{rmk}[Compact tame tangles] \label{rmk:compact-tame-tangles} Replacing the open cube $\II^{k}$ by corner neighborhoods $\II^\sigma$ (see \cref{term:corner-neighborhoods}) one readily defines a condition of \textbf{tame compact framed transversality} (which requires neighborhoods of the form $\II^\sigma \times (\II^{n-k},\fcone(l))$, $\sigma \in \kP^k)$. A \textbf{compact tame $m$-tangle} $g = (W \into \bI^n)$, where $W$ is an $m$-manifold, is a tame stratification that is tame compact framed transversal.
\end{rmk}

\nid As a straight-forward generalization of \cref{conj:mfld-diag-approx}, we mention the following conjecture which relates our tame notion of tangles with the following classical notions of tangles.

\begin{term}[Tangles] \label{term:transversal} An `$m$-tangle' is an embedded $m$-manifold $f = (W \into \bI^{n})$. We say $f$ is a PL (resp.\ smooth) $m$-tangle if it is a PL embedding of a PL manifold (resp.\ a smooth embedding of a smooth manifold).
\end{term}

\begin{rmk}[Tangles with nice corners] \label{rmk:what-about-corners} Notions of $m$-tangles $f = (W \into \bI^{n})$ usually include conditions that guarantee that $W$ is well-behaved with respect to the corners of $\bI^n$. For instance, a `tangle with nice corners' $f$ can be obtained by requiring, firstly, that $\im(f) \subset \bI^m \times \II^{n-m}$ and secondly, that maps $\pi_{>i} : f \to \bI^i$ (for $1 \leq i \leq m$) restrict on $\II^{i-1} \times \{\pm 1\}$ to trivial framed stratified bundles of lower dimensional tangles with nice corners (with the latter notion being defined inductively). We usually need not impose such extra conditions, since corner behavior is already taken care of by enforcing compact framed transversality.
\end{rmk}

\begin{conj}[Tame tangles approximate transversal tangles] \label{conj:transv-tang-approximation} Any compact framed transversal $m$-tangle can be approximated by compact tame $m$-tangles.
\end{conj}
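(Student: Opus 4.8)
The plan is to prove \cref{conj:transv-tang-approximation} by an induction on dimension (on $m$, or better, a double induction on $(n, m)$ with $n-m$ bounded), mimicking the inductive structure of \cref{defn:framed-transv} itself and using a stratification of the tangle manifold by transversal dimension. First I would decompose $W$ via the filtration $W = W_0 \supseteq W_{\leq 1} \supseteq \cdots$, where $W_{\leq k} = \{x \in W : \tdim(x) \leq k\}$; the compact framed transversal condition says each stratum $W_k \setminus W_{\leq k-1}$ is, locally, a product $\II^\sigma \times (\II^{n-k}, \fcone(l))$ with $l = (S^{m-k-1} \into \partial\bI^{n-k})$. The key point is that, away from $W_{\leq k-1}$, the behavior of $W$ over a $k$-transversal point is entirely controlled by a lower-dimensional transversal tangle — namely the link sphere $S^{m-k-1} \into \partial\bI^{n-k} \cong S^{n-k-1}$, i.e.\ a compact framed transversal $(m-k-1)$-tangle in the $(n-k-1)$-sphere. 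By the inductive hypothesis these links can be approximated by \emph{tame} ones, and the approximation can be made to depend continuously (indeed constructibly) on the base parameter $\II^\sigma$, so that the approximation assembles into a tame tangle bundle near $W_k$.

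The core technical step is a local-to-global patching argument. First I would establish the \emph{local} statement: around any point $x \in W$ with $\tdim(x) = k$, after passing to the canonical framed tubular neighborhood $\II^\sigma \times (\II^{n-k}, \fcone(l_x))$, one can replace $l_x$ by an arbitrarily close tame link $\widetilde{l_x}$ (using the inductive hypothesis on the $(m-k-1)$-tangle in $S^{n-k-1}$, together with \cref{thm:plness-implies-tameness} to note that a PL approximation automatically yields tameness) while matching $f$ exactly outside a slightly smaller neighborhood. Then I would globalize: cover $W$ by finitely many such framed tubular charts (finiteness is available since $W$ is compact), order the charts by the transversal dimension of their centers in \emph{decreasing} order, and perform the replacements one chart at a time from deepest stratum outward. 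The ordering matters because the neighborhoods of $k$-transversal points have, in their punctured versions, only $j$-transversal points with $j > k$, so once the higher-$\tdim$ strata have been made tame one can safely modify a lower-$\tdim$ chart without disturbing what has already been fixed. Each modification is supported in a small set, so the composite self-homeomorphism of $\bI^n$ moves points by an arbitrarily small amount, giving the required $\eps$-close approximation.

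I would run the base case at $m = 0$: a compact framed transversal $0$-tangle is a finite set of points in $\bI^n$ (a $0$-manifold being compact), and finitely many points always define a tame stratification (meshes exist — just take a grid of framed cells separating the points), so the statement is trivial; likewise $n = m$ (a full-dimensional submanifold, equivalently the boundary $\partial W \into \bI^n$ plus interior) reduces to the codimension-one link data, which is one dimension lower. The inductive hypothesis is applied to the link tangles $S^{m-k-1} \into S^{n-k-1}$, whose ambient dimension $n-k-1 < n$ and whose tangle dimension $m-k-1 < m$; one has to take care that the statement for tangles in a sphere (rather than a cube) follows from the cube version by the usual cone/suspension identifications in \cref{notn:cube-links}, or else carry the sphere-ambient version along in the induction.

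The main obstacle I expect is ensuring that the local tame approximations of the links patch together \emph{framed-compatibly} and \emph{simultaneously} over the base parameter of a positive-transversal-dimension stratum — i.e.\ producing a tame tangle \emph{bundle} over $\II^\sigma$ rather than just a fiberwise-tame family. This is exactly the kind of `continuity in the base' subtlety flagged for tame stratified bundles in \cref{rmk:tame-bundles} and \cref{thm:tame-bundle-class}, and it is where one genuinely needs that tameness is a robust (PL-compatible, via \cref{thm:plness-implies-tameness} and \cref{rmk:pl-map-implies-tame-map}) condition: the safest route is to first put the local picture into framed PL form, approximate PL-ly and parametrically (PL approximation theorems give parametrized versions with essentially no extra work, since the parameter direction $\II^\sigma$ can be triangulated along), and then invoke \cref{thm:plness-implies-tameness} to conclude the result is tame. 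A secondary, softer difficulty is bookkeeping the corner behavior on $\partial\bI^n$ (cf.\ \cref{rmk:what-about-corners}): I would handle corners by the same inductive device, noting that compact framed transversality already builds in the corner compatibility, so no separate argument is needed beyond being careful that the charts near corners use $\II^\sigma$ with some $\sigma_i \neq \emptyset$ and that the supported modifications respect the corner stratification.
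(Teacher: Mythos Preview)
The statement you are attempting to prove is a \emph{conjecture} in the paper, not a theorem: the paper offers no proof and does not claim one. Immediately after stating it, the authors remark only that ``the conjecture holds in the case of PL tangles'' and defer to \cref{obs:pl-tang-gen-tame} for that special case. So there is no paper proof to compare your proposal against.

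That said, your proposal has a genuine gap relative to the full topological statement. Your strategy ultimately bottoms out in ``first put the local picture into framed PL form, approximate PL-ly and parametrically, \ldots\ then invoke \cref{thm:plness-implies-tameness}.'' But the conjecture is about arbitrary compact framed transversal $m$-tangles, with $W$ merely a topological manifold and $W \into \bI^n$ merely a continuous embedding. Topological manifolds need not admit PL structures (in dimensions $\geq 4$), and even when they do, a given topological embedding into $\bI^n$ need not be isotopic to a PL one. So the step ``put the local picture into framed PL form'' is exactly the hard content of the conjecture, not a routine reduction. Your inductive scheme on transversal dimension, and your care about patching tame bundles over $\II^\sigma$, are reasonable scaffolding, but they do not by themselves supply a PL (or tame) model of a wild topological link $S^{m-k-1} \into \partial\bI^{n-k}$; the inductive hypothesis gives you approximation of \emph{compact framed transversal} links, yet the link $l_x$ extracted from a topological tubular neighborhood is not a priori known to be compact framed transversal itself in any controlled way (the definition only demands that points \emph{away} from the cone stratum be higher-transversal, not that the link embedding has good regularity).

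For contrast, the paper's treatment of the PL case (\cref{obs:pl-tang-gen-tame}) is much more direct than your induction: a PL embedding is already tame by \cref{thm:plness-implies-tameness}, and framed transversality is obtained by an arbitrarily small perturbation of the vertices of a simplicial representative. No stratification by $\tdim$ or link-induction is needed there.
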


\nid The conjecture holds in the case of PL tangles (we will return to this point later in \cref{obs:pl-tang-gen-tame}).


\subsection{Combinatorial definition} Next, let us discuss the analogous combinatorial definition of tangles. Recall, from \cref{rmk:strat-posets} that a poset $P$ can be endowed with a topology whose basic opens are the downward closures $P^{\leq x}$ of its elements $x \in P$. A subposet $Q \into P$ is called `closed' if it is closed as a topological subspace; equivalently, this means that for any $x \in Q$ and $y \in P$ with an arrow $x \to y$ we have $y \in Q$.

\begin{rmk}[Poset embeddings as stratifications] \label{rmk:poset-emb-strat} Paralleling the translation of embeddings to stratifications in \cref{rmk:emb-as-strat}, note that any embedding $f : Q \into P$ of a closed subposet $Q$ of $P$, determines a poset map $\theta_Q : P \to [1]\op$ (mapping $x \in P$ to $0$ iff $x \in Q$) called the `indicator map' of $Q$. From this we obtain a stratification $(P,\ccs {\theta_Q})$ of $P$ by connected component splitting of $\theta_Q$ (see \cref{obs:lab-strat-top}). Explicitly, strata of $\ccs {\theta_Q}$ are the connected components of $Q$ and of $P \setminus Q$. Abusing notation, we also write $f : P \to \Entr(f)$ in place of $\ccs{\theta_Q} : P \to \Entr(\ccs{\theta_Q})$.
\end{rmk}

\nid Given an $n$-truss $T$ and a closed subposet $f : Q \subset T_n$, we write $(T,f)$ for the stratified truss whose labeling is obtained from $f$ via the previous remark.

\begin{term}[Poset spheres] A poset $P$ `is an $m$-sphere' if its classifying space (i.e. the geometric realization of its nerve) is a topological $m$-sphere.
\end{term}

\nid Recall the definition of cone trusses (see \cref{defn:truss-cones}); as before, we usually denote maximal elements in total posets of cone trusses by $\top$. The combinatorial analog of the notion of tame framed transversality is given in the next definition.

\begin{defn}[Combinatorial transversality] \label{defn:comb-transv} For fixed $m < n$, a stratified open $n$-truss $(T,f : Q \into T_n)$ is said to be \textbf{combinatorially $m$-dim $k$-transversal at $x \in T_n$} ($k \leq m$) if there exists an open cone $(n-k)$-truss $C$ with a subposet $D \into C_{n-k}$ containing the cone point $\top$, with $D \setminus \top$ being an $(m-k-1)$-sphere, and
\[
    \NF{T^{\leq x}, f^{\leq x}} = \OTT^{k} \times (C,D \into C_{n-k}).
\]
If all $x \in T_n$ are $k_x$-transversal points for some $k_x$, then we say that $(T,f)$ is \textbf{combinatorially $m$-dim transversal}.
\end{defn}

\nid Observe that $(C,D)$ is necessarily normalized in the preceding definition. Thus, since $C$ is assumed to be a cone truss, it follows that $(C,D)$ is not a product itself (i.e.\ not of the form $\II^i \times (S,g)$ for $i > 0$).

\begin{rmk}[Comparison of combinatorial and tame framed transversality] We point out two important differences between the definition of combinatorial transversality above, and our earlier definition of tame framed transversality.
\begin{enumerate}
    \item[$-$] In the definition of combinatorial transversality, we require an explicit `dimension' parameter $m$, whereas in our earlier definition of tame framed transversality this parameter was determined as the dimension of the embedded manifold. (Morally, this roots in the fact that the subposet $Q \into T_n$ in general need have an intrinsically definable notion of dimension; for instance $Q$ could be a singleton, but still describe a combinatorially $m$-dim $k$-transversal point.)
    \item[$-$] Secondly, note that the definition of combinatorial transversality, unlike our earlier definition of framed transversality, need not be phrased inductively (that is, combinatorial $k$-transversality need to refer to the $(l > k)$-transversality of neighboring points)---the required `filtration by transversality dimension' instead follows from the underlying combinatorics of trusses. \qedhere
\end{enumerate}
\end{rmk}

\begin{notn}[Transversal dimension, combinatorially] \label{notn:transv-dim-comb} Given an $m$-dim transversal stratified $n$-truss $(T,f : Q \into T_n)$, for $x \in Q$, we write $k = \tdim(x)$ if $(T,f)$ is $m$-dim $k$-transversal at $x$.\end{notn}

\begin{eg}[Combinatorial transversality] In the middle of \cref{fig:a-stratified-truss-with-combinatorially-transversal} we illustrate a stratified 2-truss $(T,f : Q \into T_2)$ (corresponding to the tame stratification given earlier in \cref{fig:framed-transversality-condition-and-failure}). The subposet $Q$ is highlighted in gray. We mark three points in $T_2$ in blue, green and red respectively: the stratified truss $(T,f)$ is combinatorially 2-transversal at the blue and green points, while it fails to be combinatorially transversal at the red point (while the neighborhood around the red point is normalized and of the form $\OTT^0 \times (C,W \into C_2)$, the stratified truss $(C,W \into C_2)$ is not a cone truss; the maximal element of $C_2$ has dimension 1!).
\begin{figure}[ht]
    \centering
    \def\svgwidth{1\columnwidth}
    \import{./figuresused/}{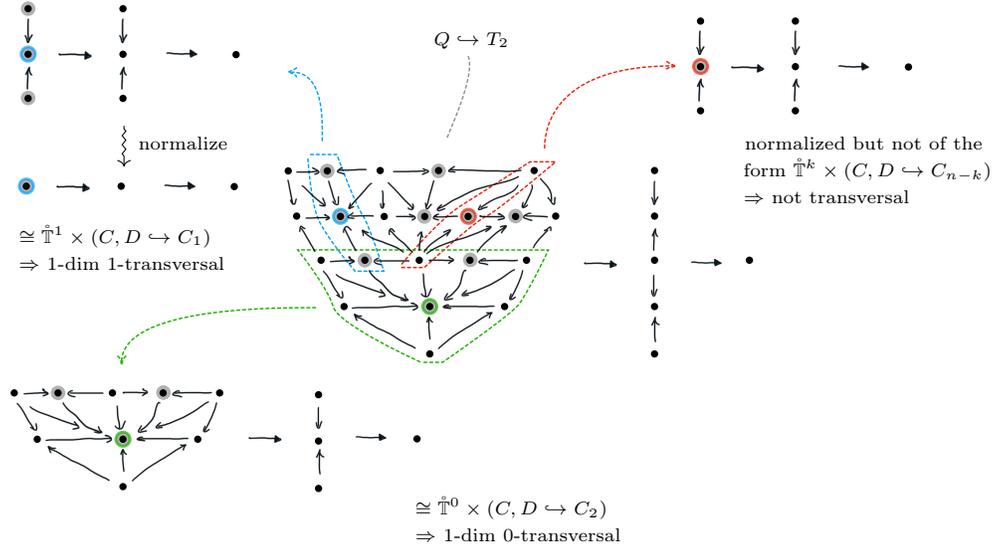}

    \caption{A stratified truss with combinatorially transversal and non-transversal points}
    \label{fig:a-stratified-truss-with-combinatorially-transversal}
\end{figure}
\end{eg}

\begin{defn}[Tangle trusses] \label{defn:comb-tangle} An \textbf{$m$-tangle $n$-truss} $(T,f : Q \into T_n)$, is a normalized stratified open $n$-truss that is combinatorially $m$-dim transversal.
\end{defn}

\nid We also refer to tangle trusses as `combinatorial tangles'.

\begin{eg}[Tangle truss] The stratified truss depicted earlier in \cref{fig:combinatorializing-tame-stratifications} is a 1-tangle 2-truss.
\end{eg}

\begin{term}[Tangle truss manifolds] \label{term:comb-filtration} Given an $m$-tangle $n$-truss $(T,f : Q \into T_n)$ we call $Q$ the `tangle truss manifold' of $(T,f)$. 
\end{term}

Our arguments for the comparison of topological and combinatorial manifold diagrams carry over to the following comparison of topological and combinatorial tangles.

\begin{thm}[Combinatorializing tame tangles] \label{obs:combinatorializing-tame-tangles} Framed stratified homeomorphism classes of tame $m$-tangles in $\II^n$ are in 1-to-1 correspondence with $m$-tangle $n$-trusses, by taking tame tangles $f = (W \into \II^n)$ to their fundamental stratified trusses $\NFTrs f$.
\end{thm}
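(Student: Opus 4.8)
The plan is to mirror the proof of \cref{thm:comb-mdiag} almost verbatim, since tame tangles are, by \cref{rmk:emb-as-strat}, just a special kind of tame stratification, and combinatorial tangles are defined by a local condition ("combinatorial transversality") that is the straightforward analogue of combinatorial conicality. By \cref{thm:tame-strat-comb}, the assignment $(W\into\II^n) \mapsto \NFTrs(W\into\II^n)$ already gives an injection of framed stratified homeomorphism classes of tame embeddings into the set of normalized stratified trusses; what remains is to identify the image, i.e.\ to show that $\NFTrs f$ is an $m$-tangle $n$-truss (in the sense of \cref{defn:comb-tangle}) if and only if $f$ is a tame $m$-tangle (in the sense of \cref{defn:tame-tangle}). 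One subtlety to record is that the tame stratification associated to an embedding $W\into\II^n$ only sees the \emph{closed} subset $W$ and its complement, so one must check that the combinatorial data $\NFTrs f$ retains the subposet $Q\subset T_n$ corresponding to $W$ — this is automatic since strata of $f$ are connected components of $W$ and of $\II^n\setminus W$, and $Q$ is exactly the union of strata lying in $W$.

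The first half is the implication: if $f=(W\into\II^n)$ is a tame $m$-tangle, then $(T,g):=\NFTrs f$ is combinatorially $m$-dim transversal. Fix $x\in T_n$ and a point $z$ in the corresponding stratum of $\iM^f$; by hypothesis $z$ is $k$-transversal for some $k$, so there is a tame framed stratified neighborhood of the form $\II^{k}\times(\II^{n-k},\fcone(l_z))$ with $l_z$ either empty (if $k=m$) or a tame link $S^{m-k-1}\into\partial\bI^{n-k}$ whose cone embedding away from the cone point consists only of $(>k)$-transversal points. Exactly as in the proof of \cref{thm:comb-mdiag}, pick mesh refinements of the neighborhood and of $f$ through which the neighborhood embedding descends to a stratified mesh map, pass to fundamental trusses to get a stratified subtruss $\FTrs(N,e)\into\FTrs(M,f)$, compose with the coarsening to $(T,g)$, and (epi,mono)-factorize. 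By \cref{lem:NFTrs-preserves-prod-and-cone} (applied to the cone \emph{embedding} rather than the stratified cone — one needs the mild variant of the lemma that handles $\fcone$ in place of $\cone$, noting the cone point carries the same label as its cone and hence does \emph{not} become its own stratum) we get that $\NFTrs e = \OTT^{k}\times(C,D\into C_{n-k})$ with $C$ an open cone truss and $D\subset C_{n-k}$ the subposet containing $\top$ with $D\setminus\top$ an $(m-k-1)$-sphere (or $D=\{\top\}$ and the link empty when $k=m$). The factorization then identifies $\NF{T^{\leq x},g^{\leq x}}$ with this product, since the relevant subtruss contains the cone point in its image and must therefore be an equality. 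Hence $(T,g)$ is an $m$-tangle $n$-truss.

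The converse runs the same way in reverse: given an $m$-tangle $n$-truss $(T,g)$ and a tame stratification $f=(W\into\II^n)$ with $\NFTrs f=(T,g)$, fix $z\in\II^n$ lying in a stratum $x$ of $\iM^f$, form the truss neighborhood $T^{\leq x}\into T$, realize it by a classifying submesh $\psi\colon N\into\iM^f$ whose image contains $z$, coarsen to get a neighborhood $\psi^f\colon(\II^n,e)\into(\II^n,f)$, and use \cref{obs:CMsh-preserves-prod-and-cone} (again in the cone-embedding variant) to conclude that $\psi^f$ realizes a neighborhood of the form $\II^k\times(\II^{n-k},\fcone(l))$ with $l$ a sphere (or empty) link, and that the transversality condition on points away from the cone locus holds because the combinatorial sphere $D\setminus\top$ forces the corresponding strata to have strictly larger transversal dimension. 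This exhibits $z$ as a $k$-transversal point, so $f$ is tame framed transversal, hence a tame $m$-tangle. Finally, by footnote~\ref{foot:non-tame-nbh-for-tangles} (the tangle analogue of \cref{obs:redundancy-tame-con}) one need not separately worry about strengthening "framed transversal" to "tame framed transversal". The main obstacle, as in \cref{thm:comb-mdiag}, is the bookkeeping of cone \emph{embeddings} versus stratified cones: one must carefully re-run \cref{lem:NFTrs-preserves-prod-and-cone} and \cref{obs:CMsh-preserves-prod-and-cone} for the labeling in which the cone point is \emph{not} its own stratum — checking that normalization still produces (respectively reflects) a product of an open truss cube with a normalized cone truss carrying a subposet through the cone point — and to verify that the sphere condition on $D\setminus\top$ matches up with the inductive $(>k)$-transversality condition in \cref{defn:framed-transv} via the classifying-mesh construction of links in \cref{rmk:cmsh-via-compactifiation}. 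The compact case then follows with the evident adjustments for corner neighborhoods, exactly as \cref{lem:class-mdiag-compact} follows from \cref{thm:comb-mdiag}.
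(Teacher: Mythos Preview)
Your proposal is correct and takes essentially the same approach as the paper: both mirror the proof of \cref{thm:comb-mdiag}, splitting into the two directions (tame $m$-tangle $\Rightarrow$ combinatorially $m$-dim transversal, and conversely) and flagging the inductive clause of \cref{defn:framed-transv} in step (2) as the only genuinely new ingredient. The paper phrases that last verification slightly differently---it invokes the observation that for $y$ in the truss neighborhood of $x$ one has a subtruss inclusion $T^{\leq y}\into T^{\leq x}$, so combinatorial transversality at $y$ (already granted by hypothesis) automatically yields the required higher transversal dimension---whereas you attribute it to the sphere condition on $D\setminus\top$; these are two ways of pointing at the same mechanism.
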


\begin{proof} The argument closely follows the proof of \cref{thm:comb-mdiag}, and we therefore only give a sketch proof here. Recall the framed transversality condition given in \cref{defn:framed-transv} resp.\ the combinatorial transversality condition in \cref{defn:comb-transv}. Analogous to the proof of \cref{thm:comb-mdiag}, we need to show (1) that for each tame $m$-tangle $f$, its fundamental stratified truss $\NFTrs f$ is combinatorially $m$-dim transversal, and (2) that given a tame stratification $f$ such that $\NFTrs f$ is an $m$-tangle truss, then $f$ is framed transversal.  The only novelty is the verification of the inductive condition in \cref{defn:framed-transv} in step (2) (recall, the condition requires us to show that $k$-transversal points have tubular neighborhoods in which all points away from the cone stratum are $j$-transversal, for $j > k$): but this follows from the combinatorial transversality condition satisfied by $\NFTrs f$, using the fact that stratified truss coarsenings restricts to stratified subtrusses (note, any arrow $x \to y$ in $T_n$, yields a subtruss inclusion $T^{\leq y} \into T^{\leq x}$ of the respective truss neighborhoods).
\end{proof}

\nid We call $\NFTrs f$ the `fundamental tangle truss' of the tame tangle $f$  and, conversely, say $f$ is a `classifying tame tangle' of $\NFTrs f$.

\begin{eg}[Combinatorializing tame tangles] The tame stratification and its combinatorialization shown earlier \cref{fig:combinatorializing-tame-stratifications} is also an example of a tame tangle and its combinatorialization. Another example can be constructed from the combinatorialization of the `braid' manifold diagram in \cref{fig:combinatorialization-of-a-3-diagram}: indeed, we may consider this diagram as a tame tangle whose tangle manifold is the union of the red and green interval strata (similarly, its combinatorialization may be considered as tangle truss whose tangle truss manifold is the union of the red and green poset strata).
\end{eg}

Parallel to our earlier discussion of compact combinatorial manifold diagrams, we may adapt the notion of combinatorial tangles to the case \emph{closed} trusses.

\begin{rmk}[Compact tangle trusses] \label{rmk:compact-tame-tangles-trusses} Replacing the open cube truss $\OTT^{k}$ by corner trusses $\TT^\sigma$ (see \cref{term:corner-trusses}) one readily defines a condition of \textbf{compact combinatorial $m$-dim transversality} for stratified closed trusses $(T,f)$ (the condition requires truss neighborhoods to normalizes to products of the form $\TT^\sigma \times (C,D \into C_{n-k})$ where $C$ is a cone truss and $D \setminus \top$ and is an $(m-k-1)$-sphere). A \textbf{compact $m$-tangle $n$-truss} $(T,f : Q \into T_n)$, is a normalized stratified closed $n$-truss that is compact combinatorially $m$-dim transversal.
\end{rmk}

\nid Analogous to the case of manifold diagrams, definitions of open tangle trusses and compact tangle trusses are related by cubical compactification (see \cref{ssec:compactification}). Via the combinatorialization of tame tangles in \cref{obs:combinatorializing-tame-tangles}, this compactification construction also finds a topological counterpart.

\begin{rmk}[Compactifying tangle trusses and tame tangles] \label{notn:compactifying-tangle-trusses}
    Given an $m$-tangle $n$-truss $(T,f : Q \into T_n)$ then its cubical compactification is a compact $m$-tangle $n$-truss which will be denoted by $(\overline T,\overline f : \overline Q \into \overline T_n)$ (see \cref{constr:comp-comb-diagrams}).
    Analogously, given an $m$-tangle $f = (W \into \II^n)$, its compactification is the compact tame tangle $\overline f = (\overline W \into \bI^n)$ (defined up to framed stratified homeomorphism) constructed as the classifying tangle of the cubical compactification $\overline {\NFTrs f}$ of the fundamental tangle truss $\NFTrs f$. Note that a compact tame tangle $(\bI^n,g)$ is the compactification of a tame tangle if and only if $\NFTrs g$ is `interior-compactifying' (cf.\ \cref{constr:compactifying-diagrams}).
\end{rmk}

\subsection{Refining tame tangles to manifold diagrams} \label{ssec:tangle-vs-mdiag} The definition of tame tangles is closely related to earlier definition of manifold diagrams. Indeed, as we now explain, any tame tangle can be refined to a manifold diagram whose strata are the components of the subspace of $k$-transversal points of the tame tangle (for all $k \geq 0$).

\begin{defn}[Transversal stratification of tangles] Given a tame $m$-tangle $f = (W \into \II^n)$, the \textbf{transversal stratification} $(\II^n,\tstr(f))$ of $f$ is the stratification whose strata are connected components of the subspaces $\tdim\inv(k) \subset W$ of $k$-transversal points (see \cref{notn:transv-dim}), and the connected components of the complement $\II^n \setminus W$ of $W$.
\end{defn}

\nid Note that the transversal stratification of a tame tangle is a \emph{refinement} of the tangle.

\begin{eg}[Transversal stratification of tangles] In \cref{fig:manifold-diagram-refinements} we depict two tame tangles refined by their transversal stratifications.
\begin{figure}[ht]
    \centering
    \def\svgwidth{1\columnwidth}
\begingroup%
  \makeatletter%
  \providecommand\color[2][]{%
    \errmessage{(Inkscape) Color is used for the text in Inkscape, but the package 'color.sty' is not loaded}%
    \renewcommand\color[2][]{}%
  }%
  \providecommand\transparent[1]{%
    \errmessage{(Inkscape) Transparency is used (non-zero) for the text in Inkscape, but the package 'transparent.sty' is not loaded}%
    \renewcommand\transparent[1]{}%
  }%
  \providecommand\rotatebox[2]{#2}%
  \newcommand*\fsize{\dimexpr\f@size pt\relax}%
  \newcommand*\lineheight[1]{\fontsize{\fsize}{#1\fsize}\selectfont}%
  \ifx\svgwidth\undefined%
    \setlength{\unitlength}{2160bp}%
    \ifx\svgscale\undefined%
      \relax%
    \else%
      \setlength{\unitlength}{\unitlength * \real{\svgscale}}%
    \fi%
  \else%
    \setlength{\unitlength}{\svgwidth}%
  \fi%
  \global\let\svgwidth\undefined%
  \global\let\svgscale\undefined%
  \makeatother%
  \begin{picture}(1,0.20833333)%
    \lineheight{1}%
    \setlength\tabcolsep{0pt}%
    \put(0,0){\includegraphics[width=\unitlength,page=1]{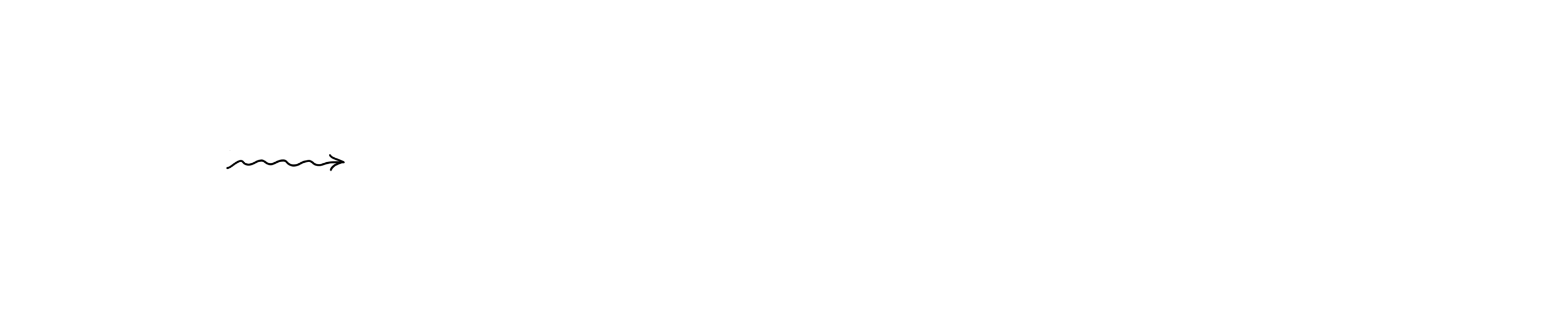}}%
    \put(0.15381944,0.11736111){\color[rgb]{0,0,0}\makebox(0,0)[lt]{\lineheight{1.25}\smash{\begin{tabular}[t]{l}{\tiny refines}\end{tabular}}}}%
    \put(0,0){\includegraphics[width=\unitlength,page=2]{manifold-diagram-refinements.pdf}}%
    \put(0.7125,0.11736111){\color[rgb]{0,0,0}\makebox(0,0)[lt]{\lineheight{1.25}\smash{\begin{tabular}[t]{l}{\tiny refines}\end{tabular}}}}%
    \put(0,0){\includegraphics[width=\unitlength,page=3]{manifold-diagram-refinements.pdf}}%
  \end{picture}%
\endgroup%

    \caption{Transversal stratification of tangle}
    \label{fig:manifold-diagram-refinements}
\end{figure}
\end{eg}

The definition can of course also be mirrored combinatorially.

\begin{defn}[Combinatorial transversality stratification] Given a $m$-tangle $n$-truss $(T,f : Q \into T_n)$, the \textbf{transversal stratified truss} $(T,\tstr(f))$ (also written $\tstr(T,f)$) is the stratified $n$-truss whose strata are connected components of the subposets $\tdim\inv(k)$ and the connected components of the subposet $T_n \setminus Q$ of $T_n$ (here, $\tdim$ is the combinatorial transversal dimension, see \cref{notn:transv-dim-comb}).
\end{defn}

\nid Note that, if $(T,g) = \NFTrs f$ for a tame tangle $f$, then $(T,\tstr(g)) = \NFTrs \tstr(g)$.

\begin{thm}[Tangles refine to manifold diagrams] Given a tame tangle $f$, then the transversal stratification $\tstr(f)$ of the tangle is a manifold diagram.
\end{thm}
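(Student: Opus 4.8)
The statement to prove is that for a tame tangle $f = (W \into \II^n)$, the transversal stratification $\tstr(f)$ is a manifold $n$-diagram, i.e.\ (by \cref{defn:mdiag}) a tame stratification that is tame framed conical. The plan is to establish tameness first, and then to deduce framed conicality by showing that at each point the framed tubular neighborhood provided by the framed transversality condition on $f$ is, after refining it by the transversal stratification, precisely the product of a euclidean factor with a stratified cone. The cleanest route is to pass through the combinatorial side, using \cref{obs:combinatorializing-tame-tangles} and \cref{thm:comb-mdiag}: I will show that the fundamental truss $\NFTrs \tstr(f)$ is a combinatorial manifold diagram.

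First I would check that $\tstr(f)$ is tame. By \cref{obs:combinatorializing-tame-tangles}, the tame tangle $f$ has a fundamental tangle truss $(T,g) = \NFTrs f$, and the transversal stratified truss $(T,\tstr(g))$ is defined as a recoarsening of $T_n$ (grouping objects of $T_n$ by transversal dimension and into components of the complement). Its classifying tame stratification is a refinement of $f$, and the remark just before the theorem notes $(T,\tstr(g)) = \NFTrs\tstr(g)$ (one must verify this is already normalized, which follows because the transversal-dimension function $\tdim : Q \to \lN$ together with the complement decomposition is itself a poset labeling of $T_n$, and $T = \FTrs \iM^f$ is already the coarsest refining mesh of $f$; any further coarsening of $\tstr(g)$ would coarsen the underlying mesh, contradicting that $T$ already refines $f$ minimally). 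So $\tstr(f)$ is tame with $\NFTrs \tstr(f) = (T,\tstr(g))$. The remaining task is to show $(T,\tstr(g))$ is combinatorially conical, which by \cref{thm:comb-mdiag} is equivalent to $\tstr(f)$ being a manifold diagram.

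Next I would verify combinatorial conicality of $(T,\tstr(g))$ at each $x\in T_n$. The key input is \cref{defn:comb-transv}: since $(T,g)$ is an $m$-tangle $n$-truss, for each $x\in T_n$ we have $\NF{T^{\leq x}, g^{\leq x}} = \OTT^k \times (C, D \into C_{n-k})$ where $C$ is an open cone $(n-k)$-truss, $D$ contains the cone point $\top$, and $D\setminus\top$ is an $(m-k-1)$-sphere. Now I need to understand how the \emph{transversal} labeling $\tstr(g)$ restricts to this neighborhood and normalizes. The transversal labeling refines the tangle labeling; on the factor $\OTT^k$ it is trivial (these $k$ directions are the transversal ones, so all of $\OTT^k \times \{\text{cone point}\}$ has transversal dimension $k$), and on the cone factor $(C, D)$ it stratifies $C_{n-k}$ into: the cone point (which is its own stratum since $\tdim$ takes the unique value $k$ there, but it is a single object anyway — this needs care), plus the strata coming from the transversal stratification on the punctured cone. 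By induction on $n$ (or on the number of strata), the transversal stratification of the link data $D\setminus\top$, which is an $(m-k-1)$-sphere embedded in the boundary truss, is already a manifold diagram one dimension down, and hence its cone, with the cone point made its own stratum, is a stratified open cone truss. Thus $\NF{T^{\leq x}, \tstr(g)^{\leq x}} = \OTT^k \times (C', c')$ with $(C',c')$ a stratified open cone $(n-k)$-truss, which is exactly the combinatorial conicality condition. One subtlety: I must confirm that making the cone point its own stratum does not conflict with normalization — but it cannot, since a stratum consisting of a single object with $\dim = 0$ can never be coarsened away.

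\textbf{Main obstacle.} The delicate step is the inductive bookkeeping in the last paragraph: showing that the transversal stratification, when restricted to the link cone $(C, D\into C_{n-k})$ of a $k$-transversal point, is combinatorially conical \emph{and} that the resulting normal form is genuinely a stratified open cone truss (cone point its own stratum, not absorbed). This requires knowing that the $(m-k-1)$-sphere $D\setminus\top$ inside $\partial$, together with the induced transversal structure on it, is itself (the combinatorialization of) a manifold diagram in dimension $n-k-1$ — i.e.\ one needs the theorem for tangles in strictly lower ambient dimension, so the argument is genuinely an induction on $n$, with the base case $n$ small (where $W$ is a disjoint union of points or empty) being immediate. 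Assembling this induction correctly, and checking that \cref{lem:NFTrs-preserves-prod-and-cone} and \cref{obs:CMsh-preserves-prod-and-cone} let one translate between the combinatorial cone statement and the topological framed-tubular-neighborhood statement, is where the real work lies; everything else is routine bookkeeping of coarsenings and normal forms as in the proof of \cref{thm:comb-mdiag}.
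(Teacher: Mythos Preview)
Your overall strategy---pass to the combinatorial side via \cref{obs:combinatorializing-tame-tangles}, then verify that $(T,\tstr(g))$ is combinatorially conical using the product-cone decomposition $\NF{T^{\leq x},g^{\leq x}} = \OTT^k \times (C,D)$ supplied by combinatorial transversality---is exactly the paper's approach.

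Where you diverge is in how you handle the cone factor, and here you make things harder than necessary. You propose an induction on $n$ to show that the transversal stratification of the link is itself a manifold diagram one dimension down. But the paper does not need this: it simply records three direct observations, namely that (i) $\NF{T^{\leq x},\tstr(g)^{\leq x}} = \tstr\!\big(\NF{T^{\leq x},g^{\leq x}}\big)$, (ii) $\tstr(\OTT^k \times (C,D)) = \OTT^k \times \tstr(C,D)$, and (iii) $\tstr(C,D)$ is a stratified open cone truss. None of these requires induction. In particular (iii) only asks that the cone point $\top$ is its own stratum in $\tstr(C,D)$; this follows because $\top$ is the unique element of the open cone truss $C$ with $\dim(\top)=0$, hence the unique point with $\tdim = 0$, so $\{\top\}$ is already a connected component of $\tdim^{-1}(0)$. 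You correctly note this fact (``a stratum consisting of a single object with $\dim = 0$ can never be coarsened away''), but then do not use it, instead reaching for an inductive hypothesis about the link. Your plan would work, but it is a detour.

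A second minor point: you gloss over observations (i) and (ii). Your sentence ``on the factor $\OTT^k$ it is trivial\ldots and on the cone factor\ldots'' is gesturing at (ii), but you never isolate (i), which is what lets you compute $\tstr$ on the \emph{normalized} neighborhood rather than on the raw neighborhood. Making these two commutation facts explicit, as the paper does, is what cleans up the argument and removes any need for induction.
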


\begin{proof} Equivalently, we may show that $(T,\tstr(g))$ is a combinatorial manifold diagram for any $m$-tangle $n$-truss $(T,g)$. By definition of combinatorial manifold diagrams we need to verify the combinatorial conicality condition for $\tstr(g)$. Take $x \in T_n$. Since $(T,g)$ is combinatorially transversal, we have $\NF{T^{\leq x}, g^{\leq x}} = \OTT^{k} \times (C,D \into C_{n-k})$. Observe, firstly, that $\NF{T^{\leq x}, \tstr(g)^{\leq x}} = \tstr(\NF{T^{\leq x}, g^{\leq x}})$, secondly, that $\tstr(\OTT^{k} \times (C,D \into C_{n-k})) = \OTT^{k} \times \tstr(C,D \into C_{n-k})$, and, thirdly, that $\tstr(C,D \into C_{n-k})$ is a stratified cone truss (all three observations are left as an exercise). Taken together these observations imply that $(T,\tstr(g))$ is combinatorially conical at $x$, which verifies the claim.
\end{proof}

\begin{obs}[Coarsest refining manifold diagram] Given a tame tangle $f$ then any refinement $g \to f$ by a manifold diagram $g$ factors uniquely through the refinement $\tstr f \to f$ by a refinement $g \to \tstr f$. In this sense, $\tstr f$ is the \emph{coarsest refining} manifold diagram of a tame tangle.
\end{obs}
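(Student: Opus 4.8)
The plan is to argue topologically, exploiting the local product structure that a refining manifold diagram imposes, and to reduce the factorization and uniqueness to the fact that all the maps in question are the identity on $\II^n$. Fix a tame $m$-tangle $f = (W \into \II^n)$ and a manifold diagram $g$ refining $f$ (so $g \to f$ is a framed strict coarsening). Since $\tstr f$ refines $f$ only by subdividing $W$ --- its strata inside $W$ are the connected components of the loci $\tdim\inv(k)$, while outside $W$ it has exactly the strata of $f$ --- it suffices to show that every stratum of $g$ is contained in a single stratum of $\tstr f$. Indeed, once this is known, the refinement $g \to \tstr f$ is given by the identity on $\II^n$ together with the poset map sending a $g$-stratum to the unique $\tstr f$-stratum containing it; it is monotone, it visibly composes with $\tstr f \to f$ to give $g \to f$, and any refinement $g \to \tstr f$ over $f$ must be the identity on underlying spaces and is therefore forced on entrance path posets, giving uniqueness.

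For a stratum $s$ of $g$ with $s \subseteq \II^n \setminus W$ the claim is immediate: $s$ lies in a single $f$-stratum, which is disjoint from $W$, hence a connected component of $\II^n \setminus W$ and thus already a stratum of $\tstr f$. The real content is to show that for a stratum $s \subseteq W$ of $g$ the transversal dimension $\tdim$ is constant on $s$; since $s$ is connected it is enough to prove $\tdim$ is locally constant on $s$. Fix $x \in s$ and pick a framed tubular neighborhood $\psi : \II^j \times (\II^{n-j}, \cone(l)) \into (\II^n, g)$ of $x$ in $g$ with $x \in \psi(\II^j \times \{0\})$ and $j = \dim s$. Because the cone point $0$ is its own stratum in $\cone(l)$, the set $\psi(\II^j \times \{0\})$ is a $g$-stratum, hence an open neighborhood of $x$ inside $s$. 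Because $W$ is a union of $g$-strata and the $g$-strata meeting $\im(\psi)$ are precisely the $\psi(\II^j \times t)$ for $t$ a stratum of $(\II^{n-j}, \cone(l))$, the intersection $W \cap \im(\psi)$ is, under $\psi$, of product form $\II^j \times W'$ for some subspace $W' \subseteq \II^{n-j}$ with $0 \in W'$. Now for any $x' \in \psi(\II^j \times \{0\})$ sufficiently close to $x$, translation in the $\II^j$-factor yields a framed open embedding of a small neighborhood of $x$ onto a neighborhood of $x'$ carrying $W$ to $W$ and $x$ to $x'$; that is, the germs of the framed embedded manifold $(W \into \II^n)$ at $x$ and at $x'$ are framed homeomorphic. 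Since the transversal dimension of a point depends only on the germ of $W \into \II^n$ at that point together with the ambient framing (see \cref{defn:framed-transv}), this gives $\tdim(x') = \tdim(x)$. Hence $\tdim$ is locally constant, so constant, on $s$, so $s$ lies in a single component of some $\tdim\inv(k)$, i.e.\ in a single stratum of $\tstr f$, completing the argument.

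The same proof runs combinatorially via \cref{obs:combinatorializing-tame-tangles}: replace tubular neighborhoods by normal forms of truss neighborhoods, and use that $\tstr$ commutes with normalization and with products by $\OTT^k$ and sends cone trusses to cone trusses --- exactly the identities already used in the proof that $\tstr f$ is a manifold diagram. In either formulation, the one mildly delicate point, and the step I expect to be the main obstacle, is the verification that the tubular (resp.\ truss) neighborhood of $x$ in $g$ forces the product decomposition $W \cap \im(\psi) \cong \II^j \times W'$ along the tangential directions, which is what prevents $\tdim$ from varying within a stratum of $g$.
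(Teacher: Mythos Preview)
The paper records this statement as an observation without supplying a proof, so there is nothing to compare against directly. Your argument is correct and fills the gap cleanly.

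The reduction to showing that $\tdim$ is constant on each $g$-stratum $s \subseteq W$ is the right move, and the crucial step --- that a framed tubular neighborhood $\psi$ for $g$ forces $W \cap \im(\psi) = \psi(\II^j \times W')$ because $W$ is a union of $g$-strata and $\psi$ is a stratified homeomorphism onto an open piece of $g$ --- goes through as you say. The conjugated translation $\psi \circ T_a \circ \psi^{-1}$ is framed (framed maps are closed under composition and inversion), it carries $W$ to $W$, and $\tdim$ depends only on the framed germ of $W$ at a point (tameness of the witnessing neighborhoods may be dropped once the tangle itself is tame, as noted in the footnote to the definition of tame tangles), so $\tdim(x) = \tdim(x')$ follows. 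Uniqueness is indeed automatic since all maps in sight are identities on $\II^n$.

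The combinatorial variant you sketch at the end is equally valid and is in fact closer in spirit to how the paper argues elsewhere: it reuses exactly the identities (that $\tstr$ commutes with normalization and with $\OTT^k$-products, and sends cone trusses to stratified cone trusses) already established in the proof of the preceding theorem. Either route suffices; the topological one you wrote out is perhaps more transparent about \emph{why} the factorization holds.
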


\nid An immediate upshot of the construction of the transversal stratification is the following.

\begin{obs}[Tangles canonically yield cell diagrams] \label{obs:tang-to-pd} Given a tame tangle $f$ we obtain a cell diagram $\tstr f^\dagger$. Via \cref{rmk:cell-diagram-interpretation}, this allows us to interpret tame tangles as higher categorical diagrams of higher morphisms.
\end{obs}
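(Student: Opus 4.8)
The plan is to derive the statement directly from the already-established fact that the transversal stratification of a tame tangle is a manifold diagram, combined with the self-duality of the combinatorial theory. Concretely, I would pass to the combinatorial side, apply the duality equivalence between combinatorial manifold and cell diagrams, and then transport the result back to the topological side via the combinatorialization theorem. No genuinely new geometric input is needed: the whole content is organizing the cited equivalences into a single chain.

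First, by the preceding theorem (that $\tstr f$ is a manifold diagram), the transversal stratification $\tstr f$ is a manifold $n$-diagram. Hence, by \cref{thm:comb-mdiag}, its fundamental stratified truss $\NFTrs \tstr f$ is a combinatorial manifold $n$-diagram; in particular it is a normalized stratified truss. Next I would dualize. Since normalization is compatible with dualization, i.e.\ $\NF{(T,g)^\dagger} = \NF{T,g}^\dagger$, the dual $(\NFTrs \tstr f)^\dagger$ is again normalized (a normalized truss is its own normal form, so its dual is its own normal form too). Writing $(T,f) := (\NFTrs \tstr f)^\dagger$ and using that $\dagger$ is involutive, we have $(T,f)^\dagger = \NFTrs \tstr f$, which is a combinatorial manifold diagram; so by \cref{obs:duality-diagrams-comb} the truss $(T,f) = (\NFTrs \tstr f)^\dagger$ is a combinatorial cell $n$-diagram.

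Finally, I would return to tame stratifications. By definition, $\tstr f^\dagger$ is the tame stratification dual to $\tstr f$, i.e.\ the tame stratification (unique up to framed stratified homeomorphism) with $\NFTrs (\tstr f^\dagger) = (\NFTrs \tstr f)^\dagger$; its existence is precisely what \cref{thm:tame-strat-comb} supplies once we know, as established above, that $(\NFTrs \tstr f)^\dagger$ is a normalized stratified truss. Since $\NFTrs(\tstr f^\dagger)$ is a combinatorial cell diagram, the correspondence between cell diagrams and combinatorial cell diagrams then shows that $\tstr f^\dagger$ is a cell $n$-diagram, which is the claim. The only point requiring care — and the nearest thing to an obstacle — is the well-definedness of $\tstr f^\dagger$: one must confirm that dualization preserves normalization so that the combinatorialization theorem genuinely produces a classifying tame stratification for the dual truss. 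Every other step is a direct invocation of a cited equivalence.
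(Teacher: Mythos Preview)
Your proposal is correct and follows essentially the same reasoning as the paper. The paper treats this as an immediate observation with no proof: since the preceding theorem shows $\tstr f$ is a manifold diagram, and cell diagrams are by definition the duals of manifold diagrams (\cref{cor:duality-m-p-diag}), the dual $\tstr f^\dagger$ is a cell diagram. Your version simply unpacks this via the combinatorial side, which is exactly the content underlying \cref{cor:duality-m-p-diag}.
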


\subsection{Cell structures of tame tangles}

We now show that tame tangles have canonical regular cell structures. Recall, a poset $P$ is called cellular when for each $x \in P$ the strict upper closure $P^{>x}$ is a sphere (see \cref{recoll:cellular-poset}).

\begin{constr}[Canonical cell structures of tangles] \label{contr:cell-struct-of-tang} Given a tame tangle $f = (W \into \II^n)$, construct its coarsest refining mesh $\iM^f$ and its fundamental tangle truss $(T,g : Q \into T_n) = \NFTrs f$.
Compactify the latter to obtain the closed stratified truss $(\overline T, \overline g : \overline Q \into \overline T_n)$ (see \cref{notn:compactifying-tangle-trusses}). For each $x \in \overline T_n$, the strict upper closure $\overline T_n^{>x}$ is a sphere (for instance, by \cite[Lem.\ 3.2.1]{fct}). It follows that $\overline T_n$ is a cellular poset, and its classifying mesh $\iM^{\overline f} \equiv \CMsh \overline T$ gives a corresponding regular cell complex $\iM^{\overline f}_n$.
Consider the constructible substratification $\cellstr_f \into \iM^{\overline f}_n$ consisting of those cells of $\iM^{\overline f}_n$ which lie in the subspace $W \into \bI^n$. We refer to $\cellstr_f$ as the `canonical cell structure of $W$' (note that $W \ot \cellstr_f \into \iM^{\overline f}_n$ is a cellular subrefinement of $W$ in the sense of \cref{rmk:cellulable-strat}). We also define $\cellstr_{\overline f} := \CStr {\overline Q}$, which cellulates the compactification $\overline W$ of $W$ (see \cref{notn:compactifying-tangle-trusses}).
\end{constr}

\nid We can also construct the following dual cell structure.

\begin{constr}[Canonical dual cell structures of tangles] \label{contr:dual-cell-struct} Given a tame tangle $f = (W \into \II^n)$, construct its coarsest refining mesh $\iM^f$ and its fundamental tangle truss $(T,g : Q \into T_n) = \NFTrs f$ as before. For each $x \in Q$, verify that $Q^{<x}$ is a sphere.\footnote{By definition of tangle trusses, $(T_n^{\leq x},g)$ normalizes to $\OTT^k \times (C,D)$ where $D \setminus \top$ is a sphere. Thus $D$ itself is a disk. Thus, the restriction of the projection $T_n^{\leq x} \to T_k^{\leq x_k}$ to $Q$ gives a poset map $Q^{\leq x} \to T_k^{\leq x_k}$ whose fibers are disks. The `base' $T_k^{\leq x_k}$ of this bundle is a disk (by the dual version of \cite[Lem.\ 3.2.1]{fct}. Thus $Q^{\leq x}$ is a disk itself. It follows that $Q^{\leq x} \setminus x = Q^{<x}$ is a sphere.}
    Thus $Q\op$ is a cellular poset. The classifying stratification $\CStr {Q\op}$ will be written $\cellstr^\dagger_f$ and called the `canonical dual cell structure' of $W$. (Note, if $\overline W$ has boundary then the classifying space $\abs{Q\op}$ need not be homeomorphic to $W$; but there is always an inclusion $\abs{Q\op} \into W$ which is an homotopy equivalence, which can be constructed by composing the identification $\abs{Q\op} \iso \abs{Q}$ with the realization $\abs{Q \into \overline T_n}$, noting that $\abs{\overline T_n}$ is the underlying space of $\CStr {\overline T_n} \iso \iM^{\overline f}_n$).
\end{constr}

\begin{eg}[Cell and dual cell structures of tame tangles] We illustrate cell and dual cell structures of tame tangles in \cref{fig:cell-and-dual-cell-structures-of-tame-tangles}: note that for the second example (the torus) we only give the 1-skeleton together with a couple of 2-cells---the reader will readily be able to fill in the rest of the 2-skeleton.
\begin{figure}[ht]
    \centering
    \def\svgwidth{1\columnwidth}
\begingroup%
  \makeatletter%
  \providecommand\color[2][]{%
    \errmessage{(Inkscape) Color is used for the text in Inkscape, but the package 'color.sty' is not loaded}%
    \renewcommand\color[2][]{}%
  }%
  \providecommand\transparent[1]{%
    \errmessage{(Inkscape) Transparency is used (non-zero) for the text in Inkscape, but the package 'transparent.sty' is not loaded}%
    \renewcommand\transparent[1]{}%
  }%
  \providecommand\rotatebox[2]{#2}%
  \newcommand*\fsize{\dimexpr\f@size pt\relax}%
  \newcommand*\lineheight[1]{\fontsize{\fsize}{#1\fsize}\selectfont}%
  \ifx\svgwidth\undefined%
    \setlength{\unitlength}{2160bp}%
    \ifx\svgscale\undefined%
      \relax%
    \else%
      \setlength{\unitlength}{\unitlength * \real{\svgscale}}%
    \fi%
  \else%
    \setlength{\unitlength}{\svgwidth}%
  \fi%
  \global\let\svgwidth\undefined%
  \global\let\svgscale\undefined%
  \makeatother%
  \begin{picture}(1,0.33055556)%
    \lineheight{1}%
    \setlength\tabcolsep{0pt}%
    \put(0,0){\includegraphics[width=\unitlength,page=1]{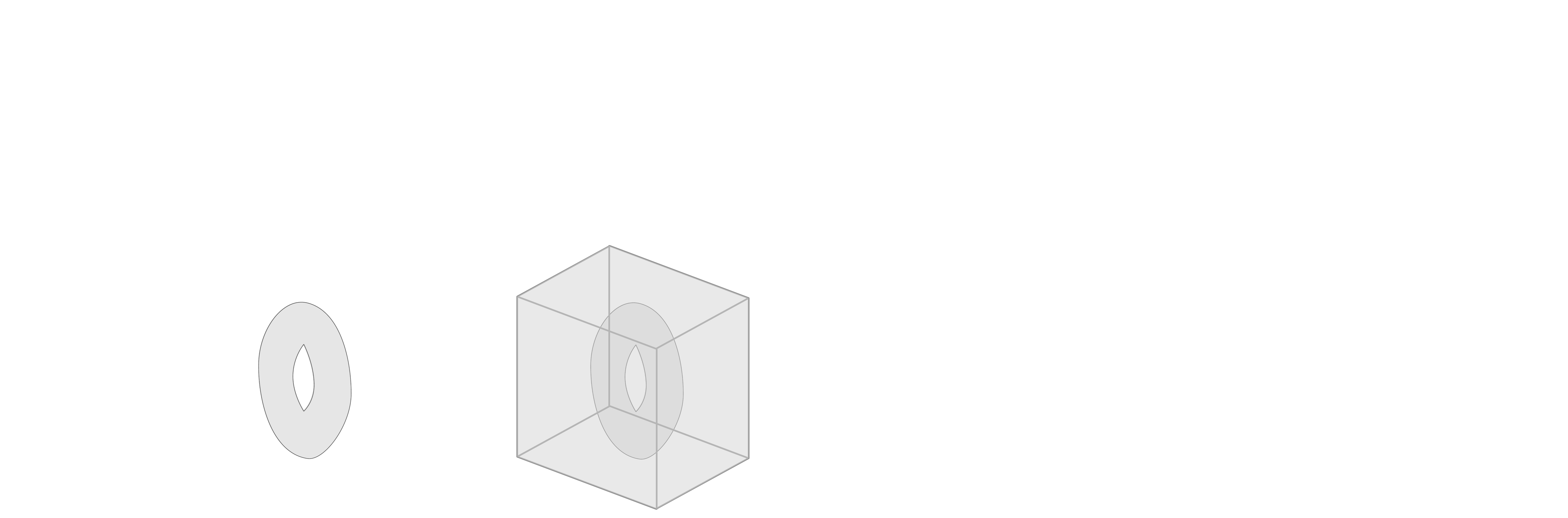}}%
    \put(0.26215278,0.08298611){\color[rgb]{0,0,0}\makebox(0,0)[lt]{\lineheight{1.25}\smash{\begin{tabular}[t]{l}$\into$\end{tabular}}}}%
    \put(0.265625,0.27361111){\color[rgb]{0,0,0}\makebox(0,0)[lt]{\lineheight{1.25}\smash{\begin{tabular}[t]{l}$\into$\end{tabular}}}}%
    \put(0,0){\includegraphics[width=\unitlength,page=2]{cell-and-dual-cell-structures-of-tame-tangles.pdf}}%
  \end{picture}%
\endgroup%

    \caption[Canonical cell and dual cell structures]{Canonical cell and dual cell structures of two tame tangles}
    \label{fig:cell-and-dual-cell-structures-of-tame-tangles}
\end{figure}
\end{eg}

\begin{rmk}[Non-existence of classical dual cell structures] In general, given a cellular poset $P$ whose classifying space is a closed manifold (i.e. compact without boundary) then $P\op$ need not be a cellular poset. In contrast, if $(T, f : Q \into T_n)$ is a combinatorial tangle, with the classifying space of $Q$ being a closed manifold, then the above constructions show that both $Q$ and $Q\op$ are cellular posets. Moreover, in the non-closed case, cellulations dualize up to compactification (that is, $\overline Q$ and $Q\op$ are cellular).
\end{rmk}

\subsection{PL structure and recognizability of tame tangles} \label{ssec:PL-struct} We address three foundational questions about our definition of tame tangles. Firstly, are tame tangle manifolds (canonically) PL manifolds? Secondly, are PL tangles generically tame tangles? And thirdly, given a tame stratification, can it be computably verified whether it is a tame tangle? The answers will be `it depends on the smooth Poincar\'e conjecture in dimension 4', `yes', and `no' respectively.

Note, while tame tangles have been defined in purely topological terms, as in the case of manifold diagrams, they carry canonical framed PL structure (as stratifications, see \cref{term:framed-pl-struct}). Indeed, this follows from \cref{thm:framed-PL-struct-unique}. Restricted to tangle manifolds, this structure gives a PL structure but a priori not necessarily a PL manifold structure (see \cref{term:PL-struct-vs-PL-manifold}). In contrast to \cref{obs:PL-struct-of-mdiag}, which noted that strata in manifold diagrams are canonically PL manifolds, the question of whether the canonical PL structure endows tangle manifolds with the structure of PL manifolds is more subtle.

A standard way of producing triangulated topological manifolds that are not PL manifolds uses the Edward---Cannon double suspension theorem \cite{cannonshrinking}: namely, given a triangulated non-trivial homology sphere $H$ its double suspension $\Sigma^2 H$ is a triangulated topological sphere that is not a PL manifold. One could hope to build a tame tangle by PL embedding $\Sigma^2 H \into \II^n$, which would yield a triangulated tangle manifold that is not a PL manifold. However, no such embedding as the next observation shows.

\begin{term}[Weak PL manifolds] A PL structure is called a `weak PL manifold structure' if all its links are homeomorphic to spheres (but not necessarily PL homeomorphic to the standard PL sphere as required in the definition of PL manifold structures, see \cref{term:PL-struct-vs-PL-manifold}).
\end{term}

\nid While our construction of $\Sigma^2 H$ yields a topological manifold with PL structure, it this fails to be a weak PL manifold, since (any triangulation in) the constructed PL structure contains links homeomorphic to $\Sigma H$ (namely, the links around the suspension points of $\Sigma^2 H$) and these links therefore fail to be homeomorphic to spheres.

\begin{obs}[Tame tangle manifolds are weak PL manifolds] The canonical PL structures of (open or compact) tame $m$-tangles restrict to weak PL $m$-manifold structures on tame tangle manifolds. (This can be shown for instance via the combinatorialization of tame tangles as tangle trusses, and the definition of combinatorial transversality.)
\end{obs}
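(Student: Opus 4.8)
The plan is to reduce the claim to a purely combinatorial statement about $m$-tangle $n$-trusses and their links, then verify it by unwinding the normal-form description of transversal neighborhoods. By \cref{obs:combinatorializing-tame-tangles} it suffices to show: for any $m$-tangle $n$-truss $(T,f:Q\into T_n)$, the canonical framed PL structure restricted to the tangle manifold is a weak PL $m$-manifold structure, i.e.\ that every link in the canonical triangulation of $Q$ (equivalently, in $\CStr{\overline Q}=\cellstr_{\overline f}$ of \cref{contr:cell-struct-of-tang}) is homeomorphic to a sphere of the appropriate dimension. So first I would translate the problem onto the combinatorial side and fix the triangulation to work with, namely the regular cell complex $\CStr{\overline Q}$ produced in \cref{contr:cell-struct-of-tang}, whose underlying space is the compactified tangle manifold $\overline W$.

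Next I would compute the link of an arbitrary cell. Given $x\in \overline Q$, the strict upper closure $\overline Q^{>x}$ is the link of the corresponding cell in $\CStr{\overline Q}$. Working with the interior points first (so that we may use the open normal form), for $x\in Q$ with $\tdim(x)=k$ we have by \cref{defn:comb-transv} that
\[
    \NF{T^{\leq x},f^{\leq x}} = \OTT^{k}\times (C,D\into C_{n-k}),
\]
where $D\setminus\top$ is an $(m-k-1)$-sphere and $\top$ is the cone point of $C$. Here I would exploit that the normalization only coarsens, hence is a truss coarsening that restricts (on the relevant closed subposets) to stratified subtrusses, and that the closure $\overline Q^{\geq x}$ is pulled apart by the product decomposition: the $\OTT^k$-factor contributes the $k$ "transversal directions" (each a closed half-line whose strict upper part is a point), while the cone factor $(C,D)$ contributes, via the dual-link computation already carried out in the footnote of \cref{contr:dual-cell-struct}, a disk $D\leq\top$ whose boundary sphere $D\setminus\top$ is the $(m-k-1)$-sphere. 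Assembling: the link of the cell $x$ inside $Q$ is the join of a $(k-1)$-sphere (from the $\OTT^k$ factor, since $\partial$ of a $k$-cube corner is $S^{k-1}$) with the $(m-k-1)$-sphere $D\setminus\top$, hence an $S^{k-1}*S^{m-k-1}\cong S^{m-1}$ — a topological sphere. The boundary cells (those $x\in\overline T_n\setminus T_n$ hit by $\overline Q$) are handled in exactly the same way, replacing $\OTT^k$ by a corner truss $\TT^\sigma$ (\cref{rmk:compact-tame-tangles-trusses}), in which case links become disks rather than spheres, which is precisely what is expected for a manifold with boundary and is still permitted by the weak-PL-manifold condition once one works with $\overline W$. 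The open case then follows by restriction, using that $\cellstr_f\into\iM^{\overline f}_n$ is the constructible substratification of cells lying in $W$.

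The main obstacle is the join identification $S^{k-1}*S^{m-k-1}\cong S^{m-1}$ at the \emph{topological} level: the normal form gives us the link only as a \emph{truss}, and we must argue that its classifying mesh/classifying space is genuinely a sphere, not merely a "weak sphere" built from non-PL-standard pieces. This is exactly where the asymmetry with \cref{obs:PL-struct-of-mdiag} shows up. The key input is that $D$, being the total-poset-truss of a cone truss with $D\setminus\top$ a sphere, has $\abs D$ a \emph{closed} disk (this is the content of the footnote computation in \cref{contr:dual-cell-struct}, itself resting on the dual version of \cite[Lem.\ 3.2.1]{fct}), so that the cone factor's link is $S^{m-k-1}$ with no pathology, and the $\OTT^k$ factor is manifestly PL standard. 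Since the join of a PL $S^{k-1}$ with a topological $S^{m-k-1}$ is a topological $S^{m-1}$ (the join of any topological sphere with $S^0$, repeated, is the suspension, and suspension of a topological sphere is a topological sphere), the link is a topological sphere as required; what it need \emph{not} be is a PL-standard sphere, and indeed the theorem only asserts the weak version. I would then remark that this argument is uniform in $x$, so all links of $\CStr{\overline Q}$ are spheres (resp.\ disks on the boundary), establishing that the canonical PL structure is a weak PL $m$-manifold structure, and finally transport the statement back through \cref{obs:combinatorializing-tame-tangles} and \cref{thm:framed-PL-struct-unique} to the topological tame tangle $f$.
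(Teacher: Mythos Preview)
Your approach is exactly what the paper's parenthetical hint points to, and the skeleton of the argument is correct: combinatorialize, pass to the regular cell complex $\CStr{\overline Q}$, and use combinatorial transversality to identify links as joins of topological spheres.

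Two small slips are worth flagging. First, you write that ``the strict upper closure $\overline Q^{>x}$ is the link of the corresponding cell in $\CStr{\overline Q}$'', but $\overline Q^{>x}$ is the \emph{boundary} of the cell $x$; the link one actually checks (at the vertex $x$ in the barycentric subdivision) is $\abs{\overline Q^{<x}} * \abs{\overline Q^{>x}}$ --- and indeed you then compute a join, so this is a momentary confusion rather than a structural error. Second, you attribute the two join factors $S^{k-1}$ and $S^{m-k-1}$ to the $\OTT^k$ and $D\setminus\top$ pieces of the \emph{normalized} neighborhood, but the link lives in the \emph{unnormalized} $\overline Q$, where $\dim(x)$ need not equal $\tdim(x)=k$ (an otherwise regular point of $W$ may sit on a lower-dimensional mesh cell forced by singularities elsewhere in the tangle). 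The clean decomposition is rather $\abs{\overline Q^{>x}}\cong S^{\dim(x)-1}$ (by cellularity of $\overline T_n$ from \cref{contr:cell-struct-of-tang}, using that $\overline Q$ is upward-closed so $\overline Q^{>x}=\overline T_n^{>x}$) joined with $\abs{Q^{<x}}$ (a sphere precisely by the footnote in \cref{contr:dual-cell-struct} that you cite, and this is where combinatorial transversality enters). Either way the join is a topological $S^{m-1}$, so your conclusion stands.
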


\begin{conj}[Tame tangle manifolds are PL manifolds] \label{conj:links-are-standard-PL} The canonical PL structures of (open or compact) tame $m$-tangles restrict to PL $m$-manifold structures on tame tangle manifolds.
\end{conj}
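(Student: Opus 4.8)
\nid \emph{Proof strategy.} The plan is to reduce \cref{conj:links-are-standard-PL} to the smooth Poincar\'e conjecture in dimension~$4$ (SPC4), and in fact to show the two are equivalent. By \cref{thm:framed-PL-struct-unique} the tangle manifold $W$ of a tame $m$-tangle $f=(W\into\II^n)$ carries a canonical framed PL structure, and by the observation immediately preceding this conjecture all of its iterated links are topological spheres, i.e.\ $W$ is a weak PL manifold. So the content of the conjecture is the upgrade ``weak PL manifold $\Rightarrow$ PL manifold'', which is to say: every iterated link of $W$ is PL-homeomorphic to the standard sphere.

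\nid The first step is to read off the links combinatorially. If $z\in W$ has transversal dimension $k$, then $W$ has a tame framed tubular neighborhood of the form $\II^{k}\times\fcone(l_z)$ with $l_z=(L_z\into\partial\bI^{n-k})$ and $L_z$ a topological $(m-k-1)$-sphere (\cref{defn:framed-transv}); moreover $L_z$, together with its induced framed PL structure, is well-defined up to PL homeomorphism by the combinatorialization \cref{obs:combinatorializing-tame-tangles} (where it appears as $D_x\setminus\top$ inside a normal form $\NF{T^{\leq x},f^{\leq x}}=\OTT^{k}\times(C_x,D_x\into (C_x)_{n-k})$). The link of $z$ in $W$ is the join $S^{k-1}\ast L_z$, and since a join of PL standard spheres is again a PL standard sphere it suffices to prove that each $L_z$ is PL standard. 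Now $\fcone(L_z)\into\II^{n-k}$ is itself a tame tangle --- a tame $(m-k)$-tangle --- with $L_z$ the link of its cone point; so for $k>0$ an induction on $m$ applies, and $L_z$ is PL standard by the inductive hypothesis in dimension $m-k<m$.

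\nid The decisive case is $k=0$ (a singular point of $W$), where $L_z$ is an $(m-1)$-dimensional weak PL manifold homeomorphic to $S^{m-1}$ and one must decide whether such a sphere is PL standard. This is classical for $m-1\leq 3$: triangulated $2$- and $3$-spheres are PL standard (Rad\'o, Moise), so a weak PL manifold of dimension $\leq 3$ that is a topological sphere is PL standard, and in particular a weak PL $4$-manifold is automatically a PL $4$-manifold. For $m-1\geq 5$ one descends through the links --- each a weak PL sphere of one lower dimension --- and applies the Stallings--Zeeman PL Poincar\'e theorem; this reduces the entire question to the case $m-1=4$, namely: is a PL $4$-manifold homeomorphic to $S^{4}$ PL-homeomorphic to the standard $S^{4}$? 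Since PL and DIFF structures agree in dimension $4$, this is precisely SPC4. Hence, granting SPC4, the induction on $m$ closes and \cref{conj:links-are-standard-PL} follows; unconditionally it holds for tame $m$-tangles with $m\leq 4$.

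\nid Finally, the dependence on SPC4 is genuine, so the conjecture is in fact equivalent to SPC4. If SPC4 fails there is an exotic PL $4$-sphere $X$ --- a closed PL $4$-manifold homeomorphic, but not PL-homeomorphic, to $S^{4}$; PL-embed $X\into\partial\bI^{n}$ for $n$ large by general position and form the cone embedding $\fcone(X)\into\II^{n}$. After a PL isotopy placing $X$ in general position with respect to the ambient framing, this is a tame $5$-tangle whose only $0$-transversal point is the cone point, with link $X$; its canonical framed PL structure is therefore not a PL manifold structure, contradicting the conjecture. The \textbf{main obstacle} is thus that SPC4 is open, so the conjecture cannot at present be settled either way; apart from this reduction, the only real work is (a) the bookkeeping identifying the links of a tame tangle with the join/cone data of its fundamental tangle truss and verifying that those links are themselves tame-tangle manifolds, and (b) in the sharpness direction, checking that an arbitrary exotic PL $4$-sphere genuinely arises as the cone of a tame tangle, i.e.\ arranging the framed-transversality conditions of \cref{defn:framed-transv} by PL general position --- routine, but requiring some care with the framing.
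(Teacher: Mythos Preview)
Your proposal is correct and matches the paper's own treatment. Note that this statement is a \emph{conjecture} in the paper, not a theorem: there is no proof to compare against, only the discussion in the remark immediately following it (\cref{rmk:SPC4}), which asserts exactly what you argue---that the conjecture is equivalent to SPC4, that an inductive argument establishes it if SPC4 holds, and that a counterexample exists if SPC4 fails. Your write-up is considerably more detailed than the paper's (which merely says ``an inductive argument can be given'' and ``a counterexample to the conjecture can be given''); in particular, your join decomposition $S^{k-1}\ast L_z$ of the link and the descent through dimensions via Stallings--Zeeman make explicit what the paper leaves to the reader.
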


\nid The conjecture depends on the following fundamental open problem.

\begin{rmk}[PL Poincar\'e conjecture in dim 4] \label{rmk:SPC4} If $m \neq 4$, all PL $m$-spheres are standard by the PL Poincar\'e conjecture  \cite[\S I.4]{buoncristiano2003fragments}. In contrast, it is an open problem whether non-standard PL 4-spheres exists, which is equivalent to the smooth Poincar\'e conjecture in dimension 4 (SPC4), see \cite{freedman2010man}. \cref{conj:links-are-standard-PL} is equivalent to SPC4: if no non-standard PL spheres exists than an inductive argument can be given to prove the conjecture, and conversely, if SPC4 is false then a counterexample to the conjecture can be given.\footnote{If SPC4 turns out to be false, it may make sense to change the definition of tame tangles to require links to be standard PL spheres, in order to ensure tangle manifolds carry canonical PL manifold structures.}
\end{rmk}

\begin{obs}[Canonical PL and smooth structures of tame $m$-tangles, $m \leq 4$] \label{rmk:tame-4-tang-PL} The conjecture is true up to and including dimension $4$ (a simpler proof can be given in this case, since the Hauptvermutung holds in dimension $k \leq 3$, see \cite{moise2013geometric}, which means that all triangulations of $k$-spheres are standard PL spheres; and thus weak PL manifolds are PL manifolds in dimension $\leq 4$). Moreover, since PL and smooth structures are equivalent up to and including dimension 6 (see \cite{hirsch1974smoothings}), tame $m$-tangles also carry canonical smooth structures on their tangle manifolds for $m \leq 4$.
\end{obs}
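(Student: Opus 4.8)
The plan is to bootstrap from the earlier observation that the canonical framed PL structure of a tame $m$-tangle $f = (W \into \II^n)$ --- which exists and is canonical by \cref{thm:framed-PL-struct-unique} --- restricts on the tangle manifold $W$ to a \emph{weak} PL $m$-manifold structure, and then to upgrade ``weak PL manifold'' to ``PL manifold'' using the low-dimensional Hauptvermutung. Concretely, one fixes a simplicial complex $K$ representing the canonical PL structure (for instance a barycentric subdivision of the canonical cell structure $\cellstr_f$ of \cref{contr:cell-struct-of-tang}), together with the induced triangulation $K_W$ of $W$. By the weak PL manifold property, every simplex link $\mathrm{lk}_{K_W}(\sigma)$ is a simplicial complex homeomorphic to a sphere (or, at a corner point of $W$ in the compact variant, to a disk) of dimension $m - \dim\sigma - 1 \leq m - 1$.

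For $m \leq 4$ these links are at most $3$-dimensional, so the next step is to invoke that in dimensions $\leq 3$ every simplicial complex homeomorphic to a sphere (resp.\ a disk) is PL-homeomorphic to the standard PL sphere (resp.\ the standard PL disk): in dimensions $0$ and $1$ this is elementary, in dimension $2$ it follows from the classification of triangulated surfaces, and in dimension $3$ it is Moise's theorem / the $3$-dimensional Hauptvermutung \cite{moise2013geometric}. Applying this to every simplex link of $K_W$ shows that $K_W$ triangulates a PL $m$-manifold; since $K$ and the underlying framed PL structure are canonical, the resulting PL $m$-manifold structure on $W$ is canonical as well. This establishes \cref{conj:links-are-standard-PL} for $m \leq 4$ and confirms, unconditionally, the case $m \leq 4$ of the equivalence discussed in \cref{rmk:SPC4}. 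The argument applies verbatim to the compact case, where one works with the compactified tangle truss $\overline{\NFTrs f}$ of \cref{notn:compactifying-tangle-trusses} and the same link analysis.

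For the smooth structures one then appeals to PL smoothing theory \cite{hirsch1974smoothings}: every PL $m$-manifold with $m \leq 6$ admits a compatible smooth structure, and any two such are diffeomorphic. Since the tangle manifold of a tame $m$-tangle with $m \leq 4 \leq 6$ carries the canonical PL $m$-manifold structure produced above, this equips it with a canonical smooth structure --- canonical precisely because it is the essentially unique smoothing of a canonical PL structure.

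I expect no deep obstacle here; the only point requiring care is the bookkeeping in the implication ``weak PL $\Rightarrow$ PL'': one must verify the link condition for simplices of all dimensions, which can be reduced to vertex links via the standard identity $\mathrm{lk}_{\mathrm{lk}(v)}(\tau) = \mathrm{lk}(v \ast \tau)$, and one must use Moise's theorem in the strong form ``any triangulated topological $3$-sphere is the standard PL $3$-sphere'' (together with its analogue for $3$-balls), which is strictly stronger than what the $3$-dimensional Poincar\'e conjecture alone provides. Everything else is a direct citation of \cref{thm:framed-PL-struct-unique}, the weak-PL-manifold observation, and classical smoothing theory.
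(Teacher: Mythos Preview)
Your proposal is correct and follows essentially the same argument as the paper: the paper's ``proof'' is in fact embedded in the observation statement itself, and consists precisely of the chain \emph{weak PL manifold $\Rightarrow$ links are topological spheres of dimension $\leq 3$ $\Rightarrow$ Hauptvermutung/Moise makes them standard PL spheres $\Rightarrow$ PL manifold $\Rightarrow$ smoothing theory gives a canonical smooth structure}. You have simply spelled out the bookkeeping (choice of triangulation, reduction to vertex links, the compact/corner case) that the paper leaves implicit.
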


We have just seen that potentially not all tame tangle manifolds are PL manifolds; thus, a tame tangle may not necessarily be framed stratified homeomorphic to a PL tangle (see \cref{term:transversal}). Conversely however, the next two observations explain that PL tangles are `generically' tame tangles, and that if PL tangles are tame then their given PL structures coincide with their canonical PL structures as tame tangles. 

\begin{obs}[PL tangles are generically tame tangles] \label{obs:pl-tang-gen-tame} Given a closed PL tangle (i.e.\ a closed PL manifold that PL embeds into the open cube $\II^n$), in order to verify that it is a tame tangle it is sufficient to check that it is tame compact framed transversal (this is analogous to \cref{obs:PL-diagrams-are-mdiag}). Moreover, the transversality condition can be regarded as a `genericity' condition in the following sense: any closed PL tangle given by an embedded simplicial complex $W \into \II^n$ can be made framed transversal by an arbitrarily small perturbation of the vertices of $W$. This observation generalizes to the case of non-closed compact PL tangles $W \into \bI^n$ with appropriate care on boundaries.
\end{obs}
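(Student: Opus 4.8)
The plan is to treat the two assertions separately; the first is a straightforward transcription of \cref{obs:PL-diagrams-are-mdiag} into the tangle setting, and the second is the substantive point. For the first assertion, a closed PL tangle $W \into \II^n$ presents, via \cref{rmk:emb-as-strat}, a stratification of the cube whose strata are open PL submanifolds and which admits a simplicial subrefinement, hence is a PL stratification; by \cref{thm:plness-implies-tameness} it is tame, and by \cref{rmk:pl-map-implies-tame-map} the local product neighbourhoods one uses to test framed transversality may be taken piecewise linear, hence tame. Recalling the footnote to \cref{defn:tame-tangle} (one may drop ``tame'' from ``tame framed transversal'' once the stratification itself is tame), it follows that as soon as tame compact framed transversality has been verified — equivalently, as soon as plain compact framed transversality has, since tameness of the stratification is automatic — \cref{rmk:compact-tame-tangles} applies verbatim and $W \into \bI^n$ is a compact tame tangle, with open interior an open tame tangle. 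So this part needs no new ideas.

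For the genericity assertion I would argue as follows. Fix a linear triangulation $K$ of the compact PL manifold $W$ realising the given embedding $W \into \II^n$, with vertex set $V$. Moving the vertices produces a family of PL embeddings parametrised by an open neighbourhood $\Omega$ of the original configuration in $(\lR^n)^V$: openness holds because nondegeneracy of the simplices of $K$, injectivity of the resulting map on $|K|$, and containment of the image in $\II^n$ are all open conditions. For a configuration in $\Omega$ the iterated projections $\pi_{>i}\colon \lR^n \to \lR^i$ of the flat framing (\cref{notn:coord-axis-reverse}) are affine on each simplex, so the whole of \cref{defn:framed-transv} becomes a finite conjunction of linear general-position conditions on the vertex positions. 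Concretely, running the flag $\lR^n \to \lR^{n-1} \to \dots \to \lR^1$, one defines inductively the combinatorial critical subcomplexes $\Sigma_k \subset W$ (simplices on which the rank of $\pi_{>k}$ drops by the expected amount) and imposes that on each simplex of each $\Sigma_k$ the relevant iterated projection is injective and that the finitely many faces meeting a given simplex are in general position inside the target $\lR^k$. Each such condition is the complement of a proper affine subvariety of $\Omega$ — it fails exactly when certain determinants, affine in the vertex coordinates, vanish — and there are finitely many, so the good locus $\Omega_{\mathrm{gen}} \subset \Omega$ is open and dense; any configuration in $\Omega_{\mathrm{gen}}$ close to the original one gives the claimed arbitrarily small framed-transversalising perturbation.

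The main obstacle, and where I expect to spend real effort, is showing that membership in $\Omega_{\mathrm{gen}}$ forces the \emph{inductive} geometric condition of \cref{defn:framed-transv}: that at a point $x$ whose deepest critical subcomplex is $\Sigma_k$ the configuration yields a framed PL neighbourhood $\II^k \times (\II^{n-k},\fcone(l))$ with $l = (S^{m-k-1} \into \partial\bI^{n-k})$ and every point off the cone stratum of strictly larger transversal dimension. This is a piecewise-linear Thom--Boardman normal form; I would prove it by downward induction on $n-k$, restricting to a complementary slice $\lR^{n-k}$ (where general position makes $\pi_{>k}$ transverse to the critical locus, so the $\II^k$-factor splits off), and then identifying the link of the now-isolated PL singularity as the generic PL sphere $S^{m-k-1} \into \partial\bI^{n-k}$ by a direct link computation from the general-position hypotheses. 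The delicate bookkeeping is to choose the list of determinantal conditions so that it is simultaneously finite, each condition affine-proper, and the whole list jointly sufficient for this normal form at every point.

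Finally, for a non-closed compact PL tangle $W \into \bI^n$ one runs the same scheme but first stratifies $\partial\bI^n$ and adds to the list the requirement that the restrictions of $W$ to the corner faces $\II^\sigma$ are themselves framed transversal in the lower-dimensional sense (tangles with nice corners, cf.\ \cref{rmk:what-about-corners}); these are again finitely many affine-proper conditions on the boundary vertices, so they can be imposed together with the interior ones, and the resulting perturbation exhibits $W$ as a compact tame tangle.
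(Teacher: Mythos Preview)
The paper provides no proof for this observation; it is stated as a bare assertion and left to the reader. Your proposal therefore goes well beyond what the paper does. The first part is correct and matches the intended one-line justification (PL $\Rightarrow$ tame via \cref{thm:plness-implies-tameness}, then invoke the footnote to \cref{defn:tame-tangle}); this is exactly the analogy with \cref{obs:PL-diagrams-are-mdiag} that the paper gestures at.

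For the genericity part, your outline is a reasonable strategy, and you are right to flag the inductive normal-form step as the substantive obstacle. One point to be careful about: you describe framed transversality as reducible to ``a finite conjunction of linear general-position conditions'' given by vanishing of determinants. The determinantal rank conditions on the iterated projections $\pi_{>k}$ are indeed algebraic in the vertex coordinates, but the full condition in \cref{defn:framed-transv} also asks that the link at a $k$-transversal point be an $(m-k-1)$-sphere embedded in $\partial\bI^{n-k}$. For a PL manifold $W$ the intrinsic links are spheres, but you must still argue that after restricting to the normal slice the \emph{embedded} link has the right type; this is not purely a rank condition and needs the general-position hypotheses to rule out, for instance, distinct sheets of $W$ colliding in a single fibre of $\pi_{>k}$. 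You do mention imposing that faces meeting a given simplex are in general position in the target $\lR^k$, which is the right idea, but it is worth being explicit that this separation condition (not just the rank condition) is what forces the link to be the expected sphere rather than, say, a wedge or a disconnected union. With that caveat, the architecture of your argument---parametrise by vertex positions, cut out a finite union of proper subvarieties, verify the normal form by downward induction on $n-k$---is sound and is a natural way to make the paper's unproved claim precise.
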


\begin{obs}[Tame PL tangles carry the canonical PL structures] \label{obs:pl-tang-inherit-pl-struct} We refer to a PL tangle that is a tame tangle as a `tame PL tangle'. Note, in a tame PL tangle $(W \into \bI^n)$ the canonical PL structure of the tangle (see \cref{thm:framed-PL-struct-unique}) restricts to the given PL structure of the PL manifold $W$. Moreover, given two tame PL tangles $(W \into \bI^n)$ and $(W' \into \bI^n)$ that are framed stratified homeomorphic, then they are framed stratified PL homeomorphic (this follows from the `flat framed Hauptvermutung', see \cite[Cor.\ 5.0.7]{fct}). Restricting this homeomorphism to tangle manifolds shows that the PL manifolds $W$ and $W'$ (with the PL structure given to them as PL tangles) are PL homeomorphic. In particular, given a tame PL tangle $(W \into \bI^n)$, the given PL structure of $W$ coincides with the canonical PL structure of the tame tangle restricted to $W$.
\end{obs}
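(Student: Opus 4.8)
The plan is to reduce both assertions of the observation to the uniqueness of framed PL structures (\cref{thm:framed-PL-struct-unique}) and the flat framed Hauptvermutung of \cite[Cor.\ 5.0.7]{fct}, and then to restrict the resulting framed stratified PL homeomorphisms of the cube to the tangle manifold. The logical skeleton is: first prove that in a tame PL tangle the canonical PL structure restricts on $W$ to the given one; then deduce the homeomorphism statement from it together with the Hauptvermutung.

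First I would fix a tame PL tangle $(W \into \bI^n)$ with associated stratification $f$ (see \cref{rmk:emb-as-strat}). Since $(W \into \bI^n)$ is a PL tangle, it presents $f$ as a PL stratification of $\bI^n$, hence $f$ is tame by \cref{thm:plness-implies-tameness}; moreover, because $W \into \bI^n$ is a PL embedding of a PL manifold (see \cref{term:transversal}), a simplicial subrefinement witnessing PL-ness of $f$ may be chosen whose restriction to $W$ is a triangulation of $W$ in its given PL manifold structure. In particular the PL tangle data equips $f$ with a framed triangulation in the sense of \cref{term:framed-pl-struct}. On the other hand, \cref{thm:framed-PL-struct-unique} supplies a \emph{unique} framed PL structure on $f$, represented by $\CMsh \NFTrs f$. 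Hence the framed triangulation coming from the ambient PL tangle and the one given by $\CMsh \NFTrs f$ are framed equivalent, i.e.\ related by a framed stratified PL homeomorphism descending to the identity on $f$. Restricting this homeomorphism to the union of strata comprising $W$ (which it respects, being stratified) exhibits a PL homeomorphism between $W$ with its given PL manifold structure and $W$ with the PL structure inherited from $\CMsh \NFTrs f$; this is exactly the claim that the canonical PL structure restricts to the given one on $W$, which handles the first and the `in particular' clauses of the observation.

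Next I would take two tame PL tangles $(W \into \bI^n)$ and $(W' \into \bI^n)$ that are framed stratified homeomorphic, with associated stratifications $f$ and $f'$. Both are tame, so the flat framed Hauptvermutung \cite[Cor.\ 5.0.7]{fct} promotes the given framed stratified homeomorphism $f \iso f'$ to a framed stratified PL homeomorphism with respect to the canonical framed PL structures $\CMsh \NFTrs f$ and $\CMsh \NFTrs f'$; by the previous paragraph these restrict on the tangle manifolds to the given PL manifold structures of $W$ and $W'$. Restricting the framed stratified PL homeomorphism to the strata comprising $W$ (carried onto those comprising $W'$) then produces a PL homeomorphism $W \iso W'$, so $W$ and $W'$ are PL homeomorphic as PL manifolds.

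The point that requires genuine care is the compatibility claim used in the second paragraph: that a framed triangulation of $f$ arising from the ambient PL tangle structure really restricts on the tangle manifold to a triangulation in its \emph{given} PL manifold structure, rather than merely some PL structure. I expect the main work to lie here, and this is precisely where one must use that the tangle is a genuine PL tangle (a PL embedding of a PL manifold) and not just a tame tangle whose underlying tangle manifold happens to be PL; it should follow by unwinding the definition of PL embedding in \cref{term:transversal}, along the lines of the argument behind \cref{rmk:pl-map-implies-tame-map}. Once that is in place, the remaining assertions are formal consequences of the cited uniqueness and Hauptvermutung results.
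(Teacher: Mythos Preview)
Your proposal is correct and follows essentially the same route as the paper: the paper's justification (embedded inline in the observation) likewise invokes the uniqueness of framed PL structures (\cref{thm:framed-PL-struct-unique}) for the first claim, the flat framed Hauptvermutung \cite[Cor.\ 5.0.7]{fct} to promote framed stratified homeomorphism to framed stratified PL homeomorphism, and then restricts to the tangle manifolds. Your write-up is in fact more explicit than the paper's, and you correctly isolate the one non-formal point---that the ambient PL triangulation of the tangle restricts on $W$ to its \emph{given} PL manifold structure---which the paper leaves implicit in the definition of PL tangle.
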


\begin{obs}[All PL structures appear as tame PL tangles] \label{obs:realizing-PL-struct} By \cref{obs:pl-tang-gen-tame} and \cref{obs:pl-tang-inherit-pl-struct} it follows that \emph{all} PL $m$-manifolds $W$ can be realized as tame PL tangles $W \into \II^n$.
\end{obs}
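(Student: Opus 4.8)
The plan is to realize $W$ as the tangle manifold of a tame PL tangle, in three steps: embed $W$ piecewise-linearly into a cube, perturb the embedding to make it framed transversal (hence, being PL, tame), and then apply \cref{obs:pl-tang-inherit-pl-struct} to identify the PL structure induced on the tangle manifold with the given one. Concretely, given a compact PL $m$-manifold $W$ with triangulating simplicial complex $K$, I would first apply the piecewise linear embedding theorem \cite{hudson1969piecewise} to obtain a PL embedding $\iota_0 : K \into \lR^n$ for any $n \geq 2m+1$; since $\iota_0(K)$ is a bounded polyhedron, post-composing with scaling by a small positive constant (a framed PL self-homeomorphism of $\lR^n$) puts the image inside $\II^n$, and $\iota_0(W)$ is closed there because $W$ is compact. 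This presents $W$ as a PL $m$-tangle $W \into \II^n$ in the sense of \cref{term:transversal}. If $\partial W \neq \emptyset$ I would instead embed the pair $(W,\partial W)$ into $(\bI^n,\partial\bI^n)$ with $\partial W$ sent into the cube boundary, and run the compact variants of the argument below (cf.\ \cref{rmk:compact-tame-tangles}).

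Next, I would invoke \cref{obs:pl-tang-gen-tame}: after an arbitrarily small perturbation of the vertices of $K$ the embedding becomes (compact) framed transversal. Combined with \cref{thm:plness-implies-tameness} (any PL stratification of $\II^n$, or of $\bI^n$, is tame), the perturbed PL embedding $\iota : W \into \II^n$ defines a tame framed transversal stratification in the sense of \cref{rmk:emb-as-strat}, i.e.\ a tame $m$-tangle by \cref{defn:tame-tangle}; being PL by construction, $\iota$ is a tame PL tangle.

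Finally, I would apply \cref{obs:pl-tang-inherit-pl-struct} to $\iota$: for a tame PL tangle, the canonical framed PL structure of the tame tangle (\cref{thm:framed-PL-struct-unique}) restricts on the tangle manifold to the PL structure it carries as a PL tangle, which here is the original PL-manifold structure of $W$. Thus $W$, with its PL structure, is realized as the tangle manifold of the tame PL tangle $\iota : W \into \II^n$ (resp.\ $\bI^n$ when $\partial W \neq \emptyset$), as asserted.

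The step carrying the real content — although it is precisely what \cref{obs:pl-tang-gen-tame} supplies, so I would cite it rather than reprove it — is the perturbation argument: one must verify that, for each simplex of $K$ and inductively for its faces, the framed transversality condition of \cref{defn:framed-transv} defines an open dense subset of the finite-dimensional configuration space of vertex positions, and that the finite intersection of these open dense conditions remains dense, guaranteeing a framed transversal representative arbitrarily close to $\iota_0$. Everything else is bookkeeping: the embedding step is classical PL topology, and the final identification is a direct citation.
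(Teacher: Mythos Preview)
Your proposal is correct and matches the paper's approach exactly: the paper states the observation as an immediate consequence of \cref{obs:pl-tang-gen-tame} and \cref{obs:pl-tang-inherit-pl-struct} without further argument, and you have simply spelled out the implicit first step (the PL embedding theorem) together with the two citations in the order they are used.
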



Unfortunately, checking the framed transversality of an arbitrary (non-PL) tangle, or more generally of some given tame stratification, may not be easy, as follows.

\begin{rmk}[Unrecognizability of tame tangles] Given a tame stratification, we cannot decide whether it is a tame tangle. Indeed, it is impossible to write an algorithm that, given a simplicial complex $K$, decides whether $\abs{K}$ is a manifold \cite{markov1958insolubility} \cite{volodin1974problem} \cite{weinberger2004computers} \cite{poonen2014undecidable}, and this impossibility remains in place when trying to check the framed transversality condition.
\end{rmk}

\nid This stands in contrast to the case of manifold diagrams (where it is possible to algorithmically check the framed conicality condition) and a priori gives the notion of tame tangle a less `computationally tractable' feel. Several remedies are imaginable---including, for instance, a tractable classification of perturbation stable singularities of tangles (as discussed in the next sections) which would make at least the class of perturbation stable tame tangles computationally tractable.

\section{Tangle stability} \label{sec:fam-and-stability} We now discuss bundles of tangles, and notions of `perturbations' and `paths' of tangles, which in turn lead to definitions of `stable' tangles.

\subsection{Tangle bundles} We give the basic definitions of stratified bundles of tame tangles and tangle trusses. We also discuss a further strengthening of the notion, which describes `fiber bundles' of tame tangles.


\begin{rmk}[Stratified bundles from bundled embeddings] Given a stratification $B$, a space $X$, and a closed subspace $W \into B \times X$, we obtain a stratification $f$ of $X$ whose strata are the intersections of the connected components of $W$ and its complement with the preimages $\pi\inv(s)$ of strata $s$ in $B$ (where $\pi$ is the projection $B \times X \to B$). We will speak of the `stratified bundle $p = (W \into B \times X)$' to refer to the stratified bundle $\pi : (B \times X, f) \to B$.
\end{rmk}

\begin{defn}[Tame tangle bundle] \label{defn:tame-tang-bun} A \textbf{tame $m$-tangle bundle} $p = (W \into B \times \II^n)$ is a tame stratified bundle whose fibers are tame $m$-tangles.
\end{defn}

\nid The definition has the following combinatorial counterpart.

\begin{notn}[Stratified truss bundles from bundled embeddings] Given a $n$-truss bundle $q$ over $P$, and a closed subposet $Q \into \Totz q$, we write $(T,f : Q \into \Totz q)$ for the stratified $n$-truss bundle whose strata are the intersections of the connected components of $Q$ and its complement with the preimages $q_{>0}\inv(b)$ of elements $b \in P$ (where $q_{>0}$ is the composite $q_n \circ q_{n-1} \circ...\circ q_1$).
\end{notn}

\begin{defn}[Stratified bundles of tangle trusses] \label{defn:tang-truss-bun} Given a poset $P$, a \textbf{stratified bundle of $m$-tangle $n$-trusses} $(q,f : Q \into \Totz q)$ over $P$ is a stratified $n$-truss bundle over $P$ whose fibers $(\rest q b,\rest f b)$ over objects $b \in P$ are $m$-tangle $n$-trusses.
\end{defn}

\nid Recall from \cref{thm:tame-bundle-class} the correspondence of tame stratified bundles and normalized stratified truss bundles. Given a stratified bundle of tame $m$-tangles $p$ over $B$, then $\NFTrs p$ is a stratified bundle of $m$-tangle trusses, which we refer to as the `fundamental tangle truss bundle' of $p$. Conversely, given a tangle truss bundle $q$ over $\Entr B$ such that $q \iso \NFTrs p$, we say that $p$ is a `classifying tame $m$-tangle bundle' of $q$. Both the notions of tangle truss bundles and of tame tangle bundles further have `compact' variations to which they relate by `compactification', which is analogous to the unbundled case discussed in \cref{rmk:compact-tame-tangles} and \cref{notn:compactifying-tangle-trusses} (note, this uses cubical compactification for stratified truss \emph{bundles}, see \cref{rmk:compact-truss-bun}).

\begin{eg}[Tame tangle bundles] We illustrate several tame tangle bundles in \cref{fig:tame-tangle-bundles}. The first two examples are bundles of 0-tangles in dim 1 over the stratification $\CStr [1]$; the third example is a bundle of 1-tangles in dim 2 over the same stratification; the fourth example is a bundle of 0-tangle in dim 2 over a cellulated circle; and the last example is a bundle of 1-tangle in dim over a cellulated open interval. Passing to the fundamental stratified truss bundle $\FTrs p$ yields corresponding examples of tangle truss bundles.
\begin{figure}[ht]
    \centering
    \def\svgwidth{1\columnwidth}
    \import{./figuresused/}{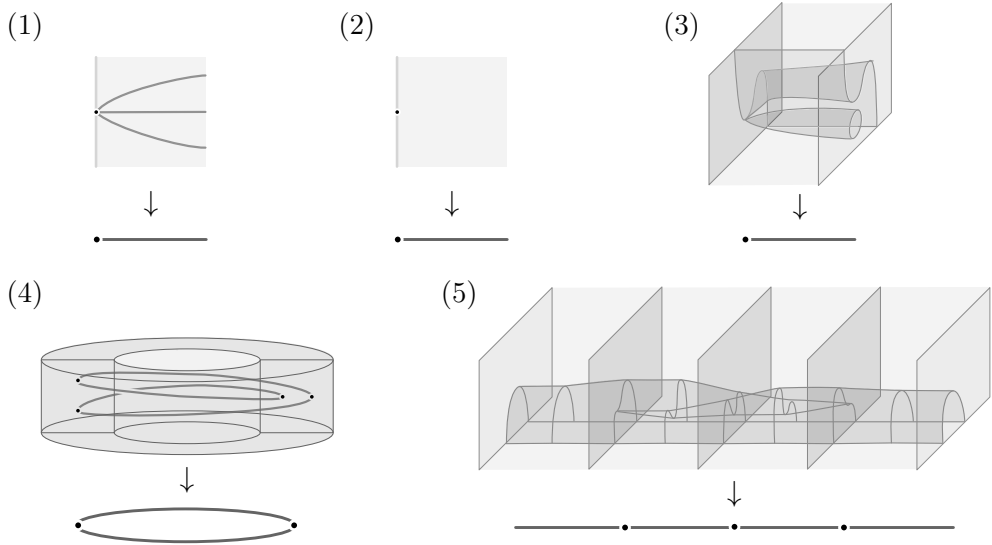}

    \caption{Stratified bundles of tame tangle}
    \label{fig:tame-tangle-bundles}
\end{figure}
\end{eg}

\begin{noneg}[Tame tangle bundles] In \cref{fig:tang-bun-non-eg} we give two examples of tame stratified bundles that fail to be tame tangle bundles: the first example fails since the subspace $W \into B \times \II^1$ is not closed; the third example fails since the fiber over the point stratum in the base stratification is not a tame tangle.
\begin{figure}[ht]
    \centering
    \def\svgwidth{1\columnwidth}
\begingroup%
  \makeatletter%
  \providecommand\color[2][]{%
    \errmessage{(Inkscape) Color is used for the text in Inkscape, but the package 'color.sty' is not loaded}%
    \renewcommand\color[2][]{}%
  }%
  \providecommand\transparent[1]{%
    \errmessage{(Inkscape) Transparency is used (non-zero) for the text in Inkscape, but the package 'transparent.sty' is not loaded}%
    \renewcommand\transparent[1]{}%
  }%
  \providecommand\rotatebox[2]{#2}%
  \newcommand*\fsize{\dimexpr\f@size pt\relax}%
  \newcommand*\lineheight[1]{\fontsize{\fsize}{#1\fsize}\selectfont}%
  \ifx\svgwidth\undefined%
    \setlength{\unitlength}{2160bp}%
    \ifx\svgscale\undefined%
      \relax%
    \else%
      \setlength{\unitlength}{\unitlength * \real{\svgscale}}%
    \fi%
  \else%
    \setlength{\unitlength}{\svgwidth}%
  \fi%
  \global\let\svgwidth\undefined%
  \global\let\svgscale\undefined%
  \makeatother%
  \begin{picture}(1,0.23125)%
    \lineheight{1}%
    \setlength\tabcolsep{0pt}%
    \put(0,0){\includegraphics[width=\unitlength,page=1]{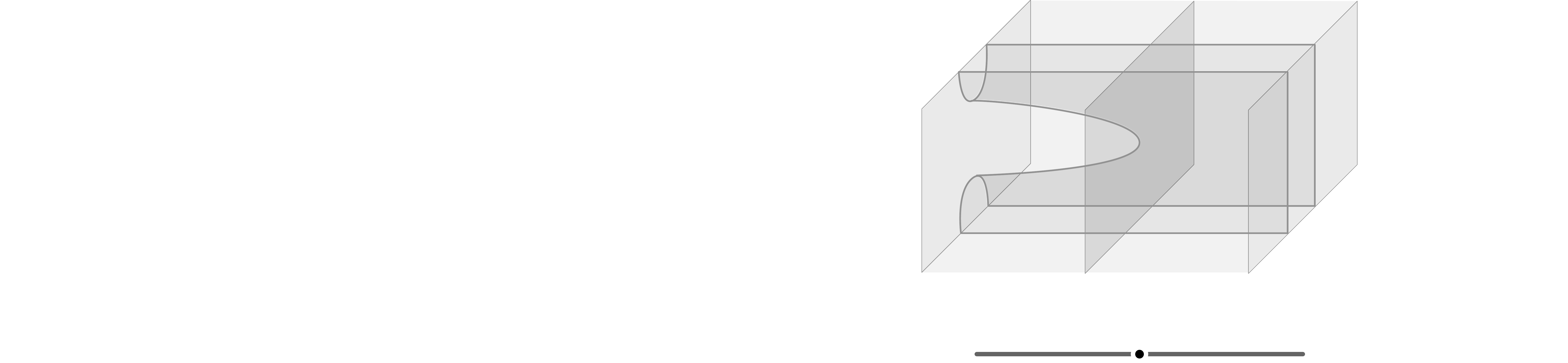}}%
    \put(0.71979167,0.02951389){\color[rgb]{0,0,0}\makebox(0,0)[lt]{\lineheight{1.25}\smash{\begin{tabular}[t]{l}$\downarrow$\end{tabular}}}}%
    \put(0,0){\includegraphics[width=\unitlength,page=2]{tang-bun-non-eg.pdf}}%
    \put(0.23923611,0.03298611){\color[rgb]{0,0,0}\makebox(0,0)[lt]{\lineheight{1.25}\smash{\begin{tabular}[t]{l}$\downarrow$\end{tabular}}}}%
  \end{picture}%
\endgroup%

    \caption{Non-examples of stratified bundles of tame tangle}
    \label{fig:tang-bun-non-eg}
\end{figure}
\end{noneg}

Observe that, in \cref{fig:tame-tangle-bundles}, the first three examples of tame tangle bundles $p = (W \into B \times \II^n)$ differ from the last two examples in that the latter bundles restrict to a fiber bundles $W \to B$ while the former bundles do not. The property of being a `tangle fiber bundle' in this sense is captured by the next definition.

\begin{term}[Tangle disks] An $m$-tangle $n$-truss $(T,f : Q \into T_n)$ is called an `$m$-disk' if the classifying space of the compactification $\overline Q$ of $Q$ (see \cref{notn:compactifying-tangle-trusses}) is a closed $m$-disk.
\end{term}

\begin{defn}[Fiber bundles of tangle trusses] \label{defn:tang-truss-bun-fiber} A  \textbf{fiber bundle of $m$-tangle $n$-trusses} $(q,f : Q \into \Totz q)$ over a poset $P$ is a stratified bundle of $m$-tangle $n$-trusses over $P$ that satisfies the following `fiber transition condition': fibers $(\rest q {c \to b}, \rest f {c \to b})$ over arrows $c \to b$ in $P$ satisfy, for $x \in \rest Q b$, that the `generic fiber' $(\rest r c, \rest g c)$ of the lower closure $(r,g) := (\restup q {c \to b} {\leq x},\restup f {c \to b} {\leq x})$ is an $m$-disk.
\end{defn}

\nid In parallel topological terms, we define the following.

\begin{defn}[Fiber bundles of tame tangles] A \textbf{fiber bundle of tame $m$-tangles} $p = (W \into B \times \II^n)$ is a tame tangle bundle whose normalized fundamental tangle truss bundles $\NFTrs p$ is a fiber bundle. 
\end{defn}

\nid The core observation about the previous two definitions is that the fiber transition condition guarantees that tame tangle fiber bundles $(p, W \into B \times \II^n)$ restrict to topological fiber bundles $W \to B$. We omit a proof of this observation.

\begin{term}[Stratified bundles by default] We henceforth often refer to `stratified bundles' of tames tangles (or tangle trusses) simply as `bundles'.
\end{term}

\subsection{Paths of tangles}

We introduce the notion of `paths' of tangles: the notion will be central for defining when singularities of tangles are equivalent. In a brief excursion, we also sketch notions of `higher paths', and how these form a `higher groupoid' of tangles.

In combinatorial terms, paths of tangles are tangle trusses, which are fiber bundles over their 1-level projections. We make this precise as follows.

\begin{notn}[Truncations] Given an $n$-truss $T = \{q_i : T_i \to T_{i-1}\}_{1 \leq i \leq n}$ write $T_{>k}$ for the $(n-k)$-truss bundle $\{q_i : T_i \to T_{i-1}\}_{k < i \leq n}$ over the poset $T_k$, and $T_{\leq k}$ for the $k$-truss $\{q_i : T_i \to T_{i-1}\}_{1 \leq i \leq k}$.
\end{notn}

\begin{term}[Fiber bundle trusses] An $m$-tangle $n$-truss $(T,f : Q \into T_n)$ is said to be an `$(n-k)$-fiber bundle' if the stratified $(n-k)$-truss bundle $(T_{>k},f)$ over the poset $T_k$ is a fiber bundle of $(m-k)$-tangles. In this case, we denote $(T,f)$ by $(T_{>k},f);T_{\leq k}$. 
\end{term}

\begin{defn}[Paths of tangle trusses] A \textbf{path $(S,g)$ of $m$-tangle $n$-trusses} is a 1-fiber bundle $(m+1)$-tangle $(n+1)$-truss, that is, $(S,g)$ of the form $(p,g);U$ where $(p,g)$ is a fiber bundle of $m$-tangle $n$-trusses, and $U$ is a 1-truss.
\end{defn}

\nid In topological terms, one analogously defines `paths of tame tangles' to be a tame tangles whose fundamental tangle truss is a path of tangle trusses.

\begin{eg}[Paths of tame tangles] The last example in \cref{fig:tame-tangle-bundles} is (up to endowing the base poset with a framing, making it a 1-mesh) is an example of a path of tame tangles.
\end{eg}

To describe composition of paths, the following notion of gluings will be needed (note that construction of gluings we give here is more general than what is needed for gluing paths; it will also allow us to glue `higher paths' later on). Recall that we refer to the $(n-k)$th coordinate of $\II^n$ as the `$k$th categorical direction' (see \cref{notn:coord-axis-reverse}). All trusses are assumed to be open.

\begin{constr}[Sides and gluings along categorical directions] \label{constr:gluings} Consider a stratified open $n$-truss $(T,f)$. Denote by $\gamma^{i}_\pm : T_{i} \into T_{i+1}$ the section of the 1-truss bundle $p_{i+1} : T_{i+1} \to T_{i}$ in $T$, that maps $x$ to the upper resp.\ lower endpoint of the fiber $p_i\inv(x)$. For $1 \leq k \leq n$, the \textbf{$k$th categorical upper} resp.\ \textbf{lower side} $(\partial^\pm_k T, \partial^\pm_k f)$ of $T$ is the maximal stratified subtruss of $(T,f)$ such that the image of the inclusion $(\partial^\pm_k T)_{n-k+1} \into T_{n-k+1}$ equals the image of the section $\gamma^{n-k}_\pm$. Given another stratified $n$-truss $(S,g)$ such that $(\partial^+_k T, \partial^+_k f)$ and $(\partial^-_k S, \partial^-_k g)$ both equal the same stratified truss $(W,h)$, their \textbf{$k$-side gluing} $(T \stack k S, f \stack k g)$ (also written as $(T,f) \stack k (S,g)$) is defined as the pushout of $(T,f) \hookleftarrow (W,h) \into (S,h)$ (in the category $\strotruss n$ of stratified open trusses and their stratified cocellular maps).\footnote{On underlying trusses, this colimit can be computed by level-wise pushouts of posets.}
\end{constr}

\begin{eg}[Gluings of stratified trusses] Let $T$ and $S$ be the (normalized) stratified trusses shown in \cref{fig:gluings-of-stratified-trusses}. To their right, we illustrate four possible gluings of $T$ and $S$ along their $2$-sides resp.\ along their $1$-sides (the mutual side along which we glue is encircled in blue). \begin{figure}[ht]
    \centering
    \def\svgwidth{1\columnwidth}
\begingroup%
  \makeatletter%
  \providecommand\color[2][]{%
    \errmessage{(Inkscape) Color is used for the text in Inkscape, but the package 'color.sty' is not loaded}%
    \renewcommand\color[2][]{}%
  }%
  \providecommand\transparent[1]{%
    \errmessage{(Inkscape) Transparency is used (non-zero) for the text in Inkscape, but the package 'transparent.sty' is not loaded}%
    \renewcommand\transparent[1]{}%
  }%
  \providecommand\rotatebox[2]{#2}%
  \newcommand*\fsize{\dimexpr\f@size pt\relax}%
  \newcommand*\lineheight[1]{\fontsize{\fsize}{#1\fsize}\selectfont}%
  \ifx\svgwidth\undefined%
    \setlength{\unitlength}{2160bp}%
    \ifx\svgscale\undefined%
      \relax%
    \else%
      \setlength{\unitlength}{\unitlength * \real{\svgscale}}%
    \fi%
  \else%
    \setlength{\unitlength}{\svgwidth}%
  \fi%
  \global\let\svgwidth\undefined%
  \global\let\svgscale\undefined%
  \makeatother%
  \begin{picture}(1,0.24131944)%
    \lineheight{1}%
    \setlength\tabcolsep{0pt}%
    \put(0,0){\includegraphics[width=\unitlength,page=1]{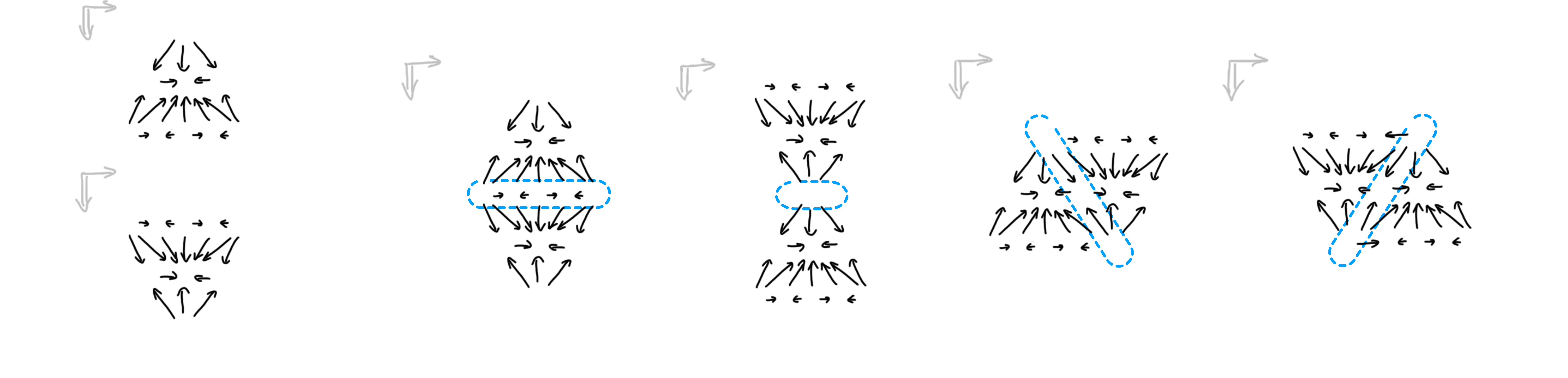}}%
    \put(0.17326389,0.17465278){\color[rgb]{0,0,0}\makebox(0,0)[lt]{\lineheight{1.25}\smash{\begin{tabular}[t]{l}$T$\end{tabular}}}}%
    \put(0.17361111,0.07048611){\color[rgb]{0,0,0}\makebox(0,0)[lt]{\lineheight{1.25}\smash{\begin{tabular}[t]{l}$S$\end{tabular}}}}%
    \put(0.29895833,0.003125){\color[rgb]{0,0,0}\makebox(0,0)[lt]{\lineheight{1.25}\smash{\begin{tabular}[t]{l}$T \stack 2 S$\end{tabular}}}}%
    \put(0.46944444,0.003125){\color[rgb]{0,0,0}\makebox(0,0)[lt]{\lineheight{1.25}\smash{\begin{tabular}[t]{l}$S \stack 2 T$\end{tabular}}}}%
    \put(0.64479167,0.003125){\color[rgb]{0,0,0}\makebox(0,0)[lt]{\lineheight{1.25}\smash{\begin{tabular}[t]{l}$T \stack 1 S$\end{tabular}}}}%
    \put(0.83194444,0.003125){\color[rgb]{0,0,0}\makebox(0,0)[lt]{\lineheight{1.25}\smash{\begin{tabular}[t]{l}$S \stack 1 T$\end{tabular}}}}%
    \put(0,0){\includegraphics[width=\unitlength,page=2]{gluings-of-stratified-trusses-comb.pdf}}%
  \end{picture}%
\endgroup%

    \caption{Gluings of stratified trusses $T$ and $S$}
    \label{fig:gluings-of-stratified-trusses-comb}
\end{figure}
\end{eg}

\begin{rmk}[Topological gluings] The construction of gluings of course has an immediate topological counterpart which can be obtained by passing back and forth between topology and combinatorics using $\FTrs$ and $\CMsh$ (or else can also be defined in purely topological terms). For example, the topological gluings of tame stratification corresponding to the gluings of stratified trusses in \cref{fig:gluings-of-stratified-trusses-comb} are shown in \cref{fig:gluings-of-stratified-trusses}.
\begin{figure}[ht]
    \centering
    \def\svgwidth{1\columnwidth}
\begingroup%
  \makeatletter%
  \providecommand\color[2][]{%
    \errmessage{(Inkscape) Color is used for the text in Inkscape, but the package 'color.sty' is not loaded}%
    \renewcommand\color[2][]{}%
  }%
  \providecommand\transparent[1]{%
    \errmessage{(Inkscape) Transparency is used (non-zero) for the text in Inkscape, but the package 'transparent.sty' is not loaded}%
    \renewcommand\transparent[1]{}%
  }%
  \providecommand\rotatebox[2]{#2}%
  \newcommand*\fsize{\dimexpr\f@size pt\relax}%
  \newcommand*\lineheight[1]{\fontsize{\fsize}{#1\fsize}\selectfont}%
  \ifx\svgwidth\undefined%
    \setlength{\unitlength}{2160bp}%
    \ifx\svgscale\undefined%
      \relax%
    \else%
      \setlength{\unitlength}{\unitlength * \real{\svgscale}}%
    \fi%
  \else%
    \setlength{\unitlength}{\svgwidth}%
  \fi%
  \global\let\svgwidth\undefined%
  \global\let\svgscale\undefined%
  \makeatother%
  \begin{picture}(1,0.19270833)%
    \lineheight{1}%
    \setlength\tabcolsep{0pt}%
    \put(0,0){\includegraphics[width=\unitlength,page=1]{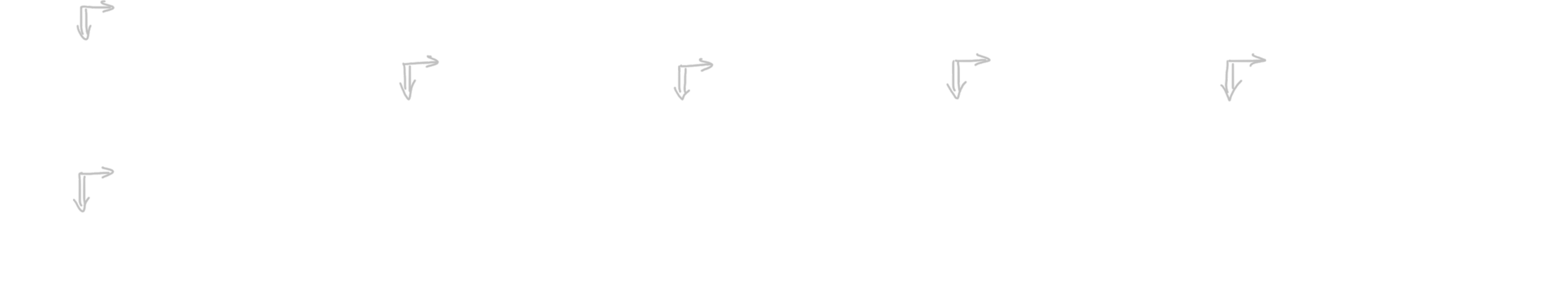}}%
    \put(0.15694444,0.13506944){\color[rgb]{0,0,0}\makebox(0,0)[lt]{\lineheight{1.25}\smash{\begin{tabular}[t]{l}$f$\end{tabular}}}}%
    \put(0.15729167,0.03090278){\color[rgb]{0,0,0}\makebox(0,0)[lt]{\lineheight{1.25}\smash{\begin{tabular}[t]{l}$g$\end{tabular}}}}%
    \put(0.28611111,0.00520833){\color[rgb]{0,0,0}\makebox(0,0)[lt]{\lineheight{1.25}\smash{\begin{tabular}[t]{l}$f \stack 2 g$\end{tabular}}}}%
    \put(0.45659722,0.00520833){\color[rgb]{0,0,0}\makebox(0,0)[lt]{\lineheight{1.25}\smash{\begin{tabular}[t]{l}$g \stack 2 f$\end{tabular}}}}%
    \put(0.63194444,0.00520833){\color[rgb]{0,0,0}\makebox(0,0)[lt]{\lineheight{1.25}\smash{\begin{tabular}[t]{l}$f \stack 1 g$\end{tabular}}}}%
    \put(0.80520833,0.00520833){\color[rgb]{0,0,0}\makebox(0,0)[lt]{\lineheight{1.25}\smash{\begin{tabular}[t]{l}$g \stack 1 f$\end{tabular}}}}%
    \put(0,0){\includegraphics[width=\unitlength,page=2]{gluings-of-stratified-trusses.pdf}}%
  \end{picture}%
\endgroup%

    \caption{Gluings of tame stratifications $f$ and $g$}
    \label{fig:gluings-of-stratified-trusses}
\end{figure}
\end{rmk}

\begin{rmk}[Gluings up to normalization] \label{rmk:gluing-up-to-norm} When working with normalized trusses, sometimes sides do not coincide `on the nose' but only up to stratified coarsening: given normalized stratified $n$-trusses $(T,f)$ and $(S,g)$ we may write $(T,f) \stack k (S,g)$ to mean the gluing $(\tilde T, \tilde f) \stack k (\tilde S, \tilde g)$ if there are canonical (i.e.\ smallest) stratified refinements $(\tilde T, \tilde f)$ resp.\ $(\tilde S, \tilde g)$ of $(T,f)$ resp.\ $(S,g)$ for which the gluing is defined.
\end{rmk}

\begin{constr}[Domains and codomains] \label{term:src-and-tgt} Given a stratified $n$-truss $(T,f)$ where $T = \{q_i : T_i \to T_{i-1}\}_{1 \leq i \leq n}$, consider its $n$th categorical sides $(\partial^\pm_n T, \partial^\pm_n f)$ and write $\partial^\pm_n T = \{\partial^\pm_n q_i : \partial^\pm_n T_i \to \partial^\pm_n T_{i-1}\}_{1 \leq i \leq n}$.
    The \textbf{domain} $\partial_-(T,f)$ (or `source') and \textbf{codomain} $\partial_+(T,f)$ (or `target') of $(T,f)$ are the stratified $(n-1)$-trusses  and with underlying trusses $\{\partial^\pm_n q_i : \partial^\pm_n T_i \to \partial^\pm_n T_{i-1}\}_{2 \leq i \leq n}$ and labeling $\partial^\pm_n f$. (In other words, $\partial_\pm (T,f)$ are obtained from $(\partial^\pm_n T, \partial^\pm_n f)$ by forgetting the first truss bundle $\partial^\pm_n q_1$.)
\end{constr}

\begin{notn}[Function type notation] We denote a stratified $n$-truss $(T,f)$ together with its domain and codomain using `function type' notation, writing $(T,f) : \partial_-(T,f) \to \partial_+(T,f)$.
\end{notn}

\nid In the case of paths, function type notation specializes as follows.

\begin{term}[Start and end points of paths] Given a path $(S,g) : (T,f) \to (T',f')$, we refer to its domain $(T,f)$ also as its `start point' and to its codomain $(T',f')$ as its `end point'.
\end{term}

\nid Our construction of gluings can now be used to construct compositions of paths.

\begin{obs}[Paths compose] Given two paths $(S,g) : (T,f) \to (T',f')$ and $(S',g') : (T',f') \to (T'',f'')$ of $m$-tangle $n$-trusses, we obtain a `composite' path $(S,g) \stack {n+1} (S',g') (T,f) \to (T'',f'')$ by gluing (see \cref{constr:gluings}).
\end{obs}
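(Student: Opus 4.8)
The statement to prove is the \textbf{Paths compose} observation: given two paths $(S,g) : (T,f) \to (T',f')$ and $(S',g') : (T',f') \to (T'',f'')$ of $m$-tangle $n$-trusses, gluing along the $(n+1)$st categorical direction yields a composite path $(S,g) \stack{n+1}(S',g') : (T,f) \to (T'',f'')$.

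\medskip

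The plan is to verify three things in turn: that the $(n+1)$-side gluing is \emph{defined} for the two paths, that the result is again a \emph{path} (i.e.\ a $1$-fiber bundle $(m+1)$-tangle $(n+1)$-truss), and that its \emph{domain and codomain} are $(T,f)$ and $(T'',f'')$ respectively. For the first point, recall that $(S,g)$ has codomain $(T',f') = \partial_+(S,g)$, which by \cref{term:src-and-tgt} is obtained from the $(n+1)$st categorical upper side $(\partial^+_{n+1}S, \partial^+_{n+1}g)$ by forgetting the first (bottom) $1$-truss bundle; similarly $(S',g')$ has domain $(T',f')$ coming from its lower side $(\partial^-_{n+1}S', \partial^-_{n+1}g')$. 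Since a path is a $1$-fiber bundle, the bottom $1$-truss bundle $\partial^\pm_{n+1}q_1$ on each side is determined by the fiber-bundle structure over the base $1$-truss $U$; thus the two sides $(\partial^+_{n+1}S, \partial^+_{n+1}g)$ and $(\partial^-_{n+1}S', \partial^-_{n+1}g')$ agree (possibly only up to the canonical coarsening of \cref{rmk:gluing-up-to-norm}, in which case we glue the canonical common refinements). So \cref{constr:gluings} applies and produces the pushout $(S\stack{n+1}S', g\stack{n+1}g')$ in $\strotruss{n+1}$.

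\medskip

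For the second point --- that the glued truss is again a path --- I would argue that the defining properties of a path are \emph{local} in the $(n+1)$st categorical direction and hence are preserved by gluing along a common $(n+1)$-side. Concretely: (i) being an open stratified $(n+1)$-truss is automatic, since the pushout is taken in $\strotruss{n+1}$; (ii) the glued truss is still a $1$-fiber bundle over a $1$-truss, because the base $1$-trusses glue (as $1$-meshes/$1$-trusses glue end-to-end, the pushout of $U \hookleftarrow (\text{point}) \hookrightarrow U'$ is again a $1$-truss) and the fiber-bundle structure on $T_{>1}$ over $T_1$ restricts correctly over each arrow --- the fiber transition condition of \cref{defn:tang-truss-bun-fiber} need only be checked over arrows $c \to b$ of the base, and every such arrow lies either in the $U$-part or the $U'$-part (the gluing point introduces no new arrows between the two halves other than those forced by the side identification, where the condition already holds); (iii) combinatorial $(m+1)$-dim transversality is a pointwise condition on elements $x \in (S\stack{n+1}S')_{n+1}$ --- for $x$ in the interior of either half the stratified truss neighborhood $\NF{(S\stack{n+1}S')^{\leq x}, \cdots}$ is unchanged from the corresponding half, and for $x$ on the gluing locus the neighborhood is controlled by the common side $(T',f')$, which is itself (the side of) a tangle truss, so transversality there follows from the transversality of $(S,g)$ and $(S',g')$; (iv) normalization is ensured either directly or by passing to the normal form (the gluing-up-to-normalization convention of \cref{rmk:gluing-up-to-norm} already builds this in). Each of these is a routine unwinding of definitions plus the observation that truss neighborhoods $T^{\leq x}$ only see a downward-closed piece, which for $x$ away from the cut locus sits entirely inside one of the two halves.

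\medskip

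For the third point, the domain and codomain are computed from the $(n+1)$st categorical \emph{lower} and \emph{upper} sides of $S\stack{n+1}S'$. Since we glued along the upper side of $S$ with the lower side of $S'$ (i.e.\ along the $(n+1)$-direction), the lower side of the composite equals the lower side of $S$, hence $\partial_-(S\stack{n+1}S') = \partial_-(S,g) = (T,f)$; dually $\partial_+(S\stack{n+1}S') = \partial_+(S',g') = (T'',f'')$. This is immediate from the fact that sides in the $k$th direction are preserved under gluing in the same direction $k$ (for $S\stack{n+1}S'$, the $(n+1)$-sides are the ``outer'' ends, which are untouched by the pushout). The main obstacle I anticipate is point (iii): carefully checking that combinatorial transversality survives at the gluing locus, since there the truss neighborhood of a cut-locus element genuinely mixes contributions from both halves --- one must verify that its normal form is still of the required product form $\OTT^{k}\times(C, D\into C_{n+1-k})$, which boils down to the fact that the common side $(T',f')$ is itself a tangle truss and that normalization commutes appropriately with restriction to sides. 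Everything else is bookkeeping with the definitions of sides, gluings, and fiber bundles already set up in \cref{constr:gluings}, \cref{constr:term:src-and-tgt}, and \cref{defn:tang-truss-bun-fiber}.
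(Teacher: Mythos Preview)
The paper gives no proof for this observation: it is stated as self-evident, with only a pointer to the gluing construction (\cref{constr:gluings}). Your proposal therefore goes well beyond what the paper offers, supplying a three-part verification (well-definedness of the gluing, preservation of the path property, correctness of domain and codomain) where the paper is content to leave all of this implicit.

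Your outline is sound and matches what a careful reader would need to check. Two small comments. First, on normalization: \cref{rmk:gluing-up-to-norm} concerns refining \emph{before} gluing so that sides match, not normalizing \emph{after} gluing; since a tangle truss is by definition normalized, you should say explicitly that one passes to the normal form of the glued truss and that this cannot disturb the outer $(n+1)$-sides (coarsenings happen only where adjacent fibers over the base agree, which is away from the endpoints). Second, your own flagged obstacle at point (iii) is the genuine content: for $x$ over the gluing point, the downward closure $(S\stack{n+1}S')^{\leq x}$ really does draw from both halves, since in an open $1$-truss the $0$-dimensional gluing point receives arrows from the adjacent $1$-dimensional objects on each side. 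The resolution is that the normal singularity at $x$ is determined by the fiber $(T',f')$ together with the bordisms to the two neighboring generic fibers, and both of these bordisms already satisfy the fiber transition condition (from $(S,g)$ and $(S',g')$ separately); the normal form of the combined neighborhood is then $\OTT^1$ times the normal form one gets from either half alone. This is routine once stated, and is presumably why the paper omits it.
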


In fact, we may also describe `higher paths' of tame tangles which organize into the following construction of a `higher groupoid' of tame tangles.

\begin{rmk}[The higher groupoid of tangles] \label{constr:higher-gpd-tangles} A \textbf{$k$-path of $m$-tangles $n$-trusses} $(S,g) : (T,f) \to (Y',e')$ is a $k$-fiber bundle $(m+k)$-tangle $(n+k)$-trusses. Thus $(S,g)$ is of the form $(p,g);U$ for $U$ an $k$-truss. (It may be convenient, but not necessary, to also impose globularity: this requires $(S,g)$ to be constant over the image of any $\gamma^i_\pm : U_{i} \into U_{i+1}$.) Note that for $k = 0$, a $k$-path is simply an $m$-tangle $n$-truss, and for $k = 1$, it is a path of $m$-tangle $n$-trusses. The collection of all $k$-paths of $m$-tangle $n$-trusses will be referred to as the `higher groupoid of $m$-tangle $n$-trusses (if globularity is imposed, this data organizes into a globular set \cite{leinster2004higher}).

    Notions of compositions of $k$-paths can be easily defined by gluings. While this definition of composition is strictly associative and unital, it turns out to not strictly satisfy the interchange law. Thus the above data does not define a strict higher groupoid, but exhibits some weakness. We consciously omit any further discussion of the type of structure it defines,\footnote{The term `free associative higher category' may be applicable; see \cite{thesis}.} as we shall only need the notion of 1-paths.
\end{rmk}

A final useful notion is that of tangle coherences: these are paths that do not contain singularities.

\begin{defn}[Tangle coherences] \label{term:coherence} A \textbf{coherence of tame $m$-tangle} $(T,f)$ is a path of tame $m$-tangles such the $(m+1)$-tangle $(T,f)$ does not contain singular points (see \cref{term:reg-and-sing-points}).
\end{defn}

\nid In fact, any tame $(m+1)$-tangle without singular points is automatically a path and thus a coherence. The standard example of a coherence is the braid, which we saw already in the introduction; more specifically, the braid is a $0$-tangle $1$-coherence. In contrast, the path in \cref{fig:tame-tangle-bundles}(5) is not a coherence, as it contains singular points. More generally, one may define a `$k$-coherence of tame $m$-tangles' to be a $k$-path without points of transversal dimension less than $k$.  For instance, the Reidemeister III move is a $0$-tangle $2$-coherence (see e.g. \cite[Fig.\ I.6]{fct}).

The next remark summarizes the four notions defined in this and the previous section.

\begin{rmk}[Summary of bundle and path notions] Conceptually, notions of bundles and paths may be organized by `increasing specialization' as follows.
    \begin{itemize}
        \item \emph{Bundles} of tame tangles are stratified bundles whose fibers are tame tangles.
        \item \emph{Fiber bundles} of tame tangle are bundles that are topological fiber bundles when restricting to tangle manifolds in the total stratification.
        \item \emph{Paths} are tame tangles whose projections to $\lR$ are fiber bundles.
        \item \emph{Coherences} of tangles are paths without singularities. \qedhere
    \end{itemize}
\end{rmk}

\subsection{Singularities, perturbation, and stability} Finally, we discuss tangle singularities and define notions of `stability' for them. Throughout this section we freely make use of the correspondence between tame tangles and tangle trusses (resp.\ tame tangle bundles and tangle truss bundles).

\begin{defn}[Tangle singularities] An $m$-tangle $n$-truss $(T,f)$ is called an \textbf{$m$-singularity} if $T$ is an open cone truss. (Note this implies $\tdim(\top) = 0$ where $\top$ is the maximal object of $T_n$.)  Similarly, a tame $m$-tangle $(\II^n,f)$ is an `$m$-singularity' when $\NFTrs f$ is an $m$-singularity.
\end{defn}

\nid Alternatively, a tame $m$-tangle is an $m$-singularity when it is the cone of a tame link whose cone point has transversal dimension $0$.

\begin{eg}[Basic tangles singularities] \label{eg:basic-tang-sing} Up to ambient dimension 2 there are finitely many tangle singularities as follows: there is a single 0-tangle singularities in dimension 1 (also denoted by the symbol $\iA_1^{\numovar 0}$, or more simply, by $\pts$); there are two 1-tangle singularities in dimension 2 (which together are denoted by the symbol $\iA_1^{\numovar 1}$, or more simply, by $\iA_1$); finally there is also a single 0-tangle singularity in dimension 2 (also denoted by the symbol $\iA_1^{\numovar 0} \oplus \eps$). These singularities are shown in \cref{fig:basic-tangles-singularities}.
\begin{figure}[ht]
    \centering
    \def\svgwidth{1\columnwidth}
\begingroup%
  \makeatletter%
  \providecommand\color[2][]{%
    \errmessage{(Inkscape) Color is used for the text in Inkscape, but the package 'color.sty' is not loaded}%
    \renewcommand\color[2][]{}%
  }%
  \providecommand\transparent[1]{%
    \errmessage{(Inkscape) Transparency is used (non-zero) for the text in Inkscape, but the package 'transparent.sty' is not loaded}%
    \renewcommand\transparent[1]{}%
  }%
  \providecommand\rotatebox[2]{#2}%
  \newcommand*\fsize{\dimexpr\f@size pt\relax}%
  \newcommand*\lineheight[1]{\fontsize{\fsize}{#1\fsize}\selectfont}%
  \ifx\svgwidth\undefined%
    \setlength{\unitlength}{2160bp}%
    \ifx\svgscale\undefined%
      \relax%
    \else%
      \setlength{\unitlength}{\unitlength * \real{\svgscale}}%
    \fi%
  \else%
    \setlength{\unitlength}{\svgwidth}%
  \fi%
  \global\let\svgwidth\undefined%
  \global\let\svgscale\undefined%
  \makeatother%
  \begin{picture}(1,0.16354167)%
    \lineheight{1}%
    \setlength\tabcolsep{0pt}%
    \put(0,0){\includegraphics[width=\unitlength,page=1]{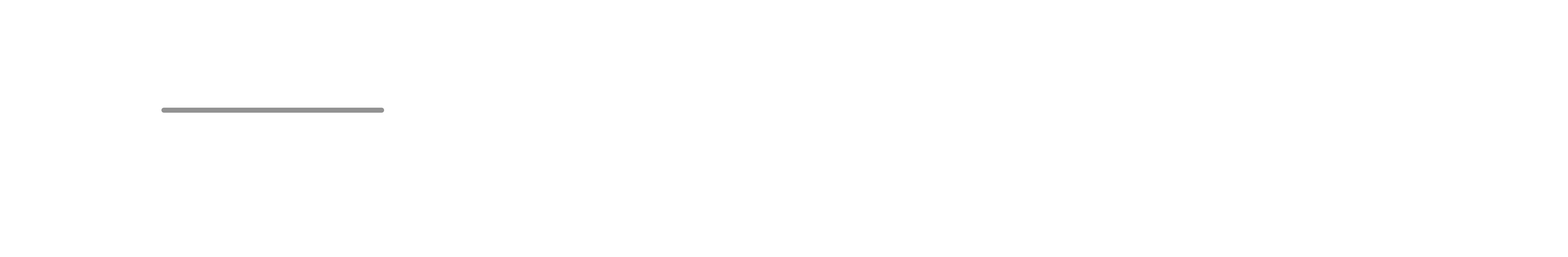}}%
    \put(0.13333333,0.00416667){\color[rgb]{0,0,0}\makebox(0,0)[lt]{\lineheight{1.25}\smash{\begin{tabular}[t]{l}{\scriptsize $\iA_1^{\numovar 0} \equiv \pts$}\end{tabular}}}}%
    \put(0.37465278,0.03576389){\color[rgb]{0,0,0}\makebox(0,0)[lt]{\lineheight{1.25}\smash{\begin{tabular}[t]{l}$\underbracket[0.140ex]{\hspace{1.65in}}_{\iA_1^{\numovar 1} ~\equiv~ \iA_1}$\end{tabular}}}}%
    \put(0.821875,0.00416667){\color[rgb]{0,0,0}\makebox(0,0)[lt]{\lineheight{1.25}\smash{\begin{tabular}[t]{l}{\scriptsize $\iA_1^{\numovar 0} \oplus \eps$}\end{tabular}}}}%
    \put(0,0){\includegraphics[width=\unitlength,page=2]{basic-tangles-singularities.pdf}}%
  \end{picture}%
\endgroup%

    \caption[Basic tangles and symbols]{Basic tangles singularities up to ambient dimension 2}
    \label{fig:basic-tangles-singularities}
\end{figure}
\end{eg}

\begin{rmk}[Normal singularities] \label{rmk:normal-singularities} Given an $m$-tangle $n$-truss $(T,f)$ and $x \in T_n$, by definition the neighborhood around $x$ normalizes to a stratified truss of the form $\OTT^k \times (C,D \into C_{n-k})$. The $(m-k)$-tangle $(n-k)$-truss $(C,D \into C_{n-k})$ is an $(m-k)$-singularity, called the `normal singularity' at $x$. An illustration is given in \cref{fig:normal-singularity}.
\begin{figure}[ht]
    \centering
    \def\svgwidth{1\columnwidth}
\begingroup%
  \makeatletter%
  \providecommand\color[2][]{%
    \errmessage{(Inkscape) Color is used for the text in Inkscape, but the package 'color.sty' is not loaded}%
    \renewcommand\color[2][]{}%
  }%
  \providecommand\transparent[1]{%
    \errmessage{(Inkscape) Transparency is used (non-zero) for the text in Inkscape, but the package 'transparent.sty' is not loaded}%
    \renewcommand\transparent[1]{}%
  }%
  \providecommand\rotatebox[2]{#2}%
  \newcommand*\fsize{\dimexpr\f@size pt\relax}%
  \newcommand*\lineheight[1]{\fontsize{\fsize}{#1\fsize}\selectfont}%
  \ifx\svgwidth\undefined%
    \setlength{\unitlength}{2160bp}%
    \ifx\svgscale\undefined%
      \relax%
    \else%
      \setlength{\unitlength}{\unitlength * \real{\svgscale}}%
    \fi%
  \else%
    \setlength{\unitlength}{\svgwidth}%
  \fi%
  \global\let\svgwidth\undefined%
  \global\let\svgscale\undefined%
  \makeatother%
  \begin{picture}(1,0.18784722)%
    \lineheight{1}%
    \setlength\tabcolsep{0pt}%
    \put(0,0){\includegraphics[width=\unitlength,page=1]{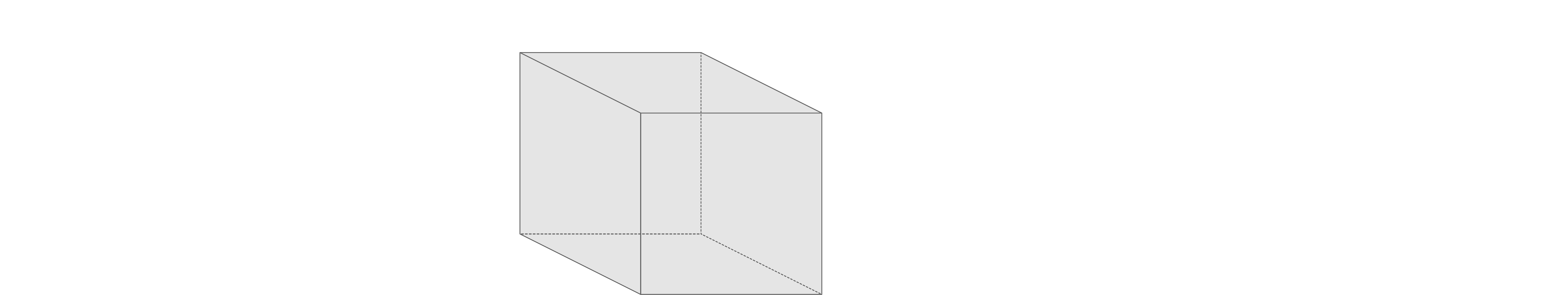}}%
    \put(0.38541667,0.08090278){\color[rgb]{0,0,0}\makebox(0,0)[lt]{\lineheight{1.25}\smash{\begin{tabular}[t]{l}{\tiny $x$}\end{tabular}}}}%
    \put(0,0){\includegraphics[width=\unitlength,page=2]{normal-singularity.pdf}}%
    \put(0.43854167,0.18090278){\color[rgb]{0,0,0}\makebox(0,0)[lt]{\lineheight{1.25}\smash{\begin{tabular}[t]{l}{\tiny normal singularity at $x$}\end{tabular}}}}%
    \put(0,0){\includegraphics[width=\unitlength,page=3]{normal-singularity.pdf}}%
    \put(0.29513889,0.16076389){\color[rgb]{0,0,0}\makebox(0,0)[lt]{\lineheight{1.25}\smash{\begin{tabular}[t]{l}{\tiny 1}\end{tabular}}}}%
    \put(0.30381944,0.17569444){\color[rgb]{0,0,0}\makebox(0,0)[lt]{\lineheight{1.25}\smash{\begin{tabular}[t]{l}{\tiny 2}\end{tabular}}}}%
    \put(0.27152778,0.140625){\color[rgb]{0,0,0}\makebox(0,0)[lt]{\lineheight{1.25}\smash{\begin{tabular}[t]{l}{\tiny 3}\end{tabular}}}}%
    \put(0,0){\includegraphics[width=\unitlength,page=4]{normal-singularity.pdf}}%
  \end{picture}%
\endgroup%

    \caption{A normal singularity at a point in a tangle manifold}
    \label{fig:normal-singularity}
\end{figure}
\end{rmk}

\nid We will study the behavior of singularities under `perturbations', which are a special instance of tangle truss bundles over the 1-simplex, whose generic fiber `surjects' onto its special fiber as follows.

\begin{defn}[Perturbations] A \textbf{perturbation $(q, f : Q \into \Totz q)$ of $m$-tangle $n$-trusses} is a $m$-tangle $n$-bundle over the 1-simplex $[1]\op = (0 \ot 1)$, such for each object $x \in \rest Q 0$ in the tangle manifold of the `special fiber' $(\rest q 0, \rest f 0 : \rest Q 0  \into \Totz {\rest q 0})$ there exists an object $y \in \rest Q 1$ in the tangle manifold of the `generic fiber' $(\rest q 1, \rest f 1 : \rest Q 1  \into \Totz {\rest q 1})$. Similarly, a \textbf{perturbation of tame tangles} is a tame tangle bundle over the stratified 1-simplex $\CStr[1]\op$ whose fundamental tangle truss bundle is a perturbation.
\end{defn}

\begin{notn}[Perturbations] A perturbation $(q,h)$ with special fiber $(T,f)$ and generic fiber $(S,h)$ will be denoted by $(q,h) : (T,f) \perturb (S,h)$
\end{notn}

\begin{eg}[Perturbations] The first and third example in \cref{fig:tame-tangle-bundles} are perturbations of tame tangles, while the second example fails to be a perturbation (as it is not `surjective' on the tangle manifold in the special fiber).
\end{eg}

\begin{rmk}[Fiber bundle perturbations] \label{rmk:fib-bun-pert} A `fiber bundle perturbation' is a perturbation that is a fiber bundle of tame tangles (in fact, in this case the surjectivity condition is vacuous, as it is implied by fiber transition condition in \cref{defn:tang-truss-bun-fiber} of fiber bundles of tangle trusses).
\end{rmk}

\begin{obs}[Perturbations compose] Given perturbations $(p,f) : (T,g) \to (T',g')$ and $(p',f') : (T',g') \to (T'',g'')$ there exists a unique tangle truss bundle $(r,h)$ over $[2]\op$ such that $\rest {(r,h)} {0 \ot 1} = (p,f)$ and $\rest {(r,h)} {1 \ot 2} = (p',f')$.\footnote{The bundle $(r,h)$ over $[2]\op$ can be constructed as follows. Write $f = (Q \into \Totz p)$ and $f' = (Q' \into \Totz {p'})$. We obtain $[1]\op$-labeled bundles $(p,\theta_Q)$ and $(p,\theta_{Q'})$ where $\theta_Q$ resp.\ $\theta_{Q'}$ are indicator maps of $Q$ resp.\ $Q'$ (see \cref{rmk:poset-emb-strat}). Using \cref{obs:labeled-n-truss-bord}, we can compose their classifying morphisms $[1]\op \to \kT^n([1]\op)$ to obtain a functor $[2]\op \to \kT^n([1]\op)$, and thus a $[1]\op$-labeled bundle $(q,\theta_{R})$ over $[2]\op$. From this one obtains the stratified truss bundle $(r,h : \theta_{R}\inv(0) \into \Totz r)$. One then shows that the conditions for $(r,h)$ to be a tangle truss bundle (see \cref{defn:tame-tang-bun}) are satisfied since they are satisfied by $(p,f)$ and $(p',f')$.} The restriction of $(r,h)$ to $(0 \ot 2)$ yields the \emph{composite perturbation} $(p,f) \ast (p',f') :  (T,g) \to (T'',g'')$.
\end{obs}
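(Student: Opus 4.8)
The plan is to carry out the construction indicated in the footnote to the statement, whose engine is the classification of labeled truss bundles (\cref{obs:labeled-n-truss-bord}): $\iC$-labeled $n$-truss bundles over a poset $P$ are the same data as functors $P \to \kT^n(\iC)$. The idea is to repackage each perturbation as a $[1]\op$-labeled $n$-truss bundle over the poset $[1]\op$, extract its classifying morphism in $\kT^n([1]\op)$, compose the two morphisms into a functor on $[2]\op$, and transport that functor back into a bundle.

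Concretely, I would first write $(p,f) = (p,\, Q \into \Totz p)$ and $(p',f') = (p',\, Q' \into \Totz{p'})$ and pass to the indicator maps $\theta_Q : \Totz p \to [1]\op$, $\theta_{Q'} : \Totz{p'} \to [1]\op$ of the tangle manifolds (\cref{rmk:poset-emb-strat}). Then $(p,\theta_Q)$ and $(p',\theta_{Q'})$ are $[1]\op$-labeled $n$-truss bundles over $[1]\op$, so by \cref{obs:labeled-n-truss-bord} each is classified by a functor $[1]\op \to \kT^n([1]\op)$, i.e.\ by a single morphism $\phi$ resp.\ $\phi'$ of $\kT^n([1]\op)$ connecting the (labeled) special and generic fibers. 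Because the generic fiber of $(p,f)$ is the special fiber of $(p',f')$ --- both equal $(T',g')$ --- the morphisms $\phi,\phi'$ form a composable pair, and a functor $[2]\op \to \kT^n([1]\op)$ is the same data as a composable pair of its morphisms; this yields a unique $\Psi : [2]\op \to \kT^n([1]\op)$ restricting to $\phi$ over $(0 \ot 1)$, to $\phi'$ over $(1 \ot 2)$, and hence to their composite over $(0 \ot 2)$. Applying \cref{obs:labeled-n-truss-bord} in reverse turns $\Psi$ into a $[1]\op$-labeled $n$-truss bundle $(q,\theta_R)$ over $[2]\op$; taking the connected-component splitting of the labeling (\cref{rmk:strat-truss-from-pos-lab}), i.e.\ setting the tangle manifold $\theta_R\inv(0) \into \Totz q$ and letting $h$ be the stratification it induces (\cref{rmk:poset-emb-strat}), produces the candidate $(r,h)$.

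The remaining work is verification, which I expect to be routine. First, $\rest{(r,h)}{0 \ot 1} = (p,f)$ and $\rest{(r,h)}{1 \ot 2} = (p',f')$, since restricting $\Psi$ along the face inclusions $[1]\op \into [2]\op$ recovers the original classifying functors and the passages of \cref{obs:labeled-n-truss-bord} and \cref{rmk:strat-truss-from-pos-lab} are compatible with restriction. Second, $(r,h)$ is a bundle of tangle trusses (\cref{defn:tang-truss-bun}): its fibers over the objects $0,1,2$ of $[2]\op$ are $(T,g)$, $(T',g')$, $(T'',g'')$, which are $m$-tangle $n$-trusses by hypothesis. Third, $\rest{(r,h)}{0 \ot 2}$ is a perturbation from $(T,g)$ to $(T'',g'')$; the one thing to check is the surjectivity condition on tangle manifolds, and for this, given $x$ in the tangle manifold of $(T,g)$, the perturbation $(p,f)$ provides $y'$ in the tangle manifold of $(T',g')$ covering $x$ and $(p',f')$ provides $y$ in the tangle manifold of $(T'',g'')$ covering $y'$; since composition in $\kT^n([1]\op)$ is profunctor composition (\cref{constr:1trussbord}), the bordism underlying $\rest{(r,h)}{0 \ot 2}$ is the composite of those underlying $\rest{(r,h)}{0 \ot 1}$ and $\rest{(r,h)}{1 \ot 2}$, so the chain $y \to y' \to x$ witnesses that $y$ covers $x$. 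Uniqueness of $(r,h)$ follows because $\Psi$ is forced by $(\phi,\phi')$, the classification of \cref{obs:labeled-n-truss-bord} is a canonical bijection, and stratified truss bundle isomorphisms over a fixed base are unique where they exist (\cref{rmk:strat-truss-iso}), so after passing to a skeleton $(r,h)$ is literally unique; I would then set $(p,f) \ast (p',f') := \rest{(r,h)}{0 \ot 2}$.

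The main obstacle, such as it is, lies entirely in that last verification: one must read the existential clause in the definition of a perturbation as asserting an arrow $y \to x$ in the total poset --- i.e.\ as the relation encoded by the fiber of the underlying poset map over an arrow (\cref{rmk:bool-prof-from-pos-map}) --- so that ``surjectivity'' transports along profunctor composition. With that identification in hand the argument carries no genuinely new ingredient beyond the classification theorem.
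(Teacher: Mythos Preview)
Your proposal is correct and follows essentially the same approach as the paper's footnote: pass to indicator-labeled bundles, compose their classifying morphisms in $\kT^n([1]\op)$ to a functor on $[2]\op$, and transport back. You supply more detail than the paper on the verification that the restriction over $(0 \ot 2)$ is again a perturbation (the surjectivity check via the composite bordism), which the paper leaves implicit.
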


\begin{eg}[Perturbations and composites of perturbations] Two (composable) perturbations as well as their composite are shown in \cref{fig:two-perturbations-and-their-composite}.
\begin{figure}[ht]
    \centering
    \def\svgwidth{1\columnwidth}
\begingroup%
  \makeatletter%
  \providecommand\color[2][]{%
    \errmessage{(Inkscape) Color is used for the text in Inkscape, but the package 'color.sty' is not loaded}%
    \renewcommand\color[2][]{}%
  }%
  \providecommand\transparent[1]{%
    \errmessage{(Inkscape) Transparency is used (non-zero) for the text in Inkscape, but the package 'transparent.sty' is not loaded}%
    \renewcommand\transparent[1]{}%
  }%
  \providecommand\rotatebox[2]{#2}%
  \newcommand*\fsize{\dimexpr\f@size pt\relax}%
  \newcommand*\lineheight[1]{\fontsize{\fsize}{#1\fsize}\selectfont}%
  \ifx\svgwidth\undefined%
    \setlength{\unitlength}{2160bp}%
    \ifx\svgscale\undefined%
      \relax%
    \else%
      \setlength{\unitlength}{\unitlength * \real{\svgscale}}%
    \fi%
  \else%
    \setlength{\unitlength}{\svgwidth}%
  \fi%
  \global\let\svgwidth\undefined%
  \global\let\svgscale\undefined%
  \makeatother%
  \begin{picture}(1,0.28680556)%
    \lineheight{1}%
    \setlength\tabcolsep{0pt}%
    \put(0.21736111,0.0125){\color[rgb]{0,0,0}\makebox(0,0)[lt]{\lineheight{1.25}\smash{\begin{tabular}[t]{l}$p$\end{tabular}}}}%
    \put(0.21666667,0.06944444){\color[rgb]{0,0,0}\makebox(0,0)[lt]{\lineheight{1.25}\smash{\begin{tabular}[t]{l}$\downarrow$\end{tabular}}}}%
    \put(0.49340278,0.06944444){\color[rgb]{0,0,0}\makebox(0,0)[lt]{\lineheight{1.25}\smash{\begin{tabular}[t]{l}$\downarrow$\end{tabular}}}}%
    \put(0.771875,0.06944444){\color[rgb]{0,0,0}\makebox(0,0)[lt]{\lineheight{1.25}\smash{\begin{tabular}[t]{l}$\downarrow$\end{tabular}}}}%
    \put(0.49236111,0.0125){\color[rgb]{0,0,0}\makebox(0,0)[lt]{\lineheight{1.25}\smash{\begin{tabular}[t]{l}$p'$\end{tabular}}}}%
    \put(0.75520833,0.0125){\color[rgb]{0,0,0}\makebox(0,0)[lt]{\lineheight{1.25}\smash{\begin{tabular}[t]{l}$p \ast p'$\end{tabular}}}}%
    \put(0,0){\includegraphics[width=\unitlength,page=1]{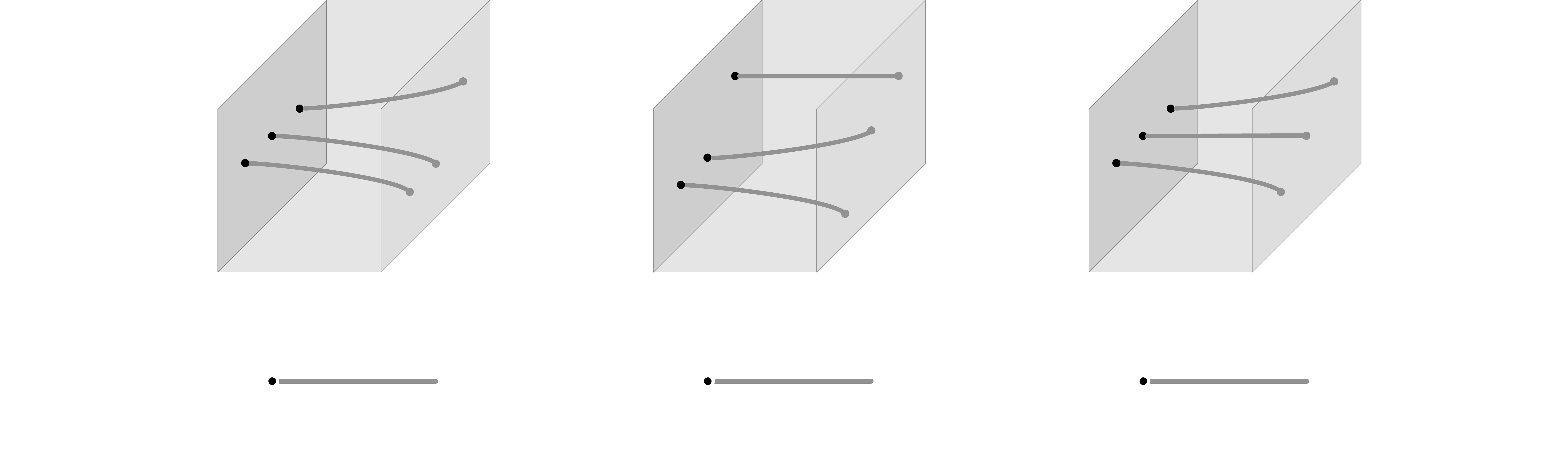}}%
  \end{picture}%
\endgroup%

    \caption{Two perturbations and their composite}
    \label{fig:two-perturbations-and-their-composite}
\end{figure}
\end{eg}

We turn to stability. The idea is as follows: a tangle singularity is stable if and only if it cannot be perturbed into a tangle containing `strictly less complex' singularities. The comparison of `complexity' requires elaboration. While in the setting of singularities of topological maps it may not be clear how to obtain an appropriate measure complexity, a key innovation in the framed combinatorial-topological setting is that we can immediately draw from combinatorial representations for this purpose. Maybe the simple measure of complexity of a poset $P$ is its cardinality $\#P$.\footnote{The simplest is not always the best, and it seems premature to not expect revisions this definition of complexity in the future.}

\begin{defn}[Stability] An $m$-singularity $(T,f : Q \into T_n)$ is \textbf{stable} if there exists no perturbation $(T,f) \perturb (S,g : W \into S_n)$ such that for all $x \in W$ we have that $\#Q > \#W^{\leq x}$.
\end{defn}

\nid As an example, note that all singularities in \cref{fig:basic-tangles-singularities} are stable---we will meet further stable singularities in the next sections.

We define the following `path equivalence' on singularities.

\begin{defn}[$\sF$-equivalence] \label{defn:framed-isotopy} Stable singularities $(T,f)$ and $(S,g)$ are said to be \textbf{$\sF$-equivalent} (or `framed isotopic'), written $(T,f) \eqv_{\sF} (S,g)$, if there is a path $(T,f) \to (S,f)$. An equivalence class of $\sF$-equivalent stable singularities is also called an \textbf{$\sF$-orbit}.
\end{defn}

\nid Note that the path need not pass only through stable singularities, or even through singularities; for instance, the last example in \cref{fig:tame-tangle-bundles} is a path between $\iA_1^{\numovarone}$ singularities that passes through non-singularity tangles (note also that the path `loops', i.e.\ it has the same start and end point). A special case of $\sF$-equivalences are perturbations $(T,f) \perturb (S,f)$ between stable singularities (indeed, any such perturbation may be completed to a path over the open 1-truss $(\bullet \to \bullet \ot \bullet)$ by the identity perturbation $(S,f) = (S,f)$).

When working with stable singularities up to $\sF$-equivalence we will be interested in the `maximally perturbed' representatives of $\sF$-orbits.

\begin{defn}[Inductive stability] \label{term:full-extended-stab} A stable singularity $(T,f)$ is called \textbf{inductively stable} if all perturbations $(T,f) \perturb (S,g)$ to another stable singularity $(S,g)$ are of the form $[1]\op \times (T,f)$.
\end{defn}

\nid Roughly speaking, inductive stability asks not only for the singularity to be stable, but also its link to be written in the most stable form. As an example consider the perturbation between stable singularities in \cref{fig:a-perturbation-between-stable-singularities}: this perturbs a singularity of type $\iA_1^{\numovarone} \stack 1 \iA_1^{\numovarone} \to \id$ to a singularity of the form $\iA_1^{\numovarone} \stack 2 \iA_1^{\numovarone} \to \id$ (see \cref{fig:gluings-of-stratified-trusses} and \cref{fig:basic-tangles-singularities})---as we will see shortly, both are stable singularities, but only the latter singularity is inductively stable.

\begin{figure}[ht]
    \centering
    \def\svgwidth{1\columnwidth}
\begingroup%
  \makeatletter%
  \providecommand\color[2][]{%
    \errmessage{(Inkscape) Color is used for the text in Inkscape, but the package 'color.sty' is not loaded}%
    \renewcommand\color[2][]{}%
  }%
  \providecommand\transparent[1]{%
    \errmessage{(Inkscape) Transparency is used (non-zero) for the text in Inkscape, but the package 'transparent.sty' is not loaded}%
    \renewcommand\transparent[1]{}%
  }%
  \providecommand\rotatebox[2]{#2}%
  \newcommand*\fsize{\dimexpr\f@size pt\relax}%
  \newcommand*\lineheight[1]{\fontsize{\fsize}{#1\fsize}\selectfont}%
  \ifx\svgwidth\undefined%
    \setlength{\unitlength}{2160bp}%
    \ifx\svgscale\undefined%
      \relax%
    \else%
      \setlength{\unitlength}{\unitlength * \real{\svgscale}}%
    \fi%
  \else%
    \setlength{\unitlength}{\svgwidth}%
  \fi%
  \global\let\svgwidth\undefined%
  \global\let\svgscale\undefined%
  \makeatother%
  \begin{picture}(1,0.24861111)%
    \lineheight{1}%
    \setlength\tabcolsep{0pt}%
    \put(0,0){\includegraphics[width=\unitlength,page=1]{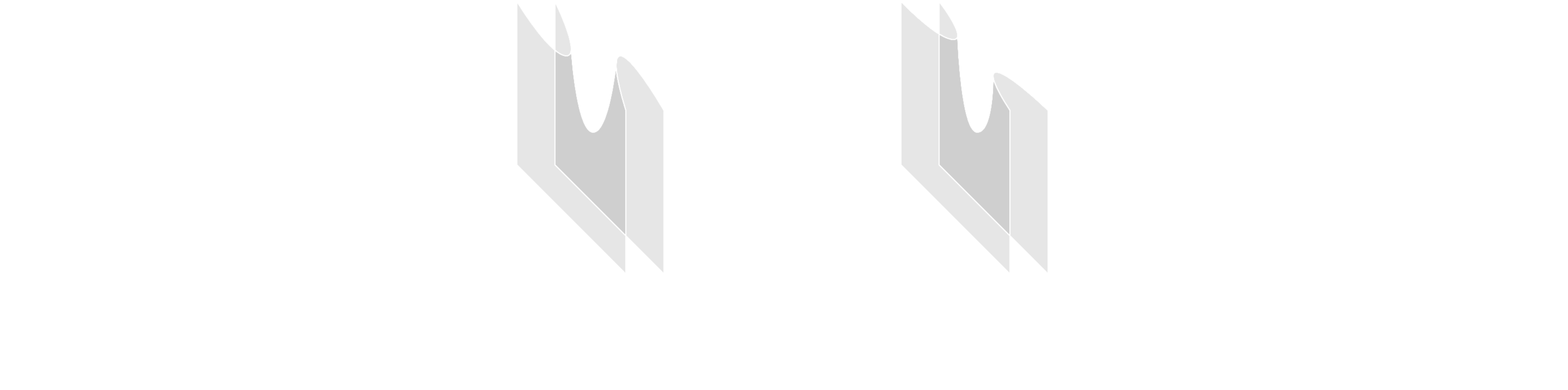}}%
    \put(0.484375,0.15555556){\color[rgb]{0,0,0}\makebox(0,0)[lt]{\lineheight{1.25}\smash{\begin{tabular}[t]{l}$\perturb$\end{tabular}}}}%
    \put(0.49409722,0.03472222){\color[rgb]{0,0,0}\makebox(0,0)[lt]{\lineheight{1.25}\smash{\begin{tabular}[t]{l}$\downarrow$\end{tabular}}}}%
    \put(0,0){\includegraphics[width=\unitlength,page=2]{a-perturbation-between-stable-singularities.pdf}}%
  \end{picture}%
\endgroup%

    \caption[A perturbation between stable singularities]{A perturbation from a stable, but not inductively stable singularity, to an inductively stable singularity}
    \label{fig:a-perturbation-between-stable-singularities}
\end{figure}

The example may suggest to the reader that we should always work with inductive stability to avoid non-generic behavior in links. However, stable, but not inductively stable singularities do occur naturally as well, and may provide `condensed' representatives of $\sF$-equivalence classes; we return to this in \cref{eg:D4} in our discussion of the classical $D_4$ singularity.

We end this section with a remark about the notion of coherences, which in some way is complementary to that of singularities.

\begin{rmk}[Stable coherences] \label{rmk:elementary-homotopies} Recall that a coherence is a path without singular points (see \cref{term:coherence}). There's now a parallel story to the above which addresses the question of `stable' (or `elementary') coherences. Namely, a stable coherence is a coherence that cannot be perturbed into strictly less complex coherences. We illustrate this with an example in \cref{fig:perturbing-the-triple-braid}: the triple braid is \emph{unstable} since it can be perturbed into three braids; in contrast, the braid itself is \emph{stable}. 
\begin{figure}[ht]
    \centering
    \def\svgwidth{1\columnwidth}
\begingroup%
  \makeatletter%
  \providecommand\color[2][]{%
    \errmessage{(Inkscape) Color is used for the text in Inkscape, but the package 'color.sty' is not loaded}%
    \renewcommand\color[2][]{}%
  }%
  \providecommand\transparent[1]{%
    \errmessage{(Inkscape) Transparency is used (non-zero) for the text in Inkscape, but the package 'transparent.sty' is not loaded}%
    \renewcommand\transparent[1]{}%
  }%
  \providecommand\rotatebox[2]{#2}%
  \newcommand*\fsize{\dimexpr\f@size pt\relax}%
  \newcommand*\lineheight[1]{\fontsize{\fsize}{#1\fsize}\selectfont}%
  \ifx\svgwidth\undefined%
    \setlength{\unitlength}{2160bp}%
    \ifx\svgscale\undefined%
      \relax%
    \else%
      \setlength{\unitlength}{\unitlength * \real{\svgscale}}%
    \fi%
  \else%
    \setlength{\unitlength}{\svgwidth}%
  \fi%
  \global\let\svgwidth\undefined%
  \global\let\svgscale\undefined%
  \makeatother%
  \begin{picture}(1,0.24861111)%
    \lineheight{1}%
    \setlength\tabcolsep{0pt}%
    \put(0,0){\includegraphics[width=\unitlength,page=1]{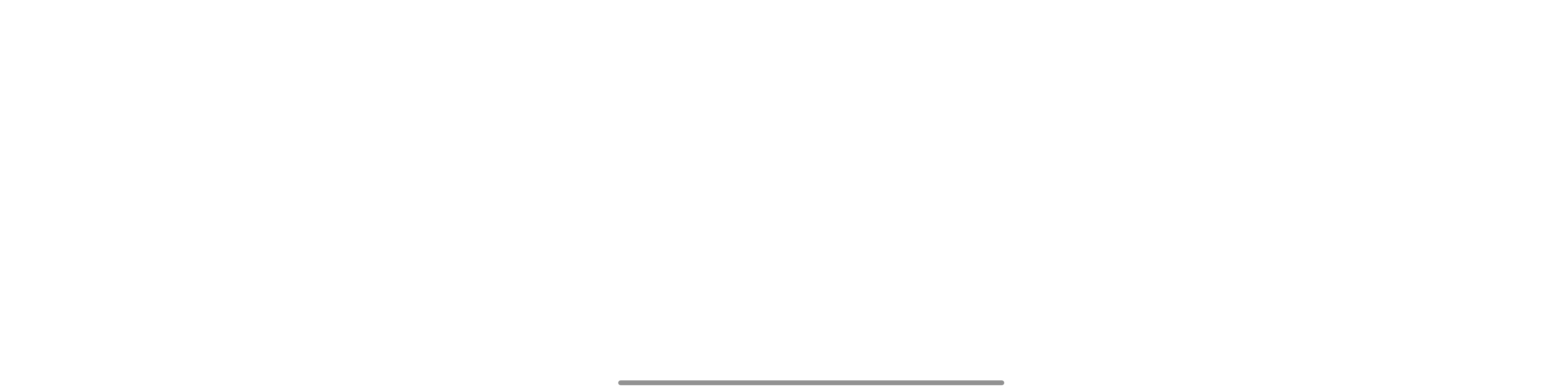}}%
    \put(0.48402778,0.15555556){\color[rgb]{0,0,0}\makebox(0,0)[lt]{\lineheight{1.25}\smash{\begin{tabular}[t]{l}$\perturb$\end{tabular}}}}%
    \put(0.49375,0.03472222){\color[rgb]{0,0,0}\makebox(0,0)[lt]{\lineheight{1.25}\smash{\begin{tabular}[t]{l}$\downarrow$\end{tabular}}}}%
    \put(0,0){\includegraphics[width=\unitlength,page=2]{perturbing-the-triple-braid.pdf}}%
  \end{picture}%
\endgroup%

    \caption{Perturbing the triple braid into three (stable) braid coherences}
    \label{fig:perturbing-the-triple-braid}
\end{figure}
\end{rmk}

\section{Stability in low dimensions} \label{sec:stab-in-low-dim}

We discuss stable $m$-tangle singularities $(\II^n,f)$ for $n \leq 4$, giving definitive answers to the classification of such singularities for $m < 3$, and an indication for the case $m = 3$.

\subsection{Stable 2-tangle singularities in 3-space}

As a warm-up, let us determine the stable 2-singularities in codimension $1$ (up to $\sF$-equivalence). We shall attempt to present our discussion in intuitive geometric terms, and will therefore mainly focus on tame tangles rather than tangle trusses---however, there's a parallel, purely combinatorial argument based on the combinatorics of tangle trusses. We will work with all tame tangles up to framed stratified homeomorphisms (which, of course, amounts to working with their corresponding fundamental tangle trusses). Recall that we refer to the $(n-k)$th coordinate of $\II^n$ as its `$k$th categorical direction'.

\begin{rmk}[Reflections] The `reflection in $k$th categorical direction' (or simply, the `$k$-reflection') of the $n$-cube $\II^n = (-1,1)^n$ is the map $r : \II^n \to \II^n$ that takes $(x_1,...,x_{n-k-1},x_{n-k},x_{n-k+1}, ...,x_n)$ to $(x_1,...,x_{n-k-1},-x_{n-k},x_{n-k+1}, ...,x_n)$. Note that if $(\II^n,f)$ is a tame stratification then so is $(\II^n,f \circ r)$. On the combinatorial side, note that $\NFTrs (f \circ r)$ is the stratified truss obtained from $\NFTrs f$ by reversing fibers frame orders in the $k$th 1-truss bundle. Observe that reflections map tame tangles to tame tangles, and manifold diagrams to manifold diagrams.
\end{rmk}

\begin{prop}[Stable 2-tangle singularities in $\II^3$] \label{thm:stable-2-sing} Up to reflection and $\sF$-equivalence, there are three stable 2-singularities in $\II^3$, namely, those depicted in \cref{fig:stable-2-tangle-singularities-in-dimension-3}.
\end{prop}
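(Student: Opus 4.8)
The plan is to work combinatorially throughout, translating the problem via \cref{obs:combinatorializing-tame-tangles} into a classification of $2$-singularity $3$-trusses up to $\sF$-equivalence, and then organizing the argument around the transversal link. A $2$-singularity in $\II^3$ is (up to framed stratified homeomorphism) the cone of a tame link $l = (S^1 \into \partial \bI^3)$ whose cone point has transversal dimension $0$; combinatorially, it is a $2$-tangle $3$-truss $(T, f : Q \into T_3)$ with $T$ an open cone truss. So the first step is to enumerate the possible tame links: an embedded circle $S^1 \into \partial\bI^3 \iso S^2$ separates the $2$-sphere, and what matters is the induced transversal (equivalently, manifold-diagram) stratification of the sphere, i.e. the $1$-dimensional manifold diagram data on $S^2$ coming from restricting the transversal stratification of the cone. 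By \cref{ssec:tangle-vs-mdiag} the cone refines to a manifold $3$-diagram, so the link refines to a manifold $2$-diagram structure on $S^2$; I would classify the relevant such structures combinatorially (finitely many, since trusses are finite), getting a short list of candidate singularities. The candidates of interest are: the "Morse" saddle/index-$1$ type singularity (where the circle in $S^2$ is a "figure-eight"–type configuration coming from two sheets of the surface crossing), and the two "fold with a cusp"/swallowtail-precursor configurations — but the key point is to see which survive the stability filter.

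The second step is the stability analysis. For each candidate $2$-singularity $(T,f)$ I would either exhibit a perturbation $(T,f) \perturb (S,g)$ strictly decreasing the complexity measure $\#Q$ on every neighborhood $W^{\leq x}$ of the generic fiber — thereby killing it — or argue no such perturbation exists. To rule out perturbations I would use the combinatorial structure: a perturbation over $[1]\op$ is classified by a morphism in $\kT^3(\iC)$ (\cref{obs:labeled-n-truss-bord}), so there are only finitely many perturbations out of a given finite singularity, and one checks them directly; the surjectivity-on-tangle-manifold condition in \cref{defn:perturbations} (equivalently the last clause of that definition as stated) sharply limits which bordisms qualify. The expected outcome is that the three singularities in \cref{fig:stable-2-tangle-singularities-in-dimension-3} — which I read as the codimension-$1$ analogue of (i) the cusp/fold singularity $\iA_1$ product-stabilized appropriately, and (ii),(iii) the two genuinely $2$-dimensional singular configurations (an $\iA_1 \stack 2 \iA_1$-type "crossing of two folds" and its sibling, matching the Morse/Cerf picture of a surface in $3$-space near a critical point) — are exactly the ones admitting no complexity-decreasing perturbation, while all other cone configurations on $S^2$ can be resolved.

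The third step is to organize the classification up to $\sF$-equivalence and reflection: having found the stable ones, I would check that no two of the three are related by a path (a $1$-fiber bundle $3$-tangle $4$-truss connecting them) — since a path is a coherence-like object that does not create singular points of transversal dimension $0$ in the slices, two non-isomorphic stable singularities of the same "Cerf type" could in principle be $\sF$-equivalent, so I need to verify the three listed representatives lie in distinct $\sF$-orbits — and that every stable $2$-singularity is reflection-and-$\sF$-equivalent to one of them. This uses that $\sF$-equivalence and reflection are the only identifications allowed, plus the finiteness of everything in sight. The main obstacle I anticipate is the stability verification: ruling out \emph{all} complexity-decreasing perturbations of a candidate requires either a clean structural invariant preserved by perturbations (something like the transversal-dimension filtration of the link, or a count of top-dimensional singular strata) that certifies minimality of $\#Q$, or an exhaustive but finite case check through $\kT^3$; making the "no perturbation decreases complexity" argument rigorous and not merely a picture-by-picture inspection is where the real work lies. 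A secondary subtlety is handling gluings "up to normalization" (\cref{rmk:gluing-up-to-norm}) correctly when comparing $\iA_1 \stack 1 \iA_1$ versus $\iA_1 \stack 2 \iA_1$ type singularities, since the example in \cref{fig:a-perturbation-between-stable-singularities} shows these can be perturbation-related, so one must be careful which representative is the stable (as opposed to inductively stable) one.
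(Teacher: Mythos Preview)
Your plan has a genuine gap at the enumeration step. You write that the candidate links are ``finitely many, since trusses are finite'', but while each individual truss is finite, there are \emph{infinitely many} $2$-singularities in $\II^3$: the link $S^1 \into \partial\bI^3$ can contain arbitrarily many $\iA_1^{\numovarone}$ singularities on the boundary faces (think of a circle winding back and forth through the top and bottom faces of the cube any number of times). So a direct enumeration of all singularities followed by a stability check on each is not a finite procedure. Relatedly, your claim that ``there are only finitely many perturbations out of a given finite singularity'' is false --- the generic fiber of a perturbation can be an arbitrarily large tangle truss --- so the stability check for a fixed candidate is also not a finite search in the way you suggest.

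The paper handles this differently. It first enumerates only those singularities whose link contains at most two $\iA_1^{\numovarone}$ singularities (this \emph{is} a finite list), and among these finds the three stable ones (extrema, saddles, cusps --- your description of them as ``crossing of two folds'' and ``figure-eight configurations'' is off; the link is always an embedded circle, never immersed). Then, for an arbitrary singularity $s : f \to g$ with at least three $\iA_1^{\numovarone}$'s in its link, the paper does not try to enumerate perturbations but instead \emph{constructs} one: it first reduces to the case $g = \id$ by bending the target into the source, then inductively simplifies the source by cancelling consecutive $\iA_1^{\numovarone}$ pairs with $\iA_2$ cusps and removing remaining caps with $\iA_1^{\numovar 2}$ extrema/saddles. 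The result is an explicit perturbation of $s$ into a tangle built only from the three stable singularities, all of which are strictly less complex than $s$. This constructive resolution is the missing idea in your proposal; without it, the ``infinitely many large singularities'' case is simply not addressed.
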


\begin{rmk}[Naming and counting stable 2-tangle singularities] \label{notn:stable-2-tang-sing} The three singularities (and their respective reflections) in \cref{fig:stable-2-tangle-singularities-in-dimension-3} will be called `extrema', `saddles', and `cusps' respectively. Symbolically, we also refer to extrema and saddles together as the $\iA_1^{\numovar 2}$ singularities, and to the cusps as the $\iA_2$ singularities. Note that applying all available reflections, in total there are \emph{eight} $\sF$-equivalence classes of stable 2-tangle singularities in $\II^3$.
\end{rmk}

\nid Recall the notions of sides, gluings, domains, and codomains from \cref{constr:gluings} and \cref{term:src-and-tgt} (and keep in mind we may tacitly refine trusses before we glue them, see \cref{rmk:gluing-up-to-norm}). Translating between tame tangles and tangles trusses via $\FTrs$ and $\CMsh$ we will freely use these notions in the topological context.

\begin{rmk}[Codimension 1 tangles in $\II^1$ and $\II^2$] A $0$-tangle $(\II^1,f)$ is a disjoint union of $i \in \lN$ points in the interval $\II^1$ and we abbreviate $(\II^1,f)$ simply by $i$. Given $0$-tangles $i$ and $j$ we may compose them by gluing: note that $i \stack 1 j = i + j$ (see \cref{constr:gluings}).  One dimension up, a 1-tangle $(\II^2,f)$ is therefore of the form $f : i \to j$ (see \cref{term:src-and-tgt}). Such a tangle will contain an arrangement of $\iA_1^{\numovarone}$ singularities as illustrated in \cref{fig:source-and-target-of-a-1-tangle-in-dim-2}.
\begin{figure}[ht]
    \centering
    \def\svgwidth{1\columnwidth}
\begingroup%
  \makeatletter%
  \providecommand\color[2][]{%
    \errmessage{(Inkscape) Color is used for the text in Inkscape, but the package 'color.sty' is not loaded}%
    \renewcommand\color[2][]{}%
  }%
  \providecommand\transparent[1]{%
    \errmessage{(Inkscape) Transparency is used (non-zero) for the text in Inkscape, but the package 'transparent.sty' is not loaded}%
    \renewcommand\transparent[1]{}%
  }%
  \providecommand\rotatebox[2]{#2}%
  \newcommand*\fsize{\dimexpr\f@size pt\relax}%
  \newcommand*\lineheight[1]{\fontsize{\fsize}{#1\fsize}\selectfont}%
  \ifx\svgwidth\undefined%
    \setlength{\unitlength}{2160bp}%
    \ifx\svgscale\undefined%
      \relax%
    \else%
      \setlength{\unitlength}{\unitlength * \real{\svgscale}}%
    \fi%
  \else%
    \setlength{\unitlength}{\svgwidth}%
  \fi%
  \global\let\svgwidth\undefined%
  \global\let\svgscale\undefined%
  \makeatother%
  \begin{picture}(1,0.21944444)%
    \lineheight{1}%
    \setlength\tabcolsep{0pt}%
    \put(0,0){\includegraphics[width=\unitlength,page=1]{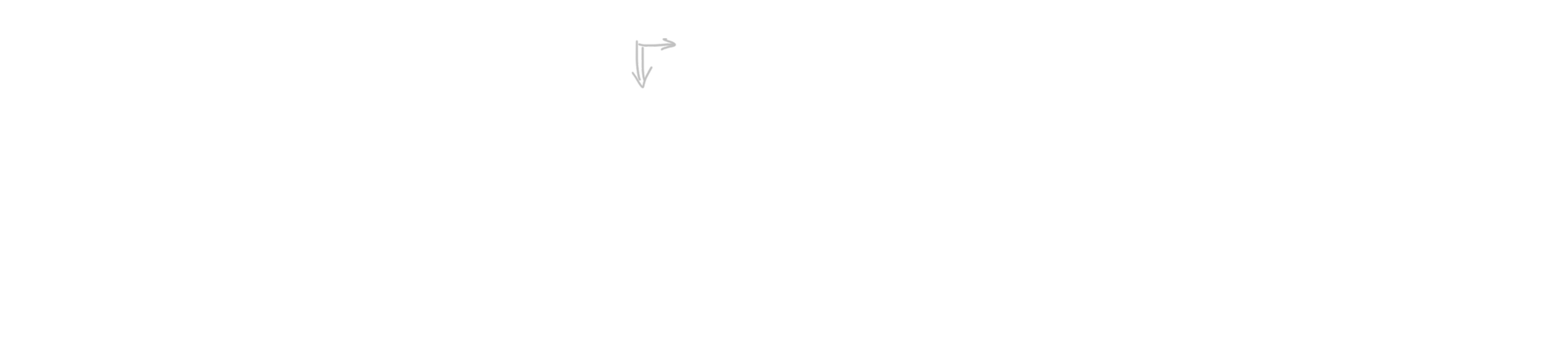}}%
    \put(0.609375,0.20625){\color[rgb]{0,0,0}\makebox(0,0)[lt]{\lineheight{1.25}\smash{\begin{tabular}[t]{l}{\tiny $6$}\end{tabular}}}}%
    \put(0,0){\includegraphics[width=\unitlength,page=2]{source-and-target-of-a-1-tangle-in-dim-2.pdf}}%
    \put(0.609375,0.00347222){\color[rgb]{0,0,0}\makebox(0,0)[lt]{\lineheight{1.25}\smash{\begin{tabular}[t]{l}{\tiny $6$}\end{tabular}}}}%
    \put(0,0){\includegraphics[width=\unitlength,page=3]{source-and-target-of-a-1-tangle-in-dim-2.pdf}}%
  \end{picture}%
\endgroup%

    \caption{A 1-tangle with six source and six target points}
    \label{fig:source-and-target-of-a-1-tangle-in-dim-2}
\end{figure}
\end{rmk}

\begin{notn}[Identities] Denote by $\id_i$ the 1-tangle $\II^1 \times i : i \to i$. This notation similarly applies to general tame tangles $f$, namely, we write $\id_f$ for $\II^1 \times f : f \to f$.
\end{notn}

\begin{proof}[Proof of \cref{thm:stable-2-sing}] We will show, (1), that the three given singularities are stable, and (2), that up to $\sF$-equivalence and reflections they are the only stable singularities. The first statement follows by enumerating \emph{all} 2-singularities $s : f \to g$ which contain at most two $\iA_1^{\numovarone}$ singularities in their link (i.e.\ in their source $f$ and target $g$ taken together). Up to reflections, the enumeration is given in \cref{fig:enumerating-up-to-two-a1}.
\begin{figure}[ht]
    \centering
    \def\svgwidth{1\columnwidth}
\begingroup%
  \makeatletter%
  \providecommand\color[2][]{%
    \errmessage{(Inkscape) Color is used for the text in Inkscape, but the package 'color.sty' is not loaded}%
    \renewcommand\color[2][]{}%
  }%
  \providecommand\transparent[1]{%
    \errmessage{(Inkscape) Transparency is used (non-zero) for the text in Inkscape, but the package 'transparent.sty' is not loaded}%
    \renewcommand\transparent[1]{}%
  }%
  \providecommand\rotatebox[2]{#2}%
  \newcommand*\fsize{\dimexpr\f@size pt\relax}%
  \newcommand*\lineheight[1]{\fontsize{\fsize}{#1\fsize}\selectfont}%
  \ifx\svgwidth\undefined%
    \setlength{\unitlength}{2160bp}%
    \ifx\svgscale\undefined%
      \relax%
    \else%
      \setlength{\unitlength}{\unitlength * \real{\svgscale}}%
    \fi%
  \else%
    \setlength{\unitlength}{\svgwidth}%
  \fi%
  \global\let\svgwidth\undefined%
  \global\let\svgscale\undefined%
  \makeatother%
  \begin{picture}(1,0.11215278)%
    \lineheight{1}%
    \setlength\tabcolsep{0pt}%
    \put(0,0){\includegraphics[width=\unitlength,page=1]{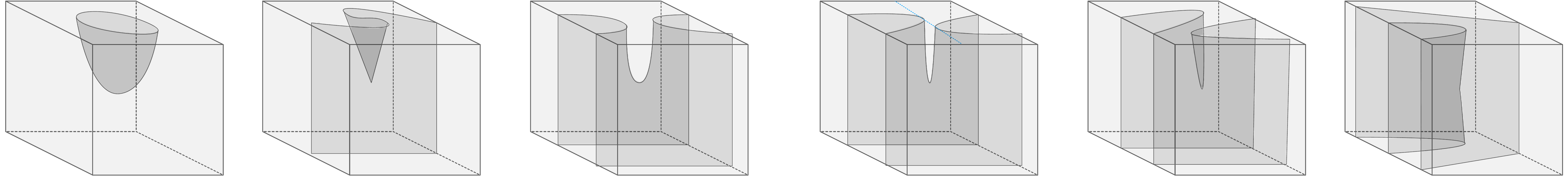}}%
    \put(0.48784722,0.04826389){\color[rgb]{0,0,0}\makebox(0,0)[lt]{\lineheight{1.25}\smash{\begin{tabular}[t]{l}{\tiny $\eqv_\sF$}\end{tabular}}}}%
    \put(0,0){\includegraphics[width=\unitlength,page=2]{enumerating-all-singularities-whose-link-contains-up-to-two-a1.pdf}}%
  \end{picture}%
\endgroup%

    \caption{Enumerating singularities whose link contains up to two $\iA_1^{\numovarone}$ singularities}
    \label{fig:enumerating-up-to-two-a1}
\end{figure}

\nid The first four singularities in this list are stable: indeed, one checks that none of them can be perturbed into strictly less complex singularities. This in particular includes the three singularities from \cref{fig:stable-2-tangle-singularities-in-dimension-3}, and thus shows statement (1).

To see statement (2), first note that the fourth singularity in \cref{fig:enumerating-up-to-two-a1} is stable but $\sF$-equivalent to the third, while the last two singularities in \cref{fig:enumerating-up-to-two-a1} are unstable: the respective perturbations that witness this are given in \cref{fig:perturbation-of-2-link-sings}. It remains to show that any singularity $s : f \to g$ not in the list of \cref{fig:enumerating-up-to-two-a1} is unstable (in particular, the link of $s$ will contain at least three $\iA_1^{\numovarone}$ singularities). We will do so by showing that $s$ can be perturbed into a tangle containing only $\iA_1^{\numovar 2}$ and $\iA_2$ singularities (all of which are strictly less complex than $s$).
\begin{figure}[ht]
    \centering
    \def\svgwidth{1\columnwidth}
    \import{./figuresused/}{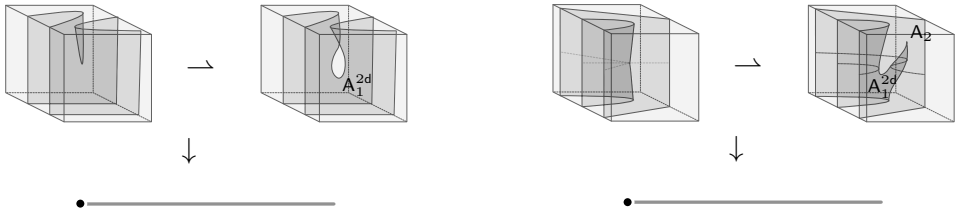}

    \caption{The simplest unstable 2-singularities and their perturbations}
    \label{fig:perturbation-of-2-link-sings}
\end{figure}

We organize the argument into two steps. The first step will show that we may assume the target to be trivial. The second step will construct the required perturbation. As a running example we will consider the singularity $s$ shown in \cref{fig:an-unstable-singularity}.

\begin{figure}[ht]
    \centering
    \def\svgwidth{1\columnwidth}
\begingroup%
  \makeatletter%
  \providecommand\color[2][]{%
    \errmessage{(Inkscape) Color is used for the text in Inkscape, but the package 'color.sty' is not loaded}%
    \renewcommand\color[2][]{}%
  }%
  \providecommand\transparent[1]{%
    \errmessage{(Inkscape) Transparency is used (non-zero) for the text in Inkscape, but the package 'transparent.sty' is not loaded}%
    \renewcommand\transparent[1]{}%
  }%
  \providecommand\rotatebox[2]{#2}%
  \newcommand*\fsize{\dimexpr\f@size pt\relax}%
  \newcommand*\lineheight[1]{\fontsize{\fsize}{#1\fsize}\selectfont}%
  \ifx\svgwidth\undefined%
    \setlength{\unitlength}{2160bp}%
    \ifx\svgscale\undefined%
      \relax%
    \else%
      \setlength{\unitlength}{\unitlength * \real{\svgscale}}%
    \fi%
  \else%
    \setlength{\unitlength}{\svgwidth}%
  \fi%
  \global\let\svgwidth\undefined%
  \global\let\svgscale\undefined%
  \makeatother%
  \begin{picture}(1,0.22083333)%
    \lineheight{1}%
    \setlength\tabcolsep{0pt}%
    \put(0,0){\includegraphics[width=\unitlength,page=1]{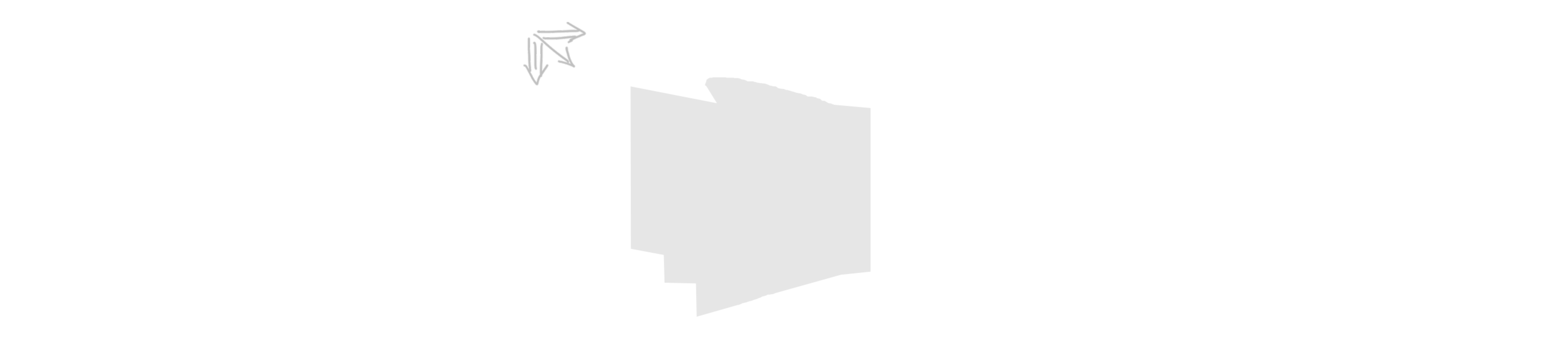}}%
    \put(0.63854167,0.15069444){\color[rgb]{0,0,0}\makebox(0,0)[lt]{\lineheight{1.25}\smash{\begin{tabular}[t]{l}{\tiny $f$}\end{tabular}}}}%
    \put(0.61631944,0.09305556){\color[rgb]{0,0,0}\makebox(0,0)[lt]{\lineheight{1.25}\smash{\begin{tabular}[t]{l}{\tiny $s$}\end{tabular}}}}%
    \put(0,0){\includegraphics[width=\unitlength,page=2]{an-unstable-singularity.pdf}}%
    \put(0.63506944,0.03888889){\color[rgb]{0,0,0}\makebox(0,0)[lt]{\lineheight{1.25}\smash{\begin{tabular}[t]{l}{\tiny $g$}\end{tabular}}}}%
    \put(0,0){\includegraphics[width=\unitlength,page=3]{an-unstable-singularity.pdf}}%
  \end{picture}%
\endgroup%

    \caption[An unstable singularity]{An unstable singularity $s : f \to g$}
    \label{fig:an-unstable-singularity}
\end{figure}

\emph{Step 1: Moving $g$ to the source.} Denote by $g\inv$ the 2-reflection of $g$. First, there is a tangle $\mu : \id_j \to g\inv \stack 2 g$ given by applying $\iA_1^{\numovar 2}$ singularities to generate the pairs of $\iA_1^{\numovarone}$ singularities in $g\inv \stack 2 g$; second, there is a 2-tangle singularity $s^{\cup} : h \to \id_i$ whose source $h := f \stack 2 g\inv$ is the 2-gluing of $f$ and $g\inv$, and whose target is the identity on $i$ strands; finally, these two tangles can be combined into a single tangle $\mu \ast s^{\cup} : f \to g$ given by the composite $(\id_f \stack 2 \mu) \stack 3 (s^{\cup} \stack 2 \id_g)$. For our choice of $s$ in \cref{fig:an-unstable-singularity}, this is illustrated in \cref{fig:bending-the-target-into-the-source}.
\begin{figure}[ht]
    \centering
    \def\svgwidth{1\columnwidth}
    \import{./figuresused/}{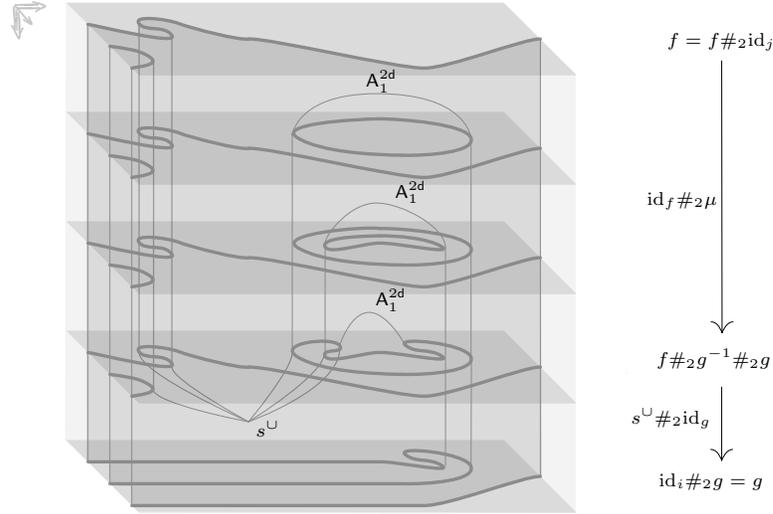}

    \caption[Fusing source and target]{A perturbation of $s$ obtained by `bending' the target into the source}
    \label{fig:bending-the-target-into-the-source}
\end{figure}

Observe that there is a (unique) perturbation $s \perturb \mu \ast s^{\cup}$ which restricts to an identity on sources and targets. If we can construct a perturbation of $s^{\cup}$ into a tangle containing only $\iA_1^{\numovar 2}$ and $\iA_2$ singularities (without changing the source $f \stack 2 g\inv \stack 2 g$ or the target $\id$), we can compose the perturbations to obtain the required perturbation of $s$. Replacing $s$ by $s^{\cup}$, we have thus reduced the proof to the case where the target of $s$ is $\id_i$.

\emph{Step 2: Inductively simplifying the source.} To show $s : h \to \id$ is unstable, we first remove all wiggles from $h$ as follows. Since the link of $s$ is a 1-sphere, observe that the source $h$ is a union of line segments in the plane $\II^2$ (i.e.\ $h$ cannot contain 1-spheres itself). Each such line segment  contains a non-negative number of $\iA_1^{\numovarone}$ singularities; if this number is greater than 1, then we can always find a pair of consecutive $\iA_1^{\numovarone}$ singularities to which we can apply an $\iA_2$ singularity (at least after an appropriate isotopy, that ensures these two singularities are `close by one another'). The application of $\iA_2$ singularities to `remove wiggles' in this way is illustrated in \cref{fig:removing-wiggles-from-the-domain}. As shown in the figure, removing wiggles inductively, produces a path $\omega : h \to \tilde h$ such that each line segment in $\tilde h$ contains at most one $\iA_1^{\numovarone}$ singularity.
\begin{figure}[ht]
    \centering
    \def\svgwidth{1\columnwidth}
    \import{./figuresused/}{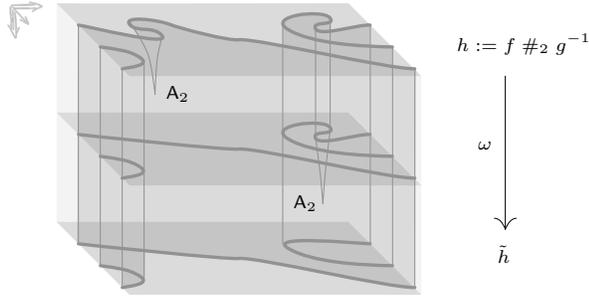}

    \caption[Removing wiggles from the source]{Removing wiggles from the source of $s^{\cup}$}
    \label{fig:removing-wiggles-from-the-domain}
\end{figure}

If $\tilde h$ equals $\id_i$ (which always holds if $h$ has a single line segment), then we are done: in this case, observe that there is a (unique) perturbation $s \perturb \omega$ and this perturbs $s$ into a tangle containing $\iA_1^{\numovar 2}$ and $\iA_2$ singularities as claimed. Otherwise, we continue. Note that $\tilde h$ has at least two line segments. Consider the two line segments containing the \emph{first} point in the source resp.\ target of $\tilde h$: the two segments are marked in red resp.\ blue in the 1-tangle shown in the top-left of \cref{fig:inductively-removing-remaining-cups-and-caps} (note the two segments must be distinct since the link of $\tilde s$ is a 1-sphere $S^1$).
\begin{figure}[ht]
    \centering
    \def\svgwidth{1\columnwidth}
    \import{./figuresused/}{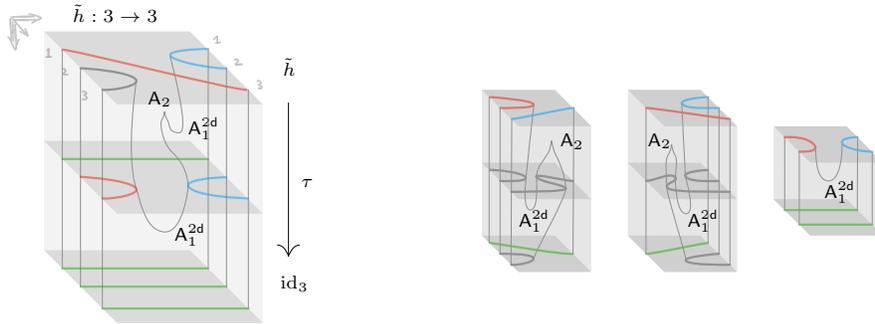}

    \caption{Inductively removing remaining cups and caps}
    \label{fig:inductively-removing-remaining-cups-and-caps}
\end{figure}
We can operate on the two segments with one of the three operations shown on the right in \cref{fig:inductively-removing-remaining-cups-and-caps} (containing only $\iA_1^{\numovar 2}$ and $\iA_2$ singularities) to obtain a tangle $\tilde h \to  \id_1 \stack 1 \tilde h'$; in fact, applying such operations inductively (noting that $\tilde h'$ has fewer line segments than $\tilde h$), we obtain a tangle $\tau : \tilde h \to \id_1 \stack 1 ... \stack 1 \id_1 = \id_i$, as illustrated in  \cref{fig:inductively-removing-remaining-cups-and-caps}.
Observe that there is a (unique) perturbation $s \perturb \omega \stack 3 \tau$ which restricts to identities on sources and targets. Since $\omega \stack 3 \tau$ only contains $\iA_1^{\numovar 2}$ and $\iA_2$ singularities this proves the statement.
\end{proof}

\subsection{Stable 1-tangle singularities in 3-space}

Recall, for $m = 1$ and $n = 2$ there are exactly two $m$-tangle singularities $(\II^n,f)$, namely those discussed in \cref{eg:basic-tang-sing}. This situation `stabilizes' to the case $m = 1$, $n = 3$ as the following theorem records.

\begin{prop}[Stable 1-tangle singularities in dimension 3] \label{thm:stable-1-sing-dim-3} Up to $\sF$-equivalence and reflection, there is one stable 1-tangle singularity in $\II^3$: namely, the singularity depicted (together with its reflection) in \cref{fig:stable-1-tangle-singularities-in-dimension-3}.
\end{prop}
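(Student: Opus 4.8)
The plan is to follow the template of the proof of \cref{thm:stable-2-sing}, but the $1$-tangle case is considerably more rigid, so the argument is shorter. First I would pass to the combinatorial side via \cref{obs:combinatorializing-tame-tangles}: a $1$-tangle singularity in $\II^3$ is, up to framed stratified homeomorphism, a normalized open cone $3$-truss $(T,f\colon Q\into T_3)$ that is combinatorially $1$-dim transversal (see \cref{defn:comb-transv,defn:comb-tangle}), whose cone point $\top\in T_3$ has transversal dimension $0$. Since $\top$ is maximal, the transversality condition at $\top$ reads $\NF{T^{\le \top},f^{\le \top}}=(T,f)=\OTT^0\times(C,D\into C_3)$ with $D\setminus\top$ a $0$-sphere; hence the tangle truss manifold $Q$ is necessarily the three-element poset $(a\to\top\ot b)$ with $\dim(\top)=0$ and $\dim(a)=\dim(b)=1$ --- geometrically a ``$V$'' consisting of the singular point $\top$ together with two regular arcs $a,b$. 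The classification therefore reduces to understanding in how many combinatorially inequivalent ways this fixed ``$V$'' can sit inside a normalized open cone $3$-truss.

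Next I would carry out this (finite) enumeration. Two constraints pin it down. Regularity of the arcs forces $\NF{T^{\le a},f^{\le a}}=\OTT^1\times(\OTT^2,\emptyset)$ (empty link), and likewise for $b$; combinatorially this says each arc is ``monotone in the top categorical direction'' near $\top$, so both arcs emanate from $\top$ in the \emph{same} top-categorical direction --- the two choices (``both up'' / ``both down'') being exchanged by the reflection in the top categorical direction. Fixing ``both up'', it remains to enumerate how $a$ and $b$ are organized with respect to the lower projections $T_3\to T_2\to T_1$ and the frame orders of the fibers. Drawing the corresponding pictures (in the manner of the enumeration in the proof of \cref{thm:stable-2-sing}), I expect a short list: the ``extremum'' of \cref{fig:stable-1-tangle-singularities-in-dimension-3}, in which $a,b$ already project to two distinct $1$-cells in $T_2$, together with a small number of variants in which $a$ and $b$ project to a common arc in $T_2$ or are tangentially arranged. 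One records which of these are already normalized and discards the non-normal ones.

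Then I would finish with the $\sF$-equivalence and stability analysis. For stability of the extremum: the underlying poset has $\#Q=3$, and by enumerating the possible perturbations $(T,f)\perturb(S,g\colon W\into S_n)$ one checks that the cone of an $S^0$-link cannot be resolved --- every generic fibre still contains a point $x$ with $\#W^{\le x}\ge 3$ --- so no perturbation strictly decreases complexity, exactly as for the basic singularities of \cref{eg:basic-tang-sing}. For the remaining members of the enumeration: each is either $\sF$-equivalent to the extremum or unstable. Equivalence is witnessed by an explicit path (\cref{defn:framed-isotopy}): sliding the two endpoints of the link around the $z>0$ region of $\partial\bI^3$ never creates a singular point, so it defines a coherence and hence an $\sF$-equivalence --- this absorbs the ``same-projection'' and ``tangential'' variants. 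Instability, where it occurs, is witnessed by an explicit perturbation resolving the singularity into strictly smaller pieces (a picture analogous to \cref{fig:perturbation-of-2-link-sings}). Combining these yields: up to reflection and $\sF$-equivalence there is exactly one stable $1$-tangle singularity in $\II^3$, the extremum of \cref{fig:stable-1-tangle-singularities-in-dimension-3}.

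The main obstacle I anticipate is the completeness of the enumeration in the middle step --- being sure the list of normalized combinatorially $1$-dim transversal open cone $3$-trusses with a ``$V$'' tangle manifold is exactly what the pictures suggest --- together with the bookkeeping in the last step needed to verify that the superficially different link placements really are $\sF$-equivalent (rather than secretly new stable singularities) and that the claimed resolutions are genuine perturbations in the required sense. These are the same flavour of case-checks as in the proof of \cref{thm:stable-2-sing}, but in one lower tangle dimension and hence more tractable.
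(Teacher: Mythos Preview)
Your approach is essentially the paper's, carried out in more combinatorial detail: enumerate the possible placements of the $S^0$ link in $\partial\bI^3$, then argue the non-standard placements are $\sF$-equivalent to the standard one. The paper's proof is a four-line geometric sketch: the two link points must lie in the same (source or target) $2$-cube, and up to reflection there are then exactly two relative positionings of these two points in that $2$-cube---the ``generic'' one of \cref{fig:stable-1-tangle-singularities-in-dimension-3} and a ``stacked'' variant---with the latter being $\sF$-equivalent (via a coherence sliding the two points) to the former.

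Two small corrections to your write-up. First, your reason for ``both arcs emanate in the same top-categorical direction'' is mis-attributed: regularity of each arc only gives monotonicity of each arc separately; what forces them to point the \emph{same} way is that $\top$ is $0$-transversal (if they pointed opposite ways the cone point would be $1$-transversal and $(T,f)$ would not be a singularity at all). Second, your ``either $\sF$-equivalent or unstable'' dichotomy is harmless but the second branch is empty here: the non-standard variants are in fact stable (just not inductively stable), so only the $\sF$-equivalence argument is needed, exactly as in the paper.
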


\begin{figure}[ht]
    \centering
    \def\svgwidth{1\columnwidth}
\begingroup%
  \makeatletter%
  \providecommand\color[2][]{%
    \errmessage{(Inkscape) Color is used for the text in Inkscape, but the package 'color.sty' is not loaded}%
    \renewcommand\color[2][]{}%
  }%
  \providecommand\transparent[1]{%
    \errmessage{(Inkscape) Transparency is used (non-zero) for the text in Inkscape, but the package 'transparent.sty' is not loaded}%
    \renewcommand\transparent[1]{}%
  }%
  \providecommand\rotatebox[2]{#2}%
  \newcommand*\fsize{\dimexpr\f@size pt\relax}%
  \newcommand*\lineheight[1]{\fontsize{\fsize}{#1\fsize}\selectfont}%
  \ifx\svgwidth\undefined%
    \setlength{\unitlength}{2160bp}%
    \ifx\svgscale\undefined%
      \relax%
    \else%
      \setlength{\unitlength}{\unitlength * \real{\svgscale}}%
    \fi%
  \else%
    \setlength{\unitlength}{\svgwidth}%
  \fi%
  \global\let\svgwidth\undefined%
  \global\let\svgscale\undefined%
  \makeatother%
  \begin{picture}(1,0.19444444)%
    \lineheight{1}%
    \setlength\tabcolsep{0pt}%
    \put(0,0){\includegraphics[width=\unitlength,page=1]{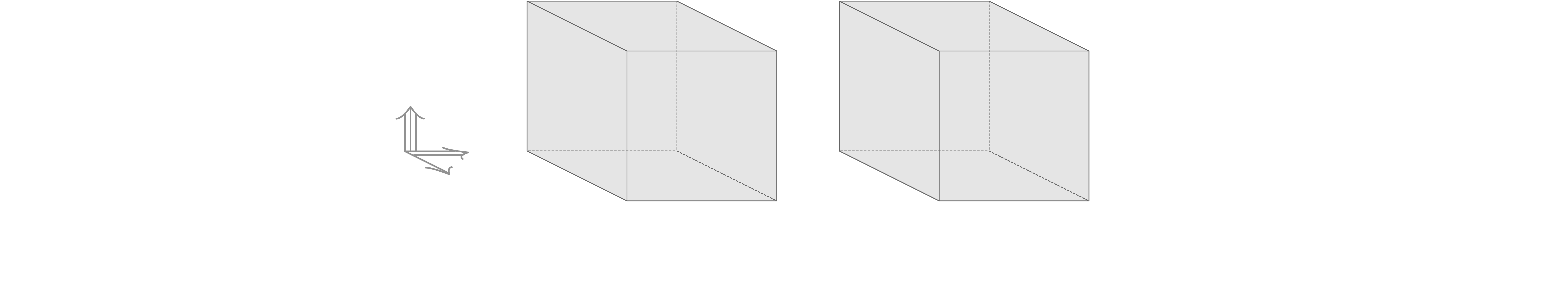}}%
    \put(0.29201389,0.07777778){\color[rgb]{0,0,0}\makebox(0,0)[lt]{\lineheight{1.25}\smash{\begin{tabular}[t]{l}{\tiny 1}\end{tabular}}}}%
    \put(0.30520833,0.09618056){\color[rgb]{0,0,0}\makebox(0,0)[lt]{\lineheight{1.25}\smash{\begin{tabular}[t]{l}{\tiny 2}\end{tabular}}}}%
    \put(0.25729167,0.13576389){\color[rgb]{0,0,0}\makebox(0,0)[lt]{\lineheight{1.25}\smash{\begin{tabular}[t]{l}{\tiny 3}\end{tabular}}}}%
    \put(0,0){\includegraphics[width=\unitlength,page=2]{stable-1-tangle-singularities-in-dimension-3.pdf}}%
    \put(0.32708333,0.03784722){\color[rgb]{0,0,0}\makebox(0,0)[lt]{\lineheight{1.25}\smash{\begin{tabular}[t]{l}$\underbracket[0.140ex]{\hspace{2.05in}}_{\iA_1 \oplus \eps}$\end{tabular}}}}%
  \end{picture}%
\endgroup%

    \caption{Stable 1-tangle singularities in dimension 3}
    \label{fig:stable-1-tangle-singularities-in-dimension-3}
\end{figure}

\begin{proof} Links of 1-tangle singularities are $0$-spheres and thus consist of two points; when working in dimension 3, these two points can either both lie in the source 2-cube or in the target 2-cube. Observe that there are two choices for the relative positioning of these points: indeed, in addition to the singularities in \cref{fig:stable-1-tangle-singularities-in-dimension-3} we also find the singularities shown in \cref{fig:stable-1-tangle-singularities-in-dimension-3-non-generic}. However, the latter singularities are $\sF$-equivalent to the former two singularities which proves the statement.
\begin{figure}[ht]
    \centering
    \def\svgwidth{1\columnwidth}
\begingroup%
  \makeatletter%
  \providecommand\color[2][]{%
    \errmessage{(Inkscape) Color is used for the text in Inkscape, but the package 'color.sty' is not loaded}%
    \renewcommand\color[2][]{}%
  }%
  \providecommand\transparent[1]{%
    \errmessage{(Inkscape) Transparency is used (non-zero) for the text in Inkscape, but the package 'transparent.sty' is not loaded}%
    \renewcommand\transparent[1]{}%
  }%
  \providecommand\rotatebox[2]{#2}%
  \newcommand*\fsize{\dimexpr\f@size pt\relax}%
  \newcommand*\lineheight[1]{\fontsize{\fsize}{#1\fsize}\selectfont}%
  \ifx\svgwidth\undefined%
    \setlength{\unitlength}{2160bp}%
    \ifx\svgscale\undefined%
      \relax%
    \else%
      \setlength{\unitlength}{\unitlength * \real{\svgscale}}%
    \fi%
  \else%
    \setlength{\unitlength}{\svgwidth}%
  \fi%
  \global\let\svgwidth\undefined%
  \global\let\svgscale\undefined%
  \makeatother%
  \begin{picture}(1,0.12847222)%
    \lineheight{1}%
    \setlength\tabcolsep{0pt}%
    \put(0,0){\includegraphics[width=\unitlength,page=1]{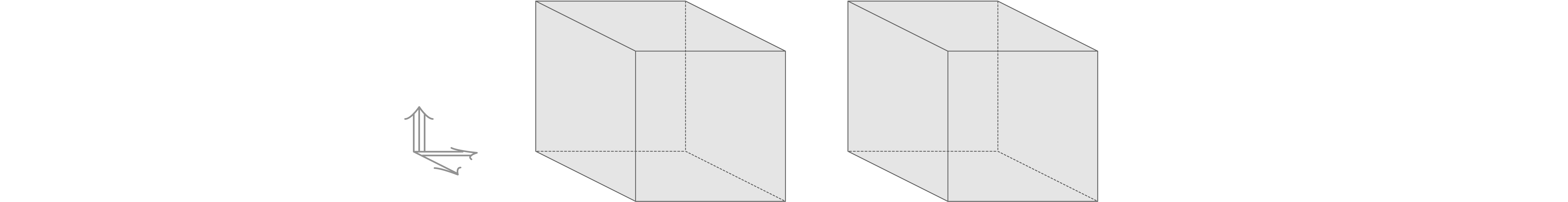}}%
    \put(0.29826389,0.01041667){\color[rgb]{0,0,0}\makebox(0,0)[lt]{\lineheight{1.25}\smash{\begin{tabular}[t]{l}{\tiny 1}\end{tabular}}}}%
    \put(0.31076389,0.03020833){\color[rgb]{0,0,0}\makebox(0,0)[lt]{\lineheight{1.25}\smash{\begin{tabular}[t]{l}{\tiny 2}\end{tabular}}}}%
    \put(0.26319444,0.06979167){\color[rgb]{0,0,0}\makebox(0,0)[lt]{\lineheight{1.25}\smash{\begin{tabular}[t]{l}{\tiny 3}\end{tabular}}}}%
    \put(0,0){\includegraphics[width=\unitlength,page=2]{stable-1-tangle-singularities-in-dimension-3-non-generic.pdf}}%
  \end{picture}%
\endgroup%

    \caption{Stable, but not inductively stable, 1-tangle singularities in dimension 3}
    \label{fig:stable-1-tangle-singularities-in-dimension-3-non-generic}
\end{figure}
\end{proof}

\nid Note that while the singularities in \cref{fig:stable-1-tangle-singularities-in-dimension-3-non-generic} are stable, they are not inductively stable (as their link may be perturbed to a more generic form).

\subsection{Stable 2-tangle singularities in 4-space}

The case of 2-tangle singularities $(\II^4,f)$ is, similarly to the case of 1-tangle singularities in $\II^3$, a stabilization of the case with one less codimension. That is, we obtain a first set of stable 2-tangle singularities $(\II^4,f)$ by stabilizing the singularities in \cref{fig:stable-2-tangle-singularities-in-dimension-3}: the resulting singularities are shown in \cref{fig:three-stable-2-tangle-singularities-in-dimension-4}.
\begin{figure}[ht]
    \centering
    \def\svgwidth{1\columnwidth}
    \import{./figuresused/}{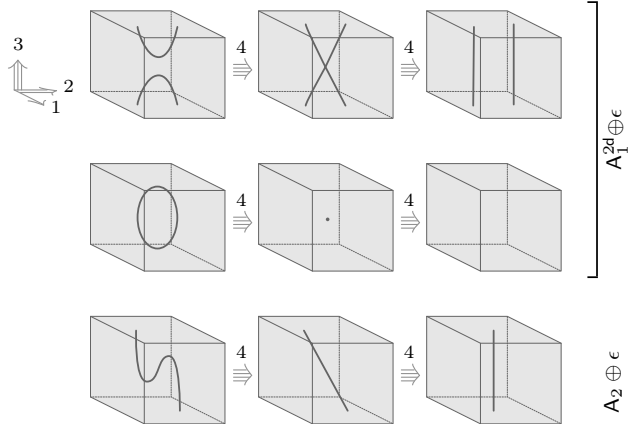}

    \caption{Three stable 2-tangle singularities in dimension 4}
    \label{fig:three-stable-2-tangle-singularities-in-dimension-4}
\end{figure}

However, codimension 2 again also brings new phenomena not present in codimension 1. In this case, the ability to braid points in the plane provides the link of a new singularity which trivializes the braid; this singularity is depicted in \cref{fig:the-braid-eating-singularity}. 

\begin{figure}[ht]
    \centering
    \def\svgwidth{1\columnwidth}
\begingroup%
  \makeatletter%
  \providecommand\color[2][]{%
    \errmessage{(Inkscape) Color is used for the text in Inkscape, but the package 'color.sty' is not loaded}%
    \renewcommand\color[2][]{}%
  }%
  \providecommand\transparent[1]{%
    \errmessage{(Inkscape) Transparency is used (non-zero) for the text in Inkscape, but the package 'transparent.sty' is not loaded}%
    \renewcommand\transparent[1]{}%
  }%
  \providecommand\rotatebox[2]{#2}%
  \newcommand*\fsize{\dimexpr\f@size pt\relax}%
  \newcommand*\lineheight[1]{\fontsize{\fsize}{#1\fsize}\selectfont}%
  \ifx\svgwidth\undefined%
    \setlength{\unitlength}{2160bp}%
    \ifx\svgscale\undefined%
      \relax%
    \else%
      \setlength{\unitlength}{\unitlength * \real{\svgscale}}%
    \fi%
  \else%
    \setlength{\unitlength}{\svgwidth}%
  \fi%
  \global\let\svgwidth\undefined%
  \global\let\svgscale\undefined%
  \makeatother%
  \begin{picture}(1,0.10347222)%
    \lineheight{1}%
    \setlength\tabcolsep{0pt}%
    \put(0,0){\includegraphics[width=\unitlength,page=1]{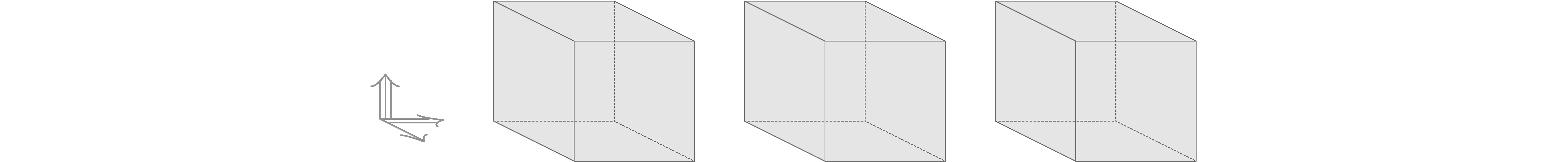}}%
    \put(0.27708333,0.00520833){\color[rgb]{0,0,0}\makebox(0,0)[lt]{\lineheight{1.25}\smash{\begin{tabular}[t]{l}{\tiny 1}\end{tabular}}}}%
    \put(0.28923611,0.02569444){\color[rgb]{0,0,0}\makebox(0,0)[lt]{\lineheight{1.25}\smash{\begin{tabular}[t]{l}{\tiny 2}\end{tabular}}}}%
    \put(0.24131944,0.06527778){\color[rgb]{0,0,0}\makebox(0,0)[lt]{\lineheight{1.25}\smash{\begin{tabular}[t]{l}{\tiny 3}\end{tabular}}}}%
    \put(0,0){\includegraphics[width=\unitlength,page=2]{the-braid-eating-singularity.pdf}}%
    \put(0.45347222,0.06284722){\color[rgb]{0,0,0}\makebox(0,0)[lt]{\lineheight{1.25}\smash{\begin{tabular}[t]{l}{\tiny 4}\end{tabular}}}}%
    \put(0,0){\includegraphics[width=\unitlength,page=3]{the-braid-eating-singularity.pdf}}%
    \put(0.61319444,0.06284722){\color[rgb]{0,0,0}\makebox(0,0)[lt]{\lineheight{1.25}\smash{\begin{tabular}[t]{l}{\tiny 4}\end{tabular}}}}%
    \put(0,0){\includegraphics[width=\unitlength,page=4]{the-braid-eating-singularity.pdf}}%
  \end{picture}%
\endgroup%

    \caption{The braid trivializing singularity}
    \label{fig:the-braid-eating-singularity}
\end{figure}

\begin{prop}[Stable 2-tangle singularities in dimension 4] \label{thm:stable-2-sing-dim-4} Up to reflection and $\sF$-equivalence, there are four stable 2-singularities in $\II^4$, namely, those depicted in \cref{fig:three-stable-2-tangle-singularities-in-dimension-4} and \cref{fig:the-braid-eating-singularity}.
\end{prop}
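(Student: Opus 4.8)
The plan is to run the same two-step scheme used in the proof of \cref{thm:stable-2-sing}, one ambient dimension higher, isolating the single genuinely new phenomenon that appears in codimension $2$. As there, I would work throughout with tangle trusses and their topological counterparts interchangeably, via \cref{obs:combinatorializing-tame-tangles}, passing back and forth with $\FTrs$ and $\CMsh$. A $2$-singularity in $\II^4$ is the cone $\fcone(l)$ of a tame link $l = (S^1 \into \partial\bI^4)$ whose cone point has transversal dimension $0$; written in function-type notation (\cref{term:src-and-tgt}) as $s : f_- \to f_+$, its source and target $f_\pm$ are $1$-tangles in $\II^3$, and the link $l$ itself is the combinatorial datum to dissect. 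I must show (1) that the four tangles of \cref{fig:three-stable-2-tangle-singularities-in-dimension-4} and \cref{fig:the-braid-eating-singularity} are stable, and (2) that every stable $2$-singularity in $\II^4$ is, up to reflection and $\sF$-equivalence, one of these four.

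For (1): the three singularities of \cref{fig:three-stable-2-tangle-singularities-in-dimension-4} are the stabilizations of the extrema, saddles, and cusps of \cref{fig:stable-2-tangle-singularities-in-dimension-3}, obtained by adjoining a trivial top categorical direction. I would first record the short, purely combinatorial lemma that stability is inherited under this stabilization: a perturbation witnessing instability of the stabilized singularity restricts, under the projection forgetting the new direction, to a perturbation witnessing instability of its $\II^3$-counterpart, which is excluded by \cref{thm:stable-2-sing}. For the braid-trivializing singularity of \cref{fig:the-braid-eating-singularity}, whose link is the closure of the braid --- a stable $0$-tangle $1$-coherence, cf.\ \cref{rmk:elementary-homotopies} --- stability is a finite check on its fundamental tangle truss: one enumerates the perturbations of the corresponding cone truss and verifies directly that none satisfies $\#Q > \#W^{\leq x}$ for all $x$, reflecting the fact that the braid can only be created or eliminated in pairs, never simplified in isolation.

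For (2): given a stable $2$-singularity $s : f_- \to f_+$, I would first run the ``bending the target into the source'' reduction of Step 1 of the proof of \cref{thm:stable-2-sing}, now in $\II^4$ --- using $\iA_1^{\numovar 2}$ singularities, build a tangle $\mu : \id \to f_+\inv \stack 3 f_+$ (with $f_+\inv$ an appropriate reflection of $f_+$) and a cap singularity $s^{\cup} : h \to \id$ with source $h := f_- \stack 3 f_+\inv$, and compose to obtain a perturbation $s \perturb \mu \ast s^{\cup}$ --- reducing to the case $f_+ = \id$. Next, I would normalize the source: $h$ is a $1$-tangle in $\II^3$, and after a maximally simplifying perturbation its combinatorics is governed by \cref{thm:stable-1-sing-dim-3} (its only singularity, up to reflection, is the stable $1$-tangle singularity) together with braid coherences. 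The argument then splits. If a maximally perturbed $h$ contains no braid coherence, the ``remove wiggles, then inductively remove cups and caps'' scheme from Step 2 of the proof of \cref{thm:stable-2-sing}, lifted one dimension, perturbs $s$ into a tangle built only from the stabilized $\iA_1^{\numovar 2}$ and $\iA_2$ singularities and their reflections, so $s$ is $\sF$-equivalent to one of the first three. If $h$ does contain a braid coherence, one perturbs away everything else and finds that $s$ is $\sF$-equivalent to the braid-trivializing singularity; conversely that singularity is \emph{not} $\sF$-equivalent to the other three, since an $\sF$-equivalence is a path and a path cannot eliminate the braid, which is a stable coherence. This last point is precisely why the count is four rather than three.

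The main obstacle is the normalization of the source together with the braid case: it requires a working classification of the $1$-tangles in $\II^3$ that can occur as the source of a stable $2$-singularity --- concretely, that up to the available perturbations such a $1$-tangle is a composite of identities, the stable $1$-tangle singularity of \cref{thm:stable-1-sing-dim-3}, and braid coherences --- and then a verification that braiding is the \emph{only} obstruction to falling back on the codimension-$1$ argument, i.e.\ that once all braid coherences are absorbed the $\II^3$ simplification scheme goes through verbatim. A secondary obstacle is establishing $\sF$-inequivalence of the braid-trivializing singularity from the other three; the natural invariant ``the link contains the braid'' works because a stable coherence persists along every path, but making this rigorous in the tangle-truss setting, where paths are $1$-fiber-bundle tangle trusses, will need a little care.
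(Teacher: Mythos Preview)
Your high-level ingredients match the paper's --- reduce to trivial target, recognize the braid as the new codimension-$2$ phenomenon, then fall back on the $\II^3$ simplification --- but the logic of your part (2) is inverted. You assume $s$ is stable and try to conclude it is $\sF$-equivalent to one of the four; yet the scheme you describe perturbs $s$ into a \emph{composite} of the four stable singularities, and such a perturbation (when the composite has several pieces, each strictly less complex than $s$) witnesses that $s$ is \emph{unstable}, not that it is $\sF$-equivalent to any single one of them. Your case (b) makes this concrete: if the source $h$ contains two braid coherences, $s$ perturbs into two braid-trivializers plus further pieces, hence is unstable --- it is certainly not $\sF$-equivalent to one braid-trivializer. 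The step ``so $s$ is $\sF$-equivalent to one of the first three'' in case (a) has the same problem.

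The paper runs the same machinery with the correct logical target: after verifying stability of the four by direct enumeration of small singularities (as in the $\II^3$ proof), it shows that any \emph{other} singularity $s$ is unstable by exhibiting a perturbation $s \perturb \rho$ with $\rho$ built entirely from the four. There is no case split on whether the source contains a braid; instead $\rho$ is built in three uniform steps: (i) a coherence genericizing the source so that its only wire crossings, seen under $\II^3 \to \II^2$, are braids; (ii) deletion of \emph{all} braids using the braid-trivializing singularity; (iii) removal of circles, then of wiggles and cups/caps exactly as in \cref{thm:stable-2-sing}. Two smaller points: your stabilization-preserves-stability lemma is not correct as stated --- a perturbation in $\II^4$ need not project to one in $\II^3$, since the perturbed tangle may use the extra direction essentially --- and the paper handles stability of the four (and implicitly their pairwise $\sF$-inequivalence) by enumeration, as it did in dimension $3$, so your ``secondary obstacle'' is absorbed there rather than requiring a separate invariant argument.
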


\begin{proof}[Proof Outline] We outline the steps of the proof. The argument is analogous to \cref{thm:stable-2-sing}. First, one shows by enumerating all singularities up to the size of the four singularities in \cref{fig:three-stable-2-tangle-singularities-in-dimension-4} and \cref{fig:the-braid-eating-singularity}, that these singularities are indeed stable and cannot be perturbed into simpler singularities. Next, one shows that all other singularities $s : f \to g$ are unstable. We may assume (analogous to the argument in \cref{thm:stable-2-sing}) that $g = \id$. As an example, consider the singularity shown in \cref{fig:an-unstable-2-singularity-in-dim-4}.
\begin{figure}[ht]
    \centering
    \def\svgwidth{1\columnwidth}
\begingroup%
  \makeatletter%
  \providecommand\color[2][]{%
    \errmessage{(Inkscape) Color is used for the text in Inkscape, but the package 'color.sty' is not loaded}%
    \renewcommand\color[2][]{}%
  }%
  \providecommand\transparent[1]{%
    \errmessage{(Inkscape) Transparency is used (non-zero) for the text in Inkscape, but the package 'transparent.sty' is not loaded}%
    \renewcommand\transparent[1]{}%
  }%
  \providecommand\rotatebox[2]{#2}%
  \newcommand*\fsize{\dimexpr\f@size pt\relax}%
  \newcommand*\lineheight[1]{\fontsize{\fsize}{#1\fsize}\selectfont}%
  \ifx\svgwidth\undefined%
    \setlength{\unitlength}{2160bp}%
    \ifx\svgscale\undefined%
      \relax%
    \else%
      \setlength{\unitlength}{\unitlength * \real{\svgscale}}%
    \fi%
  \else%
    \setlength{\unitlength}{\svgwidth}%
  \fi%
  \global\let\svgwidth\undefined%
  \global\let\svgscale\undefined%
  \makeatother%
  \begin{picture}(1,0.19166667)%
    \lineheight{1}%
    \setlength\tabcolsep{0pt}%
    \put(0,0){\includegraphics[width=\unitlength,page=1]{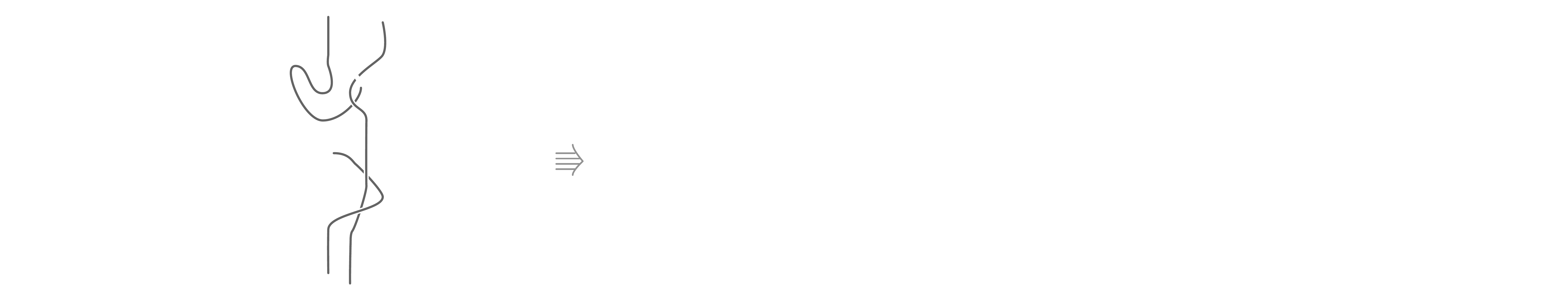}}%
    \put(0.35694444,0.10486111){\color[rgb]{0,0,0}\makebox(0,0)[lt]{\lineheight{1.25}\smash{\begin{tabular}[t]{l}{\tiny 4}\end{tabular}}}}%
    \put(0,0){\includegraphics[width=\unitlength,page=2]{an-unstable-2-singularity-in-dim-4.pdf}}%
    \put(0.63263889,0.10486111){\color[rgb]{0,0,0}\makebox(0,0)[lt]{\lineheight{1.25}\smash{\begin{tabular}[t]{l}{\tiny 4}\end{tabular}}}}%
    \put(0,0){\includegraphics[width=\unitlength,page=3]{an-unstable-2-singularity-in-dim-4.pdf}}%
  \end{picture}%
\endgroup%

    \caption{An unstable 2-singularity in dimension 4}
    \label{fig:an-unstable-2-singularity-in-dim-4}
\end{figure}

We will produce a perturbation $s \perturb \rho$ such that $\rho$ only uses the four stable singularities determined above. The perturbation $\rho$ of $s$ can be built in three steps (for our example of $s$, the perturbation is shown in \cref{fig:perturbation-of-an-unstable-2-singularity-in-dim-4}). In the first step, we genericize the source $f$ of $s$ by a coherence $f \to \tilde f$ (see \cref{term:coherence}), such that $\tilde f = (W \into \II^3)$ has the following property: the projection $\pi : \II^3 \to \II^2$ restricts to projection $W \to \II^2$ which has empty or singleton preimages except for isolated points $x \in \II^2$, at which there is a framed neighborhood $U_x$ such that $W$ restricts on $\pi\inv(U_x)$ to a braid. (In other words, $\tilde f$ doesn't contain any other `wire crossings' but braids.) In the second step, we then delete all braids using the braid trivializing singularity. In the third and final step, we remove circles using the second singularity in \cref{fig:three-stable-2-tangle-singularities-in-dimension-4}, and then remove wiggles and simplify using the first and third singularity in \cref{fig:three-stable-2-tangle-singularities-in-dimension-4} (this last part is analogous to the argument in \cref{thm:stable-2-sing}). As a result we have shown that $s$ can be rewritten in terms of the four stable singularities, and is thus unstable as required.
\begin{figure}[ht]
    \centering
    \def\svgwidth{1\columnwidth}
    \import{./figuresused/}{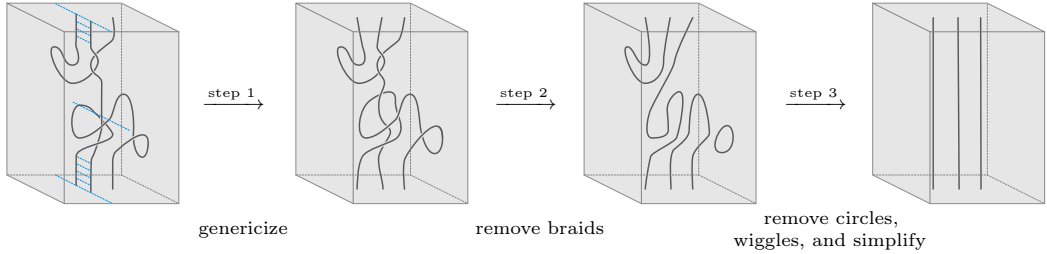}

    \caption{3-step perturbation of an unstable 2-singularity in dimension 4}
    \label{fig:perturbation-of-an-unstable-2-singularity-in-dim-4}
\end{figure}
\end{proof}

\begin{rmk}[Fiber-bundle-perturbation stable singularities] A different classification of stable singularities than that of \cref{thm:stable-2-sing-dim-4} would have been obtained if we worked with `fiber bundle perturbations' in place of `perturbations' (see \cref{rmk:fib-bun-pert}): indeed, for any isotopy class of knots in $\II^3$ we can produce an $\sF$-equivalence class of fiber-bundle-perturbation stable singularities by taking the cone of a (sufficiently simple) representative of that knot isotopy class. In particular, there are infinitely many $\sF$-equivalence classes of fiber-bundle-perturbation stable 2-singularities in dimension 4.
\end{rmk}

\begin{rmk}[Stability for 3-tangle singularities in 4-space] \label{rmk:3-tang-in-dim-4} We briefly also remark on the case of 3-tangle singularities $(\II^4,f)$. We claim that there are nine classes of stable singularities in this case (up to reflections and $\sF$-equivalence) which can be reasonably organized into five `types' of singularities---these nine classes (and five types) of singularities are illustrated later in \cref{fig:stable-3-tangle-singularities}.
\end{rmk}

\section{Smooth singularities} \label{sec:classical-sings}

In this section we discuss the heuristic relation of tame tangle singularities and classical smooth singularities. At the end of the section we make several conjectures relating the framed topology of tame tangles to smooth structures on manifolds.

The relation between tame tangles singularities (or rather their corresponding conical manifold diagrams) and classical smooth singularities we are going to put forward isn't conceptually new: it has been known and used as a vague idea for a while \cite{carter1996diagrammatics} \cite{ncc2006}. One of our goals in this section is to make this idea less vague. As a first example of the idea, recall the stable 2-tangle singularities in dimension 3, namely the `extrema' and `saddle' singularities  $\iA_1^{\numovar 2}$ and the `cusp' singularities $\iA_2$. These are usually thought to have the following classical `counterparts':
\begin{enumerate}
    \item The counterpart of $\iA_1^{\numovar 2}$ are the `Morse singularities' $f(x_1,x_2) = \pm x_1^2 \pm x_2^2$,
\item The counterpart of $\iA_2$ are the `Morse-Cerf singularities' $f(x_1,u_1) = \pm x_1^3 \pm u_1 x_1$.
\end{enumerate}
As we will discuss in this section, the underlying principle of this relation is to translate classical smooth singularities into their (parametrized) graphs and consider these graphs as tame tangles. Observations in low dimensions suggest that the translation preserves stability---i.e.\ it translates stable classical smooth singularities to stable tame tangle singularities.

The resulting relation between the two approaches is somewhat mysterious, since their respective mathematical foundations are rather different. We will not give a rigorous explanation of this relation; instead, we would like to suggest up the problem of explaining it as an open challenge. Part of the importance of this question derives from the study of dualizability data in higher category theory as we will explain (see \cref{rmk:dualizability}).

\subsection{Recollections from smooth singularity theory}

We start with a brief recollection of classical singularity theory. Our main reference is the excellent modern treatment in \cite{mond2020singularities}. The subject has its roots in seminal work by Milnor, Thom, Mather, Arnold, Wall, and many others. In the following, all maps are assumed to be smooth unless we indicate otherwise.

The rough set-up of classical singularity theory is as follows. We study the space map germs $f : (\lR^m,0) \to (\lR^p,0)$ under the action of a group $\sG$: for $p = 1$, the common choice is to set $\sG$ to be the group $\sR$ of origin-preserving diffeomorphisms acting on the domain (which is usually called `right equivalence'); and, for $p > 1$, one common sets $\sG = \sA$, where $\sA$ acting by origin-preserving diffeomorphisms on both domain and codomain (which is usually called `left-right equivalence'). For our purposes, we will usually assume $p = 1$ and $\sG = \sR$ (unless otherwise noted) but many of the ideas below hold for more general choices of $\sG$.

Germs in the same $\sG$-orbit are said to be `$\sG$-equivalent'. One can define a notion of tangent space $T\sG f$ and normal space $T^1_\sG f$ at $f$ of the orbit $\sG f$ in the space map germs (see e.g.\ \cite[\S3.2]{mond2020singularities}, where this notational choice is explained as well).
We call $f$ `$\sG$-finite' if the dimension of the normal space $T^1 \sG f$ is finite (this dimension is also called the `$\sG$-codimension'). `$\sG$-determinacy' of a germ $f$ means that there is a $k < \infty$ such that any other germ $g$ with the same $k$-jet as $f$ is $\sG$-equivalent to $f$. Mather showed that $\sG$-finiteness is equivalent to $\sG$-determinacy of $f$ (see \cite[Thm.\ 6.1]{mond2020singularities}). (This is a core result in the classification of singularities since it allows us to only inspect finitely many derivatives to determine $\sG$-finite $\sG$-orbits.)
Given a $\sG$-finite $f$, one moreover says that $f$ is `$\sG$-simple' if in a small neighborhood of $f$ in the space of germs we only find finitely many other $\sG$-orbits. The classification of $\sG$-orbits for general values of $m$ and $p$ was of great interest and promise especially in the 1980s and 1990s---however, activity has dampened over time, and many of the remaining questions appear to be hard. A list of successful classifications is compiled in \cite[\S6.4]{mond2020singularities}.
Most prominently, one has the ADE classification of Arnold, which classifies \emph{all} $\sR$-simple $\sR$-orbits for $m > 0$ and $p = 1$. We list germs representing these $\sR$-orbits below (in each case, the germs we have $i$ `stem' variables $(x_1, ...,x_i)$, as well as a quadratic term $\pm x^2_{i+1} \pm x^2_{i+2} \pm ... \pm x^2_m$ in the other variables which we keep implicit).
\begin{enumerate}
\item $A_k$ germs of the form $x_1^{k+1}$, $k > 1$.
\item $D^\pm_k$ germs, form $x_1^{k-1} \pm x_1 x_2^2$, $k > 3$.
\item $E_6$, $E_7$, and $E_8$ germs, of the form $x_1^3 + x_2^4$, $x_1^3 + x_1 x_2^3$, and $x_1^3 + x_2^5$ respectively.
\end{enumerate}
Observe that the above germs have at most two stem variables, and that for those germs $f$ with two stem variables the 3-jets $j^3 f$ never vanish. Let us briefly sketch why non-simple germs are to be expected in other `higher' cases. Consider the space of $k$-jets of germs $(\lR^2,0) \to (\lR,0)$ in two variables at $0$; the action of $\sR$ on germs descends to an action on this space by $\mathrm{GL(\lR^2)}$.
Now, if $k \leq 3$, then there are finitely many $\sR$-orbits in the space of $k$-jets in two variables, but for $k > 3$, there are `moduli' of such orbits, i.e.\ all of the orbits  have positive codimension (indeed, the fiber $J^4(2,1) \to J^3(2,1)$ has dimension 5, with basis $x_1^4$, $x_1^3 x_2$, $x_1^2 x_2^2$, $x_1 x_2^3$, $x_2^4$, while the group $\mathrm{GL}(\lR^2)$ that is acting on it only has dimension $4$). $\sR$-orbits of $k$-jets in three variables we find moduli already when $k = 3$.
Thus we encounter non-simple $\sR$-orbits of germs when considering germs with vanishing 3-jets in two variables, or vanishing 2-jets in three variables. Conversely, for germs of two variables that are determined by their 3-jet, all $\sR$-orbits are $\sR$-simple and of codimension less than or equal to 5, see \cite[Thm.\ 6.12]{mond2020singularities}. (Thom's original seven `elementary singularities' exactly describe those orbits up to codimension 5, which are $A_2, A_3, A_4, A_5, D^\pm_4,D_5$, and the ADE classification extends this to two infinite families $A_k$, $D_k$, observing that they remain simple even in codimension higher than $5$.) This dashes the hope that finding a finite classification of singularities in codimensions $k$, for $k > 5$. The issue has led mathematicians to consider right equivalence by homeomorphism in place of diffeomorphisms, as discussed thoroughly in \cite{du1995geometry} (however, this leads to other difficulties with the classification problem of singularities; as the authors \emph{loc.cit.}\ note, ``while Arnold finds a zoo of singularities, we find a bestiary'').

In contrast, in the framed combinatorial-topological setup of singularity theory outlined in \cref{sec:fam-and-stability}, the worry of encountering a `moduli of singularities' is absent---since tangle singularities are combinatorially classifiable by tangle truss singularities there are only countably many such singularities (up to framed homeomorphism), and thus there can be, at most, countably many `$\sF$-orbits' (i.e. $\sF$-equivalence classes, see \cref{defn:framed-isotopy}).

\subsection{From smooth to tame singularities} \label{ssec:heur-rel-sing} A priori, it seems rather implausible that there should be any type of faithful relation between $\sF$-orbits of tame tangle singularities (see \cref{defn:framed-isotopy}) and $\sG$-orbits of smooth map germs (as discussed in the previous section). However, we will now illustrate that, while generally tame tangles are `more flexible' objects than smooth map germs, there seems to be a close relation between the two deriving from the translation of $\sG$-simple $\sG$-orbits into $\sF$-orbits given in \cref{constr:translating-smooth-to-tame-sing} below.

\begin{rmk}[Unfoldings] We recall one further notion from classical smooth singularity theory. Let $f : (\lR^m,0) \to (\lR^p,0)$ be a smooth $\sG$-finite germ. An `$l$-unfolding' $F$ of $f$ is a smooth map $F : (\lR^m \times \lR^l, 0) \to (\lR^p \times \lR^l,0)$ of the form $F(x,u) \equiv (f_u(x),u)$ such that $f_0 = f$. A `versal' unfolding $F$ is an unfolding of $f$ from which any other unfolding can be obtained by pullback (`versal' is the intersection of `universal' and `transversal', see \cite[\S5]{mond2020singularities}).
The versal unfolding $F$ is said to be `miniversal' if $l$ is minimal with respect to $F$ being versal. Equivalently, the versal unfolding $F$ is miniversal if its derivatives (in the direction of its $l$ parameters) form a basis for the normal space of the orbit $\sG f$ in the space of germs (see \cite[Thm.\ 5.1]{mond2020singularities}). We may thus think of miniversal unfoldings as `unfolding $f$ in all directions normal to the $\sG$-action'.
\end{rmk}

\begin{heur}[Translation of $\sG$-simple $\sG$-orbits to $\sF$-orbits] \label{constr:translating-smooth-to-tame-sing} Given a $\sG$-simple $\sG$-orbit, we will construct corresponding tame tangle singularity (representing an $\sF$-orbit). Assume the chosen orbit has codimension $l$. Pick a representative $f$ of the orbit, and a miniversal $l$-unfolding $F(x,u) \equiv (f_u(x),u)$ of $f$.
    In fact, for the singularities of interest to us, there are usually polynomial `normal forms' available for both $f$ and $F$.\footnote{Further discussion of `normal forms' can be found in \cite{arnol1972normal} \cite{arnold1976local} \cite{ushiki1984normal} \cite{wall1984notes}---ultimately, there doesn't seem to be a formal definition of what a normal form is. Intuitively, a normal form is a `simplest' representative of an orbit, but there might be several such choices (and if there are, then some might be more suitable than others for the heuristic).}
    Examples of normal forms include the earlier representations of the ADE singularities by simple polynomials. We assume to have chosen such normal forms for both $f$ and $F$ (importantly, this choice will ensure that $F$ looks sufficiently generic for the next step of the heuristic).
    Denote by $\tilde\Gamma_f$ the $u$-parametrized graph of the family of maps $f_u$: this is the subspace of $\lR^l \times \lR^p \times \lR^m$ consisting of points $(u, f_u(x), x)$ where $x \in \lR^m$ and $u \in \lR^l$. Here, the order of components matters: the first $l$ components describe the parameters $u$, the next $n$ components describe the values of $f_u$, and the last $m$ components the space of variables $x$. Set $\lR^n = \lR^l \times \lR^p \times \lR^m$ (in particular, $n := l + p + m$). Let $\II^n$ be the open unit $n$-cube as before.
    For an appropriate choice of \emph{framed} embedding $\phi : \II^n \into \lR^n$, the parametrized graph $\tilde\Gamma_f$ intersects the image of $\phi$ in a tame $m$-tangle singularity, which we denote by $\phi\inv\tilde\Gamma_f \into \II^n$---we illustrate this step in examples below. Keeping $\phi$ implicit, we simply write $\tilde\Gamma_f$ for an $(m+l)$-tangle singularity obtained from $f$ in this way.
\end{heur}

\begin{eg}[Translating extrema, saddles, cusps] \label{eg:translating-extrema} We return to the examples mentioned the beginning of \cref{sec:classical-sings}. Set $f(x_1,x_2) = x_1^2 + x_2^2$, $g(x_1,x_2) = x_1^2 - x_2^2$, and $h(x_1) = x_1^3$. While $f$ and $g$ are of codimension $0$ (i.e.\ their orbits are open in the space of germs), $h$ is of codimension $1$ and thus has a miniversal unfolding with one parameter: we choose this to be the family $h_{u_1} = x_1^3 - u_1 x_1$. The respective (parametrized) graphs $\tilde\Gamma_f$, $\tilde\Gamma_g$, and $\tilde\Gamma_h$ are shown in the top row of \cref{fig:translating-extrema-saddles-cusps}.
    In each case we implicitly pick a framed inclusion $\phi : \II^3 \into \lR^3$ (note that the order of coordinates is different in the last example compared to the first two); we mark the intersection of $\partial \II^3$ and $\tilde \Gamma$ in green. Below each example, we illustrate the resulting tangle singularity $\phi\inv \tilde \Gamma$. Note, that the heuristic outputs stable tangle singularities, and in this sense it preserves stability for the given examples.
\begin{figure}[ht]
    \centering
    \def\svgwidth{1\columnwidth}
    \import{./figuresused/}{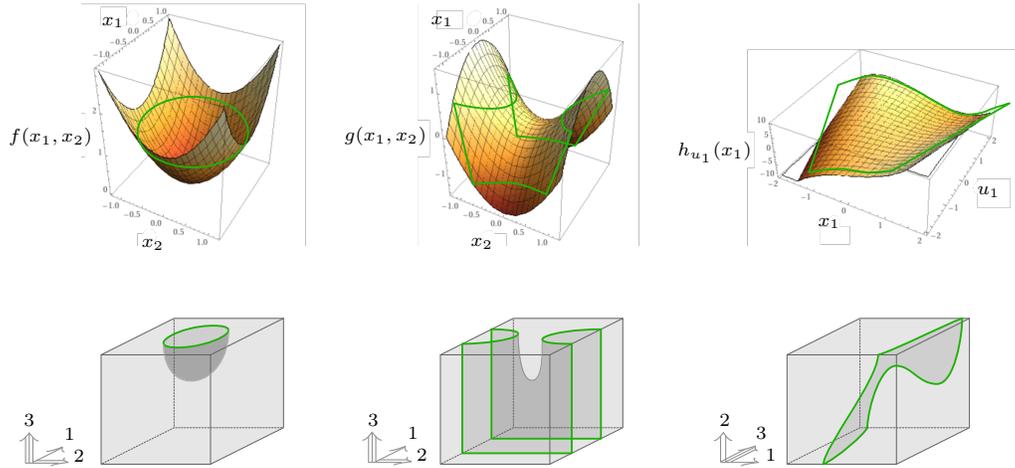}

    \caption[Translating classical to tame tangle singularities]{Translating extrema, saddles and cusps into tangle singularities}
    \label{fig:translating-extrema-saddles-cusps}
\end{figure}
\end{eg}

Let us remark that we may use the heuristic to interpret classical smooth singularities as `higher algebraic relations'. Recall, tame $m$-tangles in codimension $p$ translate into cell $(m + p)$-diagrams (see \cref{obs:tang-to-pd}); this translation takes tame tangle \emph{singularities} to cell $n$-diagrams with a \emph{single} $n$-cell, which we can think of as a morphisms relating its source and target. We may thus interpret classical singularities as categorical `relators'.\footnote{Here, a terminology `relator' hints at the categorification of the term `relation', and follows the usual terminological convention of adding the Latin suffix `-or' when turning strict identities into higher categorical morphisms.}

From now on, we focus on the case $p = 1$, that is, on map germs $\lR^m \to \lR$. In terms of tame tangles, this means we will restrict our attention to tame tangle singularities in codimension 1. For this case, we now discuss a heuristic that describes how higher tangle singularities can be constructed by extrapolation from a pattern of `binary relators'.

\begin{term}[Binary relators] \label{term:binary-relator} Given tame tangles $f$ and $g$, a `relator' is a tangle singularity $s : f \to g$. We call $s$ a `binary relator' if $g = \id$ (resp.\ $f = \id$) and $f$ (resp.\ $g$) comprises exactly two tangle singularities. We will also use the terminology in a weaker, more conceptual sense, to mean that $f$ (resp.\ $g$) contains two `distinguished' singularities that govern the overall structure of $f$ (but $f$ may contain further singularities in addition to these two).
\end{term}

\begin{eg}[Binary relators] \label{eg:binary-relator} Recall the singularities $\iA_1^{\numovarone}, \iA_1^{\numovar 2}$ and $\iA_2$ from \cref{eg:basic-tang-sing} and \cref{notn:stable-2-tang-sing}: these stable $m$-tangle singularities are binary relators of stable $(m-1)$-tangle singularities, as follows.
\begin{enumerate}
\item[$-$] $\iA_1^{\numovarone}$ singularities relate composites of \emph{two} point singularities to identities.
\item[$-$] $\iA_1^{\numovar 2}$ singularities relate composites of \emph{two} $\iA_1^{\numovarone}$ singularities to identities.
\item[$-$] $\iA_2$ singularities relate composites of \emph{two} $\iA_1^{\numovarone}$ singularities to identities, but note that the way these two $\iA_1^{\numovarone}$ singularities are composed differs from the $\iA_1^{\numovar 2}$ case; we will refer to this composition as an `$\iA_2$-shape' composition. \qedhere
\end{enumerate}
\end{eg}


\begin{heur}[Tangle singularities from binary algebraic relators] \label{rmk:stable-tang-sing-from-bin-laws} We want to generalize the `binary relator' structure observed in the previous example to higher singularities. For this, we schematize the notation for singularities by the following heuristic. We write $\singa \iX {\vec k} \iY$ to mean the `kind' of $m$-tangle singularities that relate binary composites of two $\iY$ singularities to identities, where composites are in the `shape of' the $i$-singularity $\iX$. The subscript $\vec k = (k_i,k_{i-1},...k_1)$ is a vector of descending indices of so-called `primary directions': roughly speaking, by identifying the categorical directions $(n+1,k_i,k_{i-1},...k_1)$ of $\singa \iX {\vec k} \iY$ with the categorical directions $(i+1,i,i-1,...,1)$ of $\iX$ will enable us to `see' that $\singa \iX k \iY$ has the shape of $\iX$. Using this notation, we may for instance rewrite:
\begin{enumerate}
    \item[$-$] $\iA_1^{\numovarone} \equiv (\singa {\iA_1} 1 {\pts})$.
    \item[$-$] $\iA_1^{\numovar 2} \equiv (\singa {\iA_1} 2 {\iA_1^{\numovarone}})$.
    \item[$-$] $\iA_2 \equiv (\singa {\iA_2} {(2,1)} {\iA_1^{\numovarone}})$.
\end{enumerate}
To further illustrate the notation, let us proceed to the next dimension: we construct 3-tangle singularities as binary relators for 2-tangle singularities. We find five kinds of singularities (represented by a total of nine $\sF$-equivalence classes of singularities up to reflections), which we illustrate in \cref{fig:stable-3-tangle-singularities}, and which we further describe below.
\begin{figure}[ht]
    \centering
    \def\svgwidth{1\columnwidth}
    \import{./figuresused/}{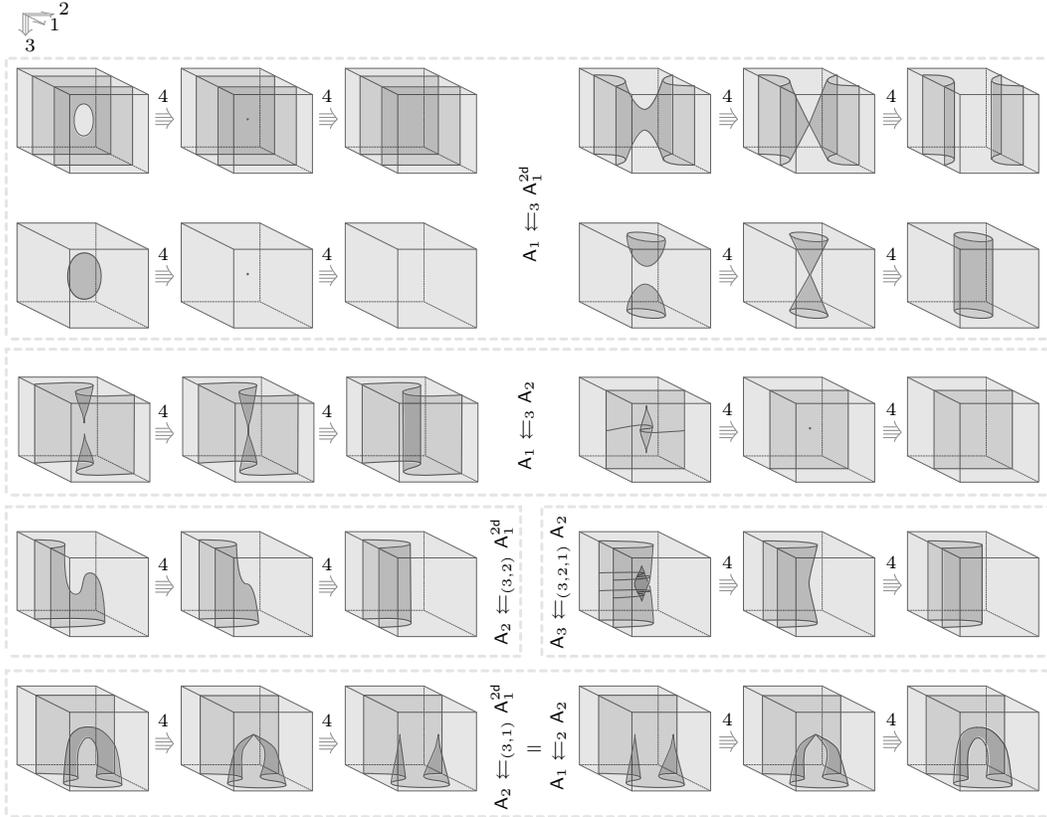}

    \caption{Nine classes of 3-tangle singularities in dim 4, organized into five kinds}
    \label{fig:stable-3-tangle-singularities}
\end{figure}
\begin{enumerate}
    \item[$-$] $\singa {\iA_1} 3 {\iA_1^{\numovar 2}}$ (also written $\iA_1^{\numovar 3}$). This is a relator of two $\iA_1^{\numovar 2}$ singularities, one the 3-reflection of the other. Its shape is $\iA_1$: to see this more explicitly, intersect the source with a line in 3-direction, which recovers the source of $\iA_1$, see \cref{fig:seeing-the-a2-shaped-in-3-tangle-singularities} for an illustration.

    \item[$-$] $\singa {\iA_2} {(3,2)} {\iA_1^{\numovar 2}}$ (also written $\iA_2^{\numovar 2}$).\footnote{Here and elsewhere, the superscript `$\numovar k$' is meant to indicate that the singularity comes from a classical germ with $k$ variables; for example, we have $\iA_1^{\numovar k} = \tilde\Gamma_f$ with $f = x_1^2 + x_2^2 + ... + x_k^2$, and similarly $\iA_2^{\numovar k} = \tilde\Gamma_g$ with $g$ being the 1-parameter family $x_1^3 - u_1 x_1 + x_2^2 + ... + x_k^2$.} This a relator of two $\iA_1^{\numovar 2}$ singularities composed in shape of $\iA_2$ in primary directions $(3,2)$. To see the $\iA_2$-shape, intersect the source with the $(3,2)$-plane, to recover the source of the usual $\iA_2$ singularity; see \cref{fig:seeing-the-a2-shaped-in-3-tangle-singularities} for an illustration.

    \item[$-$] $\singa {\iA_2} {(3,1)} {\iA_1^{\numovar 2}}$. This is a relator of two $\iA_1^{\numovar 2}$ singularities composed in the shape of $\iA_2$ in primary direction $(3,1)$. To see the $\iA_2$-shape, intersect the source with the $(3,1)$-plane, to recover the source of the usual $\iA_2$ singularity; see \cref{fig:seeing-the-a2-shaped-in-3-tangle-singularities} for an illustration.

    \item[$-$] $\singa {\iA_1} 3 {\iA_2}$. This is a relator of two $\iA_2$ singularities, one the 3-reflection of the other. The underlying $\iA_1$ shape is exhibited in \cref{fig:seeing-the-a2-shaped-in-3-tangle-singularities}.

    \item[$-$] $\singa {\iA_3} {(3,2,1)} {\iA_2}$ (also written simply $\iA_3$. This relator takes two $\iA_2$ singularities, composed by in an $\iA_3$-shape, and relates them to an identity. Note this is a new shape appearing in this dimension, which (in this and higher dimensions) only acts on other $\iA_2$-shaped singularities.

    \item[$-$] $\singa {\iA_1} 2 {\iA_2}$. This is a relator of two $\iA_2$ singularities. The underlying $\iA_1$ shape is illustrated again in \cref{fig:seeing-the-a2-shaped-in-3-tangle-singularities}.
\end{enumerate}

\begin{figure}[ht]
    \centering
    \def\svgwidth{1\columnwidth}
    \import{./figuresused/}{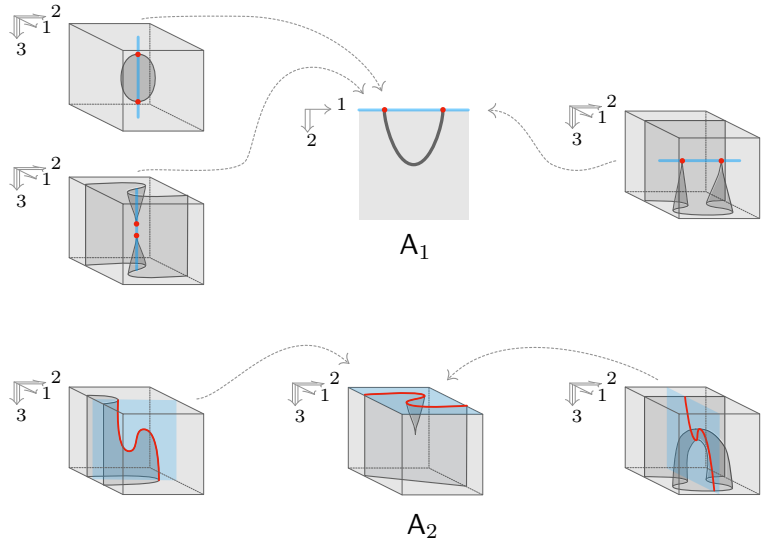}

    \caption{Seeing $\iA_1$ and $\iA_2$ shapes in the sources of higher singularities}
    \label{fig:seeing-the-a2-shaped-in-3-tangle-singularities}
\end{figure}

\nid Several remarks are in order. Firstly, it is worth emphasizing again that the symbols introduced here denote \emph{kinds} of singularities, consisting of multiple $\sF$-equivalence classes of as given (up to reflections) in \cref{fig:stable-3-tangle-singularities}. Secondly, not all possible `combinations of symbols' (including choices of primary directions $\vec k$) yield valid singularities. One can describe reasonable conditions for composite symbols to only describe actual singularities (but this goes beyond the intended scope here). However, thirdly, even with such rules in place, there are non-trivial interactions between relators, as the next remark records.
\end{heur}

\begin{obs}[Relators interact] \cref{fig:stable-3-tangle-singularities} illustrates the following important point: the classes $\singa {\iA_2} {(3,1)} {\iA_1^{\numovar 2}}$ and $\singa {\iA_1} 2 {\iA_2}$ in fact coincide: indeed, up to a 4-reflection, their given respective representatives are identical (and both contain both a pair of $\iA_1^{\numovar 2}$ singularities and a pair of $\iA_2$ singularities, making them a `double binary relator' rather than a true binary relator in the sense of \cref{term:binary-relator}). Going into yet higher dimensions one finds a plethora of such interactions between relators: for instance, in \cref{eg:D3} we will meet the `triple binary relator' $\iD_3$.
\end{obs}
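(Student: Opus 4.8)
The plan is to prove the statement combinatorially, through the combinatorialization of tame tangles (\cref{obs:combinatorializing-tame-tangles}). A stable $3$-tangle singularity in $\II^4$ is, up to framed stratified homeomorphism, an $m$-tangle $n$-truss $(T,f:Q\into T_4)$ with $T$ an open cone $4$-truss, and by \cref{defn:framed-isotopy} two stable singularities lie in the same $\sF$-orbit once there is a path between them; in particular, framed-stratified-homeomorphic stable singularities are $\sF$-equivalent. The qualifier `up to a $4$-reflection' moreover allows postcomposing one representative with the reflection $r:\II^4\to\II^4$ in the $4$th categorical direction, which by the Reflections remark sends tame tangles to tame tangles and acts on the fundamental tangle truss $\NFTrs$ by reversing the fibre frame orders of the top $1$-truss bundle. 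Thus the whole claim reduces to three explicit checks on the two representatives drawn in \cref{fig:stable-3-tangle-singularities}: (i) compute their fundamental tangle trusses $\NFTrs$; (ii) produce a stratified-truss isomorphism between $\NFTrs$ of the representative of $\singa {\iA_1} 2 {\iA_2}$ and the top-bundle-reversed $\NFTrs$ of the representative of $\singa {\iA_2} {(3,1)} {\iA_1^{\numovar 2}}$; and (iii) read off from either truss that its normal singularities (\cref{rmk:normal-singularities}) include both a pair of $\iA_1^{\numovar 2}$'s and a pair of $\iA_2$'s, so that neither the source nor the target of the singularity `comprises exactly two tangle singularities' in the sense of \cref{term:binary-relator}.

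For step (i) I would either triangulate the drawn tangles and take their coarsest refining meshes directly, or---more efficiently---invoke the translation heuristic \cref{constr:translating-smooth-to-tame-sing}: both representatives are cut, by small framed boxes $\phi:\II^4\into\lR^4$, out of the parametrised graph $\tilde\Gamma\subset\lR^4$ of the single cuspidal family $(x_1,x_2,u_1)\mapsto x_1^3-u_1x_1+x_2^2$, and the two symbols simply record two orderings of the coordinate axes of $\lR^4$---one exhibiting the $\iA_2$-shape in the $(3,1)$-plane with the $\iA_1^{\numovar 2}$'s along the $2$-direction, the other exhibiting the $\iA_1$-shape along the $2$-direction with the two $\iA_2$'s in the $(3,2,1)$-block (cf.\ \cref{fig:seeing-the-a2-shaped-in-3-tangle-singularities})---orderings that differ by precisely one coordinate reflection, namely the $4$-reflection of the statement. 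With two finite stratified $4$-trusses in hand, steps (ii) and (iii) are finite, mechanical verifications: in (ii) one matches cell dimensions (\cref{rmk:n-truss-dim}), the subposet $Q$, and the fibre frame orders with the top $1$-truss bundle reversed; in (iii) one uses that the normal singularity at $x\in Q$ is the cone germ of $\NF{T^{\leq x},f^{\leq x}}=\OTT^{k}\times(C,D\into C_{4-k})$ and inspects the link $2$-sphere $S^2\into\partial\bI^4$, finding exactly two loci with the $2$-dimensional $\iA_1^{\numovar 2}$ cone germ and two with the $2$-dimensional $\iA_2$ cone germ.

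The main obstacle is bookkeeping rather than anything conceptual: organising coordinates on the link $2$-sphere in $\partial\bI^4$ so that the `$(3,1)$-reading' and the `$(2)$-reading' of $\tilde\Gamma$ are visibly related by a single reflection, and then confirming that the coarsest refining meshes---equivalently, by \cref{thm:tame-strat-comb}, the normalized fundamental stratified trusses---of the two pictures agree after that reflection. Two subsidiary points also need care. First, one must know the two representatives are genuinely \emph{stable} $3$-singularities; I would establish this exactly as in the proof of \cref{thm:stable-2-sing-dim-4}, by enumerating all $3$-singularities up to their size and checking that neither can be perturbed into strictly less complex ones. Second, the framed box $\phi$ may need a slight generic adjustment to make $\phi\inv\tilde\Gamma$ a tame framed transversal stratification, a genericity condition that can always be met by an arbitrarily small perturbation of $\phi$ (cf.\ \cref{obs:pl-tang-gen-tame}). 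Once the two normalized trusses are written down, both `coincidence' and `double binary relator' follow immediately.
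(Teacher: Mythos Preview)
The paper does not give a separate proof of this observation at all: it is stated as self-evident from \cref{fig:stable-3-tangle-singularities}, the point being that one simply compares the two pictures in the figure and sees that (after a $4$-reflection) they coincide. Your combinatorial formalization via $\NFTrs$ and a truss-by-truss comparison is a legitimate way to make this rigorous, and your steps (ii) and (iii) are exactly the kind of finite bookkeeping that would certify the observation. In that sense your overall plan is sound and more careful than what the paper does.

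However, your step (i) contains a genuine error. You propose to obtain both representatives as $\phi^{-1}\tilde\Gamma$ for the parametrised graph of the classical cuspidal family $(x_1,x_2,u_1)\mapsto x_1^3-u_1x_1+x_2^2$, with the two symbols arising from two coordinate orderings differing by a reflection. But the paper explicitly rules this out: immediately after the observation, it names the common class $\iD_2$ and then remarks (``Non-classical tangle singularities'') that $\iD_2$ \emph{has no classical smooth counterpart} in the sense of \cref{constr:translating-smooth-to-tame-sing}, precisely because its source mixes $2$-variable Morse singularities $\iA_1^{\numovar 2}$ with $1$-parameter $1$-variable Morse--Cerf singularities $\iA_2$ and therefore cannot be a graph $\tilde\Gamma_f$ for any germ $f$. (The germ you wrote down is $\iA_2^{\numovar 2}$, which is $\singa{\iA_2}{(3,2)}{\iA_1^{\numovar 2}}$, a different class in the list.) So your shortcut for computing the trusses fails; you must instead build them directly from the pictures in \cref{fig:stable-3-tangle-singularities}, e.g.\ by writing down refining meshes by hand and then normalizing.
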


\begin{term}[The $\iD_2$ singularity class] Based on the previous observation, we jointly denote the classes of singularities $(\singa {\iA_2} {(3,1)} {\iA_1^{\numovar 2}})$ and $(\singa {\iA_1} 3 {\iA_2})$ by $\iD_2$. Instead of the representatives given in \cref{fig:stable-3-tangle-singularities}, we usually represent singularities in $\iD_2$ by moving all nontrivial content into the source (resp.\ target), making the target (resp.\ source) trivial: this is illustrated in \cref{fig:the-u-singularity}. (In fact, strictly speaking, only the representation in \cref{fig:the-u-singularity} is stable in the sense of our definition of stability; however, both representations can be expressed in terms of one another and the other classes of singularities---the idea for this translation is a higher dimensional version of step 1 in the proof of \cref{thm:stable-2-sing}.)
\begin{figure}[ht]
    \centering
    \def\svgwidth{1\columnwidth}
\begingroup%
  \makeatletter%
  \providecommand\color[2][]{%
    \errmessage{(Inkscape) Color is used for the text in Inkscape, but the package 'color.sty' is not loaded}%
    \renewcommand\color[2][]{}%
  }%
  \providecommand\transparent[1]{%
    \errmessage{(Inkscape) Transparency is used (non-zero) for the text in Inkscape, but the package 'transparent.sty' is not loaded}%
    \renewcommand\transparent[1]{}%
  }%
  \providecommand\rotatebox[2]{#2}%
  \newcommand*\fsize{\dimexpr\f@size pt\relax}%
  \newcommand*\lineheight[1]{\fontsize{\fsize}{#1\fsize}\selectfont}%
  \ifx\svgwidth\undefined%
    \setlength{\unitlength}{2160bp}%
    \ifx\svgscale\undefined%
      \relax%
    \else%
      \setlength{\unitlength}{\unitlength * \real{\svgscale}}%
    \fi%
  \else%
    \setlength{\unitlength}{\svgwidth}%
  \fi%
  \global\let\svgwidth\undefined%
  \global\let\svgscale\undefined%
  \makeatother%
  \begin{picture}(1,0.11041667)%
    \lineheight{1}%
    \setlength\tabcolsep{0pt}%
    \put(0,0){\includegraphics[width=\unitlength,page=1]{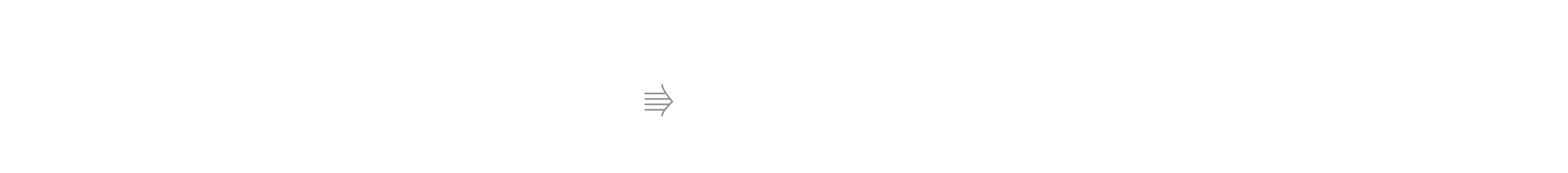}}%
    \put(0.41388889,0.06180556){\color[rgb]{0,0,0}\makebox(0,0)[lt]{\lineheight{1.25}\smash{\begin{tabular}[t]{l}{\tiny 4}\end{tabular}}}}%
    \put(0,0){\includegraphics[width=\unitlength,page=2]{the-u-singularity.pdf}}%
    \put(0.57361111,0.06180556){\color[rgb]{0,0,0}\makebox(0,0)[lt]{\lineheight{1.25}\smash{\begin{tabular}[t]{l}{\tiny 4}\end{tabular}}}}%
    \put(0,0){\includegraphics[width=\unitlength,page=3]{the-u-singularity.pdf}}%
    \put(0.24548611,0.08993056){\color[rgb]{0,0,0}\makebox(0,0)[lt]{\lineheight{1.25}\smash{\begin{tabular}[t]{l}{\tiny 1}\end{tabular}}}}%
    \put(0.25451389,0.10520833){\color[rgb]{0,0,0}\makebox(0,0)[lt]{\lineheight{1.25}\smash{\begin{tabular}[t]{l}{\tiny 2}\end{tabular}}}}%
    \put(0.22152778,0.06944444){\color[rgb]{0,0,0}\makebox(0,0)[lt]{\lineheight{1.25}\smash{\begin{tabular}[t]{l}{\tiny 3}\end{tabular}}}}%
  \end{picture}%
\endgroup%

    \caption{The $\iD_2$ singularity class}
    \label{fig:the-u-singularity}
\end{figure}
\end{term}

The next remark serves to record in what way the set-up of tangles differs from that of smooth map germs.

\begin{rmk}[Non-classical tangle singularities] The singularity orbit $\iD_2$ has no classical smooth counterpart in the sense of \cref{constr:translating-smooth-to-tame-sing}: its source mixes 2-variable Morse singularities with 1-parameter 1-variable Morse-Cerf singularities and thus cannot be a graph of the form $\tilde\Gamma_f$. This provides a sense of how tame tangle singularities are `more flexible' than classical smooth maps, and the resulting theory of such singularities is finer.
\end{rmk}

\subsection{The elliptic umbilic singularity as a tame tangle} Having seen how $\iA_k$ singularities are binary relators of $\iA_{k-1}$ singularities it is now natural to ask about the second infinite series of singularities: do the ideas of \cref{constr:translating-smooth-to-tame-sing} and \cref{rmk:stable-tang-sing-from-bin-laws} still apply for $D_k$ singularities, $k \geq 4$? In the lowest dimension, namely, for the $D_4$ singularity (which in Thom's classification is also known as the `elliptic umbilic singularity'), the question is answered positively in this section. The insight into the nature of the $D_4$ singularity may be summarized as follows.
\begin{center}\itshape
    While the tame tangle singularity $\iD_4$ corresponding to the classical $D_4$ singularity is stable, it is not inductively stable. It admits a perturbation to an inductively stable singularity, which is a binary relator of $\iD_3$ singularities (see \cref{term:binary-relator}).
\end{center}
In order to visualize the $\iD_3$ and $\iD_4$ singularities, the following notation will be helpful.

\begin{notn}[Projecting 2-tangles] \label{notn:3-tangle-sings-as-proj} Going forward, we often depict 2-tangles in $\II^3$ by the projections of their `non-regular points' (i.e. points of transversal dimension $<2$) along  $\II^3 \to \II^2$. This simplifies the depiction of higher tangles as well: for instance, consider the first six 3-tangle examples from \cref{fig:stable-3-tangle-singularities} and the 3-tangle \cref{fig:the-u-singularity}. In \cref{fig:denoting-3-tangle-singularities-by-their-projections} below (ignoring colors for now), we re-illustrate these seven 3-tangles by only depicting projections of their non-regular points to $\II^2$. We usually also add, in color, the deformation that happens in between any of the sampled slices. Note that the notation is ambiguous: for instance, the first two examples now notationally coincide while they were originally different (see \cref{fig:stable-3-tangle-singularities}). To remedy this ambiguity we usually provide  an `initial condition' given by at least one full picture of a 2-tangle in the 3-cube $\II^3$, which then determines the rest of the higher tangle.
\begin{figure}[ht]
    \centering
    \def\svgwidth{1\columnwidth}
    \import{./figuresused/}{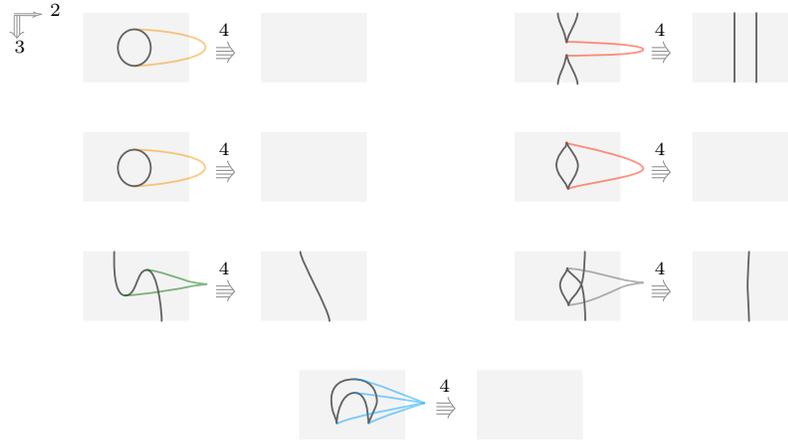}

    \caption{Denoting 3-tangle singularities by their projections}
    \label{fig:denoting-3-tangle-singularities-by-their-projections}
\end{figure}
\end{notn}

\begin{eg}[The $\iD_3$ relator] \label{eg:D3} As a warm-up, let us illustrate the $\iD_3$ relator. In \cref{fig:the-source-of-the-d3-law} we depict its source. The is a relator of an $\iA_2$-shaped composite of $\iD_2$ singularities; we highlight the shape of (the source of) $\iA_2$ by a dashed red line in the blue plane, which spans the categorical directions 2 and 4. However, note that this relator cross-interacts with several relators (just as the $\iD_2$ singularity itself was an interacting relator of both $\iA_2$ singularities and of $\iA_1^{\numovar 2}$ singularities). Concretely, it is simultaneously also a $\singa {\iA_2} {(3,1)} {\iA_1^{\numovar 3}}$ relator and a $\singa {\iA_2} {(3,1)} {\iA_2^{\numovar 2}}$ relator---correspondingly, we observe binary occurrences of both $\iA_1^{\numovar 3}$ and of $\iA_2^{\numovar 2}$ singularities in the depicted source of $\iD_3$. ($\iD_3$ is the first example of such a `triple binary relator'.)
\begin{figure}[ht]
    \centering
    \def\svgwidth{1\columnwidth}
    \import{./figuresused/}{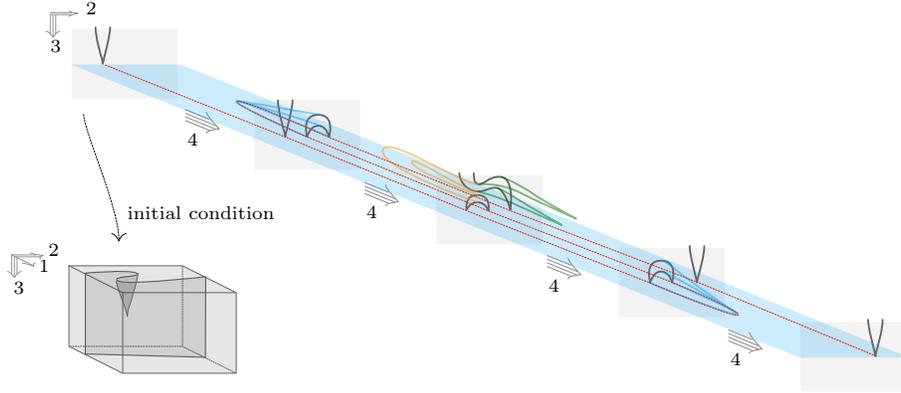}

    \caption{The source of the $\iD_3$ relator}
    \label{fig:the-source-of-the-d3-law}
\end{figure}
\end{eg}

\begin{eg}[The $\iD_4$ relator] \label{eg:D4} The classical $D^-_4$ germ is represented by $x_1^3 - x_1 x_2^2$. The codimension of this germ is $3$, and a miniversal unfolding of $D^-_4$ can be given the normal form $x_1^3 - x_2 x_1^2 + u_1 (x_2^2 + x_1^2) - u_2 x_2 - u_3 x_1$. Following \cref{constr:translating-smooth-to-tame-sing} we consider the parametrized graph $\tilde\Gamma_f$ in $\lR^2 \times \lR^1 \times \lR^3 = \lR^6$  of this unfolding. Choosing an appropriate framed cube $\II^6 \into \lR^6$ around the origin, we obtain a tame tangle singularity. As for the earlier \cref{eg:translating-extrema} the process is straight-forward, but requires a good visualization of the 5-manifold $\tilde\Gamma_f$ in $\lR^6$. A more detailed description of this process can be found in \cite{dorn2022d4law}. We will denote the resulting tame 5-tangle singularity by $(\II^6, \iD_4)$. (Up to moving nontrivial content from its target to its source, we can assume that $\iD_4$ is of the form $\src \iD_4 \to \id$.)

    Preliminary work suggests that the source of $\iD_4$ cannot be simplified in a substantive way; in other words, $\iD_4$ appears to be stable. Importantly though, it is not true that $\iD_4$ is inductively stable (see \cref{term:full-extended-stab}); that is, the source $\src \iD_4$ of $\iD_4$ is not yet maximally generic.
    To obtain an inductively stable singularity we apply any maximal perturbation $\iD_4 \to \iD'_4$ between singularities---in fact, one can ensure in this way that the source $\src \iD'_4$ only contains stable 5-tangle singularities that follow the schematic form of \cref{rmk:stable-tang-sing-from-bin-laws}. The resulting source of the singularity $\iD'_4$, simplified slightly up to $\sF$-equivalence, is depicted in \cref{fig:the-source-of-the-d4-singularity}. (The $\iD'_4$ singularity is the cone of the depicted source, see also \cref{fig:qualitative-content-of-d4-singularity}.) In order to be able to interpret the picture, note the following.
    \begin{enumerate}
        \item[$-$] The source $\src \iD'_4$ of $\iD'_4$ is a 4-tangle in dimension 5, $(\II^5,\src \iD'_4)$.
        \item[$-$]  Each row in the picture is a 3-tangle slice, which restricts $(\II^5,\src \iD'_4)$ to `times' $\{t_i\} \times \II^4$. Each such slice is in turn depicted as explained in \cref{notn:3-tangle-sings-as-proj}: \emph{thin} colored lines representing evolution in the 4-direction (and we preserve the color-coding of singularity classes from \cref{fig:denoting-3-tangle-singularities-by-their-projections}) and an `initial condition' is given for the middle picture in the first row (dashed line).
        \item[$-$] The rows, read from top-to-bottom, together show the evolution of these 3-tangles in the 5-direction: \emph{thick} color-lines keep track of how 3-tangle singularities evolve. Note that 4-tangle singularities occur (the source is not an isotopy); a legend of these singularities is provided in the lower left.
        \item[$-$] The legend uses symbols as introduced in \cref{rmk:stable-tang-sing-from-bin-laws}. \qedhere
    \end{enumerate}
\begin{figure}[!ht]
    \centering
    \def\svgwidth{1\columnwidth}
    \import{./figuresused/}{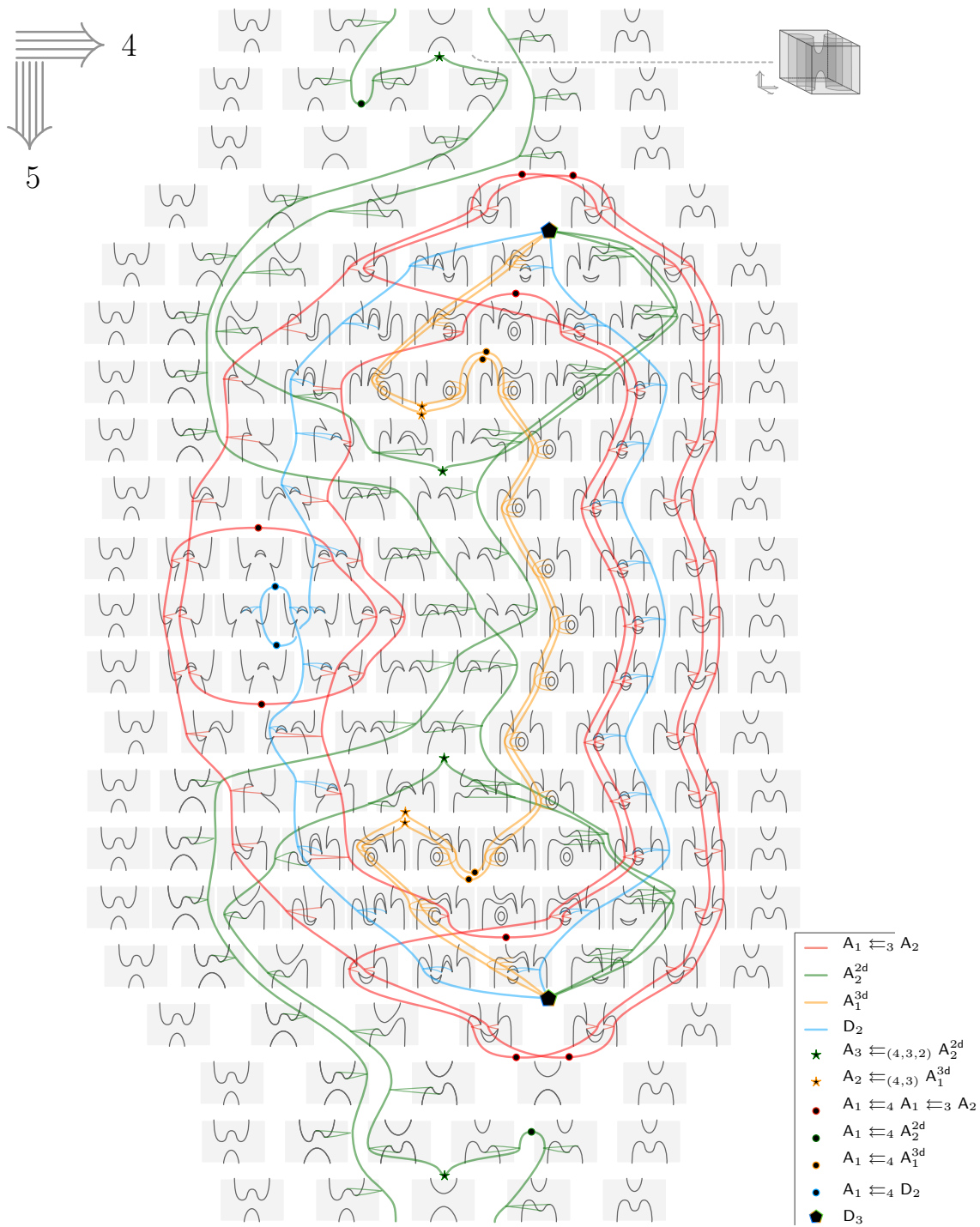}

    \caption{The perturbed source of the $\iD_4$ singularity}
    \label{fig:the-source-of-the-d4-singularity}
\end{figure}
\end{eg}

\begin{rmk}[Comparison of $\iD_4$ source and an $\iA_1$-shaped composite of $\iD_3$'s] A schematic representation of the $\iD_4$ singularity is given on the left in \cref{fig:qualitative-content-of-d4-singularity}: this extracts from the previous \cref{fig:the-source-of-the-d4-singularity} only the locus traced out by $\iD_2$ and $\iD_3$ singularities, and depicts it in the (5,4)-plane, extended as by taking a cone into the 6th categorical direction. Contrast this with the $\iA_1$-shaped relator $\singa {\iA_1} 5 {\iD_3}$: unlike the latter relator, whose source simple glues together two 5-reflected $\iD_3$ singularities on their boundary, the source of $\iD_4$ composes two 5-reflected $\iD_3$ singularities up to a `Serre loop' of $\iD_2$ singularities (see \cite{douglas2020dualizable}).
\begin{figure}[ht]
    \centering
    \def\svgwidth{1\columnwidth}
    \import{./figuresused/}{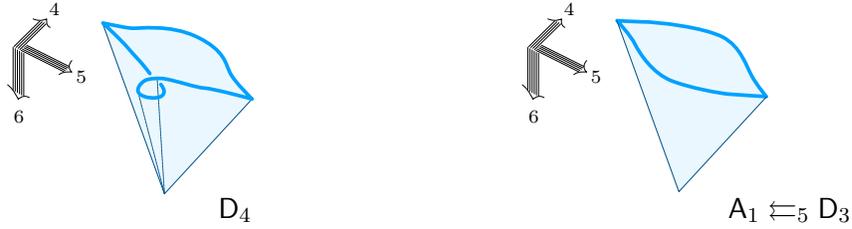}

    \caption{Qualitative comparison of the $\iD_4$ singularity}
    \label{fig:qualitative-content-of-d4-singularity}
\end{figure}
\end{rmk}

\nid This completes our brief empiric discussion of the relation of classical and tame tangle singularities. We end this section with remarks about related ideas and further topics of research.

\begin{note}[Drawing exotic PL 4-spheres] \label{rmk:SPC4-pictorially} Recall our earlier discussion of PL manifold structures of tame tangles and its relation to SPC4 (see \cref{rmk:SPC4} and \cref{rmk:tame-4-tang-PL}). If there are exotic PL 4-spheres $\tilde S^4$ then these PL embed in $\II^5$ (where they bound exotic PL 5-disks); this is discussed in \cite{colding2012mean} in the smooth case, from which the PL case follows. By \cref{obs:pl-tang-gen-tame}, we can choose a sufficiently generic such embedding, obtaining a tame 4-tangle $\tilde S^4 \into \II^5$. Thus, if they exist, exotic PL 4-spheres can be represented by a picture like \cref{fig:the-source-of-the-d4-singularity} (built from suitably stable 4-tangle singularities in $\II^5$).
\end{note}

\begin{note}[Invertibility and dualizability] \label{rmk:dualizability} The binary relators touched upon earlier in \cref{rmk:stable-tang-sing-from-bin-laws} are, of course, closely related to the structure of coherently invertible (and coherently dualizable) objects in higher categories. This relation roots in the generalized tangle hypothesis (TH), which provides a link between normal framed tangles and dualizable objects (see \cite{baez1995higher}). In order to see how the TH applies to our discussion here, we briefly sketch how tame tangle singularities can be endowed with `normal framings' (in fact, in the form of a combinatorial structure).

    Let us first consider codimension-1 tame $m$-tangle singularities $f = (W \into \II^{m+1})$. The topological Schoenflies theorem \cite{brown1960proof} shows that the link $\partial \overline W \into \partial \bI^{m+1}$ of $f$ separates $S^m \iso \partial \bI^{m+1}$ into two $m$-disks. Thus, $\II^{m+1} \setminus W$ has two $(m+1)$-disk components. A \emph{signing} $\mathrm{sgn}$ of $f$ labels one of these disks with `$-$' and the other with `$+$'.
    For $m = 0$, signings thus distinguish two tame tangle singularities: the `positive' point $\ast : - \to +$ and `negative' point $\ast^\dagger : + \to -$. In both cases, the tangle manifold (which is merely a point) can be given a normal 1-frame that points into the adjacent 1-disk labeled by `$+$'. Via the TH, the singularity $\ast^\dagger$ can be thought of as the dual of $\ast^\dagger$ (both considered as objects in a monoidal higher category). For $m > 0$, we may similarly endow signed $m$-tangle singularities $(f,\mathrm{sgn})$ with a normal 1-frame which, at any point of the tangle manifold, points into the `$+$'-labeled disk. Again via the TH, we can think of $(f,\mathrm{sgn})$ as a part of the dualizability structure of the dualizable object $\ast$.

    The idea extends to higher codimensions by stabilization. Namely, we can stabilize any given signed codim-1 singularities $(f,\mathrm{sgn})$ and its normal 1-frame to a normal $k$-framed singularity $(f \oplus \eps^{k-1},\mathrm{sgn})$ in $\II^{m + k}$ (by post-composing $f$ with the embedding $\II^{m + 1} \iso \II^{m + 1} \times \{0\} \into \II^{m + k}$ and adding the last $(k-1)$ unit vectors to the normal frame). This procedure produces certain very specific normal framed $m$-tangle singularities in codim-$k$, and not all codim-$k$ singularities can be obtained in this way (for instance, the braid trivializing singularity cannot be obtained in this way). Again, via the TH, we can think of the resulting normal $k$-framed $m$-tangle singularities as expressing the dualizability structure of the dualizable object $\ast \oplus \eps^{k-1}$ (in a `$k$-tuply monoidal' higher category).
\end{note}

\nid We remark that the relation between tame tangle singularities and dualizability described in the preceding note, makes the classification of stable tame tangle singularities an interesting problem: indeed, such a classification (together with a classification of `stable coherences', such as braids and their higher analogues, see \cref{rmk:elementary-homotopies}) \emph{should} provide the generating higher morphisms from which to build homotopy groups and other homotopical behavior.

\begin{note}[The case $p > 1$] Understanding stable tangle singularities for codimension $p > 1$ goes beyond the ADE pattern. Such singularities appear naturally for instance in the attachment maps of cells of Thom spectra other than the sphere spectrum (see e.g.\ \cite{dorn2022TP}). The classification of singularities remains a `very open' problem in this general case (and classical work on the matter has been much sparser than in the case $p = 1$---nonetheless, interesting results exist when replacing `left-right equivalence' by certain coarser equivalence relations \cite{giusti1983}).
\end{note}

\subsection{Conjectures about the combinatorialization of smooth structures} \label{sec:conjectures}

The close connection illustrated in previous sections between classical smooth singularities and framed combinatorial topology motivated us in \cite[\S5]{fct} to make several conjectures regarding the `smooth behavior' of tame stratifications. We here recall and rephrase these conjectures in the context of manifold diagrams and tangles. For simplicity, we will work with closed manifolds, i.e.\ compact manifolds without boundary (though, with sufficient care, similar conjectures can be made for manifolds with boundary as well). Recall the notion of smooth tangles from \cref{term:transversal}. A `tame smooth tangle' $W \into \II^n$ is a smooth tangle that is a tame tangle (i.e.\ $W$ is a smooth manifold, and $W \into \II^n$ a smooth embedding that is tame and framed transversal). Our starting point is the following observation in low dimensions.

\begin{obs}[Combinatorialization of smooth structures in dim $\leq 4$] \label{obs:comb-of-dim4-struct} For $m \leq 4$, recall from \cref{rmk:tame-4-tang-PL} that any tame $m$-tangle $f : W \into \II^n$ carries a canonical smooth manifold structure on its tangle manifold $W$ (since this is true in the PL case and since PL and smooth manifold structures are equivalent in these dimensions). In fact, given any smooth manifold structure on a topological $m$-manifold $W$, then there exists a tame tangle $f : W \into \II^n$ whose canonical smooth structure coincides with the given smooth structure on $W$ (again, this follows since it is true in the PL case, see \cref{obs:realizing-PL-struct}). Moreover, if $f : W \into \II^n$ is a tame \emph{smooth} tangle and $f_{\mathrm{PL}} = (W_{\mathrm{PL}} \into \II^n)$ a PL tangle that is framed stratified homeomorphic to $f$, then the framed stratified homeomorphism $f \iso f_{\mathrm{PL}}$ can be chosen to be piecewise differentiable (by an argument analogous to the piecewise linear case, see \cref{obs:pl-tang-inherit-pl-struct}). In particular, there is a piecewise differentiable homeomorphism $W \iso W_{\mathrm{PL}}$, and, because the piecewise differentiable and the piecewise linear categories are equivalent, this implies that the given smooth structure of $W$ coincides with the canonical smooth structure of $f$ on $W$. The situation can be summarized as follows: in dimensions $m \leq 4$, framed stratified homeomorphism classes of tame $m$-tangles $f : W \into \II^n$ can be used to present any smooth structure on a given topological manifold $W$ (by endowing $W$ with the canonical smooth structure of $f$), and whenever $W \into \II^n$ is smooth itself, then the canonical smooth structure is simply the smooth structure of $W$. 

    In fact, general position arguments (analogous to the PL case in \cref{obs:pl-tang-gen-tame}) should also apply in the smooth case to prove that all smooth structures of $W$ can be realized as tame \emph{smooth} tangles $W \into \II^n$. While we will not attempt to prove this here, we formally record the statement (for all dimensions $m$) in \cref{conj:tame-dense} below.
\end{obs}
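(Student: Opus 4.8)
The plan is to deduce \cref{obs:comb-of-dim4-struct} by transporting, one clause at a time, the already-established facts about tame \emph{PL} tangles across the classical equivalence between PL and smooth manifolds in low dimensions. For the first clause: given a tame $m$-tangle $f : W \into \II^n$ with $m \le 4$, \cref{thm:framed-PL-struct-unique} equips the stratification $f$ with its unique framed PL structure, represented by $\CMsh \NFTrs f$, and restricting to the stratum $W$ gives a PL structure on $W$, which the observation that tame tangle manifolds are weak PL manifolds refines to a weak PL \emph{manifold} structure. Since every link in sight has dimension $\le 3$, Moise's resolution of the Hauptvermutung in dimension $\le 3$ \cite{moise2013geometric} promotes this to a genuine PL manifold structure (this is exactly \cref{rmk:tame-4-tang-PL}), and Hirsch--Mazur smoothing theory \cite{hirsch1974smoothings}, whose obstruction and classification groups vanish in dimension $\le 6$, then endows $W$ with a canonical, uniquely PL-compatible smooth structure; canonicity is inherited from the uniqueness statements just invoked.

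For the second clause, let $W$ be a topological $m$-manifold, $m \le 4$, equipped with an arbitrary smooth structure. Applying the same low-dimensional smoothing theory in reverse, the underlying topological manifold carries a unique compatible PL manifold structure $W_{\mathrm{PL}}$; by \cref{obs:realizing-PL-struct} there is a tame PL tangle $g : W_{\mathrm{PL}} \into \II^n$, and by \cref{obs:pl-tang-inherit-pl-struct} the canonical PL structure of $g$ as a tame tangle restricts on $W$ to the PL structure of $W_{\mathrm{PL}}$. Running the construction of the first clause on $g$ therefore recovers precisely the smooth structure PL-compatible with $W_{\mathrm{PL}}$, which by uniqueness of smoothings in this range is the smooth structure we started with, so $g$ is the required tame tangle. (That $g$ can even be arranged to be a tame \emph{smooth} tangle is what is deferred to \cref{conj:tame-dense}, in line with the general-position discussion of \cref{obs:pl-tang-gen-tame}; we do not attempt it here.)

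For the third clause, let $f : W \into \II^n$ be a tame smooth tangle and $f_{\mathrm{PL}} : W_{\mathrm{PL}} \into \II^n$ a PL tangle framed stratified homeomorphic to $f$. By \cref{obs:combinatorializing-tame-tangles} they have the same fundamental tangle truss, so $\CMsh \NFTrs f = \CMsh \NFTrs f_{\mathrm{PL}}$, and by the flat framed Hauptvermutung (\cref{obs:pl-tang-inherit-pl-struct}, \cite[Cor.\ 5.0.7]{fct}) this classifying mesh is framed stratified PL-homeomorphic to $f_{\mathrm{PL}}$. The substantive point is a \emph{piecewise-differentiable} counterpart: that $f$ is framed stratified PD-homeomorphic to $\CMsh \NFTrs f$. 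I would establish this by rerunning the argument of \cite[Cor.\ 5.0.7]{fct} in the PD category — a smooth tangle admits a smooth refining mesh, obtained by putting a smooth triangulation of $\II^n$ into general position with respect to the iterated projections $\pi_{>i}$ via smooth transversality; this mesh coarsens piecewise-differentiably to the coarsest refining mesh; and $\CMsh \NFTrs f$ is PD-homeomorphic to the latter by the explicit construction of classifying meshes. Composing the PL identification above with this PD identification (PD $\supseteq$ PL) and restricting to tangle manifolds yields a PD homeomorphism $W \cong W_{\mathrm{PL}}$; since the piecewise-differentiable and piecewise-linear categories are equivalent and, in dimension $\le 6$, a smooth manifold is recovered from its underlying PD (equivalently PL) manifold, this forces the canonical smooth structure of $f$ to coincide with the given one. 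The summary clause then follows by combining the three.

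I expect the flat framed PD Hauptvermutung in the third clause to be the main obstacle: one must arrange the smoothing and triangulation \emph{compatibly with the full iterated-bundle structure of meshes}, not merely with the underlying manifold, and verify that the resulting identification with $\CMsh \NFTrs f$ is simultaneously framed and piecewise-differentiable — the PD analogue of \cite[Cor.\ 5.0.7]{fct}. As in that case, the hypothesis $m \le 4$ is precisely what keeps the relevant smoothing obstruction and uniqueness groups trivial, so the argument is not expected to extend verbatim beyond dimension $4$; beyond it the statement becomes entangled with SPC4 (cf.\ \cref{rmk:SPC4}).
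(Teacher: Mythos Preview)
Your proposal is correct and follows essentially the same approach as the paper: the observation is self-arguing, and you have faithfully expanded each of its three clauses by transporting the corresponding PL facts (\cref{rmk:tame-4-tang-PL}, \cref{obs:realizing-PL-struct}, \cref{obs:pl-tang-inherit-pl-struct}) across the low-dimensional PL/DIFF equivalence. Your treatment of the third clause is in fact more detailed than the paper's --- where the paper simply gestures at ``an argument analogous to the piecewise linear case'', you explicitly sketch the required PD analogue of the flat framed Hauptvermutung and correctly flag it as the substantive step.
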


\nid Building on and generalizing the previous observation, our first conjecture is that the tameness and framed transversality is generic property of smooth embeddings $W \into \II^n$. (We endow mapping spaces with compact-open topology.)

\begin{conj}[Smooth embeddings are generically tame tangles] \label{conj:tame-dense} Given a closed smooth manifold $W$, the subspace of tame smooth tangles $W \into \II^n$ is dense in the space of all smooth embeddings $W \into \II^n$.
\end{conj}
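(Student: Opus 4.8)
\textbf{Proof proposal for \cref{conj:tame-dense}.}

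The plan is to combine a smooth transversality argument (to control the loci of critical points) with a smooth triangulation/tameness argument (to make the resulting stratified picture finite and mesh-refinable), and then to check that a generic small perturbation achieves both simultaneously. First I would set up the relevant jet-theoretic framework: given a closed smooth manifold $W$ and a smooth embedding $e : W \into \II^n$, consider the composite projections $\pi_{>i} \circ e : W \to \lR^i$ for $i = 1, \dots, n$ (using the categorical-direction indexing of \cref{notn:coord-axis-reverse}). The transversal dimension $\tdim(x)$ of a point $x$ in $W$ (see \cref{notn:transv-dim}) is governed by how the successive restrictions of these projections degenerate at $x$; the condition of framed transversality (\cref{defn:framed-transv}) amounts, roughly, to requiring that at each stage the relevant critical locus of $\pi_{>i}\circ e$ restricted to the previously-defined critical strata is itself cut out transversally, so that the strata $\tdim\inv(k)$ are honest submanifolds meeting the coordinate projections in the expected way. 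I would phrase this as a jet-transversality condition: there is a locally closed submanifold (or stratified subset, in the style of Thom--Boardman) $\Sigma$ of the appropriate multijet bundle such that $e$ is framed transversal precisely when its iterated jet prolongations avoid, or meet transversally, the strata of $\Sigma$. Thom's transversality theorem then gives that the set of embeddings $e$ whose jet prolongations are transversal to this stratification is residual, hence dense, in $\Emb(W, \II^n)$ with the compact-open (equivalently, on a compact manifold, the $C^\infty$) topology.

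Next I would address tameness. Here the key inputs are the earlier results of the excerpt: \cref{thm:plness-implies-tameness} (PL implies tame) together with \cref{obs:pl-tang-gen-tame} (PL tangles are generically tame), and the remark \cref{rmk:pl-map-implies-tame-map}. The strategy is: a generic smooth embedding $e : W \into \II^n$ can be made simplicial with respect to some finite smooth triangulation of $W$ and a compatible PL structure on a neighborhood of $e(W)$ in $\II^n$ (using classical smooth triangulation theory, e.g.\ Whitehead--Munkres, and the equivalence of the smooth and PL categories in the relevant range is \emph{not} needed here since we only need existence of \emph{one} compatible triangulation, not uniqueness). Such a simplicial embedding defines a PL stratification of $\II^n$ which is tame by \cref{thm:plness-implies-tameness}. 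Moreover, by \cref{obs:pl-tang-gen-tame}, an arbitrarily small perturbation of the vertices of this simplicial complex makes the resulting PL tangle framed transversal. The point I would then need to make is that this vertex-perturbation can be taken \emph{smooth and small}: since $W$ is already smooth and embedded, and since smoothing theory lets us realize the PL approximation by a smooth embedding $C^\infty$-close to $e$, the perturbed configuration corresponds to a smooth embedding which is (i) $C^\infty$-close to $e$, (ii) tame (being PL-equivalent to a tame stratification, via a framed stratified homeomorphism, and tameness is a property of the framed stratified homeomorphism class by \cref{thm:tame-strat-comb}), and (iii) framed transversal in the combinatorial sense, hence by \cref{obs:combinatorializing-tame-tangles} defines a tame tangle whose canonical smooth structure agrees with that of $W$ (cf.\ \cref{obs:comb-of-dim4-struct} for the low-dimensional template of this last step).

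The two arguments then have to be merged: I want a single dense $G_\delta$ of embeddings that are simultaneously tame \emph{and} framed transversal. I would do this by first intersecting with the residual set of embeddings admitting a finite compatible triangulation (dense by smooth triangulation theory and the fact that "being simplicial for some finite triangulation" is an open-dense condition after small perturbation), obtaining tameness on a dense set; and then observing that within the tame locus the framed transversality condition is again open-dense, because for a tame embedding the failure of framed transversality is detected by the combinatorial data $\NFTrs e$ (finitely many strata), and the stratum-wise transversality defects can each be removed by an arbitrarily small perturbation exactly as in \cref{obs:pl-tang-gen-tame}, with these perturbations being local and hence patchable by a partition of unity into one global small smooth isotopy. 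The main obstacle I expect is precisely this last merging step --- more specifically, verifying that the combinatorial framed-transversality condition of \cref{defn:comb-transv}, which is phrased in terms of normal forms of truss neighborhoods rather than in terms of jets, is stable under and achievable by small smooth perturbations in a way that does not destroy tameness. This requires a careful dictionary between the Thom--Boardman-type smooth stratification of jet space (where transversality is classical) and the mesh/truss normal-form picture (where ``transversality'' is the statement that the cone link $C_x$ is not itself a product), together with a quantitative control: one must ensure that after making the embedding framed transversal in the combinatorial sense on one mesh stratum, a subsequent perturbation fixing another stratum does not reintroduce a non-tame or non-transversal configuration on the first --- an inductive, descending-dimension bookkeeping argument, analogous to the inductive structure already present in \cref{defn:framed-transv}, should handle this but is the technical heart of the proof.
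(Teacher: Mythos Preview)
The statement you are attempting to prove is a \emph{conjecture} in the paper, not a theorem; the paper contains no proof of it. Immediately after stating \cref{conj:tame-dense}, the authors remark only that ``In the PL case this conjecture is true, and was discussed in \cref{obs:pl-tang-gen-tame},'' and in \cref{obs:comb-of-dim4-struct} they write that general position arguments analogous to the PL case ``should also apply in the smooth case\ldots While we will not attempt to prove this here, we formally record the statement\ldots in \cref{conj:tame-dense}.'' So there is nothing on the paper's side to compare your argument against.

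As for the content of your sketch: the overall shape---jet transversality to control the critical-locus stratification, then a smooth-triangulation step to import tameness from the PL case via \cref{thm:plness-implies-tameness} and \cref{obs:pl-tang-gen-tame}---is a plausible line of attack and is exactly the kind of ``general position argument analogous to the PL case'' the paper gestures at. But you have correctly identified the gap yourself: the dictionary between Thom--Boardman-style jet stratifications and the combinatorial framed-transversality condition of \cref{defn:comb-transv} is not established anywhere in the paper (or, to our knowledge, in the literature), and the inductive bookkeeping needed to ensure that perturbations achieving transversality at one level do not destroy tameness or transversality at another is genuinely nontrivial. Your step (ii) is also suspect as written: tameness is a property of a specific stratification of a specific flat-framed space, not merely of a framed stratified homeomorphism class, so ``being PL-equivalent to a tame stratification via a framed stratified homeomorphism'' does not by itself make the original smooth embedding tame---you would need the perturbed smooth embedding itself to admit a mesh refinement. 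In short, your proposal is a reasonable outline of why the conjecture is believable, but it does not close the problem, and the paper does not claim to either.
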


\nid In the PL case this conjecture is true, and was discussed in \cref{obs:pl-tang-gen-tame}. Moreover, if we endow the mapping space with a topology in which neighborhoods keep differentials `close by' (such as the Whitney $C^\infty$-topology), then it should be possible to replace `dense' by `open and dense' in the conjecture.

While the preceding conjecture guarantees that for any (closed) smooth manifold $W$ there exists a tame smooth tangle $W \into \II^n$, we may conversely ask if the smooth structure of $W$ can be recovered from such a tangle. Let us assume for a moment that all smooth disks $D^n$ are standard (in reality, the question is open for $n = 4$ and $n = 5$, and hinges again on SPC4).
Then, given a tame smooth tangle $W \into \II^n$, each point of $W$ has a tame framed neighborhood which is a standard smooth disk. In classical Morse theory, exotic smooth structures can arise from attachments of standard disks if the attaching maps are themselves exotic.
However, in the context of tame tangles, such attachments are (by an inductive argument) standard gluings along lower dimensional tame tangles. Inductively, exotic attachments can therefore only happen on $0$-tangles; but $0$-tangles have no exotic structure. This motivates the following conjecture.

\begin{conj}[Framed homeomorphism of tame smooth tangles implies diffeomorphism] \label{conj:smooth-faithfull} If two tame smooth tangles $W \into \II^n$ and $W' \into \II^n$ are framed stratified homeomorphic, then $W$ and $W'$ are diffeomorphic.\footnote{Note that this conjecture isn't `obviously equivalent' to SPC4: indeed, the sketched argument would probably still work with the simpler assumption that smooth structures of exotic 5-disks are determined by their boundary exotic 4-spheres.}
\end{conj}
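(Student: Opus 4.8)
The plan is to reduce the statement to a local, inductive analysis of tame smooth tangles, using the combinatorialization machinery (\cref{obs:combinatorializing-tame-tangles}) together with the comparison of smooth and PL categories in low dimensions (\cref{rmk:tame-4-tang-PL}, \cref{obs:comb-of-dim4-struct}). First I would fix a framed stratified homeomorphism $\Phi : f \iso f'$ between the two tame smooth tangles $f = (W \into \II^n)$ and $f' = (W' \into \II^n)$, and observe via \cref{obs:combinatorializing-tame-tangles} that this is equivalent to saying $\NFTrs f \iso \NFTrs f'$ as $m$-tangle $n$-trusses. The goal is to upgrade $\Phi$ (after an ambient isotopy) to a diffeomorphism $W \iso W'$; equivalently, to show that the canonical smooth structure determined by the combinatorial data of $\NFTrs f$ (in the sense of \cref{obs:comb-of-dim4-struct} when $m \leq 4$) agrees with the a priori smooth structure carried by $W$ as a smooth tangle, and likewise for $W'$. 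If both $W$ and $W'$ carry the \emph{canonical} smooth structure associated to the common truss, we are done.

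The heart of the argument is the assertion, flagged in the discussion preceding the conjecture, that a tame smooth tangle carries no "hidden" exotic structure beyond what is visible in its canonical (PL/smooth) structure. I would make this precise by an induction on $m = \dim W$, using the transversal stratification $\tstr(f)$ (see \cref{ssec:tangle-vs-mdiag}), which refines $W$ by a manifold diagram whose strata are the loci $\tdim\inv(k)$. The inductive step is: at each point $x \in W$ with transversal dimension $k$, the framed transversality condition (\cref{defn:framed-transv}) provides a tame framed neighborhood of the form $\II^k \times \fcone(l_x)$ with $l_x = (S^{m-k-1} \into \partial\bI^{n-k})$, and $l_x$ is itself a lower-dimensional tame tangle. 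Assuming $W$ is smooth, this neighborhood is (by standard smooth transversality / tubular neighborhood theory) a \emph{standard} product chart, and the link $l_x$ inherits a smooth structure that, by the inductive hypothesis, is determined by its combinatorics. The key point is then that the smooth structure on $W$ is reconstructed from these local pieces by \emph{standard} gluings along smooth submanifolds of strictly smaller dimension: the attaching data for the $k$-transversal stratum along the $(>k)$-transversal strata is, up to smooth isotopy, canonical, because an exotic attachment could only be introduced on a $0$-dimensional tangle, and $0$-tangles (finite sets of points) carry no exotic structure. Hence, inductively, the smooth structure on $W$ equals the canonical one attached to $\NFTrs f$, and since $\NFTrs f = \NFTrs f'$, the same holds for $W'$; the homeomorphism $\Phi$ can then be promoted to a piecewise-differentiable, hence (by equivalence of the piecewise-differentiable and smooth categories) smooth, homeomorphism $W \iso W'$.

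The main obstacle I expect is exactly the step that the paper itself sidesteps: controlling the \emph{gluing} of the local smooth product charts into a global smooth structure, and verifying that the ambient framed stratified homeomorphism $\Phi$ can be chosen to respect this gluing up to smooth isotopy rather than merely topological isotopy. Classically, the subtlety is that attaching handles of dimension $\geq 5$ along exotic spheres produces exotic manifolds; the claim here is that the tame tangle structure forces all such attachments to be standard because the relevant attaching loci are themselves \emph{lower-dimensional tame tangles} whose smoothings are already pinned down, so the induction bottoms out at $0$-tangles. Making this rigorous requires (i) a smooth version of the compatibility between the coarsest refining mesh $\iM^f$ and the smooth structure on $W$ — i.e., that the canonical regular cell structure of \cref{contr:cell-struct-of-tang} can be realized by a smooth triangulation when $f$ is smooth — and (ii) an inductive uniqueness-of-collar-type statement for the framed tubular neighborhoods in \cref{defn:framed-transv}, smoothly rather than just topologically. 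Modulo the open-disk caveat noted in the footnote (needed to know each local chart is a \emph{standard} smooth disk, which is automatic for $m \leq 4$ but not known for $n = 4,5$ in general), both (i) and (ii) should follow from classical smoothing theory \cite{hirsch1974smoothings}, \cite{munkres1966elementary}, applied stratum by stratum; but assembling them into a clean proof is where the real work lies, and is why the statement is (appropriately) left as a conjecture.
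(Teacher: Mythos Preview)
The statement you are attempting to prove is a \emph{conjecture}, not a theorem: the paper offers no proof, only a heuristic motivation in the paragraph immediately preceding it. Your proposal is essentially a fleshed-out version of that same heuristic (local standard disks via framed transversality, inductive control of attachments along lower-dimensional tame tangles, bottoming out at $0$-tangles), and you correctly identify at the end that the hard steps---realizing the canonical cell structure by a smooth triangulation, and proving a smooth uniqueness-of-collar statement for the framed tubular neighborhoods so that the gluings are smoothly standard---are precisely where a rigorous argument would have to do real work.

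So there is no discrepancy to flag between your approach and the paper's: you have recovered and sharpened the paper's own motivation, and you have named the genuine obstacles that keep it a conjecture. The one point worth emphasizing is that even your inductive scheme tacitly assumes the local smooth disks are standard, which (as the footnote notes) is itself open in the relevant dimensions; without that, the base of the induction already fails, independently of the gluing issues.
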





\nid Again, in the PL case this conjecture is true, and was discussed in \cref{obs:pl-tang-inherit-pl-struct}.

Note that a priori there are many tame smooth tangles $W \into \II^n$ for a given smooth manifold $W$; any such choice may be thought of as a choice of `higher Morse function' on $W$. Different choices should be related, as the next conjecture suggests. Recall the notion of framed isotopy from \cref{defn:framed-isotopy}. We say a framed isotopy is smooth if it is smooth as a tame tangle.

\begin{conj}[Smooth framed isotopy of tame smooth tangles implies diffeomorphism] \label{conj:smooth-full} Given smooth $m$-tame tangles $f : W \into \II^n$, $f' : W' \into \II^n$, if $f$ and $f'$ are smoothly framed isotopic then $W$ and $W'$ are diffeomorphic.
\end{conj}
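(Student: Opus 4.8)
\textbf{Proof proposal for \cref{conj:smooth-full}.}

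The plan is to reduce the statement to \cref{conj:smooth-faithfull} by showing that a smooth framed isotopy between two tame smooth tangles in fact witnesses a diffeomorphism between the underlying manifolds. First I would unwind the definitions: a smooth framed isotopy $(S,g) : f \to f'$ is, by \cref{defn:framed-isotopy} together with the hypothesis of smoothness, a path of tame tangles (in the sense of \cref{defn:paths of tangle trusses}, i.e.\ a $1$-fiber bundle $(m+1)$-tangle over a $1$-truss) which is additionally a smooth tame tangle $S = (W_S \into \II^{n+1})$ with start point $f = (W \into \II^n)$ and end point $f' = (W' \into \II^n)$. Being a coherence-style path, the projection of $S$ to the last categorical direction $\lR$ is a fiber bundle when restricted to the tangle manifold $W_S$ (this is the core property of tangle fiber bundles recorded after \cref{defn:tang-truss-bun-fiber}, and here it applies on the whole base since $S$ is a $1$-fiber bundle). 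Over the contractible base $\II^1 = (-1,1)$ this topological fiber bundle is trivial, so $W_S \cong W \times (-1,1)$ as topological spaces, and hence $W \cong W'$ as topological manifolds; the work is to upgrade this to a \emph{diffeomorphism}.

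The key step is to observe that the canonical smooth structure of a tame smooth tangle (\cref{rmk:tame-4-tang-PL}, extended in the spirit of \cref{obs:comb-of-dim4-struct} and its PL analogue \cref{obs:pl-tang-inherit-pl-struct}) is natural for framed stratified maps: if $F : (X,f)\to (Y,g)$ is a framed stratified map of tame smooth tangles which restricts to an embedding (resp.\ homeomorphism) on tangle manifolds and which is itself smooth, then $F$ restricts to a smooth embedding (resp.\ diffeomorphism) of tangle manifolds. Applying this naturality to the two inclusions of the fibers over points $\pm\tfrac12 \in \II^1$ into $S$, together with the trivialization $W_S\cong W\times(-1,1)$ chosen compatibly with the framing, produces a smooth map $W \to W'$. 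To see this map is a diffeomorphism I would argue inductively on the transversal-dimension filtration of the tangle, exactly as in the heuristic after \cref{conj:smooth-faithfull}: the fiber transition condition (\cref{defn:tang-truss-bun-fiber}) guarantees that the bundle restricts, over each stratum of the transversal stratification $\tstr(S)$, to a trivial \emph{framed} bundle of lower-dimensional tame tangles, so that all the gluing data parametrizing how $W$ is assembled from disks along lower tangles varies smoothly and trivially over $\II^1$; at the bottom of the filtration one is gluing along $0$-tangles, which carry no exotic structure. Concretely, I would realize the diffeomorphism as the time-$1$ flow of a smooth vector field on $W_S$ lifting $\partial/\partial t$ on the base, constructed stratum by stratum using the framed local triviality, exactly the way one proves the isotopy extension theorem; the framed-transversality conditions (\cref{defn:framed-transv}) are precisely what make the stratum-wise constructions patch.

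The main obstacle is the smoothing-theoretic input: one must know that the canonical smooth structures behave functorially under smooth framed stratified maps, i.e.\ that a smooth framed stratified homeomorphism of tame smooth tangles can be promoted (rel.\ a finer framed cellulation) to a piecewise-differentiable, hence after smoothing a genuinely smooth, map of the tangle manifolds. For $m\le 4$ this is handled by \cref{obs:comb-of-dim4-struct} and the equivalence of PL and smooth categories in low dimensions, but in general it relies on a smooth analogue of the flat framed Hauptvermutung (\cite[Cor.\ 5.0.7]{fct}) which is only conjectural --- indeed this is why the statement is labelled a conjecture rather than a theorem. A secondary subtlety is that the path $(S,g)$ is allowed to pass through singular tangles (it need not be a coherence), so the trivialization of $W_S \to \II^1$ must be produced in the stratified, not merely the topological, category; this is where the $1$-fiber-bundle hypothesis (as opposed to a mere tangle bundle) is essential, and where one invokes that $\tstr(S)$ is itself a manifold diagram (\cref{thm:tangles refine to manifold diagrams}) so that all the local models are controlled. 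Modulo the smoothing input, the remaining steps --- unwinding the fiber-bundle structure, building the lifting vector field stratum-by-stratum, integrating it --- are routine, and the conclusion then drops out of \cref{conj:smooth-faithfull} applied to the start and end points of the now-diffeomorphism-realizing isotopy.
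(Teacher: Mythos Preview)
The paper does not prove this statement: it is explicitly a \emph{conjecture} (environment \texttt{conj}), and the surrounding text offers only motivation and the remark that the converse should hold in the stable range. There is therefore no proof in the paper to compare your proposal against.

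That said, a few comments on your sketch as a heuristic. First, your framing as a ``reduction to \cref{conj:smooth-faithfull}'' is misleading: \cref{conj:smooth-faithfull} concerns tangles that are framed stratified \emph{homeomorphic}, whereas here the start and end points of a path need not be framed stratified homeomorphic at all (a path can pass through genuine singularities and change the combinatorics of the fibers). Your actual mechanism---lift $\partial/\partial t$ to a smooth vector field on $W_S$ using the fiber-bundle structure and integrate---is independent of \cref{conj:smooth-faithfull} and would, if it worked, prove the statement outright. Second, the hard part you identify (a smooth analogue of the flat framed Hauptvermutung, ensuring that the canonical smooth structure is functorial for smooth framed stratified maps) is indeed the crux, and is precisely why the authors leave this as a conjecture; your stratum-by-stratum construction of the lifting vector field presupposes exactly this kind of smooth local triviality, which is not established anywhere in the paper. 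So your proposal is best read as an outline of what a proof \emph{would} require, correctly locating the missing ingredient, rather than as a proof.
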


\nid In the stable range $n > 2m + 1$, the converse should hold as well (i.e.\ diffeomorphism will imply the existence of a framed isotopy).

Finally, we turn to stable tame tangle singularities. First, note the previous conjectures suggest that there is a distinction between `smooth' and `non-smooth' tangle singularities, that is, we do not expect all tangle singularities to be smoothable---instead, non-smoothable examples of the following form should exist. Given an exotic smooth $k$-sphere $W$ (where $k > 4$) we can find a tame, framed transversal embedding $W \into \II^n$ by \cref{conj:tame-dense}. Then the cone on $W$ embeds in $\II^n \times \II$ and defines a tame tangle singularity. However, since exotic smooth spheres (above dimension 4) do not bound smooth disks, and assuming \cref{conj:smooth-faithfull} holds true, this tangle singularity itself cannot be smoothable. When considering stable singularities, one would similarly expect the class of smooth stable $m$-tangle singularities to be a proper subclass of the class of all stable $m$-tangle singularities for general $m$. 

Recall, for the low-dimensional cases discussed in \cref{sec:stab-in-low-dim}, we obtained classifications of stable tangle singularities up to $\sF$-equivalence that were `constructive' (i.e.\ we could explicitly construct and list all of the stable tangle singularity classes). In fact, the lists we obtained were finite as well. Our final conjecture states that similar classifications should be possible for singularities in all dimensions and codimensions.

\begin{conj}[Classifications of stable tangle singularities is tractable] $\sF$-equivalence classes of stable $m$-tangle singularities in codimension $p$ are constructively classifiable, and there are finitely many such classes for any pair $(m,p)$.
\end{conj}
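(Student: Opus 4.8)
This is a conjecture and not something I can actually prove—the relevant classification questions in dimensions $m + p$ beyond $4$ are open and appear genuinely hard—so what follows is a description of the strategy that would have to succeed in order to establish it, together with an account of what the obstruction is. The plan rests on the combinatorialization of tame tangles (\cref{obs:combinatorializing-tame-tangles}): $m$-tangle singularities in codimension $p$ correspond to $m$-singularity $(m+p)$-tangle trusses, i.e.\ to normalized stratified open cone trusses satisfying combinatorial $m$-dim transversality (\cref{defn:comb-transv}, \cref{defn:comb-tangle}, and the singularity condition). Since trusses are finite combinatorial objects, for each fixed ambient dimension $n = m + p$ there are only \emph{countably} many such singularities up to framed stratified homeomorphism, and hence at most countably many $\sF$-equivalence classes (\cref{defn:framed-isotopy}). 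Constructive enumerability of $\sF$-classes would then follow if one could algorithmically (i) list all candidate cone trusses of each combinatorial size, (ii) decide combinatorial transversality for each—this is decidable, since normalization is computable and the conditions of \cref{defn:comb-transv} are finitary checks on $\NF{T^{\leq x},f^{\leq x}}$ (contrast the \emph{topological} undecidability remark, which does not apply on the combinatorial side), (iii) decide stability by searching over perturbations (\cref{defn:comb-transv}, the stability definition), noting that a perturbation decreasing $\#Q$ can be bounded in size, and (iv) decide $\sF$-equivalence by searching for connecting paths of bounded complexity. Steps (i)--(iv) give a semi-decision procedure; upgrading to a genuine classification requires the finiteness half of the conjecture.

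The finiteness half is the real content, and the plan would be to establish it by an inductive "binary relator'' argument generalizing the proofs of \cref{thm:stable-2-sing}, \cref{thm:stable-1-sing-dim-3}, and \cref{thm:stable-2-sing-dim-4}, together with the heuristic schema of \cref{rmk:stable-tang-sing-from-bin-laws}. First I would set up the induction on $m$ (for fixed or bounded $p$): the link of an $m$-singularity is an $(m+p-1)$-tangle in $\partial\bI^{m+p}$, built from lower-dimensional singularities, and by inductive hypothesis only finitely many \emph{stable} lower singularities occur up to $\sF$-equivalence. The key structural claim to prove is a "bounded source'' lemma: any stable $m$-singularity is $\sF$-equivalent to one whose source (after the "bend the target into the source'' move of Step 1 of \cref{thm:stable-2-sing}, and after removing wiggles via $\iA_2$-type moves as in Step 2) is assembled from a bounded number of stable normal singularities in a bounded number of gluing patterns. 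Granting such a bound, the number of stable $\sF$-classes in dimension $m+p$ is finite because it is controlled by (finitely many stable pieces) $\times$ (finitely many bounded assembly patterns). The perturbation-theoretic heart is a \emph{simplification algorithm}: given any singularity whose source exceeds the bound, exhibit an explicit perturbation to a strictly less complex one using only the finitely many stable pieces—exactly as \cref{fig:removing-wiggles-from-the-domain} and \cref{fig:inductively-removing-remaining-cups-and-caps} do for $m=2$, $p=1$, and as the three-step argument of \cref{thm:stable-2-sing-dim-4} does for $m=2$, $p=2$ (genericize crossings to braids, delete braids, remove circles and wiggles).

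The main obstacle—and the reason this remains a conjecture—is precisely the step where classical singularity theory breaks down: \emph{moduli of singularities}. In the smooth world, $\sR$-orbits of germs acquire positive-dimensional moduli once the jet order exceeds $3$ in two variables (or $2$ in three variables), so there is no finite classification of smooth singularities beyond $\iA_k, \iD_k, \iE_k$ and their low-codimension relatives. The hope underlying the conjecture is that this pathology is an artifact of the smooth (diffeomorphism) equivalence relation and that, since tame tangles are classified by \emph{discrete} combinatorial data, passing to $\sF$-equivalence collapses such moduli—there being only countably many combinatorial types, a would-be "moduli'' of singularities can contain at most countably many $\sF$-classes, and plausibly only finitely many stable ones. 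But proving this collapse is exactly what is hard: one would need to show that within any putative family of tangle singularities, all but finitely many members admit complexity-decreasing perturbations, i.e.\ that the "generic'' member of the family is unstable. Establishing such a uniform destabilization statement—essentially a combinatorial transversality/genericity theorem strong enough to tame the phenomena that defeat the smooth classification—is the crux, and no general argument for it is presently available; the low-dimensional cases in \cref{sec:stab-in-low-dim} work only because the source tangles are low enough dimensional to be exhaustively analyzed by hand (and even the $m=3$, $p=1$ count in \cref{rmk:3-tang-in-dim-4} is asserted without rigorous proof).
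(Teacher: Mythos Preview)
Your assessment is correct: this is stated in the paper as a conjecture with no proof, so there is nothing in the paper to compare your proposal against. You rightly identify it as open and sketch the natural inductive strategy extrapolated from the low-dimensional arguments in \cref{sec:stab-in-low-dim} together with the binary-relator heuristic of \cref{rmk:stable-tang-sing-from-bin-laws}, and you correctly locate the obstruction in the need for a uniform destabilization result that would collapse the moduli phenomena familiar from the smooth theory.

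One small correction: in step (ii) you claim combinatorial transversality is decidable because the checks are finitary. But \cref{defn:comb-transv} requires that $D \setminus \top$ be an $(m-k-1)$-sphere, i.e.\ that the classifying space of a finite poset be homeomorphic to a sphere. Recognizing whether a finite simplicial complex is a sphere is undecidable in general dimensions (this is essentially the unrecognizability remark the paper makes for tame tangles). So step (ii) is not obviously decidable either, and the enumeration half of the conjecture already has nontrivial content beyond what you indicate.
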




\clearpage

\Urlmuskip=0mu plus 1mu\relax
\renewcommand{\baselinestretch}{.95}\small
\bibliographystyle{alpha}
\bibliography{library-mdiag}

\begin{thebibliography}{BDSPV15}

\bibitem[Ada04]{adams2004knot}
Colin~C. Adams.
\newblock {\em The knot book}.
\newblock American Mathematical Society, Providence, RI, 2004.
\newblock An elementary introduction to the mathematical theory of knots,
  Revised reprint of the 1994 original.

\bibitem[AFT17a]{ayala2017factorization}
David Ayala, John Francis, and Hiro~Lee Tanaka.
\newblock Factorization homology of stratified spaces.
\newblock {\em Selecta Mathematica}, 23(1):293--362, 2017.

\bibitem[AFT17b]{ayala2017local}
David Ayala, John Francis, and Hiro~Lee Tanaka.
\newblock Local structures on stratified spaces.
\newblock {\em Advances in Mathematics}, 307:903--1028, 2017.

\bibitem[AGZV85]{arnold1985singularities}
Vladimir~I Arnold, Sabir~M Gusein-Zade, and Alexander~N Varchenko.
\newblock {\em Singularities of Differentiable Maps, Volume 1: Classification
  of Critical Points, Caustics and Wave Fronts}.
\newblock Modern Birkh{\"a}user Classics. Birkh{\"a}user Boston, 1985.

\bibitem[Ara17]{ara2017a}
Manuel Araújo.
\newblock {\em Coherence for 3-dualizable objects}.
\newblock PhD thesis, University of Oxford, 2017.

\bibitem[Arn72]{arnol1972normal}
Vladimir~I Arnold.
\newblock Normal forms for functions near degenerate critical points, the
  {W}eyl groups of ${A}_k$, ${D}_k$, ${E}_k$ and lagrangian singularities.
\newblock {\em Funktsional'nyi Analiz i ego Prilozheniya}, 6(4):3--25, 1972.

\bibitem[Arn76]{arnold1976local}
Vladimir~I Arnold.
\newblock Local normal forms of functions.
\newblock {\em Inventiones mathematicae}, 35(1):87--109, 1976.

\bibitem[Bae06]{ncc2006}
John Baez.
\newblock This week’s finds in mathematical physics (week 241).
\newblock Blog Post, 2006.
\newblock
  \url{https://golem.ph.utexas.edu/category/2006/11/this_weeks_finds_in_mathematic_2.html}
  (With comment section discussing singularities).

\bibitem[BB18]{baez2018quantum}
John~C Baez and Jacob~D Biamonte.
\newblock {\em Quantum techniques in stochastic mechanics}.
\newblock World Scientific, 2018.

\bibitem[BD95]{baez1995higher}
John~C Baez and James Dolan.
\newblock Higher-dimensional algebra and topological quantum field theory.
\newblock {\em Journal of Mathematical Physics}, 36(11):6073--6105, 1995.

\bibitem[BDSPV15]{bartlett2015modular}
Bruce Bartlett, Christopher~L Douglas, Christopher~J Schommer-Pries, and Jamie
  Vicary.
\newblock Modular categories as representations of the 3-dimensional bordism
  2-category.
\newblock {\em arXiv preprint arXiv:1509.06811}, 2015.

\bibitem[Bj{\"o}84]{bjorner1984posets}
Anders Bj{\"o}rner.
\newblock Posets, regular cw complexes and bruhat order.
\newblock {\em European Journal of Combinatorics}, 5(1):7--16, 1984.

\bibitem[BL11]{baez2011prehistory}
John~C Baez and Aaron Lauda.
\newblock A prehistory of n-categorical physics.
\newblock {\em Deep Beauty: Understanding the Quantum World Through
  Mathematical Innovation}, pages 13--128, 2011.

\bibitem[BMS12]{barrett2012gray}
John~W Barrett, Catherine Meusburger, and Gregor Schaumann.
\newblock Gray categories with duals and their diagrams.
\newblock {\em arXiv preprint arXiv:1211.0529}, 2012.

\bibitem[Bro60]{brown1960proof}
Morton Brown.
\newblock A proof of the generalized schoenflies theorem.
\newblock {\em Bulletin of the American Mathematical Society}, 66(2):74--76,
  1960.

\bibitem[Buo03]{buoncristiano2003fragments}
Sandro Buoncristiano.
\newblock {\em Fragments of geometric topology from the sixties}.
\newblock University of Warwick, Mathematics Institute, 2003.

\bibitem[BV17]{bar2017data}
Krzysztof Bar and Jamie Vicary.
\newblock Data structures for quasistrict higher categories.
\newblock In {\em 2017 32nd Annual ACM/IEEE Symposium on Logic in Computer
  Science (LICS)}, pages 1--12. IEEE, 2017.

\bibitem[Can79]{cannonshrinking}
James~W Cannon.
\newblock Shrinking cell-like decompositions of manifolds. codimension three.
\newblock {\em Annals of Mathematics}, pages 83--112, 1979.

\bibitem[Car11]{carter2011excursion}
J~Scott Carter.
\newblock {\em Excursion In Diagrammatic Algebra, An: Turning A Sphere From Red
  To Blue}, volume~48.
\newblock World Scientific, 2011.

\bibitem[Cer70]{cerf1970stratification}
Jean Cerf.
\newblock La stratification naturelle des espaces de fonctions
  diff{\'e}rentiables r{\'e}elles et le th{\'e}oreme de la pseudo-isotopie.
\newblock {\em Publications Math{\'e}matiques de l'IH{\'E}S}, 39:5--173, 1970.

\bibitem[CKS96]{carter1996diagrammatics}
J~Scott Carter, Louis~H Kauffman, and Masahico Saito.
\newblock Diagrammatics, singularities, and their algebraic interpretations.
\newblock In {\em 10th Brazilian Topology Meeting, Sao Carlos, July 22 26,
  1996,Mathematica Contemporanea}. Citeseer, 1996.

\bibitem[CMP12]{colding2012mean}
Tobias~Holck Colding, William~P Minicozzi, II, and Erik~Kjaer Pedersen.
\newblock Mean curvature flow as a tool to study topology of 4-manifolds.
\newblock {\em arXiv preprint arXiv:1208.5988}, 2012.

\bibitem[Con70]{conway1970enumeration}
John~H Conway.
\newblock An enumeration of knots and links, and some of their algebraic
  properties.
\newblock In {\em Computational problems in abstract algebra}, pages 329--358.
  Elsevier, 1970.

\bibitem[Cra00]{crans2000braiding}
Sjoerd Crans.
\newblock On braiding, syllapses and symmetries.
\newblock {\em Cahiers de topologie et g{\'e}om{\'e}trie diff{\'e}rentielle
  cat{\'e}goriques}, 41(1):2--74, 2000.

\bibitem[CRS97]{carter1997combinatorial}
J~Scott Carter, Joachim~H Rieger, and Masahico Saito.
\newblock A combinatorial description of knotted surfaces and their isotopies.
\newblock {\em advances in mathematics}, 127(1):1--51, 1997.

\bibitem[CS98]{carter1998knotted}
J~Scott Carter and Masahico Saito.
\newblock {\em Knotted surfaces and their diagrams}.
\newblock Number~55. American Mathematical Soc., 1998.

\bibitem[Cvi08]{cvitanovi2008}
Predrag Cvitanovi\'{c}.
\newblock {\em Group theory}.
\newblock Princeton University Press, Princeton, NJ, 2008.
\newblock Birdtracks, Lie's, and exceptional groups.

\bibitem[DD21]{fct}
Christoph Dorn and Christopher~L Douglas.
\newblock {\em Framed combinatorial topology}.
\newblock arXiv preprint 2112.14700, 2021.
\newblock Numbered references refer to the version of July 2022, available at
  \url{https://cxdorn.github.io/assets/pdfs/fct_jul22.pdf}.

\bibitem[Dor18]{thesis}
Christoph Dorn.
\newblock {\em Associative $n$-categories}.
\newblock PhD thesis, University of Oxford, 2018.

\bibitem[Dor22a]{dorn2022TP}
Christoph Dorn.
\newblock A categorical perspective on the {Pontryagin-Thom} construction,
  2022.
\newblock Online note,
  \url{https://cxdorn.github.io/research/the-categorical-tp-construction/}.

\bibitem[Dor22b]{dorn2022d4law}
Christoph Dorn.
\newblock The {D4} dualizability law, 2022.
\newblock Online note,
  \url{https://cxdorn.github.io/research/the-D4-dualizability-law/}.

\bibitem[DPW95]{du1995geometry}
Andrew Du~Plessis and Terry Wall.
\newblock {\em The geometry of topological stability}.
\newblock Number~9. Oxford University Press, 1995.

\bibitem[DSPS20]{douglas2020dualizable}
Christopher Douglas, Christopher Schommer-Pries, and Noah Snyder.
\newblock {\em Dualizable tensor categories}, volume 268.
\newblock American Mathematical Society, 2020.

\bibitem[FGMW10]{freedman2010man}
Michael~H Freedman, Robert~E Gompf, Scott Morrison, and Kevin Walker.
\newblock Man and machine thinking about the smooth 4-dimensional
  {Poincar{\'e}} conjecture.
\newblock {\em Quantum Topology}, 1(2):171--208, 2010.

\bibitem[Fon13]{fong2013causal}
Brendan Fong.
\newblock Causal theories: A categorical perspective on bayesian networks.
\newblock {\em arXiv preprint arXiv:1301.6201}, 2013.

\bibitem[Fri20]{friedman2020singular}
Greg Friedman.
\newblock {\em Singular intersection homology}, volume~33.
\newblock Cambridge University Press, 2020.

\bibitem[FY89]{freyd1989braided}
Peter~J Freyd and David~N Yetter.
\newblock Braided compact closed categories with applications to low
  dimensional topology.
\newblock {\em Advances in mathematics}, 77(2):156--182, 1989.

\bibitem[Giu83]{giusti1983}
Marc Giusti.
\newblock Classification des singularit{\'e}s isol{\'e}es simples
  d’intersections completes.
\newblock {\em In Singularities, Part 1 (Arcata, Calif., 1981)}, volume 40 of
  Proc. Sympos. Pure Math.:457--494, 1983.

\bibitem[GPS95]{gordon1995coherence}
Robert Gordon, Anthony~John Power, and Ross Street.
\newblock {\em Coherence for tricategories}, volume 558.
\newblock American Mathematical Soc., 1995.

\bibitem[Had20]{hadzihasanovic2020diagrammatic}
Amar Hadzihasanovic.
\newblock Diagrammatic sets and rewriting in weak higher categories, 2020.
\newblock arXiv preprint arXiv:2007.14505.

\bibitem[Hat02]{hatcher2002AT}
Allen Hatcher.
\newblock {\em Algebraic topology}.
\newblock Cambridge University Press, Cambridge, 2002.

\bibitem[Hen17]{henry2017nonunital}
Simon Henry.
\newblock Non-unital polygraphs form a presheaf category, 2017.
\newblock arXiv preprint arXiv:1711.00744.

\bibitem[HM74]{hirsch1974smoothings}
Morris~W Hirsch and Barry Mazur.
\newblock {\em Smoothings of piecewise linear manifolds}.
\newblock Princeton university press, 1974.

\bibitem[HRV22]{heidemann2022zigzag}
Lukas Heidemann, David Reutter, and Jamie Vicary.
\newblock Zigzag normalisation for associative $n$-categories.
\newblock {\em arXiv:2205.08952}, 2022.

\bibitem[Hud69]{hudson1969piecewise}
John F.~P. Hudson.
\newblock {\em Piecewise linear topology}.
\newblock W. A. Benjamin, Inc., New York-Amsterdam, 1969.
\newblock University of Chicago Lecture Notes prepared with the assistance of
  J. L. Shaneson and J. Lees.

\bibitem[Hum12]{hummon2012surface}
Benjamin~Taylor Hummon.
\newblock {\em Surface diagrams for gray-categories}.
\newblock PhD thesis, UC San Diego, 2012.

\bibitem[JS91]{joyal1991geometry}
Andr{\'e} Joyal and Ross Street.
\newblock The geometry of tensor calculus, i.
\newblock {\em Advances in mathematics}, 88(1):55--112, 1991.

\bibitem[Kap10]{kapustin2010topological}
Anton Kapustin.
\newblock Topological field theory, higher categories, and their applications.
\newblock In {\em Proceedings of the International Congress of Mathematicians
  2010 (ICM 2010) (In 4 Volumes) Vol. I: Plenary Lectures and Ceremonies Vols.
  II--IV: Invited Lectures}, pages 2021--2043. World Scientific, 2010.

\bibitem[Koc04]{kock2004frobenius}
Joachim Kock.
\newblock {\em Frobenius algebras and 2-d topological quantum field theories}.
\newblock Number~59. Cambridge University Press, 2004.

\bibitem[Lei04]{leinster2004higher}
Tom Leinster.
\newblock {\em Higher operads, higher categories}, volume 298.
\newblock Cambridge University Press, 2004.

\bibitem[Lur09]{lurie2009classification}
Jacob Lurie.
\newblock On the classification of topological field theories.
\newblock {\em Current developments in mathematics}, 2008:129--280, 2009.

\bibitem[Lur12]{lurie2012higher}
Jacob Lurie.
\newblock Higher algebra, 2012.

\bibitem[LW69]{lundell1969topology}
Albert~T Lundell and Stephen Weingram.
\newblock {\em The topology of {CW} complexes}.
\newblock The University Series in Higher Mathematics. Van Nostrand Reinhold
  Co., New York, 1969.

\bibitem[Mar58]{markov1958insolubility}
Andrei~Andreevich Markov.
\newblock The insolubility of the problem of homeomorphy.
\newblock In {\em Doklady Akademii Nauk}, volume 121, pages 218--220. Russian
  Academy of Sciences, 1958.

\bibitem[Mat71]{mather1971stability}
John~N Mather.
\newblock Stability of {C}{\infty} mappings {VI}: the nice dimensions.
\newblock In {\em Proceedings of Liverpool Singularities—Symposium I}, pages
  207--253. Springer, 1971.

\bibitem[MNB20]{mond2020singularities}
David Mond and Juan~J Nu{\~n}o-Ballesteros.
\newblock {\em Singularities of mappings}.
\newblock Springer, 2020.

\bibitem[Moi77]{moise2013geometric}
Edwin~E. Moise.
\newblock {\em Geometric topology in dimensions {$2$} and {$3$}}.
\newblock Graduate Texts in Mathematics, Vol. 47. Springer-Verlag, New
  York-Heidelberg, 1977.

\bibitem[Mor25]{morse1925relations}
Marston Morse.
\newblock Relations between the critical points of a real function of n
  independent variables.
\newblock {\em Transactions of the American Mathematical Society},
  27(3):345--396, 1925.

\bibitem[Mun84]{munkres2018elements}
James~R Munkres.
\newblock {\em Elements of algebraic topology}.
\newblock Addison-Wesley, Menlo Park, CA, 1984.

\bibitem[Pen71]{penrose1971applications}
Roger Penrose.
\newblock Applications of negative dimensional tensors.
\newblock {\em Combinatorial mathematics and its applications}, 1:221--244,
  1971.

\bibitem[Poo14]{poonen2014undecidable}
Bjorn Poonen.
\newblock Undecidable problems: a sampler.
\newblock {\em Interpreting G{\"o}del: Critical Essays}, pages 211--241, 2014.

\bibitem[Pst14]{pstrkagowski2014dualizable}
Piotr Pstr{\k{a}}gowski.
\newblock On dualizable objects in monoidal bicategories, framed surfaces and
  the cobordism hypothesis.
\newblock {\em arXiv preprint arXiv:1411.6691}, 2014.

\bibitem[RE96]{rozenberg1996elementary}
Grzegorz Rozenberg and Joost Engelfriet.
\newblock Elementary net systems.
\newblock In {\em Advanced Course on Petri Nets}, pages 12--121. Springer,
  1996.

\bibitem[RS95]{rovelli1995spin}
Carlo Rovelli and Lee Smolin.
\newblock Spin networks and quantum gravity.
\newblock {\em Physical Review D}, 52(10):5743, 1995.

\bibitem[SP09]{schommer2009classification}
Christopher~J Schommer-Pries.
\newblock {\em The classification of two-dimensional extended topological field
  theories}.
\newblock University of California, Berkeley, 2009.

\bibitem[Str01]{street2001powerful}
Ross Street.
\newblock Powerful functors.
\newblock {\em Unpublished expository notes}, 2001.
\newblock \url{http://web.science.mq.edu.au/~street/Pow.fun.pdf}.

\bibitem[Tel16]{teleman2016five}
Constantin Teleman.
\newblock Five lectures on topological field theory.
\newblock In {\em Geometry and Quantization of Moduli Spaces}, pages 109--164.
  Springer, 2016.

\bibitem[Tho76]{thom1976structural}
Ren{\'e} Thom.
\newblock {\em Structural Stability and Morphogenesis: An Outline of a General
  Theory of Models}.
\newblock W. A. Benjamin, Inc., Reading, Mass.-London-Amsterdam, 1976.
\newblock Translated from the French by D. H. Fowler.

\bibitem[tHV74]{t1974diagrammar}
Gerardus 't~Hooft and Martinus Veltman.
\newblock Diagrammar.
\newblock In {\em Particle Interactions at Very High Energies}, pages 177--322.
  Springer, 1974.

\bibitem[Tri99]{trimble1999}
Todd Trimble.
\newblock Surface diagrams.
\newblock {\em nLab:Surface diagrams}, 1999.

\bibitem[Tur90]{turaev1990operator}
Vladimir~G Turaev.
\newblock Operator invariants of tangles, and r-matrices.
\newblock {\em Mathematics of the USSR-Izvestiya}, 35(2):411, 1990.

\bibitem[Ush84]{ushiki1984normal}
Shigehiro Ushiki.
\newblock Normal forms for singularities of vector fields.
\newblock {\em Japan Journal of Applied Mathematics}, 1(1):1--37, 1984.

\bibitem[Vid09]{vidal2009entanglement}
Guifre Vidal.
\newblock Entanglement renormalization: an introduction.
\newblock {\em arXiv preprint arXiv:0912.1651}, 2009.

\bibitem[VKF74]{volodin1974problem}
Igor~A Volodin, Valentin~E Kuznetsov, and Anatoly~T Fomenko.
\newblock The problem of discriminating algorithmically the standard
  three-dimensional sphere.
\newblock {\em Russian Mathematical Surveys}, 29(5):71, 1974.
\newblock Appendix by S. P. Novikov.

\bibitem[Wal84]{wall1984notes}
Terry Wall.
\newblock Notes on the classification of singularities.
\newblock {\em Proc. London Math. Soc.}, 48:461--513, 1984.

\bibitem[Wei04]{weinberger2004computers}
Shmuel Weinberger.
\newblock Computers, rigidity, and moduli: The large-scale fractal geometry of
  riemannian moduli space, 2004.

\bibitem[Yet88]{yetter1986markov}
David~N. Yetter.
\newblock Markov algebras.
\newblock In {\em Braids ({S}anta {C}ruz, {CA}, 1986)}, volume~78 of {\em
  Contemp. Math.}, pages 705--730. Amer. Math. Soc., Providence, RI, 1988.

\bibitem[Yet01]{yetter2001functorial}
David~N Yetter.
\newblock {\em Functorial Knot Theory: Categories of Tangles, Coherence,
  Categorical Deformations and Topological Invariants}, volume~26.
\newblock World Scientific, 2001.

\end{thebibliography}

\end{document}